\tikzset{stretch/.initial=1}
\newcommand\drawloop[4][]%
\def\@tocline#1#2#3#4#5#6#7{\relax
  \ifnum #1>\c@tocdepth 
  \else
    \par \addpenalty\@secpenalty\addvspace{#2}%
    \begingroup \hyphenpenalty\@M
    \@ifempty{#4}{%
      \@tempdima\csname r@tocindent\number#1\endcsname\relax
    }{%
      \@tempdima#4\relax
    }%
    \parindent\z@ \leftskip#3\relax \advance\leftskip\@tempdima\relax
    \rightskip\@pnumwidth plus4em \parfillskip-\@pnumwidth
    #5\leavevmode\hskip-\@tempdima
      \ifcase #1
       \or\or \hskip 1em \or \hskip 2em \else \hskip 3em \fi%
      #6\nobreak\relax
    \dotfill\hbox to\@pnumwidth{\@tocpagenum{#7}}\par
    \nobreak
    \endgroup
  \fi}
\newtheorem{theorem}{Theorem}[section]
\newtheorem{lemma}[theorem]{Lemma}
\newtheorem{corollary}[theorem]{Corollary}
\newtheorem{proposition}[theorem]{Proposition}
\newtheorem{question}[theorem]{Question}
\theoremstyle{definition}
\newtheorem{defn}[theorem]{Definition}
\newtheorem{remark}[theorem]{Remark}
\newcommand{\mc}{\mathcal}
\newcommand{\mf}{\mathbf}
\newcommand{\mb}{\mathbb}
\newcommand{\wh}{\widehat}
\newcommand{\ud}{\,\mathrm{d}}
\newcommand{\id}{\mathrm{id}}
\DeclareMathOperator{\abph}{\mc{U}}
\DeclareMathOperator{\ab}{Z}
\DeclareMathOperator{\tran}{\Theta}
\DeclareMathOperator{\poly}{poly}
\DeclareMathOperator{\supp}{supp}
\DeclareMathOperator{\adj}{adj}
\DeclareMathOperator{\codim}{codim}
\DeclareMathOperator{\q}{c}
\DeclareMathOperator{\ns}{X}
\DeclareMathOperator{\nss}{Y}
\DeclareMathOperator{\co}{\circ\hspace{-0.02 cm}}
\DeclareMathOperator{\cu}{C}
\DeclareMathOperator{\cor}{Cor}
\DeclareMathOperator{\cs}{s}
\newcommand*{\sbr}[1]{\scalebox{0.8}{$(#1)$}}
\newcommand*{\db}[1]{\llbracket #1\rrbracket}
\DeclareMathOperator{\tIm}{Im}
\DeclareMathOperator{\tnss}{\tilde{\nss}}
\renewcommand*\env@matrix[1][*\c@MaxMatrixCols c]{%
  \hskip -\arraycolsep
  \let\@ifnextchar\new@ifnextchar
  \array{#1}}
\begin{document}
\vspace*{-0.5cm}

\title[On higher-order Fourier analysis in characteristic $p$]{On higher-order Fourier analysis in characteristic $p$}

\author{Pablo Candela}
\address{Universidad Aut\'onoma de Madrid and ICMAT\\ Madrid 28049\\ Spain}
\email{pablo.candela@uam.es}

\author{Diego Gonz\'alez-S\'anchez}
\address{MTA Alfr\'ed R\'enyi Institute of Mathematics\\ 
Budapest, Hungary, H-1053}
\email{diegogs@renyi.hu}

\author{Bal\'azs Szegedy}
\address{MTA Alfr\'ed R\'enyi Institute of Mathematics\\ 
Budapest, Hungary, H-1053}
\email{szegedyb@gmail.com}

\begin{abstract}
In this paper, the nilspace approach to higher-order Fourier analysis is developed in the setting of vector spaces over a prime field $\mb{F}_p$, with applications mainly in ergodic theory. A key requisite for this development is to identify a class of nilspaces adequate for this setting. We introduce such a class, whose members we call $p$-homogeneous nilspaces. One of our main results characterizes these objects in terms of a simple algebraic property. We then prove various further results on these nilspaces, leading to a structure theorem describing every finite $p$-homogeneous nilspace as the image, under a nilspace fibration, of a member of a simple family of filtered finite abelian $p$-groups. The applications include a description of the Host-Kra factors of ergodic $\mb{F}_p^\omega$-systems as $p$-homogeneous nilspace systems. This enables the analysis of these factors to be reduced to the study of such nilspace systems, with central questions on the factors thus becoming purely algebraic problems on finite nilspaces. We illustrate this approach by proving that for $k\leq p+1$ the $k$-th Host--Kra factor is an Abramov system of order $\leq k$, extending a result of Bergelson--Tao--Ziegler that holds for $k< p$. We illustrate the utility of $p$-homogeneous nilspaces also by showing that the above-mentioned structure theorem yields a new proof of the Tao--Ziegler inverse theorem for Gowers norms on $\mb{F}_p^n$.
\end{abstract}
\maketitle

\section{Introduction}
\noindent The theory of higher-order Fourier analysis, initiated by Gowers in his celebrated work on Szemer\'edi's theorem \cite{GSz}, has generated various fascinating developments in analysis and combinatorics in the last two decades. Many of these developments aim to understand the relation between the central objects in this theory, namely the uniformity norms (or Gowers norms), and structures involving nilpotent groups. This relation emerged early on, especially in the work \cite{HK-non-conv} of Host and Kra which introduced seminorms in ergodic theory analogous to the uniformity norms, and proved the Ergodic Structure Theorem, establishing a deep connection between these seminorms and nilmanifolds \cite[Theorem 10.1]{HK-non-conv} (see also \cite[Ch.\ 16]{HKbook}). This inspired further progress, notably in the work of Green and Tao \cite{GT08} and Green--Tao--Ziegler \cite{GTZ-U4} in arithmetic combinatorics, developing this connection between Gowers norms and nilmanifolds, leading to the proof by Green, Tao and Ziegler of the inverse theorem for Gowers norms on finite cyclic groups \cite{GTZ}. 

The search for further conceptual clarification of the above-mentioned connection also led to the discovery of interesting structures closely related to the uniformity norms, starting with the \emph{parallelepiped structures} introduced by Host and Kra \cite{HK-par}, leading to the concept of \emph{nilspaces} defined by Antol\'in Camarena and the third named author in \cite{CamSzeg}. 

\noindent Nilspace related topics have now grown into an active research area, including detailed treatments by the first named author \cite{Cand:Notes1,Cand:Notes2} and by Gutman, Manners and Varj\'u \cite{GMV1,GMV2,GMV3}, as well as further applications in arithmetic combinatorics, ergodic theory, probability theory, and topological dynamics \cite{CGSS,CScouplings,GGY,GMV1,GMV2,GMV3}.

Initial applications of nilspaces in higher-order Fourier analysis were obtained in \cite{SzegHigh}, where they were combined with analysis on ultraproducts to prove regularity and inverse theorems for the Gowers norms on various families of compact abelian groups. In \cite[end of Ch.\ 17]{HKbook}, Host and Kra suggested that the nilspace approach from \cite{SzegHigh} might be unified with the analysis of characteristic factors for uniformity seminorms from \cite{HK-non-conv}. A measure-theoretic framework enabling such a unification was introduced in \cite{CScouplings}, based on the notion of cubic couplings, inspired by the Host--Kra measures from \cite[\S 3]{HK-non-conv}. Applications of this framework included an extension of the Ergodic Structure Theorem to nilpotent group actions (see \cite[Theorem 5.12]{CScouplings} and \cite[Theorem 5.1]{CGSS}), and an extension of the inverse theorem to all compact abelian groups and also to nilmanifolds \cite{CSinverse}. 

In this paper we aim to demonstrate the utility of the above-mentioned framework in another principal setting for higher-order Fourier analysis, in which this approach had not been applied previously, namely the \emph{characteristic-$p$ setting}. Here, the uniformity norms are studied on vector spaces $\mb{F}_p^n$ over a field $\mb{F}_p$ of fixed prime order $p$, with dimension $n$ allowed to tend to infinity. This direction was fostered notably by Green \cite{GreenSurvey}, who promoted these vector spaces as useful models for various problems in arithmetic combinatorics that were originally posed in the integer setting, the latter setting being usually modeled by cyclic groups $\mb{Z}_N$ of large prime order $N$ allowed to tend to infinity. The usefulness of the vector space models relies mainly on the fact that they provide much richer algebraic structure than is available in the integer setting. The characteristic-$p$ setting has been very fruitful for higher-order Fourier analysis, with many interesting results in arithmetic combinatorics and in ergodic theory (for more background on this setting, see for instance the survey \cite{WolfSurvey}). Among these results, the present paper is related mainly to the work of Bergelson, Tao and Ziegler on ergodic actions of the (additive group of the) vector space $\mb{F}_p^\omega=\bigoplus_{i\in \mb{N}}\mb{F}_p$ \cite{BTZ, BTZ2}, and the related inverse theorems for Gowers norms proved by Tao and Ziegler in \cite{TZ-High,T&Z-Low}. Let us now describe the approach to these topics in this paper.
 
In the integer setting, a decisive conceptual step was to identify \emph{nilmanifolds} as adequate spaces with which to define basic harmonics (nilsequences) that could yield a useful inverse theorem for the Gowers norms. In characteristic $p$, the greater algebraic richness of this setting made it possible to obtain an inverse theorem with the corresponding harmonics being \emph{global phase polynomials}, easily definable directly on the initial spaces $\mb{F}_p^n$. Thus, inverse theorems in  characteristic $p$ have hitherto been obtained without a conceptual step similar to the above-mentioned one involving nilmanifolds. On the other hand, this has left open several questions that are relevant in order to clarify and strengthen the connections between the integer setting and the characteristic-$p$ setting. These questions also lead to new results in the latter setting in itself, and can be subsumed under the following initial and more general question.
\begin{question}\label{Q:main}
Which class of spaces analogous to compact nilmanifolds is adequate for higher-order Fourier analysis in characteristic $p$?
\end{question}
\noindent In this paper we show that the nilspace approach offers a useful answer to this question. More precisely, we identify and study the class of compact nilspaces that emerges when the above-mentioned framework from \cite{CScouplings,CSinverse} is applied in the characteristic-$p$ setting. We call these structures \emph{$p$-homogeneous nilspaces}. We show that these nilspaces yield a description of the characteristic factors for uniformity seminorms which is strong enough to give new proofs of central results in higher-order Fourier analysis in this setting, such as the inverse theorem for the Gowers norms from \cite{TZ-High,T&Z-Low}, and also new results concerning the Host--Kra factors of ergodic $\mb{F}_p^\omega$-systems.

To explain our results, first let us briefly recall the strategy used in \cite{CScouplings,CSinverse} to prove the inverse theorem for Gowers norms in the integer setting, as it is overall the same process that will lead to $p$-homogeneous nilspaces in characteristic $p$. To this end, let us recall that a nilspace is a set $\ns$ equipped with a sequence of cube sets $\cu^n(\ns)\subset \ns^{\{0,1\}^n}$, $n\geq 0$, whose elements are called the \emph{$n$-cubes} on $\ns$, satisfying three axioms, the most subtle of which is the \emph{completion axiom}, which states that any \emph{$n$-corner} on $\ns$ (roughly speaking, an $n$-cube missing one vertex) can always be completed to an $n$-cube. We say that $\ns$ is a \emph{$k$-step} nilspace if every $(k+1)$-corner on $\ns$ has a \emph{unique} completion. Instead of recalling these definitions in detail, it is more helpful intuitively at this point just to recall the standard example of how any abelian group $\ab$ can be viewed as a 1-step nilspace, denoted by $\mc{D}_1(\ab)$: for each $n$, the cube set $\cu^n(\mc{D}_1(\ab))$ consists of the \emph{standard $n$-cubes}, of the form $(x+v\sbr{1}h_1+\cdots+v\sbr{n}h_n)_{v\in\{0,1\}^n}\in \ab^{\{0,1\}^n}$, for any elements $x,h_i\in \ab$. Given nilspaces $\ns$, $\nss$, a \emph{nilspace morphism} from $\ns$ to $\nss$ is a cube-preserving map $\ns\to \nss$, and the set of all such morphisms is denoted by $\hom(\ns,\nss)$. A nilspace $\ns$ is \emph{compact} if the set $\ns$ is equipped with a compact second-countable Hausdorff topology which is compatible with the cubic structure in the sense that each cube set $\cu^n(\ns)$ is compact in the product topology on $\ns^{\{0,1\}^n}$. We refer to \cite{Cand:Notes1,Cand:Notes2} for more background on nilspaces.

The main result in \cite{CScouplings} is a structure theorem describing the characteristic factors, for a general type of uniformity seminorms, in terms of compact nilspaces \cite[Theorem 4.2]{CScouplings}. The strategy in \cite{CSinverse} is based on the fact that when this structure theorem is applied to the uniformity seminorms on ultraproducts of cyclic groups $\mb{Z}_N$ (for increasing primes $N$), the resulting characteristic factors are completely described by the class of compact nilspaces $\ns$ admitting morphisms $\mc{D}_1(\mb{Z}_N)\to \ns$ that are increasingly \emph{balanced} as $N$ grows (the notion of balanced morphism, recalled in detail in Section \ref{sec:bridge}, involves a quantitative form of equidistribution which also requires the $n$-cubic power of the morphism to be equidistributed in $\cu^n(\ns)$ for large $n$). A key step in this proof of the inverse theorem on $\mb{Z}_N$ is then to show that the relevant nilspaces arising this way are all connected nilmanifolds, more precisely, \emph{toral nilspaces} \cite[Theorem 6.1]{CSinverse}.

As signalled by Question \ref{Q:main}, previously there was no class of compact  nilspaces clearly identified as playing a role in characteristic $p$ similar  to the role of nilmanifolds in the integer setting. However, the above strategy indicates such a class in a natural way, namely, the class of compact nilspaces $\ns$ with the property of admitting increasingly balanced morphisms from (the additive group of) $\mb{F}_p^n$ into $\ns$ as $n$ grows. As we shall see, this property yields one of various equivalent ways of defining $p$-homogeneous compact nilspaces. Moreover, one of the main results in this paper shows that these nilspaces can also be identified by a much simpler and purely algebraic property. Because of its simplicity, we use this property to define $p$-homogeneous nilspaces, as follows. 
\begin{defn}\label{def:p-hom-intro}
Let $\ns$ be a nilspace and let $p$ be a prime. We say that $\ns$ is a \emph{$p$-homogeneous nilspace} if for every positive integer $n$, for every $f\in \hom(\mc{D}_1(\mb{Z}^n),\ns)$ the restriction $f|_{[0,p-1]^n}$ is in $\hom(\mc{D}_1(\mb{Z}_p^n),\ns)$. \footnote{Here $\mb{Z}_p^n$ is identified with $[0,p-1]^n$ equipped with addition mod $p$, the usual way.}
\end{defn}
\noindent To state the announced result linking Definition \ref{def:p-hom-intro} to balanced morphisms from $\mb{F}_p^n$, we need the notion of a \emph{finite-rank} nilspace. This involves the fact that a $k$-step compact nilspace $\ns$ can always be decomposed as a $k$-fold  \emph{compact abelian bundle} \cite[Definition 2.1.8]{Cand:Notes2}, where for each $i\in [k]$ the nilspace factor $\ns_i$ is an extension (in the sense of \cite[Definition 3.3.13]{Cand:Notes1}) of $\ns_{i-1}$ by a compact abelian group, called the $i$-th \emph{structure group} of $\ns$ and denoted by $\ab_i$ or $\ab_i(\ns)$; see \cite[Proposition 2.1.9]{Cand:Notes2}. If every structure group has finite rank then $\ns$ is called a  \emph{compact finite-rank nilspace}, which we abbreviate to ``$\textsc{cfr}$ nilspace". The \emph{inverse limit theorem} for nilspaces \cite{CamSzeg} states that every compact nilspace can be decomposed as an inverse limit of \textsc{cfr} nilspaces (see also \cite[Theorem 2.7.3]{Cand:Notes2}). This often enables the study of a  class of compact nilspaces to be reduced in a very useful way to the study of the \textsc{cfr} members of the class. We can now state the announced result.
\begin{theorem}\label{thm:main1-intro}
Let $\ns$ be a $k$-step \textsc{cfr} nilspace, let $d$ be a metric\footnote{The metric underlies the notion of balance for morphisms, see Remark \ref{rem:metconv}.} generating the topology on $\ns$, and let $p$ be a prime. There exists $b=b(\ns,d,p)>0$ such that the following holds: if for some $D$ there is a $b$-balanced morphism $\varphi:\mc{D}_1(\mb{Z}_p^D)\to \ns$, then $\ns$ is $p$-homogeneous.
\end{theorem}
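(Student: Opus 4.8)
The plan is to induct on the step $k$ of $\ns$, using its canonical decomposition as a $k$-fold compact abelian bundle with structure groups $\ab_1,\dots,\ab_k$ and factor maps $\ns=\ns_k\to\ns_{k-1}\to\cdots\to\ns_0=\{\ast\}$ (see \cite[Prop.\ 2.1.9]{Cand:Notes2}). The case $k=0$ is vacuous. For the inductive step write $\pi\colon\ns\to\ns_{k-1}$ for the top factor map and $\ab=\ab_k$ for its fibre group. The constant $b=b(\ns,d,p)$ will be fixed small at the very end, after $\ns$ and $d$ are given; since $\ns$ is a fixed $k$-step \textsc{cfr} nilspace there will be only finitely many smallness requirements on $b$.

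At the inductive step, balance is used twice. First, since $\pi$ is a continuous nilspace factor map, hence compatible with the Haar and cube measures, the morphism $\bar\varphi:=\pi\circ\varphi\colon\mc{D}_1(\mb{Z}_p^D)\to\ns_{k-1}$ is $b'$-balanced with $b'=b'(\ns,d,b)\to 0$ as $b\to 0$ (this is the behaviour of balance under factor maps, cf.\ Section~\ref{sec:bridge}); choosing $b$ so that $b'\le b(\ns_{k-1},d',p)$ for a quotient metric $d'$ on the \textsc{cfr} nilspace $\ns_{k-1}$, the inductive hypothesis gives that $\ns_{k-1}$ is $p$-homogeneous. Second, I claim $\ab_k$ is a \emph{finite $\mb{F}_p$-vector space}, so that $\ns$ is finite. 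If not, then because $\ab_k$ is a finite-rank compact abelian group there is a continuous surjection $\chi$ from $\ab_k$ onto $\mb{T}$, or onto $\mb{Z}_{p^2}$, or onto $\mb{Z}_q$ for some prime $q\neq p$; pushing the extension $\pi$ out along $\chi$ produces a $k$-step nilspace $\ns'$, an extension of $\ns_{k-1}$ with top structure group $\chi(\ab_k)$, and a factor map $\ns\to\ns'$. Composing with $\varphi$ gives a morphism $\varphi'\colon\mc{D}_1(\mb{Z}_p^D)\to\ns'$, still $b''$-balanced with $b''\to 0$ as $b\to0$. Relative to a continuous section of $\ns'\to\ns_{k-1}$, the top-level component of $\varphi'$, and of its cube maps $\varphi'^{(m)}$, is governed by polynomial maps of degree $\le k$ out of the $p$-torsion group $\mb{Z}_p^D$ and into $\chi(\ab_k)$ (resp.\ into the cube cocycle group $\cu^m(\mc{D}_k(\chi(\ab_k)))$); such maps have image confined to a proper translate of a subgroup, so they cannot equidistribute, contradicting the balance of $\varphi'$ once $b$ is small. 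Hence every $\ab_i$ is a finite $\mb{F}_p$-vector space and $\ns$ is finite.

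It remains to deduce $p$-homogeneity of $\ns$ from the facts that $\ns$ is finite, $\ns_{k-1}$ is $p$-homogeneous and $\ab_k$ is an $\mb{F}_p$-vector space. Fix $f\in\hom(\mc{D}_1(\mb{Z}^n),\ns)$. By the inductive hypothesis applied to $\pi\circ f$, the restriction $(\pi\circ f)|_{[0,p-1]^n}$ lies in $\hom(\mc{D}_1(\mb{Z}_p^n),\ns_{k-1})$; so for every standard $\mb{Z}_p^n$-cube $c$ the configuration $\pi\circ f\circ c$ is a cube of $\ns_{k-1}$, and it remains to see that $f\circ c$ is the \emph{correct} lift of it to $\ns$, i.e.\ that $f\circ c\in\cu^m(\ns)$. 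Since the lifts of $\pi\circ f\circ c$ into $\cu^m(\ns)$ form a torsor over $\cu^m(\mc{D}_k(\ab_k))$, this reduces to showing that the $\ab_k$-valued cocycle data carried by $f$ --- which, $\ns$ being a finite nilspace all of whose structure groups are $\mb{F}_p$-vector spaces, may be presented by $\mb{F}_p$-polynomials --- is preserved, modulo $\cu^m(\mc{D}_k(\ab_k))$, when its arguments are reduced mod $p$; and this is the elementary fact that a polynomial identity over $\mb{F}_p$ survives reduction mod $p$. Thus $f|_{[0,p-1]^n}\in\hom(\mc{D}_1(\mb{Z}_p^n),\ns)$, completing the induction. (Equivalently, once $\ns$ is known to be a finite nilspace with $\mb{F}_p$-vector space structure groups one may invoke the algebraic characterization of $p$-homogeneous nilspaces.)

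The step I expect to be the main obstacle is the second one: making precise that no balanced morphism out of the $p$-torsion group $\mb{Z}_p^D$ can exist into a nilspace whose top structure group has a $\mb{Z}_{p^2}$-, $\mb{Z}_q$- or $\mb{T}$-component. Here one must quantify how far polynomial maps of bounded degree out of $\mb{Z}_p^D$, \emph{together with their cube powers}, are from equidistributing in such a target, and extract from the definition of balance exactly the conditional cube-equidistribution needed; this is where the special arithmetic of the field $\mb{F}_p$ enters decisively. A secondary technical point is the passage to a polynomial presentation of the structure cocycles of $\ns$. Finally, the admissible $b=b(\ns,d,p)>0$ is obtained by taking it small enough to beat all of the (finitely many, one per level) equidistribution thresholds above; this is legitimate precisely because the statement allows $b$ to depend on $\ns$, $d$ and $p$.
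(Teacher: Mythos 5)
Your overall architecture (induction on the step, using balance of $\pi_{k-1}\co\varphi$ to handle $\ns_{k-1}$, and using balance again to pin down the structure groups) matches the paper's up to that point, but the final step contains a genuine gap, and it is located exactly where the paper has to do its hardest work.

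The gap is the claim that once $\ns$ is finite, $\ns_{k-1}$ is $p$-homogeneous and every $\ab_i(\ns)$ is an elementary abelian $p$-group, the $p$-homogeneity of $\ns$ follows by ``reduction mod $p$ of a polynomial identity over $\mb{F}_p$''. This implication is false. Take $p\geq 3$ and the group nilspace on $G=\mb{Z}_{p^2}$ with filtration $G_0=G_1=\mb{Z}_{p^2}\geq G_2=p\mb{Z}_{p^2}\geq G_3=\{0\}$: this is a finite $2$-step nilspace, both structure groups are $\cong\mb{Z}_p$, and its $1$-step factor $\mc{D}_1(\mb{Z}_p)$ is $p$-homogeneous, yet the filtration is not $p$-homogeneous (for $g\in G_1$ one needs $pg\in G_p=\{0\}$, which fails), so by Theorem \ref{thm:intro-1} the nilspace is not $p$-homogeneous. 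The point is that the extension data of $\ns$ over $\ns_{k-1}$ need not be presentable by $\mb{F}_p$-polynomials compatibly with reduction mod $p$; knowing that $(\pi_{k-1}\co f)|_{[0,p-1]^n}$ is a morphism into $\ns_{k-1}$ does not by itself produce \emph{any} morphism $\mb{Z}_p^n\to\ns$ lifting it, and without such a reference lift the ``torsor over $\cu^m(\mc{D}_k(\ab_k))$'' comparison has nothing to compare $f\co c$ against. This is why the paper isolates the lifting property \eqref{eq:dimreduc} as an additional hypothesis in Proposition \ref{prop:dimreduc}, and why the balanced morphism $\varphi$ must be used a third time, at the top level: Proposition \ref{prop:main} shows that every $f\in\hom(\mc{D}_1(\mb{Z}_p^M),\ns_{k-1})$ factors as $\varphi'\co g$ with $g$ affine, so that $\varphi\co g$ supplies the missing lift. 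That proposition is the technical core of the whole proof (it needs Lemma \ref{lem:XcharUknorm}, the decomposition of Lemma \ref{lem:Wdecomp}, and the generalized von Neumann inequality of Theorem \ref{thm:GGVN}), and your proposal omits it entirely while labelling the step it replaces as elementary.

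A secondary remark on your structure-group step: the paper does not argue via characters and non-equidistribution of polynomial images (which, as you note, would require controlling the ``top-level component'' of $\varphi$ relative to a section --- a component that is not a polynomial in any obvious sense). Instead it uses the no-small-subgroups property of the Lie group $\ab_k$ together with a concatenation argument: balance produces a cube $\varphi\co\q^*$ close to the corner-concentrated cube $\q_z$, this is extended both to a morphism on $\mb{Z}^k$ and to a $p$-periodic morphism on $\mb{Z}_p^k$, and comparing the two chains of $p$ concatenated cubes forces $pz$ into an arbitrarily small neighbourhood of $0$, whence $p\cdot\ab_k=\{0\}$. Your route might be salvageable, but as written it is a sketch of a different and harder-to-quantify argument.
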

\noindent The proof of Theorem \ref{thm:main1-intro} occupies Section \ref{sec:bridge} and involves several steps, using in particular a recent refinement of the Generalized Von Neumann Theorem \cite{CGSS-seq-CS-Eurocomb,CGSS-seq-CS} (see also the recent work of Manners \cite{MannersTC}).

With Theorem \ref{thm:main1-intro}, the theory of $p$-homogeneous nilspaces can be developed using the property in Definition \ref{def:p-hom-intro}, a property that has rather strong consequences, which we begin to develop in Section \ref{sec:p-hom}. The simplest examples of $p$-homogeneous nilspaces are the 1-step nilspaces based on finite elementary abelian $p$-groups, i.e.\ the nilspaces $\mc{D}_1(\mb{Z}_p^n)$. In Section \ref{sec:p-hom}, the property in Definition \ref{def:p-hom-intro} is used in particular to identify a larger family of examples of $p$-homogeneous nilspaces, within the standard class of so-called \emph{group nilspaces}. Recall that a $k$-step group nilspace is constructed by taking any group $G$ equipped with a filtration $G_\bullet$ of finite degree $k$, and equipping $G$ with the associated \emph{Host--Kra cube sets} $\cu^n(G_\bullet)$, $n\geq 0$; we call this nilspace the \emph{group nilspace} associated with the filtered group $(G,G_\bullet)$ (see for instance \cite[Ch.\ 6]{HKbook} for more background on Host--Kra cubes, and \cite[\S 2.2.1]{Cand:Notes1} for the group nilspace construction).

We say that a filtration $G_\bullet=(G_i)_{i\geq 0}$ is \emph{$p$-homogeneous} if for all $i$, for all $g\in G_i$ we have $g^p\in G_{i+p-1}$.  We obtain the following description of $p$-homogeneous group nilspaces.
\begin{theorem}\label{thm:intro-1}
Let $p$ be a prime and let $(G,G_{\bullet})$ be a filtered group. The associated group nilspace is a $p$-homogeneous nilspace if and only if $G_\bullet$ is a $p$-homogeneous filtration.
\end{theorem}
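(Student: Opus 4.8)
The plan is to reduce both implications to statements about polynomial maps. Recall the standard identification $\hom(\mc{D}_1(A),\ns(G,G_\bullet))=\poly(A,G_\bullet)$, the group of polynomial maps $\phi\colon A\to G$ adapted to $G_\bullet$ (those with $\partial_{h_1}\cdots\partial_{h_j}\phi\in G_j$ for all $j\geq 0$ and all $h_i\in A$), applied with $A=\mb{Z}^n$ and $A=\mb{Z}_p^n$ carrying their degree-$1$ filtrations. Since the reduction map $\pi\colon\mb{Z}^n\to\mb{Z}_p^n$ is a morphism $\mc{D}_1(\mb{Z}^n)\to\mc{D}_1(\mb{Z}_p^n)$ and discrete derivatives transport along it, a map $\bar\phi\colon\mb{Z}_p^n\to G$ lies in $\poly(\mb{Z}_p^n,G_\bullet)$ if and only if $\bar\phi\circ\pi\in\poly(\mb{Z}^n,G_\bullet)$; consequently, for $f\in\poly(\mb{Z}^n,G_\bullet)$ the restriction $f|_{[0,p-1]^n}$ lies in $\hom(\mc{D}_1(\mb{Z}_p^n),\ns(G,G_\bullet))$ if and only if the $p$-periodisation $f^{\langle p\rangle}$ of $f|_{[0,p-1]^n}$ — i.e.\ $f^{\langle p\rangle}(x)=f(x\bmod p)$ with coordinates reduced to $\{0,\dots,p-1\}$ — lies in $\poly(\mb{Z}^n,G_\bullet)$. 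Thus $\ns(G,G_\bullet)$ is $p$-homogeneous precisely when $f^{\langle p\rangle}\in\poly(\mb{Z}^n,G_\bullet)$ for every $n$ and every $f\in\poly(\mb{Z}^n,G_\bullet)$. (I use the usual convention $G_0=G_1$, so the filtration condition for $i=0$ follows from that for $i=1$.)

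For the ``only if'' direction, fix $i\geq 1$ and $g\in G_i$ and apply this to the test map $f\colon\mb{Z}^i\to G$, $f(v)=g^{v_1v_2\cdots v_i}$: since $g\in G_i$, the iterated discrete derivatives of $v\mapsto v_1\cdots v_i$ (monomials of degree $\leq i-t$ after $t$ derivatives, and $0$ for $t>i$) give $f\in\poly(\mb{Z}^i,G_\bullet)$, so by hypothesis $\bar f:=f|_{[0,p-1]^i}$ lies in $\poly(\mb{Z}_p^i,G_\bullet)$, whence every order-$t$ iterated discrete derivative of $\bar f$ (for addition mod $p$) takes values in $G_t$. I would then evaluate the order-$(i+p-1)$ derivative obtained by differentiating once in each of the directions $e_2,\dots,e_i$ and then $p$ times in the direction $e_1$: restricting to the slice where the auxiliary coordinates vanish, the first $i-1$ derivatives collapse $\bar f$ to the map $\bar v\mapsto g^{\bar v_1}$, and an elementary computation of the iterated $p$-th discrete derivative of $t\mapsto g^{t}$ on $\mb{Z}_p$ — using $\partial^{(p)}_{e_1}(g^{v_1})=g$ for $v_1\leq p-2$, $=g^{1-p}$ at $v_1=p-1$, together with $\binom{p-1}{\ell}\equiv(-1)^\ell\bmod p$ — produces the value $g^{\pm p}$. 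Hence $g^{\pm p}\in G_{i+p-1}$, so $g^p\in G_{i+p-1}$. (All maps here take values in the abelian subgroup $\langle g\rangle$, so there is no non-commutativity to worry about.)

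For the ``if'' direction, assume $G_\bullet$ is $p$-homogeneous, let $f\in\poly(\mb{Z}^n,G_\bullet)$, and use the Taylor (Leibman) expansion $f=\prod_{\mathbf{m}}g_{\mathbf{m}}^{\binom{\,\cdot\,}{\mathbf{m}}}$ over multi-indices $\mathbf{m}$ with $|\mathbf{m}|\leq\deg G_\bullet$ (in a fixed order), where $g_{\mathbf{m}}\in G_{|\mathbf{m}|}$ and $\binom{x}{\mathbf{m}}=\prod_j\binom{x_j}{m_j}$. The factors with some $m_j\geq p$ restrict to the identity on $[0,p-1]^n$, and since $\poly(\mb{Z}^n,G_\bullet)$ is a group under pointwise multiplication, it suffices to show that each periodised monomial $x\mapsto g^{\binom{x\bmod p}{\mathbf{m}}}$ (with $g\in G_{|\mathbf{m}|}$ and all $m_j\leq p-1$) lies in $\poly(\mb{Z}^n,G_\bullet)$. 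This is an instance of the following key lemma: \emph{if $s,\mu\geq 0$, $g\in G_{\mu+s}$, and $\gamma$ is a $p$-periodic $\mb{Z}$-valued function on $\mb{Z}^n$ whose restriction to $[0,p-1]^n$ is a $\mb{Z}$-combination of monomials $\binom{x}{\mathbf{l}}$ with $|\mathbf{l}|\leq\mu$, then $x\mapsto g^{\gamma(x)}$ lies in $\poly(\mb{Z}^n,G_{\bullet+s})$.} I would prove this by a double induction: an outer, downward induction on the filtration level $\mu+s$ (trivial once $\mu+s>\deg G_\bullet$, since then $g=1_G$), and an inner induction on $\mu$ (base case $\mu=0$: constant maps). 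For the inner step, using ``$P\in\poly(A,G_{\bullet+s})\iff\partial_hP\in\poly(A,G_{\bullet+s+1})$ for all $h$'', one must control $\partial_h(g^\gamma)=g^{\gamma(\,\cdot+h)-\gamma(\,\cdot)}$; applying Vandermonde's identity coordinate by coordinate, one writes $\gamma(\,\cdot+h)-\gamma(\,\cdot)=\gamma'+p\gamma''$ with $\gamma'$ a $p$-periodic function of degree $\leq\mu-1$ on $[0,p-1]^n$ (the ``no-carry'' part) and $\gamma''$ a $p$-periodic function (the ``carry corrections'', which carry the explicit factor $p$ precisely because $p\mid\binom{p}{\iota}$ for $1\leq\iota\leq p-1$). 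Then $\partial_h(g^\gamma)=g^{\gamma'}\cdot(g^p)^{\gamma''}$: the first factor is handled by the inner hypothesis, and the second by the outer hypothesis together with the relation $g\in G_{\mu+s}\Rightarrow g^p\in G_{\mu+s+p-1}$ ($p$-homogeneity) — the degrees and levels matching up exactly because the jump of $p-1$ in the filtration compensates the loss of at most $p-1$ in degree incurred by the carry term.

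I expect the key lemma to be where the real work lies: the underlying idea is clean — a carry in the periodisation always contributes a factor of $p$, hence a factor of $g^p$, which $p$-homogeneity places exactly $p-1$ levels higher in the filtration, matching the degree drop — but pinning down the Vandermonde carry decomposition with its degree bookkeeping, and checking that the double induction is well-founded with these parameters, takes some care. The ``only if'' direction is more computational but routine once the test map has been chosen.
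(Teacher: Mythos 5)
Your proposal is correct, and both halves take a route genuinely different from the paper's. The paper proves the theorem as part of a three-way equivalence (Theorem \ref{thm:strong-p-hom-group}): the direction ``nilspace $p$-homogeneous $\Rightarrow$ filtration $p$-homogeneous'' is deduced from Lemma \ref{lem:p-hom-group}, which works for coset nilspaces $G/\Gamma$ under the \emph{weaker} hypothesis of w-$p$-homogeneity and therefore has to run an induction on $j\in[0,p-1]$, repeatedly ``correcting lines'' of an unknown extension by Taylor monomials until the relation $h_{(p,1^{m-1})}h_{(p-1,1^{m-1})}^{\binom{p}{p-1}}\cdots h_{(2,1^{m-1})}^{\binom{p}{2}}\gamma g^p\in\Gamma$ can be read off. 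Because you assume full $p$-homogeneity, you know the restriction of the explicit polynomial $v\mapsto g^{v_1\cdots v_i}$ is itself a morphism on $\mb{Z}_p^i$, so you can evaluate one iterated derivative of order $i+p-1$ and obtain $g^{\pm p}\in G_{i+p-1}$ directly; this is a legitimate and appreciably shorter argument for the theorem as stated (though it does not recover the paper's stronger coset/w-$p$-homogeneous version). In the converse direction the paper kills $f|_{[0,p-1]^n}$ point by point in colex order by multiplying with $p$-periodic morphisms built from the functions $m_i^{(p)}$, whose morphism property rests on the ``circular vector'' computation of Appendix \ref{app:aux-p-hom}; you instead periodise the Taylor monomials of $f$ outright and prove a single lemma about maps $x\mapsto g^{\gamma(x)}$ by a double induction. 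The two verifications are cousins --- both ultimately trade the divisibility $p\mid\binom{p}{k}$ ($1\le k\le p-1$) against the filtration jump $g^p\in G_{\cdot+p-1}$ --- but your packaging is cleaner in that it avoids introducing the auxiliary groups $H_i^{(p)}$ and the order-sensitive correction scheme, at the price of the carry bookkeeping you rightly identify as the crux.

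Two small points to nail down when you write the key lemma in full. First, for a general shift $h$ several coordinates of $x$ can carry simultaneously, so the difference $\gamma(\cdot+h)-\gamma(\cdot)$ is not literally $\gamma'+p\gamma''$ with $\deg\gamma''\le\mu+p-2$: a term with $|S|$ carrying coordinates acquires a factor $p^{|S|}$ and degree up to $\mu+|S|(p-2)$. Either track powers of $p$ (each extra factor buys another $p-1$ levels, which still dominates the extra $p-2$ in degree), or --- simpler --- use the standard reduction to derivatives along the generators $e_j$ (via the cocycle identity and the group/translation-invariance of $\poly$), in which case exactly one coordinate can carry and your stated decomposition holds verbatim. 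Second, record explicitly that $\tilde B_j=0$ when $l_j=0$, so the carrying coordinates always contribute $l_j\ge 1$ to $|\mathbf l|$; this is what makes the inequality $\deg\gamma''\le(\mu-l_j)+(p-1)\le\mu+p-2$ come out right. With these in place the double induction is well founded and the argument closes.
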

\noindent In particular, since morphisms between group nilspaces are the same thing as polynomial maps between the corresponding filtered groups (see e.g.\ \cite[Section \S 2.2.2]{Cand:Notes1}), Theorem \ref{thm:intro-1} implies that given any filtered group $(G,G_{\bullet})$, the filtration $G_\bullet$ is $p$-homogeneous if and only if for every polynomial $f\in \poly(\mb{Z}^n,G_{\bullet})$ the restriction $f|_{[0,p-1]^n}$ yields a polynomial map in $\poly(\mb{Z}_p^n,G_{\bullet})$. Theorem \ref{thm:intro-1} also implies in a simple way that for every $p$-homogeneous $k$-step nilspace defined on a finite cyclic group, the group must in fact be isomorphic to a subgroup of $\mb{Z}_{p^r}$ for $r=\lfloor \frac{k-1}{p-1}\rfloor+1$; see Proposition \ref{prop:cyclic-p-hom}. These groups $\mb{Z}_{p^r}$ underlie the \emph{non-classical polynomials} of degree $k$ on $\mb{F}_p^n$, identified by Tao and Ziegler in \cite{T&Z-Low} as adequate harmonics for an inverse theorem in characteristic $p$ that is valid even in the so-called \emph{low-characteristic setting}, i.e.\ for $p\leq k$. In the present approach, these cyclic groups also play a key role, but rather as basic objects used to describe more general $p$-homogeneous nilspaces. Using these general descriptions (detailed below), the inverse theorem can be deduced relatively easily, as explained at the end of this introduction.

Concerning more general $p$-homogeneous nilspaces (not necessarily group nilspaces), our main results in Section \ref{sec:p-hom} include the following proposition. In particular, this indicates that $p$-homogeneous nilspaces are natural generalizations of elementary abelian $p$-groups.

\begin{proposition}\label{prop:p-hom-str-gps-intro}
Let $\ns$ be a $k$-step $p$-homogeneous nilspace. Then for every $i\in [k]$, the structure group $\ab_i(\ns)$ is an elementary abelian $p$-group. In particular, a $p$-homogeneous nilspace is \textsc{cfr} if and only if it is a finite nilspace.\footnote{We say that a nilspace is \emph{finite} if its underlying set is finite.}
\end{proposition}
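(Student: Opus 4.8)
The plan is to reduce the structure-group claim to the case of the \emph{top} structure group, and then to combine the $\ab_k$-bundle structure of a $k$-step nilspace with Theorem~\ref{thm:intro-1}. I would first record two closure properties of the class of $p$-homogeneous nilspaces. Any sub-nilspace $\ns'$ of a $p$-homogeneous nilspace $\ns$ is $p$-homogeneous: a morphism $f\in\hom(\mc{D}_1(\mb{Z}^n),\ns')$ is also a morphism into $\ns$, so $f|_{[0,p-1]^n}\in\hom(\mc{D}_1(\mb{Z}_p^n),\ns)$, and since its image lies in $\ns'$ and the cubes of $\ns'$ are induced from $\ns$, this restriction is a morphism into $\ns'$. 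Likewise any factor $\ns_i$ of $\ns$ is $p$-homogeneous: given $g\in\hom(\mc{D}_1(\mb{Z}^n),\ns_i)$, using that $\mc{D}_1(\mb{Z}^n)$ is a finite-rank free nilspace (hence projective) one lifts $g$ along the factor map $\pi_i\colon\ns\to\ns_i$ (a fibration) to some $\tilde g\in\hom(\mc{D}_1(\mb{Z}^n),\ns)$ with $\pi_i\circ\tilde g=g$; then $\tilde g|_{[0,p-1]^n}\in\hom(\mc{D}_1(\mb{Z}_p^n),\ns)$, and post-composing with the morphism $\pi_i$ shows $g|_{[0,p-1]^n}\in\hom(\mc{D}_1(\mb{Z}_p^n),\ns_i)$. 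Since $\ab_i(\ns)$ is by definition the top structure group of the ($p$-homogeneous) $i$-step nilspace $\ns_i$, it suffices to prove that the top structure group $\ab_k$ of an arbitrary $k$-step $p$-homogeneous nilspace $\ns$ is an elementary abelian $p$-group.

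For this, fix a point $x\in\ns$ and let $F$ be the fibre through $x$ of the top factor map $\ns=\ns_k\to\ns_{k-1}$, equipped with the cube structure induced from $\ns$. The $\ab_k$-bundle structure of $\ns$ over $\ns_{k-1}$ (the notion of extension of nilspaces by an abelian group, \cite[Definition~3.3.13]{Cand:Notes1}) exhibits $F$ as a sub-nilspace of $\ns$ which, with origin $x$, is isomorphic to $\mc{D}_k(\ab_k)$; by the previous paragraph $\mc{D}_k(\ab_k)$ is therefore $p$-homogeneous. Now $\mc{D}_k(A)$ is the group nilspace associated with $A$ carrying the maximal degree-$k$ filtration $A_\bullet$, with $A_i=A$ for $0\le i\le k$ and $A_i=0$ for $i>k$; so by Theorem~\ref{thm:intro-1} it is $p$-homogeneous if and only if $A_\bullet$ is a $p$-homogeneous filtration. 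Reading this last condition at level $i=k$: every $g\in A_k=A$ must satisfy $g^p\in A_{k+p-1}$, and since $p\ge 2$ we have $k+p-1\ge k+1$, hence $A_{k+p-1}=0$ and $g^p=e$. Thus $A=\ab_k$ has exponent dividing $p$, as desired. The ``in particular'' now follows: if $\ns$ is \textsc{cfr} then every $\ab_i(\ns)$ is an elementary abelian $p$-group of finite rank, hence finite, and since $\ns$ is obtained from the finitely many finite groups $\ab_1(\ns),\dots,\ab_k(\ns)$ by successive bundle extensions it is itself finite (of cardinality $\prod_{i=1}^k|\ab_i(\ns)|$); the converse is trivial.

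The steps needing the most care are the two standard nilspace inputs invoked above: the projectivity of finite-rank free nilspaces, used to lift morphisms from $\mc{D}_1(\mb{Z}^n)$ along fibrations, and the identification of the fibre of the top factor map with $\mc{D}_k(\ab_k)$ as a sub-nilspace; once these are granted the reduction to Theorem~\ref{thm:intro-1} is immediate. A more hands-on alternative for the key case, which avoids the fibre identification, would be: realise a given $a\in\ab_k$ as the coefficient of the multilinear monomial $t_1\cdots t_k$ of some $f\in\hom(\mc{D}_1(\mb{Z}^k),\ns)$, apply $p$-homogeneity to get $f|_{[0,p-1]^k}\in\hom(\mc{D}_1(\mb{Z}_p^k),\ns)$, and evaluate this restriction on a cube of $\mc{D}_1(\mb{Z}_p^k)$ whose naive lift to $\mb{Z}^k$ realises a nonzero multiple of $p$ in that multilinear coefficient; the cube identity this forces in $\ns$ then gives $p\cdot a=0$. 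The cost of that route is the ``wrap-around'' bookkeeping with carries carried out in a general (not necessarily group) nilspace, together with checking that every $a\in\ab_k$ does arise as such a top coefficient.
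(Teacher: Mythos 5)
Your argument is correct, but it takes a genuinely different route from the paper's. The paper proves this statement (as Proposition \ref{prop:strgps}) under the weaker hypothesis of w-$p$-homogeneity, by exactly the ``hands-on alternative'' you sketch at the end: starting from $g(v)=x+v\sbr{1}\cdots v\sbr{k}z$, it builds two chains of $p$ adjacent $k$-cubes (one from $g$ on $\mb{Z}^k$, one from a $p$-periodic morphism on $\mb{Z}_p^k$ agreeing with $g$ on $\db{k}$), uses transitivity of the adjacency relation and concatenation to assemble a $(k+1)$-cube, and reads off $pz=0$ from the vanishing of its Gray-code alternating sum; this combinatorial core is shared with Proposition \ref{prop:b-bal-p-hom-finite}. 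Your main route instead reduces to the top structure group (factors are $p$-homogeneous by lifting $\mb{Z}^n$-morphisms through the fibration $\pi_i$, which is exactly Corollary \ref{cor:liftthrufib} as used in Lemma \ref{lem:fibs-preserve-phoms}), identifies the fibre of $\pi_{k-1}$ with $\mc{D}_k(\ab_k)$ via the bundle axioms, and then invokes Theorem \ref{thm:intro-1} for the maximal degree-$k$ filtration on $\ab_k$. This is not circular: the direction of Theorem \ref{thm:strong-p-hom-group} you need, (i)$\Rightarrow$(iii), is Lemma \ref{lem:p-hom-group}, whose proof is independent of the structure-group statement. What your route buys is brevity, given Theorem \ref{thm:intro-1} as a black box; what it costs is that it uses the full strength of Definition \ref{def:p-hom-intro} and the rather heavy induction inside Lemma \ref{lem:p-hom-group}, whereas the paper's direct argument needs only w-$p$-homogeneity --- a genuine gain, since the w-version of the proposition is used later (e.g.\ in Proposition \ref{prop:cyclic-p-hom} and Proposition \ref{prop:lifting-equiv}). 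One small caveat: your blanket claim that any sub-nilspace of a $p$-homogeneous nilspace is $p$-homogeneous should be read as a statement about subsets carrying the induced cube structure that happen to be nilspaces; for the fibre $\pi_{k-1}^{-1}(\pi_{k-1}(x))\cong\mc{D}_k(\ab_k)$ this is precisely what the extension axioms guarantee, so your application is sound.
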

\noindent When the property of structure groups in this proposition is combined with a certain lifting property for morphisms from elementary abelian $p$-groups into $\ns$, we obtain a useful \emph{sufficient} condition for $\ns$ to be $p$-homogeneous; see Proposition \ref{prop:lifting-equiv}. 

In Section \ref{sec:gen-set}, we prove a structure theorem for $p$-homogeneous nilspaces, which is also a key ingredient in our applications. The theorem describes general finite $p$-homogeneous nilspaces as images, under \emph{nilspace fibrations}, of members of a much simpler class of $p$-homogeneous nilspaces, defined as follows. (To recall the notion of nilspace fibrations, also known by the original term \emph{fiber surjective morphisms}, see \cite[Definition 7.1]{GMV1}, \cite[Definition 3.3.7]{Cand:Notes1}; essentially, the role of fibrations for compact nilspaces generalizes the role of continuous surjective homomorphisms for compact abelian groups.)
\begin{defn}\label{def:bblocks-intro}
Let $p$ be a prime, let $k,\ell\in\mb{N}$ with $k\geq \ell$, and let $r=r(k,\ell,p):=\lfloor\frac{k-\ell}{p-1}\rfloor+1$. We define $\abph_{k,\ell}$ to be the $k$-step $p$-homogeneous group nilspace\footnote{When the prime $p$ needs to be specified we will write $\abph^{(p)}_{k,\ell}$, but usually we omit this superscript.} consisting of the cyclic group $G=\mb{Z}_{p^r}$ equipped with the $p$-homogeneous degree-$k$ filtration $(G_i)_{i\geq 0}$ where $G_i = \mb{Z}_{p^r}$ for $i\in [0,\ell]$ and $G_i=p^{\lfloor\frac{i-\ell-1}{p-1}\rfloor+1}\mb{Z}_{p^r}$ for $i\ge \ell+1$, that is, the filtration
\[\begin{array}{cccccccccccccccc}
G_1 &   & G_\ell & & G_{\ell+1} & & G_{\ell+p-1} & & G_{\ell+p}\\
\parallel &  & \parallel & & \parallel &  & \parallel & &  \parallel\\
\mb{Z}_{p^r}& = \cdots = & \mb{Z}_{p^r} & \ge & p\mb{Z}_{p^r} & =  \cdots  = & p\mb{Z}_{p^r} &\geq & p^2\mb{Z}_{p^r}\;\cdots
\end{array}.
\]
We define $\mc{Q}_{p,k}$ to be the set of all $p$-homogeneous nilspaces $\nss$ such that for some integers $a_\ell \ge 0$ ($\ell\in[k]$) we have that $\nss$ is isomorphic to the product\footnote{The definition of product nilspaces (or of powers $\ns^a$ of a nilspace $\ns$) is the natural one; see \cite[\S 3.1.1]{Cand:Notes1}.}  nilspace $\prod_{\ell= 1}^k \abph_{k,\ell}^{\,a_\ell}$.
\end{defn}
\noindent The cyclic groups underlying the nilspaces $\abph_{k,\ell}$ agree with those underlying the non-classical polynomials in \cite[Lemma 1.6 (vi)]{T&Z-Low}, as mentioned above. 

We can now state the structure theorem, which establishes that the abelian group nilspaces in $\mc{Q}_{p,k}$ suffice to describe all $p$-homogeneous \textsc{cfr} nilspaces via fibrations.
\begin{theorem}\label{thm:general-p-hom-intro}
Let $\ns$ be a $k$-step $p$-homogeneous finite nilspace. Then there exists $\nss\in \mc{Q}_{p,k}$ and a fibration $\psi:\nss\to \ns$ with the following property: for every morphism $f\in \hom(\mc{D}_1(\mb{Z}_p^n),\ns)$ there is a morphism $g\in \hom(\mc{D}_1(\mb{Z}_p^n),\nss)$ such that $\psi \co g = f$.
\end{theorem}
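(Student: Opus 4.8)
The plan is to first observe that the morphism-lifting clause is automatic once a fibration $\psi\colon\nss\to\ns$ with $\nss\in\mc{Q}_{p,k}$ has been produced, and then to construct such a $\psi$ by induction on the step $k$. For the first point: given any such $\psi$ and any $f\in\hom(\mc{D}_1(\mb{Z}_p^n),\ns)$, precompose $f$ with the reduction-mod-$p$ morphism $\rho\colon\mc{D}_1(\mb{Z}^n)\to\mc{D}_1(\mb{Z}_p^n)$ to get $\tilde f:=f\co\rho\in\hom(\mc{D}_1(\mb{Z}^n),\ns)$; lift $\tilde f$ along the fibration $\psi$ to some $\tilde g\in\hom(\mc{D}_1(\mb{Z}^n),\nss)$ — fibrations lift morphisms out of $\mc{D}_1(\mb{Z}^n)$, by the same enumeration-and-completion argument that shows they lift cubes, freeness of $\mb{Z}^n$ ensuring that no obstruction arises; and then, since $\nss$ is $p$-homogeneous (every member of $\mc{Q}_{p,k}$ is), restrict $\tilde g$ to $[0,p-1]^n$, obtaining by Definition \ref{def:p-hom-intro} a morphism $g\in\hom(\mc{D}_1(\mb{Z}_p^n),\nss)$, for which $\psi\co g=\tilde f|_{[0,p-1]^n}=f$. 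So it remains only to build the fibration.

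For the induction: when $k=1$, Proposition \ref{prop:p-hom-str-gps-intro} together with finiteness gives $\ns\cong\mc{D}_1(\mb{Z}_p^{a})=\abph_{1,1}^{\,a}\in\mc{Q}_{p,1}$, and $\psi=\id$ works. For $k\ge2$, I would apply the inductive hypothesis to the $(k-1)$-step factor $\ns_{k-1}$ (which is finite and $p$-homogeneous, the latter class being closed under factors; see Section \ref{sec:p-hom}) to get a fibration $\alpha\colon\nss_{k-1}\to\ns_{k-1}$ with $\nss_{k-1}\in\mc{Q}_{p,k-1}$, and then pass to the fibre product $\wh{\ns}:=\nss_{k-1}\times_{\ns_{k-1}}\ns$. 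Since $\alpha$ is a fibration, $\wh{\ns}$ is a (finite, $k$-step) nilspace, the projection $\wh{\ns}\to\ns$ is a fibration, and reading off the cube sets of $\wh{\ns}$ as fibre products $\cu^n(\nss_{k-1})\times_{\cu^n(\ns_{k-1})}\cu^n(\ns)$ shows that $\wh{\ns}$ is again $p$-homogeneous; as a fibration $\nss\to\wh{\ns}$ from a $\mc{Q}_{p,k}$-nilspace composes with $\wh{\ns}\to\ns$ to the desired one, I may replace $\ns$ by $\wh{\ns}$. This reduces the problem to the case where $\ns$ is a $k$-step $p$-homogeneous extension of a nilspace $\ns_{k-1}\in\mc{Q}_{p,k-1}$ by its top structure group $\ab_k$, which Proposition \ref{prop:p-hom-str-gps-intro} identifies as an elementary abelian $p$-group.

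The crux is then to realize this extension inside $\mc{Q}_{p,k}$. The extension is classified by a continuous degree-$k$ $\ab_k$-valued cocycle $\sigma$ on $\ns_{k-1}$; and since $(\abph_{k,\ell})_{k-1}\cong\abph_{k-1,\ell}$ for $\ell<k$ while $(\abph_{k,k})_{k-1}$ is a point, the products $\prod_\ell\abph_{k,\ell}^{\,b_\ell}$ are exactly the members of $\mc{Q}_{p,k}$ whose $(k-1)$-step factor lies in $\mc{Q}_{p,k-1}$, each carrying over its $(k-1)$-step factor a canonical degree-$k$ cocycle assembled from the canonical cocycles of the atomic extensions $\abph_{k,\ell}\to\abph_{k-1,\ell}$ ($\ell<k$) together with split $\mc{D}_k(\mb{Z}_p)=\abph_{k,k}$-directions. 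I would choose such a product, a fibration $\beta$ from its $(k-1)$-step factor onto $\ns_{k-1}$, and a homomorphism of top structure groups, so that the induced canonical cocycle pushes forward to one cohomologous to $\beta^{\ast}\sigma$; the standard lifting criterion for morphisms between nilspace extensions (cf.\ \cite{Cand:Notes1}) then yields a morphism $\psi$ from the product onto $\ns$ extending $\beta$, and $\psi$ is a fibration because it is surjective on each structure group. This closes the induction.

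The hard part — where I expect the generating-set machinery of Section \ref{sec:gen-set} to be essential — is the cohomological input just used: that every degree-$k$ $\mb{Z}_p$-valued cohomology class on a finite $\mc{Q}_{p,k-1}$-nilspace is, modulo coboundaries, a combination of such canonical (and split) cocycles. The natural route is to obtain an explicit description, up to coboundary, of the degree-$k$ cocycles on a $p$-homogeneous group nilspace in terms of ``non-classical polynomial'' data, and to check that the cyclic nilspaces $\abph_{k,\ell}$ exhaust the indecomposable such data, so that sufficiently many copies of the $\abph_{k,\ell}$, pulled back along suitable morphisms, realize any prescribed class. Everything else — closure of $p$-homogeneity under factors and fibre products, the computations $(\abph_{k,\ell})_{k-1}\cong\abph_{k-1,\ell}$ and of $\ab_k(\abph_{k,\ell})$, and the extension-lifting criterion — should be routine given the results quoted in the excerpt.
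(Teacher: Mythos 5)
Your treatment of the morphism-lifting clause and of the inductive setup is sound and matches the paper: the paper also disposes of the lifting statement via Corollary \ref{cor:liftthrufib} together with the $p$-homogeneity of $\nss$, handles $k=1$ via Proposition \ref{prop:strgps}, and reduces the inductive step to covering a degree-$k$ extension built as a fibre product over $\ns_{k-1}$. The problem is what you call the crux. Your plan is to realize the extension $\wh{\ns}\to\nss_{k-1}$ (with $\nss_{k-1}\in\mc{Q}_{p,k-1}$) by a cohomological argument: describe degree-$k$ $\ab_k$-valued cocycles on a finite $\mc{Q}_{p,k-1}$-nilspace up to coboundary, and show that the ``canonical'' cocycles of the atomic extensions $\abph_{k,\ell}\to\abph_{k-1,\ell}$, pulled back along suitable morphisms, exhaust all classes. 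You give no argument for this claim; it is essentially a restatement of the theorem, and nothing in the quoted results supplies it. Note also that the extension you must realize genuinely need not split over $\nss_{k-1}$ (e.g.\ $\abph_{p+1,2}$ over its $p$-step factor $\mc{D}_2(\mb{Z}_p)$), so there is no shortcut here: some substantive input about which non-split degree-$k$ extensions of $\mc{Q}_{p,k-1}$-nilspaces can occur is unavoidable, and your proposal leaves exactly that input unproved.

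The paper's route around this is different and worth contrasting with yours. Rather than forming $\nss_{k-1}\times_{\ns_{k-1}}\ns$ and classifying cocycles over the $(k-1)$-step base, it first \emph{lifts the base to step $k$}: writing $\nss'=\prod_{\ell}\abph_{k-1,\ell}^{\,a_\ell}$ for the inductively obtained cover of $\ns_{k-1}$, it sets $Q:=\prod_{\ell}\abph_{k,\ell}^{\,a_\ell}$ (so $Q\in\mc{Q}_{p,k}$ and $Q_{k-1}=\nss'$) and forms $\nss:=\{(a,b)\in Q\times\ns:\psi'(\pi_{k-1}(a))=\pi_{k-1}(b)\}$. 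By Proposition \ref{prop:sub-prod-ext} this is a degree-$k$ extension of $Q$ by $\ab_k(\ns)$, and it is $p$-homogeneous; the key point is then Proposition \ref{prop:ext-split}, which asserts that a $p$-homogeneous degree-$k$ extension of a member of $\mc{Q}_{p,k}$ by a finite elementary abelian $p$-group \emph{splits}, so $\nss\in\mc{Q}_{p,k}$ and $\psi=p_2$ works. The proof of that splitting proposition is where the real work happens: one lifts the generating translations of $Q$ to translations of $\nss$ (via the $\mc{T}^*$ splitting criterion and an induction through Lemma \ref{lem:proof-spli-aux}), and then uses Proposition \ref{prop:trans-of-p-strong} --- the $p$-homogeneity of the filtered translation group --- to bound the orders of the lifted translations so that the resulting cross-section $Q\to\nss$ is well defined. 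None of this translation-group machinery appears in your sketch, and without it (or a worked-out version of your cocycle classification) the argument does not close.
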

\noindent This theorem is a refinement of (and was inspired by) results on finite nilspaces that were obtained by the third named author in \cite{SzegFin}, especially \cite[Theorem 6]{SzegFin}. It is a central ingredient in our proofs of the regularity and inverse theorems, discussed below.

The second main result in Section \ref{sec:gen-set} refines Theorem \ref{thm:general-p-hom-intro} for $k\leq p$, as follows. This uses the so-called \emph{degree-$\ell$} nilspace structure on any abelian group $\ab$, denoted by $\mc{D}_\ell(\ab)$, which is a standard way to turn $\ab$ into an $\ell$-step nilspace; see \cite[Definition 2.2.30]{Cand:Notes1}.
\begin{theorem}\label{thm:intro-3}
Let $p$ be a prime and let $k\in \mb{N}$ with $k \leq p$. Let $\ns$ be a $k$-step $p$-homogeneous compact nilspace. Then for every $\ell\in [k]$ there exists $a_\ell\in \mb{N}\cup\{\infty\}$ such that $\ns$ is isomorphic to the product nilspace\footnote{Here $\mb{Z}_p^{\infty}$ denotes the direct product $\mb{Z}_p^\mb{N}$.} $\prod_{\ell=1}^k \mc{D}_\ell(\mb{Z}_p^{a_\ell})$.
\end{theorem}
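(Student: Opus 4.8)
The plan is to reduce to finite nilspaces and then induct on the step $k$. First, by the inverse limit theorem a general compact $p$-homogeneous nilspace is an inverse limit of \textsc{cfr} $p$-homogeneous nilspaces; here one uses that factors of $p$-homogeneous nilspaces are again $p$-homogeneous (lift a morphism from $\mc{D}_1(\mb{Z}^n)$ along the factor fibration and restrict to $[0,p-1]^n$), and that by Proposition \ref{prop:p-hom-str-gps-intro} these \textsc{cfr} factors are finite. Granting the theorem for finite nilspaces, one obtains product decompositions $\ns^{(m)}\cong\prod_{\ell=1}^k\mc{D}_\ell(\mb{Z}_p^{a_\ell^{(m)}})$ which can be chosen compatibly with the factor fibrations (these surject on structure groups, so the $a_\ell^{(m)}$ are non-decreasing in $m$), and the general case follows by passing to the limit with $a_\ell=\sup_m a_\ell^{(m)}\in\mb{N}\cup\{\infty\}$. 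So from now on $\ns$ is finite.

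For $k=1$ a $1$-step nilspace is $\mc{D}_1(\ab)$ for a finite abelian group $\ab$, and applying Definition \ref{def:p-hom-intro} to affine maps $\mb{Z}^n\to\ab$ forces $p\ab=0$, i.e.\ $\ab\cong\mb{Z}_p^{a_1}$. For the inductive step, the $(k-1)$-step factor $\ns_{k-1}$ is a finite $(k-1)$-step $p$-homogeneous nilspace (and $k-1\le p$), so by induction $\ns_{k-1}\cong\prod_{\ell=1}^{k-1}\mc{D}_\ell(\mb{Z}_p^{a_\ell})$, while by Proposition \ref{prop:p-hom-str-gps-intro} the top structure group is $\ab_k(\ns)\cong\mb{Z}_p^{a_k}$. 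Since all lower structure groups of $\mc{D}_k(\mb{Z}_p^{a_k})$ vanish, the assertion $\ns\cong\prod_{\ell=1}^k\mc{D}_\ell(\mb{Z}_p^{a_\ell})$ is precisely the assertion that the degree-$k$ abelian extension $\pi:\ns\to\ns_{k-1}$ by $\mb{Z}_p^{a_k}$ is trivial, equivalently admits a morphism section $s:\ns_{k-1}\to\ns$ of $\pi$, equivalently has a coboundary as its classifying cocycle on $\cu^k(\ns_{k-1})$. Establishing this splitting is the heart of the matter.

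To establish it I would invoke the structure theorem, Theorem \ref{thm:general-p-hom-intro}: there is $\nss\in\mc{Q}_{p,k}$ and a fibration $\psi:\nss\to\ns$ through which every morphism $\mc{D}_1(\mb{Z}_p^n)\to\ns$ lifts. The key observation is that for $k\le p$ every building block $\abph_{k,\ell}$ is itself a product of nilspaces of the form $\mc{D}_m(\mb{Z}_p)$: one has $r(k,\ell,p)=1$, so $\abph_{k,\ell}=\mc{D}_\ell(\mb{Z}_p)$, except when $(k,\ell)=(p,1)$, in which case $\abph_{p,1}$ is $\mb{Z}_{p^2}$ with a degree-$p$ filtration, and the ``$p$-adic digit'' map $\mb{Z}_{p^2}\to\mb{Z}_p^2$, $g\mapsto(g\bmod p,\lfloor g/p\rfloor)$, is a nilspace isomorphism $\abph_{p,1}\xrightarrow{\ \sim\ }\mc{D}_1(\mb{Z}_p)\times\mc{D}_p(\mb{Z}_p)$ — this uses Lucas' theorem in the form $\lfloor m/p\rfloor\equiv\binom mp\pmod p$ together with $\binom{\sum_i v_i}{p}=\sum_{|S|=p}\prod_{i\in S}v_i$ for $v_i\in\{0,1\}$, which shows that the second coordinate of a Host--Kra cube of $\abph_{p,1}$ is a polynomial of degree $\le p=k$ in the cube variables. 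Hence $\nss\cong\prod_{\ell=1}^k\mc{D}_\ell(\mb{Z}_p^{c_\ell})$ is already of the desired form, and its degree-$k$ extension class over $\nss_{k-1}$ is trivial. By naturality of the extension class, $\psi_{k-1}^*$ carries the class of $\ns$ to the trivial class of $\nss$; it therefore remains to deduce from the lifting property of $\psi$ that the class of $\ns$ itself is trivial. I expect to do this by using the lifting property to assemble, out of the universal morphisms $\mc{D}_1(\mb{Z}_p^n)\to\ns$ and their lifts, a morphism section of $\psi$; then $\ns$ is a retract of $\nss$, and a retract of $\prod_\ell\mc{D}_\ell(\mb{Z}_p^{c_\ell})$ (the image of an idempotent endomorphism) is again of that form.

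The step I expect to be the main obstacle is exactly this last one: converting the lifting property into an honest splitting of $\pi$, i.e.\ the vanishing of the degree-$k$ extension class of $\ns$ over $\ns_{k-1}$. An alternative route for this obstacle, bypassing Theorem \ref{thm:general-p-hom-intro}, is a direct cohomological computation: over $\mb{F}_p$ the degree-$k$ extensions of $\prod_{\ell<k}\mc{D}_\ell(\mb{Z}_p^{a_\ell})$ by $\mb{Z}_p$ are classified by explicit ``polynomial'' cocycles, and one verifies that for $k\le p$ only the coboundaries yield a $p$-homogeneous total space, since a non-trivial class would produce an affine map $\mb{Z}^n\to\ns$ whose restriction to $[0,p-1]^n$ is not a $\mb{Z}_p^n$-morphism, contradicting Definition \ref{def:p-hom-intro}. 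Either way, once the splitting is secured the induction closes, and reassembling the inverse limit completes the proof.
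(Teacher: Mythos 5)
Your skeleton coincides with the paper's: reduce to finite nilspaces by the inverse limit theorem, induct on the step, reduce the inductive step to splitting the degree-$k$ extension $\pi:\ns\to\ns_{k-1}$, and isolate $\abph_{p,1}\cong\mc{D}_1(\mb{Z}_p)\times\mc{D}_p(\mb{Z}_p)$ (Proposition \ref{prop:aux-high-char}) as the one special feature of the case $k=p$. But the step you yourself flag as "the heart of the matter" --- the splitting --- is a genuine gap, and neither of your two routes closes it. Your primary route asks for a morphism section $s:\ns\to\nss$ of the fibration $\psi$ supplied by Theorem \ref{thm:general-p-hom-intro}; the lifting property of that theorem only lifts morphisms from $\mc{D}_1(\mb{Z}_p^n)$, and gives no way to assemble a globally defined cube-preserving section of a non-injective fibration. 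Indeed, if "fibration image of a member of $\mc{Q}_{p,k}$" formally implied "retract of, hence isomorphic to, a product", the same argument would apply for $k>p$, where the corresponding statement is precisely the open problem of Remark \ref{rem:open-Qs}. So something specific to $k\le p$ must enter, and it does not appear in your argument. Your fallback "cohomological" route ("one verifies that only coboundaries yield a $p$-homogeneous total space") is a restatement of the claim rather than a proof.

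What the paper actually uses is Proposition \ref{prop:ext-split}: a $p$-homogeneous degree-$k$ extension of a member of $\mc{Q}_{p,k}$ by a finite elementary abelian $p$-group splits. This is proved by lifting the generating translations $\alpha_{\ell,j}$ of the base one at a time (via the criterion for lifting translations and an inner induction, Lemma \ref{lem:proof-spli-aux}), and then --- crucially --- invoking Proposition \ref{prop:trans-of-p-strong}, i.e.\ the $p$-homogeneity of the filtered translation group, to bound the orders of the lifted translations $\beta_{\ell,j}$ so that the cross-section $\overline{x}\mapsto\prod\beta_{\ell,j}^{\overline{x}_{\ell,j}}(y)$ is well defined. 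For $k<p$ one has $\abph_{k,\ell}=\mc{D}_\ell(\mb{Z}_p)$ for all $\ell$, so $\ns_{k-1}\in\mc{Q}_{p,k}$ and Proposition \ref{prop:ext-split} applies directly (Corollary \ref{cor:high-char1}); for $k=p$ the base $\ns_{p-1}$ is \emph{not} in $\mc{Q}_{p,p}$ and one must pass through the subdirect product $\nss$ and splice in Proposition \ref{prop:aux-high-char} before descending back to $\ns$ (Proposition \ref{prop:high-char}). None of this translation-lifting machinery is in your proposal. A secondary, smaller gap: in the reduction to the finite case, the claim that the product decompositions of the terms $\ns^{(m)}$ "can be chosen compatibly with the factor fibrations" is not a consequence of monotonicity of the ranks $a_\ell^{(m)}$; it is Proposition \ref{prop:very-final-s}, whose proof again relies on the splitting result (Corollary \ref{cor:spli-klep}).
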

\noindent Theorem \ref{thm:intro-3} is a key tool in our applications in ergodic theory. 
Note that the theorem covers the high-characteristic case $k<p$ but also the additional case $k=p$. For $k>p$, we do not yet know what would be a corresponding qualitatively optimal refinement of Theorem \ref{thm:general-p-hom-intro}. This leads to questions that we leave open in this paper; see Remark \ref{rem:open-Qs}.

We close this introduction with a summary of the main applications.

In Section \ref{sec:ergodic} we treat the applications in ergodic theory. Our  results here concern the ergodic measure-preserving actions of $\mb{F}_p^\omega$ studied by Bergelson, Tao and Ziegler in \cite{BTZ,BTZ2}, and specifically the \emph{Host--Kra factors} of such $\mb{F}_p^\omega$-systems, i.e.\ the characteristic factors for uniformity seminorms on these systems.\footnote{Recall from \cite[Definition 1.1]{BTZ} the notion of a $G$-system  for a locally compact abelian group $G$; see also \cite[Definition 5.9]{CScouplings} for a definition of Host--Kra factors valid for $G$-systems more generally.} In the setting of $\mb{Z}$-systems, the Ergodic Structure Theorem from \cite{HK-non-conv} describes the Host--Kra factors as inverse limits of \emph{nilsystems}. In the characteristic-$p$ setting, the Host--Kra factors have hitherto been described in terms of \emph{Weyl systems}, which were defined in \cite[Definition 1.5, Theorem 1.6]{BTZ2} specifically  for this setting (see also \cite[Theorem 4.8]{BTZ}). The different approach in the present paper unifies the descriptions of Host--Kra factors in these two settings via the common notion of a \emph{nilspace system}, introduced in \cite{CScouplings}. Let us recall that a ($k$-step) nilspace system $(\ns,G)$ is a specific type of $G$-system, consisting of a compact ($k$-step) nilspace $\ns$, and a topological group $G$ acting continuously on $\ns$ via a group homomorphism $G \to \tran(\ns)$, where $\tran(\ns)$ is the \emph{translation group}\footnote{The translation group $\tran(\ns)$ is a group of automorphisms naturally defined on any nilspace $\ns$, which can be viewed as a generalization of the regular action of abelian groups on themselves; see \cite[\S 3.2.4]{Cand:Notes1}.} of $\ns$. (The nilspace system can also be specified as a triple $(\ns,G,\phi)$, if the homomorphism $\phi:G\to\tran(\ns)$ needs to be emphasized; we can also add to the data an explicit filtration on $G$, preserved by $\phi$, writing $(\ns,(G,G_\bullet),\phi)$ and calling this a \emph{filtered nilspace system}; see \cite[Definition 5.10]{CScouplings}.) Such a system can be viewed as a topological dynamical system, and if we equip $\ns$ with its Haar probability measure then the nilspace system becomes a measure-preserving $G$-system. Nilspace systems were shown in \cite{CGSS,CScouplings} to yield extensions of the Ergodic Structure Theorem (in particular, ergodic nilspace $\mb{Z}$-systems are inverse limits of nilsystems \cite[Theorem 5.1]{CGSS}). Let us mention that there are other descriptions of the characteristic factors of ergodic $G$-systems. There is for instance the concept of \emph{nilpotent system} introduced in \cite[Definition 1.29]{shalom1} for the 2-step case with $G=\bigoplus_{p\in P}\mb{F}_p$ (where $P$ is a multiset of primes), and more generally, for any countable abelian group $G$ in the 2-step case, there is a description of the 2-factor as a double coset space \cite[Theorem 1.21]{shalom2}.
 
Let us say that a nilspace system is \emph{$p$-homogeneous} if the underlying compact nilspace is $p$-homogeneous. We then have the following result.
\begin{theorem}\label{thm:main-ergodic-intro}
For every $k\in \mb{N}$, the $k$-th Host--Kra factor of every ergodic $\mb{F}_p^{\omega}$-system is isomorphic to a $p$-homogeneous $k$-step ergodic nilspace system $(\ns, \mb{F}_p^\omega)$.
\end{theorem}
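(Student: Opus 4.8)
The plan is to combine the general structure theory for Host–Kra factors from \cite{CScouplings} with Theorem~\ref{thm:main1-intro} of this paper. First I would recall that by the structure theorem for characteristic factors \cite[Theorem 4.2]{CScouplings} (and its refinement for $G$-systems, \cite[Theorem 5.12]{CScouplings}), for any ergodic $\mb{F}_p^\omega$-system $(\mf{X},\mu,(T_g)_{g\in\mb{F}_p^\omega})$ and any $k\in\mb{N}$, the $k$-th Host–Kra factor $\mf{Z}_k(\mf{X})$ is measurably isomorphic to an ergodic $k$-step nilspace system $(\ns,\mb{F}_p^\omega,\phi)$, where $\ns$ is a compact $k$-step nilspace and $\phi:\mb{F}_p^\omega\to\tran(\ns)$ is a continuous homomorphism with dense image giving an ergodic action. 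This already delivers the nilspace system; the entire content to be extracted is that $\ns$ is $p$-homogeneous.

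The key step is then to show the nilspace $\ns$ produced this way is $p$-homogeneous. By the inverse limit theorem for nilspaces \cite{CamSzeg} (see also \cite[Theorem 2.7.3]{Cand:Notes2}), $\ns$ is an inverse limit of $\textsc{cfr}$ nilspaces $\ns^{(j)}$; since $p$-homogeneity is visibly preserved under inverse limits (it is a condition on morphisms from $\mc{D}_1(\mb{Z}^n)$, which factor through the $\ns^{(j)}$), it suffices to show each factor $\ns^{(j)}$ is $p$-homogeneous. For this I would invoke Theorem~\ref{thm:main1-intro}: it is enough to produce, for each $j$ and for each bound $b=b(\ns^{(j)},d,p)>0$, some $D$ and a $b$-balanced morphism $\varphi:\mc{D}_1(\mb{Z}_p^D)\to\ns^{(j)}$. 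Such morphisms come from the dynamics: the action of $\mb{F}_p^\omega$ by translations on $\ns$, composed with the projection $\ns\to\ns^{(j)}$, yields, for each finite-rank subgroup $\mb{F}_p^D\leq\mb{F}_p^\omega$ and each base point, a morphism $\mc{D}_1(\mb{F}_p^D)\to\ns^{(j)}$ (because translations are nilspace automorphisms, an orbit map of a $D$-generated subgroup is a morphism from $\mc{D}_1(\mb{Z}_p^D)$). The ergodicity of the nilspace system, together with the fact that the relevant cubic couplings are the Haar measures on $\cu^n(\ns)$ (this is exactly how the framework of \cite{CScouplings} is set up — the characteristic factor carries the Host–Kra cube measures), forces these orbit morphisms to equidistribute: as $D\to\infty$ their $n$-cubic powers become equidistributed in $\cu^n(\ns^{(j)})$ for every fixed $n$. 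Hence for $D$ large enough the morphism $\varphi$ is $b$-balanced, and Theorem~\ref{thm:main1-intro} applies.

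I expect the main obstacle to be the equidistribution/balance input in the previous paragraph: one must verify carefully that ergodicity of the $\mb{F}_p^\omega$-nilspace system implies the required quantitative equidistribution of the finite-dimensional orbit morphisms in all cube sets $\cu^n(\ns^{(j)})$, uniformly enough to beat the threshold $b(\ns^{(j)},d,p)$. The cleanest route is to note that in the construction of \cite{CScouplings} the characteristic factor is built precisely so that its cube structure records the Host–Kra seminorm data of $(\mf{X},\mu)$; ergodicity of the original system transfers to ergodicity of the nilspace system, which by \cite[\S 5]{CScouplings} (or \cite[Theorem 5.1]{CGSS} in the analogous $\mb{Z}$-case) means the orbit closure of a generic point under $\mb{F}_p^\omega$ is all of $\ns$, and the unique ergodicity-type statements there give that averaging the orbit morphisms over $\mb{F}_p^D$ converges to the Haar measure on each $\cu^n(\ns)$. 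A secondary technical point is checking that $p$-homogeneity of $\ns$ passes to the conclusion that the whole nilspace system $(\ns,\mb{F}_p^\omega)$ is a $p$-homogeneous nilspace system in the sense defined just before the theorem — but this is immediate, since by definition a nilspace system is $p$-homogeneous exactly when its underlying nilspace is. Finally one records that the ergodicity and $k$-step properties are inherited directly from the corresponding properties of the Host–Kra factor, completing the proof.
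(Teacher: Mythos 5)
Your proposal is correct and follows essentially the same route as the paper: invoke the structure theorem of \cite{CScouplings} to realize the factor as an ergodic compact nilspace system, reduce to \textsc{cfr} factors via the inverse limit theorem, produce arbitrarily balanced morphisms $\mc{D}_1(\mb{Z}_p^D)\to\ns^{(j)}$ from orbit maps of the subgroups $\mb{F}_p^D$ using unique ergodicity of the cube actions, and conclude with Theorem \ref{thm:main1-intro}. You have also correctly isolated the genuine technical content, namely upgrading ergodicity of the actions of $\cu^n(\mb{F}_p^\omega)$ on $\cu^n(\ns)$ to unique ergodicity so that the F\o{}lner averages of the orbit morphisms converge to the Haar measures on the cube sets, which is exactly what the paper proves in Lemma \ref{lem:unique-erg} and Proposition \ref{prop:ergodic-tool}.
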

\noindent In fact, a property markedly stronger than ergodicity holds here: the action of $\mb{F}_p^\omega$ on $\ns$ is uniquely ergodic,\footnote{The notion of unique ergodicity may be recalled from \cite[p.\ 87, \S 4.3.a.]{HassKat}.} and for every $n$ the standard cube-set $\cu^n(\mb{F}_p^\omega)$ also has a uniquely ergodic action on $\cu^n(\ns)$; see Theorem \ref{thm:main-ergodic-strong}. This stronger form of ergodicity follows from the main results in \cite{CScouplings}, which we combine with Theorem \ref{thm:main1-intro} to prove Theorem \ref{thm:main-ergodic-intro}.

With Theorem \ref{thm:main-ergodic-intro}, the study of these Host--Kra factors can be reduced to the study of $p$-homogeneous nilspace systems. Focusing on the latter systems, we then obtain more precise descriptions of the factors. In particular, for $k\leq p$ we have the following result, showing that the $k$-th factor consists of an elementary abelian $p$-group equipped with a specific degree-$k$ filtration.
\begin{theorem}\label{thm:high-char-ergodic}
Let $(\ns, \mb{F}_p^\omega)$ be an ergodic $p$-homogeneous $k$-step nilspace system with $k\leq p$. Then this system is isomorphic to a nilspace system $\big(\prod_{\ell=1}^k \mc{D}_\ell(\mb{Z}_p^{a_\ell}),\mb{F}_p^\omega\big)$ where $a_\ell\in \mb{N}\cup \{\infty\}$ for each $\ell\in [k]$.
\end{theorem}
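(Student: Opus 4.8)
The plan is to obtain this as a quick consequence of Theorem \ref{thm:intro-3}, by transporting the acting group along the nilspace isomorphism that theorem supplies. First, write the given system as a triple $(\ns,\mb{F}_p^\omega,\phi)$, where $\phi\colon\mb{F}_p^\omega\to\tran(\ns)$ is the continuous homomorphism defining the action and $\ns$ is, by hypothesis, a $k$-step $p$-homogeneous compact nilspace with $k\le p$. Applying Theorem \ref{thm:intro-3} to the underlying compact nilspace $\ns$ then yields, for each $\ell\in[k]$, a value $a_\ell\in\mb{N}\cup\{\infty\}$ together with a nilspace isomorphism $\Phi\colon\ns\to\nss$, where $\nss:=\prod_{\ell=1}^k\mc{D}_\ell(\mb{Z}_p^{a_\ell})$ is again a compact nilspace (necessarily $k$-step, since $\Phi$ is an isomorphism). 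Being a morphism of compact nilspaces whose inverse is also a morphism, $\Phi$ is in particular a homeomorphism.

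Next I would use that the translation group of a nilspace is defined purely in terms of its cubic structure, so that conjugation by $\Phi$ maps $\aut(\ns)$ bijectively onto $\aut(\nss)$ and restricts to a bijection $\tran(\ns)\to\tran(\nss)$; moreover, since $\Phi$ is a homeomorphism, the resulting map $\psi\colon\mb{F}_p^\omega\to\tran(\nss)$ given by $\psi(g)=\Phi\circ\phi(g)\circ\Phi^{-1}$ is a continuous action. Hence $(\nss,\mb{F}_p^\omega,\psi)$ is a nilspace system, and by construction $\Phi$ intertwines $\phi$ with $\psi$, so it is an isomorphism of nilspace systems from $(\ns,\mb{F}_p^\omega,\phi)$ onto $(\nss,\mb{F}_p^\omega,\psi)$. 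Since isomorphisms of $\mb{F}_p^\omega$-systems preserve ergodicity, the target system is ergodic as well, which gives the assertion.

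I do not expect a genuine obstacle in this last step: the structural content is entirely contained in Theorem \ref{thm:intro-3} (which itself rests on Theorem \ref{thm:general-p-hom-intro} and the analysis of $p$-homogeneous nilspaces in Sections \ref{sec:p-hom} and \ref{sec:gen-set}), and the only point requiring a little care is the functoriality of $\tran(\cdot)$ under nilspace isomorphism, which is immediate from the definition of the translation group. If one wanted a more explicit description of the transported action $\psi$ — for instance pinning down its image inside $\tran(\nss)$ — one could combine the ergodicity of the action with the known descriptions of the translation groups of the degree-$\ell$ factors $\mc{D}_\ell(\mb{Z}_p^{a_\ell})$ and of their product; but this refinement is not needed for the statement above.
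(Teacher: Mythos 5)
Your proposal is correct and follows essentially the same route as the paper, which simply combines Theorem \ref{thm:intro-3} (giving the nilspace isomorphism $\ns\cong\prod_{\ell=1}^k\mc{D}_\ell(\mb{Z}_p^{a_\ell})$) with the transport of the $\mb{F}_p^\omega$-action along that isomorphism. The details you supply — conjugating $\phi$ by $\Phi$ to get $\psi$, noting that $\tran(\cdot)$ is preserved under nilspace isomorphism, and that ergodicity transfers — are exactly the routine steps the paper leaves implicit (continuity of $\psi$ being automatic since $\mb{F}_p^\omega$ is discrete).
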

\noindent This relates to previous work as follows. In the high-characteristic case $k<p$, the results of \cite{BTZ} describe the $k$-th Host--Kra factor
 as a $k$-fold iterated abelian extension by elementary abelian $p$-groups, with the cocycle for the $j$-th extension being polynomial of degree $\leq j$; 
 see \cite[Corollary 8.7]{BTZ}. Theorem \ref{thm:high-char-ergodic} instead describes the factor as a nilspace system on the explicit product nilspace $\prod_{\ell=1}^k \mc{D}_\ell(\mb{Z}_p^{a_\ell})$, which is a $k$-fold abelian bundle where the $i$-th factor $\prod_{\ell=1}^i \mc{D}_\ell(\mb{Z}_p^{a_\ell})$ is a \emph{splitting extension} of degree $i$ (in the nilspace sense) of the previous factor $\prod_{\ell=1}^{i-1} \mc{D}_\ell(\mb{Z}_p^{a_\ell})$ by the group $\mb{Z}_p^{a_i}$, and this description holds even for $k=p$. We note that the translation group of this product nilspace can be described more explicitly, thus obtaining a complete description of the transformations in this factor in terms of polynomial maps between filtered elementary abelian $p$-groups; see Theorem \ref{thm:HiCharTransDesc}.
 
Theorem \ref{thm:main-ergodic-intro} also enables progress in a closely related direction for these applications in ergodic theory, namely the direction concerning Abramov systems. Recall that an ergodic $\mb{F}_p^\omega$-system is an \emph{Abramov system of order $\leq k$} if its $L^2$-space is the closure of the linear span of phase polynomials of degree at most $k$; see Definition  \ref{def:abramov}. The following interesting question arose in the work of Bergelson, Tao and Ziegler \cite{BTZ}.
\begin{question}\label{Q:Abramov}
Is the $k$-th Host--Kra factor of an ergodic $\mb{F}_p^\omega$-system always an Abramov system of order $\leq k$, for every $k\in \mb{N}$?
\end{question}
\noindent In \cite{BTZ}, an affirmative answer is given in the case $k<p$ (\cite[Theorem 1.19]{BTZ}), and this is believed to hold also for $k\geq p$; see \cite[Remark 1.21]{BTZ}. For $k\geq p$, a partial affirmative answer is given in \cite[Theorem 1.20]{BTZ}, showing that the factor is Abramov of order $O_k(1)$.

We extend the affirmative answer to Question \ref{Q:Abramov} as follows. 
\begin{theorem}\label{thm:Abramov}
For every ergodic $\mb{F}_p^{\omega}$-system and every $k\leq p+1$, the $k$-th Host--Kra factor of the system is Abramov of order $\leq k$.
\end{theorem}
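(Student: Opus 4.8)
The plan is to combine Theorem~\ref{thm:main-ergodic-intro} with the structural results of Sections~\ref{sec:p-hom}--\ref{sec:gen-set}. By Theorem~\ref{thm:main-ergodic-intro} it suffices to prove that every $p$-homogeneous $k$-step ergodic nilspace system $(\ns,\mb{F}_p^\omega)$ with $k\leq p+1$ is Abramov of order $\leq k$. The starting observation is that a nilspace morphism $\phi:\ns\to\mc{D}_k(\mb{T})$ (with $\mb{T}=\mathbb{R}/\mathbb{Z}$) is automatically a phase polynomial of degree $\leq k$ for the $\mb{F}_p^\omega$-action: the action is via $\tran(\ns)=\tran_1(\ns)$, and differencing a morphism into $\mc{D}_j(\mb{T})$ along a degree-$1$ translation yields a morphism into $\mc{D}_{j-1}(\mb{T})$, so $k+1$ successive differencings give a constant (cf.\ \cite{Cand:Notes1}). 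Hence it is enough to show that such morphisms span a dense subspace of $L^2(\ns)$. Writing $\ns$ as an inverse limit of finite $p$-homogeneous nilspaces $\ns_j$ (via the inverse limit theorem together with Proposition~\ref{prop:p-hom-str-gps-intro}, which gives that $\textsc{cfr}$ coincides with finite for these nilspaces) and pulling back morphisms along the connecting morphisms $\ns\to\ns_j$, we reduce to the following purely nilspace-theoretic statement: \emph{for every finite $p$-homogeneous $k$-step nilspace $\ns$ with $k\leq p+1$, the morphisms $\ns\to\mc{D}_k(\mb{T})$ separate the points of $\ns$}. (These morphisms form a group under pointwise addition in $\mb{T}$ and contain the constants, so on a finite nilspace point-separation already yields that they span $L^2(\ns)$, by the elementary case of Stone--Weierstrass.)

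For $k\leq p$ this is immediate from Theorem~\ref{thm:intro-3}: there $\ns\cong\prod_{\ell=1}^k\mc{D}_\ell(\mb{Z}_p^{a_\ell})$, and any character of this finite abelian group separates points and is a morphism into $\mc{D}_k(\mb{T})$ --- a character is additive, so its second difference vanishes, and the only defining condition for such a morphism not implied by this concerns the degree-$(k+1)$ part of the target, which is trivial since the source carries a filtration of degree $\leq k$. In particular, for $k\leq p+1$ this describes the $(k-1)$-step factor $\ns_{k-1}$ as such an explicit split product of degree-$\ell$ nilspaces on elementary abelian $p$-groups.

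The substantial case is $k=p+1$, where $\ns=\ns_{p+1}$ is an abelian extension of $\ns_p\cong\prod_{\ell\leq p}\mc{D}_\ell(\mb{Z}_p^{b_\ell})$ by the structure group $\ab_{p+1}(\ns)$, a finite elementary abelian $p$-group (Proposition~\ref{prop:p-hom-str-gps-intro}). Given $x\neq x'$ in $\ns$: if their images in $\ns_p$ differ, a character of $\ns_p$, viewed as a morphism $\ns\to\ns_p\to\mc{D}_p(\mb{T})\hookrightarrow\mc{D}_{p+1}(\mb{T})$, separates them. Otherwise $x'=t\cdot x$ with $0\neq t\in\ab_{p+1}(\ns)$; writing $\eta\in\widehat{\ab_{p+1}(\ns)}$ for the vertical frequency of a morphism $\phi:\ns\to\mc{D}_{p+1}(\mb{T})$ (i.e.\ $\phi(s\cdot y)-\phi(y)=\eta(s)$ for $s\in\ab_{p+1}(\ns)$, which is well defined since $\ab_{p+1}(\ns)$ acts by degree-$(p+1)$ translations), it suffices to show that the vertical frequencies realised this way separate the points of $\ab_{p+1}(\ns)$. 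For this I would invoke Theorem~\ref{thm:general-p-hom-intro} to obtain a fibration $\psi:\nss\to\ns$ with $\nss=\prod_\ell\abph_{p+1,\ell}^{\,a_\ell}\in\mc{Q}_{p,p+1}$, together with its lifting property for morphisms out of $\mc{D}_1(\mb{Z}_p^n)$. On $\nss$ the group characters realise \emph{all} of $\widehat{\ab_{p+1}(\nss)}$ --- this is the decisive feature of the building blocks $\abph_{p+1,\ell}$: each is built on a cyclic $p$-group $\mb{Z}_{p^r}$ with $r=r(p+1,\ell,p)$ which, at this value of $k$, is small enough that the generating character $x\mapsto e^{2\pi i x/p^r}$ already has nontrivial vertical frequency on the top structure group. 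Since a fibration is surjective on structure groups, $\psi$ induces an injection $\widehat{\ab_{p+1}(\ns)}\hookrightarrow\widehat{\ab_{p+1}(\nss)}$; so for a prescribed $\eta$ we may choose a character $\Phi$ of $\nss$ with vertical frequency $\psi^{*}\eta$, and the task reduces to transporting $\Phi$ back to a morphism $\ns\to\mc{D}_{p+1}(\mb{T})$ of vertical frequency $\eta$.

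I expect this transport step to be the main obstacle, because $\psi$ is a nilspace fibration and not a group homomorphism, so $\Phi$ need not be constant on the fibres of $\psi$ and hence need not descend set-theoretically to $\ns$. To get around this I would exploit the lifting property of Theorem~\ref{thm:general-p-hom-intro} together with the strong form of unique ergodicity from Theorem~\ref{thm:main-ergodic-strong}: fix $x_0\in\ns$ and lift the orbit morphism $\theta:\mc{D}_1(\mb{Z}_p^N)\to\ns$, $g\mapsto g\cdot x_0$ --- surjective once $N$ is large, since $\ns$ is finite --- to $\tilde\theta\in\hom(\mc{D}_1(\mb{Z}_p^N),\nss)$ with $\psi\co\tilde\theta=\theta$, and consider $\Phi\co\tilde\theta\in\hom(\mc{D}_1(\mb{Z}_p^N),\mc{D}_{p+1}(\mb{T}))$. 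Using that the cube sets of $\ns$ are recovered from the $\mb{F}_p^\omega$-orbit (by unique ergodicity of the cube actions), one should be able to verify that $\Phi\co\tilde\theta$ respects the relation $g\sim g'\iff\theta(g)=\theta(g')$: the indeterminacy in lifting $\theta$ through $\psi$ lies, at the top level, in the kernel of $\ab_{p+1}(\nss)\to\ab_{p+1}(\ns)$, which $\psi^{*}\eta$ annihilates, while at lower levels it is controlled by the explicit split structure of $\ns_p$ coming from Theorem~\ref{thm:intro-3}; then $\Phi\co\tilde\theta$ factors as $\phi\co\theta$ for a (necessarily unique) morphism $\phi:\ns\to\mc{D}_{p+1}(\mb{T})$ with vertical frequency $\eta$, completing the argument. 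It is precisely this appeal to Theorem~\ref{thm:intro-3} for the factor $\ns_p$ --- hence the constraint $p=k-1\leq p$ --- that confines the method to $k\leq p+1$; pushing further would require a structural description of $p$-homogeneous nilspaces of step larger than $p$, which is not yet available (cf.\ Remark~\ref{rem:open-Qs}).
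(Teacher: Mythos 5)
Your overall framework coincides with the paper's: reducing the theorem to the statement that, for a finite $p$-homogeneous $k$-step nilspace $\ns$, the morphisms $\ns\to\mc{D}_k(\mb{T})$ separate points is exactly the paper's \emph{sub-abelian} property in the form of Lemma \ref{lem:SAequiv}; your observations that such morphisms give phase polynomials and that point-separation yields density are the content of Proposition \ref{prop:Abrachar}; the inverse-limit reduction is Lemma \ref{lem:invlimSA}; and the case $k\leq p$ via Theorem \ref{thm:intro-3} is handled as in the paper. The gap is in the case $k=p+1$, which is the substantial new content of the theorem. There you must produce, for every $\eta\in\widehat{\ab_{p+1}(\ns)}$, a morphism $\ns\to\mc{D}_{p+1}(\mb{T})$ with vertical frequency $\eta$, and your proposed mechanism --- pick a character $\Phi$ of $\nss\in\mc{Q}_{p,p+1}$ with vertical frequency $\psi^{*}\eta$ and transport it down through the fibration $\psi:\nss\to\ns$ --- is left as an expectation, and the heuristic offered for why it should work is not correct. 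The indeterminacy between two lifts $\tilde\theta,\tilde\theta'$ of the orbit morphism $\theta$ through a fibration is \emph{not} confined to $\ker\big(\ab_{p+1}(\nss)\to\ab_{p+1}(\ns)\big)$: the fibres of $\psi$ carry indeterminacy at every level of the bundle decomposition of $\nss$, and a character of $\nss$ has no reason to be constant along that lower-level indeterminacy. Matching the top vertical frequency therefore does not make $\Phi\co\tilde\theta$ factor through $\theta$, and the split structure of $\ns_p$ controls $\ns_p$, not the behaviour of $\Phi$ on the fibres of $\psi$.

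What your transport step is really asking for is the cohomological statement that the paper isolates as Proposition \ref{prop:struc-k=p+1}: for a \textsc{cfr} $(p+1)$-step $p$-homogeneous nilspace $\ns$ there is an injective morphism from $\ns$ into an abelian group nilspace (indeed $\ns\times \mc{D}_p(\mb{Z}_p^m)$ is isomorphic to one), which immediately gives the separating family of morphisms into $\mc{D}_{p+1}(\mb{T})$ by composing with characters. The proof of that proposition is not a formal descent: it passes to the fibre product $T=\nss\times_{\ns_p}\ns$, splits the resulting degree-$(p+1)$ extension of $\nss$ via Proposition \ref{prop:ext-split}, quotients by $\ker(\phi_{p+1})$ using the analysis of such quotients in Proposition \ref{prop:factorization-last-str-group}, and then splits a degree-$p$ extension using Lemma \ref{lem:Claim1} and Corollary \ref{cor:spli-klep}. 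Some structural input of this kind is unavoidable, since the obstruction to realizing $\eta$ as a vertical frequency is precisely the class of the degree-$(p+1)$ extension $\ns\to\ns_p$ pushed forward along $\eta$, and showing that it vanishes is where all the work lies. Your proposal correctly identifies where the difficulty sits and why the method stops at $k=p+1$, but it does not close that difficulty.
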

\noindent The proof of this theorem uses a reformulation of Question \ref{Q:Abramov} as a problem purely about nilspaces, which we discuss in Section \ref{sec:ergodic} (see Proposition \ref{prop:Abrachar} and Question \ref{Q:SAphom}) and which seems of interest as a possible way to make further progress on this question.

Finally, Section \ref{sec:inverse} contains applications concerning arithmetic combinatorics, which further illustrate the relevance of $p$-homogeneous nilspaces to higher-order Fourier analysis in characteristic $p$. In particular,  we use these nilspaces to give a new proof of the inverse theorem for Gowers norms on $\mb{F}_p^n$ from \cite{T&Z-Low}, as well as regularity theorems in this setting. The idea of the proof is to start from the more general inverse theorem in the nilspace approach from \cite{CScouplings,CSinverse}, which tells us that a function $f$ with non-trivial Gowers $U^{k+1}$ norm on $\mb{F}_p^n$ correlates with a complex-valued function which factors through a highly-balanced morphism $\phi$ from $\mb{F}_p^n$ to a \textsc{cfr} $k$-step nilspace $\ns$. Thanks to Theorem \ref{thm:intro-1}, we deduce that $\ns$ is $p$-homogeneous. Then Theorem \ref{thm:general-p-hom-intro} enables us to lift the morphism $\phi$ through a simpler finite abelian group nilspace, belonging to the class $\mc{Q}_{p,k}$. This, combined with a standard Fourier decomposition on the abelian group underlying this nilspace, yields a non-classical phase polynomial of degree $\leq k$ correlating non-trivially with the original function $f$, as required. Moreover, for $k\leq p$, using Theorem \ref{thm:intro-3} instead of Theorem \ref{thm:general-p-hom-intro} we obtain the inverse theorem with \emph{classical} polynomials; see Theorem \ref{thm:InvThm-k=p}. Note that the case $k=p$ of this result was obtained only recently, independently, in \cite{Berger&al}.

The proof strategy on $\mb{F}_p^n$ outlined above is similar to the one used in the integer setting in \cite{CSinverse}; both are rooted in the general inverse theorem from \cite{CSinverse}, and the differences arise only once we have to deal with $p$-homogeneous nilspaces here, instead of toral nilspaces in the integer setting. So far, this general strategy does not yield effective bounds. Currently, the proofs of inverse theorems for Gowers norms with the best effective bounds work with much more specific strategies in each setting; see the recent breakthroughs of Manners in the integer setting \cite{M}, and of Gowers and Mili\'cevi\'c in the characteristic-$p$ setting \cite{GM2}. It would be very interesting to know if a more general strategy can also yield effective bounds, perhaps even an effective inverse theorem for general finite abelian groups.

\section{$p$-homogeneous nilspaces as images of highly balanced morphisms on $\mb{Z}_p^n$}\label{sec:bridge}
\noindent The goal of this section is to prove Theorem \ref{thm:main1-intro}. We first summarize the strategy, by formulating its main ingredients in the three propositions below and then explaining how these are combined to prove the theorem. Then the work divides into subsections dedicated to proving each of the propositions.

Before we state the three main propositions, we need to recall some terminology and an important initial assumption related to the notion of balanced morphisms. For any compact metrizable topological space $X$, we denote by $\mc{P}(X)$ the space of Borel probability measures on $X$  equipped with the weak topology, which is then also compact metrizable \cite[Theorem (17.22)]{K}. We can then recall the notion of balanced morphism used in \cite{CSinverse}.

\begin{defn}[Balance]\label{def:balance}
Let $\nss$ be a $k$-step compact nilspace. For each $n\in \mb{N}$ fix a metric $d_n$ on $\mc{P}(\cu^n(\nss))$. Let $\ns$ be a compact nilspace, and let $\phi:\ns\to \nss$ be a (continuous) morphism. Then for $b>0$ we say that $\phi$ is \emph{$b$-balanced} if for every $n\leq 1/b$ we have $d_n\big(\mu_{\cu^n(\ns)}\co(\phi^{\db{n}})^{-1}, \mu_{\cu^n(\nss)}\big)\leq b$.
\end{defn}

\begin{remark}\label{rem:metconv}
The notion of balance thus depends on the choice of metrics $d_n$. Throughout this paper we adopt the following convention: once we have fixed a metric $d$ on a compact nilspace $\nss$, this automatically induces a metric on each cube set $\cu^n(\nss)$, $n\in \mb{N}$ (we choose the $\ell_{\infty}$-metric $(\q,\q')\mapsto \max_{v\in \db{n}} d(\q(v),\q'(v))$), and this in turn induces a metric $d_n$ on $\mc{P}(\cu^n(\nss))$ for each $n$ in a standard way (e.g.\ the L\'evy-Prokhorov metric). Thus, fixing a metric on $\nss$ is enough to fix the notion of balanced morphisms into $\nss$.
\end{remark}
Let us now state the main propositions.
\begin{proposition}\label{prop:b-bal-p-hom-finite}
Let $\nss$ be a $k$-step \textsc{cfr} nilspace, let $d$ be a metric generating the topology on $\nss$, and let $p$ be a prime. There exists $b=b(\nss,d,p)>0$ such that the following holds: if for some $D$ there is a $b$-balanced morphism $\varphi:\mc{D}_1(\mb{Z}_p^D)\to \nss$, then every structure group of $\nss$ is a finite elementary abelian $p$-group, and in particular $\nss$ is a finite set.
\end{proposition}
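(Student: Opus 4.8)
The plan is to reduce the statement, via the canonical decomposition of $\nss$ as a $k$-fold compact abelian bundle with structure groups $\ab_1,\dots,\ab_k$, to the assertion that each $\ab_i$ has exponent dividing $p$: a compact abelian group of exponent dividing $p$ and finite rank is isomorphic to $\mb{Z}_p^{d}$ for some $d$, and a $k$-fold abelian bundle over a point with finite structure groups is itself finite. I would argue by induction on the number of steps $k$. The bundle projection $\pi\colon\nss\to\nss_{k-1}$ is a fibration, so $\pi^{[n]}$ maps $\cu^n(\nss)$ onto $\cu^n(\nss_{k-1})$ and pushes $\mu_{\cu^n(\nss)}$ to $\mu_{\cu^n(\nss_{k-1})}$; it follows that $\pi\co\varphi$ is $b'$-balanced for some $b'=b'(b,\nss)$ with $b'\to 0$ as $b\to 0$ (only the finitely many levels $n\le 1/b$ enter, and each $\pi^{[n]}$ is uniformly continuous). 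Hence, for $b$ small enough, the inductive hypothesis applies to $\nss_{k-1}$: the groups $\ab_1,\dots,\ab_{k-1}$ have exponent dividing $p$ and $\nss_{k-1}$ is finite. It then remains to prove the same for the top structure group $\ab_k$.

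The algebraic heart of the matter is that the ``degree-$k$ part'' of a morphism from a $p$-torsion group into $\nss$ is $p$-torsion-valued in $\ab_k$. Precomposing $\varphi$ with the affine maps $\ell\colon\mb{Z}_p^k\to\mb{Z}_p^D$ yields morphisms $\varphi\co\ell\in\hom(\mc{D}_1(\mb{Z}_p^k),\nss)$ and hence $k$-cubes $(\varphi\co\ell)|_{\{0,1\}^k}\in\cu^k(\nss)$, whose images under $\pi^{[k]}$ I will follow in the finite set $\cu^k(\nss_{k-1})$. For any abelian group $B$ and any polynomial map $f$ of degree $\le k$ from a $p$-torsion abelian group to $B$, the iterated difference $c:=\parc_{h_1}\cdots\parc_{h_k}f$ is a constant with $pc=0$: the operator $\parc_{ph_1}$ equals $\parc_0$ and hence annihilates everything, while applied to the degree-$\le 1$ map $\parc_{h_2}\cdots\parc_{h_k}f$ it returns $\sum_{j=0}^{p-1}\parc_{h_1}\cdots\parc_{h_k}f(\,\cdot\,+jh_1)=pc$. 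Translating through the bundle structure: if $\Psi_1,\Psi_2\in\hom(\mc{D}_1(\mb{Z}_p^k),\nss)$ satisfy $\pi\co\Psi_1=\pi\co\Psi_2$, then $\Psi_1-\Psi_2$ is a polynomial map $\mb{Z}_p^k\to\ab_k$ of degree $\le k$, so its ``top alternating sum'' $\sum_{v\in\{0,1\}^k}(-1)^{|v|}(\Psi_1-\Psi_2)(v)$ lies in $\ab_k[p]$. Consequently, over each base cube $\bar c\in\cu^k(\nss_{k-1})$, and relative to one fixed lift for each of the finitely many preimages of $\bar c$ in $\hom(\mc{D}_1(\mb{Z}_p^k),\nss_{k-1})$, the cubes reachable by $\varphi^{[k]}$ over $\bar c$ lie in a finite union of cosets of the proper closed subgroup $\{a\in\ab_k^{\{0,1\}^k}\colon\sum_v(-1)^{|v|}a_v\in\ab_k[p]\}$ of the fibre, the latter being a torsor over $\cu^k(\mc{D}_k(\ab_k))=\ab_k^{\{0,1\}^k}$.

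To conclude I would argue by contradiction: suppose $\ab_k$ has exponent not dividing $p$. Disintegrate $\mu_{\cu^k(\nss)}$ over the finite base $\cu^k(\nss_{k-1})$, each of whose atoms carries positive mass; the conditional measures are then Haar on the fibres. For $b$ small, the balance of $\varphi$, together with that of $\pi\co\varphi$ controlling the base marginal, forces the conditional pushforwards of $\mu_{\cu^k(\mc{D}_1(\mb{Z}_p^D))}$ under $\varphi^{[k]}$ to be $\beta(b)$-close to Haar on each fibre, with $\beta(b)\to 0$ as $b\to 0$. By the previous paragraph each such conditional pushforward is supported on a union of cosets of a fixed proper closed subgroup of the fibre. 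The remaining task is to show that such a union cannot be $\beta(b)$-dense in the fibre once $b$ is small in terms of $\nss,d,p$; granting this, choosing $b$ accordingly contradicts balance, so $\ab_k$ has exponent dividing $p$. This completes the induction, and then, all structure groups being finite, $\nss$ is finite.

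The step I expect to be the main obstacle is precisely this last ``no dense union of cosets'' claim, which must hold uniformly even though the size of $\nss_{k-1}$ — and hence the number of cosets a priori in play — is unbounded. When $\ab_k$ has positive dimension the relevant subgroup has infinite index in the fibre, so a fixed finite union of its cosets manifestly has a robust hole (a quotient argument: a finite subset of an infinite compact group is not $\delta$-dense for $\delta$ small relative to the group and the number of points), and the claim is easy; this already forces $\nss$ to be finite. The genuinely delicate case is $\ab_k$ finite of exponent $>p$, where the index is finite, possibly as small as $p$: here the number of cosets must be controlled intrinsically rather than through $|\nss_{k-1}|$. This should come from the very restricted structure of the group of degree-$\le k$ polynomial maps $\mb{Z}_p^k\to\ab_k$ (only the constants when $\gcd(|\ab_k|,p)=1$, and in general with $p$-torsion leading coefficient) combined with the full strength of balance at cube level $k$ in the sense of Definition~\ref{def:balance}, rather than with equidistribution at the ground level alone. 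A viable route is to first dispose of the positive-dimensional case as above, reducing to the finite case, and then treat that case by a separate, more combinatorial argument.
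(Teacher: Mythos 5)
Your reduction to the top structure group is sound and matches the paper's: the pushforward argument showing $\pi_{k-1}\co\varphi$ is $b'(b)$-balanced, the induction, and the observation that a \textsc{cfr} nilspace with elementary abelian $p$-group structure groups is finite are all as in the actual proof. Your algebraic lemma is also correct: if $\pi_{k-1}\co\Psi_1=\pi_{k-1}\co\Psi_2$ for $\Psi_1,\Psi_2\in\hom(\mc{D}_1(\mb{Z}_p^k),\nss)$, then $\Psi_1-\Psi_2$ is a degree-$\le k$ polynomial into $\ab_k$ whose top iterated derivative is killed by $p$. And your handling of the positive-dimensional case works (the image under the alternating-sum homomorphism of a bounded union of cosets of $H$ is a finite set of cardinality controlled by $\nss$, hence misses a ball of radius depending only on $\nss$ and $d$). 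One side remark: your worry about uniformity in $|\nss_{k-1}|$ is a non-issue, since $b$ is allowed to depend on $\nss$, hence on $|\nss_{k-1}|$ and on $|\hom(\mc{D}_1(\mb{Z}_p^k),\nss_{k-1})|$.

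The genuine gap is exactly the case you flag and do not resolve: $\ab_k$ finite of exponent $>p$. There the subgroup $H=\{a\in\ab_k^{\db{k}}:\sum_v(-1)^{|v|}a_v\in\ab_k[p]\}$ has \emph{finite} index $[\ab_k:\ab_k[p]]$, possibly as small as $p$, while the number of cosets your argument produces over a base cube $\bar c$ is the number of morphisms $\bar\Psi\in\hom(\mc{D}_1(\mb{Z}_p^k),\nss_{k-1})$ restricting to $\bar c$ on $\db{k}$. By Lemma \ref{lem:polydet} such a morphism is determined by its values on $S_{k,k}$, which strictly contains $\db{k}\setminus\{1^k\}$ when $p>2$, so there can be many such $\bar\Psi$ — in particular more than $[\ab_k:\ab_k[p]]$ — and your coset constraint only compares cubes coming from morphisms with the \emph{same} projection $\bar\Psi$; across different $\bar\Psi$'s the reference lifts $\Psi_0^{\bar\Psi}$ sit in uncontrolled cosets of $H$. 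Hence the union of cosets can a priori be the entire fibre, and fibre-wise equidistribution then yields no contradiction. This is not a technicality to be deferred: it is the whole content of the statement in the finite-exponent case. The paper avoids the issue by arguing pointwise rather than distributionally: for each $z\in\ab_k$ it uses balance only through a density statement (Lemma \ref{lem:bal->dens}) to find a cube $\varphi\co\q^*$ $\epsilon$-close to the cube $\q_z$ supported at $1^k$, then extends $\varphi\co\q^*$ both to a morphism $g\in\hom(\mc{D}_1(\mb{Z}^k),\nss)$ (whose value at $(p,1^{k-1})$ records $pz$ up to small errors) and to the $p$-periodic morphism $\varphi\co h\in\hom(\mb{Z}_p^k,\nss)$, and concatenates the two resulting chains of $k$-cubes into a $(k+1)$-cube whose Gray-code alternating sum must vanish; this places $pz$ in an arbitrarily small neighbourhood of $0$, and the no-small-subgroups property of the Lie group $\ab_k$ finishes. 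To salvage your route you would need an input of this concatenation type anyway, so I would recommend restructuring the endgame along those lines rather than trying to bound the number of cosets.
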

\noindent The second proposition gives a sufficient condition for a nilspace to be $p$-homogeneous.
\begin{proposition}\label{prop:dimreduc}
Let $p$ be a prime and $k\in \mb{N}$. Then there exists $M\in \mb{N}$ such that the following holds. A $k$-step nilspace $\ns$ is $p$-homogeneous if it satisfies the following property:
\begin{eqnarray}\label{eq:dimreduc}
&&\hspace*{-1cm}\textrm{Every structure group of $\ns$ is an elementary abelian $p$-group, and for all $i\in [k]$,}\\
&&\hspace*{-1cm}\textrm{for every $f\in\hom\big(\mc{D}_1(\mb{Z}_p^M),\ns_i)$ there exists $\tilde{f}\in\hom\big(\mc{D}_1(\mb{Z}_p^M),\ns)$ such that $\pi_i\co \tilde{f}=f$.}\nonumber 
\end{eqnarray}
\end{proposition}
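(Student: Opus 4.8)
\emph{Strategy.} I would run the argument along the bundle decomposition $\{*\}=\ns_0,\ns_1,\dots,\ns_k=\ns$, with $i$‑th structure group $\ab_i$, separating an easy \emph{layer} step from a hard \emph{dimension reduction}. \emph{Layer Lemma:} if $\nss$ is an $\ell$‑step nilspace whose $(\ell-1)$‑step factor $\nss'$ is $p$‑homogeneous, whose top structure group $\ab$ is an elementary abelian $p$‑group, and such that for every $n$ every morphism $\mc{D}_1(\mb{Z}_p^n)\to\nss'$ lifts, through the projection $\nss\to\nss'$, to a morphism $\mc{D}_1(\mb{Z}_p^n)\to\nss$, then $\nss$ is $p$‑homogeneous. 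Applying this with $(\nss,\nss')=(\ns_i,\ns_{i-1})$ for $i=1,\dots,k$, starting from the trivial $\ns_0$ and using at each step that the previous factor is already known $p$‑homogeneous, yields that $\ns$ is $p$‑homogeneous — provided one can check, for each $i$, the lifting hypothesis. Property~\eqref{eq:dimreduc} makes all $\ab_i$ elementary abelian $p$‑groups, so the Lemma applies once the liftings are in place, and it already supplies the liftings at the single dimension $M$ (apply \eqref{eq:dimreduc} with index $i$ and compose the lift down along $\ns\to\ns_i$: then every morphism $\mc{D}_1(\mb{Z}_p^M)\to\ns_{i-1}$ lifts to $\ns_i$). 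So the heart of the proof is a \emph{dimension reduction for the lifting property}: for a suitable $M=M(p,k)$, if every morphism $\mc{D}_1(\mb{Z}_p^M)\to\ns_{i-1}$ lifts to $\ns_i$, then the same holds at every dimension $n$. Two elementary facts are used: (a) for any elementary abelian $p$‑group $\ab$ and any $\ell$, the nilspace $\mc{D}_\ell(\ab)$ is $p$‑homogeneous (immediate from Theorem~\ref{thm:intro-1}, or directly since $\binom{p}{j}\equiv 0\bmod p$ for $0<j<p$); and (b) morphisms out of $\mc{D}_1(\mb{Z}^n)$ lift through nilspace fibrations, including the relative form that a lift defined over the sub‑nilspace $[0,p-1]^n$ extends to a lift over all of $\mc{D}_1(\mb{Z}^n)$ (completing one vertex at a time via corner completion in the fibration).

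\emph{Proof of the Layer Lemma.} Given $f\in\hom(\mc{D}_1(\mb{Z}^n),\nss)$ and $\pi'\colon\nss\to\nss'$: since $\nss'$ is $p$‑homogeneous, $(\pi'\circ f)|_{[0,p-1]^n}$ descends to a morphism $\bar g\colon\mc{D}_1(\mb{Z}_p^n)\to\nss'$, which by hypothesis lifts to a morphism $G\colon\mc{D}_1(\mb{Z}_p^n)\to\nss$. Then $G|_{[0,p-1]^n}$ is a morphism of the sub‑nilspace $[0,p-1]^n\subset\mc{D}_1(\mb{Z}^n)$ lifting $(\pi'\circ f)|_{[0,p-1]^n}$, and by~(b) it extends to a morphism $\widetilde G\colon\mc{D}_1(\mb{Z}^n)\to\nss$ with $\pi'\circ\widetilde G=\pi'\circ f$. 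Now $f$ and $\widetilde G$ both lift $\pi'\circ f$, so $\rho:=f-\widetilde G$ is a well‑defined morphism $\mc{D}_1(\mb{Z}^n)\to\mc{D}_\ell(\ab)$ (the lifts of $\pi'\circ f$ forming a torsor over $\hom(\mc{D}_1(\mb{Z}^n),\mc{D}_\ell(\ab))$). By~(a), $\mc{D}_\ell(\ab)$ is $p$‑homogeneous, so $\rho|_{[0,p-1]^n}$ descends to a morphism $\bar\rho\colon\mc{D}_1(\mb{Z}_p^n)\to\mc{D}_\ell(\ab)$. Hence $f|_{[0,p-1]^n}=\widetilde G|_{[0,p-1]^n}+\rho|_{[0,p-1]^n}=G|_{[0,p-1]^n}+\rho|_{[0,p-1]^n}$ descends to $G+\bar\rho$, which is again a morphism $\mc{D}_1(\mb{Z}_p^n)\to\nss$ by the same torsor structure (adding a morphism into $\mc{D}_\ell(\ab)$ to $G$ preserves being a morphism, $\ab$ being the top structure group). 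So $\nss$ is $p$‑homogeneous.

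\emph{The main obstacle.} Everything thus reduces to the dimension reduction for liftings, which is by far the hardest step. I would phrase it cohomologically: the extension $\ns_i\to\ns_{i-1}$ is classified by a class $\eta$ in the relevant nilspace cohomology of $\ns_{i-1}$ with coefficients in $\ab_i$, and a morphism $g\colon\mc{D}_1(\mb{Z}_p^n)\to\ns_{i-1}$ lifts iff the pulled‑back class $g^*\eta\in\mr{H}^{\bullet}(\mc{D}_1(\mb{Z}_p^n),\ab_i)$ vanishes. The dimension‑$M$ hypothesis says exactly that $(g\circ\theta)^*\eta=\theta^*(g^*\eta)=0$ for every morphism $\theta\colon\mc{D}_1(\mb{Z}_p^M)\to\mc{D}_1(\mb{Z}_p^n)$ (since $g\circ\theta$ lifts), i.e.\ that $g^*\eta$ pulls back to $0$ under every such $\theta$; one must show that when $M$ is large in terms of $p$ and $k$ this forces $g^*\eta=0$. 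The way I would attack this is through explicit cochain representatives: $g^*\eta$ is represented by a nilspace cocycle on $\mc{D}_1(\mb{Z}_p^n)$ of bounded complexity — concretely a polynomial cochain of degree bounded in terms of $p$ and $i$, incorporating the non‑classical degrees that occur in characteristic $\le k$ — the coboundary relation pins it down modulo a top homogeneous, essentially multilinear, part, and a top part vanishing on all $M$‑dimensional coordinate subspaces vanishes identically once $M$ exceeds that degree bound (by polarization when $p$ is large, and by a more delicate finite‑differencing argument when $p$ is small). Taking $M=M(p,k)$ to be the maximum of the resulting thresholds over $i\in[k]$ then finishes the proof; if convenient one may first reduce to $\ns$ finite via the inverse limit theorem, the hypotheses passing to the \textsc{cfr} factors and $p$‑homogeneity to inverse limits. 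This local‑to‑global statement for bounded‑complexity cohomology classes is where essentially all the difficulty lies.
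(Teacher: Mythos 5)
Your ``Layer Lemma'' and its proof are sound, and in fact they reproduce the final step of the paper's argument: there too one takes a lift $\tilde f$ of $\pi_{k-1}\co f$, observes that $f-\tilde f$ is a morphism into $\mc{D}_k(\ab_k)$, and uses that $\mc{D}_k(\ab_k)$ is $p$-homogeneous for elementary abelian $\ab_k$ (Lemma \ref{lem:elemabpolycase}) together with the extension-through-fibrations fact (Corollary \ref{cor:liftthrufibgen}). Your observation that \eqref{eq:dimreduc} supplies one-layer liftings at dimension $M$ is also correct. The problem is that your reduction to the Layer Lemma requires the lifting hypothesis at \emph{every} dimension $n$, so the entire proof rests on the ``dimension reduction for liftings,'' and for that step you offer only a heuristic. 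The cohomological sketch — that $g^*\eta$ is a bounded-degree polynomial cochain whose vanishing on all $M$-dimensional affine images forces global vanishing, ``by polarization when $p$ is large and a more delicate finite-differencing argument when $p$ is small'' — is not an argument; the low-characteristic case is exactly the content of the proposition, and no such local-to-global rigidity statement for nilspace extension classes over $\mc{D}_1(\mb{Z}_p^n)$ is established or even precisely formulated. As it stands the central step is missing.

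It is worth seeing how the paper avoids this. Rather than reducing the \emph{lifting} property from dimension $n$ to dimension $M$, it reduces the \emph{$p$-homogeneity} condition itself: Proposition \ref{prop:FD-equiv} characterizes $p$-homogeneity by the inclusion $\hom_p^n(\ns)\subset\hom(\mc{D}_1(\mb{Z}_p^n),\ns)$ for all $n$ (where $\hom_p^n$ is defined via the maximal cube $\q^*_{p,n}$), and Proposition \ref{prop:fordimreduc} shows that the single inclusion at $n=M=p^{k+2}$ suffices. The latter is purely combinatorial: by Lemma \ref{lem:fordimreduc-2} one only needs to test compositions $f\co T\co\phi$ with $\phi$ a $p$-face-map of dimension $k+1$; each coordinate of $T\co\phi$ is one of the $p^{k+2}$ affine-linear maps $\mb{Z}_p^{k+1}\to\mb{Z}_p$, so all such compositions factor through a single universal affine map $T^*$ into $\mb{Z}_p^{p^{k+2}}$ followed by a coordinate-selecting $p$-discrete-cube morphism — and the hypothesis at dimension $M$ applies to $f\co\psi$. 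With this in hand, the lifting hypothesis in \eqref{eq:dimreduc} is only ever invoked at the single dimension $M$, which is exactly what the statement provides. If you want to salvage your route, you would need to prove your local-to-global claim for extension classes, which is at least as hard as Proposition \ref{prop:fordimreduc}; reorganizing around the $\hom_p^M$ criterion is the cleaner path.
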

\noindent In the next section we will see with additional tools that, in fact, a converse to this proposition holds as well, so that property \eqref{eq:dimreduc} for $M$ sufficiently large can be used as an equivalent definition of $p$-homogeneous nilspaces; see Proposition \ref{prop:lifting-equiv}.

The last ingredient for Theorem \ref{thm:main1-intro} tells us that if $\ns'$ is $p$-homogeneous and $\varphi'\in \hom(\mc{D}_1(\mb{Z}_p^D), \ns')$ is sufficiently balanced, then any nilspace morphism $\mc{D}_1(\mb{Z}_p^M)\to\ns'$ can be factored through a much simpler morphism $\mc{D}_1(\mb{Z}_p^M)\to \mc{D}_1(\mb{Z}_p^D)$ (this latter morphism is thus an affine linear map $\mb{F}_p^M\to\mb{F}_p^D$).
\begin{proposition}\label{prop:main}
Let $\ns'$ be a $k$-step \textsc{cfr} $p$-homogeneous nilspace and let $M\in\mb{N}$. Then there exists $b'=b'(\ns',M)>0$ such that the following holds. If for some $D\in \mb{N}$ there is a $b'$-balanced morphism $\varphi'\in \hom(\mc{D}_1(\mb{Z}_p^D),\ns')$, then for every morphism $f\in \hom(\mc{D}_1(\mb{Z}_p^M),\ns')$ there exists a morphism $g\in \hom(\mc{D}_1(\mb{Z}_p^M),\mc{D}_1(\mb{Z}_p^D))$ such that $f=\varphi'\co g$.
\end{proposition}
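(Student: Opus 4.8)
The goal is to show that, given enough balance, every morphism $f\in\hom(\mc{D}_1(\mb{Z}_p^M),\ns')$ factors as $f=\varphi'\co g$ for some morphism $g\in\hom(\mc{D}_1(\mb{Z}_p^M),\mc{D}_1(\mb{Z}_p^D))$ (necessarily an affine-linear map $\mb{F}_p^M\to\mb{F}_p^D$). The natural strategy is an inductive one on the step $k$, built on the abelian bundle decomposition $\ns'=\ns'_k\to\ns'_{k-1}\to\cdots\to\ns'_0=\{*\}$, where each $\ns'_i$ is an extension of $\ns'_{i-1}$ by the structure group $\ab_i=\ab_i(\ns')$, which by Proposition \ref{prop:b-bal-p-hom-finite} is a finite elementary abelian $p$-group once $\varphi'$ is sufficiently balanced. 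The base case $k=0$ is trivial, and at each inductive step we have, by the induction hypothesis applied to $\pi_{k-1}\co\varphi'$ (which is itself a balanced morphism into the $(k-1)$-step \textsc{cfr} $p$-homogeneous nilspace $\ns'_{k-1}$), an affine-linear $g_{k-1}:\mb{F}_p^M\to\mb{F}_p^D$ with $\pi_{k-1}\co f=\pi_{k-1}\co\varphi'\co g_{k-1}$. The task is then to correct $g_{k-1}$ on the top level so that the lift also matches in the last coordinate; the obstruction to doing so lives in a cohomology-type group measuring the difference of two morphisms agreeing below the top level, and this difference is a cocycle on $\mc{D}_1(\mb{Z}_p^M)$ valued in $\ab_k$ of degree $\le k$.

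The heart of the argument is therefore: the set of morphisms $h:\mc{D}_1(\mb{Z}_p^M)\to\ns'$ lifting a fixed $\pi_{k-1}\co f$ is a coset of the group $\poly_{\le k}(\mb{Z}_p^M,\ab_k)$ of degree-$\le k$ polynomial maps $\mb{F}_p^M\to\ab_k$ (this uses the $p$-homogeneity of $\ns'$, so that restricting $\mb{Z}$-morphisms to $[0,p-1]^M$ gives $\mb{Z}_p$-morphisms, hence the relevant cocycle spaces are the mod-$p$ ones); similarly for $\varphi'\co g'$ as $g'$ ranges over affine-linear maps reducing to $g_{k-1}$. Balance of $\varphi'$ forces $\varphi'^{\db n}$ to equidistribute in $\cu^n(\ns')$, and in particular it forces the pushforward of Haar measure on $\mb{F}_p^D$ under $\varphi'$ to be close to Haar measure on $\ns'$; by a compactness/finiteness argument (the nilspace is a finite set, and for $D$ large "approximately Haar-distributed" on a finite set forces exact equidistribution of each fiber) this actually makes $\varphi'$ a fibration, and a fibration whose associated maps on cube sets are surjective. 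One then argues that the induced map on the degree-$\le k$ polynomial spaces, $\poly_{\le k}(\mb{Z}_p^D,\ab_k)\to\poly_{\le k}(\mb{Z}_p^M,\ab_k)$ given by precomposition with an affine-linear surjection, hits the required cocycle: concretely, pick any affine-linear surjection $\sigma:\mb{F}_p^D\to\mb{F}_p^M$ (possible as $D\ge M$ when $D$ is large), so that every polynomial on $\mb{F}_p^M$ pulls back to one on $\mb{F}_p^D$, and adjust $g_{k-1}$ along the fibers of $\varphi'$ using this. The point is that the space of morphisms $\mc{D}_1(\mb{Z}_p^M)\to\mc{D}_1(\mb{Z}_p^D)$ is rich enough — affine-linear maps $\mb{F}_p^M\to\mb{F}_p^D$ — to realize, after composition with $\varphi'$, any element of the target coset, because balance pins down $\varphi'$ tightly enough on cubes of dimension up to roughly $k$ (which is all that matters for a $k$-step nilspace, by the Generalized Von Neumann-type input invoked in the proof of Theorem \ref{thm:main1-intro}, or just by the combinatorics of $k$-step nilspaces directly).

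The main obstacle, and where the quantitative dependence $b'=b'(\ns',M)$ is spent, is making precise the step "a sufficiently balanced morphism from $\mb{F}_p^D$ is effectively a fibration with surjective cube maps, uniformly enough that every morphism from $\mc{D}_1(\mb{Z}_p^M)$ lifts". The delicate point is that a priori balance only controls $\varphi'$ on cubes of dimension $\le 1/b'$, and one must (i) choose $b'$ small enough, depending on $M$ as well as on $\ns'$, that this controls all morphisms $\mc{D}_1(\mb{Z}_p^M)\to\ns'$ — a morphism from an $M$-dimensional cube is determined by its values, so one needs equidistribution of $\varphi'^{\db N}$ for $N$ comparable to $M$ — and (ii) pass from approximate equidistribution to the exact, combinatorial statement that lifts exist, which requires a finiteness/rigidity argument exploiting that $\ns'$ is a finite set (so that once $D$ is large, "$\varphi'^{\db n}$ is $b'$-close to uniform on the finite set $\cu^n(\ns')$" upgrades to genuine surjectivity of $\varphi'^{\db n}$, and a pigeonhole/averaging choice of fiber elements assembles into an affine-linear $g$). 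Handling the interaction between the inductive correction on the structure group $\ab_k$ and the affine-linearity constraint on $g$ — ensuring the correction can be taken affine-linear on $\mb{F}_p^M$ and still lifts through $\varphi'$ — is the technical core, and I expect it to occupy the bulk of the proof.
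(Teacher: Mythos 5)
Your inductive framing is the same as the paper's: reduce mod $\pi_{k-1}$, get $g_{k-1}$ with $\pi_{k-1}\co\varphi'\co g_{k-1}=\pi_{k-1}\co f$, and identify the obstruction as a degree-$\le k$ polynomial $q:\mb{Z}_p^M\to\ab_k$. But the mechanism you propose for killing $q$ has a fatal flaw. You claim that sufficient balance upgrades $\varphi'$ to a fibration (with surjective cube maps in all dimensions), and then you correct along its fibers. This cannot work: a fibration image of the $1$-step nilspace $\mc{D}_1(\mb{Z}_p^D)$ has step at most $1$, so for $k>1$ no morphism $\mc{D}_1(\mb{Z}_p^D)\to\ns'$ is ever a fibration, however balanced. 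The paper makes exactly this point in the remark following the proof (it is the reason $b'$ must depend on $M$). Balance gives you cube-surjectivity only up to dimension $1/b'$, and that is qualitatively weaker than the corner-lifting property you would need. Relatedly, your "pull back $q$ through an affine surjection $\sigma:\mb{F}_p^D\to\mb{F}_p^M$" step does not produce a correction that remains inside the class of \emph{affine-linear} maps $g:\mb{F}_p^M\to\mb{F}_p^D$: the error $q$ has degree up to $k$, and there is no algebraic reason an affine-linear perturbation of $g_{k-1}$ composed with $\varphi'$ should produce exactly $-q$.

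What actually closes the gap in the paper is a counting argument, not an algebraic lifting. First, by uniqueness of corner completion in a $k$-step nilspace, $q$ is determined by its values on the simplex $S_{k+1,M}=\{z\in[0,p-1]^M:|z|<k+1\}$ (Lemma \ref{lem:polydet}), so one only needs to hit the finitely many prescribed values $(f(z))_{z\in S}$ inside the fiber $y+\ab_k^S$. Second, one proves that as $(x',t')$ ranges over all affine maps, the tuple $(\varphi'(x'+t'\cdot z))_{z\in S}$ is $\varepsilon$-equidistributed in that fiber (Lemma \ref{lem:charfreekey}); taking $\varepsilon<1/|\ab_k^S|$ gives surjectivity onto the fiber, hence some $g'$ with $\varphi'\co g'=f$ on $S$, hence everywhere. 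The equidistribution itself is where balance is spent: one decomposes fiber-mean-zero test functions into rank-one products with one factor of mean zero on every $\ab_k$-coset (Lemma \ref{lem:Wdecomp}), shows that such a factor composed with a balanced $\varphi'$ has small $U^k$ norm (Lemma \ref{lem:XcharUknorm}), and then applies the generalized von Neumann inequality for the pattern $(x+t_1z_1+\cdots+t_Mz_M)_{z\in S_{k+1,M}}$ (Theorem \ref{thm:GGVN}). Your proposal gestures at "pigeonhole/averaging" at the very end, but none of these three ingredients is present, and the route you describe in their place rests on a false premise.
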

\noindent Before we go into the proofs of the above three propositions, let us explain how these results imply Theorem \ref{thm:main1-intro}. The following diagram helps to visualize the argument.
\begin{center}
\begin{tikzpicture}
  \matrix (m) [matrix of math nodes,row sep=4em,column sep=4em,minimum width=2em]
  {
           &    & \mc{D}_1(\mb{Z}_p^M) \\
     \mc{D}_1(\mb{Z}_p^D) & \ns &  \\
      & \ns'=\ns_{k-1} &\\};
  \path[-stealth]
    (m-2-1) edge node [above] {$\varphi$} (m-2-2)
    (m-1-3) edge node [above] {$g$} (m-2-1)
    (m-1-3) edge node [below] {$\tilde{f}$} (m-2-2)
    (m-2-1) edge node [left] {$\varphi'=\pi_{k-1}\co \varphi$\;} (m-3-2)
    (m-1-3) edge node [right] {$f$} (m-3-2)
    (m-2-2) edge node [above] {$\pi_{k-1}\qquad\,$} (m-3-2);
\end{tikzpicture}
\end{center}
\begin{proof}[Proof of Theorem \ref{thm:main1-intro}]
We argue by induction on $k$. The case $k=1$ is clear since then by Proposition \ref{prop:b-bal-p-hom-finite} we have $\ns=\mc{D}_1(\mb{Z}_p^m)$ for some $m\in \mb{N}$, a $p$-homogeneous nilspace. For $k>1$, letting $\varphi$ be the supposed $b$-balanced morphism $\mc{D}_1(\mb{Z}_p^D)\to \ns$, let $\varphi'=\pi_{k-1}\co \varphi\in \hom(\mc{D}_1(\mb{Z}_p^D),\ns')$, where $\ns'$ is the $(k-1)$-step nilspace factor, i.e.\ $\ns'=\ns_{k-1}=\pi_{k-1}(\ns)$.

Note that, whatever compatible metric $d'$ we may have fixed on $\ns_{k-1}$, we have that for every $b^*>0$, if $b>0$ is sufficiently small then $\varphi'$ is $b^*$-balanced (relative to the metrics $d'_n$ on $\mc{P}(\cu^n (\ns'))$ induced by $d'$ as per Remark \ref{rem:metconv}). To see this, note that the pushforward map
$\mc{P}(\cu^n(\ns)) \rightarrow \mc{P}(\cu^n(\ns_{k-1}))$, $\nu \mapsto  \nu \co (\pi_{k-1}^{\db{n}})^{-1}$ is continuous, by definition of the weak topology and the continuity of $\pi_{k-1}$. Thus, for every $\epsilon>0$ there is $\delta_n>0$ such that the cube-set Haar measures $\mu_{\cu^n(\ns)}$, $\mu_{\cu^n(\ns_{k-1})}$ satisfy that for every $\nu\in \mc{P}(\cu^n (\ns))$, if $d_n(\nu,\mu_{\cu^n(\ns)})<\delta_n$ then $d_n'(\nu\co(\pi_{k-1}^{\db{n}})^{-1},\mu_{\cu^n(\ns_{k-1})})<\epsilon$ (where we use that $\mu_{\cu^n(\ns_{k-1})} = \mu_{\cu^n(\ns)}\co (\pi_{k-1}^{\db{n}})^{-1}$). Applying this with $\nu=\mu_{\cu^n(\mb{Z}_p^D)}\co(\varphi^{\db{n}})^{-1}$, we deduce that if $b< \min_{n\le 1/b^*} \delta_n(b^*)$, then $d_n'(\mu_{\cu^n(\mb{Z}_p^D)}\co ({\varphi'}^{\db{n}})^{-1},\mu_{\cu^n(\ns_{k-1})})\le b^*$ for all $n\le 1/b^*$. 

Hence, if $b$ is sufficiently small (depending on $\ns$ and in particular on the metric on $\ns_{k-1}$) then by induction $\ns'$ is $p$-homogeneous. By Proposition \ref{prop:b-bal-p-hom-finite}, if $b$ is small enough then the structure groups of $\ns$ are all finite elementary abelian $p$-groups. Then, by Proposition \ref{prop:dimreduc} it suffices to prove that the lifting property in \eqref{eq:dimreduc} holds. We claim that, since $\ns_{k-1}$ is $p$-homogeneous, the lifting property already holds for $i\leq k-1$. Indeed, for any $f\in\hom(\mc{D}_1(\mb{Z}_p^M),\ns_i)$, letting $q:\mb{Z}^M\to \mb{Z}_p^M$ be the natural quotient map, we have $f\co q \in\hom(\mc{D}_1(\mb{Z}^M),\ns_i)$. By Corollary \ref{cor:liftthrufibgen} applied with the fibration $\psi=\pi_i:\ns_{k-1}\to\ns_i$, there exists $f'\in\hom(\mc{D}_1(\mb{Z}^M),\ns_{k-1})$ such that $f\co q = \pi_i\co f'$. As $\ns_{k-1}$ is $p$-homogeneous, the restriction $f'|_{[0,p-1]^n}$ is a morphism $\tilde{f}\in \hom(\mc{D}_1(\mb{Z}_p^M),\ns_{k-1})$, and this morphism is a lift which proves our claim. Hence it now suffices to prove the lifting property for $i=k-1$, that is, that for some $M=M(k,p)$, for every $f\in\hom\big(\mc{D}_1(\mb{Z}_p^M),\ns_{k-1})$ there is $\tilde{f}\in\hom\big(\mc{D}_1(\mb{Z}_p^M),\ns)$ such that $\pi_{k-1}\co \tilde{f}=f$. Fix any such $f$. Applying Proposition \ref{prop:main} to $f$ with $\ns'=\ns_{k-1}$ and $\varphi'=\pi_{k-1}\co\varphi$ (with $b$ small so that $b^*$ is less than the parameter $b'(\ns',M)$ given by that proposition), we obtain $g\in \hom(\mc{D}_1(\mb{Z}_p^M),\mc{D}_1(\mb{Z}_p^D))$ such that $f=\varphi'\co g$. Letting $\tilde f = \varphi\co g$, we have $\pi_{k-1}\co \tilde f= \pi_{k-1}\co\varphi\co g = \varphi'\co g = f$.
\end{proof}

\addtocontents{toc}{\protect\setcounter{tocdepth}{1}}

\subsection{Proof of Proposition \ref{prop:b-bal-p-hom-finite}}\hfill\\
We argue by induction on $k$. The base case $k=0$ is trivial ($\nss$ is then the 1-point nilspace). Throughout this subsection let us denote $\mc{D}_1(\mb{Z}_p^D)$ simply by $\mb{Z}_p^D$ (the nilspace structure used on this group is $\mc{D}_1(\mb{Z}_p^D)$ throughout the proof).

First, by the same argument as in the above proof of Theorem \ref{thm:main1-intro}, for every $b^*>0$, if $b>0$ is sufficiently small then $\pi_{k-1}\co \varphi\in \hom(\mb{Z}_p^D, \nss_{k-1})$ is $b^*$-balanced. Hence, if $b$ is less than the constant $b(\nss_{k-1},d',p)$ (given by Proposition \ref{prop:b-bal-p-hom-finite} for $\nss_{k-1}$), then by induction all the structure groups $\ab_i(\nss)$ with $i<k$ are finite elementary abelian $p$-groups. In particular $\nss_{k-1}$ is a finite set. The proof thus reduces to showing that the last structure group $\ab_k=\ab_k(\nss)$ is also a finite elementary $p$-group. As $\ab_k$ is a (compact abelian) Lie group, by the no-small-subgroups property there is an open neighbourhood $U_0$ of $0\in\ab_k$ such that the only subgroup of $\ab_k$ included in $U_0$ is $\{0\}$. It suffices to show that for $b>0$ sufficiently small we have $pz\in U_0$ for all $z\in \ab_k$, as then the subgroup $p\cdot \ab_k$ is $\{0\}$. 

For $\epsilon>0$ and $\q\in \cu^k(\nss)$ let $B(\q,\epsilon):=\{\q'\in \cu^k(\nss): \forall\, v\in \db{k},\; d_{\nss}(\q'(v),\q(v))<\epsilon\}$.

Fix any $y\in\nss$. For every $z\in \ab_k$, let $\q_z$ denote the cube in $\cu^k(\nss)$ such that $\q_z(1^k)=y+z$ and $\q_z(v)=y$ for all $v\not=1^k$. Let $\epsilon>0$ be a parameter to be fixed later. 

Our first step in this proof is to use the balance property of $\varphi$ to show that
\begin{equation}\label{eq:Prop2.1-Step1}
\forall\, z\in \ab_k,\;\exists \q^*\in \cu^k(\mb{Z}_p^D)\textrm{ such that }\varphi\co\q^*\in B(\q_z,\epsilon).
\end{equation}
This will be a straightforward application of the following basic result.

\begin{lemma}\label{lem:bal->dens}
Let $V$ be a finite set and let $(W,d)$ be a compact metric space. Let $\mu$ be the uniform probability measure on $V$ and let $\nu$ be a strictly positive measure in $\mc{P}(W)$. Let $r$ be a metric on $\mc{P}(W)$. Then for every $\epsilon>0$ there exists $\delta=\delta(W,d,r,\epsilon)>0$ such that if $f:V\to W$ is a map satisfying $r(\mu\co f^{-1},\nu)\leq \delta$, then for every $y\in W$ there exists $x\in V$ such that $d(f(x),y)<\epsilon$. 
\end{lemma}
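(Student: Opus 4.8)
The plan is to argue by contradiction, after first reducing the statement to a purely measure-theoretic claim about the pushforward measure. Given $f\colon V\to W$, write $\lambda:=\mu\co f^{-1}\in\mc{P}(W)$; this is a probability measure supported on the finite set $f(V)$, namely $\lambda=\tfrac{1}{\abs{V}}\sum_{x\in V}\delta_{f(x)}$. For $y\in W$ and $s>0$ write $B(y,s):=\{w\in W:d(w,y)<s\}$. If $\lambda\big(B(y,s)\big)>0$ then $B(y,s)$ meets $f(V)$, so there is $x\in V$ with $d(f(x),y)<s$. Hence it suffices to prove: for every $\epsilon>0$ there is $\delta>0$ such that every $\lambda\in\mc{P}(W)$ with $r(\lambda,\nu)\le\delta$ satisfies $\lambda\big(B(y,\epsilon)\big)>0$ for all $y\in W$ \emph{simultaneously}. (As is the convention in this paper, we take $r$ to be a metric generating the weak topology on $\mc{P}(W)$.)

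Suppose this fails for some $\epsilon>0$. Then for each $n\in\mb{N}$ there are $\lambda_n\in\mc{P}(W)$ and $y_n\in W$ with $r(\lambda_n,\nu)\le 1/n$ and $\lambda_n\big(B(y_n,\epsilon)\big)=0$. Since $r$ metrizes the weak topology, $\lambda_n\to\nu$ weakly; and since $(W,d)$ is compact we may pass to a subsequence along which $y_n\to y$ for some $y\in W$. For all large $n$ we then have $d(y_n,y)<\epsilon/2$, hence $B(y,\epsilon/2)\subseteq B(y_n,\epsilon)$ and so $\lambda_n\big(B(y,\epsilon/2)\big)=0$. Applying the portmanteau theorem to the open set $B(y,\epsilon/2)$ gives
\[
\nu\big(B(y,\epsilon/2)\big)\;\le\;\liminf_{n\to\infty}\lambda_n\big(B(y,\epsilon/2)\big)\;=\;0 .
\]
But $B(y,\epsilon/2)$ is a nonempty open subset of $W$, so strict positivity of $\nu$ gives $\nu\big(B(y,\epsilon/2)\big)>0$, a contradiction. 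This proves the claim, and hence the lemma — with the minor caveat that the $\delta$ so produced also depends on $\nu$ (the $\nu$-free phrasing is legitimate once $\nu$ is regarded as fixed by the ambient context, as it is in all the intended applications).

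The only step carrying real content is obtaining a single $\delta$ that works uniformly in $y\in W$, and this is exactly what the compactness extraction $y_n\to y$ delivers; without compactness of $W$ the point $y$ at which strict positivity of $\nu$ is invoked need not exist. If one instead wants an explicit value of $\delta$, take $r$ to be the L\'evy--Prokhorov metric and argue as follows: first, by the same compactness-plus-strict-positivity argument, $c:=\inf_{y\in W}\nu\big(B(y,\epsilon/2)\big)>0$; then set $\delta:=\min\{\epsilon/4,c/2\}$, so that for any $\lambda$ with $r(\lambda,\nu)\le\delta$, applying the defining inequality of the L\'evy--Prokhorov metric to the set $B(y,\epsilon/2)$ (whose $\delta$-neighbourhood is contained in $B(y,\epsilon)$, as $\delta\le\epsilon/4$) yields $\lambda\big(B(y,\epsilon)\big)\ge\nu\big(B(y,\epsilon/2)\big)-\delta\ge c-c/2>0$. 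Either route is short; the substance lies entirely in the interplay between compactness of $W$ and strict positivity of $\nu$.
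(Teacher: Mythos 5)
Your proof is correct. Both you and the paper reduce the problem to showing that any measure weakly close to $\nu$ assigns positive mass to every $\epsilon$-ball, and both arguments rest on the same two pillars (compactness of $W$ and strict positivity of $\nu$), but the execution differs. The paper takes a finite $\epsilon/2$-net $F$ of $W$ and, for each $t\in F$, the continuous test function $g_t(y)=\max\{0,\epsilon/2-d(t,y)\}$; since $\int g_t\ud\nu>0$ for all $t$ in the \emph{finite} set $F$ and each functional $\kappa\mapsto\int g_t\ud\kappa$ is weakly continuous, a single $\delta$ works for all $t$ at once, and a point of $f(V)$ within $\epsilon/2$ of some net point is within $\epsilon$ of any given $y$. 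Your main route instead runs a sequential-compactness contradiction ($y_n\to y$ after passing to a subsequence) and invokes the portmanteau inequality for open sets. Your argument is slightly shorter but non-quantitative and leans on the portmanteau theorem; the paper's net-plus-test-functions argument produces $\delta$ directly from finitely many strict inequalities and avoids any appeal to portmanteau or to subsequences. Your Lévy--Prokhorov variant is in fact closer in spirit to the paper's proof, since the uniform lower bound $c=\inf_y\nu(B(y,\epsilon/2))>0$ plays the role of the finitely many positive integrals $\int g_t\ud\nu$. You are also right to flag that $\delta$ depends on $\nu$; the paper's own $\delta$ has the same hidden dependence (through the values $\int g_t\ud\nu$), and in all applications $\nu$ is determined by the ambient nilspace, so this is harmless. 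Finally, your implicit assumption that $r$ generates the weak topology is likewise needed in the paper's proof and is consistent with the conventions of Remark \ref{rem:metconv}.
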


\begin{proof}
By compactness there is a finite set $F\subset W$ such that $\bigcup_{t\in F} B_{\epsilon/2}(t)=W$, where $B_{\epsilon/2}(t)$ denotes the open ball of radius $\epsilon/2$ and center $t$. For each $t\in F$ let $g_t$ denote the continuous function $y\mapsto \max\{0,\epsilon/2-d(t,y)\}$ on $W$. Since $\nu$ is strictly positive, we have $\int_W g_t\ud\nu>0$ for every $t\in F$. Hence, for some $\delta>0$, if $\kappa\in \mc{P}(W)$ satisfies $r(\kappa,\nu)\leq\delta$, then $\int_W g_t \ud\kappa > 0$  for every $t\in F$. In particular, if $f:V\to W$ satisfies $r(\mu\co f^{-1},\nu)\leq \delta$, then with $\kappa=\mu\co f^{-1}$ we have $\int_W g_t~\ud\kappa = |V|^{-1}\sum_{x\in V}g_t(f(x))>0$, so there exists $x\in V$ with $d(f(x),t)<\epsilon/2$. Now let $y\in W$ be arbitrary. We have that some $t\in F$ satisfies $d(t,y)<\epsilon/2$ and some $x\in V$ satisfies $d(f(x),t)<\epsilon/2$, so $d(f(x),y)<\epsilon$.
\end{proof}

\noindent Applying Lemma \ref{lem:bal->dens} with $V=\cu^k(\mb{Z}_p^D)$, $W=\cu^k(\nss)$, and $f=\varphi^{\db{k}}$, we deduce that there exists $\tilde b=\tilde b(\nss,\epsilon)>0$ (having fixed the metrics as per Remark \ref{rem:metconv}) such that if $\varphi$ is $b$-balanced for $b<\tilde b$, then \eqref{eq:Prop2.1-Step1} holds. Since $\nss_{k-1}$ is a finite set and each fiber $\pi_{k-1}^{-1}(y)$, $y\in \nss_{k-1}$ is compact, we have $\min\{d_{\nss}(x,y):\pi_{k-1}(x)\neq \pi_{k-1}(y)\}>0$. Using this and the fact that each fiber $\pi_{k-1}^{-1}(y)$ is homeomorphic to $\ab_k$, for any open neighbourhood $U_1$ of $0$ in $\ab_k$ we can choose $\epsilon>0$ small enough to ensure that, for each $z\in \ab_k$, the cube $\varphi\co\q^*$ given by \eqref{eq:Prop2.1-Step1} equals $v\mapsto \q_z(v)+q_z(v)$, for some map $q_z:\db{k}\to \ab_k$ such that $q_z(v)\in U_1$ for all $v\in\db{k}$. Also, since the map $q_z$ is a difference of two cubes on $\nss$, we have $q_z\in \cu^k(\mc{D}_k(\ab_k))$ \cite[Definition 3.2.18 and Theorem 3.2.19]{Cand:Notes1}. 

In the second main step of the proof, we shall now extend the cube $\varphi \co \q^*$ in two different ways, thus producing two morphisms that will be used in the final step below (the combinatorial core of the proof) to deduce that $pz\in U_0$ as required. 

Our first extension of $\varphi \co \q^*$ is to a morphism $g\in \hom(\mc{D}_1(\mb{Z}^k),\nss)$. Recall from \cite[Corollary 2.2.17]{Cand:Notes1} that the cube $q_z$ can be extended to a morphism (polynomial map) $f\in \hom(\mc{D}_1(\mb{Z}^k),\mc{D}_k(\ab_k))$ of the form $f(t)=\sum_{w\in \db{k}} a_w\binom{t}{w}$ for $t\in \mb{Z}^k$, where the coefficients $a_w\in\ab_k$ are determined as finite sums and differences of the values of $q_z$ (see \cite[Lemma 2.2.5]{Cand:Notes1}). It follows that for any open neighbourhood $U_2$ of $0\in\ab_k$, we can set the previous neighbourhood $U_1$ so that, if $q_z$ is $U_1$-valued, then the extension $f$ satisfies $f(v)\in U_2$ for all $v=(v\sbr{1},\ldots,v\sbr{k})\in [0,p]^k$. We then define our first extension of $\varphi \co \q^*$ to be $g(v):=y+f(v\sbr{1},\ldots,v\sbr{k})+z\,v\sbr{1}\cdots v\sbr{k}$ in $\hom(\mc{D}_1(\mb{Z}^k),\nss)$.

Our second extension of $\varphi \co \q^*$ consists in extending to a morphism not from $\mb{Z}^k$ but from $\mb{Z}_p^k$. To do this we note that the standard $k$-cube $\q^*$ on $\mb{Z}_p^D$ is trivially extendable to a morphism $h\in \hom(\mb{Z}_p^k,\mb{Z}_p^D)$, namely, if $\q^*(v)=x+v\sbr{1}h_1+\cdots +v\sbr{k}h_k$ for $v\in \db{k}$, then $h(v)=x+v\sbr{1}h_1+\cdots +v\sbr{k}h_k$ for $v\in \mb{Z}_p^k$. The extension is then $\varphi \co h\in \hom(\mb{Z}_p^k,\nss)$.

We now come to the combinatorial core of the proof. Here we shall use the morphisms $g$ and $\varphi \co h$, and concatenations of cubes, to deduce that $pz\in U_0$. We first note that the morphism properties of $g$ and $\varphi \co h$ yield two chains of $p$ consecutive $k$-cubes which are usefully interrelated. More precisely, for each $i\in [p]$ let $\q_i\in \cu^k(\nss)$ be the cube obtained by restricting $g$ to $\{i-1,i\}\times\db{k-1}$, and similarly let $\q_i'\in\cu^k(\nss)$ be the restriction of $\varphi\co h$ to $\{i-1,i\} \times \db{k-1}$, thus obtaining the two chains of cubes $(\q_i)_{i\in [p]}$, $(\q_i')_{i\in [p]}$. We define a map $\wh{\q}$ which combines usefully the four $(k-1)$-cubes that form the ``extreme faces" of these two chains: let $\wh{\q}:\db{k+1}\to\nss$ be defined by $\wh{\q}(0,v_2,\ldots,v_{k+1}) = g(0,v_2,\ldots,v_k) = \q^*(0,v_2,\ldots,v_k)$, $\wh{\q}(1,v_2,\ldots,v_{k},0)=\varphi \co h (p,v_2,\ldots,v_k)= \q^*(0,v_2,\ldots,v_k)$ (where the last equality uses the $p$-periodicity of $h$), and finally $\wh{\q}(1,v_2,\ldots,v_k,1) = g(p,v_2,\ldots,v_k)$. We shall now show that $\wh{\q}\in \cu^{k+1}(\nss)$, using that $g$ and $\varphi\co h$ are morphisms, and using concatenations of cubes. 
To this end, we note the following useful fact: let us define a relation $\sim$ on $\cu^k(\nss)$ by declaring that $\q\sim\q'$ if the map $\tilde{\q}:\db{k+1}\to\nss$,  $\tilde\q(v,0)=\q(v)$, $\tilde\q(v,1)=\q'(v)$ ($v\in\db{k}$) is in $\cu^{k+1}(\nss)$; then the morphism property of $g$ implies that the cubes $\q_i$ defined above satisfy $\q_i\sim \q_j$ for each $i,j\in [p]$ (since there is a $(k+1)$-cube on $\mc{D}_1(\mb{Z}^k)$ with image $\{i-1,i\}\times\db{k-1}$ on one $k$-face and image $\{j-1,j\}\times\db{k-1}$ on the opposite $k$-face). Similarly, the morphism property of $\varphi\co h$ implies that $\q_i'\sim\q_j'$ for each $i,j\in [p]$. Now note that, by concatenation of cubes \cite[Lemma 3.1.7.]{Cand:Notes1}, the relation $\sim$ is transitive (it is also clearly reflexive and symmetric, by the nilspace axioms, so it is an equivalence relation). Applying this transitivity via the cubes $\q_1=\q_1'$ at one end of the chains, we deduce that for every $i\in [p]$ we have $\q_i\sim \q_i'$. Hence, for each $i\in [p]$, the map $\tilde\q_i:\db{k+1}\to\nss$ defined by $\tilde\q_i(v,0)=\q_i(v)$ and $\tilde\q_i(v,1)=\q'_i(v)$ ($v\in \db{k}$) is a cube. Note also that, for each $i\in[p-1]$, the cubes $\tilde \q_i$, $\tilde \q_{i+1}$ are adjacent in the sense that $\tilde \q_i(1,v)=\tilde\q_{i+1}(0,v)$ for every $v\in \db{k}$. Moreover, by the $p$-periodicity of $h$, we have $\varphi\co h(p,v)=\varphi\co h(0,v)$ for every $v\in \db{k-1}$. This implies that the map $\wh{\q}$ defined above is indeed in $\cu^{k+1}(\nss)$ as we claimed, since it is the concatenation of the cubes $\tilde \q_i$, $i\in [p]$. Now, to conclude this combinatorial argument, we note that the map $\db{k+1}\to\nss$ with constant value $y$ is also in $\cu^{k+1}(\nss)$, so the difference between $\wh{\q}$ and this constant cube must be a cube in $\cu^{k+1}\big(\mc{D}_k(\ab_k)\big)$, and must therefore have Gray-code alternating sum equal to 0 (see \cite[Definitions 3.2.18 and 2.2.30]{Cand:Notes1}). Hence, if $\epsilon>0$ is small enough, then the neighbourhood $U_2$ in the construction of $g$ (i.e.\ the smallness of the values of $f$) is small enough so that $pz\in U_0$. This proves that $\ab_k$ is an elementary abelian $p$-group.

To complete the proof of Proposition \ref{prop:b-bal-p-hom-finite}, it now only remains to prove that $\nss$ is finite. Since $\nss$ is a \textsc{cfr} nilspace, its structure groups are compact abelian Lie groups, so they are of the form $\mb{T}^{n_j}\times A_j$ where $n_j\ge 0$ are integers and $A_j$ are finite abelian groups for $j\in [k]$. Since we now know that each structure group is an elementary abelian $p$-group, we have $n_j=0$ for all $j\in [k]$, i.e.\ the structure groups are all finite. Now note, more generally, that if all the structure groups of a $k$-step nilspace $\ns$ are finite then $\ns$ is a finite set. This can be seen by induction on $k$, using the fact that for the factor map $\pi_{k-1}:\ns\to \ns_{k-1}$, each preimage $\pi_{k-1}^{-1}(x)$, $x\in\ns_{k-1}$ is in bijection with the structure group $\ab_k$ (see \cite[\S 3.2.3]{Cand:Notes1}).

This completes the proof or Proposition \ref{prop:b-bal-p-hom-finite}.

\begin{remark}\label{rem:gen-b-bal}
A straightforward adaptation of the above proof yields a generalization of Proposition \ref{prop:b-bal-p-hom-finite} where the 1-step $p$-homogeneous nilspaces $\mc{D}_1(\mb{Z}_p^D)$ can be replaced by the more general class of \textsc{cfr} \emph{w-$p$-homogeneous} nilspaces, introduced in Section \ref{sec:p-hom}. We do not need this generalization in this paper.
\end{remark}

\subsection{Proof of Proposition \ref{prop:dimreduc}}\hfill\\
We prove Proposition \ref{prop:dimreduc} by induction on the step of the nilspace. The argument uses several ingredients, each of which is relevant to $p$-homogeneous nilspaces in themselves.

The first main ingredient is a result which gives a further equivalent description of $p$-homogeneous nilspaces: Proposition \ref{prop:FD-equiv} below. To prove this, we want a useful way to decide whether a given map from the set $[0,p-1]^n\subset \mb{Z}^n$ into a nilspace $\ns$ can be extended to a morphism $\mc{D}_1(\mb{Z}^n)\to \ns$. The following tool will help to obtain a useful sufficient condition for such an extension.
\begin{defn}[Maximal cube]\label{def:max-cube-p}
For each integer $n\geq 0$ and prime $p$ we define the \emph{maximal cube} $\q^*_{p,n}$ as the following element of $\cu^{n(p-1)}(\mc{D}_1(\mb{Z}^n))$:
\begin{equation}\label{eq:maxcube-sp}
\forall\,v\in \db{n(p-1)},\;\, \q^*_{p,n}(v)  :=  \sum_{i=0}^{n-1} \big(v\sbr{i(p-1)+1}+v\sbr{i(p-1)+2}+\cdots+v\sbr{(i+1)(p-1)}\big)\, e_{i+1},
\end{equation}
where $(e_i)_{i\in [n]}$ is the standard basis of $\mb{Z}^n$.
\end{defn}
\noindent The above-mentioned sufficient condition goes as follows.
\begin{lemma}\label{lem:FD-lift}
Let $\ns$ be a nilspace. If $g:[0,p-1]^n\to \ns$ satisfies $g\co \q^*_{p,n}\in \cu^{n(p-1)}(\ns)$, then there exists $f\in \hom(\mc{D}_1(\mb{Z}^n),\ns)$ such that $g=f|_{[0,p-1]^n}$.
\end{lemma}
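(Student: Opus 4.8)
The plan is to prove the lemma by induction on the step $k$ of $\ns$ (the general, possibly infinite-step case then following by writing $\ns$ as the inverse limit of its finite-step factors and choosing a compatible sequence of extensions, which is possible by a standard compactness argument when $\ns$ is compact). The case $k=0$ is trivial. For the inductive step, let $\pi:=\pi_{k-1}\colon\ns\to\ns_{k-1}$ be the top factor map, a fibration with structure group $\ab_k$. Since $\pi$ is a morphism, $(\pi\co g)\co\q^*_{p,n}=\pi\co(g\co\q^*_{p,n})\in\cu^{n(p-1)}(\ns_{k-1})$, so $\pi\co g\colon[0,p-1]^n\to\ns_{k-1}$ satisfies the hypothesis of the lemma for the $(k-1)$-step nilspace $\ns_{k-1}$; by the induction hypothesis it extends to some $\bar f\in\hom(\mc{D}_1(\mb{Z}^n),\ns_{k-1})$. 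Using Corollary \ref{cor:liftthrufibgen} (morphisms out of $\mc{D}_1(\mb{Z}^n)$ lift through fibrations) we lift $\bar f$ to $f_1\in\hom(\mc{D}_1(\mb{Z}^n),\ns)$ with $\pi\co f_1=\bar f$.

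It then remains to correct $f_1$ on the box so that it matches $g$. For $x\in[0,p-1]^n$ we have $\pi(g(x))=\bar f(x)=\pi(f_1(x))$, so $g(x)$ and $f_1(x)$ lie in a common $\pi$-fibre, which is an $\ab_k$-torsor; let $\rho\colon[0,p-1]^n\to\ab_k$ be the unique map with $g(x)=f_1(x)+\rho(x)$. The cubes $g\co\q^*_{p,n}$ and $f_1\co\q^*_{p,n}$ in $\cu^{n(p-1)}(\ns)$ lie over the same cube of $\ns_{k-1}$, so by the abelian-bundle structure of $\ns$ (\cite[\S 3.2.3 and Theorem 3.2.19]{Cand:Notes1}) their difference $\rho\co\q^*_{p,n}$ belongs to $\cu^{n(p-1)}(\mc{D}_k(\ab_k))$. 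Hence everything reduces to the \emph{abelian case}: if $\ab$ is an abelian group and $\rho\colon[0,p-1]^n\to\ab$ satisfies $\rho\co\q^*_{p,n}\in\cu^{n(p-1)}(\mc{D}_k(\ab))$, then $\rho$ extends to a morphism $\gamma\in\hom(\mc{D}_1(\mb{Z}^n),\mc{D}_k(\ab))$, i.e.\ to a polynomial map $\mb{Z}^n\to\ab$ of degree $\le k$. Granting this, $f:=f_1+\gamma$ (the pointwise sum under the $\ab_k$-action, which remains a morphism by the bundle structure) extends $g$, closing the induction.

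For the abelian case I would translate the hypothesis into a vanishing statement for the iterated discrete derivatives of $\rho$ at the origin. Index the coordinates of $\{0,1\}^{n(p-1)}$ as $v_{c,l}$ with $c\in[n]$, $l\in[p-1]$, so that $\q^*_{p,n}(v)=\big(\sum_l v_{c,l}\big)_{c\in[n]}$. Newton's forward-difference expansion of $\rho$ over the box $[0,p-1]^n$ gives, for $v\in\{0,1\}^{n(p-1)}$, $\rho(\q^*_{p,n}(v))=\sum_{\alpha\in[0,p-1]^n}\big(\prod_c\binom{\sum_l v_{c,l}}{\alpha_c}\big)(\Delta^\alpha\rho)(0)$; expanding each $\binom{\sum_l v_{c,l}}{\alpha_c}$ as the elementary symmetric polynomial of degree $\alpha_c$ in the $\{0,1\}$-valued variables $v_{c,\cdot}$, one sees that in the (unique) multilinear representation of $\rho\co\q^*_{p,n}$ the coefficient of $\prod_{(c,l)\in S}v_{c,l}$ equals $(\Delta^{\alpha_S}\rho)(0)$, where $(\alpha_S)_c:=|S\cap(\{c\}\times[p-1])|$. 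Since a cube of $\mc{D}_k(\ab)$ is precisely a multilinear polynomial of degree $\le k$ (\cite[Corollary 2.2.17 and Definition 2.2.30]{Cand:Notes1}), the hypothesis forces $(\Delta^\alpha\rho)(0)=0$ for all $\alpha\in[0,p-1]^n$ with $|\alpha|>k$. Then $\gamma(x):=\sum_{\alpha\in[0,p-1]^n,\,|\alpha|\le k}\big(\prod_c\binom{x_c}{\alpha_c}\big)(\Delta^\alpha\rho)(0)$ is a polynomial map of degree $\le k$, hence a morphism $\mc{D}_1(\mb{Z}^n)\to\mc{D}_k(\ab)$, and Newton's formula once more gives $\gamma(x)=\rho(x)$ for all $x\in[0,p-1]^n$ since the omitted terms vanish.

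The main obstacle is the abelian case, and within it the bookkeeping showing that the single hypothesis ``$\rho\co\q^*_{p,n}$ is a cube'' annihilates exactly the finite differences $(\Delta^\alpha\rho)(0)$ with $|\alpha|>k$ and no others — this is what simultaneously makes the explicit polynomial $\gamma$ have degree $\le k$ and agree with $\rho$ on the whole box. It is precisely here that the shape of $\q^*_{p,n}$ is used, namely that it realises each coordinate value in $[0,p-1]$ as a sum of $p-1$ bits. The remainder of the argument is routine use of the abelian-bundle structure of nilspaces and of the lifting-through-fibrations property, both already available in the paper.
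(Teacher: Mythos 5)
Your core argument is correct for $k$-step nilspaces, and it takes a genuinely different route from the paper's. The paper proves the lemma as the special case of Corollary \ref{cor:liftthrufibgen} with $\nss$ equal to the one-point nilspace, i.e.\ by extending $g$ one lattice point at a time using only the corner-completion axiom applied to maximal cubes of growing boxes (Lemmas \ref{lem:app}, \ref{lem:cor-of-box} and \ref{lem:cor-compl-box}); no structure groups appear and no bound on the step is needed. You instead induct on the step, lift through $\pi_{k-1}$ using the (independently proved, hence non-circular) fibration-lifting corollary, and solve the remaining abelian problem by Newton interpolation. That abelian computation is right: the multilinear coefficient of $\prod_{(c,l)\in S}v_{c,l}$ in $\rho\co\q^*_{p,n}$ is $(\Delta^{\alpha_S}\rho)(0)$, every $\alpha\in[0,p-1]^n$ arises as some $\alpha_S$, so membership in $\cu^{n(p-1)}(\mc{D}_k(\ab_k))$ annihilates exactly the differences of height $>k$, and the truncated Newton polynomial $\gamma$ has degree $\le k$ and agrees with $\rho$ on the whole box. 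What your route buys is an explicit formula for the correction in each abelian layer and a transparent explanation of why $\q^*_{p,n}$ is the right test object (it detects precisely the differences $\Delta^\alpha$ with $\alpha\in[0,p-1]^n$); what it costs is the finite-step hypothesis and a reliance on the bundle decomposition.

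The genuine gap is the passage to general nilspaces. The lemma is stated, and in Proposition \ref{prop:FD-equiv} invoked, for an arbitrary nilspace $\ns$, and your proposed reduction of the infinite-step case (``write $\ns$ as the inverse limit of its finite-step factors and use compactness'') does not work: an abstract nilspace carries no topology, and even setting that aside, a nilspace need not be the inverse limit of its canonical factors $\ns_k$. For example, a set $X$ with $\cu^m(X)=X^{\db{m}}$ for every $m$ satisfies all three nilspace axioms, has every factor $\ns_k$ equal to a point, and yet is not a point, so the canonical map $\ns\to\varprojlim\ns_k$ is far from an isomorphism. (The inverse limit theorem for compact nilspaces produces \textsc{cfr} approximations, not the canonical factors, and is unavailable without compactness anyway.) Since every application of the lemma in this paper is to a finite-step nilspace, your argument suffices for the paper's purposes; but to prove the lemma as stated you must either add a $k$-step hypothesis or use a step-free argument such as the paper's corner-completion proof.
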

\noindent This is a special case of a result concerning general nilspace theory rather than just $p$-homogeneous nilspaces. Because of this, we leave the proof for Appendix \ref{app:res-nil} (specifically, Lemma \ref{lem:FD-lift} is the special case of Corollary \ref{cor:liftthrufibgen} with $\nss$ equal to the 1-point nilspace).

As the sufficient condition in Lemma \ref{lem:FD-lift} will be used repeatedly below, let us introduce the following notation for it.
\begin{defn}\label{def:hom-n-p-set}
Let $p$ be a prime, let $n\ge 0$ be an integer and $\ns$ be a nilspace. Then we define the set $\hom_p^n(\ns):=\{f:[0,p-1]^n\to \ns: f\co \q^*_{p,n}\in \cu^{n(p-1)}(\ns)\}$.
\end{defn}

We can now state and prove the first main ingredient for the proof of Proposition \ref{prop:dimreduc}.
\begin{proposition}\label{prop:FD-equiv}
A nilspace $\ns$ is $p$-homogeneous if and only if for every integer $n\geq 0$ we have $\hom_p^n(\ns)\subset \hom(\mc{D}_1(\mb{Z}_p^n),\ns)$.
\end{proposition}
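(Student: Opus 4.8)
The plan is to prove the two implications separately, using Lemma~\ref{lem:FD-lift} for the forward direction and the explicit shape of the maximal cube $\q^*_{p,n}$ for the converse. First I would record the one preliminary observation I need: that $\q^*_{p,n}$, as given by \eqref{eq:maxcube-sp}, is exactly the standard $n(p-1)$-cube on $\mc{D}_1(\mb{Z}^n)$ with base point $0$ whose $j$-th edge is $e_{\lceil j/(p-1)\rceil}$; hence $\q^*_{p,n}\in\cu^{n(p-1)}(\mc{D}_1(\mb{Z}^n))$, and since each of its $n$ coordinates is a sum of $p-1$ of the entries $v_j\in\{0,1\}$, the image of $\q^*_{p,n}$ is contained in (in fact equal to) $[0,p-1]^n$. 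These two facts are the only inputs required from Definition~\ref{def:max-cube-p}.

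For the direction ``$p$-homogeneous $\Rightarrow$ the $\hom_p^n$ inclusion'', I would fix $n\geq 0$ (the case $n=0$ being trivial) and $g\in\hom_p^n(\ns)$. By Definition~\ref{def:hom-n-p-set}, $g\co\q^*_{p,n}\in\cu^{n(p-1)}(\ns)$, so Lemma~\ref{lem:FD-lift} supplies $f\in\hom(\mc{D}_1(\mb{Z}^n),\ns)$ with $f|_{[0,p-1]^n}=g$. Applying the defining property of $p$-homogeneity (Definition~\ref{def:p-hom-intro}) to this $f$ gives $g=f|_{[0,p-1]^n}\in\hom(\mc{D}_1(\mb{Z}_p^n),\ns)$, as wanted.

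For the converse, assuming $\hom_p^n(\ns)\subset\hom(\mc{D}_1(\mb{Z}_p^n),\ns)$ for every $n$, I would take an arbitrary $n\geq 1$ and $f\in\hom(\mc{D}_1(\mb{Z}^n),\ns)$ and set $g:=f|_{[0,p-1]^n}$. Since the image of $\q^*_{p,n}$ lies in $[0,p-1]^n$ we have $g\co\q^*_{p,n}=f\co\q^*_{p,n}$, and this is a cube of $\ns$ because $f$ is a morphism and $\q^*_{p,n}\in\cu^{n(p-1)}(\mc{D}_1(\mb{Z}^n))$. Hence $g\in\hom_p^n(\ns)$, and the hypothesis gives $g\in\hom(\mc{D}_1(\mb{Z}_p^n),\ns)$. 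As $n$ and $f$ were arbitrary, this is precisely the statement that $\ns$ is $p$-homogeneous.

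I do not expect any genuine obstacle in this argument: once Lemma~\ref{lem:FD-lift} is in hand, each direction is a short unwinding of the definitions of nilspace morphism and of $p$-homogeneity, together with the observation that $\q^*_{p,n}$ both surjects onto $[0,p-1]^n$ and is a cube. All the real content is packaged into Lemma~\ref{lem:FD-lift} (deferred to Appendix~\ref{app:res-nil}), namely that the single coherence condition $g\co\q^*_{p,n}\in\cu^{n(p-1)}(\ns)$ already forces $g$ to extend to a morphism on all of $\mc{D}_1(\mb{Z}^n)$; the one point one must get right is choosing $\q^*_{p,n}$ rich enough for that extension result while still landing in $[0,p-1]^n$, which is exactly what Definition~\ref{def:max-cube-p} arranges.
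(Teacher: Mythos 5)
Your proof is correct and follows essentially the same route as the paper: the forward implication is exactly the application of Lemma \ref{lem:FD-lift} combined with Definition \ref{def:p-hom-intro}, and the backward implication is the same observation that $f|_{[0,p-1]^n}\in\hom_p^n(\ns)$ because $\q^*_{p,n}$ is a cube on $\mc{D}_1(\mb{Z}^n)$ with image in $[0,p-1]^n$. Your write-up merely makes explicit the two facts about $\q^*_{p,n}$ that the paper leaves implicit.
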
 
\begin{proof}
To see the backward implication note that, given $f\in \hom(\mc{D}_1(\mb{Z}^n),\ns)$ we have $f|_{[0,p-1]^n}\in \hom_p^n(\ns)$, so by the assumed inclusion we have $f|_{[0,p-1]^n}\in \hom(\mc{D}_1(\mb{Z}_p^n),\ns)$, so $\ns$ is $p$-homogeneous. The forward implication follows from Lemma \ref{lem:FD-lift}.
\end{proof}
\noindent The second main ingredient is the following result. This strengthens the backward implication in Proposition \ref{prop:FD-equiv}, as the assumption is made only up to a bounded dimension.
\begin{proposition}\label{prop:fordimreduc}
For every prime $p$ and $k\in \mb{N}$ there is $M=M(p,k)>0$ such that the following holds. If a  $k$-step nilspace  $\ns$ satisfies $\hom_p^M(\ns)\subset \hom(\mc{D}_1(\mb{Z}_p^M),\ns)$, then $\ns$ is $p$-homogeneous.
\end{proposition}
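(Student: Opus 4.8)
The plan is to deduce $p$-homogeneity of $\ns$ from the hypothesis by induction on the step $k$, using Proposition~\ref{prop:FD-equiv} together with the abelian-bundle structure of $k$-step nilspaces and a combinatorial argument modeled on the core of the proof of Proposition~\ref{prop:b-bal-p-hom-finite}. I would take $M=M(p,k)$ nondecreasing in $k$ and sufficiently large to support all the reductions below. The case $k=0$ is trivial, so assume $k\geq 1$ and view $\ns$ as a degree-$k$ extension of its $(k-1)$-step factor $\ns_{k-1}$ by the structure group $\ab_k$.

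First I would propagate the hypothesis to $\ns_{k-1}$. Given $g'\in\hom_p^{M'}(\ns_{k-1})$ with $M'=M(p,k-1)\leq M$, Lemma~\ref{lem:FD-lift} yields an extension $\tilde g'\in\hom(\mc{D}_1(\mb{Z}^{M'}),\ns_{k-1})$; Corollary~\ref{cor:liftthrufibgen}, applied to the fibration $\pi_{k-1}:\ns\to\ns_{k-1}$, provides $\tilde g\in\hom(\mc{D}_1(\mb{Z}^{M'}),\ns)$ with $\pi_{k-1}\co\tilde g=\tilde g'$. Then $g:=\tilde g|_{[0,p-1]^{M'}}$ lies in $\hom_p^{M'}(\ns)$, hence, after padding with constant coordinates, in $\hom_p^{M}(\ns)$; the hypothesis gives $g\in\hom(\mc{D}_1(\mb{Z}_p^{M'}),\ns)$, and composing with $\pi_{k-1}$ gives $g'\in\hom(\mc{D}_1(\mb{Z}_p^{M'}),\ns_{k-1})$. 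Thus $\ns_{k-1}$ satisfies the hypothesis of the proposition with parameter $M(p,k-1)$, so by induction $\ns_{k-1}$ is $p$-homogeneous.

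By Proposition~\ref{prop:FD-equiv} it now suffices to show $\hom_p^n(\ns)\subseteq\hom(\mc{D}_1(\mb{Z}_p^n),\ns)$ for every $n$. Fix $g\in\hom_p^n(\ns)$ with extension $\tilde g\in\hom(\mc{D}_1(\mb{Z}^n),\ns)$. Since $\pi_{k-1}\co g\in\hom_p^n(\ns_{k-1})$ and $\ns_{k-1}$ is $p$-homogeneous, $\pi_{k-1}\co g$ already respects every standard mod-$p$ cube on $\mb{Z}_p^n$. Using that in a $k$-step nilspace a map $\db{n}\to\ns$ is a cube as soon as all its restrictions to $(k+1)$-dimensional faces are cubes, and that any standard mod-$p$ cube on $\mb{Z}_p^n$ is obtained from one whose direction vectors lie in $\{0,1\}^n$ by precomposition with a ``product of diagonals'' cube-morphism $\db{m}\to\db{m(p-1)}$ (a morphism because each diagonal $\db{1}\to\db{p-1}$ is a $1$-cube, $\cu^1$ of any nilspace being the full square), one reduces the problem to showing $g\co Q\in\cu^{D}(\ns)$ for every standard mod-$p$ cube $Q$ on $\mb{Z}_p^n$ of a bounded dimension $D=D(p,k)$ whose base is $0$ and all of whose direction vectors lie in $\{0,1\}^n$. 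Choosing a cube $\hat c\in\cu^{D}(\ns)$ lying over the cube $\pi_{k-1}\co g\co Q$ of $\ns_{k-1}$ (possible since $\pi_{k-1}$ is a fibration), the membership $g\co Q\in\cu^{D}(\ns)$ is then equivalent to the $\ab_k$-valued map $g\co Q-\hat c$ belonging to $\cu^{D}(\mc{D}_k(\ab_k))$, i.e.\ to the vanishing in $\ab_k$ of its top Gray-code alternating sum.

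The last step, where the hypothesis is genuinely used, is the crux. I would argue here as in the combinatorial core of the proof of Proposition~\ref{prop:b-bal-p-hom-finite}: feeding into the hypothesis the $[0,p-1]^M$-restrictions of suitable integer morphisms $\mc{D}_1(\mb{Z}^M)\to\ns$ (with $M$ large enough to carry the direction vectors of $Q$ and the auxiliary vectors arising in the argument), one first deduces that $p\,\ab_k=0$, and then compares $g\co Q$ with the cube $\tilde g\co\hat Q$ coming from the \emph{integer} standard cube $\hat Q$ that has the same base $0$ and direction vectors as $Q$; this cube lies in $\cu^{D}(\ns)$ automatically, since $\tilde g$ is an integer morphism. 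As $\hat Q-Q$ is $p$ times an explicit $\mb{Z}^n$-valued function and $\ab_k$ is now $p$-torsion, one shows that the contributions of these correction terms to the top structure group cancel, via concatenations of cubes exactly as in Proposition~\ref{prop:b-bal-p-hom-finite}. I expect this last step to be the main obstacle: the difficulty is precisely the mismatch between the values of a mod-$p$ cube, which lie in $[0,p-1]^n$, and the values of the integer cubes that $\tilde g$ controls directly, and the hypothesis at the single dimension $M(p,k)$ is exactly what bridges this gap at the level of the top structure group, the lower ones being handled by the induction.
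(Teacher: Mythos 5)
Your proposal has a genuine gap at exactly the point you flag as ``the main obstacle'': the step where the hypothesis $\hom_p^M(\ns)\subset \hom(\mc{D}_1(\mb{Z}_p^M),\ns)$ at a \emph{single} dimension $M$ must be converted into the statement that $g$ respects an \emph{arbitrary} standard mod-$p$ cube $Q$ on $\mb{Z}_p^n$. To use the hypothesis you must exhibit a concrete element of $\hom_p^M(\ns)$ and then explain how the resulting mod-$p$ morphism property at dimension $M$ pulls back to the cube $g\co Q$; your sketch does neither. The appeal to the combinatorial core of Proposition \ref{prop:b-bal-p-hom-finite} does not supply this: there the essential input was a genuinely $p$-periodic morphism $\varphi\co h\in\hom(\mb{Z}_p^k,\nss)$, which came for free by composing the given $b$-balanced morphism $\varphi$ with an affine map; and even then the concatenation argument only yields $p\,\ab_k=0$, not the membership of an arbitrary mod-$p$ cube. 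Producing a $p$-periodic morphism that agrees with $\tilde g$ on the relevant box is essentially the statement you are trying to prove, so the argument as sketched is circular at its crux. The comparison of $g\co Q$ with $\tilde g\co\hat Q$ is also not a ``$p$ times an explicit correction'' at the level of $\ab_k$, since $\tilde g$ is a morphism into $\ns$, not into an abelian group, and the discrepancy $\tilde g(x)-\tilde g(x\bmod p)$ lives in no controlled coset without further input.

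The missing idea is the paper's dimension-reduction mechanism, which bypasses structure groups entirely. One first shows (Lemma \ref{lem:fordimreduc-1}) that $f\in\hom_p^n(\ns)$ is a mod-$p$ morphism provided $f\co T\in\hom_p^m(\ns)$ for every affine $T:\mb{Z}_p^m\to\mb{Z}_p^n$, and (Lemma \ref{lem:fordimreduc-2}) that this can be tested on $p$-face-maps of dimension $k+1$ only. The composite $T\co\phi$ is then an affine map $\mb{Z}_p^{k+1}\to\mb{Z}_p^n$ each of whose $n$ coordinates is one of only $p^{k+2}$ possible affine-linear forms on $\mb{Z}_p^{k+1}$; taking $M=p^{k+2}$, there is a single universal map $T^*\in\hom(\mc{D}_1(\mb{Z}_p^{k+1}),\mc{D}_1(\mb{Z}_p^M))$ listing all of them and a coordinate-selection morphism $\psi:[0,p-1]^M\to[0,p-1]^n$ with $T\co\phi=\psi\co T^*$. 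The element fed into the hypothesis is $f\co\psi\in\hom_p^M(\ns)$, and the conclusion $f\co T\co\phi=(f\co\psi)\co T^*\in\hom(\mc{D}_1(\mb{Z}_p^{k+1}),\ns)$ follows by composing the resulting mod-$p$ morphism with $T^*$. Note that this route needs neither induction on the step, nor the fact that $\ab_k$ is $p$-torsion, nor any Gray-code computation; your first two reduction steps (propagating the hypothesis to $\ns_{k-1}$ and the lift $\hat c$ over $\pi_{k-1}\co g\co Q$) are correct but end up being unnecessary scaffolding around a hole that they do not fill.
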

\noindent The proof will use the following definition, which extends the notion of discrete cube morphism from \cite{CamSzeg} (see also \cite[Definition 1.1.1.]{Cand:Notes1}).
\begin{defn}
Let $p$ be a prime. A function $\phi:[0,p-1]^n\to [0,p-1]^m$ is a \emph{$p$-discrete-cube morphism} if it is the restriction of an affine homomorphism $\mb{Z}^n\to \mb{Z}^m$. A $p$-discrete-cube morphism $\phi:[0,p-1]^n\to [0,p-1]^m$ with $n\le m$ is a \emph{$p$-face-map} of dimension $n$ if it is injective and fixes $m-n$ coordinates.
\end{defn}
\noindent The case $p=2$ of this definition yields the usual discrete cube morphisms. It can be seen (e.g.\ by a straightforward adaption of the proof of \cite[Lemma 1.1.2.]{Cand:Notes1}) that each coordinate function $\phi_j(v)$, $j\in [m]$, is either $v_{i_j}$, or $p-1-v_{i_j}$ for some $i_j\in [n]$, or is a constant $k\in\{0,\ldots,p-1\}$.

The proof of Proposition \ref{prop:fordimreduc} relies on the following couple of lemmas.
\begin{lemma}\label{lem:fordimreduc-1}
Let $f\in \hom_p^n(\ns)$ and suppose that for every $T\in \hom(\mc{D}_1(\mb{Z}_p^m),\mc{D}_1(\mb{Z}_p^n))$ we have $f\co T\in \hom_p^m(\ns)$. Then $f\in \hom(\mc{D}_1(\mb{Z}_p^n),\ns)$.
\end{lemma}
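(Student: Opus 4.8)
The plan is to verify directly the defining property of a nilspace morphism, namely that $f$ maps every cube of $\mc{D}_1(\mb{Z}_p^n)$ to a cube of $\ns$. Since the cube sets of a $\mc{D}_1$-nilspace consist exactly of the standard cubes, it suffices to fix an integer $d\ge 0$ and a standard cube $\q\in\cu^d(\mc{D}_1(\mb{Z}_p^n))$, say $\q(v)=x+v\sbr{1}h_1+\cdots+v\sbr{d}h_d$ (arithmetic mod $p$) for some $x,h_1,\ldots,h_d\in\mb{Z}_p^n$, and to prove that $f\co\q\in\cu^d(\ns)$.

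First I would view $\q$ as the restriction to $\db{d}\subseteq[0,p-1]^d$ of the affine map $T\colon[0,p-1]^d\to[0,p-1]^n$, $T(v)=x+v\sbr{1}h_1+\cdots+v\sbr{d}h_d$ (again mod $p$). As affine maps between $\mc{D}_1$-nilspaces are precisely the nilspace morphisms, we have $T\in\hom(\mc{D}_1(\mb{Z}_p^d),\mc{D}_1(\mb{Z}_p^n))$ and $T|_{\db{d}}=\q$. Applying the hypothesis with this $T$ (taking $m=d$) gives $f\co T\in\hom_p^d(\ns)$, that is, $(f\co T)\co\q^*_{p,d}\in\cu^{d(p-1)}(\ns)$. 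Lemma \ref{lem:FD-lift} then supplies a morphism $F\in\hom(\mc{D}_1(\mb{Z}^d),\ns)$ with $F|_{[0,p-1]^d}=f\co T$.

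The last step is to evaluate this lift on the standard generating $d$-cube of $\mc{D}_1(\mb{Z}^d)$, i.e.\ the cube $w\mapsto w$ from $\db{d}$ into $\mb{Z}^d$ (base point $0$, directions $e_1,\ldots,e_d$). Since $F$ is a morphism, $F$ applied to this cube lies in $\cu^d(\ns)$; on the other hand it equals $F|_{\db{d}}=(f\co T)|_{\db{d}}=f\co(T|_{\db{d}})=f\co\q$. Hence $f\co\q\in\cu^d(\ns)$, and as $\q$ was an arbitrary cube of $\mc{D}_1(\mb{Z}_p^n)$ we conclude that $f\in\hom(\mc{D}_1(\mb{Z}_p^n),\ns)$.

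I do not expect a genuine obstacle here: the content is packaged into Lemma \ref{lem:FD-lift} (the criterion for extending a map on $[0,p-1]^d$ to a morphism on $\mc{D}_1(\mb{Z}^d)$ via the maximal cube), together with the explicit description of cubes and of morphisms for $\mc{D}_1$-nilspaces. The only point that needs a little care is the bookkeeping around the inclusion $\db{d}\subseteq[0,p-1]^d$: one must check that restricting the affine extension $T$ of $\q$ back to $\db{d}$ returns $\q$, so that the lift $F$ of $f\co T$ restricts to $f\co\q$ on the standard cube, and this is exactly what transfers the cube property from $F$ to $f\co\q$.
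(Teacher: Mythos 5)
Your proof is correct and follows essentially the same route as the paper: extend the given cube $\q$ to an affine morphism $T\in\hom(\mc{D}_1(\mb{Z}_p^d),\mc{D}_1(\mb{Z}_p^n))$, apply the hypothesis to get $f\co T\in\hom_p^d(\ns)$, and then recover $f\co\q\in\cu^d(\ns)$ by restricting to $\db{d}$. The only cosmetic difference is that the paper passes from $f\co T\in\hom_p^d(\ns)$ to the cube property of $f\co T\co i$ (with $i:\db{d}\to\mb{Z}_p^d$ the identity embedding) directly, whereas you route through the lift $F\in\hom(\mc{D}_1(\mb{Z}^d),\ns)$ supplied by Lemma \ref{lem:FD-lift}; both are valid instances of the same mechanism.
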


\begin{proof}
By definition $f\in \hom(\mc{D}_1(\mb{Z}_p^n),\ns)$ if for every $\q\in \cu^m(\mc{D}_1(\mb{Z}_p^n))$ we have $f\co \q\in \cu^m(\ns)$. As $\mc{D}_1(\mb{Z}_p^n)$ is $p$-homogeneous we know that $\q$ extends to a morphism $T\in \hom(\mc{D}_1(\mb{Z}_p^m),\mc{D}_1(\mb{Z}_p^n))$. By our assumption we have $f\co T\in \hom_p^m(\ns)$. Since $f\co \q=f\co T \co i$, where $i:\db{m}\to \mb{Z}_p^m$ is the identity embedding (which is in $\cu^m(\mb{Z}_p^m)$) we have $f\co \q\in \cu^m(\ns)$ as required.
\end{proof}

\begin{lemma}\label{lem:fordimreduc-2}
Let $\ns$ be a $k$-step nilspace and $n\ge k+1$. Let $f:[0,p-1]^n\to \ns$ satisfy $f\co \phi\in \hom_p^{k+1}(\ns)$ for every $p$-face-map $\phi:[0,p-1]^{k+1}\to [0,p-1]^n$. Then $f\in \hom_p^n(\ns)$.
\end{lemma}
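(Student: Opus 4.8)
The plan is to prove the statement in the equivalent form $f\co\q^*_{p,n}\in\cu^{n(p-1)}(\ns)$, which is exactly the assertion $f\in\hom_p^n(\ns)$ by Definition~\ref{def:hom-n-p-set}. First I would reduce this to a statement about $(k+1)$-dimensional faces: since $\ns$ is $k$-step, a map $\db{N}\to\ns$ with $N\ge k+1$ lies in $\cu^N(\ns)$ as soon as all its restrictions to $(k+1)$-dimensional faces of $\db{N}$ lie in $\cu^{k+1}(\ns)$. (This is a standard fact about $k$-step nilspaces; one proves it by induction on $N$ using the unique completion of $(k+1)$-corners: the restriction of the given map to $\db{N}\setminus\{1^N\}$ is a valid $N$-corner because its codimension-one faces not through $1^N$ are $(N-1)$-cubes by induction, and the unique completion must agree with the given map at $1^N$, since this equality can be checked on any $(k+1)$-face through $1^N$, where both maps are cubes extending the same $(k+1)$-corner.) Since $n(p-1)\ge n\ge k+1$, it thus suffices to show that the restriction of $f\co\q^*_{p,n}$ to every $(k+1)$-face of $\db{n(p-1)}$ lies in $\cu^{k+1}(\ns)$.

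So I fix a $(k+1)$-face $F$ of $\db{n(p-1)}$, given by a choice of free coordinates $j_1<\cdots<j_{k+1}$ and by $\{0,1\}$-values on the remaining coordinates. Grouping the $n(p-1)$ coordinates into the $n$ consecutive blocks of size $p-1$ appearing in~\eqref{eq:maxcube-sp}, I set $S_b\subseteq[k+1]$ to be the set of $s$ with $j_s$ in the $b$-th block, $m_b:=|S_b|$, and $c_b$ to be the sum of the fixed values of $F$ inside the $b$-th block. Then $\sum_b m_b=k+1$, $m_b\le p-1$, and $0\le c_b\le p-1-m_b$; moreover a direct computation gives $(f\co\q^*_{p,n})|_F=f\co(\q^*_{p,n}|_F)$ where $(\q^*_{p,n}|_F)(w)_b=c_b+\sum_{s\in S_b}w_s$ for $w\in\db{k+1}$.

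The key step is to factor $\q^*_{p,n}|_F=\phi\co\sigma$ with $\phi$ a $p$-face-map of dimension exactly $k+1$ and $\sigma$ a standard $(k+1)$-cube on $\mc{D}_1(\mb{Z}^{k+1})$ taking values in $[0,p-1]^{k+1}$. Let $B_1<\cdots<B_r$ be the blocks with $m_b\ge1$; since $r\le k+1\le n$, at least $k+1-r$ blocks have $m_b=0$, and I pick distinct such blocks $b'_{r+1},\dots,b'_{k+1}$. I define $\sigma(w):=x+\sum_{s=1}^{k+1}w_s h_s$, where $h_s:=e_a$ for the unique $a\le r$ with $s\in S_{B_a}$ (here $e_a$ is a standard basis vector of $\mb{Z}^{k+1}$), $x_a:=c_{B_a}$ for $a\le r$, and $x_a:=c_{b'_a}$ for $a>r$; this is an element of $\cu^{k+1}(\mc{D}_1(\mb{Z}^{k+1}))$ whose $a$-th coordinate lies in $[c_{B_a},c_{B_a}+m_{B_a}]\subseteq[0,p-1]$ for $a\le r$ and equals the constant $c_{b'_a}\le p-1$ for $a>r$, so $\sigma$ is $[0,p-1]^{k+1}$-valued. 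I define $\phi\colon[0,p-1]^{k+1}\to[0,p-1]^n$ as the restriction of the affine homomorphism sending $z$ to the vector whose $B_a$-coordinate is $z_a$ ($a\le r$), whose $b'_a$-coordinate is $z_a$ ($a>r$), and whose remaining $n-(k+1)$ coordinates are the constants $c_b$. This map is injective and fixes $n-(k+1)$ coordinates, so it is a $p$-face-map of dimension $k+1$, and a coordinate-by-coordinate check gives $\phi\co\sigma=\q^*_{p,n}|_F$.

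To conclude: the hypothesis applied to $\phi$ gives $f\co\phi\in\hom_p^{k+1}(\ns)$, so by Lemma~\ref{lem:FD-lift} there is a morphism $G\in\hom(\mc{D}_1(\mb{Z}^{k+1}),\ns)$ with $G|_{[0,p-1]^{k+1}}=f\co\phi$; since $\sigma$ is $[0,p-1]^{k+1}$-valued, $(f\co\q^*_{p,n})|_F=f\co\phi\co\sigma=G\co\sigma$, which lies in $\cu^{k+1}(\ns)$ because $G$ is a morphism and $\sigma\in\cu^{k+1}(\mc{D}_1(\mb{Z}^{k+1}))$. As $F$ was arbitrary, this shows $f\co\q^*_{p,n}\in\cu^{n(p-1)}(\ns)$, i.e.\ $f\in\hom_p^n(\ns)$. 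I expect the two delicate points to be, first, the reduction to $(k+1)$-faces (the fact that cubes in a $k$-step nilspace are detected on $(k+1)$-faces), and second, inside the factorization, the use of the hypothesis $n\ge k+1$ to provide enough blocks with $m_b=0$ so that $\phi$ can be upgraded to a genuine dimension-$(k+1)$ $p$-face-map; without this padding one would only obtain a lower-dimensional $p$-face-map, for which the hypothesis gives no information.
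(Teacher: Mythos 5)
Your proof is correct, but it takes a genuinely different route from the one in the paper (Lemma \ref{lem:fordimreduc-2-app}). The paper argues globally: it first proves that every ``$p$-corner of dimension $k+1$'' has a unique completion in $\hom_p^{k+1}(\ns)$, then builds a candidate $g\in\hom_p^n(\ns)$ by simplicially completing $f$ restricted to the points of support $\le k$ (via Lemma \ref{lem:completion-simplicial}), and finally shows $f=g$ by a minimal-support contradiction argument that pushes the comparison into a $p$-face-map. You instead verify the defining condition $f\co\q^*_{p,n}\in\cu^{n(p-1)}(\ns)$ directly, face by face: you reduce cube membership in a $k$-step nilspace to membership of all $(k+1)$-face restrictions, and then observe that each $(k+1)$-face restriction of the maximal cube factors as $\phi\co\sigma$ with $\phi$ a $p$-face-map of dimension $k+1$ and $\sigma$ a $[0,p-1]^{k+1}$-valued standard cube, so that the hypothesis together with Lemma \ref{lem:FD-lift} finishes the job. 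Your route avoids the simplicial-completion machinery and the notion of $p$-corner entirely, at the cost of the preliminary fact that in a $k$-step nilspace a map $\db{N}\to\ns$ is an $N$-cube as soon as all its $(k+1)$-face restrictions are cubes; your inductive proof of that fact (form the $N$-corner, complete it, and match the completion at $1^N$ on a $(k+1)$-face through $1^N$ via unique completion) is sound, and it is essentially the same unique-completion mechanism the paper uses, just packaged once at the outset rather than inside a propagation argument. You also correctly isolate where $n\ge k+1$ enters: both proofs need it to pad the relevant $p$-face-maps up to dimension exactly $k+1$ (in the paper this appears in the choice of $\phi(v)=(v\sbr{1},\ldots,v\sbr{k+1},w\sbr{k+2},\ldots,w\sbr{s},0,\ldots,0)$; in yours, in the selection of the blocks $b'_{r+1},\ldots,b'_{k+1}$ with $m_b=0$).
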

\noindent This second lemma has a longer and more technical proof and concerns nilspaces more generally, so we leave it to Appendix \ref{app:res-nil}; see Lemma \ref{lem:fordimreduc-2-app}.

\begin{proof}[Proof of Proposition \ref{prop:fordimreduc}] Let $M(p,k):=p^{k+2}$. By Proposition \ref{prop:FD-equiv}, it is enough to check that for all $n\ge 0$ we have $ \hom_p^n(\ns)\subset \hom(\mc{D}_1(\mb{Z}_p^n),\ns)$. Thus let $f$ be any element of $\hom_p^n(\ns)$, and let us distinguish the following two cases.

If $n\leq M$, then consider the map $\phi:[0,p-1]^M\to [0,p-1]^n$, $(v_1,\ldots,v_n,\ldots,v_M)\mapsto (v_1,\ldots,v_n)$, which clearly extends to a morphism $\mc{D}_1(\mb{Z}^M)\to \mc{D}_1(\mb{Z}^n)$. It follows that $f\co \phi\in \hom_p^M(\ns)$. Hence by our assumption $f\co \phi\in \hom(\mc{D}_1(\mb{Z}_p^M),\ns)$. Now consider $\psi:[0,p-1]^n\to [0,p-1]^M$, $(v_1,\ldots,v_n)\mapsto (v_1,\ldots,v_n,0,\ldots,0)$. As $\psi\in \hom(\mc{D}_1(\mb{Z}_p^n),\mc{D}_1(\mb{Z}_p^M))$, we have that $f=f\co\phi\co \psi \in \hom(\mc{D}_1(\mb{Z}_p^n),\ns)$, as required.

If $n>M$, then by Lemma \ref{lem:fordimreduc-1} it suffices to check that for the given $f\in \hom_p^n(\ns)$ and any $T\in \hom(\mc{D}_1(\mb{Z}_p^m),\mc{D}_1(\mb{Z}_p^n))$, we have $f\co T\in \hom_p^m(\ns)$. For this, by Lemma \ref{lem:fordimreduc-2} it suffices to check that for every $p$-face-map $\phi:[0,p-1]^{k+1}\to [0,p-1]^m$ we have $f\co T\co \phi\in \hom_p^n(\ns)$. Note that $\phi\in \hom(\mc{D}_1(\mb{Z}_p^{k+1}),\mc{D}_1(\mb{Z}_p^m))$. Therefore $T\co \phi \in \hom(\mc{D}_1(\mb{Z}_p^{k+1}),\mc{D}_1(\mb{Z}_p^n))$, so for each $i\in [n]$ the $i$-th coordinate of $T\co \phi$ is an affine-linear map $\mb{Z}_p^{k+1}\to \mb{Z}_p$ of the form $(v_1,\ldots,v_{k+1})\mapsto a_0^{(i)}+a_1^{(i)}v_1+\cdots+ a_{k+1}^{(i)}v_{k+1}$ for some coefficients $a_0^{(i)},\ldots,a_{k+1}^{(i)}$. In total there are $p^{k+2}=M(p,k)$ possible such maps. Therefore there is an affine-linear map $T^*\in \hom(\mc{D}_1(\mb{Z}_p^{k+1}),\mc{D}_1(\mb{Z}_p^M))$, whose coordinates are all these possible maps, and there is a $p$-discrete-cube morphism $\psi:[0,p-1]^M\to [0,p-1]^n$ that selects the correct entry of $T^*$ in order to have $T\co \phi = \psi\co T^*$ (actually $\psi$ can be given as an $n\times M$ matrix with each row having some entry equal to 1 and all others equal to 0). Since $f\co \psi\in \hom_p^M(\ns)$, by hypothesis we have $f\co\psi\in \hom(\mc{D}_1(\mb{Z}_p^M),\ns)$. Hence $ f\co T\co \phi  = (f\co \psi)\co T^*\in \hom(\mc{D}_1(\mb{Z}_p^{k+1}),\ns)$.
\end{proof}
\noindent We need one more tool for the proof of Proposition \ref{prop:dimreduc}, namely the following lemma esta- blishing the $p$-homogeneity of elementary abelian $p$-groups with higher-degree filtrations.

\begin{lemma}\label{lem:elemabpolycase}
Let $\ab$ be an elementary abelian $p$-group, and let $k\in \mb{N}$. Then the $k$-step nilspace $\mc{D}_k(\ab)$ is $p$-homogeneous.
\end{lemma}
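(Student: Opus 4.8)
The plan is to verify the defining property of Definition~\ref{def:p-hom-intro} head-on, by describing the morphisms $\mc{D}_1(\mb{Z}^n)\to\mc{D}_k(\ab)$ explicitly and then restricting them to $[0,p-1]^n$. Since $\mc{D}_1(\mb{Z}^n)$ and $\mc{D}_k(\ab)$ are the group nilspaces attached to $\mb{Z}^n$ and $\ab$ with their degree-$1$ and degree-$k$ filtrations respectively, a morphism $f\colon\mc{D}_1(\mb{Z}^n)\to\mc{D}_k(\ab)$ is precisely a polynomial map of degree $\le k$ from $\mb{Z}^n$ to $\ab$, and hence by the standard expansion of polynomial maps on $\mb{Z}^n$ in terms of binomial-coefficient monomials (see e.g.\ \cite[\S 2.2]{Cand:Notes1}) one can write $f(t)=\sum_{\mathbf{j}}c_{\mathbf{j}}\prod_{i=1}^n\binom{t_i}{j_i}$, where the sum runs over multi-indices $\mathbf{j}=(j_1,\dots,j_n)$ of non-negative integers with $j_1+\dots+j_n\le k$ and the coefficients $c_{\mathbf{j}}$ lie in $\ab$.

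The first step is to discard the terms that are invisible on the box $[0,p-1]^n$: if some coordinate satisfies $j_i\ge p$, then $\binom{t_i}{j_i}=0$ for every $t_i\in\{0,\dots,p-1\}$, so on $[0,p-1]^n$ the map $f$ coincides with the polynomial map $\tilde f(t):=\sum_{\mathbf{j}}c_{\mathbf{j}}\prod_{i=1}^n\binom{t_i}{j_i}$ in which the sum is further restricted to those $\mathbf{j}$ with $j_i\le p-1$ for all $i$; note that $\tilde f$ is still a polynomial map $\mb{Z}^n\to\ab$ of degree $\le k$. The crucial observation is that $\tilde f$ is $p$-periodic. Indeed, for a fixed coordinate direction $r$, Vandermonde's identity gives $\binom{t_r+p}{j_r}-\binom{t_r}{j_r}=\sum_{i=1}^{j_r}\binom{p}{i}\binom{t_r}{j_r-i}$, whence
\[
\tilde f(t+p\,e_r)-\tilde f(t)=\sum_{\mathbf{j}}\,\sum_{i=1}^{j_r}\Big(\binom{p}{i}\,c_{\mathbf{j}}\Big)\binom{t_r}{j_r-i}\prod_{l\neq r}\binom{t_l}{j_l};
\]
in every term of this sum we have $1\le i\le j_r\le p-1$, so $p$ divides $\binom{p}{i}$, and since $\ab$ is an elementary abelian $p$-group we get $\binom{p}{i}c_{\mathbf{j}}=0$. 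Hence $\tilde f(t+p\,e_r)=\tilde f(t)$ for all $r$ and all $t$.

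This $p$-periodicity argument is where the hypothesis on $\ab$ enters, and is the only point that requires real care; in particular it genuinely uses the truncation to $j_i\le p-1$, since for $k\ge p$ the unrestricted map $f$ itself need not be $p$-periodic. Granting $p$-periodicity, $\tilde f$ descends to a well-defined map $g\colon\mb{Z}_p^n\to\ab$, and under the identification of $\mb{Z}_p^n$ with $[0,p-1]^n$ this $g$ is precisely $\tilde f|_{[0,p-1]^n}=f|_{[0,p-1]^n}$. It then remains to check that $g$ is a polynomial map of degree $\le k$: given $h_1,\dots,h_{k+1}\in\mb{Z}_p^n$, lifting them to $\mb{Z}^n$ and using that $\tilde f$ has degree $\le k$ (so all its $(k+1)$-fold finite differences vanish) yields $\partial_{h_1}\cdots\partial_{h_{k+1}}g=0$. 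By the equivalence between morphisms of group nilspaces and polynomial maps of filtered groups (see \cite[\S 2.2.2]{Cand:Notes1}), it follows that $g=f|_{[0,p-1]^n}$ is a morphism $\mc{D}_1(\mb{Z}_p^n)\to\mc{D}_k(\ab)$, which is exactly what Definition~\ref{def:p-hom-intro} demands. One could alternatively reduce to a single bounded dimension via Lemma~\ref{lem:FD-lift} and Proposition~\ref{prop:fordimreduc} before carrying out the expansion, but this would not simplify the core computation above.
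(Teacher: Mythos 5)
Your proof is correct and follows essentially the same route as the paper's: expand the morphism as a polynomial map $f(t)=\sum_{\mathbf{j}}c_{\mathbf{j}}\binom{t}{\mathbf{j}}$ with coefficients in $\ab$, use that $p\,c_{\mathbf{j}}=0$, and conclude that the restriction to $[0,p-1]^n$ descends to a degree-$\le k$ polynomial map on $\mb{Z}_p^n$. The only difference is that you make explicit two points the paper leaves terse — the truncation to multi-indices with $j_i\le p-1$ (needed since $f$ itself need not be $p$-periodic when $k\ge p$) and the Vandermonde computation establishing $p$-periodicity — which is a welcome clarification but not a different argument.
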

\begin{proof}
We check that Definition \ref{def:p-hom-intro} holds for $\ns=\mc{D}_k(\ab)$. We know (see e.g.\ \cite[Theorem 2.2.14]{Cand:Notes1}) that $f$ is in $\hom(\mc{D}_1(\mb{Z}^n),\mc{D}_k(\ab))$ if and only if $f$ is a degree-$k$ polynomial map $\mb{Z}^n\to \ab$. Moreover (e.g.\ by \cite[Lemma A.1]{GTar}), for each $\mf{i}\in \mb{N}^n$ with \emph{height} $|\mf{i}|:= i_1+\cdots + i_n\leq k$, there is an element $a_{\mf{i}}\in \ab$ such that $f(\mf{n})=\sum_{\mf{i}\in \mb{N}^n:|\mf{i}|\leq k} a_{\mf{i}} \binom{\mf{n}}{\mf{i}}$ for every $\mf{n}\in\mb{Z}^n$. For each $\mf{i}$ we have $p\, a_{\mf{i}}=0$. This implies that the restriction $f|_{[0,p-1]^n}$ yields a well-defined map $\mb{Z}_p^n\to \ab$, which is readily seen to be a polynomial of degree at most $k$.
\end{proof}

We can now complete the main goal of this section.

\begin{proof}[Proof of Proposition \ref{prop:dimreduc}]
We argue by induction on $k$. The case $k=0$ is trivial. Let $M(p,k)=p^{k+2}$. We need to prove that every $f\in \hom_p^{M(p,k)}(\ns)$ is in $\hom(\mc{D}_1(\mb{Z}_p^{M(p,k)}),\ns)$.

First we prove by induction that $\ns_{k-1}$ can be assumed to be $p$-homogeneous. Since $M(p,k)\ge M(p,k-1)$, it suffices to show that for every $g\in \hom(\mc{D}_1(\mb{Z}_p^{M(p,k-1)}),\ns_i)$ there exists $\tilde{g}\in \hom(\mc{D}_1(\mb{Z}_p^{M(p,k-1)}),\ns_{k-1})$ such that $g = \pi_i\co \tilde{g}$. Writing elements of $\mb{Z}_p^{M(p,k)}$ as $(v,w)$ with $v\in \mb{Z}_p^{M(p,k-1)}$, let $\phi:\mb{Z}_p^{M(p,k)}\to \mb{Z}_p^{M(p,k-1)}$, $(v,w)\mapsto v$ (the projection to the first $M(p,k-1)$ coordinates). Then $g\co \phi\in \hom(\mc{D}_1(\mb{Z}_p^{M(p,k)}),\ns_i)$. By hypothesis, there exists $h\in \hom(\mc{D}_1(\mb{Z}_p^{M(p,k)}),\ns)$ that lifts $g\co \phi$. Let $i:\mb{Z}_p^{M(p,k-1)}\to \mb{Z}_p^{M(p,k)}$ be the inclusion map $v\mapsto (v,0)$. Then $h\co i$ is in $\hom(\mc{D}_1(\mb{Z}_p^{M(p,k-1)}),\ns)$ and lifts $g$, i.e.\ $\pi_i\co h\co i =g$. Hence $\tilde{g}:=\pi_{k-1}\co h\co  i$ is in $\hom(\mc{D}_1(\mb{Z}_p^{M(p,k-1)}),\ns_{k-1})$ and $\pi_i\co \tilde{g}= g$.

Now consider $\pi_{k-1}\co f\in \hom_p^{M(p,k)}(\ns_{k-1})$. As $\ns_{k-1}$ is $p$-homogeneous, by Proposition \ref{prop:FD-equiv} we have $\pi_{k-1}\co f\in \hom(\mc{D}_1(\mb{Z}_p^{M(p,k)}),\ns_{k-1})$. By hypothesis, there exists $\tilde{f}\in \hom(\mc{D}_1(\mb{Z}_p^{M(p,k)}),\ns)$ that lifts $\pi_{k-1}\co f$, i.e., $\pi_{k-1}\co\tilde{f} = \pi_{k-1}\co f$. This implies that $f-\tilde{f}\in \hom_p^{M(p,k)}(\mc{D}_k(\ab_k(\ns)))$. Since $\ab_k(\ns)$ is an elementary abelian $p$-group, by Lemma \ref{lem:elemabpolycase} the nilspace $\mc{D}_k(\ab_k(\ns))$ is $p$-homogeneous. Then by Proposition \ref{prop:FD-equiv} we deduce that $f-\tilde{f} \in \hom\big(\mc{D}_1(\mb{Z}_p^{M(p,k)}),\mc{D}_k(\ab_k(\ns))\big)$. Hence $f = \tilde{f}+(f-\tilde{f})\in \hom(\mc{D}_1(\mb{Z}_p^{M(p,k)}),\ns)$.
\end{proof}

\subsection{Proof of Proposition \ref{prop:main}}\hfill\\
\noindent To simplify the notation, we shall prove Proposition \ref{prop:main} with $\ns'$ relabeled as $\ns$. 

\addtocontents{toc}{\protect\setcounter{tocdepth}{2}}

We begin with a useful construction of a function with small $U^k$-norm on $\mb{Z}_p^D$ using a $p$-homogeneous nilspace $\ns$: it suffices to compose a highly balanced morphism $\mb{Z}_p^D\to\ns$ with a function on $\ns$ that has average 0 on every orbit of the structure group $\ab_k(\ns)$.

\begin{lemma}\label{lem:XcharUknorm}
Let $\ns$ be a $k$-step \textsc{cfr} $p$-homogeneous nilspace with a compatible metric. For any $\eta>0$ there is $b=b(\ns,p,\eta)>0$ such that if $\varphi\in \hom(\mc{D}_1(\mb{Z}_p^D),\ns)$ is $b$-balanced, then for any 1-bounded function\footnote{A complex-valued function $h$ is said to be \emph{1-bounded} if $|h|\leq 1$ everywhere on the domain of $h$.} $f:\ns\to \mb{C}$ such that $\forall\,x\in\ns$, $\int_{\ab_k} f(x+z)\ud\mu_{\ab_k}(z)=0$, we have $\|f\co \varphi\|_{U^k}\le \eta$.
\end{lemma}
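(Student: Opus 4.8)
The plan is to exploit the fact that a \textsc{cfr} $p$-homogeneous nilspace is necessarily a \emph{finite} nilspace (Proposition~\ref{prop:p-hom-str-gps-intro}). This removes all measure-theoretic subtleties at once: the hypothesis on $f$ becomes an elementary finite-averaging condition, and the balance of $\varphi$ can be used essentially verbatim to compare $\|f\co\varphi\|_{U^k}$ with a purely nilspace-theoretic cubic average on $\cu^k(\ns)$, which will then be seen to vanish. First I would record what the hypothesis on $f$ says in nilspace terms. Since $\ns$ is finite, the factor map $\pi_{k-1}\colon\ns\to\ns_{k-1}$ has fibres that are cosets of the finite structure group $\ab_k=\ab_k(\ns)$, with normalized counting measure equal to the transported Haar measure $\mu_{\ab_k}$; hence the conditional expectation of $f$ onto $\ns_{k-1}$ is the function $x\mapsto\int_{\ab_k}f(x+z)\,\ud\mu_{\ab_k}(z)$, and the hypothesis says exactly that this conditional expectation vanishes identically.

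Next I would rewrite $\|f\co\varphi\|_{U^k}$ in cubic terms. Since the $k$-cubes of $\mc{D}_1(\mb{Z}_p^D)$ are precisely the standard ones, and the parametrization $(\mb{Z}_p^D)^{k+1}\to\cu^k(\mc{D}_1(\mb{Z}_p^D))$ is a surjective group homomorphism, it pushes the uniform measure forward to $\mu_{\cu^k(\mb{Z}_p^D)}$, whence
\[
\|f\co\varphi\|_{U^k}^{2^k}=\int_{\cu^k(\ns)}G\,\ud\big(\mu_{\cu^k(\mb{Z}_p^D)}\co(\varphi^{\db{k}})^{-1}\big),\qquad G(\q):=\prod_{v\in\db{k}}\overline{f}^{\,|v|}(\q(v)),
\]
where $\overline{(\cdot)}^{\,0}$ is the identity, $\overline{(\cdot)}^{\,1}$ is complex conjugation, and $G\colon\cu^k(\ns)\to\mb{C}$ is $1$-bounded. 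Choosing $b<1/k$ so that Definition~\ref{def:balance} applies with $n=k$, we get $d_k\big(\mu_{\cu^k(\mb{Z}_p^D)}\co(\varphi^{\db{k}})^{-1},\mu_{\cu^k(\ns)}\big)\le b$; and since $\cu^k(\ns)$ is finite, $\mc{P}(\cu^k(\ns))$ is a finite-dimensional simplex on which $d_k$ induces the usual topology, so $d_k$-closeness of two measures implies closeness in total variation with a modulus $\omega=\omega(\ns,d,k)$ vanishing at $0$. As $\|G\|_\infty\le 1$, this yields $\big|\,\|f\co\varphi\|_{U^k}^{2^k}-\int_{\cu^k(\ns)}G\,\ud\mu_{\cu^k(\ns)}\,\big|\le\omega(b)$.

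It then remains to show $\int_{\cu^k(\ns)}G\,\ud\mu_{\cu^k(\ns)}=0$. This is the standard vanishing of the $k$-th Gowers--Host--Kra cubic average on a $k$-step nilspace for functions whose conditional expectation onto $\ns_{k-1}$ is trivial (see \cite{CScouplings} and \cite[Ch.~2]{Cand:Notes2}); concretely, one disintegrates $\mu_{\cu^k(\ns)}$ over $\mu_{\cu^k(\ns_{k-1})}$ along $\pi_{k-1}^{\db{k}}$, uses that the $k$-cubes of $\mc{D}_k(\ab_k)$ are unconstrained, i.e.\ $\cu^k(\mc{D}_k(\ab_k))=\ab_k^{\db{k}}$, so that the conditional measure on each fibre is the product Haar measure $\mu_{\ab_k}^{\otimes\db{k}}$, and then the integral factorizes over $v\in\db{k}$ into a product whose $v$-th factor is $\overline{(\cdot)}^{\,|v|}$ applied to the (vanishing) conditional expectation of $f$ at a point of $\ns_{k-1}$; since $\db{k}\neq\emptyset$, the product is $0$. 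Combining with the previous paragraph, $\|f\co\varphi\|_{U^k}^{2^k}\le\omega(b)$, so it suffices to take $b=b(\ns,p,\eta)$ with $b<1/k$ and $\omega(b)\le\eta^{2^k}$.

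The only step requiring genuine care is this last one, and within it the identification of the conditional cube measure with the product Haar measure on $\ab_k^{\db{k}}$ (equivalently, that $\cu^k(\mc{D}_k(\ab_k))$ carries no constraint on $k$-cubes); everything else is bookkeeping, because finiteness of $\ns$ makes it unnecessary to approximate $f$ by continuous functions when transferring the balance information from $\varphi$ to the cubic average.
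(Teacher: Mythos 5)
Your proposal is correct and rests on the same core computation as the paper's proof: transfer the $U^k$ cubic average of $f\co\varphi$ to the Haar cubic average $\int_{\cu^k(\ns)}\prod_v C^{|v|}f(\q(v))\ud\mu_{\cu^k(\ns)}$ using the $b$-balance at $n=k$, then kill the latter by disintegrating $\mu_{\cu^k(\ns)}$ along $\pi_{k-1}^{\db{k}}$ and using that $\cu^k(\mc{D}_k(\ab_k))=\ab_k^{\db{k}}$ carries the product Haar measure, so the inner integral factors into copies of the vanishing fibrewise average of $f$. The one genuine difference is how uniformity over $f$ is handled: the paper argues by contradiction, extracting a uniformly convergent sequence from the compact set of admissible $1$-bounded functions and then invoking weak convergence for the fixed limit function, whereas you observe that finiteness of $\cu^k(\ns)$ (which you correctly get from Proposition \ref{prop:p-hom-str-gps-intro}, without needing the balanced morphism) makes the weak metric $d_k$ uniformly comparable to total variation on the simplex $\mc{P}(\cu^k(\ns))$, giving a bound $\omega(b)$ valid simultaneously for all $1$-bounded test functions $G$. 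Your route is slightly cleaner and yields an explicit modulus $b\mapsto\omega(b)$ rather than a purely existential $b(\ns,p,\eta)$; the paper's compactness argument is the one that generalizes when the target is not finite, but that generality is not needed here.
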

\begin{proof}
By Proposition \ref{prop:b-bal-p-hom-finite}, if $b$ is small enough then $\ns$ is finite. 

Suppose for a contradiction that there exists $\eta_0>0$ such that for all $n\in\mb{N}$ we have a $\frac{1}{n}$-balanced morphism $\varphi_n$ and a 1-bounded function $f_n:\ns\to \mb{C}$ such that $\|f_n\co \varphi_n\|_{U^k}\ge \eta_0$. As the set $T:=\{f:\ns\to \mb{C}:|f|\le 1\textrm{ and }\forall x\in\ns,\, \int_{\ab_k} f(x+z)\ud\mu_{\ab_k}(z)=0\}$ is compact, we may assume without loss of generality that for some $f\in T$ we have $\max_{x\in \ns} |f_n(x)- f(x)|\to 0$ as $n\to \infty$. Then
$\eta_0 \le \|f_n\co \varphi_n\|_{U^k}\le \|(f_n-f)\co \varphi_n\|_{U^k}+\|f\co \varphi_n\|_{U^k}$. For large enough $n$, we have $\|(f_n-f)\co \varphi_n\|_{U^k}\le \frac{\eta_0}{3}$ and, since $\varphi_n$ is $\frac{1}{n}$-balanced, also 
$\left| \int \prod_{v\in \db{k}} C^{|v|} f\co\varphi_n(\q(v)) \ud\mu_{\cu^k(\mb{Z}_p^D)}(\q)- \int \prod_{v\in \db{k}} C^{|v|} f(\q(v)) \ud\mu_{\cu^k(\ns)}(\q)
\right| \le \frac{\eta_0}{3}$, where $C$ denotes the complex-conjugation operator.

By construction of this Haar measure \cite{Cand:Notes2}, this last integral equals
\[
\int_{\cu^k(\ns)}\int_{\cu^k(\mc{D}_k(\ab_k))} \prod_{v\in \db{k}} C^{|v|} f(\q(v)+\q'(v)) \ud\mu_{\cu^k(\mc{D}_k(\ab_k))}(\q')\ud\mu_{\cu^k(\ns)}(\q).
\]
But we know that $\cu^k(\mc{D}_k(\ab_k))$ is just the direct product $\ab_k^{\db{k}}$ with its Haar measure being the $\db{k}$-power of the Haar measure on $\ab_k$. Hence for each fixed $\q$ the inner integral above is
$\prod_{v\in \db{k}} C^{|v|}  \int_{\ab_k} f(\q(v) +z)  \ud\mu_{\ab_k}(z) =0$. 
This yields a contradiction.
\end{proof}
\noindent Now let us turn to the core of the proof of Proposition \ref{prop:main}. Let us recall briefly the situation. We have the abelian group $A:=\hom(\mc{D}_1(\mb{Z}_p^M),\mc{D}_1(\mb{Z}_p^D))$ of affine homomorphisms $\mb{Z}_p^M\to \mb{Z}_p^D$. Note that $A\cong  \mb{Z}_p^D\oplus \mb{Z}_p^{D\times M}$, so that in particular we can represent the elements of $A$ as $(M+1)$-tuples $(x,t_1,\ldots,t_M)\in (\mb{Z}_p^D)^{M+1}$, where such a tuple represents uniquely the affine homomorphism $g_{(x,t=(t_1,\ldots,t_M))}\in A$ defined by
\[
g_{(x,t)}(z)=x+t \cdot z = x+t_1z_1+\cdots +t_M z_M,\textrm{ for }z\in\mb{Z}_p^M.
\]
(The $t_i$ can also be viewed as the $M$ columns of a matrix $\tau\in \mb{Z}_p^{D\times M}$ defining the linear map $z\mapsto \tau z$, and then $g_{(x,t)}$ is this linear map composed with translation by $x$.) This mapping of elements of $A$ to $(M+1)$-tuples $\big(x,t=(t_1,\ldots,t_M)\big)$ is a group isomorphism.

On the other hand we have the abelian bundle $B:=\hom(\mc{D}_1(\mb{Z}_p^M),\ns)$, where $\ns$ is a $k$-step $p$-homogeneous nilspace, and we have a $b$-balanced morphism $\varphi\in \hom(\mc{D}_1(\mb{Z}_p^D),\ns)$.

We then consider the map $F:A\to B$, $g\mapsto \varphi\co g$, and our goal is to prove that if $\varphi$ is $b$-balanced with $b$ sufficiently small (where $b$ can depend on $\ns$), then $F$ is surjective. 

Note that for each element $(x,t)$ in $A$, the image $F(x,t)=\varphi\co g_{(x,t)}$ can be viewed as a point in $\ns^{\mb{Z}_p^M}$, namely as the point
$F\big(x,t=(t_1,\ldots,t_M)\big)= \big(\varphi(x+t_1 z_1+\cdots+t_Mz_M) \big)_{z\in \mb{Z}_p^M}$. 

The first step in our strategy is to use induction on $k$ to reduce the task of proving that $F$ is surjective onto $B$ to the task of proving a simpler looking statement about the distribution of the orbit $(F(x,t))_{(x,t)\in A}$ in $k$-level fibers of some power of $\ns$. The reduction goes as follows. Let $f$ be any element in $B$. Then, by induction on $k$ (using that $\pi_{k-1}\co\varphi$ is $b'(b)$-balanced where $b'\to 0$ as $b\to 0$), there is an element $g\in A$ such that  the map $\pi_{k-1}\co\varphi\co g$ is equal to $\pi_{k-1}\co f$. In other words, our goal of showing that $f$ is the image of some $g\in A$ under $F$ is already achieved modulo $\pi_{k-1}$, i.e.\ we have $\pi_{k-1}\co\varphi\co g = \pi_{k-1}\co f$. Then, since $\varphi\co g(z)$ and $f(z)$ are in the same $\pi_{k-1}$-fiber in $\ns$ for every $z\in \mb{Z}_p^M$, we can take the difference of these two maps, which must then be a morphism into the $k$-th structure group $\ab_k$ of $\ns$, namely $q:z \; \mapsto \; \varphi\co g(z) - f(z)\, \in \, \hom\big(\mc{D}_1(\mb{Z}_p^M),\mc{D}_k(\ab_k)\big)$. In other words, this map $q$ is a $\ab_k$-valued polynomial of degree at most $k$ in $M$ variables.

Let us identify $\mb{Z}_p^M$ with $[0,p-1]^M$, and define for $r\in \mb{N}$ the  set
\begin{equation}\label{eq:simp}
S_{r,M}:= [0,p-1]^M_{< r}=\{z\in [0,p-1]^M: |z|:= z_1+\cdots+z_M < r\}.
\end{equation}
Then $q$ is entirely determined by its values on $S_{k+1,M}$. Actually, this holds for more general morphisms, in the following sense which will be used later.
\begin{lemma}\label{lem:polydet}
Let $\ns$ be a $k$-step nilspace and let $q\in\hom(\mc{D}_1(\mb{Z}_p^M),\ns)$. Then the values of $q$ on $S_{k+1,M}$ determine the full map $q$.
\end{lemma}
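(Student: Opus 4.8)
The plan is to induct on the step $k$ of the nilspace $\ns$. The base case $k=0$ is trivial, since $\ns$ is then a single point and $S_{1,M}=\{0\}$ already carries all the information. For the inductive step, suppose $q,q'\in\hom(\mc{D}_1(\mb{Z}_p^M),\ns)$ agree on $S_{k+1,M}$; we want $q=q'$. First I would push down to the $(k-1)$-step factor: the maps $\pi_{k-1}\co q$ and $\pi_{k-1}\co q'$ lie in $\hom(\mc{D}_1(\mb{Z}_p^M),\ns_{k-1})$ and agree on $S_{k,M}\subset S_{k+1,M}$, hence by the inductive hypothesis they agree on all of $\mb{Z}_p^M$. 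Therefore, for each $z\in\mb{Z}_p^M$, the points $q(z)$ and $q'(z)$ lie in the same $\pi_{k-1}$-fiber, and their difference defines a map $\rho:z\mapsto q(z)-q'(z)\in\hom\big(\mc{D}_1(\mb{Z}_p^M),\mc{D}_k(\ab_k)\big)$, i.e.\ a degree-$\le k$ polynomial map $\mb{Z}_p^M\to\ab_k$ which vanishes on $S_{k+1,M}$. (That the difference of two morphisms lying over the same morphism to $\ns_{k-1}$ is a morphism into $\mc{D}_k(\ab_k)$ is the standard structure theory of $k$-step nilspaces as $k$-fold abelian bundles, which is available from the excerpt.) So the whole problem reduces to the abelian, polynomial statement: a degree-$\le k$ polynomial map $\mb{Z}_p^M\to\ab_k$ vanishing on all $z$ with $|z|\le k$ is identically zero.

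For this reduced statement I would use the combinatorial description of polynomial maps in the binomial basis. By \cite[Lemma 2.2.5, Corollary 2.2.17]{Cand:Notes1} (and as recalled in the proof of Lemma \ref{lem:elemabpolycase}), any $\rho\in\hom(\mc{D}_1(\mb{Z}^M),\mc{D}_k(\ab_k))$ can be written as $\rho(\mf{n})=\sum_{\mf{i}\in\mb{N}^M:\,|\mf{i}|\le k} a_{\mf{i}}\binom{\mf{n}}{\mf{i}}$, where the coefficients $a_{\mf{i}}\in\ab_k$ are obtained as finite $\mb{Z}$-linear combinations of the values $\rho(\mf{m})$ for $\mf{m}$ ranging over monomials with $|\mf{m}|\le k$, i.e.\ exactly over $S_{k+1,M}$. (One first lifts the given map on $[0,p-1]^M$ through the $p$-homogeneity of $\mc{D}_k(\ab_k)$, Lemma \ref{lem:elemabpolycase}, to a genuine polynomial map on $\mb{Z}^M$, noting this lift is determined on $S_{k+1,M}$ by the finite-difference formula.) Hence if $\rho$ vanishes on $S_{k+1,M}$, then every $a_{\mf{i}}=0$, so $\rho\equiv 0$ on $\mb{Z}^M$, and a fortiori on $\mb{Z}_p^M$. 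This closes the induction.

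The one point requiring care is the interface between the nilspace-theoretic reduction and the abelian computation: I must check that the "difference" map $\rho$ really is a morphism into $\mc{D}_k(\ab_k)$ whose values on $S_{k+1,M}$ are prescribed by those of $q$ and $q'$, and that the finite-difference coefficient formula only involves indices $\mf{i}$ with $|\mf{i}|\le k$ — this is where the exponent $k+1$ in $S_{k+1,M}$ enters, since a degree-$\le k$ polynomial has binomial coefficients indexed by heights $0,1,\dots,k$, and the value at the "monomial point" $\mf{m}$ with $\mf{m}$ of height $j$ determines (together with lower ones) the coefficient $a_{\mf{m}}$. I expect the genuine content — rather than obstacle — to be organising the induction cleanly so that the inductive hypothesis for $\ns_{k-1}$ is applied with the correct simplex $S_{k,M}$, and noting that the whole argument is essentially a relative version of the familiar fact that a degree-$d$ polynomial on $\mb{Z}^M$ is determined by its values on the discrete simplex of size $d$. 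An alternative, slightly slicker route would be to induct on $M$ as well, slicing $\mb{Z}_p^M$ along the last coordinate and using that the "discrete derivative" of a degree-$\le k$ polynomial in the last variable is degree $\le k-1$, but the binomial-basis argument above is the most direct given the tools already assembled in the paper.
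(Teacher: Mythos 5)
Your proof is correct in substance, but it takes a genuinely different route from the paper's. The paper does not descend through the structure groups at all: it argues by induction on the \emph{height} $|z|$ of the point $z\in[0,p-1]^M$, entirely inside $\ns$. For each $z$ with $|z|=k+1$ one writes down an explicit $(k+1)$-cube $\q(v)=v\sbr{1}h_1+\cdots+v\sbr{k+1}h_{k+1}$ on $\mc{D}_1(\mb{Z}_p^M)$ (with the $h_i$ equal to repeated standard basis vectors, $z_1$ copies of $e_1$, then $z_2$ copies of $e_2$, etc.) whose top vertex is $z$ and whose remaining vertices all lie in $S_{k+1,M}$; then $q\co\q|_{\db{k+1}\setminus\{1^{k+1}\}}$ is a $(k+1)$-corner on the $k$-step nilspace $\ns$, so its completion is unique and must equal $q\co\q$, pinning down $q(z)$. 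One then propagates to heights $k+2, k+3,\dots$. That argument is shorter, needs only the corner-completion axiom, and makes it visually obvious why $S_{k+1,M}$ is the right set. Your argument instead inducts on the step, peels off $\ab_k$ via the bundle decomposition (the fact that the difference of two morphisms lying over the same morphism to $\ns_{k-1}$ lands in $\hom(\mc{D}_1(\mb{Z}_p^M),\mc{D}_k(\ab_k))$ is indeed used elsewhere in the paper, so that step is fine), and reduces to the statement that a degree-$\le k$ polynomial $\mb{Z}_p^M\to\ab_k$ vanishing on $S_{k+1,M}$ is zero. That is more machinery, but it isolates the ``polynomial determined by its values on a simplex'' content cleanly.

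One point in your abelian reduction needs an extra line in low characteristic. When $k\ge p$, the set $\{\mf{j}\in\mb{Z}_{\ge 0}^M:|\mf{j}|\le k\}$ is \emph{not} contained in $[0,p-1]^M$, so it is not ``exactly $S_{k+1,M}$'': the finite-difference formula $a_{\mf{i}}=\sum_{\mf{j}\le\mf{i}}(-1)^{|\mf{i}|-|\mf{j}|}\binom{\mf{i}}{\mf{j}}\,\tilde\rho(\mf{j})$ involves points $\mf{j}$ with some coordinate $\ge p$ (e.g.\ $\mf{i}=(p,0,\dots,0)$ when $k\ge p$), which lie outside $S_{k+1,M}$. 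This is repaired by noting that your lift $\tilde\rho=\rho\co\pi$ (which is just composition with the quotient morphism $\mb{Z}^M\to\mb{Z}_p^M$ -- no appeal to Lemma \ref{lem:elemabpolycase} is needed for this direction) is $p$-periodic in each coordinate, and coordinatewise reduction mod $p$ into $[0,p-1]$ does not increase the height, so $\tilde\rho(\mf{j})=\rho(\mf{j}\bmod p)=0$ for all $|\mf{j}|\le k$. With that sentence added, the induction closes and the proof is complete.
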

\begin{proof}
We use the uniqueness of completion of $(k+1)$-corners on $\ns$. To this end, for $z\in S_{k,M}$, we call  $|z|:=z_1+\cdots+z_M$ the \emph{height} of $z$, as in the previous subsection.

If all elements in $[0,p-1]^M$ have height at most $k$ (which happens when $(p-1)M\leq k$) then there is nothing to prove (since $q|_{S_{k+1,M}}$ is then already the full map $q$). Otherwise, we argue by induction on the height. We start with any $z\in [0,p-1]^M$ satisfying $|z|=k+1$. Note that there is a $(k+1)$-cube $\q$ on $\mc{D}_1(\mb{Z}_p^M)$ such that $|\q(v)|\leq k$ for $v\neq 1^{k+1}$ and $\q(1^{k+1})=z$. Indeed, letting $z_1,\ldots,z_M$ be the coordinates of $z$, we can take $\q(v)=v\sbr{1} h_1 + \cdots + v\sbr{k+1}h_{k+1}$, where, out of the $k+1$ elements $h_i\in [0,p-1]^M$, we set the first $z_1$ of them to be equal to $e_1$, the next $z_2$ of them to be equal to $e_2$, etc., the last $z_M$ of them to be equal to $e_M$. Then, letting $\q'$ be the $(k+1)$-corner obtained by restricting $\q$ to $\db{k+1}\setminus\{1^{k+1}\}$, we have that $q\co \q'$  is a $(k+1)$-corner on $\ns$, so its completion is unique, and this completion is $q\co \q$ by the morphism property, so the value $q(z)=q\co \q(1^{k+1})$ is uniquely determined. 

Repeating this argument with every $z$ of height $k+1$, then by induction with height $k+2$, and so on, we determine all values $q(z)$, $z\in [0,p-1]^M$.
\end{proof}
\noindent Recall that in our ongoing argument, the polynomial $q$ is the ``error" that we would like to correct in $\varphi\co g$ to get the desired map $f$ as an image $\varphi\co g'$ and thus prove the surjectivity claimed in Proposition \ref{prop:main}. Thanks to Lemma \ref{lem:polydet}, we can focus on correcting errors in $S_{k+1,M}$. It then suffices to prove the following quantitative equidistribution result.
\begin{lemma}\label{lem:charfreekey}
Let $\ns$ be a $k$-step \textsc{cfr} $p$-homogeneous nilspace with a compatible metric, and let $S=S_{k+1,M}$. For every $\varepsilon>0$, there exists $b>0$ such that the following holds. Let $\varphi\in \hom(\mc{D}_1(\mb{Z}_p^D),\ns)$ be $b$-balanced, let $(x,t=(t_1,\ldots,t_M))\in A:=\hom(\mc{D}_1(\mb{Z}_p^M),\mc{D}_1(\mb{Z}_p^D))$, and let $y=\big( \varphi(x+t\cdot z)\big)_{z\in S} \in \ns^S$. Then the map $F':A\to \ns^S,\; (x',t')\,\mapsto\, \big( \varphi(x'+t'\cdot z)\big)_{z\in S}$ is $\varepsilon$-equidistributed in the fiber $y + \ab_k^S \subset \ns^S$, in the following sense:
\begin{equation}\label{eq:charfreekey} 
\forall\,w\in \ab_k^S,\quad \Big| \frac{\mu_A\big(F'^{-1}(y+w)\big)}{\mu_A\big(F'^{-1}(y+\ab_k^S)\big)} - \frac{1}{|\ab_k^S|}\Big| < \varepsilon.
\end{equation}
\end{lemma}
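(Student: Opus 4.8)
The plan is to establish \eqref{eq:charfreekey} by Fourier analysis on the group $\ab_k^S$, where $\ab_k=\ab_k(\ns)$ is the top structure group of $\ns$, a finite elementary abelian $p$-group by Proposition \ref{prop:b-bal-p-hom-finite}. This reduces the claim to a bound on certain character sums over $A$, which is then supplied by Lemma \ref{lem:XcharUknorm} combined with a Generalized von Neumann estimate.

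First I would set up the reduction. Put $N:=|F'^{-1}(y+\ab_k^S)|$; since $\mu_A$ is uniform, $\mu_A(F'^{-1}(y+w))=|F'^{-1}(y+w)|/|A|$, so \eqref{eq:charfreekey} amounts to $\big||F'^{-1}(y+w)|/N-1/|\ab_k^S|\big|<\varepsilon$. For $z\in S$ set $\bar y(z):=\pi_{k-1}(\varphi(x+t\cdot z))$, and for $\xi\in\pi_{k-1}^{-1}(\bar y(z))$ let $[\xi]_z\in\ab_k$ be the $\ab_k$-torsor difference of $\xi$ with $\varphi(x+t\cdot z)$. Expanding each indicator as
\[
\mathbf{1}\big[\varphi(x'+t'\cdot z)=y(z)+w(z)\big]=\mathbf{1}\big[\pi_{k-1}\varphi(x'+t'\cdot z)=\bar y(z)\big]\,\tfrac{1}{|\ab_k|}\sum_{\chi\in\widehat{\ab_k}}\chi\big([\varphi(x'+t'\cdot z)]_z-w(z)\big)
\]
and multiplying over $z\in S$, one obtains, with $H_z:\ns\to\mb{C}$ the $1$-bounded function $H_z(\xi):=\mathbf{1}[\pi_{k-1}\xi=\bar y(z)]\,\psi_z([\xi]_z)$ and $\Sigma(\psi):=|A|\,\mathbb{E}_{(x',t')\in A}\prod_{z\in S}H_z(\varphi(x'+t'\cdot z))$,
\[
\frac{|F'^{-1}(y+w)|}{N}-\frac{1}{|\ab_k^S|}=\frac{1}{|\ab_k^S|\,N}\sum_{\psi\neq\mathbf{1}}\overline{\psi(w)}\,\Sigma(\psi),
\]
the sum running over nontrivial characters $\psi=(\psi_z)_{z\in S}$ of $\ab_k^S$ with $\psi(w):=\prod_z\psi_z(w(z))$. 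Thus it suffices to show (a) $|\Sigma(\psi)|\le\eta|A|$ for every nontrivial $\psi$, where $\eta$ can be made arbitrarily small by shrinking $b$, and (b) $N\ge c|A|$ for a constant $c=c(\ns,p,M)>0$; then \eqref{eq:charfreekey} follows as soon as $\eta<c\varepsilon$, after bounding the $|\ab_k^S|-1$ terms.

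For (a): given a nontrivial $\psi$, choose $z_0\in S$ with $\psi_{z_0}\neq\mathbf{1}$. Then $H_{z_0}$ has zero mean on every $\ab_k$-orbit (the mean vanishes off $\pi_{k-1}^{-1}(\bar y(z_0))$, and on that fibre it equals $\psi_{z_0}([\xi]_{z_0})\int_{\ab_k}\psi_{z_0}\,\ud\mu_{\ab_k}=0$), so Lemma \ref{lem:XcharUknorm} gives $\|H_{z_0}\co\varphi\|_{U^k}\le\eta$ once $b$ is small enough in terms of $\ns,p,\eta$. Writing the average in $\Sigma(\psi)$ as $\mathbb{E}_{x',t_1',\dots,t_M'\in\mb{Z}_p^D}\prod_{z\in S}h_z(\ell_z(x',t'))$ with $h_z:=H_z\co\varphi$ and $\ell_z(x',t'):=x'+z_1t_1'+\dots+z_Mt_M'$, we have a system of linear forms in the $M+1$ vector variables $(x',t_1',\dots,t_M')$ indexed by $z\in S=S_{k+1,M}$. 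The crucial point is that this system has Cauchy--Schwarz complexity at most $k-1$ at the index $z_0$: writing $z_0=(a_1,\dots,a_M)$, the $k$ affine hyperplanes $\{z:z_i=j\}$ for $i\in[M],\,0\le j<a_i$ (there are $|z_0|$ of these) and $\{z:z_1+\dots+z_M=s\}$ for $|z_0|<s\le k$ (there are $k-|z_0|$ of these) all avoid $z_0$ and together cover $S_{k+1,M}\setminus\{z_0\}$ — indeed any $z\neq z_0$ in $S_{k+1,M}$ either has some coordinate $z_i<a_i$, or satisfies $z\ge z_0$ coordinatewise, forcing $|z_0|<|z|\le k$; refining this covering to a partition of $S\setminus\{z_0\}$ into $\le k$ classes, each inside a hyperplane missing $z_0$, shows that $\ell_{z_0}$ lies outside the span of each class. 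The Generalized von Neumann theorem then gives $|\mathbb{E}\prod_z h_z(\ell_z)|\le\|h_{z_0}\|_{U^k}\le\eta$, hence $|\Sigma(\psi)|\le\eta|A|$. This is the classical statement for $k<p$; for $k\ge p$, where the $\mb{F}_p$-reductions of the hyperplanes $\{z_1+\dots+z_M=s\}$ can contain $z_0$, one instead invokes the recent refinement of the Generalized von Neumann theorem from \cite{CGSS-seq-CS-Eurocomb,CGSS-seq-CS}, which still yields $|\mathbb{E}\prod_z h_z(\ell_z)|\le C\|h_{z_0}\|_{U^k}^{c}$ with $C,c>0$ depending only on $k,p,M$.

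For (b): the event defining $N$ says that $(\pi_{k-1}\co\varphi)\co g_{(x',t')}$ agrees on $S$ — equivalently, by Lemma \ref{lem:polydet} applied to the $(k-1)$-step nilspace $\ns_{k-1}$, everywhere — with the fixed morphism $\pi_{k-1}\co f$; since $\pi_{k-1}\co\varphi$ is $b'(b)$-balanced with $b'\to0$ as $b\to0$, iterating the present lemma through the lower structure groups (induction on the step $k$) and using the surjectivity from Proposition \ref{prop:main} at lower steps shows that $g\mapsto\pi_{k-1}\co\varphi\co g$ distributes $A$ approximately uniformly onto the finite set $\hom(\mc{D}_1(\mb{Z}_p^M),\ns_{k-1})$, so every fibre, in particular the one defining $N$, has size at least $c|A|$. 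Combining (a) and (b) and taking $\eta<c\varepsilon$, hence $b$, small enough completes the proof. I expect the genuinely delicate points to be the complexity computation and the invocation of the (refined) Generalized von Neumann theorem in (a) — this is what forces the choice $S=S_{k+1,M}$ and is the only place where the low-characteristic regime $k\ge p$ causes real difficulty — and the bookkeeping in (b) needed to extract the uniform lower bound $N\ge c|A|$ from the induction.
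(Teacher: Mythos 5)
Your proof is correct and its essential ingredients coincide with the paper's: Lemma \ref{lem:XcharUknorm} applied to a factor with zero mean on every $\ab_k$-fiber, the refined Generalized von Neumann theorem (Theorem \ref{thm:GGVN}) for the system of forms indexed by $S_{k+1,M}$, and an induction on the step to bound $\mu_A(F'^{-1}(y+\ab_k^S))$ below independently of $D$ (the paper does this via $F'^{-1}(y+\ab_k^S)\supset F_{k-1}'^{-1}(y_{k-1})$ and iterating the lemma at lower steps, exactly as you sketch in (b)). Where you genuinely diverge is the reduction step: the paper first passes to Proposition \ref{prop:keyWeq} by a contradiction argument (constructing an $h\in W$ taking values $\pm 1$ at two points of the fiber), and then decomposes a general $1$-bounded $h\in W$ into boundedly many rank-one functions with a zero-mean factor via the combinatorial Lemma \ref{lem:Wdecomp}; you instead expand the indicator of the event $F'(x',t')=y+w$ directly by Fourier analysis on $\ab_k^S$, which produces the rank-one character functions $H_z$ immediately and yields an exact identity for $|F'^{-1}(y+w)|/N - 1/|\ab_k^S|$ rather than a contradiction. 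Your route is more direct and makes the dependence of the error on the number of nontrivial characters explicit, at the cost of being tied to the group structure of $\ab_k^S$; the paper's decomposition lemma is a general-purpose statement about zero-average functions on products of finite sets. Your discussion of Cauchy--Schwarz complexity in (a) is more detail than strictly needed, since Theorem \ref{thm:GGVN} is stated for exactly this system in all characteristics, but you correctly identify that the low-characteristic case $k\ge p$ is precisely what forces the use of the refined version from \cite{CGSS-seq-CS}.
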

\noindent Note that the quantity $\frac{\mu_A\big(F'^{-1}(y+w)\big)}{\mu_A\big(F'^{-1}(y+\ab_k^S)\big)}$ in \eqref{eq:charfreekey} is a conditional probability of $F'(x,t)$ equalling $y+w$ given that $F'(x,t)$ is in the fiber $y+\ab_k^S$. Thus \eqref{eq:charfreekey} tells us that this probability is $\varepsilon$-close to the Haar-probability of the singleton $\{y+w\}$ in this fiber.

To see that Lemma \ref{lem:charfreekey} implies Proposition \ref{prop:main}, recall that so far we had found $g=g_{x,t_1,\ldots,t_M}$ such that $f(z) = \varphi\co g(z)-q(z)$, and now we just need to ``correct" the polynomial difference $q(z)$ in order to conclude the desired surjectivity. By Lemma \ref{lem:charfreekey} and the finiteness of $\ab_k$ (given by Proposition \ref{prop:b-bal-p-hom-finite}), for $\varepsilon$ sufficiently small (namely $\varepsilon < \frac{1}{|\ab_k^S|}$), the $\varepsilon$-equidistribution implies surjectivity \emph{in this $\ab_k^S$-fiber}, so there is $(x',t')\in A$ such that $\big( \varphi(x'+t'\cdot z)\big)_{z\in S} = \big( \varphi(x+t\cdot z)-q(z)\big)_{z\in S} =  \big( f(z)\big)_{z\in S}$. In other words, letting $g'$ be the element of $A$ corresponding to $(x',t')$, we now have that the morphisms $\varphi\co g'$ and $f$ in $\hom(\mc{D}_1(\mb{Z}_p^M),\ns)$ agree on the simplicial set $S\subset \mb{Z}_p^M$. But then by Lemma \ref{lem:polydet} we deduce that $\varphi\co g'=f$ on all of $\mb{Z}_p^M$, which gives our desired surjectivity conclusion, completing the proof of Proposition \ref{prop:main}.

Let us now turn to the proof of Lemma \ref{lem:charfreekey}. 

\begin{defn}\label{def:W}
We define $W=W_{\ns,k,p,M}$ to be the vector space of functions $h:\ns^S\to\mb{C}$ with the property that for every point $y\in \ns^S$ we have $\int_{\ab_k^S} h(y+w)\ud\mu_{\ab_k^S}(w)=0$.
\end{defn}
\noindent Note that $W$ has finite dimension because by Proposition \ref{prop:b-bal-p-hom-finite} we know that $\ns$ is finite. The dimension of $W$ thus depends on $|\ns|$, and note that it also increases as $M$ grows (but this poses no problem, as $M$ will be fixed in terms of $p$ and $k$).

Our next step is to reduce the proof of Lemma \ref{lem:charfreekey} to establishing the following result.

\begin{proposition}\label{prop:keyWeq}
Let $\ns$ be a $k$-step $p$-homogeneous \textsc{cfr} nilspace with a compatible metric, and let $W$ be the vector space in Definition \ref{def:W}. For every $\delta>0$, there exists $b=b(\ns,M,\delta)>0$  such that if $\varphi\in \hom(\mc{D}_1(\mb{Z}_p^D),\ns)$ is $b$-balanced then
\begin{equation}\label{eq:keyWeq}
\forall\, h\in W\textrm{ with }\|h\|_\infty\leq 1,\quad\textrm{we have }\quad \Big|\mb{E}_{x,t_1,\ldots,t_M\in \mb{Z}_p^D}\; h\Big(\big(\varphi(x+t\cdot z)\big)_{z\in S}\Big)\Big| \leq \delta.
\end{equation}
\end{proposition}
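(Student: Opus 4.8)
The plan is to express the average $\mb{E}_{x,t}\,h\big((\varphi(x+t\cdot z))_{z\in S}\big)$ as a multilinear Gowers-type average and then invoke Lemma \ref{lem:XcharUknorm} via a generalized von Neumann estimate. The first observation is that the parametrization $z\mapsto x+t_1z_1+\cdots+t_Mz_M$ of an affine map $\mb{Z}_p^M\to\mb{Z}_p^D$ is itself ``cubic'' in the variables $(x,t_1,\ldots,t_M)$: for each fixed $z\in S$ the point $x+t\cdot z$ is an integer affine-linear combination, with $\{0,1\}$-type structure, of the $M+1$ parameters. More precisely, I would set up a suitable configuration on $(\mb{Z}_p^D)^{M+1}$ so that the family $(x+t\cdot z)_{z\in S}$ is (a projection of) a morphism from a product of $\mc{D}_1(\mb{Z}_p)$'s, or at least is controlled by a Gowers norm $U^{k}$ of $f\co\varphi$ for appropriate functions $f$ on $\ns$. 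The key point making the $U^k$-norm (rather than some higher norm) the relevant one is that $S=S_{k+1,M}$ consists of tuples of height $\le k$, so each coordinate of the configuration only involves products of at most $k$ of the ``direction'' variables $t_i$.

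The second step is the reduction to functions with zero $\ab_k$-orbit average. Since $h\in W$, by definition $\int_{\ab_k^S}h(y+w)\,\ud\mu_{\ab_k^S}(w)=0$ for every $y$. One should decompose $h$ (or rather control the average directly) so that the relevant ``top-level'' function on $\ns$ appearing in the multilinear average has mean zero on every $\ab_k$-coset; this is exactly the hypothesis of Lemma \ref{lem:XcharUknorm}. Concretely, I expect to write the average in \eqref{eq:keyWeq} as a sum/integral of terms each of which is bounded by a product of Gowers $U^k$-norms $\|f_j\co\varphi\|_{U^k}$ where at least one $f_j$ lies in the class $T$ from Lemma \ref{lem:XcharUknorm} (i.e.\ has vanishing average on each $\ab_k$-orbit). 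This is where the recent refinement of the Generalized Von Neumann Theorem (cited as \cite{CGSS-seq-CS-Eurocomb,CGSS-seq-CS}) enters: it allows one to bound a multilinear average over a non-standard configuration (here indexed by the simplex $S$ rather than a cube) by a single Gowers norm of one of the functions, provided the configuration has the appropriate ``Cauchy--Schwarz complexity''. Since $\ns$ is finite (Proposition \ref{prop:b-bal-p-hom-finite}) and $W$ finite-dimensional, it suffices to prove the bound for $h$ ranging over a fixed finite spanning set, so one only needs finitely many such applications.

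Finally, given $\delta>0$, one applies Lemma \ref{lem:XcharUknorm} with $\eta$ chosen small relative to $\delta$ and the (fixed, finite) combinatorial constants coming from the von Neumann estimate and $\dim W$, obtaining a threshold $b=b(\ns,M,\delta)$ such that any $b$-balanced $\varphi$ forces $\|f\co\varphi\|_{U^k}\le\eta$ for all $f\in T$, hence $|\mb{E}_{x,t}\,h(\cdots)|\le\delta$ for all $1$-bounded $h\in W$. The main obstacle I anticipate is the precise packaging of the configuration $(x+t\cdot z)_{z\in S}$ so that the generalized von Neumann machinery applies cleanly: one must verify that the linear forms $\{x+t\cdot z:z\in S\}$, viewed in the variables $(x,t_1,\ldots,t_M)$, have the right structure (no ``repeated'' forms obstructing the Cauchy--Schwarz, and complexity exactly $k$ so that $U^k$ suffices), and one must correctly track which function ends up carrying the zero-orbit-average property after the Cauchy--Schwarz manipulations. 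Handling the combinatorics of $S_{k+1,M}$ and the $p$-homogeneity (which is what guarantees $\varphi$ restricted appropriately still gives a morphism, so that $f\co\varphi$ composed with the configuration is legitimately a cube average) is the technical heart of the argument.
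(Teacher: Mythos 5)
Your proposal is correct and follows essentially the same route as the paper: decompose the average into boundedly many multilinear terms in which one factor has zero average on every $\ab_k$-orbit, bound each such term via the generalized von Neumann estimate of \cite{CGSS-seq-CS} (Theorem \ref{thm:GGVN}) by $\|h_{r,z'}\co\varphi\|_{U^k}^c$, and then invoke Lemma \ref{lem:XcharUknorm} with $\eta$ small in terms of $\delta$ and the combinatorial constants. The one ingredient you leave implicit is the explicit rank-one decomposition of $1$-bounded functions in $W$ (the paper's Lemma \ref{lem:Wdecomp}), which is exactly what puts the average into the product form required by the von Neumann theorem while keeping all factors $1$-bounded and placing the zero-orbit-average property on a single factor.
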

\begin{proof}[Proof of Lemma \ref{lem:charfreekey} using Proposition \ref{prop:keyWeq}]
Suppose that Lemma \ref{lem:charfreekey} fails for some $\varepsilon>0$. Then this failure is witnessed by a point in some $\ab_k^S$-fiber; more precisely, there exist $x,t_1,\ldots,t_M \in \mb{Z}_p^D$ such that, letting $y=\big(\varphi(x+t\cdot  z)\big)_{z\in S}$, there is a point in the fiber $y+\ab_k^S$, i.e., some point $y' = y + w$ for some $w\in \ab_k^S$, such that the following holds: let $\mu_A'$ denote the measure of the preimage ${F'}^{-1}(y')$ in $A$ \emph{conditioned} on the event ${F'}^{-1}(y+\ab_k^S)$, that is $\mu_A'({F'}^{-1}(y'))= \mu_A({F'}^{-1}(y')) / \mu_A({F'}^{-1}(y+\ab_k^S))$. Then $\big|\mu'_A({F'}^{-1}(y))-\frac{1}{|\ab_k^S|}\big|\geq \varepsilon$. 

Suppose that $\mu_A'({F'}^{-1}(y))\leq \frac{1}{|\ab_k^S|}- \varepsilon$ (the case $\mu_A'({F'}^{-1}(y))\geq \frac{1}{|\ab_k^S|} + \varepsilon$ is handled similarly). Note that there is some other point $\tilde y$ in the same fiber such that $\mu_A'({F'}^{-1}(\tilde{y}))\geq \frac{1}{|\ab_k^S|}- \frac{\varepsilon}{2}$. Indeed, otherwise we would have $1=\mu_A'({F'}^{-1}(y+\ab_k^S))\leq \sum_{w\in \ab_k^S} \mu_A'({F'}^{-1}(y+w)) \leq 1-\frac{\varepsilon}{2}|\ab_k^S|$, a contradiction.

Now let $h$ be the function on $\ns^S$ which is 0 in every $\ab_k^S$-fiber other than the fiber $y + \ab_k^S$, and in this fiber let $h(y')=-1$, $h(\tilde y)=1$, and $h=0$ otherwise. Then clearly $h\in W$. However, for this function $h$ the conclusion \eqref{eq:keyWeq} fails because the left side of \eqref{eq:keyWeq} for $h$ is $
\mu_A ({F'}^{-1}(y'))- \mu_A ({F'}^{-1}(\tilde y)) = \mu_A({F'}^{-1}\big(y+\ab_k^S)\big)\Big(\mu_A' ({F'}^{-1}(y'))- \mu_A' ({F'}^{-1}(y))\Big)\geq  \mu_A({F'}^{-1}\big(y+\ab_k^S)\big) \varepsilon/2$,
so indeed \eqref{eq:keyWeq} fails with $\delta=\mu_A({F'}^{-1}\big(y+\ab_k^S)\big)\,\varepsilon/2$. Now we want this $\delta$ to depend on $\varepsilon$, and perhaps $\ns$ and $M$, but not on $D$, so we have to ensure that the quantity $\mu_A({F'}^{-1}\big(y+\ab_k^S)\big)$ is bounded away from $0$ independently of $D$. 

By induction we may assume that Lemma \ref{lem:charfreekey} holds for step at most $k-1$. Let $F'_{k-1}:A\to \ns_{k-1}^{S_{k-1}}$, and $y_{k-1}:=(\pi_{k-1}(\varphi(x+t\cdot z))_{z\in S_{k-1}}$. Then for $b$ small enough we have
\[
\left| 
\frac{\mu_A(F'_{k-1}(y_{k-1}+w))}{\mu_A(F'_{k-1}(y_{k-1}+\ab_{k-1}^{S_{k-1}}))}-\frac{1}{|\ab_{k-1}^{S_{k-1}}|}
\right|\le \frac{1}{2|\ab_{k-1}^{S_{k-1}}|}
\]
(recall that $\pi_{k-1}\co \varphi$ is $b_{k-1}$-balanced with $b_{k-1}(b)\to 0$ as $b\to 0$). 

Using that $F'^{-1}(y+\ab_k^S)\supset F_{k-1}'^{-1}(y_{k-1})$ we have
\[
\mu_A(F'^{-1}(y+\ab_k^S))\ge \mu_A(F_{k-1}'^{-1}(y_{k-1})) \ge \frac{1}{2|\ab_{k-1}^{S_{k-1}}|}\mu_A(F'_{k-1}(y_{k-1}+\ab_{k-1}^{S_{k-1}})).
\]
\noindent Thus, repeating this argument iteratively we conclude that, for $b$ small enough,
\[
\mu_A(F'^{-1}(y+\ab_k^S))\ge \frac{1}{2|\ab_{k-1}^{S_{k-1}}|}\cdots \frac{1}{2|\ab_1^{S_{1}}|}
\]
which is a quantity that depends on $\ns$, $p$ and $M$, but not on $D$.

We thus deduce that the conclusion of Proposition \ref{prop:keyWeq} fails, as required.
\end{proof}

\noindent We now turn to the proof of \eqref{eq:keyWeq}. For this purpose let us first observe the following useful decomposition of functions in $W$.

\begin{lemma}\label{lem:Wdecomp}
Let $\ns$ be a $k$-step \textsc{cfr} $p$-homogeneous nilspace. For every 1-bounded function $h$ in $W$ we have a decomposition $h=\sum_{r\in [R]} h_r$ where $R=R(\ns,p,M)$ and $h_r: y\mapsto \prod_{z\in S} h_{r,z}(y_z)$ is a rank 1 function such that each $h_{r,z}$ is 1-bounded and, for each $r$, for some $z'=z'(h,r)\in S$, the function $h_{r,z'}$ has integral 0 in each $\ab_k$-fiber.
\end{lemma}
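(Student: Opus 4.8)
The plan is to reduce the statement to finite-dimensional linear algebra by using that $\ns$ is finite — this is Proposition \ref{prop:b-bal-p-hom-finite}, applicable since $\ns$ is \textsc{cfr} and $p$-homogeneous — so that $L^2(\ns)$ is finite-dimensional. First I would analyse $\ns$ one coordinate at a time. Since $\ns$ carries the Haar probability measure $\mu_\ns$, which is invariant under the translation action of the top structure group $\ab_k$, the averaging operator $P\colon L^2(\ns)\to L^2(\ns)$, $g\mapsto\big(x\mapsto\int_{\ab_k}g(x+z)\ud\mu_{\ab_k}(z)\big)$, is an orthogonal projection; write $V_0=\mathrm{im}\,P$ (functions constant on $\ab_k$-orbits) and $V_1=\ker P=V_0^\perp$ (functions with integral $0$ in every $\ab_k$-fiber), so that $L^2(\ns)=V_0\oplus V_1$ orthogonally.

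Next I would locate $W$ inside the tensor product. Since $\ab_k^S=\prod_{z\in S}\ab_k$ acts on $\ns^S=\prod_{z\in S}\ns$ coordinatewise, the averaging operator over $\ab_k^S$ on $L^2(\ns^S)\cong\bigotimes_{z\in S}L^2(\ns)$ equals $Q:=\bigotimes_{z\in S}P$, which is the orthogonal projection onto $\bigotimes_{z\in S}V_0$. By Definition \ref{def:W} we have $W=\ker Q$. Expanding $\bigotimes_{z\in S}(V_0\oplus V_1)=\bigoplus_{A\subseteq S}\big(\bigotimes_{z\in A}V_1\otimes\bigotimes_{z\notin A}V_0\big)$ as an orthogonal direct sum, with $\bigotimes_{z\in S}V_0$ the $A=\varnothing$ summand, we get that $W$ is the span of the summands with $A\neq\varnothing$. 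Hence every $h\in W$ is a linear combination of rank $1$ tensors $\prod_{z\in S}g_z$ in which at least one factor $g_{z'}$ lies in $V_1$, i.e.\ has integral $0$ in each $\ab_k$-fiber.

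Then I would make this explicit and enforce $1$-boundedness. Fix orthonormal bases $\mathcal B_0$ of $V_0$ and $\mathcal B_1$ of $V_1$ with respect to $\mu_\ns$; then $\mathcal B=\mathcal B_0\cup\mathcal B_1$ has size $|\ns|$ and $\{\bigotimes_{z\in S}b_{j(z)}:j\colon S\to\mathcal B\}$ is an orthonormal basis of $L^2(\ns^S)$. Writing $h=\sum_j c_j\bigotimes_z b_{j(z)}$, Cauchy--Schwarz gives $|c_j|=|\langle h,\bigotimes_z b_{j(z)}\rangle|\le\|h\|_2\le\|h\|_\infty\le1$, and by the previous paragraph $c_j=0$ unless $j(z)\in\mathcal B_1$ for some coordinate, which we then fix and denote $z'(j)$. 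Each $b\in\mathcal B$ satisfies $\|b\|_\infty\le|\ns|^{1/2}=:C$, so with $\hat b:=b/C$ (still $1$-bounded, and still in $V_1$ when $b\in\mathcal B_1$) we have $c_j\bigotimes_z b_{j(z)}=(c_jC^{|S|})\bigotimes_z\hat b_{j(z)}$ with scalar of modulus $\le C^{|S|}$. Splitting each such term into $\lceil C^{|S|}\rceil$ identical copies with scalar $c_jC^{|S|}/\lceil C^{|S|}\rceil$ (of modulus $\le1$), and absorbing this scalar into the factor at $z'(j)$, produces rank $1$ functions $h_r=\prod_{z\in S}h_{r,z}$ with every $h_{r,z}$ $1$-bounded and $h_{r,z'(j)}$ a scalar of modulus $\le1$ times an element of $V_1$, hence $1$-bounded with integral $0$ in each $\ab_k$-fiber. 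The number of terms is $R\le|\ns|^{|S|}\,\lceil|\ns|^{|S|/2}\rceil$, and since $|S|=|S_{k+1,M}|$ depends only on $p,k,M$, this bound depends only on $\ns,p,M$, as required.

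The argument is essentially routine linear algebra once finiteness of $\ns$ is in hand; the only step needing care — and the one I would regard as the main (minor) obstacle — is this final rescaling and splitting, which converts the abstract orthogonal decomposition into one whose factor functions are honestly $1$-bounded while keeping $R$ independent of $D$. Note that the $p$-homogeneity of $\ns$ enters only through Proposition \ref{prop:b-bal-p-hom-finite}.
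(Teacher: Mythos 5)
Your argument is correct, and it takes a genuinely different route from the paper's. The paper proceeds combinatorially: it partitions $\ns^S$ into the finitely many $\ab_k^S$-fibers, writes $h=\sum_j h1_{F_j}$, reduces to the case of a single fiber, and then inducts on $|S|$ down to the two-variable case, which is handled by decomposing a mean-zero matrix into explicit ``dipole'' functions with entries $\alpha$ and $-\alpha$; this keeps every factor manifestly $1$-bounded throughout. You instead observe that the fiber-averaging operator $P$ is an orthogonal projection, that $Q=\bigotimes_{z\in S}P$ is the corresponding projection on $L^2(\ns^S)$ with $W=\ker Q$, and read off the decomposition from the orthogonal splitting $\bigotimes_z(V_0\oplus V_1)=\bigoplus_{A\subseteq S}\bigl(\bigotimes_{z\in A}V_1\otimes\bigotimes_{z\notin A}V_0\bigr)$; the price is that an orthonormal basis need not consist of $1$-bounded functions, which you correctly repair by the rescaling-and-splitting step (and you are right that this is the only delicate point -- the scalar absorbed at $z'(j)$ preserves both $1$-boundedness and membership in $V_1$, and your bound $R\le|\ns|^{|S|}\lceil|\ns|^{|S|/2}\rceil$ depends only on $\ns,p,M$ as required). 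Your route is shorter and more conceptual; the paper's yields a decomposition with more explicit, fiber-supported factors and no normalization loss. One cosmetic point: as a standalone statement the lemma has no balanced morphism in its hypotheses, so the cleanest reference for finiteness of a \textsc{cfr} $p$-homogeneous nilspace is Proposition \ref{prop:p-hom-str-gps-intro} (equivalently Proposition \ref{prop:strgps}) rather than Proposition \ref{prop:b-bal-p-hom-finite}; the paper commits the same shortcut since the lemma is only invoked in a context where a balanced morphism is present, so this does not affect correctness.
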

\begin{proof}
Since $\ns$ is finite, there are finitely many fibers $y+\ab_k^S$ partitioning $\ns^S$. Denoting these fibers $F_1,\ldots,F_N$, we have $h=h 1_{F_1} + \cdots + h 1_{F_N}$, where each $h 1_{F_j}$ has 0 average on $F_j$ and is 0 outside $F_j$. Therefore it suffices to show that given a \emph{single} such fiber, every 1-bounded function with 0 average on \emph{this} fiber is a sum of rank 1 functions $y\mapsto \prod_{z\in S} h_z(y_z)$ with every $h_z$ being 1-bounded and with at least one of the $h_z$ having average 0 on this fiber (we can then extend $h_z$ by 0 to a function on all of $\ns$, which then clearly has average 0 on every $\ab_k$-fiber in $\ns$).

Thus, we have reduced the problem to proving the lemma in the case where all of $\ns$ is a single $\ab_k$-fiber: let $\ns$ be a finite set, let $K$ be a positive integer (we will apply this with $K=|S|$),  and let $W$ be the vector space of functions with average 0 on all of $\ns^K$; then every 1-bounded function in $W$ is a sum of rank-1 functions where at least one of the factor-functions has 0 average on $\ns$ and all factor-functions are 1-bounded.

We can prove this claim by induction on $K$. For $K=1$ the claim holds tautologically. Suppose then that the claim holds for the similarly-defined vector space $W'\leq \mb{C}^{\ns^{K-1}}$. Take a 1-bounded function $h:\ns^K=\ns^{K-1} \times \ns\to \mb{C}$ having average 0. Suppose we could show that this is a sum of functions of the form $(y',y)\in \ns^{K-1} \times \ns \mapsto h_1(y')h_2(y)$ where at least one of $h_1,h_2$ has 0 average and both are 1-bounded. Then if it is $h_2$ that has 0 average we are done (as $h_1$ is a sum of 1-bounded rank-1 functions by standard results) and if it is $h_1$ that has 0 average then by induction $h_1$ is a sum of rank 1 functions with 1-bounded factor-functions, one of which has 0 average, so together with $h_2$ we get rank 1 functions decomposing $h$ as required.

Thus, we have reduced the problem even more, to proving that if $\ns,\nss$ are finite sets and $f: \ns \times \nss \to \mb{C}$ has 0 average and is 1-bounded, then it is a combination of rank 1 functions $u(x)v(y)$ where $u$ and $v$ are both 1-bounded and one of them has average 0. Note that any such function $f$ is a sum of functions $f'$ which equal some value $\alpha\in[-1,1]$ in some entry $(x_0,y_0)$, then $-\alpha$ in some other entry $(x_1,y_1)$, and 0 everywhere else (just make a cycle of such functions $f'$ through consecutive pairs of entries, the second non-zero entry of one such $f'$ being corrected by the first non-zero entry of the next such $f'$). Hence it suffices to show that any of these functions $f'$ is a sum of rank 1 functions of the desired kind. But if $x_0=x_1$ or $y_0=y_1$ then $f$ is already a rank 1 function of the desired kind (for example if $x_0=x_1$ then $f'(x,y)=\alpha \,1_{x_0}(x)v(y)$ where $v(y_0)=1=-v(y_1)$ and $v(y)=0$ otherwise). If instead $x_0\neq x_1$ and $y_0\neq y_1$, then $f'(x,y)=u_1(x)v_1(y)+u_2(x)v_2(y)$ where $u_1=\alpha(1_{x_0}-1_{x_1})$,  $v_1=1_{y_0}$, and $u_2=1_{x_1}$, $v_2=\alpha(1_{y_0}-1_{y_1})$. This completes the proof.
\end{proof}
\noindent Given Lemma \ref{lem:Wdecomp}, to prove Proposition \ref{prop:keyWeq} we can first use the decomposition $h=\sum_{r\in [R]} h_r$ where for each $r$ there is a system of 1-bounded functions $(h_{r,z}:\ns\to\mb{C})_{z\in S}$ with some $h_{r,z}$ having average 0 in each $\ab_k$-fiber and $h_r(y)=\prod_{z\in S}h_{r,z}(y_z)$. Hence the right side of \eqref{eq:keyWeq} is at most
\begin{equation}\label{eq:keyWeq2}
\sum_{r\in [R]}\Big| \mb{E}_{x,t_1,\ldots,t_M\in \mb{Z}_p^D} \prod_{z\in S}h_{r,z}\co\varphi(x+t_1 z_1+\cdots+t_M z_M) \Big|.
\end{equation}
Note that $R$ depends on the dimension of $W$, hence on $|\ns^S|$, and this depends on $\ns$ (in particular on $k$) but also on $p$ and $M$.

Now by Lemma \ref{lem:XcharUknorm}, for each $r$, the 1-bounded function $h_{r,z'}:\ns\to\mb{C}$ that has average 0 on every $\ab_k$-fiber satisfies $\|h_{r,z'}\co \varphi\|_{U^k}\leq \eta$ for $b$ small enough (where $\varphi$ is the initial $b$-balanced morphism in $\hom(\mc{D}_1(\mb{Z}_p^D),\ns)$). Note that, by Lemma \ref{lem:XcharUknorm}, the parameter $b$ depends only on $\eta,p,k,M$ and $\ns$ but not on the particular function $h_{r,z'}$.

Therefore, the proof is now completed by applying the following result, which extends the Generalized Von Neumann Theorem and was proved recently in \cite[Theorem 1.10]{CGSS-seq-CS}.
\begin{theorem}\label{thm:GGVN}
Let $p$ be a prime, let $M\in \mb{N}$ and let $k\in [M(p-1)]$. Then there exists $c>0$ such that for every collection of 1-bounded functions $(f_z:\mb{Z}_p^D\to\mb{C})_{z\in S_{k+1,M}}$,
\begin{equation}\label{eq:GGVN}
\Big| \mb{E}_{x,t_1,\ldots,t_M\in \mb{Z}_p^D} \prod_{z\in S_{k+1,M}} f_z(x+t_1z_1+\cdots +t_M z_M)  \Big| \leq  \min_{z\in S_{k+1,M}} \|f_z\|_{U^k}^c.
\end{equation}
\end{theorem}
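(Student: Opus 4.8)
The plan is to reduce \eqref{eq:GGVN} to the assertion that, for each fixed $z_0\in S_{k+1,M}$, the average $\Lambda$ on its left-hand side satisfies $|\Lambda|\le\|f_{z_0}\|_{U^k}^{\,c}$ for some $c=c(k,M,p)>0$; the displayed bound then follows on taking the smallest of these finitely many constants. This is a statement about the \emph{true complexity} of the system of linear forms $\psi_z(x,t):=x+z_1t_1+\cdots+z_Mt_M$ in the $M+1$ variables $(x,t_1,\ldots,t_M)$ indexed by $z\in S_{k+1,M}$: what one must show is that this system has true complexity at most $k-1$ at every index $z_0$, together with a quantitative and \emph{inverse-theorem-free} proof of the corresponding Gowers-norm control --- the latter proviso being essential, since this estimate is itself an ingredient in the proof of the inverse theorem in Section~\ref{sec:inverse}.

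The first step is to pin down the relevant algebraic input: for each $z_0$ one wants a polynomial $P$ in $M$ variables, of degree at most $k$ over $\mb{F}_p$, with $P(z)=0$ for every $z\in S_{k+1,M}\setminus\{z_0\}$ but $P(z_0)\ne0$. (By the tensor-power criterion of Gowers and Wolf this is exactly the condition that $(\psi_z)_z$ have true complexity $\le k-1$ at $z_0$, at least in the range of $p$ where that criterion is available.) Such a ``separating polynomial'' does exist, because $S_{k+1,M}$ is an \emph{interpolation set} for the space of degree-$\le k$ polynomial functions on $\mb{F}_p^M$: that space has dimension $|S_{k+1,M}|$ --- it is spanned by the monomials $z_1^{a_1}\cdots z_M^{a_M}$ with $0\le a_j\le p-1$ and $a_1+\cdots+a_M\le k$ --- and evaluation at the points of $S_{k+1,M}$ is injective on it, a fact provable by a leading-monomial argument (for $p>k$ it is the classical unisolvence of the ``principal lattice'' on a simplex, and for $M=1$ it is just that a degree-$\le k$ polynomial with $k+1$ roots vanishes). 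One thus obtains, for each $z_0$, a Lagrange-type polynomial $L_{z_0}$ of degree $\le k$ that vanishes on $S_{k+1,M}$ away from $z_0$.

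The second and main step is to convert the existence of $L_{z_0}$ into the bound $|\Lambda|\le\|f_{z_0}\|_{U^k}^{\,c}$ by a sequence of Cauchy--Schwarz operations, in the style of the Gowers--Wolf proof of the true-complexity upper bound. Using $L_{z_0}$ (or rather the dual symmetric tensor of degree $k$ that its existence provides) one designs a chain of $k$ Cauchy--Schwarz steps, each introducing a fresh difference variable and each eliminating, after integration, a portion of the forms $\psi_z$ with $z\ne z_0$, arranged so that $\psi_{z_0}$ is never eliminated and emerges at the end carrying the full $U^k$ box-structure on $f_{z_0}$; since each step costs at most a square root, $c$ may be taken of the form $2^{-n}$ for a suitable $n=n(k,M,p)$. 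When $p>k$ this runs cleanly, and the base case $M=1$ --- where the hypothesis forces $k\le p-1$ and $\Lambda=\mb{E}_{x,t}\prod_{j=0}^{k}f_j(x+jt)$ is the Gowers inner product of a non-degenerate $(k+1)$-term arithmetic progression --- is precisely the classical Generalized Von Neumann inequality.

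The main obstacle is the regime $p\le k$: then $S_{k+1,M}$ is strictly smaller than the integer simplex, the coordinate caps $a_j\le p-1$ come into play, polynomials must be handled modulo $z_j^p=z_j$, and one has to check both that the separating polynomial $L_{z_0}$ still exists with degree $\le k$ and that the Cauchy--Schwarz chain built from it still closes up to $U^k$ rather than to a weaker norm --- the bookkeeping that guarantees $\psi_{z_0}$ survives every step being the delicate point. A detailed execution of this plan is carried out in \cite[Theorem~1.10]{CGSS-seq-CS} (see also the announcement \cite{CGSS-seq-CS-Eurocomb}, and \cite{MannersTC} for a related treatment of the true complexity of linear systems), which we simply invoke here.
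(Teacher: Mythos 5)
Your proposal ultimately does exactly what the paper does: Theorem \ref{thm:GGVN} is not proved in this paper but is imported as a black box from \cite[Theorem 1.10]{CGSS-seq-CS}, and you invoke the same reference. Your preliminary sketch of the underlying mechanism (interpolation on the truncated simplex $S_{k+1,M}$ plus an inverse-theorem-free iterated Cauchy--Schwarz argument, with the low-characteristic regime $p\le k$ as the delicate case) is a reasonable account of what that cited work accomplishes, so the two approaches coincide.
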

\noindent Indeed, this result implies that the sum in \eqref{eq:keyWeq2} is at most $R\,\eta^c$, and then we can take $b$ to be small enough (in terms of everything that goes into $R$ and $c$, i.e.\ $k$, $p$, $\ns$, and $M$) so that $R\eta^c\leq \delta$, as required in the conclusion of Proposition \ref{prop:keyWeq}. 

This completes the proof of Proposition \ref{prop:main}.

\begin{remark}
In Proposition \ref{prop:main}, for general step $k$ the parameter $b'$ must depend on the dimension $M$. This can be seen using the following fact from nilspace theory (not detailed in this paper): if $\varphi$ is a cube-surjective morphism from a finite nilspace $\ns$ to a nilspace $\nss$, then $\varphi$ must be a fibration. Using this, we see that if $b'$ were independent of $M$, then we could deduce that the morphism $\varphi'$ is a fibration, which would force the step of $\ns'$ to be at most the step of $\mc{D}_1(\mb{Z}_p^D)$, a contradiction if $k>1$.
\end{remark}

\section{Further properties of $p$-homogeneous nilspaces}\label{sec:p-hom}

\noindent In this section we prove additional results about $p$-homogeneous nilspaces, which will be used in the next section to obtain the main structure theorem. It turns out that some of these results hold for a (potentially) larger class of nilspaces, which we call \emph{weak-$p$-homogeneous} (or \emph{w-$p$-homogeneous}) nilspaces.

\begin{defn}[w-$p$-homogeneous nilspace]
Let $p$ be a prime. A nilspace $\ns$ is \emph{w-$p$-homogeneous} if for every $\q\in \cu^n(\ns)$ there exists $f\in \hom(\mc{D}_1(\mb{Z}_p^n),\ns)$ such that $f|_{\db{n}} = \q$. 
\end{defn}
\noindent The fact that every $p$-homogeneous nilspace is a w-$p$-homogeneous nilspace is a direct consequence of the following fact.

\begin{lemma}\label{lem:ext-cube} Let $\ns$ be a nilspace and let $\q\in \cu^n(\ns)$ for any $n\ge 0$. Then there exists a morphism $f\in\hom(\mc{D}_1(\mb{Z}^n),\ns)$ such that $f|_{\db{n}}=\q$.
\end{lemma}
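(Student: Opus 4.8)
The plan is to deduce Lemma \ref{lem:ext-cube} from Lemma \ref{lem:FD-lift}, of which it is exactly the case $p=2$. The key observation is that when $p=2$ every ``block'' in the definition \eqref{eq:maxcube-sp} of the maximal cube has length $p-1=1$, so $n(p-1)=n$ and $\q^*_{2,n}(v)=\sum_{i=1}^n v\sbr{i}\,e_i$ for $v\in\db{n}$; that is, $\q^*_{2,n}$ is simply the standard $n$-cube given by the identity embedding $\db{n}\hookrightarrow\mb{Z}^n$ (which indeed lies in $\cu^n(\mc{D}_1(\mb{Z}^n))$). Consequently, for any map $g\colon\db{n}\to\ns$ the composite $g\co\q^*_{2,n}$ coincides with $g$ itself viewed as an element of $\ns^{\db{n}}$, so the hypothesis ``$g\co\q^*_{p,n}\in\cu^{n(p-1)}(\ns)$'' of Lemma \ref{lem:FD-lift} becomes just ``$g\in\cu^n(\ns)$''.

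Given this, the proof is immediate: apply Lemma \ref{lem:FD-lift} with $p=2$ and $g=\q$. Since $\q\in\cu^n(\ns)$ by assumption, that lemma produces a morphism $f\in\hom(\mc{D}_1(\mb{Z}^n),\ns)$ with $f|_{\db{n}}=\q$, as required (the case $n=0$ being trivial). The only thing that needs checking is the identification of $\q^*_{2,n}$ with the standard $n$-cube, which is read off directly from \eqref{eq:maxcube-sp} by setting $p=2$.

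Thus the substantive content sits entirely inside Lemma \ref{lem:FD-lift}, whose proof is deferred to Appendix \ref{app:res-nil} as the one-point-target case of Corollary \ref{cor:liftthrufibgen}, and I expect that to be the real obstacle: one cannot extend a map given only on the sub-discrete-cube $\db{n}\subset\mb{Z}^n$ to a genuine morphism on all of $\mc{D}_1(\mb{Z}^n)$ by a naive periodic or reflected extension — already for $n=1$ the ``oscillating'' extension $m\mapsto\q(m\bmod 2)$ of a $1$-cube $\q$ fails the cube condition on the $3$-cube with vertices $0,1,1,2,1,2,2,3$ in $\mb{Z}$ — and instead one must invoke the completion axiom repeatedly, filling in the values of $f$ in a carefully chosen order (by increasing height $|z|=z_1+\cdots+z_n$, as in the proof of Lemma \ref{lem:polydet}), which is precisely what the appendix argument carries out. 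Should one prefer a self-contained treatment at this point rather than routing through the appendix, the statement can also be quoted from the nilspace literature, e.g. from the analysis of $\hom(\mc{D}_1(\mb{Z}^n),\ns)$ in \cite{CamSzeg} and \cite{Cand:Notes1}; but since Lemma \ref{lem:FD-lift} is established there in any case, invoking it directly is the cleanest route.
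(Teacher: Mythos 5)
Your reduction is correct: for $p=2$ the maximal cube $\q^*_{2,n}$ is the identity embedding $\db{n}\hookrightarrow\mb{Z}^n$, so Lemma \ref{lem:FD-lift} at $p=2$ is literally Lemma \ref{lem:ext-cube}, and both statements are in fact the same specialization of Corollary \ref{cor:liftthrufibgen} (box $=\db{n}$, target the one-point nilspace) that the paper itself invokes via Corollary \ref{cor:liftthrufib}. This is essentially the paper's approach, with the substantive work correctly identified as living in the appendix.
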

\noindent Since this lemma concerns general nilspaces, we leave its proof to Appendix \ref{app:res-nil} (specifically, Lemma \ref{lem:ext-cube} is the special case of Corollary \ref{cor:liftthrufib} with $\nss$ equal to the 1-point nilspace).

\begin{remark}
For $p=2$ the notions of $p$-homogeneous and w-$p$-homogeneous nilspaces are readily seen to be equivalent. For $p>2$ we do not know whether every w-$p$-homogeneous nilspace is $p$-homogeneous. Within certain classes of nilspaces, we can prove that the two notions are indeed equivalent. This holds for example for group nilspaces, as established in Theorem \ref{thm:strong-p-hom-group} below. It can also be proved that every 2-step w-$p$-homogeneous nilspace is $p$-homogeneous (since this is not used in the sequel, we omit the details). Thus, this paper leaves open the following question, which seems of independent interest despite not being crucial for our purposes here.
\end{remark}
\begin{question}
Is every w-$p$-homogeneous nilspace also $p$-homogeneous, for all primes $p$?
\end{question}
\noindent Our first result about w-$p$-homogeneous nilspaces is that they are generalizations of elementary abelian $p$-groups, in the sense of the following result, which immediately implies Proposition \ref{prop:p-hom-str-gps-intro}.
\begin{proposition}\label{prop:strgps}
Let $\ns$ be a $k$-step w-$p$-homogeneous nilspace and let $i\in [k]$. Then $\ab_i(\ns)$ is an elementary abelian $p$-group. If $\ns$ is also a \textsc{cfr} nilspace, then $\ns$ is finite.
\end{proposition}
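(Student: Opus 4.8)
The plan is to prove that each structure group $\ab_i(\ns)$ of a $k$-step w-$p$-homogeneous nilspace is an elementary abelian $p$-group, and then to deduce finiteness in the \textsc{cfr} case just as in the final paragraph of the proof of Proposition \ref{prop:b-bal-p-hom-finite}. First I would reduce to working with the last structure group: for $i<k$, one passes to the $i$-step factor $\ns_i$, observing that $\ns_i$ is again w-$p$-homogeneous (the image of a w-$p$-homogeneous nilspace under the factor map $\pi_i$ is w-$p$-homogeneous, since any cube $\q\in\cu^n(\ns_i)$ lifts to a cube on $\ns$, which extends to a morphism from $\mc{D}_1(\mb{Z}_p^n)$, and composing with $\pi_i$ gives the required extension on $\ns_i$). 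So it suffices to show that $\ab_k(\ns)$ is an elementary abelian $p$-group.

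The heart of the argument is to exploit the w-$p$-homogeneity on a single well-chosen cube. Fix any $y\in\ns$ and any $z\in\ab_k$, and let $\q_z\in\cu^k(\ns)$ be the cube with $\q_z(1^k)=y+z$ and $\q_z(v)=y$ for all $v\neq 1^k$ (this is a cube because, by the structure of $\ns$ as an abelian bundle over $\ns_{k-1}$, adding a cube of $\mc{D}_k(\ab_k)$ to a constant cube yields a cube, and $v\mapsto z\,v\sbr{1}\cdots v\sbr{k}$ is in $\cu^k(\mc{D}_k(\ab_k))$). By w-$p$-homogeneity there is $f\in\hom(\mc{D}_1(\mb{Z}_p^k),\ns)$ with $f|_{\db{k}}=\q_z$. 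Now consider the two ways of reading off $k$-cubes from a chain inside $\mb{Z}_p^k$ along the first coordinate: for $i\in[p]$ the restriction of $f$ to $\{i-1,i\}\bmod p\times\db{k-1}$ is a $k$-cube, and since $f$ is a morphism on the \emph{cyclic} group $\mb{Z}_p^k$, the face at coordinate $p\equiv 0$ coincides with the face at $0$. Concatenating these $p$ cubes (using \cite[Lemma 3.1.7]{Cand:Notes1}) and using that all the ``interior'' faces are constant equal to $y$ while the two extreme faces contribute $\q_z$ and its negative, one obtains a closed $(k+1)$-cube whose difference with the constant cube $y$ lies in $\cu^{k+1}(\mc{D}_k(\ab_k))$; computing its Gray-code alternating sum forces $p\,z=0$. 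This is essentially the combinatorial core already carried out in the proof of Proposition \ref{prop:b-bal-p-hom-finite}, except that here w-$p$-homogeneity supplies the morphism $f$ directly (with no balance hypothesis and no error terms), which makes the argument cleaner: one does not need the neighbourhoods $U_0,U_1,U_2$ or the analytic approximations, only the identity $p\,z=0$ in $\ab_k$. Since $z\in\ab_k$ was arbitrary, $\ab_k$ is an elementary abelian $p$-group, and an induction on $k$ (applied to the factors $\ns_i$) gives the statement for all $i\in[k]$.

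For the second assertion, suppose in addition that $\ns$ is \textsc{cfr}. Then each structure group $\ab_i(\ns)$ is a compact abelian Lie group, hence of the form $\mb{T}^{n_i}\times A_i$ with $A_i$ finite; having just shown $\ab_i$ is an elementary abelian $p$-group, and since $\mb{T}$ is divisible (so $\mb{T}^{n_i}$ cannot be a nontrivial elementary abelian $p$-group), we get $n_i=0$, i.e.\ every $\ab_i$ is finite. Finally, a $k$-step nilspace all of whose structure groups are finite is itself finite: by induction on $k$, $\ns_{k-1}$ is finite, and each fiber $\pi_{k-1}^{-1}(x)$ is in bijection with $\ab_k$ (see \cite[\S 3.2.3]{Cand:Notes1}), so $\ns$ is finite.

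The main obstacle is the combinatorial step producing a closed $(k+1)$-cube from the chain of $p$ cubes and correctly identifying its relevant ``algebraic residue'' as $p\,z$. This requires care with the concatenation lemma and with keeping track of which faces are constant; but since this is exactly the computation already performed (in a more delicate, quantitative form) inside the proof of Proposition \ref{prop:b-bal-p-hom-finite}, the present proof can essentially quote that combinatorial argument, specialized to the present simpler setting where the morphism on $\mc{D}_1(\mb{Z}_p^k)$ is given rather than approximately constructed.
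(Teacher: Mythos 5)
Your overall strategy is the paper's: reduce by induction to showing that the top structure group $\ab_k(\ns)$ is killed by $p$, invoke w-$p$-homogeneity to extend a suitable $k$-cube to a morphism on $\mc{D}_1(\mb{Z}_p^k)$, and then run the combinatorial core of the proof of Proposition \ref{prop:b-bal-p-hom-finite}; the reduction to $\ab_k$ and the \textsc{cfr} finiteness paragraph are both fine. But the combinatorial core as you describe it has a genuine gap: that core is a \emph{comparison of two chains} of $k$-cubes obtained from \emph{two different extensions of the same cube}, and you have only produced one of them. In the paper one takes, besides the periodic morphism $f\in\hom(\mc{D}_1(\mb{Z}_p^k),\ns)$ extending the cube, also the explicit morphism $g\in\hom(\mc{D}_1(\mb{Z}^k),\ns)$, $g(v)=x+v\sbr{1}\cdots v\sbr{k}\,z$. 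It is $g$ that produces the quantity $pz$, via $g(p,1,\ldots,1)=x+pz$, while $f$ supplies the wrap-around $f(p,\cdot)=f(0,\cdot)$; the relation $\sim$ (cubes pairing to a $(k+1)$-cube), transitivity via concatenation, and the fact that the two chains share their initial cube are what let one pair $\q_i$ with $\q_i'$ and concatenate the resulting $(k+1)$-cubes into $\wh{\q}$, whose Gray-code alternating sum then forces $pz=0$.

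With only $f$ in hand, the construction you describe does not go through. Concatenating the $p$ adjacent $k$-cubes $f|_{\{i-1,i\}\times\db{k-1}}$ along the first coordinate yields a single $k$-cube (namely the degenerate restriction to $\{0,p\}\times\db{k-1}$), not a $(k+1)$-cube, so there is no Gray-code sum of the right dimension to compute. Moreover the claim that ``all the interior faces are constant equal to $y$'' is false: already the face $f|_{\{1\}\times\db{k-1}}$ takes the value $y+z$ at $1^k$, and the values of $f$ on $\{i\}\times\db{k-1}$ for $i\geq 2$ are not controlled (in the model $\ns=\mc{D}_k(\mb{Z}_p)$ with $f(n)=z\,n_1\cdots n_k$ one has $f(2,1,\ldots,1)=2z$). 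Nothing in your single-chain picture ever evaluates to $y+pz$, so no identity $pz=0$ can be extracted from it. The fix is exactly to reinstate the second chain: define $g$ as above (this is a morphism since $v\mapsto v\sbr{1}\cdots v\sbr{k}z$ lies in $\poly(\mb{Z}^k,\mc{D}_k(\ab_k))$), note that $g$ and $f$ restrict to the same cube on $\db{k}$, and then quote the two-chain concatenation argument verbatim, which in this exact (non-approximate) setting indeed yields $pz=0$ with no neighbourhoods $U_0,U_1,U_2$ needed.
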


\begin{proof}
We argue by induction on $k$. For $k=0$ the result is trivial ($\ns$ is then the 1-point nilspace). For $k>0$, by induction it suffices to prove that if $\ns$ is of step $k$, then every element of its $k$-th structure group $\ab_k$  has order $p$. Fix any $z\in\ab_k$ and any $x\in \ns$. Let $g\in \hom(\mc{D}_1(\mb{Z}^k),\ns)$ be the morphism defined by $g(v\sbr{1},\ldots,v\sbr{k}):=x+v\sbr{1}\cdots v\sbr{k}z$ (this is indeed a morphism, since it is the composition of the polynomial map $\mb{Z}^k\to\ab_k$, $v\mapsto v\sbr{1}\cdots v\sbr{k}z$ with the morphism $z\mapsto x+z$). For each $i\in [0,p-1]$, let $\q_i\in \cu^k(\ns)$ be the cube obtained by restricting $g$ to $\db{k-1}\times\{i,i+1\}$, that is, we have $\q_i(1^{k-1},0)=x+i z$, $\q_i(1^k)=x+(i+1)z$, and $\q(v)=x$ otherwise. We have the adjacency of cubes $\q_i\prec \q_{i+1}$ for each $i\in [0,p-2]$. Moreover, we have the following useful property: define a relation $\sim$ on $\cu^k(\ns)$ by declaring that $\q\sim\q'$ if the map $\tilde{\q}:\db{k+1}\to\ns$,  $\tilde\q(v,0)=\q(v)$, $\tilde\q(v,1)=\q'(v)$ ($v\in\db{k}$) is in $\cu^{k+1}(\ns)$; then the fact that $g$ is a morphism implies that $\q_i\sim \q_j$ for each $i,j\in [0,p-1]$ (since there is a $(k+1)$-cube on $\mb{Z}^k$ with image $\db{k-1}\times\{i,i+1\}$ on one $k$-face and image $\db{k-1}\times\{j,j+1\}$ on the opposite $k$-face). 

By w-$p$-homogeneity, there is a morphism $f\in\hom(\mc{D}_1(\mb{Z}_p^k),\ns)$ with $f|_{\db{k}}=\q_0$. 
For each $i\in [0,p-1]$, let $\q_i'\in\cu^k(\ns)$ similarly be the restriction of $f$ to $\db{k-1}\times \{i,i+1\}$. As above, the morphism property implies that $\q_i'\sim\q_j'$ for each $i,j\in [0,p-1]$. Now, repeating the argument using concatenations that was the combinatorial core of the proof of Proposition \ref{prop:b-bal-p-hom-finite}, we deduce here that $pz=0$.

Finally, if $\ns$ is also a \textsc{cfr} nilspace, then its finiteness is deduced exactly as in the end of the proof of Proposition \ref{prop:b-bal-p-hom-finite}.
\end{proof}
\noindent We shall now work toward the proof of Theorem \ref{thm:intro-1}, establishing the equivalence between $p$-homogeneity of a \emph{group} nilspace and $p$-homogeneity of the associated filtration. For one of the directions in this equivalence, we shall in fact prove the following more general result giving a similar algebraic property for w-$p$-homogeneous \emph{coset} nilspaces. Recall that a coset nilspace consists of a coset space $G/\Gamma$ where $G$ is a group with a filtration $G_\bullet$  and $\Gamma$ is a subgroup of $G$, the cubes being the projections to $G/\Gamma$ of the Host--Kra cubes in $\cu^n(G_\bullet)$, thus every $n$-cube on $G/\Gamma$ is of the form $v\mapsto \q(v)\Gamma$ for some $\q\in\cu^n(G_\bullet)$; see \cite[Proposition 2.3.1]{Cand:Notes1} (in particular, filtered nilmanifolds are central examples of coset nilspaces). Given a group $H$ and $k\in\mb{N}$ we denote by $H^k$ the set of $k$-th powers $\{h^k:h\in H\}$.
\begin{lemma}\label{lem:p-hom-group}
Let $p$ be a prime, let $(G,G_{\bullet})$ be a filtered group, and let $\Gamma$ be a subgroup of $G$. Let $\ns=G/\Gamma$ be the associated coset nilspace. If $\ns$ is w-$p$-homogeneous then for every $m\ge 0$ we have $G_m^p\subset G_{m+p-1}\Gamma$.
\end{lemma}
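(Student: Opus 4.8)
The plan is to fix $m \ge 0$ and an arbitrary element $g \in G_m$, and to show that $g^p \in G_{m+p-1}\Gamma$ by exhibiting a suitable $(m+p-1)$-cube on $\ns$ that is forced, by the w-$p$-homogeneity hypothesis, to come from a morphism out of $\mc{D}_1(\mb{Z}_p^{m+p-1})$, which constrains the relevant group element. Concretely, I would start from the standard observation that the element $g$, living in the $m$-th term of the filtration, gives rise to a natural $(m+p-1)$-cube (or rather an $n$-cube for a well-chosen $n$) in $\cu^n(G_\bullet)$ of the form used in the proof of Proposition \ref{prop:b-bal-p-hom-finite} and Proposition \ref{prop:strgps}: namely the cube whose only non-identity-type contribution is a product of $p-1$ "copies" of $g$ placed so that the full alternating sum over a face produces $g^p$ (or $g^{-p}$). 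The idea is that the Host--Kra cube condition for $G_\bullet$ lets us build a cube $\q \in \cu^{m+p-1}(G_\bullet)$ using $g \in G_m$ on the first $m$ directions and a telescoping/chaining construction in the remaining $p-1$ directions, so that projecting to $\ns = G/\Gamma$ yields a cube $\bar\q = \q\,\Gamma \in \cu^{m+p-1}(\ns)$.

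The key step is then the following: by w-$p$-homogeneity there is $f \in \hom(\mc{D}_1(\mb{Z}_p^{m+p-1}),\ns)$ with $f|_{\db{m+p-1}} = \bar\q$. Now the morphism $f$ descends to a map on $\mb{Z}_p^{m+p-1}$, so it is "$p$-periodic" in each coordinate. Exploiting this periodicity exactly as in the combinatorial core of the proof of Proposition \ref{prop:b-bal-p-hom-finite} (the argument with two chains of cubes and concatenation), one compares the cube coming from $g \in G_m$ inside $G_\bullet$ — which, when extended $p$ steps in one direction, produces $g^p$ as a boundary contribution — with the cube coming from the $p$-periodic morphism $f$, which produces the identity coset after $p$ steps. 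Chaining the intermediate cubes (using that $g^i \in G_m \subset G_{m+p-1}\cdot(\text{lower stuff})$ is controlled, and using the concatenation lemma \cite[Lemma 3.1.7]{Cand:Notes1} for transitivity of the relation $\sim$), the difference between these two $(m+p-1)$-cubes is seen to lie in $\cu^{m+p-1}(\mc{D}_{m+p-1}(\cdot))$ modulo $\Gamma$, and its Gray-code alternating sum must vanish modulo $\Gamma$. Tracking what that alternating sum is, one reads off precisely that $g^p \in G_{m+p-1}\,\Gamma$.

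I expect the main obstacle to be the bookkeeping in this last step: one must choose the cube $\q \in \cu^{m+p-1}(G_\bullet)$ with enough care that (i) it genuinely lies in the Host--Kra cube set of $G_\bullet$ (which requires verifying that the various partial products land in the correct filtration terms $G_i$, using $g \in G_m$ and the definition of Host--Kra cubes), and (ii) the cancellation produced by comparing with the $p$-periodic morphism $f$ isolates exactly $g^p$ modulo $\Gamma$ and not $g^p$ times some uncontrolled factor. In the group-nilspace ($\Gamma$ trivial) case this is essentially the computation already carried out in the proof of Proposition \ref{prop:strgps} with $\mc{D}_k$ replaced by a general filtered group; the coset case is obtained by projecting everything through $G \to G/\Gamma$ and noting that a $k$-cube with vanishing Gray-code alternating sum in $G/\Gamma$ corresponds to the said alternating sum in $G$ lying in $\Gamma$. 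A secondary, more routine obstacle is confirming that $f$, being a morphism from $\mc{D}_1(\mb{Z}_p^{m+p-1})$, indeed extends the chosen cube and is $p$-periodic in the needed sense; this is immediate from the definition of $\mc{D}_1(\mb{Z}_p^n)$ and of nilspace morphisms, since $\mb{Z}_p^n$ is literally the quotient of $\mb{Z}^n$.
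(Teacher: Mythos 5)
There is a genuine gap, and it is quantitative: your chain-and-concatenation argument (borrowed from the proofs of Propositions \ref{prop:b-bal-p-hom-finite} and \ref{prop:strgps}) produces, from the $m$-cube supported on $g\in G_m$ and its $p$-periodic extension $f\in\hom(\mc{D}_1(\mb{Z}_p^m),G/\Gamma)$, only an $(m+1)$-dimensional cube on $G/\Gamma$ that is trivial except at one vertex where it equals $g^{\pm p}\Gamma$. Concatenating $p$ adjacent $m$-cubes along one new direction adds exactly one dimension, not $p-1$. Via Lemma \ref{lem:correction-cubes} this yields $g^p\in G_{m+1}\Gamma$, which is only the first step toward the claim. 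Your proposal asserts that one can instead start from an $(m+p-1)$-cube "using $g$ on the first $m$ directions and a telescoping construction in the remaining $p-1$ directions", but you never construct such a cube, and doing so would essentially presuppose $g^p\in G_{m+p-1}$ — the very thing to be proved. Note also that the Gray-code alternating-sum step you invoke relies, in Proposition \ref{prop:strgps}, on $z$ lying in the \emph{last} structure group of a $k$-step nilspace; for $g\in G_m$ with $m$ below the degree of the filtration, the correct substitute is Lemma \ref{lem:correction-cubes}, and then the dimension of the cube you exhibit is exactly the filtration depth you obtain.

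The paper's proof gets from depth $m+1$ to depth $m+p-1$ by an internal induction on $j\in[0,p-1]$, proving $G_m^p\subset G_{m+j}\Gamma$ \emph{for all $m$ simultaneously} at each stage. The mechanism is a Taylor-coefficient expansion (Lemma \ref{lem:taylor}) of the $p$-periodic extension $f$: matching $f$ against $\underline{n}\mapsto g^{n_1\cdots n_m}\Gamma$ on $\{0,\ldots,p\}\times\{0,1\}^{m-1}$ yields a relation $h_{(p,1^{m-1})}\,h_{(p-1,1^{m-1})}^{\binom{p}{p-1}}\cdots h_{(2,1^{m-1})}^{\binom{p}{2}}\,\gamma\, g^p\in\Gamma$ with $h_{(s,1^{m-1})}\in G_{m+s-1}$. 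The top term lies in $G_{m+p-1}$, but the intermediate terms are a priori only in $G_{m+1}$; they are pushed to depth $m+j+1$ using that $p\mid\binom{p}{s}$ for $2\le s\le p-1$ together with the induction hypothesis applied at the levels $m+s-1$, plus normality of the $G_\ell$. This bootstrapping — and the divisibility of the binomial coefficients that drives it — is the essential content of the lemma and is entirely absent from your proposal. Without it you can only conclude $G_m^p\subset G_{m+1}\Gamma$.
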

\noindent Recall (e.g.\ from \cite[Lemma B.9]{GTZ}) that for $\underline{i}= (i_1,\ldots,i_m)\in \mb{Z}_{\geq 0}^m$ and $\underline{n} = (n_1,\ldots,n_m) \in\mb{Z}^m$, the multiparameter binomial coefficient $\binom{\underline{n}}{\underline{i}}\in\mb{Z}$ is defined by $\binom{\underline{n}}{\underline{i}}= \binom{n_1}{i_1}\cdots \binom{n_m}{i_m}$. Recall also (e.g.\ from the proof of Lemma \ref{lem:polydet}), that we define the \emph{height} of $\underline{i}$ to be $|\underline{i}|=i_1+\cdots+i_m$. We recall the following useful properties of the polynomial maps $\underline{n}\mapsto h_{\underline{i}}^{\binom{\underline{n}}{\underline{i}}}$ (where for $m\in \mb{Z}$ and $h\in G$ we write $h^m$ for the $m$-th power  $h\cdot h\cdots h$ in $G$):
\begin{enumerate}[leftmargin=0.8cm]
    \item\label{it:i} The map $\underline{n}\mapsto h_{\underline{i}}^{\binom{\underline{n}}{\underline{i}}}$ is in $\poly(\mc{D}_1(\mb{Z}^m),G)$ if and only if $h_{\underline{i}}\in G_{|\underline{i}|}$.
    \item\label{it:ii} If $n_j<i_j$ for some $j\in [m]$,  then $h_{\underline{i}}^{\binom{\underline{n}}{\underline{i}}}=\id$.
    \item\label{it:iv} Let $\underline{i}\in \{0,\ldots,p\}\times \{0,1\}^{m-1}$ and $\underline{n}=(p,\underline{v})$ for some $\underline{v}\in \{0,1\}^{m-1}$. If $v_j<i_j$ for some $j\in \{2,\ldots,m\}$, then $h_{\underline{i}}^{\binom{\underline{n}}{\underline{i}}}=\id$. Otherwise $h_{\underline{i}}^{\binom{\underline{n}}{\underline{i}}} =h_{\underline{i}}^{\binom{(p,i_2,\ldots,i_m)}{\underline{i}}} = h_{\underline{i}}^{\binom{p}{i_1}}$.
\end{enumerate}
\begin{proof}[Proof of Lemma  \ref{lem:p-hom-group}]
We prove by induction on $j$ that for all $j\in [0,p-1]$, for every $m\ge 0$ we have $G_m^p\subset G_{m+j}\Gamma$. The case $j=0$ is trivial, so we assume that $G_m^p\subset G_{m+j}\Gamma$ for all $m\ge 0$ and we need to show that $G_m^p\subset G_{m+j+1}\Gamma$ for all $m\ge 0$.

Fix any $m\ge 0$ and $g\in G_m$. Consider the cube $\q\in\cu^m(G/\Gamma)$ such that $\q(1^{m})=g\Gamma$ and $\q(\underline{v})=\Gamma$ for all $\underline{v}\not=1^m$. As $G/\Gamma$ is w-$p$-homogeneous, there exists an extension $f\in \hom(\mc{D}_1(\mb{Z}_p^m), G/\Gamma)$ such that $f|_{\db{m}} = \q$. The proof will go as follows: using Lemma \ref{lem:taylor} we shall construct a morphism $f' = \prod h_{\underline{i}}^{\binom{\underline{n}}{\underline{i}}}\Gamma\in \hom(\mc{D}_1(\mb{Z}^m), G/\Gamma)$ such that $f'|_{\{0,\ldots,p\}\times \{0,1\}^{m-1}} = f|_{\{0,\ldots,p\}\times \{0,1\}^{m-1}}$, and we will deduce the desired conclusion from the resulting expression of $f'$.

We construct $f'$ in three steps, where the second step involves an iterative argument. Rather than accumulating notation for each new modified version of the function that we produce in the argument, we just use the same notation $f'$ throughout the process, which means that $f'$ denotes a different function as we progress through the argument (essentially, each round of the iteration modifies the previous function $f'$ by multiplying it on the left by polynomial maps $h_{\underline{i}}^{\binom{\underline{n}}{\underline{i}}}$).

\textbf{Step 1:} Define $f'(\underline{n}):=g^{n_1\cdots n_m}\Gamma$. This function $f'$ has the following important features:
\begin{itemize}
    \item $f'|_{\db{m}} = f|_{\db{m}}=\q$.
    \item $f'(p,1^{m-1})=g^p\Gamma$ and $f'(p,\underline{v})=\Gamma$ for all $\underline{v}\not=1^{m-1}$.
\end{itemize}
\noindent It may be useful to have an example of the process we are applying. For $m=3$ and $p=3$, at this stage $f'|_{\{0,1,2,3\}\times \{0,1\}^2}$ looks as follows.
\begin{center}
\includegraphics[width=90mm]{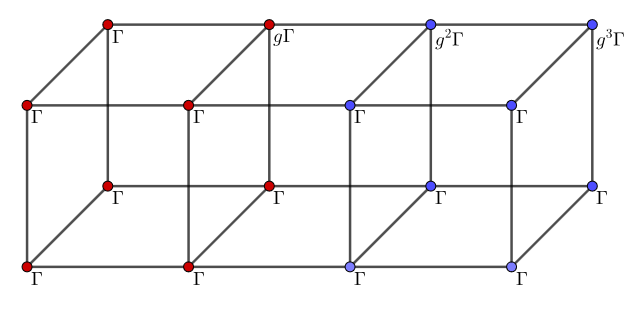}
\end{center}

\noindent In red we have the vertices where $f'=f$. The values of $f'$ at these vertices will not change for the rest of the proof.

\textbf{Step 2:} this step involves an inductive argument, each round of which is an operation that we  call \emph{correcting the line at \underline{v}}, for $\underline{v}\in \db{m-1}\setminus \{1^{m-1}\}$. Let us describe this process.

\textbf{Correcting the line at $\underline{v}$:} suppose that for all $\underline{w}\in \db{m-1}\setminus\{\underline{v}\}$ such that $w_j\le v_j$ for all $j\in \{2,\ldots,m\}$ (instead of labeling the elements for $j\in \{1,\ldots,m-1\}$, the elements of $\db{m-1}$ will be labeled for $j\in \{2,\ldots,m\}$) we have already done the operation of \emph{correcting the line at $\underline{w}$}. Furthermore, suppose that $f'(p,1^{m-1})=\gamma g^p\Gamma$ (for some $\gamma\in\Gamma$ that may not be equal in all the rounds of the process)  and $f'(p,\underline{t})=\Gamma$ for all $\underline{t}\not=1^{m-1}$. By Lemma \ref{lem:taylor}, we can multiply $f'$ (on the left) by elements $h_{(s,\underline{v})}^{\binom{\underline{n}}{(s,\underline{v})}}$ in such a way that the product agrees with $f$ first at the vertex $(2,\underline{v})$, then at $(3,\underline{v})$, and so on all the way to $(p,\underline{v})$. Thus, let $f''(\underline{n}):=h_{(p,\underline{v})}^{\binom{\underline{n}}{(p,\underline{v})}}\cdots h_{(2,\underline{v})}^{\binom{\underline{n}}{(2,\underline{v})}}f'(\underline{n})$.

The above properties of the polynomials $h_{\underline{i}}^{\binom{\underline{n}}{\underline{i}}}$ imply the following useful facts:
\begin{itemize}[leftmargin=0.6cm]
    \item Let $\underline{n}\in \mb{Z}_{\ge 0}^m$ with $n_j < v_j$ for some $j\in [m]$. Then $f''(\underline{n})=f'(\underline{n})$, by property \eqref{it:ii}. In particular, correcting the line at $\underline{v}$ preserves the previous corrections of lines at $\underline{w}$.
    \item $f''(p,1^{m-1})=\gamma  g^p\Gamma$ for some $\gamma\in\Gamma$, and $f''(p,\underline{t})=\Gamma$ for all $\underline{t}\in \db{m-1}\setminus\{1^{m-1}\}$. Indeed, when we multiply by the last factor $h_{(p,\underline{v})}^{\binom{\underline{n}}{(p,\underline{v})}}$, since this ensures that $f''(p,\underline{v})=f(p,\underline{v})=\Gamma$ and $f'(p,\underline{v})=\Gamma$, we must have that $h_{(p,\underline{v})}h_{(p-1,\underline{v})}^{\binom{(p,\underline{v})}{(p-1,\underline{v})}}\cdots h_{(2,\underline{v})}^{\binom{(p,\underline{v})}{(2,\underline{v})}}$ is an element $\gamma''\in\Gamma$. Moreover, property \eqref{it:iv} above implies that for all $\underline{t}\in \db{m-1}\setminus\{\underline{v}\}$ we have $h_{(p,\underline{v})}^{\binom{(p,\underline{t})}{(p,\underline{v})}} h_{(p-1,\underline{v})}^{\binom{(p,\underline{t})}{(p-1,\underline{v})}}\cdots h_{(2,\underline{v})}^{\binom{(p,\underline{t})}{(2,\underline{v})}}\in\Gamma$, indeed this product is $\gamma''$ if $t_i\geq v_i$ for all $i\in [2,m]$, and is the identity otherwise. In particular, we have $f''(p,1^{m-1})=\gamma''f'(p,1^{m-1})=\gamma'' \gamma' g^p\Gamma$ (where $\gamma'$ comes from previous line corrections). Hence our claim holds with $\gamma=\gamma''\gamma'$.
\end{itemize}

\noindent To conclude \emph{correcting the line at \underline{v}}, we set $f''$ to be the new $f'$. 

To complete Step 2, we now correct the lines at $\underline{v}$ for all $\underline{v}\in \db{m-1}$. In order to be able to apply Lemma \ref{lem:taylor} in this process, these corrections have to be done in an order such that for $\underline{v},\underline{v}'\in \db{m-1}$, if $v_j\le v'_j$ for all $j\in\{2,\ldots,p\}$, then we correct the line at $\underline{v}$ before  we  correct the line at $\underline{v}'$ (we can take the lexicographic order, for example).

To visualize this with our example above, after correcting the line at $(0,0)$ we would have $f'$ as follows.
\begin{center}
\includegraphics[width=90mm]{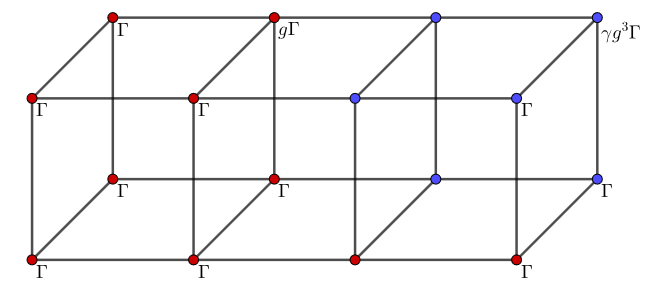}
\end{center}
\noindent Again, the vertices in red represent the ones at which $f'=f$ and whose values will not change for the rest of the proof. After the next two corrections, $f'$ looks as follows (recall that the value of $\gamma$ may be different in each appearance).
\begin{center}
\includegraphics[width=90mm]{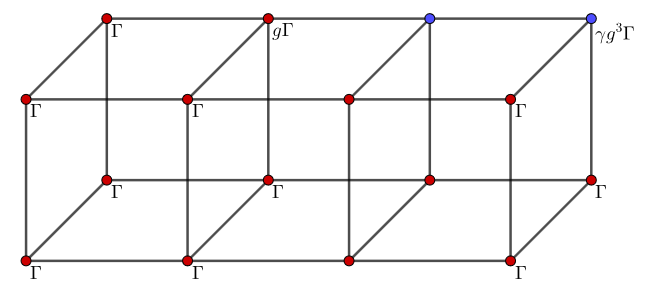}
\end{center}
\textbf{Step 3:} we now correct the line at $1^{m-1}$. The final properties that we obtain are different this time. The important part is now the value $f'(p,1^{m-1})$. By construction we have  $f'(p,1^{m-1}) = f(p,1^{m-1})=\Gamma$, which implies
\begin{equation}
    h_{(p,1^{m-1})}h_{(p-1,1^{m-1})}^{\binom{p}{p-1}} \cdots h_{(2,1^{m-1})}^{\binom{p}{2}}\gamma g^p\in \Gamma,
\end{equation}
where the terms $h_{(j,1^{m-1})}^{\binom{p}{j}}$ are the factors involved in correcting the line at $1^{m-1}$, and $\gamma\in\Gamma$.

Now, using the induction hypothesis, the fact that $\binom{p}{j}$ is a multiple of $p$ for $j\in [2,p-1]$, and the fact that each group $G_\ell$ is normal in $G$, we deduce that $h_{(p-1,1^{m-1})}^{\binom{p}{p-1}} \cdots h_{(2,1^{m-1})}^{\binom{p}{2}} \in G_{|(2,1^{m-1})|+j}\Gamma=G_{m+j+1}\Gamma$. Since the last term $h_{(p,1^{m-1})}$ is already in $G_{m+p-1}\Gamma$, we conclude that $g^p\in G_{\min(m+j+1,m+p-1)}\Gamma = G_{m+j+1}\Gamma$. This completes the inductive step.
\end{proof}

\begin{remark}\label{rem:p-hom-coset} 
By a straightforward generalization of the above proof it can be shown that if a coset nilspace $G/\Gamma$ is w-$p$-homogeneous then, more generally, for all $\ell,m\ge 0$ we have $G_m^{p^\ell}\subset G_{m+\ell(p-1)}\Gamma$. We omit the details as this will not be needed in the sequel.
\end{remark}
\noindent We are now ready to prove Theorem \ref{thm:intro-1}, which we restate here in a refined form.
\begin{theorem}\label{thm:strong-p-hom-group} Let $(G,G_{\bullet})$ be a filtered group and let $\ns$ be the associated group nilspace. Then the following properties are equivalent:
\begin{enumerate}
    \item $\ns$ is $p$-homogeneous.
    \item $\ns$ is w-$p$-homogeneous.
    \item The filtration $G_\bullet$ is $p$-homogeneous.
\end{enumerate}
\end{theorem}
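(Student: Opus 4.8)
The three properties being asserted equivalent, it suffices to prove the cycle $(1)\Rightarrow(2)\Rightarrow(3)\Rightarrow(1)$. The implication $(1)\Rightarrow(2)$ is immediate from Lemma \ref{lem:ext-cube}: given $\q\in\cu^n(\ns)$, extend it to some $f\in\hom(\mc{D}_1(\mb{Z}^n),\ns)$; if $\ns$ is $p$-homogeneous then $f|_{[0,p-1]^n}\in\hom(\mc{D}_1(\mb{Z}_p^n),\ns)$ by Definition \ref{def:p-hom-intro}, and its restriction to $\db{n}$ is $\q$, which is exactly w-$p$-homogeneity. The implication $(2)\Rightarrow(3)$ is the special case $\Gamma=\{e\}$ of Lemma \ref{lem:p-hom-group}: if the group nilspace $\ns=G/\{e\}$ is w-$p$-homogeneous, then $G_m^p\subseteq G_{m+p-1}$ for every $m\ge 0$, which is precisely the definition of a $p$-homogeneous filtration. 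So the substance is in the implication $(3)\Rightarrow(1)$.

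For $(3)\Rightarrow(1)$, recall that morphisms between group nilspaces coincide with polynomial maps between the underlying filtered groups, so $\hom(\mc{D}_1(\mb{Z}^n),\ns)=\poly(\mb{Z}^n,G_\bullet)$ and $\hom(\mc{D}_1(\mb{Z}_p^n),\ns)=\poly(\mb{Z}_p^n,G_\bullet)$. By Definition \ref{def:p-hom-intro} it then suffices to prove: for every $n$ and every $f\in\poly(\mb{Z}^n,G_\bullet)$, the restriction $f|_{[0,p-1]^n}$, read as a map on $\mb{Z}_p^n$, lies in $\poly(\mb{Z}_p^n,G_\bullet)$. Write $k$ for the degree of $G_\bullet$ and put $f$ in Taylor form $f(\underline{n})=\prod_{\underline{i}\,:\,|\underline{i}|\le k}h_{\underline{i}}^{\binom{\underline{n}}{\underline{i}}}$ (ordered product, $h_{\underline{i}}\in G_{|\underline{i}|}$), as provided by Lemma \ref{lem:taylor} and property \eqref{it:i}. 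Iterating the hypothesis gives $g\in G_j\Rightarrow g^{p^\ell}\in G_{j+\ell(p-1)}$, so every $h_{\underline{i}}$ has $p$-power order (dividing $p^{\lceil(k+1-|\underline{i}|)/(p-1)\rceil}$); hence each factor $h_{\underline{i}}^{\binom{\underline{n}}{\underline{i}}}$ depends on $\binom{\underline{n}}{\underline{i}}$ only through its residue modulo that $p$-power.

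The heart of the argument is to show that replacing each coordinate $n_j$ by its residue $r_j\in[0,p-1]$ modulo $p$ changes the product only by corrections that drop far enough down the filtration to be harmless; equivalently, that the $p$-periodic extension $\bar f(\underline{n}):=\prod_{\underline{i}}h_{\underline{i}}^{\binom{\underline{r}(\underline{n})}{\underline{i}}}$ again lies in $\poly(\mb{Z}^n,G_\bullet)$ and factors through $\mb{Z}_p^n$. I would carry this out by induction on the step $k$. The base of the induction, and the key to the inductive step, is that the top structure group $G_k$ is an elementary abelian $p$-group (Proposition \ref{prop:strgps}, or directly from $G_k^p\subseteq G_{k+p-1}=\{e\}$), for which the statement is Lemma \ref{lem:elemabpolycase}. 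The inductive step peels $G_k$ off exactly as in the proof of Proposition \ref{prop:dimreduc}: one applies the case of the factor $\ns_{k-1}$ (lifting first to a polynomial map on $\mb{Z}^n$ via Corollary \ref{cor:liftthrufibgen} and then restricting), and one must then check that the residual $G_k$-valued error $\underline{n}\mapsto f(\underline{r}(\underline{n}))f(\underline{n})^{-1}$ is a polynomial map $\mb{Z}_p^n\to G_k$ of degree $\le k$. This last check rests on a Lucas-type analysis of the coefficients $\binom{\underline{n}}{\underline{i}}$ modulo the order of $h_{\underline{i}}$ (expanding $\binom{n_j}{i_j}$ around $\binom{r_j}{i_j}$ by the Chu--Vandermonde identity and using that $p\mid\binom{p}{a}$ for $1\le a\le p-1$), with $p$-homogeneity used to absorb every resulting factor of $p$ by pushing the coefficient $h_{\underline{i}}\in G_{|\underline{i}|}$ into $G_{|\underline{i}|+p-1}$.

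The step I expect to be the main obstacle is precisely this last bookkeeping in the possibly non-abelian group $G$: one must fix the order of the factors $h_{\underline{i}}^{\binom{\cdot}{\underline{i}}}$ so that the Chu--Vandermonde rewriting can be performed one filtration layer at a time without disturbing the layers already treated --- the same technical core, and the same iterative ``correcting a line'' procedure, as in the proof of Lemma \ref{lem:p-hom-group}. Everything else consists of routine applications of results already in place.
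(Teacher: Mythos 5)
Your handling of $(1)\Rightarrow(2)$ and $(2)\Rightarrow(3)$ coincides with the paper's and is correct. The gaps are all in $(3)\Rightarrow(1)$, and two of them are concrete. First, the lifting step is circular: to produce a lift $\tilde f\in\hom(\mc{D}_1(\mb{Z}_p^n),\ns)$ of $\pi_{k-1}\co f|_{[0,p-1]^n}$ you propose to lift through the fibration to a morphism on $\mb{Z}^n$ via Corollary \ref{cor:liftthrufibgen} ``and then restrict'' --- but the assertion that the restriction to $[0,p-1]^n$ of a morphism $\mc{D}_1(\mb{Z}^n)\to\ns$ is a morphism from $\mc{D}_1(\mb{Z}_p^n)$ is precisely the $p$-homogeneity of $\ns$ you are trying to prove. (The paper only ever restricts lifts landing in the factor $\ns_{k-1}$, which is $p$-homogeneous by the induction hypothesis; passing from $\ns_{k-1}$ back up to $\ns$ is, for general nilspaces, the hard content of Proposition \ref{prop:main}, and for group nilspaces the paper avoids it entirely.) Second, the ``residual error'' $\underline{n}\mapsto f(\underline{r}(\underline{n}))f(\underline{n})^{-1}$ is not $G_k$-valued in general: take $G=\mb{Z}_{p^3}$ with the filtration of $\abph^{(p)}_{2p-1,1}$ (so $G_2=\cdots=G_p=p\mb{Z}_{p^3}$ and $G_{p+1}=\cdots=G_{2p-1}=p^2\mb{Z}_{p^3}$) and $f$ the identity homomorphism $\mb{Z}\to\mb{Z}_{p^3}$, a degree-$1$ polynomial map; at $n=p$ the error is $-p$, which lies in $G_p$ but not in $G_k=G_{2p-1}$. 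So the object your inductive step asks to analyse is not of the claimed form, and without solving the lifting problem there is no candidate error term that is.

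The remaining issue is that the Chu--Vandermonde/Lucas heuristic does not by itself deliver the key fact. Writing $n_j=r_j+pm_j$, the correction terms $\binom{r_j}{i_j-b}\binom{pm_j}{b}$ with $b\geq p$ carry no factor of $p$ (Lucas gives $\binom{pm}{p}\equiv m\bmod p$), so it is not true that every correction can be ``absorbed'' by pushing $h_{\underline{i}}$ down by $p-1$. More importantly, even where the corrections are divisible by $p$, pointwise depth of $\bar f\cdot f^{-1}$ in the filtration is neither necessary nor sufficient for the periodization $\bar f$ to be a polynomial map: what must be verified is that \emph{all iterated derivatives} of each periodized Taylor term land in the correct filtration subgroups, and the required divisibilities ($p^{\ell}$ dividing $\partial_1^{j}$ already for $j$ roughly $\ell(p-1)$ steps beyond $|\underline{i}|$, rather than the $\ell p$ steps that crude congruences give) are exactly the content of the circular-vector computation (Proposition \ref{prop:circular-mult-p} and Corollary \ref{cor:der-circ}) and of the specific choice of the $p$-periodic correctors $m_i^{(p)}$. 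The paper's actual proof of $(3)\Rightarrow(1)$ sidesteps both of your gaps: it constructs the explicit $p$-periodic morphisms $m_i^{(p)}$ and their products (Lemma \ref{lem:prod-m}), and multiplies $f$ by the correction $w^{g'_{\underline{t}}}$ one point of $[0,p-1]^n$ at a time in colexicographic order, using Lemma \ref{lem:taylor} only to certify $w\in G_{|\underline{t}|}$; no lifting statement for $\ns$ itself is ever needed. Your sketch identifies the right circle of ideas, but the two steps above would have to be replaced, not just filled in.
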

\begin{proof}
The implication $(i)\Rightarrow (ii)$ follows from Lemma \ref{lem:ext-cube} as observed at the beginning of this section. The implication $(ii)\Rightarrow (iii)$ follows from Lemma \ref{lem:p-hom-group} applied with the trivial subgroup $\Gamma = \{\id\}$.

We now prove $(iii)\Rightarrow (i)$, using the following strategy. Given any $f\in\hom(\mc{D}_1(\mb{Z}^n),\ns)$,  note that there exists a $p$-periodic morphism $g_0\in \hom(\mc{D}_1(\mb{Z}_p^n),\ns)$ such that $g_0(0^n)=f(0^n)$ (we can take $g_0$ to be the constant map with value $f(0^n)$). Then, writing $g_0^{-1}$ for the map sending each $z$ to the inverse of the group element $g_0(z)$, we have $fg_0^{-1}(0^n)=\id$. Now suppose that there exists $g_1\in \hom(\mc{D}_1(\mb{Z}_p^n),\ns)$ such that $g_1(0^n)=\id$ and $g_1(1,0,\ldots,0)=(fg_0^{-1})(1,0,\ldots,0)$. Then $fg_0^{-1}g_1^{-1}$ is in $\hom(\mc{D}_1(\mb{Z}_p^n),\ns)$ and equals $\id$ at $0^n$ and at $(1,0,\ldots,0)$. Repeating this process, we will end up with a morphism $fg_0^{-1}g_1^{-1}\cdots g_\ell^{-1}\in \hom(\mc{D}_1(\mb{Z}^n),\ns)$ such that $(fg_0^{-1}g_1^{-1}\cdots g_\ell^{-1})|_{[0,p-1]^n} =\id$, thus showing that $f|_{[0,p-1]^n}=g_0\cdots g_\ell\in \hom(\mc{D}_1(\mb{Z}_p^n),\ns)$ as required.

We now prove that each step of the process can be carried out. First, for $i\ge 1$ we define the group nilspace $H^{(p)}_i:=\mb{Z}$ with filtration 
\begin{equation}\label{eq:gnsdef}
\left(H^{(p)}_i\right)_j = 
     \begin{cases}
       \mb{Z} &\quad\text{if } j=0,1,\ldots,i\\
       p^{\lfloor\frac{j-i-1}{p-1}\rfloor+1}\mb{Z} &\quad\text{if } j\ge i+1. \\ 
     \end{cases}
\end{equation}
\noindent
Now, for $i\in \{0,\ldots,p-1\}$ we define $m_i^{(p)}:\mc{D}_1(\mb{Z})\to H^{(p)}_i$ as follows:
\[   
m_i^{(p)}(x) = 
     \begin{cases}
       0 &\quad\text{if } (x)^*_p\in \{0,1,\ldots,i-1\}\\
       (-1)^{(x)^*_p-i}\binom{p-i-1}{(x)^*_p-i} &\quad\text{if } (x)^*_p\in\{i,\ldots,p-1\} \\
     \end{cases}
\]
where $(x)^*_p$ is the residue modulo $p$ of $x$ in $[0,p-1]$. It is easy to see that\footnote{Where $\partial_1 f(x):=f(x+1)-f(x)$ for any $f:\mb{Z}\to Z$ where $Z$ is an abelian group.} $\partial_1 m_i^{(p)} = m_{i-1}^{(p)}$ for $i\ge 1$. Moreover, for all $i\in[0,p-1]$ we have that $\partial_1^{i+1} m_i^{(p)}$ is a circular vector (viewed as the element $(\partial_1^{i+1} m_i^{(p)}(0),\ldots,\partial_1^{i+1} m_i^{(p)}(p-1))$ of $\mb{Z}^p$, see Definition \ref{def:circular-vector}) such that all its entries are multiples of $p$. To see this, note that $\partial_1^{i+1} m_i^{(p)} = \partial_1^{p-1} m_{p-2}^{(p)}$ and we can apply Corollary \ref{cor:der-circ} to the latter. Indeed, applying this Corollary as many times as required we get that $m_i^{(p)}$ is a morphism (with the filtrations $\mc{D}_1(\mb{Z})$ and $(H_i^{(p)})_{\bullet}$, by \cite[Theorem 2.2.14]{Cand:Notes1}). Note also that by construction each morphism $m_i^{(p)}$ is a $p$-periodic map on $\mb{Z}$.

These morphisms $m_i^{(p)}$ will be the basic tool to define the morphisms $g$ mentioned above. But first, it is convenient to see how to use them in dimension larger than 1. Let $n\in \mb{N}$ and, given a vector of indices $\underline{t}=(t_1,\ldots,t_n)\in \{0,\ldots,p-1\}^n$, define the functions $g'_{\underline{t}}:\mc{D}_1(\mb{Z}^n)\to H^{(p)}_{|\underline{t}|}$ as follows: $g'_{\underline{t}}(\underline{x}):=m_{t_1}^{(p)}(x_1)m_{t_2}^{(p)}(x_2)\cdots m_{t_n}^{(p)}(x_n)$, where $\underline{x}=(x_1,\ldots,x_n)\in \mb{Z}^n$ and by definition we take $H^{(p)}_0:=H^{(p)}_1$. The proof that this is a morphism follows from Lemma \ref{lem:prod-m}. Before continuing, let us note two useful properties of these morphisms:
\begin{itemize}
    \item $g'_{\underline{t}}(\underline{t})=1$ and
    \item $g'_{\underline{t}}(\underline{x})=0$ if $x_j<t_j$ for some $j\in [n]$.
\end{itemize}
Now we are ready to complete the argument. We argue by induction on $\underline{t}\in \{0,\ldots,p-1\}^{n}$, using the colexicographic order on this set. Suppose that for a fixed $\underline{t'}=(t'_1,\ldots,t'_n)$ we have been able to find a morphism $h\in \hom(\mc{D}_1(\mb{Z}_p^n),\ns)$ such that $(fh)(\underline{x}) = \id$ for all $\underline{x}  \le \underline{t'}$. Let $\underline{t}=(t_1,\ldots,t_n)$ be the next vector after $\underline{t'}$ in the colex order. Note that $w:=(fh)(\underline{t})\in G_{|\underline{t}|}$. This can be seen by composing the morphism $fh$ with the maximal cube $\q_{0^n,\underline{t}}$ (see Definition \ref{def:max-cube}).

We now define $g\in \hom(\mc{D}_1(\mb{Z}_p^n),\ns)$ by setting  $g(\underline{x}):= w^{g'_{\underline{t}}(\underline{x})}$, where note that this is indeed a morphism from $\mc{D}_1(\mb{Z}_p^n)$ since it is a morphism from $\mc{D}_1(\mb{Z}^n)$ and is $p$-periodic in each coordinate (since the $m_i^{(p)}$ are $p$-periodic).
To conclude this inductive step, we define $h'\in \hom(\mc{D}_1(\mb{Z}_p^n),\ns)$ as $h':=hg^{-1}$. By the two mentioned properties of $g'_{\underline{t}}$ we have that $fh'(\underline{x})=\id$ now for all $\underline{x}\leq \underline{t}$. 
\end{proof}
\noindent As a first consequence of Theorem \ref{thm:strong-p-hom-group} we obtain the following simple description of $p$-homogeneous nilspaces defined on finite cyclic groups. Let us say that a set of integers is \emph{$t$-separated} if  every pair of integers $a,b$ in this set satisfies $|a-b|\geq t$.
\begin{proposition}\label{prop:cyclic-p-hom}
Let $G$ be a finite cyclic group equipped with a filtration $G_\bullet$ of degree exactly $k$ \textup{(}i.e.\ $G_{k+1}=\{0\}\neq G_k$\textup{)}, and such that the associated group nilspace is $p$-homogeneous. Then $G\cong\mb{Z}_{p^d}$ for some positive integer $d \leq \lfloor \frac{k-1}{p-1}\rfloor +1$, and there is a $(p-1)$-separated set $\Delta\subset [k]$ with $|\Delta|=d$ and $k\in \Delta$, such that $G_{i+1}=p\cdot G_i$ for every $i\in\Delta$ and $G_{i+1}=G_i$ otherwise.
\end{proposition}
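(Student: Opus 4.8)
The plan is to reduce the proposition to a short computation with filtrations of cyclic $p$-groups, building on the two main results already established for (w-)$p$-homogeneous nilspaces. First I would apply Theorem \ref{thm:strong-p-hom-group}: since the group nilspace $\ns$ associated with $(G,G_\bullet)$ is $p$-homogeneous, the filtration $G_\bullet$ is $p$-homogeneous, i.e.\ $p\cdot G_i\subseteq G_{i+p-1}$ for all $i\geq 0$. Next, as the filtration has degree exactly $k$, the nilspace $\ns$ is a $k$-step $p$-homogeneous nilspace, so Proposition \ref{prop:strgps} applies and tells us that each structure group $\ab_i(\ns)$, $i\in[k]$, is an elementary abelian $p$-group. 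For a group nilspace these structure groups are the successive quotients $G_i/G_{i+1}$; since $G$ is cyclic each such quotient is cyclic, and a cyclic elementary abelian $p$-group is either trivial or isomorphic to $\mb{Z}_p$.

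From this the group $G$ and the coarse shape of $G_\bullet$ follow immediately. Put $\Delta:=\{i\in[k]:G_{i+1}\subsetneq G_i\}$ and $d:=|\Delta|$. Telescoping the orders of the structure groups gives $|G|=\prod_{i\in[k]}|G_i/G_{i+1}|=p^{d}$, and since $G$ is cyclic this means $G\cong\mb{Z}_{p^d}$; also $k\in\Delta$ because $G_{k+1}=\{0\}\neq G_k$, so in particular $d\geq 1$. Now the subgroups of $\mb{Z}_{p^d}$ form the single chain $\mb{Z}_{p^d}\supsetneq p\,\mb{Z}_{p^d}\supsetneq\cdots\supsetneq p^{d}\mb{Z}_{p^d}=\{0\}$, and $G_\bullet$ is a weakly decreasing sequence of subgroups of $G$ starting at $G$ and ending at $\{0\}$ with exactly $d$ strict drops (those indexed by $\Delta$); hence each strict drop must multiply by exactly $p$, i.e.\ $G_{i+1}=p\cdot G_i$ for $i\in\Delta$ and $G_{i+1}=G_i$ otherwise.

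It remains to show $\Delta$ is $(p-1)$-separated, which is the one place where the full force of $p$-homogeneity (not merely the structure-group property) is used. Here I would fix $i\in\Delta$ and a generator $g$ of the nontrivial cyclic group $G_i$, so that $p\cdot g$ generates $G_{i+1}=p\cdot G_i$. By $p$-homogeneity $p\cdot g\in G_{i+p-1}$, hence $G_{i+1}=\langle p\cdot g\rangle\subseteq G_{i+p-1}$; combined with the descending inclusions $G_{i+1}\supseteq G_{i+2}\supseteq\cdots\supseteq G_{i+p-1}$ this forces $G_{i+1}=G_{i+2}=\cdots=G_{i+p-1}$. In particular none of $i+1,\ldots,i+p-2$ lies in $\Delta$, so any two distinct elements of $\Delta$ differ by at least $p-1$, i.e.\ $\Delta$ is $(p-1)$-separated. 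Finally, writing $\Delta=\{i_1<\cdots<i_d\}$ with $i_d=k$, separation yields $k=i_d\geq i_1+(d-1)(p-1)\geq 1+(d-1)(p-1)$, so $(d-1)(p-1)\leq k-1$ and therefore $d\leq\lfloor\frac{k-1}{p-1}\rfloor+1$, as claimed.

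I do not anticipate a real obstacle: once Theorem \ref{thm:strong-p-hom-group} and Proposition \ref{prop:strgps} are available the argument is essentially bookkeeping about cyclic $p$-groups. The only mildly delicate points are the translation of ``structure groups are elementary abelian $p$-groups'' into ``each step of the cyclic filtration $G_\bullet$ drops by at most a factor of $p$'', and then reading the $(p-1)$-separation of $\Delta$ directly off the inclusion $p\cdot G_i\subseteq G_{i+p-1}$ evaluated at a generator of $G_i$.
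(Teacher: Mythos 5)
Your proof is correct and follows essentially the same route as the paper's: Proposition \ref{prop:strgps} forces each $G_i/G_{i+1}$ to be $\{0\}$ or $\mb{Z}_p$, Theorem \ref{thm:strong-p-hom-group} supplies the inclusion $p\cdot G_i\subseteq G_{i+p-1}$ used for the $(p-1)$-separation, and the final bound on $d$ is the same counting of $\min\Delta+(|\Delta|-1)(p-1)\leq k$. Your separation step (deducing directly that $G_{i+1}=\cdots=G_{i+p-1}$ for $i\in\Delta$ from $G_{i+1}=p\cdot G_i\subseteq G_{i+p-1}$) is just a slightly more direct phrasing of the paper's contradiction argument and rests on the same inclusion.
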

\begin{proof}
By Proposition \ref{prop:strgps}, every structure group of $\ns$, i.e. every quotient $G_i/G_{i+1}$, is an elementary abelian $p$-group, which must then be cyclic, so must be $\{0\}$ or $\mb{Z}_p$. It follows that $G=\mb{Z}_{p^d}$ for some $d\geq 0$. Let $\Delta=\{i\in [k]:G_i/G_{i+1}\cong \mb{Z}_p\}$. It is then clear that $G_{i+1}=p\cdot G_i$ for $i\in \Delta$ and $G_{i+1}=G_i$ otherwise, and also that $k\in \Delta$ (since $G_\bullet$ has degree exactly $k$). Since $G=\mb{Z}_{p^d}$, it is also clear that $|\Delta|=d$. 

To see that $\Delta$ is $(p-1)$-separated, let $i<j$ be any two elements of $\Delta$, so that $G_{j+1}\subset p^2\cdot G_i$, and suppose for a contradiction that $j-i< p-1$. Then we would have $j+1\leq i+ p-1$, so $G_{i+p-1}\leq G_{j+1} = p^2\cdot G_i$, and since $p\cdot G_i$ is not the trivial subgroup, we would also have $p^2\cdot G_i\subsetneq p\cdot G_i$, so $G_{i+p-1}\subsetneq p\cdot G_i$, and so $G_\bullet$ would not be $p$-homogeneous, contradicting Theorem \ref{thm:strong-p-hom-group}.

Finally, by the previous paragraphs $1+ (d-1)(p-1)\leq (\min \Delta) + (|\Delta|-1)(p-1)$ $\leq \max \Delta = k$. This implies $d \leq \lfloor \frac{k-1}{p-1}\rfloor +1$.
\end{proof}
\begin{remark}
Proposition \ref{prop:cyclic-p-hom} implies that the nilspace $\abph_{k,1}$ from Definition \ref{def:bblocks-intro}, defined on the cyclic group $\mb{Z}/p^{\lfloor \frac{k-1}{p-1}\rfloor +1}\mb{Z}$, is the largest nilspace among $k$-step $p$-homogeneous nilspaces defined on finite cyclic groups.
\end{remark}
\noindent Next we use Theorem \ref{thm:strong-p-hom-group} to prove that the translation group of a $p$-homogeneous nilspace is also $p$-homogeneous, a fact that we shall use in Section \ref{sec:gen-set}.

\begin{proposition}\label{prop:trans-of-p-strong}
Let $\ns$ be a $p$-homogeneous nilspace. Then the group nilspace consis- ting of the translation group $\tran(\ns)$ with the filtration $\big(\!\tran_i(\ns)\big)_{i\geq 0}$ is also $p$-homogeneous.
\end{proposition}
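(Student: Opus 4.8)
The plan is to reduce, via Theorem~\ref{thm:strong-p-hom-group}, to showing that the filtration $\tran_\bullet(\ns)$ is $p$-homogeneous; by the reformulation of $p$-homogeneity of filtrations noted just after Theorem~\ref{thm:intro-1} (applied to the filtered group $(\tran(\ns),\tran_\bullet(\ns))$), this amounts to proving that for every $m\ge 1$ and every $H\in\poly(\mb{Z}^m,\tran_\bullet(\ns))$, the restriction $H|_{[0,p-1]^m}$ lies in $\poly(\mb{Z}_p^m,\tran_\bullet(\ns))$. The idea is to detect polynomiality of a translation-valued sequence through its action on $\ns$, and then transfer the $p$-homogeneity of $\ns$ itself. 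Let $A$ denote either $\mb{Z}$ or $\mb{Z}_p$. I would use the following standard property of the translation filtration, valid for any nilspace (cf.\ the theory of nilspace systems in \cite{CScouplings}): a sequence $H:A^m\to\tran(\ns)$ lies in $\poly(A^m,\tran_\bullet(\ns))$ if and only if, for every $n\ge 0$ and every $f\in\hom(\mc{D}_1(A^n),\ns)$, the combined map $\Phi_{H,f}:(v,\underline t)\mapsto H(\underline t)\big(f(v)\big)$ lies in $\hom(\mc{D}_1(A^{n+m}),\ns)$. The forward implication here is that polynomial translation-sequences carry cubes of $\ns$ to cubes of $\ns$; the backward implication uses that a translation of $\ns$ is determined by its action on points, together with the fact that a translation-valued sequence is polynomial provided all of its orbit maps $\underline t\mapsto H(\underline t)(x)$, $x\in\ns$, are polynomial sequences in $\ns$.

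Granting this, fix $H\in\poly(\mb{Z}^m,\tran_\bullet(\ns))$ and set $H':=H|_{[0,p-1]^m}$. For every $n\ge 0$ and every $f\in\hom(\mc{D}_1(\mb{Z}^n),\ns)$, the forward implication over $A=\mb{Z}$ gives $\Phi_{H,f}\in\hom(\mc{D}_1(\mb{Z}^{n+m}),\ns)$, and then the $p$-homogeneity of $\ns$ (Definition~\ref{def:p-hom-intro}) yields $\Phi_{H,f}|_{[0,p-1]^{n+m}}\in\hom(\mc{D}_1(\mb{Z}_p^{n+m}),\ns)$. Now $\Phi_{H,f}|_{[0,p-1]^{n+m}}$ is exactly $\Phi_{H',f'}$, where $f':=f|_{[0,p-1]^n}$; moreover, every $g\in\hom(\mc{D}_1(\mb{Z}_p^n),\ns)$ arises as such an $f'$, by taking $f=g\co q$ with $q:\mb{Z}^n\to\mb{Z}_p^n$ the quotient map. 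Hence $\Phi_{H',g}\in\hom(\mc{D}_1(\mb{Z}_p^{n+m}),\ns)$ for every $n$ and every $g\in\hom(\mc{D}_1(\mb{Z}_p^n),\ns)$, and applying the backward implication over $A=\mb{Z}_p$ gives $H'\in\poly(\mb{Z}_p^m,\tran_\bullet(\ns))$, as required.

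The point that needs care is the detection characterization used above, in both the $\mb{Z}$ and the $\mb{Z}_p$ cases. The forward direction is the familiar fact that, in a filtered nilspace system, polynomial sequences in the acting group send cubes to cubes; it is purely combinatorial. For the backward direction one analyses $\Phi_{H,f}$ along coordinate subcubes to reduce to the statement that a map $A^m\to\tran(\ns)$ is polynomial as soon as all its orbit maps are polynomial sequences in $\ns$, which is where one uses that distinct translations act differently on $\ns$; this is the main obstacle of the argument. An alternative route, avoiding this machinery, is to argue directly with the combinatorial characterization of $\tran_j(\ns)$ (a translation is of degree $\le j$ iff applying it on a codimension-$j$ face of any cube of $\ns$ again gives a cube) and to prove the statement ``$\beta\in\tran_i(\ns)\Rightarrow\beta^p\in\tran_{i+j}(\ns)$'' by induction on $j\in\{0,1,\dots,p-1\}$, paralleling the proof of Lemma~\ref{lem:p-hom-group}: each inductive step would combine a folding-modulo-$p$ construction (using the $p$-homogeneity of $\ns$) with a concatenation-of-cubes argument in the spirit of the combinatorial core of the proof of Proposition~\ref{prop:b-bal-p-hom-finite}, and with the divisibility $p\mid\binom{p}{l}$ for $0<l<p$ to absorb the intermediate contributions $\beta^{\binom{p}{l}}$ via the induction hypothesis; on this route the main obstacle would instead be the bookkeeping ensuring that folding produces exactly the codimension-$(i+p-1)$ statement and not something weaker.
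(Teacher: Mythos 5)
Your reduction, via Theorem \ref{thm:strong-p-hom-group}, to showing that the filtration $\big(\!\tran_i(\ns)\big)_{i\geq 0}$ is $p$-homogeneous is exactly the paper's first step. The problem is that everything after that in your first route rests on the ``detection characterization'', and its backward direction is a genuine gap, not a standard fact. The forward direction is indeed the familiar statement that Host--Kra cubes of $(\tran(\ns),\tran_\bullet(\ns))$ act on cubes of $\ns$, so polynomial translation-valued maps combine with morphisms into $\ns$ to give morphisms. But the converse asks the following: if a configuration $(g_u)_{u}$ of translations (obtained by composing $H$ with cubes of $\mc{D}_1(A^m)$) sends every cube of $\ns$ to a cube, must it lie in $\cu^n(\tran_\bullet(\ns))$, or at least must the iterated derivatives $\partial_{h_j}\cdots\partial_{h_1}H(t)$ land in $\tran_j(\ns)$? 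This is a converse to the action property, and it is not proved in \cite{CScouplings} or \cite{CGSS}; your fallback formulation (``a translation-valued sequence is polynomial provided all of its orbit maps are polynomial'') is even weaker, since orbit maps only record the action on points while membership in $\tran_j(\ns)$ is a condition on the action on codimension-$j$ faces of cubes. You flag this yourself as ``the main obstacle'', and nothing in the proposal overcomes it; moreover you need it over $A=\mb{Z}_p$, where it is precisely the kind of statement the proposition is trying to establish. So route one is not a proof.

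Your alternative route is much closer to what the paper actually does, but as written it is only a plan. The paper does not run an induction on $j\in\{0,\dots,p-1\}$ with binomial coefficients as in Lemma \ref{lem:p-hom-group}; instead it proves $\alpha\in\tran_i(\ns)\Rightarrow\alpha^p\in\tran_{i+p-1}(\ns)$ in one shot. Concretely: given $\q\in\cu^{p+n}(\ns)$ it applies $\alpha$ to $p$ faces $F_1,\dots,F_p$ meeting in a codimension-$(i+p-1)$ face $C$, producing a cube $\q'$ that carries $\alpha^p$ on $C$ together with unwanted powers of $\alpha$ off $C$; it then takes a $p$-periodic extension $f$ of $\q'$ (using w-$p$-homogeneity), precomposes with a shearing matrix $T^{-1}$ over $\mb{Z}_p$, multiplies by $\alpha^{g}$ for an explicit polynomial $g$ of degree $i$ to cancel the errors, restricts to $[0,p-1]^{p+n}$ using $p$-homogeneity, and composes back with $T$ to land exactly on $(\alpha^p)^C(\q)$. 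The ``bookkeeping'' you defer is precisely this construction, and it is the substance of the proof; without it (or a worked-out version of your binomial induction) the argument is incomplete.
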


\begin{proof}
By Theorem \ref{thm:strong-p-hom-group} it suffices to prove that for every $\alpha\in \tran_i(\ns)$ we have $\alpha^p \in \tran_{i+p-1}(\ns)$. To do this, given any cube $\q\in \cu^{p+n}(\ns)$ where $n\ge i-1$ (otherwise we may not have enough dimensions) we want to show that applying $\alpha^p$ to any face of codimension $i+p-1$ gives again a cube. By the symmetries of cubes it suffices to show this for just one  particular face. Let us write $\db{p+n}=\db{i-1}\times \db{p} \times \db{n-i+1}$, so any element $y\in\db{p+n}$ is of the form $(u,w,v) \in \db{i-1}\times \db{p} \times \db{n-i+1}$. Let us define the faces $F_j:=\{(u,w,v)\in \db{p+n}: u=1^{i-1}, w(j)=1\}$ and let $C:= \cap_{j=1}^p F_j$. It is clear that $\codim(C)=i+p-1$, so it suffices to prove that $(\alpha^p)^C(\q)\in\cu^{n+p}(\ns)$. First, let us take $\q':= \alpha^{F_1}\co \cdots\co \alpha^{F_p} (\q)$, i.e., the cube obtained applying $\alpha$ to $F_j$ for all $j\in [p]$. Note that in $\q'$ we have $\alpha^p$ applied to $\q(y)$ for each $y\in C$, as we need. However there are also ``errors" in $\q'$, i.e., applications of non-zero powers of $\alpha$ to $\q(y)$ for some elements $y\not\in C$. Our aim now is to use the additional symmetries provided by $p$-homogeneity to cancel these errors. To this end, we  define a matrix $T\in M^{(p+n)\times (p+n)}(\mb{Z}_p)$ as follows: the submatrix of $T$ formed by the first $i-1$ rows and columns is the identity matrix, and similarly for the submatrix formed by the last $n-i+1$ rows and columns of $T$; the submatrix of $T$ formed by the $p$ rows and columns indexed by $[i,i+p-1]$ is an identity matrix too except for its first row, where we set all entries equal to 1. All other entries of $T$ are 0.

Let $f\in \hom(\mc{D}_1(\mb{Z}_p^{p+n}),\ns)$ be a periodic extension of the cube $\q'$. Then also $f \co T^{-1} \in \hom(\mc{D}_1(\mb{Z}_p^{p+n}),\ns)$. Recall that we can interpret $f \co T^{-1}$ as a $p$-periodic morphism in $\hom(\mc{D}_1(\mb{Z}^{p+n}),\ns)$. Now let us define the morphism $g\in \hom(\mc{D}_1(\mb{Z}^{p+n}),\mc{D}_i(\mb{Z}))$ by setting $g(u,w,v):=-u(1)u(2)\cdots u(i-1)w(1)$ (it is easy to see that this is a morphism). 

Now, given $h\in \hom(\mc{D}_1(\mb{Z}^{p+n}),\ns)$, $g\in \hom(\mc{D}_1(\mb{Z}^{p+n}),\mc{D}_i(\mb{Z}))$ and $\alpha\in\tran_i(\ns)$, for every $x\in \mb{Z}^{p+n}$ let $(\alpha^g * h)(x) := \alpha^{g(x)}(h(x))$. Note that $\alpha^g * h \in \hom(\mc{D}_1(\mb{Z}^{p+n}),\ns)$, since when we compose this with a cube, we apply $\alpha$ to faces of codimension $i$ of the cube. 

By the above observation, we have $\alpha^g * (f\co T^{-1}) \in \hom(\mc{D}_1(\mb{Z}^{p+n}),\ns)$. As $\ns$ is $p$-homogeneous, the restriction $\alpha^g * (f\co T^{-1})|_{[0,p-1]^{p+n}}$ is in $\hom(\mc{D}_1(\mb{Z}_p^{p+n}),\ns)$. Let us denote this restriction by $m$. Since $m\in \hom(\mc{D}_1(\mb{Z}_p^{p+n}),\ns)$, we know that $m \co T \in \hom(\mc{D}_1(\mb{Z}_p^{p+n}),\ns)$. We now complete the proof by showing that  $m \co T|_{\db{p+n}} = (\alpha^p)^C (\q)$.

Let $y$ be any element of $\db{p+n}$, so $m\co T|_{\db{p+n}}(y)=m(T(y))=\alpha^{g(x)}(f\co T^{-1}(x))$, where $x$ is $T(y)$ with coordinates reduced mod $p$ into $[0,p-1]$. To see that this is equal to $(\alpha^p)^C(\q)(y)$, first note that if $y\in C$ then $T(y) \!\mod p$ has $w(1)=0$, so $g(x)=0$ and therefore $\alpha^{g(x)}(f\co T^{-1}(x))=f(y)=\q'(y)=\alpha^p\q(y)$. Now, if $y\not\in C$, then let $s\in [0,p-1]$ be the number of coordinates that are 1 in the $w$ part of $y$. Consider the case $y\not\in \cup_i F_i$, i.e., we have $s=0$ or some coordinate $u(j)$ is 0. Then again the element $x=T(y) \!\mod p$ satisfies $g(x)=0$ (either because $w(1)=0$ or because $u(j)=0$) and as above we then have $\alpha^{g(x)}(f\co T^{-1}(x))=f(y)$, which is $\q(y)$ (since $y$ is not in any face $F_i$ in this case), as required. The remaining case is $y\in (\cup_i F_i)\setminus C$, i.e., that all $u(j)$ are 1 and $s\in [p-1]$. Then note that the element $x=T(y) \!\mod p$ satisfies $g(x)=-s$. Hence $\alpha^{g(x)}(f\co T^{-1}(x))=\alpha^{-s}\q'(y)$, and this equals $\q(y)$ since $y$ is in the intersection of $s$ faces and so $\q'(y)=\alpha^s(\q)(y)$. This completes the proof that $m \co T|_{\db{p+n}} = (\alpha^p)^C(\q)$.
\end{proof}
\noindent We close this section with the observation that, using the results above, Proposition \ref{prop:dimreduc} can be upgraded by showing that its converse also holds. This yields the following further equivalent description of $p$-homogeneous nilspaces.

\begin{proposition}\label{prop:lifting-equiv}
For every prime $p$ and $k\in \mb{N}$, there exists $M>0$ such that the following holds. A $k$-step nilspace $\ns$ is $p$-homogeneous if and only if 
every structure group of $\ns$ is an elementary abelian $p$-group and for all $i\in [k]$, for every $f\in\hom(\mc{D}_1(\mb{Z}_p^M),\ns_i)$ there is $\tilde{f}\in\hom(\mc{D}_1(\mb{Z}_p^M),\ns)$ such that $\pi_i\co \tilde{f}=f$.
\end{proposition}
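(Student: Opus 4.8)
The plan is to fix once and for all the value $M:=M(p,k)=p^{k+2}$ furnished by Proposition \ref{prop:dimreduc}, and to prove the two implications of the equivalence separately. The forward implication (that property \eqref{eq:dimreduc} for this $M$ implies $p$-homogeneity) requires no new work: the right-hand side of the statement is word-for-word the property \eqref{eq:dimreduc}, so Proposition \ref{prop:dimreduc} applies verbatim.

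For the converse, I would assume that $\ns$ is $p$-homogeneous and derive both conditions on the right-hand side. First, since every $p$-homogeneous nilspace is w-$p$-homogeneous (this follows from Lemma \ref{lem:ext-cube}, as noted at the beginning of this section), Proposition \ref{prop:strgps} gives immediately that each structure group $\ab_i(\ns)$ is an elementary abelian $p$-group. It then remains to establish the lifting property, and here I would reuse the argument already carried out inside the proof of Theorem \ref{thm:main1-intro} for the factors $\ns_i$ with $i\le k-1$, noting that it in fact works for all $i\in[k]$ once $\ns$ itself is known to be $p$-homogeneous: given $i\in[k]$ and $f\in\hom(\mc{D}_1(\mb{Z}_p^M),\ns_i)$, compose with the quotient $q:\mb{Z}^M\to\mb{Z}_p^M$ to obtain $f\co q\in\hom(\mc{D}_1(\mb{Z}^M),\ns_i)$; apply Corollary \ref{cor:liftthrufibgen} to the fibration $\pi_i:\ns\to\ns_i$ to get $f'\in\hom(\mc{D}_1(\mb{Z}^M),\ns)$ with $\pi_i\co f'=f\co q$; and then use $p$-homogeneity of $\ns$ to conclude that the restriction $\tilde f:=f'|_{[0,p-1]^M}$ lies in $\hom(\mc{D}_1(\mb{Z}_p^M),\ns)$. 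Since $q$ restricts to the identity on $[0,p-1]^M$, one gets $\pi_i\co\tilde f=(\pi_i\co f')|_{[0,p-1]^M}=(f\co q)|_{[0,p-1]^M}=f$, which is the required lift.

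I do not anticipate a substantial obstacle here: essentially all of the content is already packaged in the three ingredients just invoked (Proposition \ref{prop:dimreduc} for one direction, and Proposition \ref{prop:strgps} together with the lifting-through-fibrations result of Appendix \ref{app:res-nil} for the other), and the only point to verify carefully is the bookkeeping that the single value $M=M(p,k)$ serves simultaneously for both directions — which it does, since the forward direction uses exactly that $M$ and the converse argument above is valid for every $M$.
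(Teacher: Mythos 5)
Your proposal is correct and follows essentially the same route as the paper: the backward implication is Proposition \ref{prop:dimreduc}, the structure-group claim comes from Proposition \ref{prop:strgps} via w-$p$-homogeneity, and the lifting property is obtained by composing with the quotient $q:\mb{Z}^M\to\mb{Z}_p^M$, lifting through the fibration $\pi_i$ using Corollary \ref{cor:liftthrufibgen}, and restricting to $[0,p-1]^M$ — exactly the argument the paper points to from the proof of Theorem \ref{thm:main1-intro}.
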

\begin{proof}
The backward implication is Proposition \ref{prop:dimreduc}. For the forward implication, the claim concerning the structure groups is given by Proposition \ref{prop:strgps} (using that $\ns$ is w-$p$-homogeneous). To see the lifting property for morphisms, we argue using Corollary \ref{cor:liftthrufibgen} as in the proof of Theorem 
\ref{thm:main1-intro} in Section \ref{sec:bridge}.
\end{proof}

\section{A structure theorem for $p$-homogeneous nilspaces}\label{sec:gen-set}
\noindent In this section we prove Theorem \ref{thm:general-p-hom-intro}, describing $p$-homogeneous finite nilspaces as fibration-images of nilspaces from the simple class in Definition \ref{def:bblocks-intro}. In  the introduction we motivated this theorem mainly through its applications. The theorem is also motivated by the following fact.
\begin{lemma}\label{lem:fibs-preserve-phoms}
Let $\ns,\nss$ be nilspaces, suppose $\ns$ is $p$-homogeneous, and let $\varphi:\ns\to \nss$ be a fibration. Then $\nss$ is also $p$-homogeneous.
\end{lemma}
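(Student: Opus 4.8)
The plan is to deduce $p$-homogeneity of $\nss$ directly from Definition \ref{def:p-hom-intro}, using the fact that a fibration lifts morphisms out of the nilspace $\mc{D}_1(\mb{Z}^n)$. Concretely: fix a positive integer $n$ and a morphism $f\in\hom(\mc{D}_1(\mb{Z}^n),\nss)$; we must show $f|_{[0,p-1]^n}\in\hom(\mc{D}_1(\mb{Z}_p^n),\nss)$. First I would lift $f$ through $\varphi$. Since $\varphi:\ns\to\nss$ is a fibration, Corollary \ref{cor:liftthrufibgen} (applied with the fibration $\varphi$ and the morphism $f:\mc{D}_1(\mb{Z}^n)\to\nss$) yields a morphism $g\in\hom(\mc{D}_1(\mb{Z}^n),\ns)$ with $\varphi\co g=f$.

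Next, I would invoke the hypothesis that $\ns$ is $p$-homogeneous: applying Definition \ref{def:p-hom-intro} to $g$ gives that the restriction $g|_{[0,p-1]^n}$ lies in $\hom(\mc{D}_1(\mb{Z}_p^n),\ns)$. Finally, composing with $\varphi$: since $\varphi$ is a morphism $\ns\to\nss$ and composition of morphisms is a morphism, $\varphi\co\big(g|_{[0,p-1]^n}\big)\in\hom(\mc{D}_1(\mb{Z}_p^n),\nss)$. But as a map $[0,p-1]^n\to\nss$ this equals $z\mapsto\varphi(g(z))=(\varphi\co g)|_{[0,p-1]^n}=f|_{[0,p-1]^n}$. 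Hence $f|_{[0,p-1]^n}\in\hom(\mc{D}_1(\mb{Z}_p^n),\nss)$. As $n$ and $f$ were arbitrary, $\nss$ is $p$-homogeneous.

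There is no serious obstacle here; the only nonroutine input is the morphism-lifting property of fibrations, i.e.\ Corollary \ref{cor:liftthrufibgen}, which we may assume as it is a general fact about nilspaces proved in the appendix and already used (in exactly this form, for $\mc{D}_1(\mb{Z}^M)$) in the proof of Theorem \ref{thm:main1-intro}. One small point to state carefully is that $\varphi\co g = f$ forces the restrictions to $[0,p-1]^n$ to agree, so that the $p$-periodic morphism produced downstairs is genuinely $f|_{[0,p-1]^n}$ and not merely some lift of it; this is immediate from $\varphi\co g=f$. (As a remark, the same argument, but using only cube-surjectivity of $\varphi$ in place of morphism-lifting, shows that fibrations also preserve the weaker property of being w-$p$-homogeneous, though we do not need this here.)
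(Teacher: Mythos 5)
Your proposal is correct and follows essentially the same route as the paper: lift $f$ to $g\in\hom(\mc{D}_1(\mb{Z}^n),\ns)$ through the fibration (the paper cites Corollary \ref{cor:liftthrufib}, the special case of \ref{cor:liftthrufibgen} you invoke), apply $p$-homogeneity of $\ns$ to $g$, and push back down with $\varphi$. The only cosmetic point is that applying the lifting corollary requires supplying an initial partial lift (a cube or box-morphism over $\ns$ compatible with $f$), which exists because fibrations are cube-surjective; this is implicit in your write-up and in the paper's.
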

\begin{proof}
Let $f\in \hom(\mc{D}_1(\mb{Z}^n),\nss)$. By Corollary \ref{cor:liftthrufib} it follows that there exists $g\in \hom(\mc{D}_1(\mb{Z}^n),\ns)$ such that $\varphi\co g = f$. Since $\ns$ is $p$-homogeneous, we have $g|_{[0,p-1]^n}\in \hom(\mc{D}_1(\mb{Z}_p^n),\ns)$, whence $f|_{[0,p-1]^n} = \varphi \co g|_{[0,p-1]^n}\in \hom(\mc{D}_1(\mb{Z}_p^n),\nss)$, as required. \footnote{An argument similar to the proof of Lemma \ref{lem:fibs-preserve-phoms} shows that fibration-images of w-$p$-homogeneous nilspaces are also w-$p$-homogeneous.}
\end{proof}
\noindent Indeed, Lemma \ref{lem:fibs-preserve-phoms} suggests that  finite $p$-homogeneous nilspaces may all emanate through fibrations from a much simpler class of nilspaces, and Theorem \ref{thm:general-p-hom-intro} confirms this. 

Recall from Definition \ref{def:bblocks-intro} that $\abph_{k,\ell}$ is the $k$-step $p$-homogeneous nilspace consisting of the group $G=\mb{Z}_{p^r}$ with $r=\lfloor\frac{k-\ell}{p-1}\rfloor+1$, equipped with the filtration
\[\begin{array}{cccccccccccccccc}
G_1 &   & G_\ell & & G_{\ell+1} & & G_{\ell+p-1} & & G_{\ell+p}\\
\parallel &  & \parallel & & \parallel &  & \parallel & &  \parallel\\
\mb{Z}_{p^r}& = \cdots = & \mb{Z}_{p^r} & \ge & p\mb{Z}_{p^r} & =  \cdots  = & p\mb{Z}_{p^r} &\geq & p^2\mb{Z}_{p^r}\cdots
\end{array}.
\] 
Note that $\abph_{k,\ell}$ is a special case of the nilspaces described in Proposition \ref{prop:cyclic-p-hom}, with the filtration chosen to ensure that this special case is an \emph{$\ell$-fold ergodic} nilspace, meaning that its $\ell$-cube set is the whole set of maps $\db{\ell}\to \abph_{k,\ell}$ (see \cite[Definition 1.2.3]{Cand:Notes1}).

\begin{remark}\label{rem:quo-z-k-l} We leave as an exercise for the reader to check that the nilspace factor map $\pi_{k-1}:\abph_{k,\ell}\to \left( \abph_{k,\ell} \right)_{k-1}$ is the quotient by the $k$-th structure group of $\abph_{k,\ell}$ (which is isomorphic to either $\mb{Z}_p$ or $\{0\}$), and that $\left( \abph_{k,\ell} \right)_{k-1}$ is isomorphic to $\abph_{k-1,\ell}$ if $\ell<k$ and is the trivial group $\{0\}$ if $\ell=k$.
\end{remark}

\noindent To begin proving Theorem \ref{thm:general-p-hom-intro}, let us note that the second sentence in the theorem, concerning lifting morphisms through the fibration $\psi$, follows from Corollary \ref{cor:liftthrufib}. Thus our main task is to prove the existence of this fibration $\psi:\nss\to\ns$, for some $\nss\in \mc{Q}_{p,k}$. The main ingredient for this is the following result which tells us that, in the class of $p$-homogeneous nilspaces, the family $\mc{Q}_{p,k}$ is closed under taking degree-$k$ extensions by finite elementary abelian $p$-groups.

\begin{proposition}\label{prop:ext-split} 
Let $Q$ be a nilspace in $\mc{Q}_{p,k}$. Let $\nss$ be a $k$-step $p$-homogeneous nilspace that is a degree-$k$ extension of $Q$ by a finite elementary abelian $p$-group. Then $\nss$ is a split extension of $Q$. In particular we have $\nss \in \mc{Q}_{p,k}$.
\end{proposition}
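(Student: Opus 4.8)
The plan is to reduce to the case $\ab\cong\mb{Z}_p$ and then to produce a nilspace morphism section of the fibration $\pi\colon\nss\to Q$; such a section is exactly what witnesses that $\nss$ is a \emph{split} degree-$k$ extension of $Q$, and since one checks directly from Definition~\ref{def:bblocks-intro} that $\mc{D}_k(\mb{Z}_p)=\abph_{k,k}$, this gives $\nss\cong Q\times\abph_{k,k}\in\mc{Q}_{p,k}$. For the reduction I argue by induction on $a:=\rk(\ab)$. If $a\ge 2$, quotient $\nss$ by the action of a rank-$(a-1)$ subgroup of the group of vertical translations: the quotient map $\nss\to\nss'$ is a fibration, so $\nss'$ is $p$-homogeneous by Lemma~\ref{lem:fibs-preserve-phoms}, $\nss'$ is a degree-$k$ extension of $Q$ by $\mb{Z}_p$, and $\nss$ is a degree-$k$ extension of $\nss'$ by $\mb{Z}_p^{\,a-1}$; applying the $a=1$ case to $\nss'$ and the inductive hypothesis to $\nss$ over $\nss'\in\mc{Q}_{p,k}$, and composing the two resulting morphism sections, shows $\nss$ is a split extension of $Q$. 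So it suffices to treat $\ab=\mb{Z}_p$, which I do now; equivalently, I must show the classifying degree-$k$ cocycle of $\nss$ over $Q$, valued in $\mb{Z}_p$, is a coboundary.

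Write $Q$ as the group nilspace of the filtered abelian group $(G,G_\bullet)$, where $G=\prod_{\ell=1}^k\mb{Z}_{p^{r_\ell}}^{\,a_\ell}$ with $r_\ell=\lfloor\tfrac{k-\ell}{p-1}\rfloor+1$ and $G_\bullet$ is the product of the filtrations of the factors $\abph_{k,\ell}$; by Theorem~\ref{thm:strong-p-hom-group} this filtration is $p$-homogeneous. I will build the section from a lift of the regular $G$-action. Left translation gives a filtered embedding $G\hookrightarrow\tran(Q)$ (with $G_i\to\tran_i(Q)$), and there is a filtered group homomorphism $\rho\colon\tran(\nss)\to\tran(Q)$ whose kernel is the group of vertical translations, isomorphic to $\ab=\mb{Z}_p$ and lying in $\tran_k(\nss)$; since $\nss$ is $k$-step this kernel is central, as $[\tran_k(\nss),\tran_1(\nss)]\subseteq\tran_{k+1}(\nss)=\{\id\}$. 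By Proposition~\ref{prop:trans-of-p-strong} the filtration $\tran_\bullet(\nss)$ is $p$-homogeneous, hence so is the induced filtration $K_i:=K\cap\tran_i(\nss)$ on $K:=\rho^{-1}(G)$, and we are reduced to splitting the central extension of filtered groups
\[
1\longrightarrow \mb{Z}_p\longrightarrow K\xrightarrow{\ \rho\ } G\longrightarrow 1 .
\]

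The heart of the argument is to find a filtered homomorphism $\iota\colon G\to K$ with $\rho\circ\iota=\id_G$. For each cyclic factor $\mb{Z}_{p^{r_\ell}}$ of $G$ take its generator $g$, so $g\in G_\ell\setminus G_{\ell+1}$. First one checks that $g$ lifts to a translation $\tilde g\in K_\ell$ of degree exactly $\ell$: translations of $Q$ of degree $<k$ lift through the degree-$k$ abelian bundle $\nss\to Q$ by the general theory of abelian bundles \cite{Cand:Notes1}, while a degree-$k$ generator comes from an $\abph_{k,k}$-factor and is a structure-group translation, which lifts trivially in degree $k$. Now the key computation: by $p$-homogeneity of $\tran_\bullet(\nss)$ we have $\tilde g^{\,p^{m}}\in\tran_{\ell+m(p-1)}(\nss)$ for all $m$, and with $m=r_\ell$,
\[
\ell+r_\ell(p-1)\ =\ \ell+(p-1)+(p-1)\Big\lfloor\tfrac{k-\ell}{p-1}\Big\rfloor\ \ge\ \ell+(p-1)+(k-\ell)-(p-2)\ =\ k+1,
\]
so $\tilde g^{\,p^{r_\ell}}\in\tran_{k+1}(\nss)=\{\id\}$; thus each chosen lift already satisfies the order relation $\tilde g^{\,p^{r_\ell}}=\id$ of its cyclic factor, and a similar count shows $\iota$ will be filtered. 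It remains to arrange that the lifts of generators of distinct cyclic factors commute; since the kernel $\mb{Z}_p$ sits in the top degree $k$ and $K$ is $2$-step nilpotent, $[\tilde g,\tilde g']$ is a $p$-torsion central element lying in $\tran_{\ell+\ell'}(\nss)$, which vanishes outright when $\ell+\ell'>k$, and in the remaining range one re-chooses the lifts using the full $p$-homogeneity of $\tran_\bullet(\nss)$ together with the abelianness of $G$. With all relations of $G$ lifted, $g\mapsto\tilde g$ extends to the desired filtered homomorphism $\iota\colon G\to K\subseteq\tran(\nss)$; fixing $x_0\in\pi^{-1}(0_G)$, the map $s\colon Q\to\nss$, $s(g)=\iota(g)(x_0)$, is a nilspace morphism (evaluation at a point of a filtered homomorphism into $\tran(\nss)$ is always a morphism of the associated group nilspace), and $\pi\circ s=\id_Q$. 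Hence $\pi$ splits and $\nss\cong Q\times\mc{D}_k(\mb{Z}_p)\in\mc{Q}_{p,k}$.

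I expect the main obstacle to be the splitting of $1\to\mb{Z}_p\to K\to G\to1$, and within it the two flagged points: ensuring each cyclic generator lifts to a translation of the correct degree $\ell$ (which is precisely what makes the inequality $\ell+r_\ell(p-1)\ge k+1$ usable), and killing the commutators $[\tilde g,\tilde g']$ for pairs with $\ell+\ell'\le k$, where the degree filtration alone gives nothing and one must invoke $p$-homogeneity of $\tran(\nss)$ in full strength. The order-relation computation $\ell+r_\ell(p-1)\ge k+1$ is the conceptual core of why the family $\mc{Q}_{p,k}$ is closed under these extensions; everything else is bookkeeping with filtered groups and the standard dictionary between filtered-group translation actions and nilspace morphisms.
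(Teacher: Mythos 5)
Your order computation ($\ell+r_\ell(p-1)\ge k+1$, hence $\tilde g^{\,p^{r_\ell}}=\id$ via Proposition \ref{prop:trans-of-p-strong}) is exactly the mechanism the paper uses, and the initial reduction to $\ab\cong\mb{Z}_p$ is harmless. But both of the steps you flag as the obstacles are genuinely gapped. The first concerns surjectivity of $K\to G$: the lifting of a degree-$\ell$ generator with $\ell<k$ from $\tran(Q)$ to $\tran_\ell(\nss)$ does \emph{not} follow from ``the general theory of abelian bundles''. The general theory (\cite[Proposition 3.3.39]{Cand:Notes1}) only says that a translation lifts through a degree-$k$ extension if and only if an associated degree-$(k-\ell)$ extension $\mc{T}^*$ of the $(k-1)$-step factor splits, and establishing that this auxiliary extension splits is precisely where the paper has to work: it runs an induction on the step $k$ and proves Lemma \ref{lem:proof-spli-aux} (splitting of lower-degree extensions of members of $\mc{Q}_{p,k-1}$) in order to feed that criterion. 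Your argument has no induction on $k$ and no substitute, so the very existence of the lifts $\tilde g$ is unproved.

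The second gap is the commutator step, and as set up it cannot be repaired by ``re-choosing the lifts'': if the kernel of $K\to G$ really were the central copy of $\mb{Z}_p$ you describe, then replacing $\tilde g$ by $\tilde g z$ with $z$ central leaves every commutator $[\tilde g,\tilde g']$ unchanged, so the resulting alternating form $G\times G\to \mb{Z}_p$ is independent of the choice of lifts and cannot be killed. This form can be nonzero for the natural lifts: for $k=2$ the Heisenberg group over $\mb{F}_p$ with its lower central series is (by Theorem \ref{thm:strong-p-hom-group}) a $p$-homogeneous degree-$2$ extension of $\mc{D}_1(\mb{Z}_p^2)$ by $\mb{Z}_p$, and the left-translation lifts of the two generators do not commute. (Also, $\ker\rho$ is in general larger than $\ab$, containing translations $x\mapsto x+f(P(x))$ for suitable morphisms $f$; any genuine repair would have to exploit these non-central elements.) The paper sidesteps all of this by not asking for a homomorphic section: its cross-section $\cs(\overline{x})=\prod_{\ell,j}\beta_{\ell,j}^{\overline{x}_{\ell,j}}(y)$ is a product of powers taken in a fixed order, which is well defined using only the order relations $\beta_{\ell,j}^{p^{r_\ell}}=\id$ and is a nilspace morphism without the $\beta_{\ell,j}$ commuting. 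You should adopt that construction, and supply the missing induction on $k$ for the translation-lifting step.
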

\noindent We shall prove this proposition by induction on the step $k$. In the induction, we shall apply the following lemma with $k'=k-1$.

\begin{lemma}\label{lem:proof-spli-aux}
Let $\ns'\in \mc{Q}_{p,k'}$, and let $\ns$ be a $k'$-step $p$-homogeneous nilspace that is a degree-$t$ extension of $\ns'$ by a finite elementary abelian $p$-group, for some $t\le k'$. Assume that Proposition \ref{prop:ext-split} holds for all steps at most $k'$. Then $\ns$ is a split extension of $\ns'$.
\end{lemma}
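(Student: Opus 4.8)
The plan is to argue by induction on the step $k'$, the case $k'=0$ being trivial; fix $k'\ge 1$ and assume the lemma for all smaller steps (each time the inductive hypothesis is invoked at a step $k''<k'$, the standing assumption that Proposition~\ref{prop:ext-split} holds for all steps at most $k'$ ensures that it holds for all steps at most $k''$, as required). Given $\ns$, $\ns'$ and $t\le k'$ as in the statement, the first case to dispatch is $t=k'$: then $\ns$ is exactly a $k'$-step $p$-homogeneous nilspace that is a degree-$k'$ extension of $\ns'\in\mc{Q}_{p,k'}$ by a finite elementary abelian $p$-group, so Proposition~\ref{prop:ext-split} at step $k'$ gives directly that $\ns$ is a split extension of $\ns'$.

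Now suppose $t<k'$. First I would descend to the $(k'-1)$-step factors. Factor maps are fibrations, so by Lemma~\ref{lem:fibs-preserve-phoms} the nilspace $\ns_{k'-1}$ is again $p$-homogeneous; by Remark~\ref{rem:quo-z-k-l} applied to each building-block factor of $\ns'$ we get $\ns'_{k'-1}\in\mc{Q}_{p,k'-1}$; and since the extension $\pi\colon\ns\to\ns'$ has degree $t\le k'-1$ it leaves the top structure group unchanged (write $B:=\ab_{k'}(\ns)\cong\ab_{k'}(\ns')$) and induces on the $(k'-1)$-step factors a degree-$t$ extension $\pi_{k'-1}\colon\ns_{k'-1}\to\ns'_{k'-1}$ by the same finite elementary abelian $p$-group $A$ (a standard compatibility of factors with abelian bundles; see \cite[\S 3.3]{Cand:Notes1}). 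Applying the inductive hypothesis at step $k'-1$ then yields a section morphism $\sigma_{k'-1}\colon\ns'_{k'-1}\to\ns_{k'-1}$ with $\pi_{k'-1}\co\sigma_{k'-1}=\id$.

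It remains to promote $\sigma_{k'-1}$ to a section $\sigma\colon\ns'\to\ns$ of $\pi$. For this I would consider $\ns^{\sharp}:=q^{-1}\big(\sigma_{k'-1}(\ns'_{k'-1})\big)\subseteq\ns$, where $q\colon\ns\to\ns_{k'-1}$ is the factor map: this is a sub-nilspace of $\ns$, it is invariant under $B$ (which acts trivially on $\ns_{k'-1}$), and via $q$ it is a degree-$k'$ extension of $\sigma_{k'-1}(\ns'_{k'-1})\cong\ns'_{k'-1}$ by $B$. The key point is that $\pi|_{\ns^{\sharp}}\colon\ns^{\sharp}\to\ns'$ is a bijection: each $\pi$-fibre is an orbit of the degree-$t$ action of $A$, this action descends to $\ns_{k'-1}$ as an action moving points only within $\pi_{k'-1}$-fibres, and inside each such fibre $\sigma_{k'-1}$ selects exactly one point, so each $A$-orbit in $\ns$ meets $\ns^{\sharp}$ in exactly one point. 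Since $\pi|_{\ns^{\sharp}}$ is also a $B$-equivariant morphism over $\ns'_{k'-1}$ between two degree-$k'$ extensions of $\ns'_{k'-1}$ by $B$, being a bijective such morphism it is an isomorphism of nilspaces, and $\sigma:=(\pi|_{\ns^{\sharp}})^{-1}$ is the desired section, which completes the induction.

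The step I anticipate to be the main obstacle is this last one: checking rigorously that $\ns^{\sharp}$ carries the asserted bundle structure and, above all, that the bijective morphism $\pi|_{\ns^{\sharp}}$ is an isomorphism of nilspaces — equivalently, that the degree-$t$ extension $\ns\to\ns'$ is obtained by pulling back the extension $\ns_{k'-1}\to\ns'_{k'-1}$ along $\ns'\to\ns'_{k'-1}$. This is where one must use carefully the theory of bounded-degree nilspace extensions, together with the hypotheses that $\ns'$ lies in $\mc{Q}_{p,k'}$ and that $\ns$ is $p$-homogeneous; by contrast, the reduction to lower step and the identification of the lower factors with members of $\mc{Q}_{p,k'-1}$ are essentially formal.
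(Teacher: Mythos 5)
Your argument is correct, but it follows a genuinely different route from the paper's. The paper does not induct on $k'$ at all: it descends in one jump to the $t$-step factors, observes via Proposition \ref{prop:proj-of-ext} that $P_t:\ns_t\to\ns'_t$ is a degree-$t$ extension of $\ns'_t=\prod_{\ell=1}^t\abph_{t,\ell}^{\,a_\ell}\in\mc{Q}_{p,t}$ \emph{and} that $\ns\cong\ns'\times_{\ns'_t}\ns_t$, splits the bottom extension by the assumed Proposition \ref{prop:ext-split} at step $t$ (where degree equals step), and then writes the section of $\ns\to\ns'$ explicitly as $x'\mapsto\varphi^{-1}(x',\gamma(\pi_t(x')))$. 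You instead descend one level at a time to the $(k'-1)$-step factors, invoke the inductive hypothesis there, and promote the section via the sub-nilspace $\ns^{\sharp}=q^{-1}(\sigma_{k'-1}(\ns'_{k'-1}))$; your closing claim that a bijective $B$-equivariant morphism over the common base between two degree-$k'$ extensions by $B$ is an isomorphism is valid (lift a cube of $\ns'$ through the extension $\ns^{\sharp}\to\ns'_{k'-1}$, compare via the extension property of $\ns'\to\ns'_{k'-1}$, and transfer the correcting cube in $\cu^n(\mc{D}_{k'}(B))$ back by equivariance). The two proofs rest on the same hidden pivot — that the degree-$t$ extension $\ns\to\ns'$ is pulled back from its image on a lower factor — which the paper isolates once and for all as Proposition \ref{prop:proj-of-ext} in Appendix A, whereas you re-derive a one-step version of it through $\ns^{\sharp}$ and correctly flag it as the main obstacle. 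What you gain is a more self-contained promotion mechanism; what you pay is that the facts you call ``standard compatibility'' (that $\pi_{k'-1}:\ns_{k'-1}\to\ns'_{k'-1}$ is again a \emph{free} degree-$t$ extension by the same group $A$, that $\phi_{k'}$ is an isomorphism when $t<k'$, and that $\ns^{\sharp}$ is a sub-nilspace carrying a degree-$k'$ extension structure over $\ns'_{k'-1}$) are each nontrivial and in the paper are exactly the content of Propositions \ref{prop:proj-of-ext}, \ref{prop:sub-prod-ext}, \ref{prop:ext-fib} and \ref{prop:fac-fib-prod}; a complete write-up of your route would still have to prove them.
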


\begin{proof} Note that by assumption the case $t=k'$ holds. For smaller values of $t$, let $P':\ns\to\ns'$ be the projection associated with the degree-$t$ extension, let us denote both factor maps $\ns\to\ns_t$ and $\ns'\to\ns_t'$ by $\pi_t$, and let $P_t$ be the induced projection $\ns_t\to\ns_t'$, so that the following diagram commutes:
\begin{center}
\begin{tikzpicture}
  \matrix (m) [matrix of math nodes,row sep=2em,column sep=4em,minimum width=2em]
  {
     \ns & \ns'  \\
     \ns_t & \ns'_t. \\};
  \path[-stealth]
    (m-1-1) edge node [above] {$P'$} (m-1-2)
    (m-1-1) edge node [right] {$\pi_t$} (m-2-1)
    (m-1-2) edge node [right] {$\pi_t$} (m-2-2)
    (m-2-1) edge node [above] {$P'_t$} (m-2-2);
\end{tikzpicture}
\end{center}
\noindent We have $\ns'=\prod_{\ell=1}^{k'} \abph_{k',\ell}^{\,a_\ell}$ for some $a_\ell\in \mb{Z}_{\geq 0}$, whence $\ns'_t=\prod_{\ell=1}^t \abph_{t,\ell}^{\,a_\ell}$ (see Remark \ref{rem:quo-z-k-l}). Proposition \ref{prop:proj-of-ext} implies that $\ns_t$ is a degree-$t$ extension of $\ns'_t$. By the assumed Proposition \ref{prop:ext-split} for step $t$,  this extension splits, so there exists a cross-section $\gamma:\ns_t'\to \ns_t$ which is also a morphism. Let $\varphi:\ns \to \ns'\times_{\ns'_t} \ns_t$ be the map $x\mapsto (P'(x),\pi_t(x))$. By Proposition \ref{prop:proj-of-ext}, this map is an isomorphism. We can therefore define the map $\Phi:\ns'\to \ns$, $x'\mapsto \varphi^{-1}(x',\gamma(\pi_t(x')))$. Now we just have to check that this is a split extension. First, let us check that it is well-defined, i.e., we need to check that $(x',\gamma(\pi_t(x')))\in \ns'\times_{\ns'_t} \ns_t$. But as $P'_t \co \gamma = \id$, we have that $P'_t(\gamma(\pi_t(x'))) = \pi_t(x')$. Next, we need to check that $P' \co \Phi = \id$, but this follows from the definition of $\varphi$ and the fact that it is an isomorphism. Finally, as $\varphi^{-1},\gamma,$ and $\pi_t$ are morphisms, so is $\Phi$.\end{proof}

\begin{proof}[Proof of Proposition \ref{prop:ext-split}]
We argue by induction on $k$. The case $k=1$ follows from Proposition \ref{prop:strgps} since $\nss$ must then be $\mc{D}_1(\mb{Z}_p^m)$ for some $m\geq 0$, and $Q$ is also of this form, so $\nss$ is clearly a split extension of $Q$. 

Thus we suppose that $k>1$ and that the result holds for all steps at most $k-1$. Let $P:\nss\to Q$ be the projection associated with the extension. We have by assumption $Q= \prod_{\ell=1}^k \abph_{k,\ell}^{\,a_\ell}$, so we can define, for each $\ell\in [k]$ and $j\in [a_\ell]$, the translation $\alpha_{\ell,j}\in \tran_\ell(\nss')$ as the function that adds 1 in the $j$-th coordinate of the factor $\abph_{k,\ell}^{\,a_\ell}$. We are going to show that we can lift any such  translation, i.e.\ that there exists $\beta_{\ell,j}\in\tran_\ell(\nss)$ such that $P \co \beta_{\ell,j} = \alpha_{\ell,j} \co P$. 

Suppose that we can lift these translations as claimed. Since $\nss$ is $p$-homogeneous, Proposition \ref{prop:trans-of-p-strong} implies that the order of $\beta_{\ell,j}\in\tran_\ell(\nss)$ divides $p^{\left\lfloor\frac{k-\ell}{p-1}\right\rfloor +1} = p^{\left\lceil\frac{k-\ell+1}{p-1}\right\rceil}$. Hence, the following cross-section is well-defined, and is then  also a morphism:
\[
\begin{array}{cccc}
\cs: & Q & \longrightarrow & \nss \\
 & \big(\overline{x}_{\ell,j}\in \mb{Z}_{p^{\lfloor\frac{k-\ell}{p-1}\rfloor+1}}\big)_{\ell\in [k],\, j\in [a_\ell]} & \longrightarrow & \prod_{\ell=1}^k\prod_{j=1}^{a_l} \beta_{\ell,j}^{\overline{x}_{\ell,j}}(y)
\end{array}
\]
\noindent where $y\in \nss$ is any element of $P^{-1}(\underline{0})$. This would conclude the proof.

So the only missing ingredient is the possibility to  lift translations. To establish this, we start with the following diagram:
\begin{center}
\begin{tikzpicture}
  \matrix (m) [matrix of math nodes,row sep=2em,column sep=4em,minimum width=2em]
  {
     \nss & Q  \\
     \nss_{k-1} & Q_{k-1}. \\};
  \path[-stealth]
    (m-1-1) edge node [above] {$P$} (m-1-2)
    (m-1-1) edge node [right] {$\pi_{k-1}$} (m-2-1)
    (m-1-2) edge node [right] {$\pi_{k-1}$} (m-2-2)
    (m-2-1) edge node [above] {$P_{k-1}$} (m-2-2);
\end{tikzpicture}
\end{center}
\noindent First note that $P_{k-1}$ is an isomorphism. Indeed, since  $\nss$ is a degree-$k$ extension of $Q$ with associated projection $P$, it is readily checked that the $(k-1)$-step factors of $\nss$ and $Q$ are isomorphic nilspaces, with $P_{k-1}$ being an isomorphism. Now let us fix some notation. Let $A$ denote the group $\mb{Z}_p^m$ we use to extend $Q$ to get $\nss$, i.e.\
\begin{center}
\begin{tikzpicture}
\node (nss) {$P:\nss$};
\node (nss')[right=of nss] {$Q$.};
\draw[->] (nss.east) -- (nss'.west);
\drawloop[->,stretch=1.1]{nss}{50pt}{120pt} node[pos=0.4,right]{$\mc{D}_k(A)$};
\end{tikzpicture}
\end{center}
\noindent Let $B$ denote the $k$-th structure group of $Q$ (which is also a power of $\mb{Z}_p$), i.e.
\begin{center}
\begin{tikzpicture}
\node (nss) {$\pi_{k-1}:Q$};
\node (nss')[right=of nss] {$Q_{k-1}.$};
\draw[->] (nss.east) -- (nss'.west);
\drawloop[->,stretch=1.1]{nss}{50pt}{120pt} node[pos=0.4,right]{$\mc{D}_k(B)$};
\end{tikzpicture}
\end{center}
The $k$-th structure group of $\nss$ must be isomorphic to $A\times B$ because it is an elementary abelian $p$-group (by Proposition \ref{prop:p-hom-str-gps-intro}) and the fibers of $\pi_{k-1}\co P$ have cardinality $|A||B|$.

Now fix any $\alpha\in \tran_i(Q)$ and let us prove that  $\alpha$ can be lifted. First let $\alpha_{k-1}$ be the induced translation on $\tran_i(Q_{k-1})$, satisfying $\pi_{k-1}\co \alpha=\alpha_{k-1}\co\pi_{k-1}$, where $\pi_{k-1}:Q\to Q_{k-1}$. As $P_{k-1}$ is an isomorphism, we have that $\alpha_{k-1}$ is also an element of $\tran_i(\nss_{k-1})$. We shall use the criterion for lifting translations established in \cite{CamSzeg} (see also \cite[Proposition 3.3.39]{Cand:Notes1}); this criterion states that we can lift a translation if the associated nilspace $\mc{T}^*$ (see \cite[Definition 3.3.34 and (3.18)]{Cand:Notes1}) is a split extension (it is not difficult to see that $\mc{T}^*$ is $p$-homogeneous if $\nss$ is). By \cite[Lemma 3.3.38]{Cand:Notes1} we know that $\mc{T}^*$ is a degree-$(k-i)$ extension of $\nss_{k-1}\cong \prod_{\ell=1}^{k-1} (\abph^{(p)}_{k-1,\ell})^{a_\ell}$, and by Lemma \ref{lem:proof-spli-aux} (applied with $k'=k-1$) we know that this extension splits. This enables us\footnote{This works for $i<k$, and for $i=k$ it is trivial that we can lift translations, so without loss of generality we can safely assume $i<k$.} to lift $\alpha_{k-1}$ to a translation $\beta\in \tran_i(\nss)$. Now the problem is that $\beta$ may not agree with $\alpha$. As $P$ is a degree-$k$ extension of a $k$-step nilspace, it is easy to check that the conditions of \cite[Lemma 1.5]{CGSS} are satisfied, so there is an element $\beta^*\in\tran_i(Q)$ such that $\beta^* \co P = P \co \beta$. 

Now the issue is that $\alpha$ and $\beta^*$ may not be equal. But we know that they are in the same $B$-fiber (because the shadow of both through $\pi_{k-1}:Q\to Q_{k-1}$ is $\alpha_{k-1}$). Now observe that, as $P:\nss\to Q$ is a fibration with $A\times B$ being the $k$-th structure group of $\nss$ and $B$ being the $k$-th structure group of $Q$, if we let $\phi:A\times B \to B$ be the homomorphism such that $P(x+z)=P(x)+\phi(z)$ for all $x\in \nss$ and $z\in A\times B$, we know that there is a cross section that is a morphism. That is, there is a map $\cs:B \to A\times B$ such that $\phi \co \cs = \id$. The simple reason for this is that $A\times B= \mb{Z}_p^{n+m}$ and $B=\mb{Z}_p^m$ (and given a homomorphism from one to the other it is trivial that we can construct a homomorphism which is a cross-section). To conclude the proof, we define 
\[
\begin{array}{cccc}
\gamma: & \nss & \longrightarrow & \nss \\
 & y & \longrightarrow & \beta(y)+\cs((\alpha-\beta^*)(P(y)),
\end{array}
\]
and this is an element of $\tran_i(\nss)$ that agrees with $\alpha$ through $P$.
\end{proof}
We can now prove the main structure theorem.
\begin{proof}[Proof of Theorem \ref{thm:general-p-hom-intro}]
We prove the existence of the fibration $\psi:\nss\to\ns$ by induction on the step $k$. The case $k=1$ follows immediately from Proposition \ref{prop:strgps}.

Thus we suppose that the theorem holds for all steps less than $k$ and we prove the theorem for step $k$. By induction there exists a nilspace $\nss'=\prod_{\ell=1}^{k-1} (\abph_{k-1,\ell}^{(p)})^{a_\ell}$ for some $a_\ell \ge 0$ for all $\ell\in [k-1]$ and a fibration $\psi':\nss'\to \ns_{k-1}$ satisfying the conclusions of Theorem \ref{thm:general-p-hom-intro} for $\ns_{k-1}$. It is easy to see that for all $\ell$ there is the projection map $\pi_{k-1}:\abph_{k,\ell}\to \abph_{k-1,\ell}$ (these maps are different for each $\ell$, but it will be clear from the context that $\pi_{k-1}$ will always represent the projection to the $k-1$ factor of a certain nilspace). Thus we can define $Q:=\prod_{\ell\in [k-1]} \abph_{k,\ell}^{\,a_\ell}$ and we have the following diagram:
\begin{center}
\begin{tikzpicture}
  \matrix (m) [matrix of math nodes,row sep=3em,column sep=4em,minimum width=2em]
  {
     \ns & \ns_{k-1} \\
     Q & \nss'. \\};
  \path[-stealth]
    (m-1-1) edge node [above] {$\pi$} (m-1-2)
    (m-2-2) edge node [right] {$\psi'$} (m-1-2)
    (m-2-1) edge node [above] {$q$} (m-2-2);
\end{tikzpicture}
\end{center}
\noindent where $q=\pi_{k-1}$ is just the projection to the $k-1$ factor of $Q$ (thus $\nss'=Q_{k-1}$).

We now define $\nss$ as the following nilspace-subdirect-product of $Q$ and $\ns$:
\[
\nss := \{(a,b) \in Q\times \ns : \psi'(q(a)) = \pi (b)\}.
\]
\noindent Note that $\nss$ is a degree-$k$ extension of $Q$ by the $k$-th structure group of $\ns$ (by Proposition \ref{prop:sub-prod-ext}). Hence we have the following diagram:
\begin{center}
\begin{tikzpicture}
  \matrix (m) [matrix of math nodes,row sep=3em,column sep=4em,minimum width=2em]
  {
     \nss & \ns & \ns_{k-1} \\
      & Q & \nss',\\};
  \path[-stealth]
    (m-1-1) edge node [above] {$p_2$} (m-1-2)
    (m-1-1) edge node [above] {$p_1$} (m-2-2)
    (m-1-2) edge node [above] {$\pi$} (m-1-3)
    (m-2-3) edge node [right] {$\psi'$} (m-1-3)
    (m-2-2) edge node [above] {$q$} (m-2-3);
\end{tikzpicture}
\end{center}
where $Q = \prod_{\ell=1}^{k-1} \abph_{k,\ell}^{\,a_\ell}$ and $\nss' =\prod_{\ell=1}^{k-1} \abph_{k-1,\ell}^{\,a_\ell}$, and $p_1$ is the projection of the following degree-$k$ extension:
\begin{center}
\begin{tikzpicture}
\node (nss) {$p_1:\nss$};
\node (nss')[right=of nss] {$Q$\,.};
\draw[->] (nss.east) -- (nss'.west);
\drawloop[->,stretch=1.1]{nss}{50pt}{120pt} node[pos=0.4,right]{$\mc{D}_k(\mb{Z}_p^n)$};
\end{tikzpicture}
\end{center}
It now suffices to prove that $\nss$ is $p$-homogeneous, as then we can apply Proposition \ref{prop:ext-split} to conclude that $\nss\in \mc{Q}_{p,k}$, and thus complete the proof setting $\psi=p_2$. But if $f=(f_1,f_2)\in \hom(\mc{D}_1(\mb{Z}^n),\nss)$, then since $Q$ and $\ns$ are $p$-homogeneous, we have $f_1|_{[0,p-1]^n}\in \hom(\mc{D}_1(\mb{Z}_p^n),Q)$ and $f_2|_{[0,p-1]^n}\in \hom(\mc{D}_1(\mb{Z}_p^n),\ns)$, whence $\nss$ is also $p$-homogeneous.

As mentioned above, the last sentence in Theorem \ref{thm:general-p-hom-intro} follows by Corollary \ref{cor:liftthrufib}.
\end{proof}

\subsection{A refined structure theorem for $k\leq p$}\hfill\smallskip\\
In this subsection we prove Theorem \ref{thm:intro-3}. 

We begin by noting that, in the high characteristic case ($k<p$), Proposition \ref{prop:ext-split} readily implies the following strengthening of Theorem \ref{thm:general-p-hom-intro}.
\begin{corollary}\label{cor:high-char1}
Let $\ns$ be a $k$-step $p$-homogeneous \textsc{cfr} nilspace, and let $p$ be a prime with $k<p$. Then there are non-negative integers $a_1,a_2,\ldots,a_k$ such that $\ns\cong\prod_{\ell=1}^{k} \mc{D}_\ell(\mb{Z}_p^{a_\ell})$. In particular, $\ns\in \mc{Q}_{p,k}$.
\end{corollary}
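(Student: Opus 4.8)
The plan is to induct on the step $k$, with Proposition \ref{prop:ext-split} doing the real work; the key preliminary observation is that for $k<p$ the building blocks of Definition \ref{def:bblocks-intro} degenerate. Indeed, for $k<p$ and any $\ell\in[k]$ one has $0\le k-\ell\le k-1\le p-2<p-1$, so $r(k,\ell,p)=\lfloor\tfrac{k-\ell}{p-1}\rfloor+1=1$, whence $\abph_{k,\ell}$ is the group $\mb{Z}_p$ filtered by $G_i=\mb{Z}_p$ for $i\le\ell$ and $G_i=\{0\}$ for $i>\ell$, i.e.\ $\abph_{k,\ell}=\mc{D}_\ell(\mb{Z}_p)$. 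Hence, in the range $k<p$, membership of $\ns$ in $\mc{Q}_{p,k}$ is precisely the statement $\ns\cong\prod_{\ell=1}^k\mc{D}_\ell(\mb{Z}_p^{a_\ell})$, so it suffices to prove $\ns\in\mc{Q}_{p,k}$.

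First I would dispatch the base case $k=1$: a $1$-step \textsc{cfr} $p$-homogeneous nilspace has a single structure group $\ab_1(\ns)$, which by Proposition \ref{prop:p-hom-str-gps-intro} is a finite elementary abelian $p$-group, so $\ns\cong\mc{D}_1(\mb{Z}_p^{a_1})\in\mc{Q}_{p,1}$. For $k>1$, consider the factor map $\pi_{k-1}:\ns\to\ns_{k-1}$. This is a fibration, so by Lemma \ref{lem:fibs-preserve-phoms} the nilspace $\ns_{k-1}$ is again $p$-homogeneous, and it is finite, since $\ns$ is finite (Proposition \ref{prop:p-hom-str-gps-intro}) and $\ns_{k-1}$ is a quotient of it; in particular $\ns_{k-1}$ is \textsc{cfr}. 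By the induction hypothesis, $\ns_{k-1}\cong\prod_{\ell=1}^{k-1}\mc{D}_\ell(\mb{Z}_p^{a_\ell})$; by the degeneration above (applied at step $k$, using $k<p$), this product equals $\prod_{\ell=1}^{k}\abph_{k,\ell}^{\,a_\ell}$ upon setting $a_k:=0$, so $\ns_{k-1}\in\mc{Q}_{p,k}$.

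Finally I would use the standard structural fact that a $k$-step nilspace $\ns$ is a degree-$k$ extension of its $(k-1)$-step factor $\ns_{k-1}$ by the structure group $\ab_k(\ns)$, which by Proposition \ref{prop:p-hom-str-gps-intro} is a finite elementary abelian $p$-group. Since $\ns$ is $p$-homogeneous and $\ns_{k-1}\in\mc{Q}_{p,k}$, Proposition \ref{prop:ext-split} applies and yields $\ns\in\mc{Q}_{p,k}$, i.e.\ $\ns\cong\prod_{\ell=1}^k\abph_{k,\ell}^{\,b_\ell}=\prod_{\ell=1}^k\mc{D}_\ell(\mb{Z}_p^{b_\ell})$ for some $b_\ell\ge 0$ (and comparing $\ell$-th structure groups on both sides forces $b_\ell=a_\ell$). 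I do not anticipate any real obstacle: the only points needing a little care are the elementary identification $\abph_{k,\ell}=\mc{D}_\ell(\mb{Z}_p)$ for $k<p$, and the well-known description of $\pi_{k-1}$ as a degree-$k$ extension of $\ns_{k-1}$; granting these, the corollary is an immediate application of Proposition \ref{prop:ext-split}, which is why it ``readily'' follows.
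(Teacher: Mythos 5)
Your proposal is correct and follows essentially the same route as the paper: induction on the step, the observation that $\abph_{k,\ell}=\mc{D}_\ell(\mb{Z}_p)$ when $k<p$ so that $\ns_{k-1}\in\mc{Q}_{p,k}$, and then a single application of Proposition \ref{prop:ext-split} to split the degree-$k$ extension by the elementary abelian $p$-group $\ab_k(\ns)$. The only differences are cosmetic (base case $k=1$ versus $k=0$, and citing Proposition \ref{prop:p-hom-str-gps-intro} rather than the equivalent Proposition \ref{prop:strgps}).
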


\begin{proof}
We argue by induction on $k$. The case $k=0$ is trivial. For $k>0$, by induction we have $\ns_{k-1} = \prod_{l=1}^{k-1} \mc{D}_\ell(\mb{Z}_p^{a_\ell})$ for some integers $a_\ell\ge 0$. By general nilspace theory we know that $\ns$ is a degree-$k$ extension of $\ns_{k-1}$ by some compact abelian group $\ab_k$, and by Proposition \ref{prop:strgps} we know that $\ab_k\cong \mb{Z}_p^{a_k}$ for some $a_k\ge 0$. Now note that for all $\ell\le k$, since $k <p$ we have $\mc{D}_\ell(\mb{Z}_p)= \abph_{k,\ell}$, because $\lfloor \frac{k-l}{p-1}\rfloor+1 = 1$. Therefore, as a product of such nilspaces  $\abph_{k,\ell}$, the nilspace $\ns_{k-1}$ is in $\mc{Q}_{p,k}$. Hence we can apply Proposition \ref{prop:ext-split}, thus deducing that $\ns$ is a split extension of $\ns_{k-1}$, so $\ns = \ns_{k-1}\times \mc{D}_k(\mb{Z}_p^{a_k}) = \prod_{\ell=1}^k \mc{D}_\ell(\mb{Z}_p^{a_\ell})$.
\end{proof}
\noindent We shall now extend this result to include the case $k=p$. To do so we shall use the following fact, which is a very specific feature of this case.
\begin{proposition}\label{prop:aux-high-char}
For any prime $p$, the nilspace $\abph_{p,1}$ is isomorphic to the product nilspace $\mc{D}_1(\mb{Z}_p)\times \mc{D}_{p}(\mb{Z}_p)$.
\end{proposition}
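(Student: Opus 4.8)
The plan is to exhibit an explicit bijection $\sigma$ between the two underlying groups which is a morphism of group nilspaces, and then to promote $\sigma$ to an isomorphism by a cube-counting argument rather than by checking directly that $\sigma^{-1}$ is a morphism.

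Unwinding Definition~\ref{def:bblocks-intro} with $k=p$ and $\ell=1$ (so that $r=2$), the nilspace $\abph_{p,1}$ is the group $\mb{Z}_{p^2}$ equipped with the filtration $G_0=G_1=\mb{Z}_{p^2}$, $G_2=\cdots=G_p=p\mb{Z}_{p^2}$, $G_{p+1}=\{0\}$. Since $(x+p)^p\equiv x^p\pmod{p^2}$, the assignment $x\mapsto x^p$ induces a well-defined map $\Pi\colon\mb{Z}_p\to\mb{Z}_{p^2}$, and by Fermat's little theorem $\Pi$ is a section of reduction modulo $p$. Letting $\iota\colon\mb{Z}_p\hookrightarrow\mb{Z}_{p^2}$ be the inclusion identifying $\mb{Z}_p$ with $p\mb{Z}_{p^2}$, I define $\sigma\colon\mc{D}_1(\mb{Z}_p)\times\mc{D}_p(\mb{Z}_p)\to\abph_{p,1}$ by $\sigma(a,b):=\Pi(a)+\iota(b)$. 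This is a bijection: given $g\in\mb{Z}_{p^2}$ one must take $a=g\bmod p$, after which $g-\Pi(a)\in p\mb{Z}_{p^2}$ pins down $b$ uniquely.

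Next I check that $\sigma$ is a morphism of group nilspaces, i.e.\ a polynomial map of the underlying filtered groups. As $\sigma=\Pi\co\mathrm{pr}_1+\iota\co\mathrm{pr}_2$ and sums of morphisms into an abelian group nilspace are again morphisms, it suffices to treat the two summands. The map $\iota$ is a homomorphism with image $G_p$, hence a morphism $\mc{D}_p(\mb{Z}_p)\to\abph_{p,1}$. For $\Pi$, writing $x^p=\sum_{j=0}^p j!\,S(p,j)\binom xj$ with $S(p,j)$ the Stirling numbers of the second kind, and using the criterion for polynomial maps out of $\mc{D}_1(\mb{Z})$ — the coefficient of $\binom xj$ must lie in $G_j$ (property~\eqref{it:i}) — together with the $p$-periodicity noted above, everything is immediate except the requirement that $p\mid j!\,S(p,j)$ for $2\le j\le p-1$. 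This is the arithmetic heart of the argument and I expect it to be the only genuinely non-formal point; it follows from $j!\,S(p,j)=\sum_{i=0}^j(-1)^i\binom ji(j-i)^p\equiv\sum_{i=0}^j(-1)^i\binom ji(j-i)\equiv 0\pmod p$, where the first congruence is Fermat's little theorem and the second is the vanishing of the $j$-th finite difference of a degree-one polynomial. Hence $\Pi\in\hom(\mc{D}_1(\mb{Z}_p),\abph_{p,1})$, so $\sigma$ is a bijective morphism. (One could instead verify that $\sigma^{-1}$ is a morphism by computing $g\mapsto\tfrac{g-g^p}{p}\bmod p$ on cubes, but this is messier and amounts to the same arithmetic.)

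Finally, to upgrade $\sigma$ to an isomorphism, I compare cube counts. Parametrizing cubes of an abelian group nilspace $(A,A_\bullet)$ by $\q(v)=\sum_{S\subseteq[n]}c_S\prod_{i\in S}v_i$ with $c_S\in A_{|S|}$ (these being pairwise distinct by M\"obius inversion), one gets $|\cu^n(\abph_{p,1})|=p^{\,2+2n+\sum_{j=2}^p\binom nj}$, and, using that the cube set of a product nilspace is the product of the cube sets, $|\cu^n(\mc{D}_1(\mb{Z}_p)\times\mc{D}_p(\mb{Z}_p))|=p^{\,1+n}\cdot p^{\sum_{j=0}^p\binom nj}$; these two quantities coincide for every $n$. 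Since $\sigma$ is injective on points, $\sigma^{\db n}$ restricts to an injection $\cu^n(\mc{D}_1(\mb{Z}_p)\times\mc{D}_p(\mb{Z}_p))\hookrightarrow\cu^n(\abph_{p,1})$ of finite sets of equal cardinality, hence a bijection, for every $n$. Therefore $\sigma^{-1}$ is cube-preserving, so $\sigma$ is a nilspace isomorphism, as required.
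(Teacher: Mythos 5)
Your proof is correct, but it takes a genuinely different route from the paper's. The paper's argument is abstract and two lines long given the machinery already in place: it regards the identity of $\mc{D}_1(\mb{Z}_p)$ as a morphism into $(\abph_{p,1})_{p-1}\cong\mc{D}_1(\mb{Z}_p)$, invokes the lifting property of Proposition \ref{prop:lifting-equiv} to lift it through $\pi_{p-1}$ to a morphism cross-section $\mc{D}_1(\mb{Z}_p)\to\abph_{p,1}$, and concludes that the degree-$p$ extension splits. Your explicit section $a\mapsto a^p$ is precisely a concrete realization of the lift whose existence the paper asserts abstractly, and your verification via $x^p=\sum_j j!\,S(p,j)\binom{x}{j}$ together with $p\mid j!\,S(p,j)$ for $2\le j\le p-1$ (and the automatic descent of a $p$-periodic element of $\hom(\mc{D}_1(\mb{Z}),\abph_{p,1})$ to $\hom(\mc{D}_1(\mb{Z}_p),\abph_{p,1})$) is sound; this is the same style of computation the paper carries out elsewhere with the maps $m_i^{(p)}$ in the proof of Theorem \ref{thm:strong-p-hom-group}. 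What your approach buys is self-containedness: you bypass Proposition \ref{prop:lifting-equiv} (hence Propositions \ref{prop:dimreduc} and \ref{prop:strgps}) entirely, at the cost of an arithmetic identity. Your final cube-counting step is valid — for an abelian filtered group $(A,A_\bullet)$ one indeed has $|\cu^n(A_\bullet)|=\prod_{j\ge 0}|A_j|^{\binom{n}{j}}$, the two counts match, and an injective morphism between finite nilspaces inducing bijections on all cube sets is an isomorphism — but it is a slight detour: once $\Pi$ is known to be a morphism cross-section of $\pi_{p-1}$, the standard fact that a degree-$k$ extension admitting a morphism cross-section is isomorphic to the product (which is what the paper's phrase ``split extension ... implies the result'' appeals to) finishes the proof without any counting.
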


\begin{proof}
It suffices to prove that there is a cross-section from $\mc{D}_1(\mb{Z}_p)$ to $\abph_{p,1}$ which is also a morphism, as this proves that $\abph_{p,1}$ is a split extension of $\mc{D}_1(\mb{Z}_p)$, which implies the result. Let $f\in\hom(\mc{D}_1(\mb{Z}_p),\mc{D}_1(\mb{Z}_p))$ be the identity map. Then, as $(\abph_{p,1})_{p-1}\cong \mc{D}_1(\mb{Z}_p)$, we can regard $f$ as an element of $\hom(\mc{D}_1(\mb{Z}_p),(\abph_{p,1})_{p-1})$. By Proposition \ref{prop:lifting-equiv}, there exists $g\in \hom(\mc{D}_1(\mb{Z}_p),\abph_{p,1})$ such that $\pi_{p-1}\co g =f$. Since $f$ is the identity map, the last equality implies that the morphism $g$ is also a cross-section $(\abph_{p,1})_{p-1}\to \abph_{p,1}$, as required.
\end{proof}
\noindent The following result is the announced extension of Corollary \ref{cor:high-char1}, and is the special case of Theorem \ref{thm:intro-3} for \textsc{cfr} nilspaces.

\begin{proposition}\label{prop:high-char}
 Let $p$ be a prime and $k\in \mb{N}$ with $k \leq p$. Let $\ns$ be a $k$-step $p$-homogeneous \textsc{cfr} nilspace. Then there are non-negative integers $a_1,a_2,\ldots,a_k$ such that $\ns$ is isomorphic to the product nilspace $\prod_{\ell=1}^k \mc{D}_\ell(\mb{Z}_p^{a_\ell})$. In particular, if $k<p$ then $\ns \in \mc{Q}_{k,p}$.
\end{proposition}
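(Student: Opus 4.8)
For $k<p$ the statement is exactly Corollary \ref{cor:high-char1}, so it remains to treat $k=p$, and there I may assume $\ns$ has step exactly $p$ (otherwise Corollary \ref{cor:high-char1} applies again), so that $\ab_p(\ns)\neq 0$. The plan is as follows. First, by Corollary \ref{cor:high-char1} applied to $\ns_{p-1}$, I would write $\ns_{p-1}\cong\prod_{\ell=1}^{p-1}\mc{D}_\ell(\mb{Z}_p^{a_\ell})$ for some integers $a_\ell\ge 0$; by Proposition \ref{prop:strgps} and general nilspace theory, $\ns$ is a degree-$p$ extension of $\ns_{p-1}$ by $\ab_p(\ns)\cong\mb{Z}_p^{a_p}$ with $a_p\ge 1$. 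The key obstruction is that, from Definition \ref{def:bblocks-intro}, one has $r(p,\ell,p)=1$, whence $\abph_{p,\ell}=\mc{D}_\ell(\mb{Z}_p)$, for $2\le\ell\le p$, but $r(p,1,p)=2$, so $\abph_{p,1}\not\cong\mc{D}_1(\mb{Z}_p)$; hence $\ns_{p-1}$ need not lie in $\mc{Q}_{p,p}$ and Proposition \ref{prop:ext-split} cannot be applied to $\ns\to\ns_{p-1}$ directly. I would circumvent this by enlarging the base.

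To this end I would set $Q:=\prod_{\ell=1}^{p-1}\abph_{p,\ell}^{\,a_\ell}\in\mc{Q}_{p,p}$ and use Proposition \ref{prop:aux-high-char} (which gives $\abph_{p,1}\cong\mc{D}_1(\mb{Z}_p)\times\mc{D}_p(\mb{Z}_p)$) together with $\abph_{p,\ell}=\mc{D}_\ell(\mb{Z}_p)$ for $2\le\ell\le p-1$ to obtain a nilspace isomorphism $Q\cong\ns_{p-1}\times\mc{D}_p(\mb{Z}_p^{a_1})$. In particular $Q_{p-1}\cong\ns_{p-1}$, and under a fixed such identification the factor map $q:Q\to Q_{p-1}=\ns_{p-1}$ is the projection to the first factor. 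Writing $\pi:\ns\to\ns_{p-1}$ for the factor map, I would form the nilspace-subdirect-product
\[ \nss\;:=\;\{(a,b)\in Q\times\ns\,:\,q(a)=\pi(b)\}, \]
which is the construction from the proof of Theorem \ref{thm:general-p-hom-intro} in the case $\psi'=\id$. By Proposition \ref{prop:sub-prod-ext} the space $\nss$ is a degree-$p$ extension of $Q$ by $\ab_p(\ns)\cong\mb{Z}_p^{a_p}$ and, since the construction is symmetric in $(Q,q)$ and $(\ns,\pi)$, also a degree-$p$ extension of $\ns$ by $\ab_p(Q)\cong\mb{Z}_p^{a_1}$; in particular $\nss$ is $p$-step, being a degree-$p$ extension of the $p$-step nilspace $\ns$. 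Moreover $\nss$ is $p$-homogeneous by the usual argument: for $f=(f_1,f_2)\in\hom(\mc{D}_1(\mb{Z}^n),\nss)$, both restrictions $f_1|_{[0,p-1]^n}$ and $f_2|_{[0,p-1]^n}$ are morphisms on $\mc{D}_1(\mb{Z}_p^n)$ (as $Q$ and $\ns$ are $p$-homogeneous), hence so is $f|_{[0,p-1]^n}$.

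Next, since $Q\in\mc{Q}_{p,p}$ and $\nss$ is a $p$-step $p$-homogeneous degree-$p$ extension of $Q$ by the finite elementary abelian $p$-group $\mb{Z}_p^{a_p}$, Proposition \ref{prop:ext-split} would give $\nss\cong Q\times\mc{D}_p(\mb{Z}_p^{a_p})\cong\ns_{p-1}\times\mc{D}_p(\mb{Z}_p^{a_1+a_p})$. On the other hand, $\nss$ being a degree-$p$ extension of $\ns$ by $\mb{Z}_p^{a_1}$ means $\ns\cong\nss/\Xi$ for a group $\Xi\cong\mb{Z}_p^{a_1}$ of degree-$p$ translations of $\nss$ acting freely. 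Since $\ns_{p-1}$ is $(p-1)$-step and hence carries no nontrivial degree-$p$ translations, the degree-$p$ translations of $\ns_{p-1}\times\mc{D}_p(\mb{Z}_p^{a_1+a_p})$ form precisely the group $\mb{Z}_p^{a_1+a_p}$ acting on the second coordinate; thus $\Xi$ corresponds to a subgroup $\Xi_0\cong\mb{Z}_p^{a_1}$ of $\mb{Z}_p^{a_1+a_p}$ and
\[ \ns\;\cong\;\ns_{p-1}\times\big(\mc{D}_p(\mb{Z}_p^{a_1+a_p})/\Xi_0\big)\;=\;\ns_{p-1}\times\mc{D}_p\big(\mb{Z}_p^{a_1+a_p}/\Xi_0\big)\;\cong\;\ns_{p-1}\times\mc{D}_p(\mb{Z}_p^{a_p}). \]
Combined with the description of $\ns_{p-1}$, this gives $\ns\cong\prod_{\ell=1}^p\mc{D}_\ell(\mb{Z}_p^{a_\ell})$. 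Finally, when $k<p$ we have $\mc{D}_\ell(\mb{Z}_p)=\abph_{k,\ell}$ for every $\ell\in[k]$, so $\ns\cong\prod_{\ell=1}^k\abph_{k,\ell}^{\,a_\ell}\in\mc{Q}_{p,k}$. I expect the main difficulty to lie in this last descent step — determining $\tran_p\big(\ns_{p-1}\times\mc{D}_p(\mb{Z}_p^{a_1+a_p})\big)$ and checking that its quotient by a rank-$a_1$ subgroup is again of the form $\ns_{p-1}\times\mc{D}_p(\mb{Z}_p^{a_p})$ — which rests on the standard facts that the degree-$p$ translation group of such a product is concentrated on the degree-$p$ factor and that $\mc{D}_p(A)/C\cong\mc{D}_p(A/C)$ for a subgroup $C\le A$.
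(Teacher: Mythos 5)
Your proof is correct, and its core architecture coincides with the paper's: the same enlarged base $Q=\abph_{p,1}^{\,a_1}\times\prod_{\ell=2}^{p-1}\mc{D}_\ell(\mb{Z}_p^{a_\ell})$, the same subdirect product $\nss=Q\times_{\ns_{p-1}}\ns$, and the same two inputs (Proposition \ref{prop:aux-high-char} to split $Q\to\ns_{p-1}$ and Proposition \ref{prop:ext-split} to split $\nss\to Q$). Where you diverge is the final descent from $\nss$ to $\ns$. The paper simply composes morphism cross-sections $\ns_{p-1}\xrightarrow{s}Q\xrightarrow{s'}\nss\xrightarrow{p_2}\ns$ to exhibit a splitting of $\pi:\ns\to\ns_{p-1}$, which immediately gives $\ns\cong\ns_{p-1}\times\mc{D}_p(\mb{Z}_p^{a_p})$. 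You instead invoke the symmetry of Proposition \ref{prop:sub-prod-ext} to view $p_2:\nss\to\ns$ as a degree-$p$ extension by $\ab_p(Q)\cong\mb{Z}_p^{a_1}$, identify $\ns$ with the quotient $\nss/\Xi$ in the sense of Proposition \ref{prop:H-nil}, and compute that quotient using the known form $\nss\cong\ns_{p-1}\times\mc{D}_p(\mb{Z}_p^{a_1+a_p})$. This is valid — the facts you lean on ($\tran_p$ of the product equals $\ab_p=\mb{Z}_p^{a_1+a_p}$ concentrated on the degree-$p$ factor, equivariance of a nilspace isomorphism with respect to the last structure group, and $\mc{D}_p(A)/C\cong\mc{D}_p(A/C)$ via cube-surjectivity of $A\to A/C$) are all standard, and the quotient-by-$\ker$ device is exactly what the paper itself uses later in the proof of Proposition \ref{prop:struc-k=p+1}. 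The trade-off is that the cross-section composition is shorter and avoids having to set up the quotient nilspace and its translation group at all; your route has the mild virtue of making the "loss" of the extra $\mc{D}_p(\mb{Z}_p^{a_1})$ factor completely explicit. One cosmetic point: you should dispose of the degenerate case $a_1=0$ (where $\ns_{p-1}$ already lies in $\mc{Q}_{p,p}$ and Proposition \ref{prop:ext-split} applies directly), though nothing in your argument actually breaks there.
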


\begin{proof}
By Corollary \ref{cor:high-char1} it suffices to prove this for $k=p$. The idea is to start again from the situation we had in the proof of Theorem \ref{thm:general-p-hom-intro} with the following diagram:
\begin{center}
\begin{tikzpicture}
  \matrix (m) [matrix of math nodes,row sep=3em,column sep=4em,minimum width=2em]
  {
     \nss & \ns & \ns_{k-1} \\
      & Q & \nss'.\\};
  \path[-stealth]
    (m-1-1) edge node [above] {$p_2$} (m-1-2)
    (m-1-1) edge node [above] {$p_1$} (m-2-2)
    (m-1-2) edge node [above] {$\pi$} (m-1-3)
    (m-2-3) edge node [right] {$\psi'$} (m-1-3)
    (m-2-2) edge node [above] {$q$} (m-2-3);
\end{tikzpicture}
\end{center}
Note that in this case, by induction we can take $\nss'=\ns_{k-1}=\ns_{p-1}$, $\psi'$ an isomorphism and $\ns_{k-1}=\ns_{p-1}\cong \prod_{\ell=1}^{p-1} \mc{D}_\ell(\mb{Z}_p^{a_\ell})$. Now, recall that by definition of $Q$ we had to \emph{lift} these factors, in the sense that each factor $\mc{D}_\ell(\mb{Z}_p) = \abph_{\ell,\ell}$ in $Q$ is lifted to a  factor $\abph_{p,\ell}$. All these \emph{lifts} are trivial except for $\mc{D}_1(\mb{Z}_p)$, which is lifted to $\abph_{p,1}$. Hence in this case we have $Q=\abph_{p,1}^{\,a_1} \times \prod_{\ell=2}^{p-1} \mc{D}_\ell(\mb{Z}_p^{a_\ell})$. Hence, by Proposition \ref{prop:aux-high-char} there exists a cross-section $s:\nss'\to Q$ which is also a morphism. The rest of the proof goes as before: the map $p_1:\nss\to Q$ is a degree-$p$ extension that splits, so there is again a cross-section $s':Q\to \nss$ which is a morphism. Then we have the cross-section $p_2\co s'\co s\co\psi'^{-1}:\ns_{k-1}\to \ns$, which is also a morphism. Hence $\ns$ is a split extension of $\ns_{k-1}$ and the result follows.
\end{proof}

\begin{corollary}\label{cor:spli-klep} Let $k\le p$, let $\ns$ be a $k$-step $p$-homogeneous \textsc{cfr} nilspace, and let $\nss$ be a $p$-homogeneous nilspace which is a degree $k$-extension of $\ns$ by an elementary abelian $p$-group. Then $\nss$ is a split extension of $\ns$.
\end{corollary}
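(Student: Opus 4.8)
The plan is to deduce the corollary from Proposition \ref{prop:ext-split} by realizing $\ns$ as a retract of a member of $\mc{Q}_{p,k}$ and transporting the resulting splitting back to $\ns$ along a fibre product.

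First I would reduce to the case where the group $A$ has finite rank. Since $\ns$ is a $p$-homogeneous \textsc{cfr} nilspace it is finite by Proposition \ref{prop:p-hom-str-gps-intro}, so the cocycle classifying the degree-$k$ extension $P\colon\nss\to\ns$ takes finitely many values; these generate a finite-rank subgroup $A'\le A$, and the $A'$-valued co-restriction of the cocycle defines a degree-$k$ extension $\nss'$ of $\ns$ by $A'$ that is a subnilspace of $\nss$. As a subnilspace of a $p$-homogeneous nilspace, $\nss'$ is again $p$-homogeneous, and the inclusion $\nss'\hookrightarrow\nss$ is a morphism over $\ns$, so any morphism cross-section of $\nss'\to\ns$ composes to a morphism cross-section of $\nss\to\ns$. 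Thus we may replace $\nss,A$ by $\nss',A'$ and assume $A$ is finite.

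Next I would produce $Q\in\mc{Q}_{p,k}$ together with morphisms $\iota\colon\ns\to Q$ and $\rho\colon Q\to\ns$ satisfying $\rho\co\iota=\id_\ns$. By Proposition \ref{prop:high-char}, $\ns\cong\prod_{\ell=1}^k\mc{D}_\ell(\mb{Z}_p^{a_\ell})$ for some integers $a_\ell\ge 0$. If $k<p$ then $\mc{D}_\ell(\mb{Z}_p)=\abph_{k,\ell}$ for every $\ell\le k$, so $\ns$ itself lies in $\mc{Q}_{p,k}$ and I would take $Q:=\ns$, $\iota=\rho=\id_\ns$. If $k=p$ then $\ns$ need not belong to $\mc{Q}_{p,p}$, since $\mc{D}_1(\mb{Z}_p)\neq\abph_{p,1}$; here I would set $Q:=\abph_{p,1}^{\,a_1}\times\prod_{\ell=2}^{p}\mc{D}_\ell(\mb{Z}_p^{a_\ell})$, which lies in $\mc{Q}_{p,p}$ (using $\abph_{p,\ell}=\mc{D}_\ell(\mb{Z}_p)$ for $2\le\ell\le p$), and use Proposition \ref{prop:aux-high-char} to get $Q\cong\ns\times\mc{D}_p(\mb{Z}_p^{a_1})$, letting $\iota$ be the inclusion as the first factor and $\rho$ the projection onto it. This case is the heart of the argument and the only place where the hypothesis $k\le p$ is genuinely used.

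Finally I would form the nilspace-subdirect-product $\tnss:=\{(q,x)\in Q\times\nss:\rho(q)=P(x)\}$, with projections $p_1\colon\tnss\to Q$ and $p_2\colon\tnss\to\nss$. Since $\rho$ is a fibration and $P$ is a degree-$k$ extension by $A$, Proposition \ref{prop:sub-prod-ext} shows that $p_1$ is a degree-$k$ extension of $Q$ by $A$; and being a subdirect product of the $p$-homogeneous nilspaces $Q$ and $\nss$, the nilspace $\tnss$ is $p$-homogeneous, by the same argument used at the end of the proof of Theorem \ref{thm:general-p-hom-intro}. Hence Proposition \ref{prop:ext-split} applies (with $A$ finite and $Q\in\mc{Q}_{p,k}$) and yields a morphism $\sigma\colon Q\to\tnss$ with $p_1\co\sigma=\id_Q$. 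Setting $\tau:=p_2\co\sigma\co\iota\colon\ns\to\nss$, which is a morphism, the identity $p_1\co\sigma=\id_Q$ forces $\sigma(\iota(y))=(\iota(y),\tau(y))$ for every $y\in\ns$, and then the defining relation of $\tnss$ gives $P(\tau(y))=\rho(\iota(y))=y$. Therefore $\tau$ is a morphism cross-section of $P$, so $\nss$ is a split extension of $\ns$. I expect the finite-rank reduction and the fibre-product bookkeeping to be routine; the genuine content is the $k=p$ case of the retract construction, which hinges on the special isomorphism $\abph_{p,1}\cong\mc{D}_1(\mb{Z}_p)\times\mc{D}_p(\mb{Z}_p)$ of Proposition \ref{prop:aux-high-char}.
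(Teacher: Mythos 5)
Your proposal is correct and follows essentially the same route as the paper: reduce via Proposition \ref{prop:high-char} to the product form, use Proposition \ref{prop:aux-high-char} to realize $\ns$ as a retract of a member of $\mc{Q}_{p,k}$ in the case $k=p$, pull the given extension back along the retraction to a subdirect product, split that with Proposition \ref{prop:ext-split}, and compose the cross-sections. The only addition is your preliminary finite-rank reduction on the extension group, a point the paper leaves implicit (its applications of the corollary only involve finite groups).
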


\begin{proof} The cases $k<p$ follow from combining Corollary \ref{cor:high-char1} with Proposition \ref{prop:ext-split}.

Now suppose that $k=p$. By Proposition \ref{prop:high-char} we have $\ns = \prod_{i=1}^p \mc{D}_i(\mb{Z}_p^{a_i})$ for some integers $a_i\ge 0$. Let $\ns^+:=\abph_{p,1}^{\,a_1}\times \prod_{i=2}^p \mc{D}_i(\mb{Z}_p^{a_i})$, and let $\phi:\ns^+\to \ns$ be the projection with deletes the component $\mc{D}_p(\mb{Z}_p^{a_1})$ from $\abph_{p,1}^{\,a_1}$, as made possible by Proposition \ref{prop:aux-high-char}. It is then clear that there exists a cross-section $s:\ns\to \ns^+$. Now let $T:=\ns^+\times_{\ns} \nss=\{(x,y)\in \ns^+\times \nss: \varphi(y)=\phi(x)\}$, where $\varphi$ here denotes the projection map $\nss\to\ns$ associated with the extension. It is easy to see that $T$ is a degree-$p$ extension of $\ns^+$ and, since $\ns^+\in \mc{Q}_{p,p}$, this extension splits. Thus, letting $p_1$ denote the associated projection $T\to \ns^+$, there exists a cross-section which is a morphism $s':\ns^+\to T$. To conclude, note that if $p_2:T\to \nss$ is the projection to the second coordinate, then $p_2\co s'\co s:\ns \to \nss$ is a cross-section which is also a morphism, and the result follows.
\end{proof}
\noindent If we just plugged Proposition \ref{prop:high-char} into the inverse limit theorem then we would obtain not quite Theorem \ref{thm:intro-3}, but rather a description of each factor in the inverse system. The following result will enable us to arrange the terms in the inverse system to express the inverse limit as the product nilspace claimed in Theorem \ref{thm:intro-3}.

\begin{proposition}\label{prop:very-final-s}
Let $p$ be a prime, let $k\leq p$, let $\ns,\nss$ be $k$-step, $p$-homogeneous nilspaces, and let $\varphi:\ns\to\nss$ be a fibration. Then $\ns \cong \nss \times Q$ for some $k$-step, $p$-homogeneous nilspace $Q$ and there exists a nilspace isomorphism $\phi:\nss\times Q \to \ns$ such that $\varphi \co \phi :\nss\times Q \to \nss$ is the projection $(y,q)\mapsto y$.
\end{proposition}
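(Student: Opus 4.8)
The plan is to induct on the step $k$. In the base case $k=1$, both $\ns$ and $\nss$ are $1$-step nilspaces, hence of the form $\mc{D}_1(G)$, $\mc{D}_1(H)$ for elementary abelian $p$-groups $G,H$ (their structure groups being elementary abelian $p$-groups by Proposition \ref{prop:strgps}), and a fibration $\varphi$, being in particular surjective, is a surjective affine homomorphism with surjective linear part $L\colon G\to H$. I would then split $G=\ker L\oplus W$ with $L|_W\colon W\xrightarrow{\sim}H$, set $Q:=\mc{D}_1(\ker L)$, and define $\phi\colon\nss\times Q\to\ns$ by $(y,z)\mapsto (L|_W)^{-1}(y-\varphi(0))+z$; this is an affine isomorphism with $\varphi\circ\phi=\mathrm{proj}_{\nss}$.

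For the inductive step, I would first pass to $(k-1)$-step factors: $\varphi$ induces a fibration $\varphi_{k-1}\colon\ns_{k-1}\to\nss_{k-1}$ of $(k-1)$-step $p$-homogeneous nilspaces, so by the inductive hypothesis there is a $(k-1)$-step $p$-homogeneous nilspace $Q_{k-1}$ and an isomorphism identifying $\ns_{k-1}$ with $\nss_{k-1}\times Q_{k-1}$ under which $\varphi_{k-1}$ becomes $\mathrm{proj}_{\nss_{k-1}}$. Fixing this identification, I would consider the morphism $\Psi:=(\varphi,\rho)\colon\ns\to\nss\times Q_{k-1}$, where $\rho:=\mathrm{proj}_{Q_{k-1}}\circ\pi_{k-1}$. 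Using $\pi_{k-1}\circ\varphi=\varphi_{k-1}\circ\pi_{k-1}$ one checks that $\Psi$ covers the identity of $\nss_{k-1}\times Q_{k-1}$, induces the identity on $(k-1)$-step factors, and on $k$-th structure groups is $z\mapsto(\varphi|_{\ab_k(\ns)}(z),0)$, which is onto with kernel the finite elementary abelian $p$-group $K:=\ker(\varphi|_{\ab_k(\ns)})$ — here $\varphi|_{\ab_k(\ns)}$ is surjective since fibrations are surjective on structure groups, and the structure groups are finite elementary abelian $p$-groups by Proposition \ref{prop:strgps}. Invoking the structural results on extensions already used in this section (Propositions \ref{prop:proj-of-ext}, \ref{prop:sub-prod-ext}), I would conclude that $\Psi$ is a fibration exhibiting $\ns$ as a degree-$k$ extension of the $k$-step $p$-homogeneous \textsc{cfr} nilspace $\nss\times Q_{k-1}$ by $K$.

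Since $k\leq p$, Corollary \ref{cor:spli-klep} then shows that this extension splits, giving an isomorphism $\theta\colon\ns\xrightarrow{\sim}(\nss\times Q_{k-1})\times\mc{D}_k(K)$ under which $\Psi$ becomes the projection to $\nss\times Q_{k-1}$. Reassociating the product, I would set $Q:=Q_{k-1}\times\mc{D}_k(K)$ — a $k$-step $p$-homogeneous nilspace, by the inductive hypothesis, Lemma \ref{lem:elemabpolycase}, and the fact that products of $p$-homogeneous nilspaces are $p$-homogeneous — and take $\phi\colon\nss\times Q\to\ns$ to be $\theta^{-1}$ precomposed with the reassociation isomorphism $\nss\times(Q_{k-1}\times\mc{D}_k(K))\to(\nss\times Q_{k-1})\times\mc{D}_k(K)$. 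Then $\varphi=\mathrm{proj}_{\nss}\circ\Psi=\mathrm{proj}_{\nss}\circ\theta$, and chasing the reassociation yields $\varphi\circ\phi=\mathrm{proj}_{\nss}$, as required. This settles the \textsc{cfr} (equivalently, finite) case; the general compact case I would deduce by a routine inverse-limit argument.

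The hard part will be the justification, in the inductive step, that $\Psi$ is genuinely a fibration realizing $\ns$ as a \emph{degree-$k$} extension of $\nss\times Q_{k-1}$ by $K$ — not merely a surjective morphism with $K$-torsor fibres. This is the expected "short exact sequence" statement for degree-$k$ extensions sharing a common $(k-1)$-factor ($\Psi$ is, up to isomorphism, the canonical map $\ns\to\nss\times_{\nss_{k-1}}\ns_{k-1}$), and establishing it cleanly is where the general nilspace machinery on extensions must be brought in. Once that is in place, Corollary \ref{cor:spli-klep} does the remaining work, and checking that $\varphi\circ\phi$ is the projection is mere bookkeeping with product nilspaces.
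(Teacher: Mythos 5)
Your proposal is correct and follows essentially the same route as the paper: induct on $k$, use the inductive hypothesis to identify $\ns_{k-1}\cong\nss_{k-1}\times Q_{k-1}$, form the map $\Psi=(\varphi,\,p_2\co\phi\co\pi_{k-1})$, show it realizes $\ns$ as a degree-$k$ extension of $\nss\times Q_{k-1}$ by $\ker(\phi_k)$, and split via Corollary \ref{cor:spli-klep}. The one step you defer — verifying that $\Psi$ is genuinely a degree-$k$ extension (surjectivity, free transitive $\ker(\phi_k)$-action on fibres, cube-surjectivity, and the cube-difference condition) — is exactly the verification the paper carries out in detail, and your identification of it as the crux is accurate.
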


\begin{proof} 
We argue by induction on $k$. The case $k=0$ is trivial. For $k>0$, suppose that $\varphi:\ns\to \nss$ is a fibration and that, by induction, the fibration $\varphi_{k-1}:\ns_{k-1}\to \nss_{k-1}$ satisfies the following property: There exists a nilspace isomorphism $\phi:\ns_{k-1}\to \nss_{k-1}\times Q_{k-1}$ such that if $p_1:\nss_{k-1}\times Q_{k-1}\to \nss_{k-1}$ is the projection to the first coordinate, then $\varphi_{k-1} = p_1 \co \phi$. The situation is illustrated in the following diagram:
\begin{center}
\begin{tikzpicture}
  \matrix (m) [matrix of math nodes,row sep=3em,column sep=4em,minimum width=2em]
  {
      \ns & \nss \\
      \ns_{k-1} & \nss_{k-1}\\
      \nss_{k-1}\times Q_{k-1}& \\
      };
  \path[-stealth]
    (m-1-1) edge node [above] {$\varphi$} (m-1-2)
    (m-1-1) edge node [right] {$\pi$} (m-2-1)
    (m-1-2) edge node [right] {$\pi$} (m-2-2)
    (m-2-1) edge node [above] {$\varphi_{k-1}$} (m-2-2)
    (m-2-1) edge node [right] {$ \phi$} (m-3-1)
    (m-3-1) edge node [above] {$p_1$} (m-2-2)
    ;
\end{tikzpicture}
\end{center}
Now let $p_2:\nss_{k-1}\times Q_{k-1}\to Q_{k-1}$ be the projection to the second coordinate, and let
\[
\begin{array}{cccc}
\Psi: & \ns & \longrightarrow & \nss\times Q_{k-1} \\
 & x & \longrightarrow & \big(\varphi(x),p_2\co \phi\co \pi(x)\big)
\end{array}.
\]
We claim that this defines a degree-$k$ extension (in the sense of \cite[Definition 3.3.13]{Cand:Notes1}) of $\nss\times Q_{k-1}$ by the abelian group $\ker(\phi_k)$, where the homomorphism $\phi_k:\ab_k(\ns)\to\ab_k(\nss)$ is the $k$-th \emph{structure morphism} of $\varphi$ (so $\varphi(x+z)=\varphi(x)+\phi_k(z)$ for all $x\in \ns$ and $z\in \ab_k(\ns)$; see \cite[Definition 3.3.1]{Cand:Notes1}). In particular $\ker(\phi_k)$ is an elementary abelian $p$-group.

To prove this claim, first let us show that $\ns$ is an abelian bundle over $\nss\times Q_{k-1}$ with projection $\Psi$. To see that $\Psi$ is surjective, fix any $(y,q)\in \nss\times Q_{k-1}$ and consider the element $\phi^{-1}(\pi(y),q)\in \ns_{k-1}$. This equals $\pi(x)$ for some $x\in \ns$, by surjectivity of $\pi$. Now, as $\pi(\varphi(x))=\pi(\varphi(y))$, there is $z\in \ab_k(\nss)$ such that $\varphi(x)+z=\varphi(y)$. Since $\varphi$ is a fibration, we know that $\phi_k$ is surjective, so there exists $z'\in \ab_k(\ns)$ such that $\phi_k(z')=z$ and hence the element $x+z'$ satisfies $\Psi(x+z')=(y,q)$, which proves the surjectivity. Now let $x,x'\in \ns$ be such that $\Psi(x)=\Psi(x')$. This implies that $\varphi(x)=\varphi(x')$ which in turn means that $\varphi_{k-1}(\pi(x)) = \varphi_{k-1}(\pi(x'))$. Since we also have  $p_2\co\phi\co \pi(x)=p_2\co \phi \co \pi(x')$ we conclude that $\pi(x)=\pi(x')$. Thus, there exists $z\in \ab_k(\ns)$ such that $x+z=x'$. Applying $\varphi$ to both sides of this expression we obtain $\varphi(x+z)=\varphi(x)+\phi_k(z) = \varphi(x')=\varphi(x)$. This implies that $z\in \ker(\phi_k)$. The fact that $\ker(\phi_k)$ acts freely on the fibers of $\ns$ follows from the fact that $\ab_k(\ns)$ acts freely on $\ns$. This proves our claim.

Now let us see that $\Psi$ defines indeed a degree-$k$ extension as claimed. The first condition to verify is that $\Psi$ is cube-surjective. Let $\q_1\times \q_2 \in \cu^n(\nss\times Q_{k-1})$. In particular this means that $(\pi\co \q_1)\times \q_2 \in \cu^n(\nss_{k-1}\times Q_{k-1})$ and thus, $\phi^{-1} \co ((\pi\co \q_1)\times \q_2) \in \cu^n(\ns_{k-1})$. Let $\q\in \cu^n(\ns)$ be a lift of this cube. In particular, this means that $\pi \co \varphi \co \q = \pi \co \q_1$. Thus, there exists $d\in \cu^n(\mc{D}_k(\ab_k(\nss)))$ such that $\varphi \co \q +d = \q_1$. As $\phi_k$ is surjective it is easy to see that there exists $d'\in \cu^n(\mc{D}_k(\ab_k(\ns)))$ such that $\phi_k \co d' = d$. Thus we have that $\q+d'\in \cu^n(\ns)$ is a lift of $\q_1\times \q_2$ via $\Psi$. The second condition is that for any $\q_1\in \cu^n(\ns)$, if $\q_2\in \cu^n(\ns)$ is any cube such that $\Psi \co \q_1 = \Psi\co \q_2$ then there exists $d\in \cu^n(\mc{D}_k(\ker(\phi_k)))$ such that $\q_1+d = \q_2$. By similar arguments as before it follows that there exists $d\in \cu^n(\mc{D}_k(\ab_k(\ns)))$ such that $\q_1+d = \q_2$. This implies that $\varphi\co \q_1+\phi_k \co d = \varphi \co \q_2$. But by hypothesis we know that $\varphi\co \q_1 = \varphi \co \q_2$, which implies that $d\in \cu^n(\mc{D}_k(\ker(\phi_k)))$. This proves that $\Psi$ is an extension as claimed.

To finish the proof, note that by Corollary \ref{cor:spli-klep} the extension defined by $\Psi$ splits. Thus we have for any $k\le p$ that $\ns \cong \nss \times Q_{k-1}\times \mc{D}_k(\ker(\phi_k))$. Letting $Q:=Q_{k-1}\times \mc{D}_k(\ker(\phi_k))$, this means that there is a nilspace isomorphism $\phi':\ns \to \nss \times Q$ such that $\varphi = p_1\co \phi'$, as required.
\end{proof}
\begin{proof}[Proof of Theorem \ref{thm:intro-3}]
By the inverse limit theorem \cite[Theorem 2.7.3]{Cand:Notes2}, the given $k$-step $p$-homogeneous compact nilspace $\ns$ is the inverse limit of \textsc{cfr} $k$-step nilspaces $\ns_j$, which are $p$-homogeneous by Lemma \ref{lem:fibs-preserve-phoms}. By Propositon \ref{prop:high-char}, each nilspace $\ns_j$ is of the form $\prod_{\ell=1}^k \mc{D}_\ell(\mb{Z}_p^{a_{j,\ell}})$ for some coefficients $a_{j,\ell}$. It now only remains to arrange these factors as $j$ ranges in $\mb{N}$ to obtain the claimed product nilspace in Theorem \ref{thm:intro-3}. To carry out this arrangement we use Proposition \ref{prop:very-final-s}: It allows us to see the maps $\psi_{i,j}:\ns_j\to\ns_i$ as projections. Hence the inverse limit has the desired form. \end{proof}
\noindent This proof of Theorem \ref{thm:intro-3} used several fortunate facts occurring for $k\leq p$, including Proposition \ref{prop:aux-high-char}. The question of whether there are similar refinements of Theorem \ref{thm:general-p-hom-intro} for higher $k>p$ seems non-trivial (see Remark \ref{rem:gendiff}). For the case $k=p+1$ we can nevertheless prove the following result, which does refine Theorem \ref{thm:intro-3} and which will be used in the next section to give new applications in ergodic theory.
\begin{proposition}\label{prop:struc-k=p+1}
Let $\ns$ be a \textsc{cfr} $k$-step $p$-homogeneous nilspace with $k=p+1$. Then there is an integer $m\ge 0$ such that $\ns\times \mc{D}_{p}(\mb{Z}_p^m)$ is isomorphic to an abelian group nilspace. In particular, there exists an injective morphism from $\ns$ to an abelian group nilspace.
\end{proposition}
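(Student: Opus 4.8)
The plan is to realize $\ns$ as a degree-$(p+1)$ extension of its $p$-step factor and then to rectify the top-degree cocycle of that extension after a $\mc{D}_p$-thickening. Since $\ns$ is \textsc{cfr} and $p$-homogeneous, its top structure group $A:=\ab_{p+1}(\ns)$ is a finite elementary abelian $p$-group by Proposition \ref{prop:p-hom-str-gps-intro}, and $\ns$ is a $p$-homogeneous degree-$(p+1)$ extension of $\ns_p:=\pi_p(\ns)$ by $A$. The factor $\ns_p$ is a $p$-step $p$-homogeneous \textsc{cfr} nilspace, so by Proposition \ref{prop:high-char} it is isomorphic to the abelian group nilspace $\prod_{\ell=1}^{p}\mc{D}_\ell(\mb{Z}_p^{a_\ell})$; fix such an identification. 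Hence it suffices to prove: \emph{for every $p$-homogeneous degree-$(p+1)$ extension $\ns$ of an abelian group nilspace $Y=\prod_{\ell=1}^{p}\mc{D}_\ell(\mb{Z}_p^{a_\ell})$ by a finite elementary abelian $p$-group, the nilspace $\ns\times\mc{D}_p(\mb{Z}_p^m)$ is an abelian group nilspace for some $m\ge 0$.}

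To set this up I would invoke Theorem \ref{thm:general-p-hom-intro}, obtaining $\nss\in\mc{Q}_{p,p+1}$ (an abelian group nilspace) and a fibration $\psi:\nss\to\ns$ whose top structure morphism $\psi_{p+1}:\ab_{p+1}(\nss)\to A$ is surjective, and whose restriction to $p$-step factors is a fibration $\psi_p:\nss_p\to Y$. By Proposition \ref{prop:very-final-s}, applicable since all four nilspaces are $p$-step $p$-homogeneous \textsc{cfr} and $p\le p$, we get $\nss_p\cong Y\times Q_0$ with $\psi_p$ becoming the first projection, for some $p$-step $p$-homogeneous \textsc{cfr} nilspace $Q_0$. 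Then, copying the construction in the proof of Proposition \ref{prop:very-final-s}, the map $\Psi:\nss\to\ns\times Q_0$, $x\mapsto\big(\psi(x),\,p_{Q_0}(\pi_p(x))\big)$ exhibits the abelian group nilspace $\nss$ as a $p$-homogeneous degree-$(p+1)$ extension of $\ns\times Q_0$ by the elementary abelian $p$-group $K:=\ker(\psi_{p+1})$. This is exactly the situation of Proposition \ref{prop:very-final-s}, except that its final step appeals to Corollary \ref{cor:spli-klep}, whose proof is valid only for $k\le p$; so what is missing is a splitting statement for $k=p+1$.

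The crux is therefore to replace Corollary \ref{cor:spli-klep} by a splitting-after-$\mc{D}_p$-thickening statement at $k=p+1$: a $p$-homogeneous degree-$(p+1)$ extension of an abelian group nilspace by an elementary abelian $p$-group splits once one adjoins a suitable factor $\mc{D}_p(\mb{Z}_p^m)$. I would prove this by analyzing the classifying $2$-cocycle $\sigma$ of the extension directly: since the base is a group nilspace, $\sigma$ may be taken to be polynomial, and — and this is the feature special to $k=p+1$ — after subtracting a coboundary its obstruction to being realized by an \emph{abelian} filtered group is concentrated at filtration level $p$, which is precisely the mechanism behind the identity $\abph_{p+1,1}=\abph_{p,1}\cong\mc{D}_1(\mb{Z}_p)\times\mc{D}_p(\mb{Z}_p)$ of Proposition \ref{prop:aux-high-char}; one then uses the fresh degree-$p$ coordinates provided by $\mc{D}_p(\mb{Z}_p^m)$ to absorb this level-$p$ obstruction, using $p$-homogeneity throughout to constrain the shape of $\sigma$. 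I expect this cocycle rectification to be the main obstacle; once it is in place, applying it to the extension $\Psi$ (and repeating the bookkeeping to absorb the auxiliary factor $Q_0$, at the cost of enlarging $m$) yields that $\ns\times\mc{D}_p(\mb{Z}_p^m)$ is an abelian group nilspace $\mc{D}_\bullet(G)$. The stated ``in particular'' is then immediate, since $x\mapsto(x,0)$, followed by the isomorphism $\ns\times\mc{D}_p(\mb{Z}_p^m)\to\mc{D}_\bullet(G)$, is an injective morphism: the constant cube with value $0$ lies in $\cu^n(\mc{D}_p(\mb{Z}_p^m))$ for every $n$, so $x\mapsto(x,0)$ is cube-preserving.
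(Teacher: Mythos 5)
Your opening moves are sound and broadly parallel the paper's: covering $\ns$ by an abelian group nilspace from $\mc{Q}_{p,p+1}$, identifying $\ns_p$ via Proposition \ref{prop:high-char}, and exhibiting the cover as a degree-$(p+1)$ extension of $\ns$ (times an auxiliary factor). But the crux of the proposition is exactly the step you defer — you write that you ``expect this cocycle rectification to be the main obstacle'' and give only a heuristic for it — so the proof is incomplete precisely where all the difficulty lies. Worse, the splitting statement you propose does not match your own application: you would need to split the degree-$(p+1)$ extension $\Psi:\nss\to\ns\times Q_0$, whose \emph{base} is $\ns\times Q_0$; this base is not known to be an abelian group nilspace (that is essentially what is being proved), so a splitting theorem for extensions \emph{of} abelian group nilspaces cannot be invoked. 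And even if $\Psi$ did split after thickening, you would conclude that $\ns\times Q_0\times \mc{D}_{p+1}(K)$ is abelian, which is not the stated conclusion: the $\mc{D}_p(\mb{Z}_p^m)$ factor cannot arise from splitting a degree-$(p+1)$ extension.

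The paper's mechanism, which your sketch does not reach, is to first kill the top-degree kernel: one quotients the covering nilspace $T$ by $\ker(\phi_{p+1})$, where $\phi_{p+1}$ is the top structure morphism of the fibration $T\to\ns$. The essential (and technically heavy) input here is Proposition \ref{prop:factorization-last-str-group}, which guarantees that this quotient of a $\mc{Q}_{p,p+1}$-nilspace is \emph{still} an abelian group nilspace — this is where your ``level-$p$ concentration of the obstruction'' intuition is actually cashed out, and it requires the change-of-basis argument of Appendix \ref{app:aux-p-hom}, not a generic cocycle computation. After this quotient, the only structure morphism of the induced fibration onto $\ns$ that can fail to be an isomorphism is $\phi_p$, so by Lemma \ref{lem:Claim1} the quotient is a degree-\emph{$p$} extension of $\ns$ by an elementary abelian $p$-group. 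That degree-$p$ extension splits because its projection to $p$-step factors splits (Corollary \ref{cor:spli-klep} at step $p\le p$, pulled back via Proposition \ref{prop:proj-of-ext}), and the splitting of a degree-$p$ extension is exactly what produces the factor $\mc{D}_p(\mb{Z}_p^m)$ in the statement. Without the quotient step and Proposition \ref{prop:factorization-last-str-group}, I do not see how your argument could be completed.
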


The proof will use the following fact.

\begin{lemma}\label{lem:Claim1}
Let $\varphi:\ns\to \nss$ be a fibration between $k$-step $p$-homogeneous \textsc{cfr} nilspaces. Suppose that all the structure morphisms are isomorphisms except maybe $\phi_{k-1}$. Then $\ns$ is a degree-$(k-1)$ extension of $\nss$ by the group $\ker(\phi_{k-1})$.
\end{lemma}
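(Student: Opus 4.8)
The plan is to realize $\ns$ as a fibred product and then apply twice the description of subdirect products as extensions (Proposition~\ref{prop:sub-prod-ext}). Write $Z:=\ker(\phi_{k-1})$, and recall that $\varphi$ restricts to fibrations $\varphi_i\colon\ns_i\to\nss_i$ on the $i$-step factors, with structure morphisms $\phi_1,\dots,\phi_i$. First I would observe that, since $\phi_1,\dots,\phi_{k-2}$ are isomorphisms, a short induction on the step shows $\varphi_{k-2}$ is injective (a fibration is injective once all its structure morphisms are, using that the $i$-th structure group acts freely on the fibres of $\ns_i\to\ns_{i-1}$), and an injective fibration is a nilspace isomorphism by cube-surjectivity; thus $\varphi_{k-2}\colon\ns_{k-2}\to\nss_{k-2}$ is an isomorphism. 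Moreover, since $\phi_k$ is an isomorphism, the map $x\mapsto(\pi_{k-1}(x),\varphi(x))$ is a morphism identifying $\ns$ with the fibred product $\ns_{k-1}\times_{\nss_{k-1}}\nss=\{(u,y):\varphi_{k-1}(u)=\pi_{k-1}(y)\}$, carrying $\varphi$ to the projection onto the $\nss$-coordinate: indeed both $\ns$ and $\ns_{k-1}\times_{\nss_{k-1}}\nss$ are degree-$k$ extensions of $\ns_{k-1}$ (the latter being the base change along $\varphi_{k-1}$ of the degree-$k$ extension $\pi_{k-1}\colon\nss\to\nss_{k-1}$, by Proposition~\ref{prop:sub-prod-ext}), this map is a bijection because $\phi_k$ is an isomorphism, and it is cube-surjective — lift cubes of the target through the fibration $\pi_{k-1}\colon\ns\to\ns_{k-1}$ and correct the fibre part using that $\phi_k$ is an isomorphism — hence a nilspace isomorphism (this is the analogue of the identification used in the proof of Lemma~\ref{lem:proof-spli-aux}, i.e.\ of Proposition~\ref{prop:proj-of-ext}).

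The heart of the argument is to check that $\varphi_{k-1}\colon\ns_{k-1}\to\nss_{k-1}$ is itself a degree-$(k-1)$ extension by $Z$. Since $\varphi_{k-1}$ is equivariant over $\phi_{k-1}$ for the free action of $\ab_{k-1}(\ns)$ on $\ns_{k-1}$, and since $\varphi_{k-2}$ is injective, two points of $\ns_{k-1}$ have the same $\varphi_{k-1}$-image if and only if they differ by an element of $Z$; hence $\ns_{k-1}$ is a $Z$-bundle over $\nss_{k-1}$ with quotient map $\varphi_{k-1}$. For the cube condition I would use that $\ns_{k-1}$ is a degree-$(k-1)$ extension of $\ns_{k-2}$ by $\ab_{k-1}(\ns)$: if $\q_1,\q_2\in\cu^n(\ns_{k-1})$ satisfy $\varphi_{k-1}\co\q_1=\varphi_{k-1}\co\q_2$, then applying the commuting square with $\varphi_{k-2}^{-1}$ gives $\pi_{k-2}\co\q_1=\pi_{k-2}\co\q_2$, so the difference $\q_2-\q_1$ lies in $\cu^n(\mc{D}_{k-1}(\ab_{k-1}(\ns)))$; composing pointwise with $\phi_{k-1}$ gives, by equivariance, the zero map, so $\q_2-\q_1$ is $Z$-valued, and hence $\q_2-\q_1\in\cu^n(\mc{D}_{k-1}(Z))$ because a $Z$-valued map belongs to $\cu^n(\mc{D}_{k-1}(Z))$ exactly when it belongs to $\cu^n(\mc{D}_{k-1}(\ab_{k-1}(\ns)))$, the defining vanishing condition being intrinsic. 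The remaining requirements — $\cu^n(\ns_{k-1})+\cu^n(\mc{D}_{k-1}(Z))\subset\cu^n(\ns_{k-1})$ and cube-surjectivity of $\varphi_{k-1}$ — are immediate. I expect this step to be the main obstacle: it is the only place that uses the hypothesis on $\phi_1,\dots,\phi_{k-2}$, and the delicate point is to confirm that the degree is exactly $k-1$ and not larger.

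To conclude, the fibred product $\{(u,y):\varphi_{k-1}(u)=\pi_{k-1}(y)\}$ is now the subdirect product of the morphism $\pi_{k-1}\colon\nss\to\nss_{k-1}$ with the degree-$(k-1)$ extension $\varphi_{k-1}\colon\ns_{k-1}\to\nss_{k-1}$ by $Z$, so Proposition~\ref{prop:sub-prod-ext} shows it is a degree-$(k-1)$ extension of $\nss$ by $Z$, with structure map the projection onto the $\nss$-coordinate. Transporting this back along the isomorphism of the first paragraph shows that $\varphi\colon\ns\to\nss$ is a degree-$(k-1)$ extension by $Z=\ker(\phi_{k-1})$, as claimed. (For $k\le 2$ the statement is either vacuous or just asserts that $\varphi$ is an isomorphism, which is already covered by the injectivity argument above.)
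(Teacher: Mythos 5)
Your proof follows essentially the same route as the paper's: identify $\ns$ with the fibre product $\ns_{k-1}\times_{\nss_{k-1}}\nss$ via $x\mapsto(\pi_{k-1}(x),\varphi(x))$ (using that $\phi_k$ is an isomorphism), verify that $\varphi_{k-1}$ is a degree-$(k-1)$ extension by $\ker(\phi_{k-1})$ — the step the paper leaves as an exercise, which you carry out correctly via the injectivity of $\varphi_{k-2}$ — and conclude by base change. The one imprecision is the final citation: Proposition \ref{prop:sub-prod-ext} as stated yields the projection onto the \emph{other} factor ($\ns_{k-1}$) as a degree-$k$ extension by $\ab_k(\nss)$, not the projection onto $\nss$ as a degree-$(k-1)$ extension by $Z$; what you actually need is that base change of the degree-$(k-1)$ extension $\varphi_{k-1}$ along the fibration $\pi_{k-1}:\nss\to\nss_{k-1}$ is again a degree-$(k-1)$ extension by $Z$, which is the short direct check the paper performs in its last paragraph (two cubes of the fibre product with the same image under the projection to $\nss$ have first components with equal $\varphi_{k-1}$-images, hence differ by a cube of $\mc{D}_{k-1}(Z)$).
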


\begin{proof}
Consider the following diagram:
\begin{center}
\begin{tikzpicture}
  \matrix (m) [matrix of math nodes,row sep=3em,column sep=4em,minimum width=2em]
  {
     \ns & \nss  \\
     \ns_{k-1} & \nss_{k-1}. \\};
  \path[-stealth]
    (m-1-1) edge node [above] {$\varphi$} (m-1-2)
    (m-1-1) edge node [right] {$\pi_{k-1}$} (m-2-1)
    (m-1-2) edge node [right] {$\pi_{k-1}$} (m-2-2)
    (m-2-1) edge node [above] {$\varphi_{k-1}$} (m-2-2);
\end{tikzpicture}
\end{center}
Now let us define the fiber product $\ns_{k-1}\times_{\nss_{k-1}} \nss$ and the map $\Phi:\ns\to \ns_{k-1}\times_{\nss_{k-1}} \nss$ such that $x \mapsto (\pi_{k-1}(x),\varphi(x))$. We claim that this is a nilspace isomorphism. First, it is clear that this is well-defined and that it is a morphism. Second, to prove that $\Phi$ is injective, let $x,x'\in \ns$ and suppose that $\Phi(x)=\Phi(x')$. In particular, $\pi_{k-1}(x)=\pi_{k-1}(x')$ and thus $x = x'+z$ for some $z\in \ab_k(\ns)$. But this means that $\varphi(x)=\varphi(x')+\phi_k(z)$ and as $\varphi(x)=\varphi(x')$, this implies that $\phi_k(z)=0$. Using that $\phi_k$ is bijective we get that $z=0$. To prove that $\Phi$ is surjective, let $(\pi_{k-1}(x),y)\in \ns_{k-1}\times_{\nss_{k-1}} \nss$. As $\pi_{k-1}(y) = \varphi_{k-1}(\pi_{k-1}(x)) = \pi_{k-1}(\varphi(x))$ we have that there exists $z'\in \ab_k(\nss)$ such that $\varphi(x)+z'=y$. Take any $z\in \ab_k(\ns)$ such that $\phi_k(z)=z'$ and we have that $\Phi(x+z)=(\pi_{k-1}(x),y)$.

To complete the proof that $\Phi$ is a nilspace isomorphism, note that it now suffices to prove that $\Phi$ is cube-surjective, as then $\Phi^{-1}$ is easily deduced to be a morphism. Let $(\pi_{k-1}\co \q_1,\q_2)\in \cu^n(\ns_{k-1}\times_{\nss_{k-1}} \nss)$. In particular this means that $\pi_{k-1}\co \varphi \co \q_1 = \pi_{k-1} \co \q_2$ and therefore there exists $d'\in \cu^n(\mc{D}_k(\ab_k(\nss)))$ such that $\varphi \co \q_1+d'=\q_2$. By the surjectivity of $\phi_k$ there is $d\in \cu^n(\mc{D}_k(\ab_k(\ns)))$ such that $\phi_k \co d = d'$. Then $\q_1+d$ is a cube such that its image through $\Phi$ is $(\pi_{k-1}\co \q_1,\q_2)$, which gives us the desired surjectivity.

Finally, to see that this defines an extension, let $P:\ns_{k-1}\times_{\nss_{k-1}} \nss\to \nss$ be the map $(\pi_{k-1}(x),y)\mapsto y$. We leave as an exercise for the reader to check that $\varphi_{k-1}:\ns_{k-1}\to \nss_{k-1}$ defines a degree-$k-1$ extension of $\nss_{k-1}$ by the group $\ker(\phi_{k-1})$. In $\ns\cong \ns_{k-1}\times_{\nss_{k-1}} \nss$ we define the action of $\mc{D}_{k-1}(\ker(\phi_{k-1}))$ as $(\pi_{k-1}(x),y)+z:=(\pi_{k-1}(x)+z,y)$. Now, the only thing to check to prove that $P$ defines an extension is that given cubes $(\pi_{k-1}\co \q_1,\q_2),(\pi_{k-1}\co \q'_1,\q'_2)\in \cu^n(\ns_{k-1}\times_{\nss_{k-1}} \nss)$, if $P\co(\pi_{k-1}\co \q_1,\q_2)=P\co(\pi_{k-1}\co \q'_1,\q'_2)$, then $(\pi_{k-1}\co \q_1,\q_2)+d=(\pi_{k-1}\co \q'_1,\q'_2)$ for some $d\in\cu^n(\mc{D}_{k-1}(\ker(\phi_{k-1}))$. To prove this, note that $\q_2 = \q'_2$ and thus $\varphi_{k-1}\co \pi_{k-1}\co \q_1 = \varphi_{k-1}\co \pi_{k-1}\co \q'_1$. As $\varphi_{k-1}$ is a degree-$k-1$ extension, there exists $d\in \cu^n(\mc{D}_{k-1}(\ker(\phi_{k-1}))$ such that $\pi_{k-1}\co \q_1+d = \pi_{k-1}\co \q_1'$. Hence $(\pi_{k-1}\co \q_1,\q_2)+d=(\pi_{k-1}\co \q'_1,\q'_2)$.
\end{proof}

\begin{proof}[Proof of Proposition \ref{prop:struc-k=p+1}]
Let us outline the proof. Let $\ns$ be a $(p+1)$-step, $p$-homogeneous, \textsc{cfr} nilspace. By known theory (the case $k=p$) we know that $\ns_p\cong \prod_{i=1}^p \mc{D}_i(\mb{Z}_p^{a_i})$. Let us separate these into three terms: $\ns_p = \mc{D}_1(\mb{Z}_p^{a})\times \mc{D}_2(\mb{Z}_p^{b})\times Q$. Let $\nss:= \abph_{p,1}^{\,a} \times \abph_{p+1,2}^{\,b}\times Q$ with the natural map $L:\nss\to \ns_p$ defined as $(y_1,y_2,q) \mapsto (\pi_1(y_1),\pi_2(y_2),q)$. In particular note that $L_{p-1}:\nss_{p-1}\to\ns_{p-1}$ is an isomorphism. Let $T$ be the following subdirect product of $\ns$ and $\nss$:  
$T:=\nss\times_{\ns_p}\ns=\{(y,x)\in \nss\times\ns: L(y)=\pi_p(x) \}$.

It is easy to see that $T$ is a degree-$(p+1)$ extension of $\nss$. As $\nss\in \mc{Q}_{p,k=p+1}$ we know that this extension splits (by Proposition \ref{prop:ext-split}) and therefore $T\cong \abph_{p,1}^{\,a} \times \abph_{p+1,2}^{\,b}\times Q\times \mc{D}_{p+1}(\mb{Z}_p^n)$ where $Z_{p+1}(\ns) = \mb{Z}_p^n$. Let us denote by $\Psi:T=\nss\times_{\ns_p}\ns\to \ns$ the map $(y,x)\mapsto x$. This map is easily seen to be a fibration. Note also that $\Psi_{p-1}:T_{p-1}\to \ns_{p-1}$ is a nilspace isomorphism. To prove this, note that by Proposition \ref{prop:fac-fib-prod} we have that $T_{p-1}\simeq \nss_{p-1}\times_{\ns_{p-1}}\ns_{p-1}$ but as $L_{p-1}$ is an isomorphism, this space is simply $\ns_{p-1}$.

Now, the first thing we do is to factor by $\ker(\phi_{p+1})$ where $\phi_{p+1}$ is the $p+1$ structure morphism of $\Psi$. That is, consider the action of $\ker(\phi_{p+1})$ on $T$ and note that $\Phi:T\to \ns$ factors through this action and thus we have a fibration $\Psi':T/\ker(\phi_{p+1})\to \ns$. But by Proposition \ref{prop:factorization-last-str-group} we know that $T/\ker(\phi_{p+1})$ is an abelian group nilspace so what we have proved is that we can refine our covering of $\ns$ to a covering such that the only structure morphism which may be not an isomorphism is $\phi_p$.

Now we apply Lemma \ref{lem:Claim1} to $\Psi'$ and thus we obtain that $T/\ker(\phi_{p+1})$ defines a degree-$p$ extension of $\ns$ by an elementary abelian $p$-group. Consider the following diagram: 
\begin{center}
\begin{tikzpicture}
  \matrix (m) [matrix of math nodes,row sep=3em,column sep=4em,minimum width=2em]
  {
     T/\ker(\phi_{p+1}) & \ns  \\
     (T/\ker(\phi_{p+1}))_{p} & \ns_{p}. \\};
  \path[-stealth]
    (m-1-1) edge node [above] {$\Psi'$} (m-1-2)
    (m-1-1) edge node [right] {$\pi_{p}$} (m-2-1)
    (m-1-2) edge node [right] {$\pi_{p}$} (m-2-2)
    (m-2-1) edge node [above] {$\Psi'_{p}$} (m-2-2);
\end{tikzpicture}
\end{center}
By Proposition \ref{prop:proj-of-ext} we have that $T/\ker(\phi_{p+1})$ is isomorphic to the subdirect product $(T/\ker(\phi_{p+1}))_p\times_{\ns_p} \ns$. Also, by Proposition \ref{prop:proj-of-ext} we know that $\Psi'_p$ defined a degree-$p$ extension of $\ns_p$ and by Corollary \ref{cor:spli-klep} this extension splits. Hence, there exists a cross-section $s:\ns_p\to (T/\ker(\phi_{p+1}))_p$ which is a morphism.

We now have all the required ingredients. Let us define $f:\ns\to (T/\ker(\phi_{p+1}))_p\times_{\ns_p} \ns$ as $x\mapsto (s(\pi_p(x)),x)$. This is clearly a morphism and furthermore this defines a cross-section for the map $\Psi'$. Hence, the extension defined by $\Psi'$ splits and therefore there exists an integer $m\ge 0$ such that $\ns\times \mc{D}_p(\mb{Z}_p^m)\simeq (T/\ker(\phi_{p+1}))_p\times_{\ns_p} \ns \simeq T/\ker(\phi_{p+1})$ which is an abelian group nilspace (by Proposition \ref{prop:factorization-last-str-group}).\end{proof}

\begin{remark}\label{rem:open-Qs}
For $k>p+1$ we do not know whether there are more explicit descriptions of $p$-homogeneous nilspaces generalizing Theorem \ref{thm:intro-3}. There is in particular a possibility which we are not able to rule out in this paper, namely,  that all these $p$-homogeneous nilspaces could be not just fibration-images of abelian group nilspaces (as in Theorem \ref{thm:intro-3}), but actually be  isomorphic to abelian group nilspaces. Another approach to the problem of describing $p$-homogeneous nilspaces $\ns$ in more detail consists in examining, not the fibrations from simpler abelian nilspaces \emph{onto} $\ns$, but rather examining injective morphisms \emph{from} $\ns$ into simpler abelian nilspaces. This latter approach is explored in the next section, where it will help to make progress on Question \ref{Q:Abramov}.
\end{remark}

\section{Applications in ergodic theory}\label{sec:ergodic}

\subsection{Host--Kra factors of $\mb{F}_p^{\omega}$-systems as $p$-homogeneous nilspace systems.}\hfill\\
In this subsection we prove Theorem \ref{thm:main-ergodic-intro}, describing the $k$-th order Host--Kra factor of any ergodic $\mb{F}_p^\omega$-system as a compact nilspace system with the underlying nilspace being $p$-homogeneous. Our starting point is the following result from \cite{CScouplings}.

\begin{theorem}[Theorem 5.11 in \cite{CScouplings}]\label{thm:CCHK}
Let $G$ be a countable discrete group, let $G_{\bullet}$ be a filtration of finite degree on $G$, suppose that $G$ acts ergodically on a Borel probability space $(\Omega,\mc{A},\lambda)$, and let $k\in \mb{N}$. Then the $k$-th Host--Kra factor of $(\Omega,(G,G_{\bullet}))$ is isomorphic to an ergodic $k$-step filtered compact nilspace system $(\ns_k,(G,G_\bullet),\widehat{\gamma}_k)$. 
\end{theorem}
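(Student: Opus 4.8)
The plan is to deduce the statement from the general structure theorem for cubic couplings \cite[Theorem 4.2]{CScouplings}. Concretely, one first equips the measure space $(\Omega,\mc{A},\lambda)$ with the cubic coupling naturally attached to the filtered action $(\Omega,(G,G_\bullet))$, then invokes \cite[Theorem 4.2]{CScouplings} to turn the $k$-th characteristic factor of this coupling into a compact $k$-step nilspace $\ns_k$, and finally carries the $G$-action through this construction so as to realize it as a continuous homomorphism $\widehat{\gamma}_k$ from $(G,G_\bullet)$ into the filtered translation group $\big(\tran(\ns_k),\tran_\bullet(\ns_k)\big)$.

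First I would recall the definition of the Host--Kra factors for filtered $G$-systems (as in \cite[Definition 5.9]{CScouplings}): they are the factors characteristic for the uniformity seminorms associated with $G_\bullet$, obtained from an iterated relatively-independent self-joining of $\lambda$ built in the spirit of \cite[\S 3]{HK-non-conv}, with the single transformation there replaced by the family of subgroups $(G_i)_{i\ge 1}$ acting by the corresponding \emph{face transformations} on the discrete cubes $\db{n}$ (i.e.\ $g\in G_i$ acting diagonally on each face of codimension $i$). This produces, for every $n\ge 0$, a probability measure $\mu^{\db{n}}$ on $\Omega^{\db{n}}$. The substantial step is then to verify that the family $(\mu^{\db{n}})_{n\ge 0}$ satisfies the axioms of a cubic coupling on $(\Omega,\mc{A},\lambda)$: compatibility of the $\mu^{\db{n}}$ under the face-embedding and coordinate-identification maps $\db{m}\to\db{n}$, the ergodicity/idempotency axiom, and the corner (``glueing'') conditions. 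Here the filtration property $[G_i,G_j]\subseteq G_{i+j}$ of $G_\bullet$ is precisely what makes the group generated by the face transformations behave well enough for these axioms to hold, generalizing the commutation identities used in \cite{HK-non-conv} for a single transformation.

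Given the cubic coupling, \cite[Theorem 4.2]{CScouplings} applies: passing to the $k$-step factor cut out by the $(k+1)$-st uniformity seminorm yields a compact $k$-step nilspace $\ns_k$, whose cube sets $\cu^n(\ns_k)$ carry exactly the pushforwards of the $\mu^{\db{n}}$ and whose underlying probability space is isomorphic to the $k$-th Host--Kra factor of $(\Omega,(G,G_\bullet))$. It then remains to put the dynamics back in. For each $g\in G$, the transformation $x\mapsto gx$ on $\Omega$ commutes with the structural maps used to build the coupling and preserves each $\mu^{\db{n}}$ in the appropriate diagonal sense; descending to $\ns_k$ it therefore induces a measure-preserving nilspace automorphism, and the ``face-exchange'' relations satisfied by the cube measures force this automorphism to be a \emph{translation} of $\ns_k$ (cf.\ \cite[\S 3.2.4]{Cand:Notes1}). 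Moreover, if $g\in G_i$ then translating an arbitrary codimension-$i$ face of an $n$-cube of $\ns_k$ by the translation induced by $g$ again produces an $n$-cube --- this is built into the invariance of $\mu^{\db{n}}$ --- so the induced translation lies in $\tran_i(\ns_k)$. This gives a continuous filtered homomorphism $\widehat{\gamma}_k:(G,G_\bullet)\to\big(\tran(\ns_k),\tran_\bullet(\ns_k)\big)$, hence the filtered compact nilspace system $(\ns_k,(G,G_\bullet),\widehat{\gamma}_k)$; finally, ergodicity of this system follows from ergodicity of the $G$-action on $\Omega$, since $\ns_k$ is a factor of $\Omega$.

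The main obstacle is the verification, in the second step, that the filtered family $(\mu^{\db{n}})$ genuinely forms a cubic coupling --- especially the ergodicity/idempotency and corner axioms --- for an \emph{arbitrary} filtration $G_\bullet$ of finite degree, rather than for the classical filtrations arising from a single transformation as in \cite{HK-non-conv}. This requires generalizing the ergodic-averaging (van der Corput type) limit arguments of \cite{HK-non-conv} from averages of one transformation to averages over the subgroups $G_i$, and checking that the commutator containments $[G_i,G_j]\subseteq G_{i+j}$ supply exactly the cancellation needed. Once this is established, the passage through \cite[Theorem 4.2]{CScouplings} and the transfer of the $G$-action to the translation group are essentially formal.
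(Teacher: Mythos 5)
This statement is not proved in the present paper at all: it is imported verbatim as Theorem 5.11 of \cite{CScouplings}, so there is no in-paper argument to compare against. Your proposal is a reasonable reconstruction of the architecture used in that reference — attach to the filtered action the Host--Kra measures $\mu^{\db{n}}$ built from face transformations, check that they form a cubic coupling, feed this into the structure theorem \cite[Theorem 4.2]{CScouplings} to get the compact $k$-step nilspace $\ns_k$, and then push the $G$-action down to translations — and in that sense it follows essentially the same route as the source the paper relies on.

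However, as a proof it is an outline rather than an argument: the two places where all the content lives are exactly the places you only assert. First, the verification that $(\mu^{\db{n}})_{n\geq 0}$ satisfies the cubic coupling axioms (in particular the conditional-independence and ergodicity axioms) for an arbitrary finite-degree filtration is a substantial generalization of the Host--Kra commutation and ergodic-averaging arguments; saying that $[G_i,G_j]\subseteq G_{i+j}$ ``supplies exactly the cancellation needed'' names the ingredient without carrying out the estimate. Second, the claim that the map on $\ns_k$ induced by $g\in G_i$ is a translation in $\tran_i(\ns_k)$ is not a formal consequence of invariance of the cube measures; in \cite{CScouplings} this requires the additional machinery relating measurable cube-preserving maps on the factor to genuine translations (their Theorem 4.5 and the surrounding results), including measurability-to-continuity upgrades, and none of that is addressed here. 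So if your intent was to re-prove the quoted theorem rather than cite it, these two steps would still need full arguments; if the intent was only to explain how the citation works, the sketch is accurate but should be presented as such.
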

\noindent Thus $\gamma_k:\Omega\to \ns_k$ is a measure-preserving map and $\widehat{\gamma}_k$ is a filtered-group homomorphism $G\to \tran(\ns_k)$ (a group homomorphism such that $\widehat{\gamma}_k(G_i)\subset \tran_i(\ns_k)$ for all $i\geq 0$) and for every $g\in G$ we have the equivariance $\gamma_k\co g =_{\lambda} \widehat{\gamma}_k(g)\co \gamma_k $ (where $=_{\lambda}$ denotes equality up to a $\lambda$-null set). We refer to  \cite[Definition 3.31 and Lemma 3.32]{CScouplings} for the detailed definition of $\gamma_k$, and to  \cite[Theorem 4.5]{CScouplings} for the definition of $\widehat{\gamma}_k$. We shall apply Theorem \ref{thm:CCHK} with $G$ the additive group of $\mb{F}_p^\omega$. As usual in this paper, when the filtration on an abelian group $G$ is not explicitly mentioned, we are implicitly using the lower central series $G_0=G_1=G\geq G_i=\{0\}$, $\forall\,i\geq 2$. Accordingly, when we write $\cu^n(G)$ for an abelian group $G$ (rather than the more rigorous notation $\cu^n(G_\bullet)$), we are referring to the standard $n$-cubes on $G$ (i.e.\ the $n$-cubes relative to the lower central series on $G$).

The idea of the proof of Theorem \ref{thm:main-ergodic-intro} is to show that the map $\widehat{\gamma}_k$ induces an arbitrarily highly balanced morphism $\varphi\in\hom(\mc{D}_1(\mb{Z}_p^D),\ns_k)$; by Theorem \ref{thm:main1-intro} this will imply that the nilspace $\ns_k$ is $p$-homogeneous. In fact, this approach involving Theorem \ref{thm:CCHK} yields a rather strong form of ergodicity on this factor. To formalize this, recall that for every filtered nilspace system $(\ns,(G,G_\bullet))$, for each $n\in\mb{Z}_{\geq 0}$ the cube-set $\cu^n(G_\bullet)$ has a natural action on $\cu^n(\ns)$ thanks to the fact that $G$ acts by translations (see \cite[Definition 5.10]{CScouplings}). 

\begin{defn}[Fully ergodic nilspace system]
A filtered nilspace system $(\ns,(G,G_\bullet))$ is \emph{fully ergodic} if for every $n\geq 0$ the action of $\cu^n(G_\bullet)$ on $\cu^n(\ns)$ is uniquely ergodic.
\end{defn}
\noindent Note that the special case $n=0$ here means that $G$ itself acts uniquely ergodically on $\ns$. One of the main results of this section is the following theorem, which directly implies Theorem \ref{thm:main-ergodic-intro}:
\begin{theorem}\label{thm:main-ergodic-strong}
For every $k\in \mb{N}$, the $k$-th Host--Kra factor of every ergodic $\mb{F}_p^{\omega}$-system is isomorphic \textup{(}as a measure-preserving system\textup{)} to a $p$-homogeneous $k$-step nilspace system $(\ns, \mb{F}_p^\omega)$ that is fully ergodic.
\end{theorem}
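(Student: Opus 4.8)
The plan is to obtain the nilspace system from Theorem \ref{thm:CCHK}, to note that its strong ergodicity is part of the output of \cite{CScouplings}, and then to deduce $p$-homogeneity by producing arbitrarily highly balanced morphisms from $\mc{D}_1(\mb{Z}_p^D)$ and invoking Theorem \ref{thm:main1-intro}, the passage from the \textsc{cfr} setting to the general compact setting being handled through the inverse limit theorem.

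First I would apply Theorem \ref{thm:CCHK} with $G$ the additive group of $\mb{F}_p^\omega$ and $G_\bullet$ its lower central series, obtaining that the $k$-th Host--Kra factor is isomorphic, as a measure-preserving system, to an ergodic $k$-step filtered compact nilspace system $(\ns_k,(G,G_\bullet),\widehat{\gamma}_k)$, with $\widehat{\gamma}_k:G\to\tran(\ns_k)$ a filtered-group homomorphism, so $\widehat{\gamma}_k(G)\subseteq\tran_1(\ns_k)$. By the main results of \cite{CScouplings} (the strong ergodicity properties of the nilspace systems produced from ergodic cubic couplings), this system is moreover \emph{fully ergodic}: for each $n\geq 0$ the action of $\cu^n(G_\bullet)$ on $\cu^n(\ns_k)$ is uniquely ergodic, with unique invariant measure the cube-set Haar measure $\mu_{\cu^n(\ns_k)}$; the case $n=0$ is the unique ergodicity of $G\curvearrowright\ns_k$. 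It then remains only to show that $\ns_k$ is $p$-homogeneous.

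The bridge to Theorem \ref{thm:main1-intro} is the following. Fix a base point $x_0\in\ns_k$, let $e_1,e_2,\dots$ be the standard generators of $\mb{F}_p^\omega$, and for each $D$ let $\iota:\mb{Z}_p^D\to\mb{F}_p^\omega$, $v\mapsto\sum_{i\leq D}v_ie_i$, and define $\varphi_D:\mb{Z}_p^D\to\ns_k$ by $\varphi_D(v):=\widehat{\gamma}_k(\iota(v))(x_0)$. This is well defined since $\widehat{\gamma}_k(e_i)^p=\widehat{\gamma}_k(pe_i)=\id$, and composing $\varphi_D$ with an $n$-cube $u\mapsto w+\sum_iu^{(i)}h_i$ of $\mc{D}_1(\mb{Z}_p^D)$ gives $u\mapsto\widehat{\gamma}_k(\iota(w))\big(\prod_i\widehat{\gamma}_k(\iota(h_i))^{u^{(i)}}(x_0)\big)$, which is a cube of $\ns_k$: it is obtained from the constant cube at $x_0$ by applying the commuting degree-$1$ translations $\widehat{\gamma}_k(\iota(h_i))$ to the codimension-$1$ faces $\{u^{(i)}=1\}$ and then applying the translation $\widehat{\gamma}_k(\iota(w))$ to the whole cube. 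Hence $\varphi_D\in\hom(\mc{D}_1(\mb{Z}_p^D),\ns_k)$. The same computation shows that $\varphi_D^{\db{n}}$ sends a uniformly random $n$-cube of $\mc{D}_1(\mb{Z}_p^D)$ to $c\cdot\q_0$, where $\q_0$ is the constant $n$-cube at $x_0$ and $c$ ranges uniformly over the finite subgroup $\cu^n(\mb{F}_p^D)$ of $\cu^n(\mb{F}_p^\omega)$; thus $\mu_{\cu^n(\mb{Z}_p^D)}\co(\varphi_D^{\db{n}})^{-1}$ is precisely the orbit average of $\q_0$ along the F{\o}lner exhaustion $\big(\cu^n(\mb{F}_p^D)\big)_D$ of the (discrete, amenable) group $\cu^n(\mb{F}_p^\omega)$. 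By the unique ergodicity of the $\cu^n(\mb{F}_p^\omega)$-action on $\cu^n(\ns_k)$, these averages converge weakly to $\mu_{\cu^n(\ns_k)}$ as $D\to\infty$; hence, for every $b>0$, taking $D$ large enough makes $\varphi_D$ $b$-balanced.

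Finally I would conclude as follows. By the inverse limit theorem \cite{Cand:Notes2}, $\ns_k=\varprojlim_m\ns_k^{(m)}$ with each $\ns_k^{(m)}$ a \textsc{cfr} $k$-step nilspace and $\pi^{(m)}:\ns_k\to\ns_k^{(m)}$ the (continuous) factor maps. Fixing $m$ and a compatible metric $d_m$ on $\ns_k^{(m)}$, and arguing exactly as in the proof of Theorem \ref{thm:main1-intro} in Section \ref{sec:bridge} (continuity of the pushforward of measures under $\pi^{(m)}$), if $\varphi_D$ is sufficiently balanced then $\pi^{(m)}\co\varphi_D\in\hom(\mc{D}_1(\mb{Z}_p^D),\ns_k^{(m)})$ is $b(\ns_k^{(m)},d_m,p)$-balanced, so by Theorem \ref{thm:main1-intro} each $\ns_k^{(m)}$ is $p$-homogeneous. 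Since the cube sets of an inverse limit are by construction the inverse limits of the cube sets, $p$-homogeneity passes to $\ns_k$: for $f\in\hom(\mc{D}_1(\mb{Z}^n),\ns_k)$ and any $\q\in\cu^{n'}(\mc{D}_1(\mb{Z}_p^n))$, the map $f|_{[0,p-1]^n}\co\q:\db{n'}\to\ns_k$ has each $\pi^{(m)}$-projection equal to $(\pi^{(m)}\co f)|_{[0,p-1]^n}\co\q\in\cu^{n'}(\ns_k^{(m)})$ (by $p$-homogeneity of $\ns_k^{(m)}$), hence lies in $\cu^{n'}(\ns_k)$. Thus $\ns_k$ is $p$-homogeneous; together with the full ergodicity recorded above and the isomorphism from Theorem \ref{thm:CCHK}, this proves the theorem. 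I expect the step requiring the most care to be the precise identification of the pushforward measures $\mu_{\cu^n(\mb{Z}_p^D)}\co(\varphi_D^{\db{n}})^{-1}$ as orbit averages along a F{\o}lner sequence, together with the exact extraction of full ergodicity from \cite{CScouplings}; the remaining steps follow the pattern already established in Section \ref{sec:bridge}.
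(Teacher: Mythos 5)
Your plan matches the paper's proof essentially step for step: Theorem \ref{thm:CCHK} gives the nilspace system, the orbit-average/F\o{}lner argument you describe is exactly the paper's Lemma \ref{lem:weak-star-conv} and Proposition \ref{prop:ergodic-tool} (producing arbitrarily balanced morphisms $g\mapsto\widehat{\gamma}_k(g)(x_0)$), and the conclusion proceeds, as in the paper, by applying Theorem \ref{thm:main1-intro} to each \textsc{cfr} term of the inverse limit and passing $p$-homogeneity to the limit. The one point you defer --- that full ergodicity is not a direct quotation from \cite{CScouplings} but requires upgrading the ergodicity of the cube actions (which does come from the cubic-coupling/Host--Kra-coupling machinery, Corollary \ref{cor:erg-all-cu-n}) to unique ergodicity via an adaptation of the Host--Kra cocycle argument --- is precisely the paper's Lemma \ref{lem:unique-erg}, and you correctly flag it as the step needing care.
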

\noindent Indeed, we will deduce that the nilspace system is $p$-homogeneous in Theorem \ref{thm:main-ergodic-strong} as a consequence of being fully ergodic. Thus, let us start by proving the latter property. 

Recall from \cite[\S 5]{CScouplings} that for any filtered group $(G,G_\bullet)$ such that $G$ acts by measure-preserving transformations on the probability space $\Omega$, we can define the sequence of associated \emph{Host--Kra couplings}, generalizing the sequence of cubic measures introduced for $G=\mb{Z}$ in \cite{HK-non-conv}; see \cite[Definition 5.4]{CScouplings}. We then have the following fact.
\begin{proposition}
Let $(\Omega,\lambda,G)$ be an ergodic $G$-system where $G$ is a countable discrete group, let $G_{\bullet}$ be a filtration on $G$, and let $(\mu^{\db{n}})_{n\ge 0}$ be the associated sequence of Host--Kra couplings. Then $\cu^n(G_\bullet)$ acts ergodically on the probability space $(\Omega^{\db{n}},\mu^{\db{n}})$ for every $n$.
\end{proposition}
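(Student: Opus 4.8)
The plan is to reduce the ergodicity of the $\cu^n(G_\bullet)$-action on $(\Omega^{\db{n}},\mu^{\db{n}})$ to the ergodicity of the original $G$-action on $(\Omega,\lambda)$, exploiting the inductive construction of the Host--Kra couplings. Recall from \cite[\S 5]{CScouplings} that $\mu^{\db{n+1}}$ is obtained from $\mu^{\db{n}}$ as a relatively independent self-coupling over a certain factor (the ``lower face'' structure), so there is a natural way to build up the cube couplings one dimension at a time. Accordingly, I would argue by induction on $n$. The base case $n=0$ is precisely the hypothesis that $G=\cu^0(G_\bullet)$ acts ergodically on $(\Omega,\lambda)=(\Omega^{\db{0}},\mu^{\db{0}})$.

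For the inductive step, suppose $\cu^n(G_\bullet)$ acts ergodically on $(\Omega^{\db{n}},\mu^{\db{n}})$. The key structural fact is that $\cu^{n+1}(G_\bullet)$ contains, as a subgroup, the group of ``doubled'' cubes coming from $\cu^n(G_\bullet)$ acting diagonally on the two $n$-faces $\db{n}\times\{0\}$ and $\db{n}\times\{1\}$ of $\db{n+1}$, together with the group of cubes supported on the upper face $\db{n}\times\{1\}$ (the ``vertical'' cubes, which form a copy of $\cu^n(G_{\bullet}')$ for the shifted filtration). First I would use the fact that $\mu^{\db{n+1}}$ is the relatively independent coupling of two copies of $\mu^{\db{n}}$ over the factor generated by the lower face: concretely, the projection $\Omega^{\db{n+1}}\to \Omega^{\db{n}}$ onto the lower face $\db{n}\times\{0\}$ pushes $\mu^{\db{n+1}}$ forward to $\mu^{\db{n}}$, and by the diagonal subgroup just described together with the inductive hypothesis, this lower-face factor is $\cu^{n+1}(G_\bullet)$-ergodic. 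Then, conditionally on the lower face, the two $n$-faces are independent copies of the conditional disintegration, and the vertical cubes act on the upper face fibrewise; one shows these fibrewise actions are ergodic by invoking the ergodicity from the inductive hypothesis applied to the (shifted-filtration) system on the upper face — this is where the structure of Host--Kra couplings as iterated relatively independent joinings, together with the ergodic decomposition, gets used. Combining ``ergodic base factor'' with ``ergodic fibres under a complementary subgroup'' yields ergodicity of the full $\cu^{n+1}(G_\bullet)$-action by a standard relative-ergodicity argument (any invariant function is measurable with respect to the base, hence constant).

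I expect the main obstacle to be making precise the fibrewise ergodicity of the vertical action and its interaction with the relatively independent coupling: one has to identify the conditional measures of $\mu^{\db{n+1}}$ over the lower face, check that the upper-face marginal of these conditionals is again (a conditional version of) a Host--Kra coupling of the appropriate shifted filtration, and then apply the inductive hypothesis in this conditional setting. This requires care with the measurable dependence of the ergodic decomposition on the base point and with the precise combinatorial description of which subgroups of $\cu^{n+1}(G_\bullet)$ act on which faces. An alternative, possibly cleaner route would be to mimic directly the proof in the $G=\mb{Z}$ case from \cite[\S 3]{HK-non-conv} (the ergodicity of the cubic measures $\mu^{[n]}$), where the same induction on $n$ is carried out using the ``magic extension'' / relatively independent joining structure; the arguments there go through essentially verbatim for a general countable discrete group $G$ with a filtration, since the only inputs are the recursive definition of the couplings and the ergodicity of the action at the previous level, both of which are available here. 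In either approach, once full ergodicity of all the cube-set actions is established, the remaining statements in Theorem \ref{thm:main-ergodic-strong} — in particular $p$-homogeneity of the nilspace $\ns$ — follow by combining this with Theorem \ref{thm:CCHK} and Theorem \ref{thm:main1-intro}, as outlined in the surrounding text.
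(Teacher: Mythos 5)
Your proposal matches the paper's proof, which is simply the observation that the inductive argument of \cite[Corollary 3.5]{HK-non-conv} for the cubic measures $\mu^{[k]}$ of a $\mb{Z}$-system carries over verbatim to the Host--Kra couplings of a filtered countable group; your ``alternative, cleaner route'' is exactly what the paper does, and your more detailed sketch is a correct expansion of that same induction. One small imprecision worth noting: the relatively independent self-joining defining $\mu^{\db{n+1}}$ is taken over the invariant $\sigma$-algebra of the diagonal action on $(\Omega^{\db{n}},\mu^{\db{n}})$, not over the full lower-face coordinate, so the right way to run the step is that invariance under the upper-face (vertical) subgroup forces measurability with respect to the lower face, after which the diagonal copy of $\cu^n(G_\bullet)$ and the inductive hypothesis give constancy.
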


\begin{proof}
This follows by a straightforward generalization of the arguments used to prove \cite[Corollary 3.5]{HK-non-conv}.\end{proof}

\begin{corollary}\label{cor:erg-all-cu-n}
Let $(\Omega,\lambda,G)$ be an ergodic $G$-system where $G$ is a countable discrete group, let $G_{\bullet}$ be a filtration on $G$, and let $\ns_k$ be the associated $k$-th Host--Kra factor. Then $\cu^n(\widehat{\gamma}_k(G))$ acts ergodically on $\cu^n(\ns_k)$ for all $n\ge 0$.
\end{corollary}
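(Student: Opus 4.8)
The plan is to transfer the ergodicity of the Host--Kra coupling $(\Omega^{\db n},\mu^{\db n})$, established in the Proposition above, to the cube set $\cu^n(\ns_k)$ by means of the coordinatewise map $\gamma_k^{\db n}\colon\Omega^{\db n}\to\ns_k^{\db n}$ induced by the factor map $\gamma_k\colon\Omega\to\ns_k$ from Theorem \ref{thm:CCHK}. The argument would rest on three facts: (i) $\gamma_k^{\db n}$ pushes $\mu^{\db n}$ forward to the Haar probability measure $\mu_{\cu^n(\ns_k)}$ on $\cu^n(\ns_k)$ (in particular this pushforward is supported on $\cu^n(\ns_k)$); (ii) $\gamma_k^{\db n}$ intertwines the natural action of $\cu^n(G_\bullet)$ on $\Omega^{\db n}$ with the action of $\cu^n(\widehat{\gamma}_k(G))$ on $\cu^n(\ns_k)$; and (iii) ergodicity is inherited by factors of measure-preserving actions. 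Combining these yields the statement immediately.

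For (i), I would invoke the construction of the $k$-th Host--Kra factor as a nilspace system in \cite{CScouplings} (the construction underlying Theorem \ref{thm:CCHK}), by which the cube sets of $\ns_k$ and their Haar measures arise precisely as the images, under the maps $\gamma_k^{\db n}$, of the Host--Kra couplings $\mu^{\db n}$ on $\Omega$; thus $(\gamma_k^{\db n})_\ast\,\mu^{\db n}=\mu_{\cu^n(\ns_k)}$. For (ii), recall that an element $\underline{g}=(g_v)_{v\in\db n}\in\cu^n(G_\bullet)$ acts on $\Omega^{\db n}$ by $(\omega_v)_v\mapsto(g_v\cdot\omega_v)_v$; applying $\gamma_k^{\db n}$ and using the equivariance $\gamma_k\co g=_{\lambda}\widehat{\gamma}_k(g)\co\gamma_k$ in each coordinate gives $(\widehat{\gamma}_k(g_v)\cdot\gamma_k(\omega_v))_v$. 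Since $\widehat{\gamma}_k$ is a filtered-group homomorphism, applying it coordinatewise maps $\cu^n(G_\bullet)$ onto $\cu^n(\widehat{\gamma}_k(G))$, so this is exactly the action of $(\widehat{\gamma}_k(g_v))_v\in\cu^n(\widehat{\gamma}_k(G))$ on $\cu^n(\ns_k)$. Hence $\gamma_k^{\db n}$ is a factor map of $\cu^n(G_\bullet)$-systems, with the action on the target factoring through $\cu^n(\widehat{\gamma}_k(G))$. Finally (iii) is routine: any $\cu^n(\widehat{\gamma}_k(G))$-invariant measurable $A\subset\cu^n(\ns_k)$ pulls back to a $\cu^n(G_\bullet)$-invariant set $(\gamma_k^{\db n})^{-1}(A)$, which by the Proposition has $\mu^{\db n}$-measure $0$ or $1$; since $\gamma_k^{\db n}$ is measure preserving by (i), $A$ has $\mu_{\cu^n(\ns_k)}$-measure $0$ or $1$. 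As $\cu^n(\widehat{\gamma}_k(G))$ and the image of $\cu^n(G_\bullet)$ act through the same transformations of $\cu^n(\ns_k)$, this is exactly the asserted ergodicity.

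The only non-formal point, and the one I would expect to require care in citing, is step (i): the identification of the pushforward of the Host--Kra coupling $\mu^{\db n}$ under $\gamma_k^{\db n}$ with the cube-set Haar measure $\mu_{\cu^n(\ns_k)}$. This is built into the construction of the Host--Kra factors as nilspace systems in \cite{CScouplings}; once it is quoted precisely, everything else is a standard transfer-of-ergodicity argument and the corollary follows from the preceding Proposition.
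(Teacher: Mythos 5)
Your argument is correct and is exactly what the paper intends: its proof of this corollary is the one-line remark that it ``follows immediately from the definition of the Host--Kra factor,'' and your three steps (the pushforward identity $(\gamma_k^{\db{n}})_\ast\mu^{\db{n}}=\mu_{\cu^n(\ns_k)}$ built into the construction in \cite{CScouplings}, the equivariance of $\gamma_k^{\db{n}}$, and transfer of ergodicity to factors) are precisely the content of that remark. You have simply written out in full the same argument the paper leaves implicit.
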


\begin{proof} This follows immediately from the definition of the Host--Kra factor.\end{proof}

\noindent Now we can prove the desired full ergodicity.

\begin{lemma}\label{lem:unique-erg}
Let $\ns$ be a $k$-step compact nilspace, let $H$ be a countable subgroup of $\tran(\ns)$, with filtration $H_\bullet=(H_i)_{i\geq 0}$ defined by $H_i:=H\cap \tran_i(\ns)$, and suppose that for every $n\ge 0$ the action of $\cu^n(H_\bullet)$ on $\cu^n(\ns)$ is ergodic relative to the Haar measure $\mu_{\cu^n(\ns)}$. Then the system $(\cu^n(\ns),\cu^n(H_\bullet))$ is uniquely ergodic, with $\cu^n(H_\bullet)$-invariant measure $\mu_{\cu^n(\ns)}$.
\end{lemma}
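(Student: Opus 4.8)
The plan is to reduce unique ergodicity for the $\cu^n(H_\bullet)$-action on $\cu^n(\ns)$ to the abelian-bundle structure of the nilspace and argue by induction on the number of nonzero structure groups (equivalently on the step $k$). The base case is when $\ns$ is a single point, which is trivial. For the inductive step, recall that $\ns$ is a compact abelian bundle over $\ns_{k-1}$ with structure group $\ab_k=\ab_k(\ns)$, and that the factor map $\pi_{k-1}:\ns\to\ns_{k-1}$ is equivariant for the induced action of $H$ on $\ns_{k-1}$ via the projection $\tran(\ns)\to\tran(\ns_{k-1})$. Passing to cubes, $\pi_{k-1}^{\db{n}}:\cu^n(\ns)\to\cu^n(\ns_{k-1})$ realizes $\cu^n(\ns)$ as a compact abelian bundle over $\cu^n(\ns_{k-1})$ with fiber group $\cu^n(\mc{D}_k(\ab_k))=\ab_k^{\db{n}}$, and this bundle map is equivariant for the $\cu^n(H_\bullet)$-actions on both sides. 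So by induction we may assume $(\cu^n(\ns_{k-1}),\cu^n(H_\bullet))$ is uniquely ergodic with invariant measure $\mu_{\cu^n(\ns_{k-1})}$, and it remains to lift unique ergodicity through this abelian-group extension.

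\textbf{Key steps.} First I would fix an arbitrary $\cu^n(H_\bullet)$-invariant Borel probability measure $\nu$ on $\cu^n(\ns)$ and show it must equal $\mu_{\cu^n(\ns)}$. Pushing $\nu$ forward under $\pi_{k-1}^{\db{n}}$ gives a $\cu^n(H_\bullet)$-invariant measure on $\cu^n(\ns_{k-1})$, which by the inductive hypothesis is $\mu_{\cu^n(\ns_{k-1})}$; hence $\nu$ disintegrates over $\mu_{\cu^n(\ns_{k-1})}$ along the fibers, each fiber being a coset of $\ab_k^{\db{n}}$. The conditional measures $(\nu_{\q})_{\q}$ can thus be regarded as a measurable family of probability measures on $\ab_k^{\db{n}}$. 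Second, I would use the hypothesis that $\cu^n(H_\bullet)$ acts \emph{ergodically} on $(\cu^n(\ns),\mu_{\cu^n(\ns)})$: the key point is that the action of $\cu^n(H_n)$ (the part of the cube-group coming from $H_n=H\cap\tran_n(\ns)$, which acts on cubes only through the top structure group, i.e.\ fiberwise by translations in $\ab_k^{\db{n}}$) together with the lower-order part of $\cu^n(H_\bullet)$ generates enough translations. More precisely, ergodicity of the $\cu^n(H_\bullet)$-action on the Haar measure forces the closure of the image of $\cu^n(H_n)$ inside $\ab_k^{\db{n}}$, combined with the lower-step transitivity, to act minimally/ergodically on almost every fiber; a standard Fourier/character argument on the compact abelian group $\ab_k^{\db{n}}$ then shows the conditional measures are invariant under a subgroup $L\le\ab_k^{\db{n}}$ large enough that, after averaging over the (uniquely ergodic) base, $\nu$ must be translation-invariant under all of $\ab_k^{\db{n}}$ on each fiber, hence $\nu=\mu_{\cu^n(\ns)}$. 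The cleanest way to run this is: decompose $L^2(\cu^n(\ns))$ into isotypic components for the fiberwise $\ab_k^{\db{n}}$-action (indexed by characters $\chi\in\widehat{\ab_k^{\db{n}}}$), observe that $\widehat\nu$ is supported on characters fixed by $\cu^n(H_\bullet)$, and use ergodicity of the Haar system plus the structure of the action to conclude only the trivial character survives.

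\textbf{Main obstacle.} The delicate point is showing that the top-level group $\cu^n(H_n)$ acts with dense image in the fiber group $\ab_k^{\db{n}}$ \emph{relative to the base dynamics} --- i.e.\ that one genuinely gets ergodicity fiberwise and not merely on some skew-product quotient. This is where one needs that $H$ is all of (a dense subgroup of) the relevant translations and that the filtration $H_\bullet$ is compatible with the bundle structure, so that $\cu^n(H_n)$ surjects (densely) onto the relevant subgroup of $\ab_k^{\db{n}}$; the hypothesis that $\cu^n(H_\bullet)$ acts ergodically on $\mu_{\cu^n(\ns)}$ is precisely what feeds this in, via the fact (from \cite[\S 3.2.4]{Cand:Notes1} and \cite{CScouplings}) that the translation group acts transitively on cube sets in the appropriate sense. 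Concretely I expect to invoke that the Haar measure $\mu_{\cu^n(\ns)}$ is the unique $\cu^n(\tran(\ns))$-invariant measure and that a dense subgroup suffices, so that ergodicity of $\cu^n(H_\bullet)$ already pins down $\mu_{\cu^n(\ns)}$ among invariant measures and the abelian-extension lifting argument closes the induction.
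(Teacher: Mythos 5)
Your overall skeleton matches the paper's proof (which adapts Host--Kra, \cite[p.\ 65, Lemma 4; p.\ 66, Proposition 5]{HKbook}): induction on the step, realization of $\cu^n(\ns)$ as an abelian extension of $\cu^n(\ns_{k-1})$, identification of the pushforward of an arbitrary invariant measure with $\mu_{\cu^n(\ns_{k-1})}$ by the inductive hypothesis, disintegration over the base, and a Fourier argument on the fiber group. Two corrections, one minor and one substantive. The minor one: the fiber group is $\cu^n(\mc{D}_k(\ab_k))$, which equals $\ab_k^{\db{n}}$ only for $n\le k$; for $n>k$ it is a proper closed subgroup, so you should work with $\cu^n(\mc{D}_k(\ab_k))$ throughout.

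The substantive gap is in your ``Main obstacle'' paragraph. You propose to show that the vertical part of $\cu^n(H_\bullet)$ has dense image in the fiber group and hence acts ergodically/minimally on almost every fiber; this is neither a consequence of the hypotheses nor needed, and trying to prove it would stall the argument (ergodicity of a skew product does not require the fiberwise part of the acting group to be dense in the fiber group --- it is a condition on the cocycle). Likewise, your closing appeal to ``Haar is the unique $\cu^n(\tran(\ns))$-invariant measure and a dense subgroup suffices'' is circular: transferring unique ergodicity from the full translation group to a countable subgroup whose action is merely ergodic is exactly what the lemma asserts. The correct mechanism (and the one the paper uses) is the eigenfunction argument: writing $T'(\q',z)=(T_{k-1}\q',\rho_T(\q')+z)$ via a Borel cross-section, the disintegration $(\mu_y)$ of the unknown ergodic invariant measure satisfies $\widehat{\mu_{T_{k-1}y}}(\chi)=\chi(\rho_T(y))\,\widehat{\mu_y}(\chi)$, so the everywhere-defined function $\phi_\chi(y,g)=\overline{\chi}(g)\widehat{\mu_y}(\chi)$ is invariant under each of the countably many $T'$. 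Ergodicity of the Haar system (the hypothesis of the lemma) forces $\phi_\chi$ to be a.e.\ constant, and the identity $\phi_\chi(y,g+h)=\overline{\chi}(h)\phi_\chi(y,g)$ then forces that constant to vanish for every nontrivial $\chi$, so each $\mu_y$ is Haar on the fiber. You should replace the density claim by this argument (or by the equivalent convolution/extreme-point argument averaging $\mu\ast\delta_u$ over the fiber group); as written, the crucial step of your proof would not go through.
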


\begin{proof}
We adapt \cite[p.\ 65, Lemma 4, and p.\ 66, Proposition 5]{HKbook} to the case of these $H$-actions. Following \cite{HKbook} we argue by induction on $k$. The case $k=0$ is trivial, as $\ns$ is then the 1-point nilspace. Let $s:\cu^n(\ns_{k-1})\to \cu^n(\ns_k)$ be a Borel cross-section (as provided by the proof of \cite[Lemma 2.4.5]{Cand:Notes2}). Let $\ab_k=\ab_k(\ns)$, and let
\begin{eqnarray*}
    \Phi: \; \cu^n(\ns_{k-1})\times \cu^n(\mc{D}_k(\ab_k)) & \to & \cu^n(\ns) \\
       (\q',z) \hspace{1.8 cm} & \mapsto & s(\q')+z.
\end{eqnarray*}
This is a Borel-measurable map (relative to the product topology on its domain), and it is bijective, with inverse $\Phi^{-1}:\q \mapsto (\pi_{k-1}\co \q, \q-s(\pi_{k-1} \co \q))$. For any $T\in \cu^n(H_\bullet)$ we define $T':=\Phi^{-1}\co T \co \Phi$. As translations on $\ns$ commute with addition of elements of $\ab_k$ \cite[Lemma 3.2.37]{Cand:Notes1}, and $\pi_{k-1}\co T = T_{k-1}\co \pi_{k-1}$ for some $T_{k-1}\in \cu^n(\tran(\ns_{k-1}))$ \cite[Definition 3.3.1 and Proposition 3.3.2]{Cand:Notes1}, we have $T'(\q',z)= (T_{k-1}( \q'),[T(s(\q'))-s(T_{k-1}(\q'))]+z)$ for all $\q'\in\cu^n(\ns_{k-1})$, $z\in \cu^n(\mc{D}_k(\ab_k))$.

Let $\Upsilon^n:=\{T':T\in \cu^n(H_\bullet)\}$. We shall now show that 
if $\mu$ is any ergodic $\Upsilon^n$-invariant Borel probability measure on $\cu^n(\ns_{k-1})\times \cu^n(\ab_k)$, then $\mu= \mu_{\cu^n(\ns_{k-1})}\times m_{\cu^n(\ab_k)}$ where $\mu_{\cu^n(\ns_{k-1})}$ and $ m_{\cu^n(\ab_k)}$ are the Haar measures on $\cu^n(\ns_{k-1})$ and $\cu^n(\ab_k)$ respectively. This will prove the claimed unique ergodicity, as any $\Upsilon^n$-invariant Borel probability measure on $\cu^n(\ns_{k-1})\times \cu^n(\ab_k)$ is the convex combination of ergodic $\Upsilon^n$-invariant Borel probability measures by \cite[Theorem 4.2.6]{HassKat}. It will also establish that $\mu=\mu_{\cu^n(\ns)}\co \Phi^{-1}$, by construction of the Haar measure $\mu_{\cu^n(\ns)}$ (see \cite[Proposition 2.2.5]{Cand:Notes2}). 

Note that if $\pi:\cu^n(\ns_{k-1})\times \cu^n(\ab_k)\to \cu^n(\ns_{k-1})$ is the projection to the first coordinate, we have that $\mu$ is a $\cu^n(H_{k-1})$-invariant measure of $\cu^n(\ns_{k-1})$ and by induction on $k$ this measure $\nu$ is precisely the Haar measure on $\cu^n(\ns_{k-1})$. 

Following the proof of \cite[p.\ 63, Lemma 4]{HKbook}, we shall use the  disintegration of the measure $\mu$ with respect to $\pi$, that is $\mu = \int \delta_y \times \mu_y \; \ud\nu(y)$. Fix any $T'\in \Upsilon^n$ and let $\rho_{T}(y):=T(s(y))-s(T_{k-1}(y))$ for any $y\in \cu^n(\ns_{k-1})$. Since $\mu$ is $T'$-invariant, we have that for $\nu$-almost every $y\in \cu^n(\ns_{k-1})$ we have $\mu_{T_{k-1}y}=\delta_{\rho_T(y)}\ast \mu_y$, in the sense that for any continuous $f:\cu^n(\mc{D}_k(\ab_k))\to \mb{C}$ we have $\int f(t) \ud \mu_{T_{k-1}y}(t) = \int \int f(a+b)\; \ud\delta_{\rho_T(y)}(a) \; \ud\mu_y(b)$.

Now, for any character $\chi:\cu^n(\mc{D}_k(\ab_k))\to \mb{C}$ the Fourier-Stieltjes coefficient $\widehat{\mu_y}(\chi)$ is well defined for  $\nu$-almost all $y\in \cu^n(\ns_{k-1})$, and equals
\[
\widehat{\mu_y}(\chi) = \int_{\cu^n(\mc{D}_k(\ab_k))} \chi(h) \; \ud\mu_y(h).
\]
Following \cite{HKbook}, we define $\phi_{\chi}(y,g):=\overline{\chi}(g)\widehat{\mu_y}(\chi)$ for all $g\in \cu^n(\mc{D}_k(\ab_k))$ and $\nu$-almost all $y\in \cu^n(\ns_{k-1})$. We have that for all $g,h\in \cu^n(\mc{D}_k(\ab_k))$ and $\nu$-almost all $y\in \cu^n(\ns_{k-1})$ we have
$\phi_{\chi}(y,g+h)=\overline{\chi}(h)\phi_{\chi}(y,g)$. Thus, using the fact that $\mu_{T_{k-1}y}=\delta_{\rho_T(y)}\ast \mu_y$, we conclude that $\phi_{\chi}(T'(y,g)) = \phi_{\chi}(y,g)$ for all $g\in \cu^n(\mc{D}_k(\ab_k))$ and $\nu$-almost all $y\in \cu^n(\ns_{k-1})$. We can repeat this argument with the \emph{countably} many elements of $\Upsilon^n$ and deduce that for all $g\in \cu^n(\mc{D}_k(\ab_k))$ and $\nu$-almost all $y\in \cu^n(\ns_{k-1})$, $\phi_{\chi}(T'(y,g)) = \phi_{\chi}(y,g)$. Thus, for $\mu$-almost all $(y,g)$ we have that for any $T'\in \Upsilon^n$, $\phi_{\chi}(T'(y,g)) = \phi_{\chi}(y,g)$. By ergodicity of $\mu$ we conclude that $\phi_{\chi}$ is constant for $\mu$-almost all $(y,g)$. If we denote this constant by $c_{\chi}$, it is easy to deduce that $c_{\chi} = \overline{\chi}(h)c_{\chi}$. Therefore, if $\chi$ is not the trivial character, $c_{\chi}=0$ for almost all $y\in \cu^n(\ns_{k-1})$, which in turn implies that each measure $\mu_y$ is the Haar measure on $\cu^n(\mc{D}_k(\ab_k))$ for almost all $y\in \cu^n(\ns_{k-1})$.
\end{proof}

\noindent The main ingredient for the proof of Theorem \ref{thm:main-ergodic-strong} is the following result, telling us that a fully ergodic action can be used to obtain arbitrarily balanced morphisms from the acting group to the Host-Kra factor.

\begin{proposition}\label{prop:ergodic-tool}
Let $\Omega$ be an ergodic $\mb{F}_p^{\omega}$-system and let $\ns_k$ be the corresponding $k$-th Host--Kra factor. Then for every $x\in \ns_k$ and $b>0$,  there exists $D=D(b,\Omega,\ns_k,x)$ such that $\phi:\mb{F}_p^D\to \ns_k$, $g\mapsto \widehat{\gamma_k}(g)(x)$ is a $b$-balanced morphism in $\hom(\mc{D}_1(\mb{Z}_p^D),\ns_k)$.
\end{proposition}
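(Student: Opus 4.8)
The plan is to exploit the full ergodicity established in Lemma \ref{lem:unique-erg}, together with the standard equivalence between unique ergodicity and uniform convergence of ergodic averages for continuous functions. First I would recall that, by Corollary \ref{cor:erg-all-cu-n}, for every $n\geq 0$ the action of $\cu^n(\widehat{\gamma}_k(\mb{F}_p^\omega))$ on $\cu^n(\ns_k)$ is ergodic with respect to the Haar measure $\mu_{\cu^n(\ns_k)}$; hence by Lemma \ref{lem:unique-erg} (applied with $H=\widehat{\gamma}_k(\mb{F}_p^\omega)$, which is a countable subgroup of $\tran(\ns_k)$ since $\mb{F}_p^\omega$ is countable) the system $(\cu^n(\ns_k),\cu^n(H_\bullet))$ is uniquely ergodic with invariant measure $\mu_{\cu^n(\ns_k)}$. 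The key point is that $\mb{F}_p^\omega=\bigcup_D \mb{F}_p^D$ is a directed union of finite subgroups, so the ergodic averages along the Følner sequence $(\mb{F}_p^D)_D$ converge; more precisely, for each $n$ and each continuous $F:\cu^n(\ns_k)\to\mb{C}$ we have $\mb{E}_{g\in\mb{F}_p^D}\,F\big(\widehat{\gamma}_k(g)^{\db{n}}(\q)\big)\to \int F\,\ud\mu_{\cu^n(\ns_k)}$ uniformly in $\q\in\cu^n(\ns_k)$ as $D\to\infty$.

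Next I would translate this into a statement about the pushforward measures appearing in Definition \ref{def:balance}. Fix $x\in\ns_k$ and let $\q_0\in\cu^n(\ns_k)$ be the constant cube with value $x$ (constant maps are always cubes). The map $\phi:\mb{F}_p^D\to\ns_k$, $g\mapsto\widehat{\gamma}_k(g)(x)$ is a morphism in $\hom(\mc{D}_1(\mb{Z}_p^D),\ns_k)$ because $\widehat{\gamma}_k$ is a filtered-group homomorphism into $\tran(\ns_k)$ and a standard $n$-cube $(x'+\sum_i v\sbr{i}h_i)_{v}$ on $\mb{F}_p^D$ is sent to $\big(\,\prod_i \widehat{\gamma}_k(h_i)^{v\sbr{i}}\,\widehat{\gamma}_k(x')(x)\big)_v$, which is a cube on $\ns_k$ since translations preserve cubes. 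Crucially, the pushforward $\mu_{\cu^n(\mb{Z}_p^D)}\co(\phi^{\db{n}})^{-1}$ is exactly the distribution of $\phi^{\db{n}}(\q)$ for $\q$ uniform in $\cu^n(\mb{Z}_p^D)$; writing $\q(v)=x'+\sum_i v\sbr{i}h_i$ with $(x',h_1,\dots,h_n)$ uniform in $(\mb{F}_p^D)^{n+1}$, this is the law of $\big(\prod_i\widehat{\gamma}_k(h_i)^{v\sbr{i}}\big)\,\widehat{\gamma}_k(x')\,\q_0$, i.e.\ the image of the product Haar measure on $\cu^n(\mb{F}_p^D)$ (as a group, via its Host--Kra cube structure) acting on $\q_0$. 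By the uniform ergodic-average convergence above, tested against a convergence-determining countable family of continuous functions on $\cu^n(\ns_k)$, this pushforward converges weakly to $\mu_{\cu^n(\ns_k)}$ as $D\to\infty$, so $d_n\big(\mu_{\cu^n(\mb{Z}_p^D)}\co(\phi^{\db{n}})^{-1},\mu_{\cu^n(\ns_k)}\big)\to 0$.

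Finally, since the notion of $b$-balance only involves the finitely many indices $n\leq 1/b$, I would choose, for each such $n$, a threshold $D_n(b)$ beyond which the above distance is $\leq b$, and set $D=D(b,\Omega,\ns_k,x):=\max_{n\leq 1/b} D_n(b)$. Then $\phi$ is $b$-balanced, as required; and once this is in hand, Theorem \ref{thm:main1-intro} applied to the \textsc{cfr} factors of $\ns_k$ (using the inverse limit theorem and Lemma \ref{lem:fibs-preserve-phoms}) yields that $\ns_k$ is $p$-homogeneous, completing the deduction of Theorem \ref{thm:main-ergodic-strong}. The main obstacle, and the place requiring the most care, is the passage from ergodicity to \emph{uniform} convergence of the $n$-cube ergodic averages: one must invoke unique ergodicity of the cube systems $(\cu^n(\ns_k),\cu^n(H_\bullet))$ from Lemma \ref{lem:unique-erg} and check that the averaging over $\mb{F}_p^D$ — rather than over an abstract Følner sequence — genuinely realizes the Haar measure on $\cu^n(\ns_k)$ in the limit, which is where the identification of $\mu_{\cu^n(\ns)}$ as built from Haar measures on the structure groups (as used in the proof of Lemma \ref{lem:unique-erg}) is needed.
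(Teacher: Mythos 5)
Your proposal is correct and follows essentially the same route as the paper: the paper likewise derives unique ergodicity of the cube systems $(\cu^n(\ns_k),\cu^n(\widehat{\gamma}_k(\mb{F}_p^\omega)))$ from Corollary \ref{cor:erg-all-cu-n} and Lemma \ref{lem:unique-erg}, and then invokes a lemma (Lemma \ref{lem:weak-star-conv}) stating precisely your key step, namely that the sampling measures $\mb{E}_{\q\in\cu^n(\mb{F}_p^D)}\delta_{\widehat{\gamma}_k(\q)\cdot q_0}$ on the orbit of the constant cube at $x$ converge weakly to $\mu_{\cu^n(\ns_k)}$ along the F\o{}lner sequence $(\mb{F}_p^D)_D$, before taking the maximum of the thresholds over the finitely many $n\le 1/b$. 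Your explicit identification of the pushforward $\mu_{\cu^n(\mb{Z}_p^D)}\co(\phi^{\db{n}})^{-1}$ with this sampling measure is a detail the paper leaves implicit, but it is the same argument.
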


\begin{proof}[Proof of Theorem \ref{thm:main-ergodic-strong} using Proposition \ref{prop:ergodic-tool}] By Theorem \ref{thm:CCHK} we know that the $k$-th Host-Kra factor is isomorphic to a $k$-step compact nilspace system. By Corollary \ref{cor:erg-all-cu-n} and Lemma \ref{lem:unique-erg} we know that the action of $\cu^n(\mb{F}_p^{\omega})$ on $\cu^n(\ns_k)$ (via $\widehat{\gamma}_k$) is uniquely ergodic for all $n\ge 0$. We now prove that $\ns_k$ is $p$-homogeneous.

Fix any $x\in \ns_k$. By the inverse limit theorem (see \cite[Theorem 2.7.3]{Cand:Notes2}) we have $\ns_k= \varprojlim \ns_{k,i}$ where $\ns_{k,i}$ are $k$-step \textsc{cfr} nilspaces. Let $\psi_i:\ns_k\to \ns_{k,i}$ be the $i$-th limit map in this inverse limit, and recall that $\psi_i$ is a fibration. Consider the $b$-balanced morphism provided by Proposition \ref{prop:ergodic-tool} (supposing some -- any -- metrics have been fixed on $\ns_k$, $\ns_{k,i}$ and using Remark \ref{rem:metconv}). Arguing as in the proof of Proposition \ref{prop:b-bal-p-hom-finite}, we deduce that $\psi_i\co \phi$ is $b'$-balanced for some parameter $b'(b)$ which tends to $0$ as $b\to 0$. Thus, given any $b' > 0$, we have that there exists $D=D(b',\Omega, \ns_k,x,i)$ such that $\psi_i \co \phi$ is $b'$-balanced. Choosing $b'=b'(\ns_{k,i},p)$ as given by Theorem \ref{thm:main1-intro}, we can conclude that for sufficiently large $D=D(\Omega,\ns_k,p,x,i)$ the compact nilspace $\ns_{k,i}$ is $p$-homogeneous. Since this holds for every $i\in \mb{N}$, we deduce that $\ns_k$ is the inverse limit of $p$-homogeneous nilspaces, which implies that $\ns_k$ itself is $p$-homogeneous (this follows easily from the definitions).
\end{proof}

\noindent In order to prove Proposition \ref{prop:ergodic-tool} we will rely on the following technical result:

\begin{lemma}\label{lem:weak-star-conv}
Suppose that the system $(\cu^n(\ns_k),\cu^n(\widehat{\gamma}_k(G)))$ is uniquely ergodic. Let $(G_D)_{D\ge 0}$ be a F\o{}lner sequence for the group $G$ such that $G= \bigcup_{D=1}^{\infty} G_D$. Then for any cube $q\in \cu^n(\ns_k)$, the sampling measures $\mb{E}_{\q\in \cu^n(G_D)}\delta_{\widehat{\gamma}_k(\q)\cdot q}$ on $\cu^n(\ns_k)$ converge in the weak topology to $\mu_{\cu^n(\ns_k)}$ as $D\to \infty$.
\end{lemma}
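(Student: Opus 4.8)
The plan is to realise the sampling measures as Ces\`aro averages for an amenable group action and then apply the classical fact that a uniquely ergodic system equidistributes orbit averages along any F\o{}lner sequence towards the invariant measure. Write $G=\mb{F}_p^\omega$ with its lower central series filtration $G_\bullet$, and recall that $\cu^n(G_\bullet)$ acts on $\cu^n(\ns_k)$ through $\widehat{\gamma}_k$: a cube $\q=(g_v)_{v\in\db{n}}\in\cu^n(G_\bullet)$ acts as the cube-translation $(\widehat{\gamma}_k(g_v))_{v\in\db{n}}\in\cu^n(\tran(\ns_k))$, the image of this action being precisely the group $\cu^n(\widehat{\gamma}_k(G))$ appearing in the hypothesis. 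Since $G$ is abelian, the standard parametrisation $(x,h_1,\dots,h_n)\mapsto (x+\sum_{i} v\sbr{i}h_i)_{v}$ identifies $\cu^n(G_\bullet)$ with $G^{n+1}$, under which $\cu^n(G_D)$ corresponds to $G_D^{\,n+1}$; as $(G_D)$ is F\o{}lner for $G$, a routine estimate (for $g=(g_0,\dots,g_n)\in G^{n+1}$ one has $|gG_D^{n+1}\triangle G_D^{n+1}|/|G_D^{n+1}|\le\sum_{i=0}^n|(g_i+G_D)\triangle G_D|/|G_D|\to 0$) shows that $(\cu^n(G_D))_{D}$ is a F\o{}lner sequence for $\cu^n(G_\bullet)$, and $\bigcup_D \cu^n(G_D)=\cu^n(G_\bullet)$ since $\bigcup_D G_D=G$. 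Finally, the Haar measure $\mu_{\cu^n(\ns_k)}$ is invariant under the cube-translation action, in particular under $\cu^n(\widehat{\gamma}_k(G))$ (translations of $\ns_k$ preserve $\mu_{\ns_k}$, and this passes to cube sets, cf.\ \cite{Cand:Notes2}); by the assumed unique ergodicity it is therefore \emph{the} unique $\cu^n(\widehat{\gamma}_k(G))$-invariant Borel probability measure on $\cu^n(\ns_k)$.

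Given these facts, I would run the standard argument. Fix $f\in C(\cu^n(\ns_k))$ and, for $q\in\cu^n(\ns_k)$, put $\nu^q_D:=\mb{E}_{\q\in \cu^n(G_D)}\delta_{\widehat{\gamma}_k(\q)\cdot q}$, so that $\int f\,\ud\nu^q_D=\mb{E}_{\q\in\cu^n(G_D)}f(\widehat{\gamma}_k(\q)\cdot q)$. Suppose for contradiction that $\int f\,\ud\nu^{q_D}_D$ fails to converge to $\int f\,\ud\mu_{\cu^n(\ns_k)}$ for some sequence $q_D\in\cu^n(\ns_k)$. Passing to a subsequence along which $\nu^{q_D}_D$ converges in the weak topology (possible since $\mc{P}(\cu^n(\ns_k))$ is compact), let $\nu$ be the limit. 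For every fixed $\q_0\in\cu^n(G_\bullet)$, using that $\widehat{\gamma}_k$ is a homomorphism on cubes and that $\q\mapsto\q_0\q$ is a bijection of $\cu^n(G_\bullet)$, one gets
\[
\Big|\int f\big(\widehat{\gamma}_k(\q_0)\cdot x\big)\,\ud\nu^{q_D}_D(x)-\int f\,\ud\nu^{q_D}_D\Big|\le\|f\|_\infty\,\frac{|\q_0\,\cu^n(G_D)\,\triangle\,\cu^n(G_D)|}{|\cu^n(G_D)|}\longrightarrow 0,
\]
so $\nu$ is invariant under $\widehat{\gamma}_k(\q_0)$ for every $\q_0$, i.e.\ $\nu$ is $\cu^n(\widehat{\gamma}_k(G))$-invariant. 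By unique ergodicity $\nu=\mu_{\cu^n(\ns_k)}$, contradicting that $\int f\,\ud\nu^{q_D}_D$ stays bounded away from $\int f\,\ud\mu_{\cu^n(\ns_k)}$. Hence $\int f\,\ud\nu^q_D\to\int f\,\ud\mu_{\cu^n(\ns_k)}$ (uniformly in $q$), and since $f\in C(\cu^n(\ns_k))$ was arbitrary, $\nu^q_D\to\mu_{\cu^n(\ns_k)}$ in the weak topology, as required.

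\textbf{Main obstacle.} There is no serious obstacle: the only delicate points are the bookkeeping ones, namely translating F\o{}lner sequences of $G$ into F\o{}lner sequences of the cube group $\cu^n(G_\bullet)$, and keeping track that the cube-translation action is only through $\widehat{\gamma}_k$, so that its invariant measures are exactly those of $\cu^n(\widehat{\gamma}_k(G))$ and $\mu_{\cu^n(\ns_k)}$ is identified as the unique one. Once these are in place the statement reduces to the classical unique-ergodicity equidistribution argument for amenable group actions.
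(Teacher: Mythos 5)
Your proof is correct and is essentially the same as the paper's: the paper simply cites the standard unique-ergodicity equidistribution argument from \cite[p.\ 30, Proposition 2]{HKbook} and \cite[p.\ 87, \S 4.3.a]{HassKat}, which is exactly the weak-$*$ compactness plus invariance-of-limit-points argument you spell out. Your additional bookkeeping (identifying $\cu^n(G_D)$ with $G_D^{n+1}$ as a F\o{}lner sequence in $\cu^n(G_\bullet)$ and noting that $\mu_{\cu^n(\ns_k)}$ is the unique invariant measure) is precisely what is needed to make that citation rigorous.
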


\begin{proof}
This follows by similar arguments as in \cite[p.\ 30, Proposition 2]{HKbook} (see also \cite[p.\ 87 \S 4.3.\ a.]{HassKat}).
\end{proof}

\begin{proof}[Proof of Proposition \ref{prop:ergodic-tool}] By Theorem \ref{thm:CCHK} we know that the $k$-th Host-Kra factor is isomorphic to a $k$-step compact nilspace system. By Corollary \ref{cor:erg-all-cu-n} and Lemma \ref{lem:unique-erg} we know that the action of $\cu^n(\mb{F}_p^{\omega})$ on $\cu^n(\ns_k)$ (via $\widehat{\gamma}_k$) is uniquely ergodic for all $n\ge 0$.

Fix some point $x\in \ns_k$. For any $D$ we can define the map $\varphi:\mb{Z}_p^D\to \ns_k$ as $x\mapsto \widehat{\gamma}_k(g)\cdot x$. We need to prove that, given $b>0$, there exists $D$ such that for every $n\le 1/b$ we have
\begin{equation}\label{eq:cubemetineq}
d_n(\mu_{\cu^n(\ns_k)},\mu_{\cu^n(\mc{D}_1(\mb{F}_p^D))}\co (\varphi^{\db{n}})^{-1})<b,
\end{equation}
where $d_n$ is a prescribed metric on $\mc{P}(\cu^n(\ns_k))$ (see Remark \ref{rem:metconv}). We now apply Lemma \ref{lem:weak-star-conv} with $G=\mb{F}_p^{\omega}$, with $G_D=\mb{F}_p^D$ for each $D\geq 0$ (naturally embedded as a subgroup $\mb{F}_p^{\omega}$ so that $(G_D)_{D\geq 0}$ is a F\o{}lner sequence in $\mb{F}_p^{\omega}$), and $q\in\cu^n(\ns_k)$ the cube with constant value $x$. Thus, for each $n$ there is $D_n$ such that for $D\ge D_n$ the inequality \eqref{eq:cubemetineq} holds. Taking $D\ge \max_{n\le1/b}(D_n)$, the result follows.
\end{proof}

We can now apply this straightaway to describe the $k$-th Host-Kra factors for $k\leq p$.

\begin{proof}[Proof of Theorem \ref{thm:high-char-ergodic}]
The result follows from combining theorems \ref{thm:main-ergodic-intro} and \ref{thm:intro-3}.
\end{proof}

\noindent We end this subsection with the following explicit description of the translation group of the nilspaces occurring in Theorem \ref{thm:high-char-ergodic}.

\begin{theorem}\label{thm:HiCharTransDesc}
Let $k\le p$, for each $i\in [k]$ let $a_i\in \mb{N}\cup\{\infty\}$, and let $\ns$ be the compact $k$-step $p$-homogeneous nilspace $\prod_{i=1}^k\mc{D}_i(\mb{Z}_p^{a_i})$. Then the translation group $\tran(\ns)$ can be identified as a set with the Cartesian product \footnote{Note that in \eqref{eq:trasdescript} the product signs outside the bracket indicate  Cartesian products, and the product sign inside the bracket indicates a product of nilspaces.}
\begin{equation}\label{eq:trasdescript}
\mb{Z}_p^{a_1}\times \prod_{i=2}^k\hom\Big(\prod_{j=1}^{i-1}\mc{D}_j(\mb{Z}_p^{a_j}),\mc{D}_{i-1}(\mb{Z}_p^{a_i})\Big),
\end{equation}
and the action of an element $(T_1,\ldots,T_k)$ in this product as a translation $\alpha\in \tran(\ns)$ is given by the formula
\begin{equation}
\alpha(x_1,\ldots,x_k)=(x_1,\ldots,x_k)+(T_1,T_2(x_1),T_3(x_1,x_2),\ldots,T_k(x_1,\ldots,x_{k-1})).
\end{equation}
\end{theorem}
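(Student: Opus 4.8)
The plan is to prove this by induction on $k$, exploiting that $\ns$ is a \emph{split} degree-$k$ extension of $\ns_{k-1}=\prod_{i=1}^{k-1}\mc{D}_i(\mb{Z}_p^{a_i})$ by the group $\ab_k(\ns)=\mb{Z}_p^{a_k}$, the splitting being built into the product structure (here one uses that $\mc{D}_k(A)$ has $\ab_k=A$ and all lower structure groups trivial, so $\ns_{k-1}$ is indeed the $(k-1)$-step factor). The engine is the following set-theoretic splitting: $\tran(\ns)=\tran(\ns_{k-1})\times V_k$, where $V_k:=\ker\bigl(\tran(\ns)\to\tran(\ns_{k-1})\bigr)$ is the group of \emph{vertical} translations (those inducing the identity on $\ns_{k-1}$), together with a canonical identification $V_k\cong\hom\bigl(\ns_{k-1},\mc{D}_{k-1}(\mb{Z}_p^{a_k})\bigr)$. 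Granting these, since the $i=k$ factor in \eqref{eq:trasdescript} is exactly $\hom\bigl(\prod_{j=1}^{k-1}\mc{D}_j(\mb{Z}_p^{a_j}),\mc{D}_{k-1}(\mb{Z}_p^{a_k})\bigr)=\hom(\ns_{k-1},\mc{D}_{k-1}(\mb{Z}_p^{a_k}))$ and the base case is $\tran\bigl(\mc{D}_1(\mb{Z}_p^{a_1})\bigr)\cong\mb{Z}_p^{a_1}$, unwinding the recursion produces the claimed product. Writing any $\alpha\in\tran(\ns)$ as $\alpha=\wt{\alpha_{k-1}}\co V$ with $\wt{\alpha_{k-1}}$ the coordinatewise lift of the induced translation $\alpha_{k-1}\in\tran(\ns_{k-1})$ and $V\in V_k$ vertical, one computes $\alpha(x_1,\dots,x_k)=(\alpha_{k-1}(x_1,\dots,x_{k-1}),\,x_k+T_k(x_1,\dots,x_{k-1}))$ where $T_k$ is the morphism attached to $V$; iterating this on $\alpha_{k-1}$ yields precisely the triangular formula $\alpha(x_1,\dots,x_k)=(x_1,\dots,x_k)+(T_1,T_2(x_1),\dots,T_k(x_1,\dots,x_{k-1}))$.

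The crux is the identification of $V_k$. A translation $\alpha$ with $\alpha_{k-1}=\id$ preserves every fibre $\{y\}\times\mb{Z}_p^{a_k}$ of $\pi_{k-1}$, and since translations commute with the action of the structure group (\cite[Lemma 3.2.37]{Cand:Notes1}), it acts on each fibre by a translation; hence $\alpha$ has the form $(y,z)\mapsto(y,z+\phi(y))$ for a function $\phi:\ns_{k-1}\to\mb{Z}_p^{a_k}$. Using that $\ns$ is a product nilspace, so $\cu^n(\ns)=\cu^n(\ns_{k-1})\times\cu^n(\mc{D}_k(\mb{Z}_p^{a_k}))$, such a map is an automorphism precisely when $\phi\co\q'\in\cu^n(\mc{D}_k(\mb{Z}_p^{a_k}))$ for every $\q'$, i.e. when $\phi\in\hom(\ns_{k-1},\mc{D}_k(\mb{Z}_p^{a_k}))$. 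The further requirement that $\alpha$ lie in $\tran(\ns)$ — equivalently, by the definition of the translation group in terms of codimension-one faces (\cite[\S 3.2.4]{Cand:Notes1}), that $\alpha$ carry the restriction of any cube to a codimension-one face $F$ back into $\cu^n(\ns)$ — amounts to asking that $v\mapsto(\phi\co\q')(v)\,\mathbf{1}_F(v)$ still lie in $\cu^n(\mc{D}_k(\mb{Z}_p^{a_k}))$. Invoking the description of $\cu^n(\mc{D}_k(A))$ as the polynomial maps $\db{n}\to A$ of degree $\le k$, and noting that multiplying such a map by the indicator of a codimension-one face raises its degree by one (after passing, if necessary, to a cube of slightly larger dimension so that a free coordinate is available), one concludes that this forces $\phi\co\q'$ to have degree $\le k-1$ for all cubes $\q'$; by \cite[Theorem 2.2.14]{Cand:Notes1} this is exactly $\phi\in\hom(\ns_{k-1},\mc{D}_{k-1}(\mb{Z}_p^{a_k}))$, and conversely every such $\phi$ gives a genuine translation. (Alternatively, one can quote directly a structural description of $\tran_1$ of a degree-$k$ extension from \cite{Cand:Notes1}, if available in the needed form.)

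The remaining inputs are routine. That every $\alpha\in\tran(\ns)$ induces $\alpha_{k-1}\in\tran(\ns_{k-1})$ with $\pi_{k-1}\co\alpha=\alpha_{k-1}\co\pi_{k-1}$ is standard. Surjectivity of $\tran(\ns)\to\tran(\ns_{k-1})$ follows by checking that, for $\beta\in\tran(\ns_{k-1})$, the coordinatewise lift $\wt{\beta}:(y,z)\mapsto(\beta(y),z)$ is an automorphism of the product nilspace $\ns$ carrying codimension-one faces of cubes to cubes — both properties being inherited componentwise from $\beta$ — hence $\wt{\beta}\in\tran(\ns)$; and $\beta\mapsto\wt{\beta}$ is a section whose image meets $V_k$ only in the identity, which yields the set-theoretic splitting and closes the induction. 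The base case is the elementary fact that the translation group of the $1$-step nilspace $\mc{D}_1(\mb{Z}_p^{a_1})$ is $\mb{Z}_p^{a_1}$ acting by translation. In the infinite case ($a_i=\infty$) all $\hom$-sets and $\tran(\ns)$ are understood with continuous maps, and the argument is unchanged.

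I expect the only real obstacle to be the degree count in the second paragraph: making it fully precise that the codimension-one face condition defining $\tran(\ns)$ is \emph{exactly} what cuts $\mc{D}_k$ down to $\mc{D}_{k-1}$, which requires working with the Möbius / discrete-derivative description of cubes on $\mc{D}_k(A)$ and being careful to pass to auxiliary higher-dimensional cubes so the conclusion is uniform in the cube dimension. Everything else — commuting with the structure group, lifting along a split extension, the cube structure of product nilspaces, and the final coordinate bookkeeping — is mechanical. We remark that neither the hypothesis $k\le p$ nor $p$-homogeneity is actually used in this argument; they are inherited from the setting of Theorem \ref{thm:high-char-ergodic}, in which this description is applied.
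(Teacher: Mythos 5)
Your proposal is correct and follows essentially the same route as the paper: induction on $k$ through the product/split structure, using that translations commute with the last structure group to write a vertical translation as $(y,z)\mapsto(y,z+\phi(y))$, and then the degree-drop characterization of cubes on $\mc{D}_k$ versus $\mc{D}_{k-1}$ to pin $\phi$ down as a morphism into $\mc{D}_{k-1}(\mb{Z}_p^{a_k})$. The degree count you flag as the main obstacle is precisely the content of \cite[Lemma 2.2.19]{Cand:Notes1}, which the paper invokes via the arrow construction $\langle 0,g'\co\q\rangle_1$ together with \cite[Lemma 3.2.32]{Cand:Notes1}, so there is no gap; your remark that neither $k\le p$ nor $p$-homogeneity is actually used also matches the paper's argument.
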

\noindent The group operation on $\tran(\ns)$ can be expressed directly on the set \eqref{eq:trasdescript} by deducing it from the definition of the action. Note also that the morphism sets in \eqref{eq:trasdescript} are sets of polynomial maps (see e.g.\  \cite[Theorem 2.2.14]{Cand:Notes1}). Thus Theorem \ref{thm:HiCharTransDesc} describes $\tran(\ns)$ in terms of polynomials.

\begin{proof}
First we prove that all the functions described are indeed translations. We argue by induction on $k$. The case $k=1$ is clear since in this case the function just adds the constant $T_1$, and is thus indeed a translation on $\mc{D}_1(\mb{Z}_p^{a_1})$. For $k\geq 2$, by induction it suffices to check that the map $\alpha: x=(x_1,,\ldots,x_k)\mapsto x+\big(0,\ldots,0,T_k(x_1,\ldots,x_{k-1})\big)$ is a translation. Recall from \cite[\S 3.1.4]{Cand:Notes1} the notation for \emph{arrow spaces}: in particular if $f,g:\db{n}\to \ns$ are any two maps, we define the 1-arrow $\langle f,g \rangle_1:\db{n+1}\to \ns$ as the map such that for $v\in \db{n}$ we have $\langle f,g \rangle_1(v,0)=f(v)$ and $\langle f,g \rangle_1(v,1)=g(v)$. By \cite[Lemma 3.2.32]{Cand:Notes1} it suffices to show that for every $\q=(\q_1,\ldots,\q_k)\in \cu^n\big(\prod_{i=1}^{k}\mc{D}_i(\mb{Z}_p^{a_i})\big)$ we have $\langle \q,\alpha\co\q\rangle_1\in \cu^{n+1}(\ns)$. But  $\langle \q,\alpha\co\q\rangle_1=\langle \q,\q\rangle_1 + g$, where $g$ is the map $\db{n+1}\to \prod_{i=1}^k \mb{Z}_p^{a_i}$ with values of the form $g(v)=(0,\ldots,0,\langle 0,T_k\co (\q_1,\ldots,\q_{k-1})\rangle_1(v))$. Thus it suffices to prove that for every such $n$-cube $\q$ we have $\langle 0,T_k\co (\q_1,\ldots,\q_{k-1})\rangle_1\in \cu^n(\mc{D}_k(\mb{Z}_p^{a_k}))$. For this, by \cite[Lemma 2.2.19]{Cand:Notes1} it suffices to have $T_k\co (\q_1,\ldots,\q_{k-1})\in \cu^n(\mc{D}_{k-1}(\mb{Z}_p^{a_k}))$. But this is  precisely what is ensured by our assumption that $T_k\in\hom\big(\prod_{j=1}^{k-1}\mc{D}_j(\mb{Z}_p^{a_j}), \mc{D}_{k-1}(\mb{Z}_p^{a_k})\big)$. 

Now we prove the converse, namely that every translation $\alpha$ has the form claimed in the theorem. By induction on $k$ we can assume that $\alpha$ has this form at least in the first $k-1$ components, so $\alpha(x)=(x_1,\ldots,x_{k-1},0)+(T_1,T_2(x_1),\ldots,T_{k-1}(x_1,\ldots,x_{k-1}),g(x))$ for some map $g:\ns\to\mb{Z}_p^{a_k}$. We know that translations commute with the action of the last structure group, so $g(x_1,\ldots,x_k)=g(x_1,\ldots,x_{k-1},0)+x_k$. Now it suffices to show that $g': (x_1,\ldots,x_{k-1})\mapsto g(x_1,\ldots,x_{k-1},0)$ is in $\hom(\prod_{i=1}^{k-1}\mc{D}_i(\mb{Z}_p^{a_i}),\mc{D}_{k-1}(\mb{Z}_p^{a_k}))$, i.e., that for every $\q\in \cu^n(\prod_{i=1}^{k-1}\mc{D}_i(\mb{Z}_p^{a_i}))$ we have $g'\co \q\in \cu^n(\mc{D}_{k-1}(\mb{Z}_p^{a_k}))$. Let $\q^*$ be the cube in $\cu^n(\prod_{i=1}^k\mc{D}_i(\mb{Z}_p^{a_i}))$ defined by $\q^*(v)=(\q(v),0^{a_k})$ for $v\in\db{n}$, and consider the map $\langle\q^*,\alpha\co\q^*\rangle_1$. On one hand, by \cite[Lemma 3.2.32]{Cand:Notes1} this map is a cube (since $\alpha$ is a translation), and on the other hand, by the above inductive expression of $\alpha$, this map equals  $\langle\q^*,\q^*\rangle_1+\langle0,\q'\rangle_1+\langle 0,\q''\rangle_1$ for some cube $\q'=(\q'_1,\ldots,\q'_{k-1},0^{a_k})\in \cu^n(\prod_{i=1}^n\mc{D}_i(\mb{Z}_p^{a_i}))$, and where $\q''(v)=(0^{a_1},\ldots,0^{a_{k-1}},g'\co \q(v))$. Then $\langle 0,\q''\rangle_1$ is in $\cu^{n+1}(\prod_{i=1}^{k}\mc{D}_i(\mb{Z}_p^{a_i}))$, since it is the combination of cubes $\langle\q^*,\alpha\co\q^*\rangle_1-\langle\q^*, \q^*\rangle_1 - \langle0,\q'\rangle_1$. Hence $\langle 0,g'\co\q\rangle_1\in \cu^{n+1}(\mc{D}_k(\mb{Z}_p^{a_k}))$ and therefore $g'\co\q\in \cu^n(\mc{D}_{k-1}(\mb{Z}_p^{a_k}))$ by \cite[Lemma 2.2.19]{Cand:Notes1}, as required.
\end{proof}

\begin{remark}
Combining Theorem \ref{thm:HiCharTransDesc} with Theorem \ref{thm:high-char-ergodic} we refine the description of the $k$-th Host-Kra factor for $k\leq p$, in that the $\mb{F}_p^\omega$-action is given by a homomorphism $\widehat{\gamma}_k$ from $\mb{F}_p^{\omega}$ to the group $\tran(\ns)$ with the above explicit description. It would be interesting to examine such homomorphisms further, possibly to refine the description even more using other available properties (e.g.\ full ergodicity). This goes beyond our aims in this paper.
\end{remark}

\subsection{$k$-step $p$-homogeneous nilspace systems as Abramov systems for $k\leq p+1$}\hfill\smallskip\\
Given a measure preserving $G$-system $(X,G)$, a function $f:X\to \mb{C}$ in $L^{\infty}(X)$ and $g\in G$, the corresponding multiplicative derivative of $f$ is the function $\Delta_g f(x):=f(g\cdot x)\overline{f(x)}$ in $L^{\infty}(X)$. We recall the notion of Abramov systems from \cite[Definition 1.13]{BTZ}, named after Leonid M.\ Abramov, who studied this type of systems in the setting of $\mb{Z}$-actions \cite{Ab}. 

\begin{defn}[Abramov system]\label{def:abramov}
Let $(X,G)$ be a $G$-system for a countable discrete agelian group $G$, and let $k\ge 0$ be an integer. We say that $\phi\in L^{\infty}(X)$ is a \emph{phase polynomial of degree $\leq k$} if for all $g_1,\ldots,g_{k+1}\in G$ we have $\Delta_{g_1}\cdots \Delta_{g_{k+1}}f = 1$ almost surely on $X$. We say that $X$ is an \emph{Abramov system} of order $\leq k$ if the linear span of the phase polynomials of degree $\leq k$ is dense in $L^2(X)$.
\end{defn}

\noindent It is proved in \cite[Theorem 1.19]{BTZ} that for every ergodic $\mb{F}_p^\omega$-system $X$, the $k$-th Host--Kra factor of this system  (denoted $\mc{Z}_{<k+1}(X)$ in \cite{BTZ}) is an Abramov system of order $\leq k$ for $k<p$. In this subsection we prove Theorem \ref{thm:Abramov}, establishing  that the $k$-th Host--Kra factor is Abramov also in the two new cases $k=p$ and $k=p+1$. To this end, we shall first reduce the problem to a question about nilspace systems.

Recall that every compact nilspace $\ns$ has a compact metric topology, relative to which every translation in $\tran(\ns)$ is a homeomorphism on $\ns$. Thus for every discrete countable group $G$ acting on $\ns$ by translations, the nilspace system $(\ns,G)$ can be treated as a topological dynamical system. It is then natural to introduce the following topological variant of Abramov systems. Given a metric space $X$, let $C(X,\mb{C})$ denote the algebra of complex-valued continuous functions on $X$ (a unital $^*$-algebra) with the uniform norm.

\begin{defn}[Topological Abramov systems]
Let $X$ be a compact metric space and let $G$ be a group acting by  homeomorphisms on $X$. A \emph{continuous phase polynomial} of degree $\leq k$ on $X$ is a function $\phi\in C(X,\mb{C})$ such that $\Delta_{g_1}\cdots \Delta_{g_{k+1}}\phi(x) = 1$ for all $g_1,\ldots,g_{k+1}\in G$, $x\in X$. We say that $(X,G)$ is \emph{topological Abramov of order $\leq k$} if the algebra generated by the continuous phase polynomials of degree $\leq k$ is dense in $C(X,\mb{C})$.
\end{defn}
\noindent By standard density arguments it is readily shown that if $\mu$ is a Borel probability measure on the compact metric space $X$ and $(X,G)$ is a topological Abramov system of order $\leq k$ then, provided $G$ acts by transformations preserving $\mu$, we have that $(X,G)$ is Abramov of order $\le k$ as a measure-preserving system. Our approach to Question \ref{Q:Abramov} is to study the question of when a nilspace system $(\ns,G)$ is a topological Abramov system. To this end, one of the main steps in this subsection consists in reformulating the topological Abramov property of a nilspace system $(\ns,G)$ as the following property of $\ns$.

\begin{defn}[Sub-abelian compact nilspace]
A compact nilspace $\ns$ is \emph{sub-abelian} of order $\leq k$ if there exists a compact (second-countable) abelian group nilspace $\nss$ of step $\leq k$ and an injective continuous morphism $\phi:\ns\to \nss$.
\end{defn}
\noindent Here recall that an abelian group nilspace is a group nilspace $(G,G_\bullet)$ where $G$ is abelian, and that a group nilspace $(G,G_\bullet)$  is of step $\leq k$ if and only if the filtration $G_\bullet$ has degree $\leq k$. The above-mentioned reformulation of Question \ref{Q:Abramov} consists in the following result.

\begin{proposition}\label{prop:Abrachar}
Let $\ns$ be a $k$-step compact nilspace, and let $G$ be a countable discrete abelian group acting on $\ns$ via a homomorphism $\varphi:G\to \tran(\ns)$. If $\ns$ is sub-abelian of order $\leq k$, then $(\ns,G)$ is topological Abramov of order $\leq k$. Conversely, if $(\ns,G)$ is topological Abramov of order $\leq k$ and
fully ergodic, then $\ns$ is sub-abelian of order $\leq k$.
\end{proposition}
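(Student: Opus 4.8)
The plan is to prove the two implications separately. For the forward implication, suppose $\phi:\ns\to\nss$ is an injective continuous morphism into a compact abelian group nilspace $\nss=(N,N_\bullet)$ of step $\leq k$. The key observation is that since $\phi$ is a morphism of group nilspaces, it intertwines the translation action: composing with the characters of $N$ and using that on a degree-$\leq k$ filtered abelian group nilspace the coordinate characters are continuous phase polynomials of degree $\leq k$ (this is the standard fact that a character $\chi$ of $N$ satisfies $\Delta_{g_1}\cdots\Delta_{g_{k+1}}\chi=1$ when each $g_i$ acts by translation by an element of $N=N_1$, because $N_{k+1}=\{0\}$ forces the iterated multiplicative derivative of $\chi$ to vanish). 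Pulling these back along $\phi$, and noting that $\varphi(g)$ for $g\in G$ corresponds under $\phi$ to translation by $\phi$-compatible elements of $N$ (one must check that $\phi$ conjugates each $\varphi(g)$ to an honest translation on the image $\phi(\ns)$, which follows because $\varphi(g)\in\tran_1(\ns)$ and $\phi$ is a nilspace morphism commuting with the cubic structure), we obtain continuous phase polynomials $\chi\co\phi$ of degree $\leq k$ on $\ns$. Since $\phi$ is injective and continuous between compact metric spaces it is a homeomorphism onto its image, so the algebra generated by the $\chi\co\phi$ separates points of $\ns$ and is closed under conjugation; by Stone--Weierstrass it is dense in $C(\ns,\mb{C})$, giving the topological Abramov property of order $\leq k$.

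For the converse, assume $(\ns,G)$ is topological Abramov of order $\leq k$ and fully ergodic. The idea is to build the abelian group nilspace $\nss$ directly from the continuous phase polynomials. Let $P_k\subset C(\ns,\mb{C})$ be the set of \emph{unimodular} continuous phase polynomials of degree $\leq k$ (those of modulus $1$ everywhere); this is a group under pointwise multiplication. The crucial structural input is that, because of full ergodicity, the action of $\cu^n(G)$ on $\cu^n(\ns)$ is uniquely ergodic for all $n$ (this is exactly what full ergodicity provides), and this should force the phase polynomials to be rigid enough that $P_k$, modulo the constants of modulus $1$, is a discrete countable abelian group $A$ carrying a natural filtration $A_\bullet$ of degree $\leq k$ coming from the degree of the phase polynomial. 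One then forms $\nss := \widehat{A}$, the Pontryagin dual, a compact second-countable abelian group, equipped with the dual filtration (the annihilators of the $A_i$), which is a group nilspace of step $\leq k$. The evaluation map $\phi:\ns\to\nss=\widehat A$, $x\mapsto (\rho\mapsto\rho(x))$ (sending $x$ to the character of $A$ given by evaluating phase polynomials at $x$), is continuous; it is a morphism because phase polynomials of degree $\leq k$ pull the cube structure back correctly (a degree-$\leq k$ phase polynomial composed with an $n$-cube of $\ns$ produces, after taking the appropriate alternating product over a $(k+1)$-face, the value $1$, which is precisely the condition for the image to land in the cube set of $\mc{D}_k$ on the target); and $\phi$ is injective precisely because the phase polynomials separate points, which is where the topological Abramov hypothesis (via Stone--Weierstrass, the generated algebra is dense, hence separates points, and one checks the separating can be arranged using the unimodular ones) is used.

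I expect the main obstacle to be the converse direction, and specifically two points within it. First, verifying that $\phi$ is genuinely a \emph{nilspace} morphism (not merely continuous and injective) requires a careful argument that each continuous phase polynomial of degree $\leq k$ indeed corresponds to a character of some degree-$\leq k$ abelian nilspace structure in a way that is compatible with the cube sets $\cu^n(\ns)$; this is where one uses the precise interplay between the derivative condition $\Delta_{g_1}\cdots\Delta_{g_{k+1}}\phi=1$ and the Host--Kra cube sets, and full ergodicity is likely needed to rule out pathological phase polynomials whose "degree" behaves badly on cubes (one wants the natural map from degrees of phase polynomials to the filtration grading to be well-behaved). Second, one must ensure that the group $A$ of phase polynomials modulo constants is actually \emph{countable} so that $\nss=\widehat A$ is second-countable and metrizable; this should follow from $\ns$ being second-countable together with a rigidity statement (two phase polynomials of degree $\leq k$ agreeing on a dense set agree everywhere, by continuity, and the fully ergodic structure limits how many there can be), but making this precise is the technical heart. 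The forward direction, by contrast, should be essentially a direct application of Stone--Weierstrass once the bookkeeping about translations is set up.
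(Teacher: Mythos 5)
Your forward direction is essentially the paper's argument: compose an injective continuous morphism into a compact abelian group nilspace with its characters, observe that these compositions are continuous phase polynomials, and apply Stone--Weierstrass using injectivity to separate points. The side issue you raise about conjugating each $\varphi(g)$ to an ``honest translation'' on the image is not needed: the phase-polynomial property of $\chi\co\pi_i\co\phi$ follows directly from the fact that the orbit configuration $v\mapsto \varphi(g_1)^{v\sbr{1}}\cdots\varphi(g_{k+1})^{v\sbr{k+1}}(x)$ is a $(k+1)$-cube on $\ns$ (each $\varphi(g_j)$ is a translation), which $\phi$ sends into the $(k+1)$-cube set of a $k$-step nilspace, forcing the Gray-code alternating sum, hence the iterated multiplicative derivative, to be trivial.

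The converse direction has a genuine gap, located exactly where you flag it but not of the nature you describe. The crux is to show that a continuous phase polynomial $f$ of degree $\leq k$ equals $e\co\phi$ for an actual morphism $\phi\in\hom(\ns,\mc{D}_k(\mb{T}))$; this is not about ``ruling out pathological phase polynomials,'' but about the fact that the phase-polynomial condition only controls $\phi$ along orbit cubes $v\mapsto g^v\cdot x$, whereas the morphism property must hold on \emph{all} of $\cu^{k+1}(\ns)$. The paper's Lemma \ref{lem:cts-approx} bridges this in two steps: (i) full ergodicity gives unique ergodicity of the $\cu^{k+1}(G)$-action on $\cu^{k+1}(\ns)$, hence density of the orbit of a constant cube, so every $\q\in\cu^{k+1}(\ns)$ is approximated by orbit cubes on which the alternating sum of $\phi$ vanishes exactly --- making $\phi$ a $\delta$-quasimorphism for every $\delta>0$; (ii) the quasimorphism approximation theorem \cite[Theorem 2.8.2]{Cand:Notes2} then yields genuine morphisms uniformly close to $\phi$, and closedness of the cube sets gives that $\phi$ itself is a morphism. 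Step (ii) is a substantive imported theorem, not a routine verification, and without it your evaluation map into $\widehat{A}$ has no reason to be a nilspace morphism. Separately, your construction requires the group $A$ of unimodular phase polynomials modulo constants to be countable, which you leave open; the paper avoids this entirely by using separability of $C(\ns,\mb{C})$ to extract only a \emph{countable} family of phase polynomials with dense linear span, converting each to a morphism $\ns\to\mc{D}_k(\mb{T})$ via the lemma above, and assembling them into a single injective morphism into $\mc{D}_k(\mb{T}^{\mb{N}})$, with injectivity following from the density via a Urysohn argument. So the dual-group construction could in principle be completed, but the two ingredients it is missing --- the quasimorphism upgrade and the countability --- are precisely the ones that carry the weight of the proof.
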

\noindent To prove this we shall use the following couple of lemmas. The first one is just a convenient reformulation of the sub-abelian property.
\begin{lemma}\label{lem:SAequiv}
A compact nilspace $\ns$ is sub-abelian of order $\leq k$ if and only if there is an injective continuous morphism $\phi:\ns \to \mc{D}_k(\mb{T}^\mb{N})$.
\end{lemma}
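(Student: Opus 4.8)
The plan is to prove the equivalence by a two-way implication, where the nontrivial direction is the forward one (from sub-abelian of order $\leq k$ via an arbitrary compact abelian group nilspace $\nss$ to an embedding into the specific target $\mc{D}_k(\mb{T}^\mb{N})$). The backward direction is immediate: $\mc{D}_k(\mb{T}^\mb{N})$ is itself a compact second-countable abelian group nilspace of step $\leq k$ (its group is $\mb{T}^\mb{N}$, equipped with the degree-$k$ filtration), so an injective continuous morphism $\phi:\ns\to\mc{D}_k(\mb{T}^\mb{N})$ is precisely a witness of the sub-abelian property.

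For the forward direction, suppose $\phi:\ns\to\nss$ is an injective continuous morphism into a compact second-countable abelian group nilspace $(\nss,\nss_\bullet)$ of step $\leq k$; write the underlying group as $A$. The first step is to reduce to the case where the filtration on $A$ is the degree-$k$ filtration $\mc{D}_k(A)$: since $\nss_\bullet$ has degree $\leq k$, the identity map on $A$ is a morphism $\mc{D}_k(A)\to \nss$ only if $A_i\supseteq A^{(k,i)}$ in the appropriate sense, but what we actually need goes the other way — I would observe that the cube sets of $(\nss,\nss_\bullet)$ are contained in those of $\mc{D}_k(A)$, so $\phi$ remains a morphism when the target is re-equipped with the degree-$k$ structure $\mc{D}_k(A)$, and injectivity and continuity are unaffected. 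Thus we may assume $\nss=\mc{D}_k(A)$ for a compact second-countable abelian group $A$. The second step is to embed $A$ continuously and injectively into $\mb{T}^\mb{N}$ as a topological group: this is the standard fact that every compact second-countable abelian group admits such an embedding into the infinite torus (equivalently, its Pontryagin dual is countable, and a countable generating set of characters gives the embedding). The third step is to check that a continuous injective group homomorphism $\iota:A\to\mb{T}^\mb{N}$ induces a continuous injective nilspace morphism $\mc{D}_k(A)\to\mc{D}_k(\mb{T}^\mb{N})$; this holds because the degree-$k$ cube sets are defined functorially from the group structure (a map is a cube on $\mc{D}_k(A)$ iff a certain alternating-sum / Gray-code condition holds, which is preserved by any group homomorphism), so $\iota$ is automatically a morphism, and composing $\iota\co\phi:\ns\to\mc{D}_k(\mb{T}^\mb{N})$ yields the desired injective continuous morphism.

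The step I expect to be the main obstacle is the reduction to $\mc{D}_k(A)$, i.e.\ verifying carefully that the cube sets $\cu^n(\nss_\bullet)$ for an arbitrary degree-$\leq k$ filtration $\nss_\bullet$ on the abelian group $A$ are contained in $\cu^n(\mc{D}_k(A))$ — equivalently, that every Host--Kra $n$-cube for $\nss_\bullet$ satisfies the single defining relation of $\mc{D}_k(A)$ (the vanishing of the relevant $(k+1)$-dimensional discrete derivative, cf.\ \cite[Definition 2.2.30]{Cand:Notes1}). This is a routine consequence of the fact that a filtration of degree $\leq k$ has $\nss_{k+1}=\{0\}$, so the only subtlety is bookkeeping with the combinatorics of Host--Kra cubes; I would cite the relevant characterization from \cite{Cand:Notes1} (in particular that $\mc{D}_k(A)$ is the largest nilspace structure of step $\leq k$ on the abelian group $A$ compatible with its group operation) rather than rederive it. Everything else — the torus embedding of a compact second-countable abelian group, and functoriality of the degree-$k$ nilspace functor under group homomorphisms — is standard and can be invoked with brief justification.
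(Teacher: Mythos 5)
Your proposal is correct and follows essentially the same route as the paper: the backward direction is immediate, and the forward direction rests on the same two facts, namely that the vanishing of Gray-code alternating sums on $(k+1)$-cubes (valid for any degree-$\leq k$ filtration on $A$) is exactly the defining condition for $\mc{D}_k$-cubes and is preserved by group homomorphisms, together with the character embedding of a second-countable compact abelian group into $\mb{T}^{\mb{N}}$. The only difference is organizational: you factor through the intermediate nilspace $\mc{D}_k(A)$, whereas the paper verifies directly that each character is a morphism into $\mc{D}_k(\mb{T})$ — both reductions invoke the identical Gray-code characterization, so the arguments coincide in substance.
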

\begin{proof}
The backward implication is clear. For the forward implication, suppose that $Z$ is a compact abelian group with a filtration $Z_\bullet$ of degree $\leq k$ and that the associated group nilspace $\nss$ admits a continuous injective morphism. Then it suffices to show that there is a continuous injective morphism $\phi':\nss\to \mc{D}_k(\mb{T}^\mb{N})$. By second-countability of $Z$, the dual group $\wh{Z}$ is countable, so we can list its elements as $\chi_i$ for $i\in \mb{N}$ and then define a map $\phi':Z\to \mb{T}^{\mb{N}}$, $x\mapsto (\chi_i(x))_{i\in \mb{N}}$. From the properties of characters it follows that $\phi'$ is continuous and injective. We claim that $\phi'\in \hom(\nss,\mc{D}_k(\mb{T}^\mb{N}))$. To see this it suffices to show that each $\chi\in \wh{Z}$ is in $\hom(\nss,\mc{D}_k(\mb{T}))$ (since $\mc{D}_k(\mb{T}^\mb{N})$ is isomorphic to the product nilspace of countably many copies of $\mc{D}_k(\mb{T})$). But $(Z,Z_\bullet)$ being of step $\leq k$ implies that for any $(k+1)$-cube $\q$ on this nilspace, the Gray-code alternating sum $\sigma_{k+1}(\q)$ is 0 (see \cite[Proposition 2.2.25]{Cand:Notes1}). Since $\chi$ commutes with the operations in this sum, we have $\sigma_{k+1}(\chi\co \q)=0$ in $\mb{T}$, so $\chi\co \q\in \cu^{k+1}(\mc{D}_k(\mb{T}))$. This proves that $\chi$ is a morphism.
\end{proof}
\noindent The second lemma uses full ergodicity to upgrade any continuous polynomial phase to a nilspace morphism.
\begin{lemma}\label{lem:cts-approx}
Let $\ns$ be a compact $k$-step nilspace and let $G$ be a countable discrete abelian group such that $(\ns,G)$ is fully ergodic. Let $f:\ns\to\mb{C}$ be a continuous polynomial phase of degree $\leq k$. Then there is a continuous morphism $\phi\in \hom\big(\ns,\mc{D}_k(\mb{T})\big)$ such that $f(x)=e(\phi(x))$ for all $x\in \ns$.
\end{lemma}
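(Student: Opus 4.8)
The plan is to show first that a continuous phase polynomial $f$ of degree $\leq k$ on $\ns$ is automatically \emph{unimodular}, i.e.\ $|f(x)|$ is constant (and we may normalize it to be $1$), and then to show that once $f$ is unimodular and takes the form $e(\phi)$ locally, the exponent $\phi$ can be chosen globally as a genuine nilspace morphism $\ns\to\mc{D}_k(\mb{T})$. For the first point, observe that the multiplicative derivative $\Delta_g$ never changes the modulus of a nowhere-zero function, and in general $|\Delta_g f| = |f|\co g \cdot |f|^{-1}$ wherever $f$ is nonzero; the condition $\Delta_{g_1}\cdots\Delta_{g_{k+1}}f\equiv 1$ forces a strong invariance on $|f|$. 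The cleanest way to extract this is to use full ergodicity: the level sets of $|f|$, or more precisely the quantity $\log|f|$ where $f\neq 0$, must be annihilated by the $(k+1)$-fold additive derivative $\partial_{g_1}\cdots\partial_{g_{k+1}}$, hence $\log|f|$ is a ``polynomial of degree $\leq k$'' for the $G$-action in a suitable sense; but it is also real-valued and continuous on the compact space $\ns$, and full ergodicity (applied at cube-level) pins down the $G$-invariant real-valued degree-$0$ part to be constant, forcing $|f|$ constant. One must first rule out zeros of $f$: if $f$ vanished somewhere, the phase-polynomial identity propagates the zero along $G$-orbits and, by minimality of the uniquely ergodic action (which follows from full ergodicity applied with $n=0$, giving unique ergodicity and hence, together with the support of Haar measure being all of $\ns$, minimality), $f$ would vanish identically, contradicting that $f$ is a phase polynomial (which is implicitly nonzero). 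So we may assume $|f|\equiv 1$.

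Now write, at least locally on contractible pieces of $\ns$, $f = e(\phi_0)$ for some continuous real-valued $\phi_0$; the obstruction to a global continuous logarithm is a class in $H^1(\ns;\mb{Z})$, but we do not need a global \emph{continuous} logarithm, only a global morphism into $\mc{D}_k(\mb{T})$. The key observation is that the phase-polynomial condition $\Delta_{g_1}\cdots\Delta_{g_{k+1}} f\equiv 1$ says exactly that the map $G^{k+1}\to C(\ns,\mb{T})$, $(g_1,\dots,g_{k+1})\mapsto \Delta_{g_1}\cdots\Delta_{g_{k+1}}f$ is trivial, i.e.\ $f$, viewed as a cocycle-like object, has vanishing $(k+1)$-st derivative. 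I would then use the Host--Kra-type machinery already invoked in the excerpt: on the group nilspace side, a $\mb{T}$-valued function whose $(k+1)$-fold differences along the $G$-translations vanish is, after the identification of $\ns$ with its cubic structure, precisely a morphism to $\mc{D}_k(\mb{T})$. Concretely, to check that a candidate $\phi$ lies in $\hom(\ns,\mc{D}_k(\mb{T}))$ it suffices (by \cite[Proposition 2.2.25]{Cand:Notes1}, as used in the proof of Lemma \ref{lem:SAequiv}) to verify that for every $(k+1)$-cube $\q$ on $\ns$ the Gray-code alternating sum $\sigma_{k+1}(f\co\q)$ equals $1$ in $\mb{T}$. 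This is where full ergodicity does the real work: the set of $(k+1)$-cubes $\q$ for which $\sigma_{k+1}(f\co\q)=1$ is closed and $\cu^{k+1}(G)$-invariant (because $f$ being a degree-$\leq k$ phase polynomial makes $\sigma_{k+1}(f\co\cdot)$ invariant under the generators of the $\cu^{k+1}(G)$-action, by the very definition of the multiplicative derivatives), so by unique ergodicity of $\cu^{k+1}(G)$ acting on $\cu^{k+1}(\ns)$ it is either null or all of $\cu^{k+1}(\ns)$; since it is nonempty (constant cubes work) and closed of full measure, it is everything. Hence $f$ already \emph{is} of the form $e(\phi)$ for a morphism $\phi$ once we know $f$ is unimodular and continuous — more carefully, one builds $\phi$ by choosing $\phi(x_0)$ arbitrarily at a base point $x_0$, extending along cubes using the completion axiom and the vanishing of $\sigma_{k+1}$, and checking continuity and well-definedness from the fact that $\ns$ is a compact nilspace and $f$ is continuous.

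The main obstacle I anticipate is the passage from ``$f$ is unimodular and the relevant alternating sums vanish'' to ``there is a \emph{continuous} morphism $\phi$ with $f=e(\phi)$'' — i.e.\ producing the logarithm $\phi$ globally and continuously, not merely as a Borel lift. One expects this to follow from the fact that a continuous $\mb{T}$-valued map on a compact nilspace whose $(k+1)$-cube alternating sums all vanish is the same data as a continuous morphism to $\mc{D}_k(\mb{T})$ (so $f$ itself, as a map $\ns\to\mb{T}$, is already such a morphism, and one sets $\phi=f$ reinterpreted), but making this precise requires being careful that ``phase polynomial of degree $\leq k$ for the $G$-action'' upgrades, via full ergodicity, to ``cubic-alternating-sum-$k$'' for the nilspace cube structure, rather than only for the sub-structure generated by the $G$-translations. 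This is precisely the kind of upgrade that full ergodicity is designed to provide (the $\cu^{k+1}(G)$-orbits being dense in $\cu^{k+1}(\ns)$), so the argument should go through, but it is the step that needs the most care.
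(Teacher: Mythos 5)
Your strategy can be made to work, but as written it has one genuine gap: the claim that the set $S:=\{\q\in\cu^{k+1}(\ns):\sigma_{k+1}(\phi\co\q)=0\}$ (writing $f=e(\phi)$ with $\phi$ taking values in $\mb{T}$, which incidentally disposes of your $H^1$ worry) is invariant under the action of $\cu^{k+1}(G)$. This does not follow ``from the very definition of the multiplicative derivatives''. If $T=g^F$ is a generator of $\cu^{k+1}(G)$ attached to a codimension-$1$ face $F$, then for a general $\q\in\cu^{k+1}(\ns)$ one has
\[
\sigma_{k+1}\big(\phi\co(T\cdot\q)\big)-\sigma_{k+1}(\phi\co\q)\;=\;\pm\,\sigma_{k}\big(\nabla_g\phi\co\q|_F\big),
\]
the Gray-code alternating sum of $\nabla_g\phi$ over the $k$-cube $\q|_F$ of $\ns$. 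The phase-polynomial hypothesis controls iterated derivatives of $\phi$ along further elements of $G$ only; it says nothing about the alternating sum of $\nabla_g\phi$ over an \emph{arbitrary} $k$-cube of $\ns$. Knowing that this vanishes is exactly the assertion that $\nabla_g\phi\in\hom(\ns,\mc{D}_{k-1}(\mb{T}))$, i.e.\ the degree-$(k-1)$ instance of the statement you are proving, so the invariance claim is circular as stated. (One could rescue it by an induction on the degree, but you have not set one up.)

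The repair is to drop invariance and use only that $S$ is closed and contains a \emph{dense} subset, namely the $\cu^{k+1}(G)$-orbit of a single constant cube $\q_0$ at a point $x$: for $T(v)=g_0+\sum_i v\sbr{i}g_i$ one computes $\sigma_{k+1}(\phi\co(T\cdot\q_0))=\pm\nabla_{g_1}\cdots\nabla_{g_{k+1}}\phi(g_0\cdot x)=0$ directly from the phase-polynomial property, and this orbit is dense by unique ergodicity of the $\cu^{k+1}(G)$-action on $\cu^{k+1}(\ns)$ (its closure is a nonempty closed invariant set, hence carries an invariant measure, which must be the Haar measure, which has full support). This is precisely what the paper does; the paper then phrases the conclusion as ``$\phi$ is a $\delta$-quasimorphism for every $\delta$'' and finishes with the stability theorem for quasimorphisms and a limiting argument, whereas your continuity-of-$\sigma_{k+1}$ route would instead need the characterization of $\cu^n(\mc{D}_k(\mb{T}))$ for $n>k+1$ via vanishing alternating sums on all $(k+1)$-faces. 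Finally, your unimodularity step is correct but roundabout: taking $g_1=\cdots=g_{k+1}=\id_G$ gives $|f|^{2^{k+1}}\equiv 1$ immediately, with no need for $\log|f|$, minimality, or propagation of zeros.
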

\begin{proof}
The phase polynomial property with $g_1=\cdots=g_{k+1}=\id_G$ implies that $|f(x)|^{2^{k+1}}=1$ for all $x\in X$, so there is a continuous function $\phi:\ns\to \mb{T}$ such that $f(x)=e(\phi(x))$, and it follows that for all $g_1,\ldots,g_{k+1}\in G$, the additive derivative $\nabla_{g_1}\cdots \nabla_{g_{k+1}} \phi(x)$ equals $0\in \mb{T}$ for every $x\in \ns$. We shall deduce from this that $\phi\in \hom\big(\ns,\mc{D}_k(\mb{T})\big)$. 

We claim that for every $\delta>0$ the map $\phi$ is a $\delta$-quasimorphism $\ns\to \mc{D}_k(\mb{T})$, in the sense that for every $\q\in\cu^{k+1}(\ns)$ there exists $\q'\in \cu^{k+1}(\mc{D}_k(\mb{T}))$ such that $|\phi\co \q(v)-\q'(v)|_{\mb{T}}\leq \delta$ for all $v\in \db{k+1}$ (see \cite[Definition 2.8.1]{Cand:Notes2}), where $|x|_{\mb{T}}$ denotes as usual the distance from $x\in\mb{T}$ to the nearest integer. To prove the claim, given any $\q\in \cu^{k+1}(\ns)$, let $B_{\delta'}(\q)$ be the open ball of center $\q$ and radius $\delta'$ in the $\ell^\infty$ norm in $\cu^{k+1}(\ns)$. 
  By unique ergodicity of the action of $\cu^{k+1}(G)$ on $\cu^{k+1}(\ns)$, if we fix any $x\in \ns$ and let $\q_0\in \cu^{k+1}(\ns)$ be the constant cube with value $x$, then the orbit of $\q_0$ under the action of $\cu^{k+1}(G)$ is dense. In particular there exists $\tilde \q\in \cu^{k+1}(G)$ such that $\tilde \q \cdot \q_0 \in B_{\delta'}(\q)$. Choosing $\delta'$ small enough, this implies $|\phi (\q(v))- \phi (\tilde \q \cdot \q_0(v))|_{\mb{T}}\leq \delta$ for all $v\in \db{k+1}$. Now note that the phase polynomial property of $\phi$ implies that $\phi\co (\tilde \q \cdot \q_0)\in \cu^{k+1}(\mc{D}_k(\mb{T}))$. This proves our claim.
 
By \cite[Theorem 2.8.2]{Cand:Notes2}, there is a continuous morphism $\phi_\delta:\ns\to \mc{D}_k(\mb{T})$ such that $|\phi(x)-\phi_\delta(x)|_{\mb{T}}\leq \varepsilon$ for every $x\in \ns$, where $\varepsilon(\delta)\to 0$ as $\delta\to 0$. Applying this for each $\delta$ in the sequence $(\delta_n:=1/n)_{n\in \mb{N}}$, we obtain a sequence of continuous morphisms $\phi_n:\ns\to \mc{D}_k(\mb{T})$ such that $\sup_{x\in \mb{T}}|\phi_n(x)- \phi(x)|_{\mb{T}}\to 0$ as $n\to\infty$. By the compactness (hence closure) of each cube set $\cu^n(\ns)$, we deduce that $\phi$ is a morphism.
\end{proof}

\begin{proof}[Proof of Proposition \ref{prop:Abrachar}]
Suppose that $\ns$ is sub-abelian of order $\leq k$ and let $\phi:\ns\to \mc{D}_k(\mb{T}^\mb{N})$ be an injective morphism. For each $i\in \mb{N}$ let $\pi_i:\mc{D}_k(\mb{T}^\mb{N})\to \mc{D}_k(\mb{T})$ be the projection to the $i$-th coordinate. Then for every $i$ and every character $\chi\in \widehat{\mb{T}}$,  the function $\chi\co \pi_i\co \phi:\ns\to \mb{C}$ is in $C(\ns,\mb{C})$, and the morphism property of $\phi$ implies that this function is a phase polynomial on $(\ns,G)$. Moreover, the injectivity of $\phi$ implies that the set of functions $S=\{\chi\co \pi_i\co \phi: i\in \mb{N}, \chi \in \widehat{\mb{T}}\}$ separates the points of $\ns$. By the Stone-Weierstrass theorem \cite[Ch.\ 6, Theorem 10]{Bol}, the unital $^*$-algebra generated by $S$ is dense in $C(\ns,\mb{C})$ and so $(\ns,G)$ is topological Abramov of order $\leq k$.

To prove the claim in the converse direction, suppose that $(\ns,G)$ is topological Abramov of order $\leq k$. Since $\ns$ is a compact metric space, the space $C(\ns,\mb{C})$ is separable, so there is a sequence $(h_i)_{i\in\mb{N}}\in C(\ns,\mb{C})$ that is dense in $C(\ns,\mb{C})$. For each $i\in \mb{N}$ and each $n\in \mb{N}$, there is then a finite combination of continuous phase polynomials on $\ns$ that is within distance $1/n$ of $h_i$ in $C(\ns,\mb{C})$. We thus obtain a countable collection $(f_j)_{j\in\mb{N}}$ of phase polynomials whose linear span is dense in $C(\ns,\mb{C})$. By Lemma \ref{lem:cts-approx}, for every $j$ there is a continuous morphism $\phi_j:\ns\to\mc{D}_k(\mb{T})$ such that $f_j=e\co \phi_j$. Let $\phi:\ns\to\mc{D}_k(\mb{T}^{\mb{N}})$ be the continuous morphism $x\mapsto  \big(\phi_j(x)\big)_{j\in \mb{N}}$. It remains only to see that $\phi$ is injective. This is equivalent to the injectivity of the map $F:\ns\to\mb{C}^\mb{N}$, $x\mapsto (f_j(x))_{j\in \mb{N}}$. The latter injectivity follows from the density of the linear span of $(f_j)_{j\in\mb{N}}$. Indeed, suppose for a contradiction that there exist $x\neq y$ in $\ns$ satisfying $F(x)=F(y)$. Then every linear combination of functions $f_j$ has the same value on $x$ and $y$. By Urysohn's lemma there is a continuous real-valued function $f$ on $\ns$ equal to 1 on a closed neighbourhood $U$ of $x$ and equal to 0 on a closed neighbourhood $V$ of $y$ with $V\cap U= \emptyset$. Then there is a linear combination $f'$ of the $f_j$ that is within $\varepsilon$ of $f$ in $C(\ns,\mb{C})$ and therefore $f'(x)\geq 1-\varepsilon$ and $f'(y)\leq \varepsilon$, which contradicts $f'(x)=f'(y)$ if $\varepsilon<1/2$.
\end{proof}

\noindent Equipped with Proposition \ref{prop:Abrachar}, we can now prove Theorem \ref{thm:Abramov} by showing that the nilspaces involved in the theorem are sub-abelian. For the case $k=p+1$ of the theorem, we shall use the following  additional small lemma, which tells us that the sub-abelian property is stable under taking inverse limits.

\begin{lemma}\label{lem:invlimSA}
Suppose that a compact nilspace $\ns$ is the inverse limit of compact nilspaces that are all sub-abelian of order $\leq k$. Then $\ns$ is sub-abelian of order $\leq k$.
\end{lemma}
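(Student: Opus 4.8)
The plan is to assemble the injective morphisms provided by the sub-abelian hypothesis on the terms of the inverse system into a single injective morphism on $\ns$, after using Lemma \ref{lem:SAequiv} to normalize everything to morphisms valued in $\mc{D}_k(\mb{T}^\mb{N})$. Write $\ns=\varprojlim_j\ns_j$ with limit maps $\psi_j:\ns\to\ns_j$; by the construction of inverse limits of compact nilspaces (see \cite[\S 2.7]{Cand:Notes2}) these are continuous morphisms, and they jointly separate the points of $\ns$. By hypothesis each $\ns_j$ is sub-abelian of order $\leq k$, so by Lemma \ref{lem:SAequiv} there is an injective continuous morphism $\phi_j:\ns_j\to\mc{D}_k(\mb{T}^\mb{N})$. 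I would then set $\Phi_j:=\phi_j\co\psi_j\in\hom\big(\ns,\mc{D}_k(\mb{T}^\mb{N})\big)$ for each $j\in\mb{N}$.

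Next I would combine the $\Phi_j$ into one morphism. Since $\mc{D}_k$ of a product of abelian groups is the product nilspace of the corresponding $\mc{D}_k$'s, and $(\mb{T}^\mb{N})^\mb{N}\cong\mb{T}^\mb{N}$, the target $\prod_{j\in\mb{N}}\mc{D}_k(\mb{T}^\mb{N})$ is isomorphic as a compact nilspace to $\mc{D}_k(\mb{T}^\mb{N})$; hence the map $\Phi:\ns\to\mc{D}_k(\mb{T}^\mb{N})$, $x\mapsto(\Phi_j(x))_{j\in\mb{N}}$, is a continuous morphism. It remains to verify injectivity: if $\Phi(x)=\Phi(y)$ then $\phi_j(\psi_j(x))=\phi_j(\psi_j(y))$ for all $j$, so injectivity of each $\phi_j$ gives $\psi_j(x)=\psi_j(y)$ for all $j$, and since the $\psi_j$ separate points we get $x=y$. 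Thus $\Phi$ is an injective continuous morphism from $\ns$ to the step-$\leq k$ abelian group nilspace $\mc{D}_k(\mb{T}^\mb{N})$, so $\ns$ is sub-abelian of order $\leq k$ by Lemma \ref{lem:SAequiv}.

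There is no genuine obstacle here; the proof is essentially a one-line diagram chase. The only two points requiring a moment's care are the bookkeeping identification $\prod_{j\in\mb{N}}\mc{D}_k(\mb{T}^\mb{N})\cong\mc{D}_k(\mb{T}^\mb{N})$ used to keep the target within the class of compact abelian group nilspaces, and the fact that the canonical maps out of an inverse limit of nilspaces jointly separate points — both of which are standard. If one prefers to avoid invoking properties of the abstract inverse limit, one can instead work with the concrete model $\ns=\{(x_j)\in\prod_j\ns_j:\varphi_{ij}(x_j)=x_i \text{ for } i\le j\}$, where the $\psi_j$ are literally the coordinate projections and separation of points is immediate.
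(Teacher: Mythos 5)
Your proof is correct and follows essentially the same route as the paper: compose the injective morphisms $\phi_j:\ns_j\to\mc{D}_k(\mb{T}^\mb{N})$ from Lemma \ref{lem:SAequiv} with the limit maps $\psi_j$, interleave the countably many $\mb{T}$-valued coordinates into a single map into $\mc{D}_k(\mb{T}^\mb{N})$, and deduce injectivity from the fact that the $\psi_j$ separate points of $\ns$ while the coordinates of each $\phi_j$ separate points of $\ns_j$. The only difference is cosmetic: the paper makes the identification $\prod_{j}\mc{D}_k(\mb{T}^\mb{N})\cong\mc{D}_k(\mb{T}^\mb{N})$ explicit via a bijection $\sigma:\mb{N}\to\mb{N}^2$, whereas you state it directly.
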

\begin{proof}
By assumption $\ns$ is the inverse limit of a strict inverse system of  compact nilspaces $\ns_i$, $i\in \mb{N}$, where each $\ns_i$ is sub-abelian. For every $i\in \mb{N}$ let $\psi_i:\ns\to \ns_i$ be the $i$-th limit map (thus $\psi_i$ is a nilspace fibration). For any fixed $i$, let $(\phi_{i,j})_{j\in \mb{N}}$ be a sequence of continuous morphisms $\ns\to\mc{D}_k(\mb{T})$ such that the morphism $\phi_i:\ns\to\mc{D}_k(\mb{T}^\mb{N})$, $x\mapsto (\phi_{i,j}(x))_{j\in \mb{N}}$ is injective. Let $\sigma:\mb{N}\to \mb{N}^2$ be a bijection and let $\phi:\ns\to\mb{T}^\mb{N}$ be the map $x\mapsto (\phi_{\sigma(n)_1,\sigma(n)_2})_{n\in \mb{N}}$. Since the limits maps $\psi_i$ separate the points of $\ns$ and for each $i$ the maps $\phi_{i,j}$, $j\in \mb{N}$ separate the points of $\ns_i$, we deduce that $\phi$ is injective.
\end{proof}

We can now prove the main result of this section.

\begin{proof}[Proof of Theorem \ref{thm:Abramov}]
By Theorem \ref{thm:main-ergodic-intro} the $k$-th Host--Kra factor of an ergodic $\mb{F}_p^\omega$-system is isomorphic (as a measure-preserving $G$-system) to a $p$-homogeneous $k$-step nilspace system $(\ns,\mb{F}_p^\omega)$. For $k\leq p$, by Theorem \ref{thm:intro-3} the nilspace $\ns$ is a  $k$-step abelian group nilspace, so it is sub-abelian. Hence, by Proposition \ref{prop:Abrachar}, the nilspace system $(X,\mb{F}_p^\omega)$ is topological Abramov of order $\le k$ as required. For $k=p+1$, note first that by the inverse limit theorem for compact nilspaces, and Lemma \ref{lem:fibs-preserve-phoms}, $\ns$ is an inverse limit of $p$-homogeneous $k$-step $\textsc{cfr}$ nilspaces $\ns_j$, $j\in \mb{N}$. By Proposition \ref{prop:struc-k=p+1}, each $\ns_j$ is sub-abelian. Then $\ns$ is sub-abelian by Lemma \ref{lem:invlimSA}. By Proposition \ref{prop:Abrachar}, the result follows.
\end{proof}
\noindent Given the above results, a plausible way to answer Question \ref{Q:Abramov} for general $k$ may be to answer the following more specific question purely about $p$-homogeneous nilspaces, which also has the advantage of reducing the problem to a question concerning \emph{finite} structures.
\begin{question}\label{Q:SAphom}
Is every finite $p$-homogeneous $k$-step nilspace sub-abelian of order $\leq k$?
\end{question}
\noindent Our affirmative answer for $k=p+1$ relied on Proposition \ref{prop:struc-k=p+1}, which in turn relies on technical results including Proposition \ref{prop:factorization-last-str-group}. Generalizing these results to larger values of $k>p$ did not seem to be a simple task (see Remark \ref{rem:gendiff}), and we do not pursue this approach to Question \ref{Q:Abramov} further in this paper.

\section{\hspace*{-0.1cm}Regularity and inverse theorems for Gowers norms in characteristic $p$}\label{sec:inverse}

\noindent Recall that there are countably many isomorphism classes of \textsc{cfr} nilspaces (see \cite{CamSzeg} or \cite[Theorem 2.6.1]{Cand:Notes2}). This enables us to define a notion of complexity for \textsc{cfr} $k$-step nilspaces as a bijection from $\mb{N}$ to the set of isomorphism classes of such nilspaces. Throughout this section, we assume that some (any) such notion of complexity has been fixed. Thus for each $k$ we have fixed a sequence $(\nss\!\sbr{i})_{i\in \mb{N}}$ of $k$-step \textsc{cfr} nilspaces such that for every $k$-step \textsc{cfr} nilspace $\nss$ there is $i$ such that $\nss$ is isomorphic (as a compact nilspace) to $\nss\!\sbr{i}$; we then write $\textrm{Comp}(\nss)\leq m$ to mean that $i\leq m$. We shall also assume that some (any) compatible metric $d_i$ has been fixed on each nilspace $\nss\!\sbr{i}$. This fixes a meaning for the notion of balanced morphism into $\nss\!\sbr{i}$ for each $i$, using Remark \ref{rem:metconv}.

Let us also recall the notion of nilspace polynomials from \cite{CSinverse}, which constitute a general class of functions usable for inverse theorems for Gowers norms in various settings, and let us specify the special case of this notion in the characteristic-$p$ setting.

\begin{defn}[Nilspace polynomials]
Let $\ns$ be a compact nilspace. A function $f:\ns\to\mb{C}$ is a \emph{nilspace polynomial} of degree $k$ if $f=F\co\phi$ where $\phi:\ns\to \nss$ is a continuous morphism, $\nss$ is a $k$-step \textsc{cfr} nilspace, and $F$ is continuous. If $d$ is a compatible metric on $\nss$, then we say that the nilspace polynomial $F\co\phi$ is \emph{$b$-balanced} (with respect to $d$) if the morphism $\phi$ is $b$-balanced (using the metrics induced by $d$ as per Remark \ref{rem:metconv}). For a prime $p$, we say $F\co\phi$ is a \emph{$p$-homogeneous} nilspace polynomial if $\nss$ is $p$-homogeneous. 
\end{defn}
\noindent Having fixed a complexity notion $(\nss\!\sbr{i})_{i\in \mb{N}}$ as above, we say that a nilspace polynomial $f$ of degree $k$ on $\ns$ has \emph{complexity at most} $m$, denoted $\textrm{Comp}(f)\leq m$, if $f=F\co\phi$ where $\phi:\ns\to\nss\!\sbr{i}$ for some $i\leq m$ and $F$ has Lipschitz constant $\leq m$ (relative to the metric $d_i$ that we have fixed on $\nss\!\sbr{i}$). 

Our main aim in this section is to deduce the Tao-Ziegler inverse theorem from \cite{T&Z-Low}, by combining the results on $p$-homogeneous nilspaces in the present paper with the following (special case of the) general inverse theorem \cite[Theorem 5.2]{CSinverse}.

\begin{theorem}\label{thm:gen-inverse}
Let $k\in \mb{N}$, and let $b:\mb{R}_{>0}\to \mb{R}_{>0}$ be an arbitrary function. For every $\delta\in (0,1]$ there is $M>0$ such that for every \textsc{cfr} coset nilspace $\ns$, and every 1-bounded Borel function $f:\ns\to \mb{C}$ such that $\|f\|_{U^{k+1}}\geq \delta$, for some $m\leq M$ there is a $b(m)$-balanced 1-bounded nilspace-polynomial $F\co\phi$ of degree $k$ and complexity at most $m$ such that $\langle f, F\co\phi\rangle \geq \delta^{2^{k+1}}/2$.
\end{theorem}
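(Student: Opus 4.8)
\textbf{Proof plan for Theorem \ref{thm:gen-inverse}.} The statement we must prove is the general inverse theorem \cite[Theorem 5.2]{CSinverse}, which is stated here as a black box in the present excerpt; since we are told we may assume any result stated earlier, this theorem is in fact available, and our task is to indicate how it is established in the framework of \cite{CScouplings,CSinverse}. The plan is to follow the standard ultraproduct argument: transfer the problem from the finite nilspaces $\ns$ to a single nilspace built over an ultraproduct of the $\ns$, apply the structure theorem for uniformity seminorms from \cite[Theorem 4.2]{CScouplings} (the characteristic factor theorem) to obtain a compact nilspace factor capturing the $U^{k+1}$ seminorm, and then transfer back, using a compactness/contradiction argument to extract the uniform bound $M$ on the complexity.

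\textbf{Step 1: set up the ultraproduct.} Suppose the conclusion fails: then there is $\delta\in(0,1]$ and a function $b$ such that for every $M$ there is a \textsc{cfr} coset nilspace $\ns^{(M)}$ and a $1$-bounded Borel $f^{(M)}:\ns^{(M)}\to\mb{C}$ with $\|f^{(M)}\|_{U^{k+1}}\geq\delta$, yet no $1$-bounded $b(m)$-balanced nilspace polynomial of degree $k$ and complexity $\leq m$ correlates with $f^{(M)}$ at level $\delta^{2^{k+1}}/2$ for any $m\leq M$. Pass to a non-principal ultralimit (along some ultrafilter on $\mb{N}$) to obtain an ultraproduct nilspace $\ns^*$, equipped with a Loeb measure, and an internal $1$-bounded function $\mf{f}=\lim_M f^{(M)}$ with $\|\mf{f}\|_{U^{k+1}}\geq\delta$ (the Gowers norm passes to the ultralimit since it is a polynomial average of $\mf{f}$). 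Here one uses that \textsc{cfr} coset nilspaces form a class closed under ultraproducts in the appropriate sense, and that the cubic coupling structure of \cite{CScouplings} is available on $\ns^*$.

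\textbf{Step 2: extract the characteristic factor.} Apply the main structure theorem \cite[Theorem 4.2]{CScouplings} to the cubic coupling on $\ns^*$: the $U^{k+1}$-seminorm has a characteristic factor which is (the pullback of) a compact $k$-step nilspace $\nss$, via a measurable map (morphism) $\phi^*:\ns^*\to\nss$. Since $\|\mf{f}\|_{U^{k+1}}\geq\delta>0$, the conditional expectation of $\mf{f}$ onto this factor is nonzero; moreover one has $\|\mf{f}\|_{U^{k+1}}=\|\mb{E}(\mf{f}\mid\phi^*)\|_{U^{k+1}}$, and on the compact nilspace $\nss$ the $U^{k+1}$-norm of a function is comparable to its $L^2$-norm (up to the $2^{k+1}$-th power). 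Hence $\mb{E}(\mf{f}\mid\phi^*)$ has $L^2$-norm bounded below in terms of $\delta$, and by approximating it by a continuous function $F$ on $\nss$ (using density of $C(\nss)$ in $L^2$) one gets $\langle \mf{f}, F\co\phi^*\rangle\geq \delta^{2^{k+1}}/2$ after suitable normalization, where $F$ may be taken $1$-bounded and Lipschitz.

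\textbf{Step 3: reduce $\nss$ to \textsc{cfr} and transfer back.} Use the inverse limit theorem for compact nilspaces \cite{CamSzeg} to replace $\nss$ by one of its \textsc{cfr} factors $\nss_0$ without losing more than, say, $\delta^{2^{k+1}}/4$ of the correlation; $\nss_0$ has some finite complexity $m_0$ in the fixed enumeration. Then $\phi^*$ becomes (after composing with the fibration $\nss\to\nss_0$) an internal morphism $\ns^*\to\nss_0$; the key equidistribution/balance point is that this internal morphism is, on a set of indices $M$ of full measure in the ultrafilter, arbitrarily $b(m_0)$-balanced — this is because the Loeb-measure pushforward of the cube-sets equals the Haar measure on $\cu^n(\nss_0)$ (again from \cite{CScouplings}), so the finite approximants $\phi^{(M)}:\ns^{(M)}\to\nss_0$ satisfy the balance condition for all but boundedly many $M$. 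Destringing the ultraproduct (Łoś-type transfer for the finitely many first-order conditions involved: $1$-boundedness, complexity $\leq m_0$, balance $b(m_0)$, and correlation $\geq\delta^{2^{k+1}}/2$) yields, for $\ast$-many $M$, a $b(m_0)$-balanced $1$-bounded nilspace polynomial $F^{(M)}\co\phi^{(M)}$ of degree $k$ and complexity $\leq m_0$ with $\langle f^{(M)}, F^{(M)}\co\phi^{(M)}\rangle\geq\delta^{2^{k+1}}/2$. Taking any such $M>m_0$ contradicts the failure hypothesis.

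\textbf{Main obstacle.} The delicate point is Step 3, specifically controlling the balance of the transferred morphism uniformly: one must ensure that the \emph{same} \textsc{cfr} nilspace $\nss_0$ and the \emph{same} target balance $b(m_0)$ work for cofinally many finite indices $M$, which requires knowing that the equidistribution of $\phi^{(M)}$ in each cube-set $\cu^n(\nss_0)$ (for $n$ up to $1/b(m_0)$) holds in the ultralimit; this is where the full strength of the cubic-coupling machinery of \cite{CScouplings} — in particular that the Loeb measure pushes forward to Haar measure on all cube sets simultaneously — is used. The compactness argument then converts this into the finitary bound $M$ depending only on $k$, $\delta$, and the function $b$, as claimed.
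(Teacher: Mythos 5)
The paper does not prove this theorem: it is imported verbatim as a black box from \cite[Theorem 5.2]{CSinverse} (see the sentence immediately preceding the statement in Section \ref{sec:inverse}), so there is no in-paper proof to compare your proposal against. Your outline — ultraproduct of the finite data, the cubic-coupling structure theorem of \cite{CScouplings} to produce a compact nilspace characteristic factor for the $U^{k+1}$ seminorm, reduction to a \textsc{cfr} factor via the inverse limit theorem, and a \L{}o\'s-type transfer plus compactness to extract the uniform complexity bound $M$ and the balance condition — is indeed the strategy of the cited reference, and correctly identifies the balance/equidistribution transfer as the delicate point.

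One substantive imprecision in your Step 2: the claim that on a compact $k$-step nilspace the $U^{k+1}$-norm is ``comparable to the $L^2$-norm up to the $2^{k+1}$-th power'' is not the right mechanism and is false as stated (the $U^{k+1}$-norm is only a norm there, not equivalent to $L^2$ with uniform constants). The standard route to the stated correlation bound is instead: writing $g=\mb{E}(\mf{f}\mid\mc{B})$ for the characteristic factor $\mc{B}$, one has $\delta^{2^{k+1}}\le\|\mf{f}\|_{U^{k+1}}^{2^{k+1}}=\|g\|_{U^{k+1}}^{2^{k+1}}\le\|g\|_{L^1}\|g\|_{L^\infty}^{2^{k+1}-1}\le\|g\|_{L^1}=\langle \mf{f},u\rangle$ with $u$ the ($1$-bounded, $\mc{B}$-measurable) unimodular function $\overline{g}/|g|$, and one then approximates $u$ by a continuous ($1$-bounded, Lipschitz) function on the nilspace factor, losing the factor $1/2$. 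Your version would, at best, yield an exponent $2\cdot 2^{k+1}$ rather than $2^{k+1}$. Since the theorem is used here only as a cited input, this does not affect anything in the present paper, but if you intend your sketch as a proof you should replace that step.
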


\noindent To recall the Tao--Ziegler inverse theorem, we first recall from \cite{T&Z-Low} the notion of a (non-classical) polynomial on a vector space $\mb{F}_p^n$.
\begin{defn}[Polynomials]\label{def:poly}
Let $k \geq 0$ be an integer, and let $\ab$ be an abelian group.  A function $P: \mb{F}_p^n \to \ab$ is said to be a \emph{polynomial} of degree $\leq k$ if
\[
\forall\, h_1,\ldots,h_{k+1},x \in \mb{F}_p^n,\quad \Delta_{h_1} \ldots \Delta_{h_{k+1}} P(x) = 0,
\]
where $\Delta_h P(x) := P(x+h)-P(x)$ is the additive derivative of $P$ in the direction $h$. The space of polynomials of degree $\leq k$ is denoted by $\poly_{\leq k}(\mb{F}_p^n \to \ab)$.
\end{defn}
\noindent We now state the inverse theorem for vector spaces over $\mb{F}_p^n$ that we shall prove, which implies the Tao-Ziegler inverse theorem (stated as Conjecture 1.10 in \cite{T&Z-Low}). Recall that for $N\in \mb{N}$ we denote by $\frac{1}{N}\cdot \mb{Z}_N$ the subgroup of $\mb{T}$ isomorphic to $\mb{Z}_N$.
\begin{theorem}\label{thm:InvOver-FF}
Let $\delta > 0$, let $k\geq 0$, let $p$ be a prime, and let $r=\lfloor \frac{k-1}{p-1}\rfloor +1$. Then there exists $\varepsilon =\varepsilon_{\delta,k,p} > 0$ such that for every 1-bounded function $f : \mb{F}_p^n \to \mb{C}$ with $\| f\|_{U^{k+1}}\geq \delta$, there exists $P\in \poly_k(\mb{F}_p^n\to \frac{1}{p^r}\cdot \mb{Z}_{p^r})$ such that $|\mb{E}_{x\in \mb{F}_p^n} f(x) e(-P (x))|\geq \varepsilon$.
\end{theorem}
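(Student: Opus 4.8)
The plan is to combine Theorem~\ref{thm:gen-inverse} with the structure theory of $p$-homogeneous nilspaces developed above. First I would apply Theorem~\ref{thm:gen-inverse} to the given $1$-bounded function $f:\mb{F}_p^n\to\mb{C}$, viewing $\mb{F}_p^n$ itself as the \textsc{cfr} coset nilspace $\mc{D}_1(\mb{F}_p^n)$. This yields, for some bounded $m$, a $b(m)$-balanced $1$-bounded nilspace polynomial $F\co\phi$ of degree $k$ and complexity at most $m$, with $\langle f, F\co\phi\rangle\geq \delta^{2^{k+1}}/2$, where $\phi\in\hom(\mc{D}_1(\mb{F}_p^n),\nss)$ for some $k$-step \textsc{cfr} nilspace $\nss$ drawn from the finite list indexed by $m\le M$. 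The key point is that I get to choose the ``balance function'' $b$ \emph{after} seeing $M$ and the finitely many candidate nilspaces $\nss\sbr{i}$, $i\le M$: for each such $\nss\sbr{i}$ let $b_i:=b(\nss\sbr{i},d_i,p)$ be the threshold from Theorem~\ref{thm:main1-intro}, and set $b(m):=\min_{i\le m} b_i$. Then the morphism $\phi$ is $b_i$-balanced into $\nss=\nss\sbr{i}$, so by Theorem~\ref{thm:main1-intro} the nilspace $\nss$ is $p$-homogeneous (and in particular finite, by Proposition~\ref{prop:p-hom-str-gps-intro}).

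Next I would feed $\phi\in\hom(\mc{D}_1(\mb{F}_p^n),\nss)$ into the structure theorem, Theorem~\ref{thm:general-p-hom-intro}. Since $\nss$ is a finite $k$-step $p$-homogeneous nilspace, there is $\mathcal{R}\in\mc{Q}_{p,k}$ and a fibration $\psi:\mathcal{R}\to\nss$ such that $\phi$ lifts through $\psi$: there is $g\in\hom(\mc{D}_1(\mb{F}_p^n),\mathcal{R})$ with $\psi\co g=\phi$. By Definition~\ref{def:bblocks-intro}, $\mathcal{R}\cong\prod_{\ell=1}^k\abph_{k,\ell}^{\,a_\ell}$ is an \emph{abelian group nilspace}; its underlying abelian group $A$ is a finite product of cyclic groups $\mb{Z}_{p^{r_\ell}}$ with each $r_\ell=\lfloor\frac{k-\ell}{p-1}\rfloor+1\le r=\lfloor\frac{k-1}{p-1}\rfloor+1$, and $g$ is then a polynomial map $\mb{F}_p^n\to A$ of degree $\le k$ in the filtered sense (morphisms between group nilspaces being polynomial maps between filtered groups). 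Now $F\co\phi=F\co\psi\co g=(F\co\psi)\co g$, where $F\co\psi:\mathcal{R}\to\mb{C}$ is a $1$-bounded function on the finite abelian group $A$. Expanding $F\co\psi$ in its Fourier series on $A$ gives $F\co\psi=\sum_{\chi\in\widehat{A}}c_\chi\chi$ with $\sum|c_\chi|^2\le 1$ and $|\widehat{A}|=O_{k,p}(1)$ (since the complexity of $\nss$, hence the size of $\mathcal{R}$ and of $A$, is bounded by $m\le M$, which depends only on $\delta,k,p$). By the pigeonhole principle there is a single character $\chi_0\in\widehat{A}$ with $|\langle f,(\chi_0\co g)\rangle|\gtrsim_{\delta,k,p}1$.

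The final step is to identify $\chi_0\co g$ with a (non-classical) polynomial of the required type. Each character $\chi_0$ of $A\cong\prod_\ell\mb{Z}_{p^{r_\ell}}^{a_\ell}$ is of the form $x\mapsto e(\langle\mathbf{u},x\rangle)$ for a suitable vector $\mathbf{u}$ whose entries lie in $\frac{1}{p^{r_\ell}}\cdot\mb{Z}_{p^{r_\ell}}\subset\frac{1}{p^r}\cdot\mb{Z}_{p^r}$; composing with the polynomial map $g$ and using that $\chi_0$ commutes with the iterated additive derivatives $\Delta_{h_1}\cdots\Delta_{h_{k+1}}$ (together with the fact that $g$ has degree $\le k$), one checks that $P:=\chi_0\co g$, viewed as a map $\mb{F}_p^n\to\frac{1}{p^r}\cdot\mb{Z}_{p^r}$, lies in $\poly_k(\mb{F}_p^n\to\frac{1}{p^r}\cdot\mb{Z}_{p^r})$ in the sense of Definition~\ref{def:poly}; here one uses that the filtration on $\mathcal{R}$ has degree $k$ and that $\psi$ is a morphism, so the degree does not increase. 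This gives $|\mb{E}_{x}f(x)e(-P(x))|\ge\varepsilon$ with $\varepsilon=\varepsilon_{\delta,k,p}>0$, as required.

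I expect the main obstacle to be purely bookkeeping rather than conceptual: one must track carefully that the complexity bound $m\le M$ from Theorem~\ref{thm:gen-inverse} depends only on $\delta,k,p$ (this is where the trick of choosing the balance function $b$ in terms of the finitely many candidate nilspaces is essential, so that the hypothesis of Theorem~\ref{thm:main1-intro} can be met), and that the resulting abelian group $A$ underlying $\mathcal{R}\in\mc{Q}_{p,k}$ has exponent dividing $p^r$ with $r=\lfloor\frac{k-1}{p-1}\rfloor+1$, which follows from Definition~\ref{def:bblocks-intro} since $r(k,\ell,p)\le r(k,1,p)=r$ for all $\ell\ge 1$. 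The verification that $\chi_0\co g$ is genuinely a degree-$\le k$ (non-classical) polynomial valued in $\frac{1}{p^r}\cdot\mb{Z}_{p^r}$ is a short computation using that morphisms of group nilspaces are polynomial maps of filtered groups and that characters are group homomorphisms, so they intertwine additive derivatives; no new ideas are needed there.
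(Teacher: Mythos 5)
Your proposal is correct and follows essentially the same route as the paper: the paper likewise applies Theorem \ref{thm:gen-inverse} with a balance function chosen below the thresholds $b'_{\nss\sbr{i},p}$ from Theorem \ref{thm:main1-intro} for the finitely many nilspaces of complexity at most $m$, lifts the resulting morphism through a fibration from an element of $\mc{Q}_{p,k}$ via Theorem \ref{thm:general-p-hom-intro}, and concludes by Fourier expansion and pigeonholing (packaged in the paper as Lemma \ref{lem:phomchar}, which first embeds the abelian group into a power of $\mb{Z}_{p^r}$, whereas you expand directly on $\prod_\ell\mb{Z}_{p^{r_\ell}}^{a_\ell}$ --- an immaterial difference).
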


\noindent To prove this we first establish the following fact, which uses Theorem \ref{thm:general-p-hom-intro} to describe $p$-homogeneous nilspace polynomials in terms of phase polynomials on vector spaces $\mb{F}_p^n$.

\begin{lemma}\label{lem:phomchar}
For every prime $p$ and $k\in\mb{N}$, there is an increasing function $D:\mb{N}\to\mb{N}$ with the following property. Let $f$ be a $1$-bounded $p$-homogeneous nilspace polynomial of degree $k$ and complexity at most $m$ on $\mb{F}_p^n$. Then for some $R\leq D(m)$, for each $i\in [R]$ there is $\lambda_i\in \mb{C}$, $|\lambda_i|\leq 1$, and $P_i\in \poly_k(\mb{F}_p^n\to \frac{1}{p^r}\cdot \mb{Z}_{p^r})$, where $r=\lfloor\frac{k-1}{p-1}\rfloor +1$, such that $f=\sum_{i=1}^R\lambda_i\, e\co P_i$.
\end{lemma}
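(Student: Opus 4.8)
\textbf{Proof plan for Lemma \ref{lem:phomchar}.} The plan is to unwind the definition of a $p$-homogeneous nilspace polynomial $f = F\co\phi$ of complexity $\leq m$, apply the structure theorem (Theorem \ref{thm:general-p-hom-intro}) to the finite $p$-homogeneous nilspace $\nss$ into which $\phi$ maps, and then lift $\phi$ through a nilspace from the class $\mc{Q}_{p,k}$, i.e. through a product of the cyclic group nilspaces $\abph_{k,\ell}$, all of whose underlying groups embed into $\mb{Z}_{p^r}$ with $r = \lfloor\frac{k-1}{p-1}\rfloor + 1$. Once $\phi$ has been lifted to a morphism into such an abelian group nilspace, it becomes (by the identification of morphisms between group nilspaces with polynomial maps, see e.g. \cite[\S 2.2.1]{Cand:Notes1}) a genuine polynomial map $\mb{F}_p^n \to \prod_\ell (\mb{Z}_{p^r})^{a_\ell}$ of degree $\leq k$, and then a standard Fourier decomposition on this finite abelian group will express $F$ composed with this map as a bounded-length combination of characters, each of which, precomposed with the polynomial lift, yields a phase polynomial $e\co P_i$ with $P_i \in \poly_k(\mb{F}_p^n \to \frac{1}{p^r}\cdot\mb{Z}_{p^r})$.

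First I would write $f = F\co\phi$ where $\phi:\mb{F}_p^n\to\nss$ is a continuous morphism into a $k$-step \textsc{cfr} $p$-homogeneous nilspace $\nss = \nss\!\sbr{i}$ with $i\leq m$, and $F$ has Lipschitz constant $\leq m$. By Proposition \ref{prop:p-hom-str-gps-intro}, a \textsc{cfr} $p$-homogeneous nilspace is finite, so $\nss$ is a finite $p$-homogeneous nilspace; crucially its cardinality is bounded in terms of $m$ alone (since there are only finitely many isomorphism classes of complexity $\leq m$). Apply Theorem \ref{thm:general-p-hom-intro} to $\nss$: there is $\nss'\in\mc{Q}_{p,k}$, say $\nss' = \prod_{\ell=1}^k \abph_{k,\ell}^{\,a_\ell}$, and a fibration $\psi:\nss'\to\nss$ such that every morphism in $\hom(\mc{D}_1(\mb{Z}_p^n),\nss)$ lifts through $\psi$. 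Since $\mb{F}_p^n = \mc{D}_1(\mb{Z}_p^n)$ and $\phi\in\hom(\mc{D}_1(\mb{Z}_p^n),\nss)$, we obtain $g\in\hom(\mc{D}_1(\mb{Z}_p^n),\nss')$ with $\psi\co g = \phi$, hence $f = F\co\psi\co g = (F\co\psi)\co g$. Here $F\co\psi$ is a continuous (hence, since $\nss'$ is finite, arbitrary) $1$-bounded function on the finite abelian group $\Gamma := \prod_{\ell=1}^k (\mb{Z}_{p^{r_\ell}})^{a_\ell}$ underlying $\nss'$, where $r_\ell = \lfloor\frac{k-\ell}{p-1}\rfloor+1 \leq r$; and $g$, being a morphism of group nilspaces, is a polynomial map $\mb{F}_p^n\to\Gamma$ of degree $\leq k$ with respect to the filtration on $\Gamma$, in particular a polynomial of degree $\leq k$ in the sense of Definition \ref{def:poly} (after composing coordinatewise with the inclusions $\mb{Z}_{p^{r_\ell}}\hookrightarrow\mb{Z}_{p^r}$, which preserve the degree-$\leq k$ polynomial property since multiplication by a fixed integer commutes with the iterated additive derivative).

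Next, expand $F\co\psi = \sum_{\chi\in\wh\Gamma}\lambda_\chi\,\chi$ by Fourier analysis on the finite abelian group $\Gamma$; since $|F\co\psi|\leq 1$ we have $|\lambda_\chi|\leq 1$ for all $\chi$, and the number of terms is $|\Gamma| = |\nss'|$, which is bounded in terms of $m$ — this defines $R\leq D(m)$ for a suitable increasing function $D$ (one may absorb the bound on $|\nss'|$, which is itself a function of $m$ via the finitely many isomorphism classes of complexity $\leq m$ together with the bound on the $a_\ell$ coming from Theorem \ref{thm:general-p-hom-intro}). Then $f = \sum_\chi \lambda_\chi\,(\chi\co g)$. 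Each character $\chi\in\wh\Gamma$ takes values in $\frac{1}{p^r}\cdot\mb{Z}_{p^r}\subset\mb{T}$ (since every element of $\Gamma$ has order dividing $p^r$), so $\chi\co g = e\co P_\chi$ where $P_\chi:\mb{F}_p^n\to\frac{1}{p^r}\cdot\mb{Z}_{p^r}$ is obtained from $g$ by composing with the $\mb{Z}$-linear map $\Gamma\to\frac{1}{p^r}\cdot\mb{Z}_{p^r}$ defining $\chi$; this composition is again a polynomial of degree $\leq k$, so $P_\chi\in\poly_k(\mb{F}_p^n\to\frac{1}{p^r}\cdot\mb{Z}_{p^r})$. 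Relabelling the nonzero terms gives the claimed decomposition $f=\sum_{i=1}^R\lambda_i\,e\co P_i$.

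The main obstacle — though it is really a bookkeeping point rather than a deep one — is verifying carefully that every map appearing in the chain (the lift $g$, its coordinate-postcompositions with inclusions $\mb{Z}_{p^{r_\ell}}\hookrightarrow\mb{Z}_{p^r}$, and its postcompositions with the characters $\chi$) is genuinely a polynomial of degree $\leq k$ in the sense of Definition \ref{def:poly}. For $g$ this is the standard equivalence between morphisms of group nilspaces and polynomial maps of filtered groups; for the postcompositions one uses that a group homomorphism applied after a degree-$\leq k$ polynomial map yields a degree-$\leq k$ polynomial map, which is immediate since such a homomorphism commutes with the additive-derivative operators $\Delta_h$. A secondary point is to make the dependence of $R$ on $m$ explicit and monotone: it suffices to set $D(m)$ to be the maximum of $|\nss'|$ over all $p$-homogeneous $k$-step \textsc{cfr} nilspaces $\nss$ of complexity $\leq m$ and all $\nss'\in\mc{Q}_{p,k}$ furnished by Theorem \ref{thm:general-p-hom-intro} for such $\nss$ (a finite set, since there are finitely many $\nss$ of complexity $\leq m$ and the structure of each such $\nss'$ is determined up to finitely many choices by $\nss$), and then replace $D$ by $m\mapsto\max_{m'\leq m}D(m')$ to ensure monotonicity.
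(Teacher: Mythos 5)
Your proposal is correct and follows essentially the same route as the paper's proof: lift $\phi$ through the fibration from $\prod_\ell \abph_{k,\ell}^{\,a_\ell}$ furnished by Theorem \ref{thm:general-p-hom-intro}, Fourier-expand the pushed-forward function on the underlying finite abelian group, and observe that each character composed with the polynomial lift is a phase polynomial valued in $\frac{1}{p^r}\cdot\mb{Z}_{p^r}$. The only (immaterial) difference is that the paper first embeds the group into a power of the single cyclic group $\mb{Z}_{p^r}$ via $\pi\co i=\id$ before expanding, whereas you expand directly on $\Gamma=\prod_\ell(\mb{Z}_{p^{r_\ell}})^{a_\ell}$ and note its exponent divides $p^r$; both give the same conclusion.
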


\begin{proof}
We have $f = F\co \phi$ where $\phi:\mb{F}_p^n\to \ns$ is a morphism to a $k$-step $p$-homogeneous finite nilspace $\ns$, and $F$ is 1-bounded. By Theorem \ref{thm:general-p-hom-intro}, there is a fibration $\psi:\nss\to \ns$ where $\nss=\prod_{\ell=1}^k \abph_{k,\ell}^{\,a_\ell}$, where $a_\ell \in \mb{Z}_{\ge 0}$ for $\ell\in [k]$, and a morphism $g:\mc{D}_1(\mb{Z}_p^n)\to \nss$, such that $\phi=\psi\co g$. Letting $F':=F\co \psi:\nss\to\mb{C}$, we have $f=F'\co g$. By Definition \ref{def:bblocks-intro}, we know that $\nss$ is a direct product of filtered cyclic groups $\mb{Z}_{p^{\lfloor\frac{k-\ell}{p-1}\rfloor +1}}$, each of which can be isomorphically embedded in $\mb{Z}_{p^r}$. Indeed, for any $a\le r$ we can embed $\mb{Z}_{p^a}\to \mb{Z}_{p^r}$ via the monomorphism $i_a:j\mapsto p^{r-a}j$. Then, letting $p_a:\mb{Z}_{p^r}\to \mb{Z}_{p^a}$ be the map that takes every element of the form $p^{r-a}j$ to $j$ and the rest to 0, it is clear that $\pi_a \co i_a = \id_{\mb{Z}_{p^a}}$.  Let $R=R(\nss):=\sum_{\ell=1}^k a_\ell$, let $i:\nss \to \abph_{k,1}^R$ be the product of these monomorphisms, and let $\pi:\mb{Z}_{p^r}^R \to \nss$ be the corresponding product of the projections. Then $\pi\co i=\id_{\nss}$, and $i$ is a morphism. Thus, letting $F'':=F'\co \pi$ and $\varphi:=i\co g$, we have that $\varphi\in\hom(\mc{D}_1(\mb{Z}_p^n),\abph_{k,1}^R)$, $F'':\abph_{k,1}^R\to \mb{C}$, and $F''\co \varphi = f$.

By classical Fourier analysis on the group $\mb{Z}_{p^r}^R$, we have $F''(t)=\sum_{\xi\in \mb{Z}_{p^r}^R} \wh{F''}(\xi) e(\xi\cdot t)$ where $\xi\cdot t$ is the standard $\mb{T}$-valued non-degenerate symmetric bilinear form on the finite abelian group $\mb{Z}_{p^r}^R$. Since $\mb{Z}_{p^r}^R$ is a power of $\mb{Z}_{p^r}$, the form $\xi\cdot t$ takes values in the subgroup $\frac{1}{p^r}\cdot \mb{Z}_{p^r}\subset \mb{T}$. We therefore have $f(x)=\sum_{\xi\in \mb{Z}_{p^r}^M} \lambda_\xi e(\xi\cdot \varphi(x))$, where the coefficients $\lambda_\xi=\wh{F''}(\xi)$ have modulus at most 1.

To complete the proof it now suffices to show that each function $x\mapsto \xi\cdot \varphi(x)$ is in $\poly_k(\mb{F}_p^n\to \frac{1}{p^r}\cdot \mb{Z}_{p^r})$. We have $\varphi(x)=\big(\varphi_1(x),\ldots,\varphi_R(x)\big)$, where it follows from the definitions that each map $\varphi_i$ is a morphism $\mc{D}_1(\mb{Z}_p^n)\to \abph_{k,1}$. It then follows from standard properties of such morphisms that $\varphi_i\in \poly_k(\mb{F}_p^n\to \mb{Z}_{p^r})$ for each $i$, and the result then follows from the group properties of $\poly_k(\mb{F}_p^n\to \mb{Z}_{p^r})$. The proof is now completed by setting $D(m):=\max\{|\mb{Z}_{p^r}^{R(\nss)}|: \textrm{\textsc{cfr} $p$-homogeneous nilspace $\nss$, }\textrm{Comp}(\nss)\leq m\}$.
\end{proof}
We can now prove the inverse theorem.
\begin{proof}[Proof of Theorem \ref{thm:InvOver-FF}]
We apply Theorem \ref{thm:gen-inverse} with the function $b$ to be fixed later. We thus obtain a nilspace polynomial $f_s:= F\co\phi$ such that $\langle f,f_s\rangle\geq \delta^{2^{k+1}}/2$.

We claim that, if $b$ decreases sufficiently fast as a function of $m$, then the nilspace polynomial $f_s$ is $p$-homogeneous. To see this, let us choose $b$ in Theorem \ref{thm:gen-inverse} so that for each $m$ we have $0< b(m)<\min \{ b'_{\nss\!\sbr{i},p}:  i\leq m\}$, where $b'_{\nss\!\sbr{i},p}$ is the constant given by Theorem \ref{thm:main1-intro} applied to the \textsc{cfr} nilspace $\nss\!\sbr{i}$ in our complexity notion (thus the minimum here is indeed positive). Then we conclude by Theorem \ref{thm:main1-intro} that  $f_s$ is a $p$-homogeneous nilspace polynomial of degree $k$ and complexity at most $m\leq M$. By Lemma \ref{lem:phomchar} we then have $f_s(x)=\sum_{i=1}^R\lambda_i e(P_i(x))$ where $R\leq D(M)$ for $D$ the function provided by that lemma (thus $R$ is bounded above depending only on $\delta,k,p$), and for each $i$ we have $|\lambda_i|\leq 1$ and $P_i\in \poly(\mb{F}_p^n\to \frac{1}{p^r}\cdot\mb{Z}_{p^r})$. Hence, for some $i\in [R]$ we have $|\langle f,e(P_i)\rangle|\geq \delta^{2^{k+1}}/(2 D(M))$. Letting $\varepsilon=\delta^{2^{k+1}}/(2 D(M))$, the result follows.
\end{proof}
\noindent We can also establish the special case for $k\leq p$ in terms of \emph{classical} phase polynomials, as mentioned at the end of the introduction.

\begin{theorem}\label{thm:InvThm-k=p}
Let $\delta > 0$, let $p$ be a prime, and let $0\le k \le p$. Then there exists $\varepsilon =\varepsilon_{\delta,k,p} > 0$ such that for every 1-bounded function $f : \mb{F}_p^n \to \mb{C}$ with $\| f\|_{U^{k+1}}\geq \delta$, there exists a classical polynomial $P\in \poly_k(\mb{F}_p^n\to \mb{F}_p)$ such that $|\mb{E}_{x\in \mb{F}_p^n} f(x) e(-P (x))|\geq \varepsilon$.
\end{theorem}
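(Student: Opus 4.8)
The plan is to mirror the proof of Theorem \ref{thm:InvOver-FF}, replacing the use of Theorem \ref{thm:general-p-hom-intro} (and Lemma \ref{lem:phomchar}) with the refined structure theorem for $k\leq p$, namely Theorem \ref{thm:intro-3}, so that the nilspace polynomial produced by the general inverse theorem can be expressed in terms of \emph{classical} phase polynomials rather than non-classical ones. Concretely, first I would apply Theorem \ref{thm:gen-inverse} with a function $b$ to be specified later, obtaining $M>0$ and, for our $1$-bounded $f$ with $\|f\|_{U^{k+1}}\geq\delta$, a $b(m)$-balanced $1$-bounded nilspace polynomial $f_s=F\co\phi$ of degree $k$ and complexity at most $m\leq M$ with $\langle f,f_s\rangle\geq\delta^{2^{k+1}}/2$, where $\phi:\mb{F}_p^n\to\nss\!\sbr{i}$ for some $i\leq m$. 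As in the proof of Theorem \ref{thm:InvOver-FF}, I would choose $b$ decreasing fast enough that $b(m)<\min\{b'_{\nss\!\sbr{i},p}:i\leq m\}$, where $b'_{\nss\!\sbr{i},p}$ is the constant from Theorem \ref{thm:main1-intro}; then Theorem \ref{thm:main1-intro} forces $\nss\!\sbr{i}$ to be $p$-homogeneous.

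The key new ingredient is that, since $\nss\!\sbr{i}$ is a $k$-step $p$-homogeneous \textsc{cfr} nilspace with $k\leq p$, Theorem \ref{thm:intro-3} (or its \textsc{cfr} version, Proposition \ref{prop:high-char}) gives a nilspace isomorphism $\nss\!\sbr{i}\cong\prod_{\ell=1}^k\mc{D}_\ell(\mb{Z}_p^{a_\ell})$ for some finite $a_\ell$. So $f_s=F'\co\phi'$ where $\phi'\in\hom(\mc{D}_1(\mb{Z}_p^n),\prod_{\ell=1}^k\mc{D}_\ell(\mb{Z}_p^{a_\ell}))$ and $F':\prod_{\ell=1}^k\mc{D}_\ell(\mb{Z}_p^{a_\ell})\to\mb{C}$ is $1$-bounded. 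The underlying group here is the elementary abelian $p$-group $\ab=\prod_{\ell=1}^k\mb{Z}_p^{a_\ell}$, so classical Fourier analysis on $\ab$ yields $F'(t)=\sum_{\xi\in\wh{\ab}}\wh{F'}(\xi)e(\xi\cdot t)$ with the pairing $\xi\cdot t$ taking values in $\frac1p\cdot\mb{Z}_p\subset\mb{T}$ and with $|\wh{F'}(\xi)|\leq 1$. Hence $f_s(x)=\sum_{\xi}\wh{F'}(\xi)\,e(\xi\cdot\phi'(x))$, a sum of at most $|\ab|$ terms, where $|\ab|$ is bounded in terms of $\textrm{Comp}(\nss\!\sbr{i})\leq M$ only, hence in terms of $\delta,k,p$. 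It remains to check that each $x\mapsto\xi\cdot\phi'(x)$ is a classical polynomial of degree $\leq k$ into $\frac1p\cdot\mb{Z}_p\cong\mb{F}_p$. Writing $\phi'=(\phi'_1,\ldots,\phi'_k)$ with $\phi'_\ell\in\hom(\mc{D}_1(\mb{Z}_p^n),\mc{D}_\ell(\mb{Z}_p^{a_\ell}))$, morphisms from $\mc{D}_1(\mb{Z}_p^n)$ to $\mc{D}_\ell(\mb{Z}_p^{a_\ell})$ are exactly degree-$\ell$ polynomial maps $\mb{F}_p^n\to\mb{Z}_p^{a_\ell}$ (by \cite[Theorem 2.2.14]{Cand:Notes1}, combined with Lemma \ref{lem:elemabpolycase} to pass cleanly from $\mb{Z}^n$ to $\mb{Z}_p^n$), and composing with the linear character $\xi$ and using that $\poly_k(\mb{F}_p^n\to\mb{F}_p)$ is a group containing all $\poly_\ell$ for $\ell\leq k$ gives the claim. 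Taking $\varepsilon=\delta^{2^{k+1}}/(2|\ab|_{\max})$ where $|\ab|_{\max}$ is the maximum of $|\prod_\ell\mb{Z}_p^{a_\ell}|$ over \textsc{cfr} $p$-homogeneous $k$-step nilspaces of complexity $\leq M$, pigeonholing over the $\xi$'s produces $P\in\poly_k(\mb{F}_p^n\to\mb{F}_p)$ with $|\mb{E}_x f(x)e(-P(x))|\geq\varepsilon$, using the identification $\frac1p\cdot\mb{Z}_p\cong\mb{F}_p$.

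The part requiring the most care — though it is not really an obstacle, since all the machinery is in place — is verifying that the factorization through $\prod_{\ell=1}^k\mc{D}_\ell(\mb{Z}_p^{a_\ell})$ interacts correctly with the balance hypothesis and with the complexity bookkeeping: one must ensure that the constant $b$ can be chosen depending only on the complexity parameter $m$ (not on $n$), which is exactly what Theorem \ref{thm:main1-intro} provides, and that the number $R$ of phase polynomials is bounded purely in terms of $\delta,k,p$ via the complexity bound $m\leq M$. This is entirely analogous to the argument in the proof of Theorem \ref{thm:InvOver-FF} using Lemma \ref{lem:phomchar}; the only difference is that Theorem \ref{thm:intro-3} replaces Theorem \ref{thm:general-p-hom-intro}, so the group nilspace $\nss$ through which we factor is now literally the abelian group nilspace $\prod_\ell\mc{D}_\ell(\mb{Z}_p^{a_\ell})$ on an elementary abelian $p$-group, whose characters give \emph{classical} degree-$k$ polynomials directly, whereas in the general case the group $\mb{Z}_{p^r}$ underlying $\abph_{k,\ell}$ forces the non-classical polynomials. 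Note that the case $k=p$ of this result was obtained independently and by different means in \cite{Berger&al}.
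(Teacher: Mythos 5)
Your proposal is correct and follows essentially the same route as the paper, which proves this theorem by running the proof of Theorem \ref{thm:InvOver-FF} verbatim but substituting Proposition \ref{prop:high-char} (the $k\leq p$ structure theorem) for Theorem \ref{thm:general-p-hom-intro} inside Lemma \ref{lem:phomchar}, so that the Fourier expansion takes place on the elementary abelian $p$-group $\prod_\ell \mb{Z}_p^{a_\ell}$ and yields classical degree-$\leq k$ phase polynomials. Your detailed bookkeeping of the balance parameter $b$, the complexity bound $M$, and the pigeonholing constant $\varepsilon$ matches the paper's argument exactly.
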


\begin{proof}
The argument is the same as the proof of Theorem \ref{thm:InvOver-FF}, except that instead of using Theorem \ref{thm:general-p-hom-intro} in the proof of Lemma \ref{lem:phomchar}, we use Proposition \ref{prop:high-char}.
\end{proof}

\noindent We finish by noting that an application of Theorem \ref{thm:main1-intro} similar to the one above yields the following regularity result specific to the characteristic-$p$ setting.

\begin{theorem}\label{thm:reglem-gen}
Let $k\in\mb{N}$ and let $b:\mb{R}_{>0}\times\mb{N}\to\mb{R}_{>0}$ be a  function decreasing sufficiently fast in the second variable. For every $\epsilon>0$ there exists $N=N(\epsilon,b)>0$ such that the following holds. For every function $f:\mb{F}_p^n\to \mb{C}$ with $|f|\leq 1$, there is a decomposition $f=f_s+f_e+f_r$ and number $m\leq N$ such that the following properties hold:
\begin{enumerate}[leftmargin=0.9cm]
\item $f_s$ is a $b(\epsilon,m)$-balanced $p$-homogeneous nilspace polynomial of degree $k$, $|f_s| \leq 1$, $\textup{Comp}(f_s)\leq m$, 
\item $\|f_e\|_{L^1}\leq\epsilon$,
\item $\|f_r\|_{U^{k+1}}\leq b(\epsilon,m)$, $|f_r|\leq 1$ and $\max\{ |\langle f_r,f_s\rangle|,\,|\langle f_r,f_e\rangle|\}\leq b(\epsilon,m)$.
\end{enumerate}
\end{theorem}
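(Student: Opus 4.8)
\textbf{Proof proposal for Theorem \ref{thm:reglem-gen}.}

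The plan is to derive this regularity statement from the general inverse theorem (Theorem \ref{thm:gen-inverse}) by the standard energy-increment argument, exactly as regularity lemmas for Gowers norms are customarily deduced from inverse theorems, and then to upgrade the ``structured'' part to a \emph{$p$-homogeneous} nilspace polynomial using Theorem \ref{thm:main1-intro} in the same way this was done in the proof of Theorem \ref{thm:InvOver-FF}. More precisely, I would run the following iteration. Start with $f_s^{(0)}=0$, $f_e^{(0)}=0$, $f_r^{(0)}=f$. At each stage $j$, if $\|f_r^{(j)}\|_{U^{k+1}}$ is already below the target threshold (which will be roughly $b(\epsilon,m_j)$ for the current complexity bound $m_j$), stop. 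Otherwise apply Theorem \ref{thm:gen-inverse} to the $1$-bounded function $f_r^{(j)}$ with the auxiliary function $b(\cdot)$ chosen so that (i) the balance parameter beats the constant $b'_{\nss\!\sbr{i},p}$ from Theorem \ref{thm:main1-intro} for every nilspace of complexity at most the current bound, and (ii) it decreases fast enough for the energy-increment bookkeeping below. This produces a $1$-bounded nilspace polynomial $F_j\co\phi_j$ of degree $k$ and complexity at most some $m$, highly balanced, with $\langle f_r^{(j)}, F_j\co\phi_j\rangle\ge c$ for $c=c(\epsilon,k,p)>0$. Crucially, since $\phi_j$ is $b'$-balanced with $b'$ below the Theorem \ref{thm:main1-intro} threshold, the target nilspace is $p$-homogeneous, so $F_j\co\phi_j$ is a $p$-homogeneous nilspace polynomial. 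I then project $f_r^{(j)}$ onto (a bounded-rank approximation of) the finite-dimensional space spanned by the cube-weighted harmonics of $\phi_j$ on $\mathbb F_p^n$ — concretely, condition on the $\sigma$-algebra generated by $\phi_j$ — absorb that conditional expectation into $f_s^{(j+1)}$, and the error of replacing this conditional expectation by a single bounded-complexity nilspace polynomial (via a standard finite Fourier/partition-of-unity truncation on the finite nilspace, which is possible because the relevant nilspace is finite by Proposition \ref{prop:b-bal-p-hom-finite}) into $f_e^{(j+1)}$, controlled in $L^1$. The $U^2$-energy $\|f_s^{(j)}\|_{L^2}^2$ increases by at least $\sim c^2$ at each step, so the iteration terminates after $O(1/c^2)$ steps, giving a final complexity bound $N$ depending only on $\epsilon$ (through $c$) and on $b$.

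After termination we set $f_s=f_s^{(J)}$, $f_e=f_e^{(J)}$, $f_r=f-f_s-f_e$. Property (1) holds because at each step the structured part was a bounded-complexity, highly-balanced, $p$-homogeneous nilspace polynomial, and one has to check that finitely many such pieces combine into a single one of bounded complexity — this is routine: a bounded sum of nilspace polynomials of degree $k$ is again a nilspace polynomial of degree $k$ (pass to the product of the target nilspaces, which is again $p$-homogeneous as a product of $p$-homogeneous nilspaces, and is still highly balanced since the diagonal morphism into a product is as balanced as its worst coordinate), and the $1$-boundedness can be arranged by truncating $F$. Property (2), $\|f_e\|_{L^1}\le\epsilon$, comes from making each of the $O(1/c^2)$ truncation errors at most $\epsilon c^2$ in $L^1$, which is affordable because the truncation scale can be taken as fine as we like once the nilspace (hence its size) is fixed. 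Property (3): $\|f_r\|_{U^{k+1}}\le b(\epsilon,m)$ is exactly the stopping condition; $|f_r|\le 1$ follows from $|f|,|f_s|,|f_e+f_r|\le 1$-type bookkeeping (or just note $f_r=f_r^{(J)}$ is obtained as a conditional expectation difference and truncate); and the near-orthogonality $|\langle f_r,f_s\rangle|,|\langle f_r,f_e\rangle|\le b(\epsilon,m)$ follows from the generalized von Neumann inequality (Theorem \ref{thm:GGVN}, or the Cauchy–Schwarz--Gowers inequality) once we observe that $f_s$ and $f_e$ are, up to bounded error, measurable with respect to the $\sigma$-algebra on which $f_r$ has small correlation — indeed the standard trick is: if $\langle f_r, f_s\rangle$ were large then by the inverse theorem $f_r$ would correlate with a bounded-complexity nilspace polynomial, contradicting smallness of $\|f_r\|_{U^{k+1}}$ provided $b$ was chosen small relative to that correlation; choosing $b$ to decay fast enough in $m$ closes this.

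The main obstacle, and the step where care is genuinely needed, is the \emph{circularity in the choice of the auxiliary function} $b$: the balance parameter one feeds into Theorem \ref{thm:gen-inverse} must simultaneously (a) beat the Theorem \ref{thm:main1-intro} threshold $b'_{\nss\!\sbr{i},p}$ for all nilspaces of complexity up to the \emph{final} bound $N$ — but $N$ itself depends on how fast $b$ decays — and (b) be small enough that the accumulated $L^1$ errors and the near-orthogonality bounds hold. The resolution is the usual one for ``regularity from inverse theorem'' arguments with a growing complexity: since the complexity is non-decreasing and increases only when the energy increases by a fixed amount $\sim c^2$, there are at most $O(1/c^2)$ distinct complexity values encountered, so one can define $b$ recursively/inductively — $b(\epsilon,m)$ is chosen after $b(\epsilon,m')$ for all $m'<m$ are fixed, small enough relative to all of them and to the Theorem \ref{thm:main1-intro} constants for nilspaces of complexity $\le m$ — and this is exactly the meaning of ``$b$ decreasing sufficiently fast in the second variable'' in the statement. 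A secondary technical point is that Theorem \ref{thm:main1-intro} is stated for \textsc{cfr} nilspaces (hence, for $p$-homogeneous ones, finite nilspaces by Proposition \ref{prop:p-hom-str-gps-intro}), which is fine because the nilspaces produced by Theorem \ref{thm:gen-inverse} are \textsc{cfr} by hypothesis; one just needs to record that the conditional expectation onto a finite nilspace factor can be approximated in $L^1$ by a Lipschitz function of the morphism with Lipschitz constant bounded in terms of the nilspace, so that the resulting $f_s$ has bounded complexity in the precise sense fixed at the start of Section \ref{sec:inverse}.
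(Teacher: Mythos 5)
Your closing step --- choosing the auxiliary balance function so small that it beats the thresholds $b'_{\nss\sbr{i},p}$ of Theorem \ref{thm:main1-intro} for every nilspace of complexity at most $m$, thereby forcing the target of $f_s$ to be $p$-homogeneous --- is exactly the content of the paper's proof. But the paper does not re-derive the regularity decomposition itself: it simply invokes the general regularity theorem of \cite{CSinverse} (Theorem 1.5 there), which already produces $f=f_s+f_e+f_r$ with $f_s$ a single $b(\epsilon,m)$-balanced nilspace polynomial of complexity at most $m$, and then adds the hypothesis $b(\epsilon,m)\leq \min\{b'_{\nss\sbr{i},p}: i\leq m\}$ to upgrade $f_s$ to a $p$-homogeneous one. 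Your proposal instead attempts to rebuild that regularity theorem from Theorem \ref{thm:gen-inverse} by an energy-increment iteration, and this is where a genuine gap appears.

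The problematic step is the one where you combine the structured pieces accumulated over the $O(1/c^2)$ iterations into a single balanced nilspace polynomial by passing to the product of the target nilspaces and asserting that ``the diagonal morphism into a product is as balanced as its worst coordinate.'' This is false. Balance of $(\phi_1,\phi_2):\mc{D}_1(\mb{F}_p^n)\to \nss_1\times\nss_2$ requires the pushforward of the uniform measure on $\cu^j(\mc{D}_1(\mb{F}_p^n))$ to be close to the Haar measure on $\cu^j(\nss_1\times\nss_2)=\cu^j(\nss_1)\times\cu^j(\nss_2)$, i.e.\ \emph{joint} equidistribution of the pair, which is strictly stronger than balance of each coordinate: taking $\phi_1=\phi_2$ with $\nss_1=\nss_2$ nontrivial, the pushforward is supported on the diagonal and is therefore far from the product Haar measure no matter how balanced $\phi_1$ is. Re-balancing the combined morphism (e.g.\ by replacing the product target with a suitable quotient or sub-nilspace onto which the joint morphism does equidistribute, while keeping control of complexity) is precisely the nontrivial content of the regularity theorem in \cite{CSinverse}, and it is why the paper delegates the entire decomposition to that reference rather than re-running the energy increment. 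The rest of your bookkeeping (the recursive choice of $b$, the $L^1$ control of $f_e$, the near-orthogonality of $f_r$) is standard and fine, but without a correct resolution of this combination step the proof as written does not deliver property (1).
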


\noindent This follows from the general regularity result \cite[Theorem 1.5]{CSinverse}, by adding the assumption that for every $\epsilon>0$ and $m\in \mb{N}$ we have $b(\epsilon,m)\leq \min \{ b'_{\nss\sbr{i},p}: i\leq m\}$, where $b'_{\nss\sbr{i},p}$ is the constant given by Theorem \ref{thm:main1-intro}. Then, again thanks to the latter theorem, we can conclude that $f_s$ is $p$-homogeneous.

\appendix
\section{Auxiliary results on nilspaces}\label{app:res-nil}

\noindent In this first appendix we collect several results from general nilspace theory used in the paper. Most of these results are new and seem of independent interest as additional tools to work with nilspaces.

Let us first fix some terminology and notation. By a \emph{box} (or hyperrectangle) in $\mb{Z}^m$, for $m\in \mb{N}$, we mean as usual a Cartesian product of $m$ intervals in $\mb{Z}$. Given a base-point $a=(a\sbr{1},\ldots,a\sbr{m})\in \mb{Z}^m$ and a vector $\ell=(\ell\sbr{1},\ldots,\ell\sbr{m})\in \mb{Z}_{\geq 0}^m$, we denote the corresponding box $\prod_{i=1}^m [a\sbr{i},a\sbr{i}+\ell\sbr{i}]\subset \mb{Z}^m$ by $B_{a,\ell}$.

For $m\ge 1$ and $n\ge 0$, we shall work with cubes in $\cu^n(\mc{D}_1(\mb{Z}^m))$ whose images are contained in a given large box. It will then be useful to associate with each box in $\mb{Z}^m$ a certain cube (in the nilspace sense) on $\mc{D}_1(\mb{Z}^m)$ which covers the entire box, which we shall call the associated \emph{maximal cube}. For example, given a box $[a\sbr{1},a\sbr{1}+\ell\sbr{1}]\times [a\sbr{2},a\sbr{2}+\ell\sbr{2}]$ in $\mb{Z}^2$, the corresponding maximal cube is the $(\ell\sbr{1}+\ell\sbr{2})$-dimensional cube on $\mc{D}_1(\mb{Z}^2)$ that maps $v\in\db{\ell\sbr{1}+\ell\sbr{2}}$ to $a + \big(v\sbr{1}+\cdots + v\sbr{\ell\sbr{1}}, 0\big) + \big(0, v\sbr{\ell\sbr{1}+1}+\cdots + v\sbr{\ell\sbr{1}+\ell\sbr{2}}\big)$. Recall the notation $e_i$ for the elements of the standard basis of $\mb{Z}^m$, and the notation $|\ell|$ for the \emph{height} $\ell\sbr{1}+\cdots+\ell\sbr{m}$ of any $\ell\in \mb{Z}_{\geq 0}^m$.
\begin{defn}[Maximal cube associated with a box]\label{def:max-cube}
Let $m\in \mb{N}$, let $a=(a\sbr{i})_{i\in [m]}\in \mb{Z}^m$ and $\ell \in \mb{Z}_{\geq 0}^m$. The \emph{maximal cube} associated with the box $B_{a,\ell}$ is the cube $\q_{a,\ell} \in \cu^{|\ell|}(\mc{D}_1(\mb{Z}^m))$ defined as follows:
\[
\forall\,v\in \db{\,|\ell|\,},\quad \q_{a,\ell} (v) := a+ \sum_{j\in [m]} \Big(v(1+\sum_{i=1}^{j-1}\ell\sbr{i})+v(2+\sum_{i=1}^{j-1}\ell\sbr{i})+\cdots+v(\sum_{i=1}^j\ell\sbr{i}))\Big)\, e_j.
\]
\end{defn}
\noindent Maximal cubes will help us to understand when a morphism defined on a box in $\mb{Z}^m$ can be extended to a morphism on all of $\mc{D}_1(\mb{Z}^m)$. To this end we introduce the following definition.
\begin{defn}
Let $m\in \mb{N}$, let $a\in \mb{Z}^m$ and $\ell\in\mb{Z}_{\ge 0}^m$, and let $\ns$ be a nilspace. Then $\hom_{a,\ell}(\ns):=\{f:B_{a,\ell}\to \ns:f\co \q_{a,\ell}\in \cu^{|\ell|}(\ns)\}$.\end{defn} 
\begin{remark}\label{rem:abuse-not}
In the sequel, if we have a function $f:S\to\ns$ for some $S\subset \mb{Z}^m$ and there exists $a\in \mb{Z}^m$ and $\ell\in \mb{Z}_{\ge 0}^m$ such that $B_{a,\ell}\subset S$, we may abuse the notation by writing $f\in \hom_{a,\ell}(\ns)$, by which we mean that $f|_{B_{a,\ell}}\in \hom_{a,\ell}(\ns)$.
\end{remark}
\noindent To treat the above-mentioned extension problem, we begin with the following observation.
\begin{lemma}\label{lem:app}
Let $\ns$ be a nilspace, let $m \in\mb{N}$, and let $B_{a,\ell}$ be a box in $\mb{Z}^m$. Suppose that $f\in \hom_{a,\ell}(\ns)$. Then for any $n\ge 0$ and any $g\in \cu^n(\mc{D}_1(\mb{Z}^m))$ such that $\tIm(g) \subset B_{a,\ell}$, we have $f \co g \in \cu^n(\ns)$.
\end{lemma}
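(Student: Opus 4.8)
The plan is to reduce the statement to the maximal cube $\q_{a,\ell}$ itself by expressing an arbitrary cube $g\in \cu^n(\mc{D}_1(\mb{Z}^m))$ with image in $B_{a,\ell}$ as a composite $\q_{a,\ell}\co \phi$, where $\phi$ is a discrete-cube morphism $\db{n}\to\db{|\ell|}$ (in the nilspace-theoretic sense, i.e.\ $\phi\in\cu^n(\mc{D}_1(\mb{Z}^{|\ell|}))$ with image in $\db{|\ell|}$). Once this factorization is available, the conclusion is immediate: $f\co g = f\co \q_{a,\ell}\co\phi = (f\co\q_{a,\ell})\co\phi$, and since $f\in\hom_{a,\ell}(\ns)$ we have $f\co\q_{a,\ell}\in\cu^{|\ell|}(\ns)$ by definition, so composing with the discrete-cube morphism $\phi$ keeps us inside $\cu^n(\ns)$ by the cube axioms.

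The first step I would carry out is a normalization: by translating, we may assume $a=0^m$, so that $B_{a,\ell}=\prod_{j=1}^m[0,\ell\sbr{j}]$ and $\q_{0,\ell}(v) = \sum_{j\in[m]}\big(\sum_{t=1}^{\ell\sbr{j}} v(\sigma_j+t)\big)e_j$, where $\sigma_j = \sum_{i<j}\ell\sbr{i}$ records the block of $\ell\sbr{j}$ coordinates of $\db{|\ell|}$ assigned to the $j$-th direction. The second step is the construction of $\phi$. Write $g(w) = x + w\sbr{1}h_1+\cdots+w\sbr{n}h_n$ for $w\in\db{n}$, with base point $x\in B_{0,\ell}$ and edge vectors $h_i\in\mb{Z}^m$; since $\tIm(g)\subset B_{0,\ell}$, for each coordinate direction $j\in[m]$ the quantity $x\sbr{j} + \sum_{i\in I}(h_i)\sbr{j}$ lies in $[0,\ell\sbr{j}]$ for every subset $I\subseteq[n]$. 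This is a strong constraint; in particular each $(h_i)\sbr{j}$ is bounded in absolute value by $\ell\sbr{j}$ and the signs of the $(h_i)\sbr{j}$ for $i$ ranging over $[n]$ are compatible with the base point $x\sbr{j}$ staying in range. The idea is to define $\phi\in\hom(\mc{D}_1(\mb{Z}^n),\mc{D}_1(\mb{Z}^{|\ell|}))$, one block of $\ell\sbr{j}$ output coordinates at a time, so that $\q_{0,\ell}\co\phi$ realizes the map $w\mapsto g(w)$ in direction $j$; concretely, in the $j$-th block one must choose, for each $w\in\db{n}$, how many of the $\ell\sbr{j}$ output coordinates are set to $1$, and this count must equal $x\sbr{j}+\sum_i w\sbr{i}(h_i)\sbr{j}$, which by the range constraint is a value in $\{0,1,\ldots,\ell\sbr{j}\}$. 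One builds this by first using $x\sbr{j}$ constant-$1$ coordinates, then for each edge $h_i$ with $(h_i)\sbr{j}>0$ adding $(h_i)\sbr{j}$ output coordinates equal to $w\sbr{i}$, and for each edge with $(h_i)\sbr{j}<0$ flipping (using $1-w\sbr{i}$ in place of an earlier block of constant-$1$ coordinates) — the bookkeeping works out precisely because of the range guarantee. The resulting $\phi$ has each output coordinate of the form $w\sbr{i}$, $1-w\sbr{i}$, or a constant in $\{0,1\}$, hence $\phi\in\cu^n(\mc{D}_1(\mb{Z}^{|\ell|}))$ with image in $\db{|\ell|}$, and by construction $\q_{0,\ell}\co\phi = g$.

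The third step is to invoke Lemma~1.1.2 of \cite{Cand:Notes1} (or its straightforward analogue), namely that composition of cubes with discrete-cube morphisms preserves the cube structure: since $f\co\q_{0,\ell}\in\cu^{|\ell|}(\ns)$ and $\phi$ is a discrete-cube morphism $\db{n}\to\db{|\ell|}$, we get $(f\co\q_{0,\ell})\co\phi\in\cu^n(\ns)$, which is exactly $f\co g\in\cu^n(\ns)$, as desired. The main obstacle I anticipate is the second step — making the coordinate-level construction of $\phi$ fully rigorous, in particular handling negative edge-coefficients $(h_i)\sbr{j}$ cleanly (the flipping trick needs enough ``spare'' constant-$1$ coordinates to flip, and one must verify this is guaranteed by the constraint that $x\sbr{j}+\sum_{i\in I}(h_i)\sbr{j}\in[0,\ell\sbr{j}]$ for \emph{all} $I$, applied with $I$ equal to the set of indices with negative $j$-coefficient). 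A cleaner alternative, which I would try first, is to avoid coordinates entirely: observe that the map $\iota_j\colon [0,\ell\sbr{j}]\to\db{\ell\sbr{j}}$, $s\mapsto (\underbrace{1,\ldots,1}_{s},0,\ldots,0)$ exhibits $\mc{D}_1([0,\ell\sbr{j}])$ as a retract of $\mc{D}_1(\mb{Z}^{\ell\sbr{j}})$ compatibly with $\q_{0,\ell}$, reduce to the one-dimensional case $m=1$ via the product structure, and there simply note that $g$ restricted to one direction is an affine map into an interval, which always factors through the maximal cube by a direct check; the product over $j\in[m]$ then assembles $\phi$. Either route closes the argument.
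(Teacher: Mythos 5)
Your proposal is correct and follows essentially the same route as the paper: both factor $g$ as $\q_{a,\ell}\co\phi$ for a discrete-cube morphism $\phi:\db{n}\to\db{|\ell|}$ built block-by-block from coordinates of the form $w\sbr{i}$, $1-w\sbr{i}$, and constants, with the counting justified by the constraint $\tIm(g)\subset B_{a,\ell}$ (in particular the inequalities $x\sbr{j}-a\sbr{j}\geq \sum_{(h_i)\sbr{j}<0}|(h_i)\sbr{j}|$ and $x\sbr{j}-a\sbr{j}\leq \ell\sbr{j}-\sum_{(h_i)\sbr{j}>0}(h_i)\sbr{j}$, which you correctly flag as the key bookkeeping points), and then conclude via the composition axiom. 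The only differences are cosmetic (ordering of the constant versus variable coordinates within each block, and your optional reduction to $m=1$).
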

\begin{proof}
Let $x,y_1,\ldots,y_n\in \mb{Z}^m$ be the components of $g$, thus $g(v)=x+v\sbr{1}\, y_1+\cdots+v\sbr{n}\,y_n$. It suffices to prove that $g = \q_{a,\ell}\co h$ for some discrete-cube morphism $h:\db{n}\to \db{|\ell|}$. We shall explain in detail how the first $\ell\sbr{1}$ coordinates of $h$ can be defined in order to satisfy this last equality (the argument is the same for each interval $[1+\sum_{i=0}^{j-1}\ell\sbr{i},\sum_{i=0}^j\ell\sbr{i}]$ of coordinates of $h$, which will correspond to the $j$-th coordinate of $g$). 

The first coordinate of $g$ equals $x\sbr{1}+y_1\sbr{1}v\sbr{1}+\cdots+y_n\sbr{1}v\sbr{n}$. Note that $x\sbr{1}\in [a\sbr{1},a\sbr{1}+\ell\sbr{1}]$ and that $\sum_{i=1}^n|y_i\sbr{1}|\le \ell\sbr{1}$ (as otherwise it is easy to check that the image of $g$ would not lie in $B_{a,\ell}$). Now, for simplicity of the notation, assume that the coordinates $y_1\sbr{1},\ldots,y_{t_1}\sbr{1}$ are all strictly positive, the coordinates $y_{t_1+1}\sbr{1},\ldots,y_{t_1+t_2}\sbr{1}$ are all strictly negative and $y_{t_1+t_2+1}\sbr{1},\ldots,y_n\sbr{1}$ are all zero (the general argument is similar, modulo taking care of the actual positions of the positive, negative, and zero coordinates, but tracking this only adds difficulty to the reading of the proof). 

We start by defining the first $\sum_{i=1}^n|y_i\sbr{1}|\le \ell\sbr{1}$ coordinates of the discrete-cube morphism $h$. We take these to be
\[
(\underbrace{v_1,\ldots,v_1}_{y_1\sbr{1} \text{ times}},\underbrace{v_2,\ldots,v_2}_{y_2\sbr{1} \text{ times}},\ldots,\underbrace{v_{t_1},\ldots,v_{t_1}}_{y_{t_1}\sbr{1} \text{ times}},\underbrace{1-v_{t_1+1},\ldots,1-v_{t_1+1}}_{|y_{t_1}\sbr{1}| \text{ times}},\ldots,\underbrace{1-v_{t_1+t_2},\ldots,1-v_{t_1+t_2}}_{|y_{t_1+t_2}\sbr{1}| \text{ times}}).
\]
Now we just have to define the next $\ell\sbr{1}-\sum_{i=1}^n|y_i\sbr{1}|$ coordinates of $h$ (and thus we would have defined in total the first $\ell\sbr{1}$ coordinates of $h$). Note that $x\sbr{1}\ge a\sbr{1}+\sum_{i=t_1+1}^{t_1+t_2}|y_i\sbr{1}|,$ as otherwise it is again easy to check that the image of $g$ would not lie in $B_{a,\ell}$. Similarly we have that $x\sbr{1}\le a\sbr{1}+\ell\sbr{1}-\sum_{i=1}^{t_1}|y_i\sbr{1}|$. Hence, we define the next $\ell\sbr{1}-\sum_{i=1}^n|y_i\sbr{1}|$ coordinates of $h$ as
\[
(1^{x\sbr{1}-a\sbr{1}-\sum_{i=t_1+1}^{t_1+t_2}|y_i\sbr{1}|},0^{\ell\sbr{1}-\sum_{i=1}^{t_1}|y_i\sbr{1}|-x\sbr{1}+a\sbr{1}}).
\]
It is now seen by straightforward summation that the first coordinate of $g$ is thus equal to the first coordinate of $\q_{a,\ell}\co h$. The result follows.
\end{proof}
\begin{lemma}[Corners of a box]\label{lem:cor-of-box} Let $\ns$ be a nilspace, let $B_{a,\ell}$ be a box in $\mb{Z}^m$, and let $f:B_{a,\ell}\setminus\{a+\ell\}\to \ns$ be a map such that for every $j\in [m]$ with $\ell\sbr{j}>0$ we have $f\in\hom_{a,\ell-e_j}(\ns)$ \textup{(}recall Remark \ref{rem:abuse-not} here\textup{)}. Then $f\co \q_{a,\ell}\in \cor^{|\ell|}(\ns)$.
\end{lemma}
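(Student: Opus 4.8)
The plan is to unwind the definitions of ``corner'' and of $\hom_{a,\ell}$ and reduce the statement to the extension property of maximal cubes that underlies the preceding lemmas. Recall that an $n$-corner on $\ns$ is a map from $\db{n}\setminus\{1^n\}$ to $\ns$ that agrees, on each of the $n$ facets through $0^n$, with (the restriction of) some $n$-cube; equivalently, it is a map whose restriction to each of the $(n-1)$-faces not containing the vertex $1^n$ extends to an $(n-1)$-cube in a mutually compatible way. So, with $n=|\ell|$, we must produce, for each coordinate direction $r\in[\,|\ell|\,]$, an element of $\cu^{n-1}(\ns)$ whose image under the relevant face embedding $\db{n-1}\hookrightarrow\db{n}\setminus\{1^n\}$ coincides with $f\co\q_{a,\ell}$ restricted to that face, and then check these agree on overlaps. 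The key observation is that the facets of $\q_{a,\ell}$ through $0^n$ are themselves maximal cubes of sub-boxes: deleting the $r$-th coordinate of $\db{|\ell|}$ (and restricting $\q_{a,\ell}$ accordingly) produces exactly $\q_{a,\ell-e_{j(r)}}$, where $j(r)$ is the unique index such that $r$ lies in the block of coordinates $[1+\sum_{i<j}\ell\sbr{i},\ \sum_{i\le j}\ell\sbr{i}]$. This is immediate from Definition \ref{def:max-cube}, since setting $v\sbr{r}=0$ simply shortens the $j(r)$-th summation range by one.

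The main steps, in order, would be: (1) For each $r\in[\,|\ell|\,]$, identify $j=j(r)$ and note that $\ell\sbr{j}>0$ (since the $r$-th coordinate of $\q_{a,\ell}$ is only present when that block is nonempty). By hypothesis $f\in\hom_{a,\ell-e_j}(\ns)$, i.e.\ $f\co\q_{a,\ell-e_j}\in\cu^{|\ell|-1}(\ns)$. By the previous paragraph's observation, $f\co\q_{a,\ell-e_j}$ is exactly the restriction of $f\co\q_{a,\ell}$ to the facet $\{v:v\sbr{r}=0\}$. So that facet of $f\co\q_{a,\ell}$ is a cube, as required. (2) The overlap compatibility between the facets for $r$ and $r'$ is automatic because all these facets are restrictions of the single map $f\co\q_{a,\ell}:\db{|\ell|}\setminus\{1^{|\ell|}\}\to\ns$ — there is no choice being made, so there is nothing to reconcile; the point is simply that $f\co\q_{a,\ell}$ restricted to $\db{|\ell|}\setminus\{1^{|\ell|}\}$ has each of its ``lower'' facets a genuine cube. (3) One subtlety: the value $\q_{a,\ell}(1^{|\ell|})=a+\ell$ is the one point of $B_{a,\ell}$ on which $f$ is not defined, so $f\co\q_{a,\ell}$ is well-defined precisely on $\db{|\ell|}\setminus\{1^{|\ell|}\}$ — here one uses that $\q_{a,\ell}$ sends $1^{|\ell|}$ to $a+\ell$ and sends no other vertex there (injectivity of $\q_{a,\ell}$ at the top vertex), which again is clear from the explicit formula. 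Assembling (1)–(3) gives $f\co\q_{a,\ell}\in\cor^{|\ell|}(\ns)$.

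The only place requiring a little care — and hence the main obstacle — is step (3), namely checking that the combinatorial structure of an $n$-corner really is ``a map on $\db{n}\setminus\{1^n\}$ all of whose $0^n$-facets are cubes,'' matching the conventions of \cite{CamSzeg,Cand:Notes1}, together with the minor check that the $\ell\sbr{j}=0$ directions (which contribute no coordinate to $\q_{a,\ell}$) are correctly excluded and cause no missing facets. Everything else is a direct substitution into Definition \ref{def:max-cube}. In particular, no appeal to Lemma \ref{lem:app} is strictly needed, though it could be invoked to streamline the identification of facets of $\q_{a,\ell}$ with maximal cubes of sub-boxes if one prefers to avoid writing out the index bookkeeping explicitly.
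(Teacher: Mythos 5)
Your proposal is correct and follows essentially the same route as the paper: the paper also proves the lemma by observing that composing $\q_{a,\ell}$ with the face embedding that inserts a $0$ in position $u$ yields exactly the maximal cube $\q_{a,\ell-e_{j(u)}}$, so each lower facet of $f\co\q_{a,\ell}$ is a cube by the hypothesis $f\in\hom_{a,\ell-e_{j(u)}}(\ns)$. Your additional checks in steps (2)–(3) (no compatibility to reconcile, only the top vertex maps to $a+\ell$, and blocks with $\ell\sbr{j}=0$ contribute no coordinates) are correct and merely make explicit what the paper leaves implicit.
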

Here ``$\cor^n(\ns)$" denotes the space of $n$-corners on $\ns$ (see \cite[Lemma 2.1.12]{Cand:Notes2}).
\begin{proof} We show that all lower faces of $f\co \q_{a,\ell}:\db{|\ell|}\setminus\{1^{|\ell|}\}\to \ns$ are cubes. For any $u\in [\,|\ell|\,]$, let $\phi_u:\db{\,|\ell|-1\,}\to \db{\,|\ell|\,}$ be the map $(v_1,\ldots,v_{|\ell|-1})\mapsto (v_1,\ldots,v_{u-1},0,v_u,\ldots,v_{|\ell|-1})$. Thus $\q_{a,\ell}\co\phi_u = \q_{a,\ell-e_{j(u)}}$ where $j(u)\in [m]$ is such that $u\in [1+\sum_{i=1}^{j(u)-1}\ell(i),\sum_{i=1}^{j(u)}\ell(i)]$. Thus $f\co \q_{a,\ell}\co\phi_u\in \cu^{|\ell|-1}(\ns)$.\end{proof}
We shall now derive some useful corollaries.
\begin{corollary}\label{cor:liftthrufibgen}
Let $\ns$ and $\nss$ be nilspaces, and let $\psi:\ns \to \nss$ be a fibration. Let $g\in \hom(\mc{D}_1(\mb{Z}^m),\nss)$, let $B_{a,\ell}$ be a box in $\mb{Z}^m$, and let $f\in \hom_{a,\ell}(\ns)$ satisfy $\psi\co f = g|_{B_{a,\ell}}$. Then there is $g'\in \hom(\mc{D}_1(\mb{Z}^m),\ns)$ such that $g'|_{B_{a,\ell}}=f$ and $\psi \co g' = g$.
\end{corollary}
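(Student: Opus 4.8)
The plan is to extend $f$ coordinate by coordinate, first in the positive directions and then in the negative directions away from the box $B_{a,\ell}$, at each stage using the completion axiom together with the fibration property of $\psi$ to fill in the new values while staying compatible with $g$. More precisely, I would first reduce to the case where $B_{a,\ell}$ has the form $[0,\ell\sbr{1}]\times\cdots\times[0,\ell\sbr{m}]$ by a translation (translations are nilspace automorphisms of $\mc{D}_1(\mb{Z}^m)$, so this is harmless). Then I would argue by a double induction: an outer induction that enlarges the box one unit at a time in each of the $2m$ directions $\pm e_j$, exhausting all of $\mb{Z}^m$ in the limit, and for each enlargement an inner induction on the height of the new lattice points being added.

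The core of the argument is the single-step extension: given that $f$ is already defined and lies in $\hom_{a,\ell}(\ns)$ on a box $B$, and compatible with $g$ there, extend it to the box $B'$ obtained by increasing $\ell\sbr{j}$ by one (say). Order the new points $z\in B'\setminus B$ by increasing height. For each such $z$, Lemma \ref{lem:cor-of-box} (applied to the minimal sub-box of $B'$ with top corner $z$, whose other corners all lie in the already-defined region by the height ordering) shows that $f\co \q_{a',\ell'}$ restricted to $\db{|\ell'|}\setminus\{1^{|\ell'|}\}$ is a corner on $\ns$. Meanwhile $g$ provides a value at $z$, and $\psi$ maps this corner-plus-$g(z)$ picture to a genuine cube on $\nss$ (since $g$ is a morphism). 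Now I invoke the defining property of a fibration: a corner on $\ns$ together with a lift, through $\psi$, of a completion on $\nss$ of the pushed-forward corner, can be completed on $\ns$ to a cube projecting correctly. This yields a value $f(z)\in \ns$ with $\psi(f(z))=g(z)$ and such that the extended $f$ still satisfies the relevant $\hom_{a',\ell'-e_i}$ conditions; one checks that $f$ remains in $\hom_{a',\ell'}(\ns)$ by verifying that $f\co\q_{a',\ell'}$ is a cube, which follows once all points up to the top corner have been filled in consistently, using that each completion was chosen as a bona fide cube.

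Having performed all $2m$ single-step enlargements and iterated, I obtain a map $g':\mb{Z}^m\to\ns$ extending $f$ with $\psi\co g'=g$ and with $g'\in\hom_{b,L}(\ns)$ for arbitrarily large boxes $B_{b,L}$. To conclude $g'\in\hom(\mc{D}_1(\mb{Z}^m),\ns)$, I would use Lemma \ref{lem:app}: any cube $h\in\cu^n(\mc{D}_1(\mb{Z}^m))$ has image contained in some box $B_{b,L}$, and since $g'|_{B_{b,L}}\in\hom_{b,L}(\ns)$, Lemma \ref{lem:app} gives $g'\co h\in\cu^n(\ns)$. Thus $g'$ is a morphism, and it agrees with $f$ on $B_{a,\ell}$ and satisfies $\psi\co g'=g$ by construction.

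\textbf{Main obstacle.} The delicate point is bookkeeping in the single-step extension: verifying that the completion axiom (uniqueness is not available since $\ns$ need not be finite-step, so I only get \emph{some} completion) can be applied consistently so that the extended map genuinely lands in $\hom_{a',\ell'}(\ns)$ — i.e.\ that $f\co\q_{a',\ell'}$ is a full cube, not merely that all its proper faces are. This requires checking that, after filling the top corner via the fibration's completion property, the resulting assignment on $\db{|\ell'|}$ is exactly the cube produced by completion (which it is, by construction, once one matches $\q_{a',\ell'}$ with the relevant discrete-cube morphism as in the proof of Lemma \ref{lem:app}). The compatibility with $g$ throughout, and making sure the choices at lower-height points do not obstruct later completions, is the part that needs the most care, though it is essentially forced by the fibration property at each stage.
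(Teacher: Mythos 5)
Your proposal is correct and follows essentially the same route as the paper: a point-by-point extension of $f$ to ever larger boxes, ordering the new points so that Lemma \ref{lem:cor-of-box} applies, completing each corner through the fibration so as to stay compatible with $g$, and finishing with Lemma \ref{lem:app} to upgrade box-wise cube-preservation to a full morphism. The only cosmetic differences are your choice of the height ordering in place of the paper's lexicographic one and handling the negative directions directly rather than by reflection; both are equivalent.
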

\begin{proof}
Recall that by definition of fibrations, given any corner $\q'\in\cor^m(\ns)$ and any cube $q\in \cu^m(\nss)$ such that $\psi \co \q' = q|_{\db{m}\setminus 1^m}$, there exists $\q\in \cu^m(\ns)$ such that $\psi \co \q = q$.

The idea of the proof is to extend the definition of $f$ point by point in an inductive process, defining values of $f$ at new points in $\mb{Z}^m$ of the form $a+(\ell\sbr{1}+1,t_2,\ldots,t_m)$ for varying $t_j$, in order to  extend $f$ eventually to the whole greater box $B_{a,\ell+e_1}$ (thus we have increased the first coordinate of $\ell$ by 1) while ensuring that $f\in\hom_{a,\ell+e_1}(\ns)$. For the induction, we can use the lexicographic order $\prec$ on $\{\ell\sbr{1}+1\}\times\prod_{j=2}^m [0,\ell\sbr{j}]$ (noting that if $v\sbr{i}\leq w\sbr{i}$ for all $i$ and $v\neq w$ then $v\prec w$). We illustrate the process in the case $m=2$ and $B_{(0,0),(1,2)}$. The points where $f$ is defined initially are
\begin{center}
\includegraphics[width=30mm]{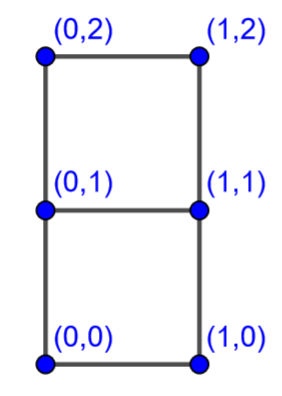}
\end{center}
and we will assign new values to the points $(2,0),(2,1)$ and $(2,2)$ (in that order).

The base case for the induction corresponds to $t_2=\cdots=t_m=0$. First we want to prove that $f\co\q_{a,(\ell\sbr{1}+1,0,\ldots,0)}|_{v\not= 1^{1+\ell(1)}}$ is in $\cor^{\ell(1)+1}(\ns)$. By Lemma \ref{lem:cor-of-box} it is enough to check that $f\co \q_{a,(\ell\sbr{1},0,\ldots,0)}\in \cu^{\ell\sbr{1}}(\ns)$. As $\q_{a,(\ell\sbr{1},0,\ldots,0)}\in \cu^{\ell\sbr{1}}(\mc{D}_1(\mb{Z}^m))$ and its image lies in $B_{a,\ell}$, by Lemma \ref{lem:app} the result follows in this case. Furthermore, by assumption we have $\psi \co f \co \q_{a,(\ell\sbr{1}+1,0,\ldots,0)} (v)= g \co \q_{a,(\ell\sbr{1}+1,0,\ldots,0)}(v)$ for all $v \neq 1^{\ell\sbr{1}+1}$, and $g \co \q_{a,(\ell\sbr{1}+1,0,\ldots,0)}\in \cu^{\ell\sbr{1}+1}(\ns)$. As $\psi$ is a fibration, we can complete the corner (i.e.\ assign a value to $f(a+(\ell\sbr{1}+1)e_1)$ making $f \co \q_{a,(\ell\sbr{1}+1)e_1}$ a cube) in such a way that $(\psi \co f)(a+(\ell\sbr{1}+1)e_1) = g(a+(\ell\sbr{1}+1)e_1)$. In our example, we would thus assign a value to $(2,0)$ and now the points where $f$ is defined are the following:
\begin{center}
\includegraphics[width=40mm]{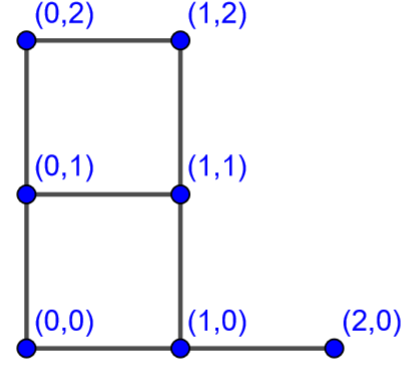}
\end{center}
For the general case, suppose that we want to assign the value of $f(a +(\ell\sbr{1}+1,t_2,\ldots,t_m))$. By induction, for all $(\ell\sbr{1}+1,t'_2,\ldots,t'_m)\prec (\ell\sbr{1}+1,t_2,\ldots,t_m)$ we have assigned a value to $f(a+(\ell\sbr{1}+1,t'_2,\ldots,t'_m))$ so that $f \co \q_{a,(\ell\sbr{1}+1,t'_2,\ldots,t'_m)}\in \cu^{\ell\sbr{1}+1+\sum_{i=2}^m t'_i}(\ns)$ and $\psi \co f \co \q_{a,(\ell\sbr{1}+1,t'_2,\ldots,t'_m)} = g \co \q_{a,(\ell\sbr{1}+1,t'_2,\ldots,t'_m)}$. Now we claim that $f \co \q_{a,(\ell\sbr{1}+1,t_2,\ldots,t_m)}(v)$ for $v\in\db{\ell\sbr{1}+1+\sum_{i=2}^m t_i} \setminus\{1^{\ell\sbr{1}+1+\sum_{i=2}^m t_i}\}$ in an element of $\cor^{\ell\sbr{1}+1+\sum_{i=2}^m t_i}(\ns)$. In order to prove this, we will rely again on Lemma \ref{lem:cor-of-box}. To apply it we need to check two different cases. First, we need that $f\co \q_{a,(\ell\sbr{1},t_2,\ldots,t_m)}\in \cu^{\ell\sbr{1}+\sum_{i=2}^mt_i}(\ns)$ (corresponding to subtracting $e_1$). As $\q_{a,(\ell\sbr{1},t_2,\ldots,t_m)}\in \cu^{\ell\sbr{1}+\sum_{i=2}^mt_i}(\ns)$ and its image is contained in $B_{a,\ell}$, by Lemma \ref{lem:app} the result follows in this case. Now let $j\ge 2$. We need to prove that $f\co \q_{a,(\ell\sbr{1}+1,t_2,\ldots,t_m)-e_j}\in \cu^{\ell\sbr{1}+\sum_{i=2}^m t_i}(\ns)$. But this case follows by induction hypothesis as $(\ell\sbr{1}+1,t_2,\ldots,t_m)-e_j\prec (\ell\sbr{1}+1,t_2,\ldots,t_m)$. 

In our example let us assume that we are trying to assign a value to $f(2,1)$ (in red in the diagram). The previous paragraph says that first we have to check that $f\co \q_{(0,0),(2,0)}$ and $f\co \q_{(0,0),(1,1)}$ are in $\cu^2(\ns)$. The images of $\q_{(0,0),(2,0)}$ and $\q_{(0,0),(1,1)}$ are represented in purple and green respectively.
\begin{center}
\includegraphics[width=80mm]{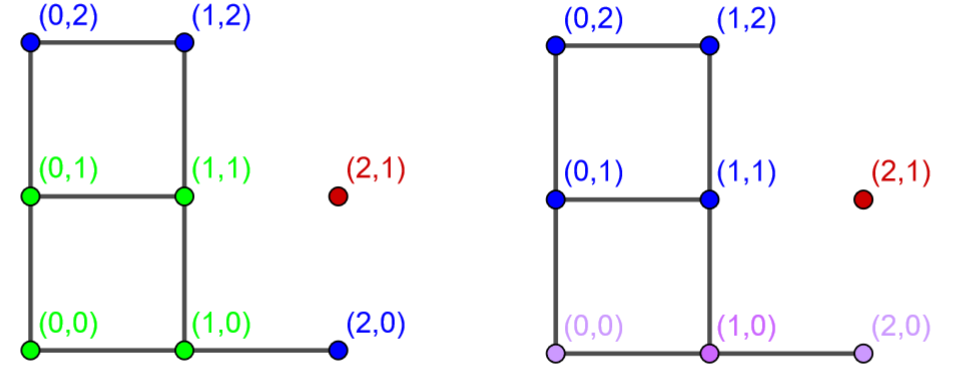}
\end{center}
From the diagram we see that for the green cube we have to use our initial assumption and an application of Lemma \ref{lem:app} and for the purple one the induction hypothesis.
We also have $\psi\co f\co \q_{a,(\ell\sbr{1}+1,t_2,\ldots,t_m)}(v) = g \co \q_{a,(\ell\sbr{1}+1,t_2,\ldots,t_m)}(v)$ for all $v\not=1^{\ell\sbr{1}+1+\sum_{j=2}^m t_j}$ and $g \co \q_{a,(\ell\sbr{1}+1,t_2,\ldots,t_m)} \in \cu^{\ell\sbr{1}+1+\sum_{j=2}^m t_j}(\nss)$ (by construction). Thus, using that $\psi$ is a fibration, we can complete the corner $f\co \q_{a,(\ell\sbr{1}+1,t_2,\ldots,t_m)}$ in such a way that $f\co \q_{a,(\ell\sbr{1}+1,t_2,\ldots,t_m)}\in \cu^{\ell\sbr{1}+1+\sum_{j=2}^m t_j}(\ns)$ and $\psi\co f\co \q_{a,(\ell\sbr{1}+1,t_2,\ldots,t_m)}=g\co \q_{a,(\ell\sbr{1}+1,t_2,\ldots,t_m)}$. The value at the top-vertex of this completion is the value that we assign to $f(a +(\ell\sbr{1}+1,t_2,\ldots,t_m))$.

At the end of this process, we obtain $f:B_{a,\ell+e_1}\to \ns$ such that $f\co \q_{(a,\ell+e_1)}\in \cu^{1+|\ell|}(\ns)$. It is fairly easy to see now that we can repeat this process in every direction (i.e.\ thus adding $e_j$ to $\ell$, for any $j\in [m]$), and thus extend $f$ to a map $\tilde f: B_{a,(L,\ldots,L)}\to \ns$ such that $\tilde f\co \q_{a,(L,\ldots,L)}\in \cu^{mL}(\ns)$, for any $L\in\mb{N}$. Moreover, if we reflect $\tilde f$ defining $f':B_{(0,a(2),\ldots,a(m)),(L,\ldots,L)}\to \ns$ as $f'(v)=\tilde f(a\sbr{1}+L-v\sbr{1},v\sbr{2},\ldots,v\sbr{m})$, then extend this by $e_1$ as above, and then reflect again, we obtain an extension of $\tilde f$ to $B_{a-e_1,(L,L,\ldots,L)}$. Arguing similarly and iteratively in each direction, we see that $f$ can be extended to any cube of size $[-L,L]^m$ for any sufficiently large $L$.\footnote{To be precise, we need $L\ge \max_{i\in\{1,\ldots,m\}}(|a(i)|+|\ell(i)|)$ as the result consists in enlarging the original box $B_{a,\ell}$ and therefore $L$ has to be large enough so that $B_{a,\ell}\subset [-L,L]^m$.} Hence, we can define inductively the extension of $f$ to all $\mb{Z}^m$. This extension is our morphism $g'\in\hom(\mc{D}_1(\mb{Z}^m),\ns)$. To check that this is indeed a morphism, we just have to note that given any cube $q\in \cu^n(\mc{D}_1(\mb{Z}^m))$, we have $\tIm(q)\subset [-L,L]^m$ for some $L$ large enough, so the result follows using Lemma \ref{lem:app}.
\end{proof}
The following consequence is the special case of Corollary \ref{cor:liftthrufibgen} with $B_{a,\ell}=\db{m}$.
\begin{corollary}\label{cor:liftthrufib}
Let $\ns$ and $\nss$ be nilspaces and let $\psi:\ns \to \nss$ be a fibration. Let $g\in \hom(\mc{D}_1(\mb{Z}^m),\nss)$, let $\q\in \cu^m(\ns)$ for some $m\ge 0$, and suppose that $\psi \co \q = g|_{\db{m}}$. Then there exists $g'\in \hom(\mc{D}_1(\mb{Z}^m),\ns)$ such that $g'|_{\db{m}}=\q$ and $\psi \co g' = g$.
\end{corollary}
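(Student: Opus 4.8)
\textbf{Plan for the proof of Corollary \ref{cor:liftthrufib}.} The statement is the special case of Corollary \ref{cor:liftthrufibgen} obtained by taking the box $B_{a,\ell}$ to be the discrete unit cube $\db{m}$, i.e.\ $a = 0^m$ and $\ell = 1^m$. Thus the plan is simply to verify that with this choice the hypotheses of Corollary \ref{cor:liftthrufibgen} coincide with the hypotheses given here, and that its conclusion specializes to the desired one.

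First I would observe that $\q^*_{0^m,1^m}$, the maximal cube associated with the box $B_{0^m,1^m}=\db{m}$, is precisely the identity embedding $\db{m}\hookrightarrow \mc{D}_1(\mb{Z}^m)$: since $|\ell|=m$ and each $\ell\sbr{i}=1$, Definition \ref{def:max-cube} gives $\q^*_{0^m,1^m}(v) = \sum_{j\in[m]} v(j)\, e_j = v$ for all $v\in\db{m}$. Consequently, for any nilspace $\ns$, the set $\hom_{0^m,1^m}(\ns) = \{f:\db{m}\to\ns : f\co \q^*_{0^m,1^m}\in \cu^m(\ns)\}$ is exactly $\cu^m(\ns)$ itself, because $f\co\q^*_{0^m,1^m} = f$ as maps $\db{m}\to\ns$. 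Hence the hypothesis ``$f\in\hom_{a,\ell}(\ns)$ with $\psi\co f = g|_{B_{a,\ell}}$'' of Corollary \ref{cor:liftthrufibgen} becomes ``$\q\in\cu^m(\ns)$ with $\psi\co\q = g|_{\db{m}}$'', which is exactly what we are given; similarly, the conclusion ``$g'|_{B_{a,\ell}} = f$ and $\psi\co g' = g$'' becomes ``$g'|_{\db{m}} = \q$ and $\psi\co g' = g$''.

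Therefore the proof is a one-line reduction: apply Corollary \ref{cor:liftthrufibgen} with $B_{a,\ell} = \db{m}$ and $f = \q$, using the identification $\hom_{0^m,1^m}(\ns) = \cu^m(\ns)$ noted above. I do not anticipate any genuine obstacle here, since all the work is already done in Corollary \ref{cor:liftthrufibgen}; the only point requiring a sentence of care is the identification of the unit-box maximal cube with the identity embedding of $\db{m}$, which is immediate from Definition \ref{def:max-cube}. The only mild subtlety worth a remark is that Corollary \ref{cor:liftthrufibgen} is stated for a box $B_{a,\ell}$ with $\ell\in\mb{Z}_{\geq 0}^m$, so one should note that $\ell = 1^m$ is an admissible choice and that the argument there (which builds the extension by enlarging the box one coordinate-direction at a time, starting from $B_{a,\ell}$) applies verbatim.

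\begin{proof}
This is the special case of Corollary \ref{cor:liftthrufibgen} obtained by taking the box $B_{a,\ell}$ to be the unit cube $\db{m}$, i.e.\ $a=0^m$ and $\ell=1^m$. Indeed, by Definition \ref{def:max-cube} the maximal cube $\q_{0^m,1^m}$ associated with $B_{0^m,1^m}=\db{m}$ is the map $v\mapsto \sum_{j\in[m]} v(j)\,e_j = v$, i.e.\ the identity embedding $\db{m}\hookrightarrow \mc{D}_1(\mb{Z}^m)$. Hence for any nilspace $\ns$ we have $\hom_{0^m,1^m}(\ns)=\{f:\db{m}\to\ns: f\co\q_{0^m,1^m}\in\cu^m(\ns)\}=\cu^m(\ns)$, since $f\co\q_{0^m,1^m}=f$. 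Thus the hypothesis $\psi\co\q=g|_{\db{m}}$ with $\q\in\cu^m(\ns)$ is exactly the hypothesis of Corollary \ref{cor:liftthrufibgen} for this box and for $f=\q$, and the conclusion of that corollary, namely the existence of $g'\in\hom(\mc{D}_1(\mb{Z}^m),\ns)$ with $g'|_{\db{m}}=\q$ and $\psi\co g'=g$, is precisely the assertion to be proved.
\end{proof}
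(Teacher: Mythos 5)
Your proof is correct and is exactly the paper's argument: the paper itself derives this corollary as "the special case of Corollary \ref{cor:liftthrufibgen} with $B_{a,\ell}=\db{m}$", and your identification of the unit-box maximal cube with the identity embedding (hence $\hom_{0^m,1^m}(\ns)=\cu^m(\ns)$) is the right, and only, point to check.
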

\begin{lemma}[Corner completion of a box]\label{lem:cor-compl-box}
Let $\ns$ be a nilspace, let $B_{a,\ell}$ be a box in $\mb{Z}^m$, and let $f:B_{a,\ell}\setminus\{a+\ell\}\to \ns$ be a map such that for every $j\in [m]$ with $\ell(j)>0$ we have $f\in \hom_{a,\ell-e_j}(\ns)$. Then there exists an element $x\in \ns$ such that, extending $f$ to all of $B_{a,\ell}$ by setting $f(a+\ell)=x$, we have $f\in\hom_{a,\ell}(\ns)$.
\end{lemma}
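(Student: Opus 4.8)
The plan is to derive Lemma \ref{lem:cor-compl-box} directly from Lemma \ref{lem:cor-of-box} together with the completion axiom for nilspaces. First I would record the elementary observation that the maximal cube $\q_{a,\ell}$ attains the vertex $a+\ell$ only at the single point $1^{|\ell|}\in\db{|\ell|}$: by Definition \ref{def:max-cube}, the $j$-th coordinate of $\q_{a,\ell}(v)$ equals $a(j)$ plus the sum of $\ell(j)$ consecutive $\{0,1\}$-valued entries of $v$, so it equals $a(j)+\ell(j)$ exactly when all those entries are $1$; hence $\q_{a,\ell}(v)=a+\ell$ forces all (non-empty) coordinate-blocks of $v$ to be all-ones, i.e.\ $v=1^{|\ell|}$. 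Consequently $\q_{a,\ell}$ maps $\db{|\ell|}\setminus\{1^{|\ell|}\}$ into $B_{a,\ell}\setminus\{a+\ell\}$, so the composite $f\co\q_{a,\ell}$ is well defined on $\db{|\ell|}\setminus\{1^{|\ell|}\}$.

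Next, the hypothesis of the lemma is exactly what Lemma \ref{lem:cor-of-box} requires, so that result applies and tells us that the restriction of $f\co\q_{a,\ell}$ to $\db{|\ell|}\setminus\{1^{|\ell|}\}$ is an $|\ell|$-corner on $\ns$. By the completion axiom, this corner extends to a cube $\tilde\q\in\cu^{|\ell|}(\ns)$ with $\tilde\q(v)=f(\q_{a,\ell}(v))$ for every $v\neq 1^{|\ell|}$. I would then set $x:=\tilde\q(1^{|\ell|})$ and extend $f$ to all of $B_{a,\ell}$ by declaring $f(a+\ell):=x$. With this choice the now-total map $f\co\q_{a,\ell}:\db{|\ell|}\to\ns$ agrees with $\tilde\q$ on $\db{|\ell|}\setminus\{1^{|\ell|}\}$ by construction, and also at $1^{|\ell|}$, since by the first step $\q_{a,\ell}(1^{|\ell|})=a+\ell$ and hence $f(\q_{a,\ell}(1^{|\ell|}))=x=\tilde\q(1^{|\ell|})$. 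Therefore $f\co\q_{a,\ell}=\tilde\q\in\cu^{|\ell|}(\ns)$, i.e.\ $f\in\hom_{a,\ell}(\ns)$, as required. The degenerate case $\ell=(0,\ldots,0)$ is immediate, since then $B_{a,\ell}\setminus\{a+\ell\}=\emptyset$ and $\hom_{a,0}(\ns)$ consists of all maps $\{a\}\to\ns$, so any $x\in\ns$ works.

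I do not expect a genuine obstacle here: the entire substance has been isolated in Lemma \ref{lem:cor-of-box}, and the remaining work is the bookkeeping in the first step, whose only point is to confirm that the unique vertex absent from the corner $f\co\q_{a,\ell}$ corresponds precisely to the single point $a+\ell$ of $B_{a,\ell}$ at which $f$ was left undefined, so that the apex of the completed cube is exactly the value we are free to assign.
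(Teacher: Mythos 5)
Your proposal is correct and follows essentially the same route as the paper: apply Lemma \ref{lem:cor-of-box} to see that $f\co\q_{a,\ell}|_{v\neq 1^{|\ell|}}$ is an $|\ell|$-corner, complete it by the completion axiom, and take $x$ to be the top-vertex value of the completion. The extra bookkeeping you include (that $\q_{a,\ell}^{-1}(a+\ell)=\{1^{|\ell|}\}$, so the extended composite coincides with the completed cube) is a correct verification of a point the paper leaves implicit.
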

\begin{proof}
By Lemma \ref{lem:cor-of-box} we have that $f\co \q_{a,\ell}|_{v\neq 1^{|\ell|}}$ is a corner in $\cor^{|\ell|}(\ns)$. Then by the completion axiom for nilspaces there exists a completion of $f\co \q_{a,\ell}$, and then letting $f(a+\ell)$ be the top-vertex value of this completion, the result follows.
\end{proof}

Next we prove some useful results concerning coset nilspaces.

\begin{lemma}\label{lem:correction-cubes}
Let $(G,G_{\bullet})$ be a filtered group, let $\Gamma$ be a subgroup of $G$, and let $\ns$ denote the associated coset nilspace. For any $n\ge 0$, let  $\q,\q'\in \cu^n(\ns)$ satisfy $\q(v)=\q'(v)$ for all $v\neq 1^n$. Then there exists $g\in G_n$ such that $g\q(1^n)=\q'(1^n)$.
\end{lemma}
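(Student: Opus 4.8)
The plan is to reduce everything to the group $G$ and to two standard facts about Host--Kra cubes. Recall that $\cu^n(G_\bullet)$ is a subgroup of $G^{\db{n}}$ under pointwise multiplication, and that, writing $\pi\colon G\to G/\Gamma$ for the quotient map, by definition of the coset nilspace every element of $\cu^n(\ns)$ is of the form $\pi\co\wt{\q}$ for some $\wt{\q}\in\cu^n(G_\bullet)$. So I would first choose lifts $\wt{\q},\wt{\q}'\in\cu^n(G_\bullet)$ of $\q,\q'$ and set $\sigma:=\wt{\q}^{-1}\wt{\q}'\in\cu^n(G_\bullet)$, i.e.\ $\sigma(v)=\wt{\q}(v)^{-1}\wt{\q}'(v)$ for $v\in\db{n}$. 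The hypothesis $\q(v)=\q'(v)$ for $v\neq 1^n$ says exactly that $\wt{\q}(v)\Gamma=\wt{\q}'(v)\Gamma$ for those $v$, i.e.\ $\sigma(v)\in\Gamma$ for all $v\in\db{n}\setminus\{1^n\}$. The conclusion of the lemma is then equivalent to showing $\sigma(1^n)\in G_n\Gamma$ (which equals $\Gamma G_n$, since $G_n$ is normal in $G$): indeed, if $\sigma(1^n)=g\gamma$ with $g\in G_n$, $\gamma\in\Gamma$, then $\wt{\q}'(1^n)=\wt{\q}(1^n)g\gamma=g'\wt{\q}(1^n)\gamma$ with $g':=\wt{\q}(1^n)\,g\,\wt{\q}(1^n)^{-1}\in G_n$, whence $\q'(1^n)=\wt{\q}'(1^n)\Gamma=g'\wt{\q}(1^n)\Gamma=g'\q(1^n)$, as required.

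To prove $\sigma(1^n)\in G_n\Gamma$, I would equip $\Gamma$ with the induced filtration $\Gamma_\bullet=(\Gamma\cap G_i)_{i\ge 0}$. Using the standard fact that a $\Gamma$-valued element of $\cu^n(G_\bullet)$ already lies in $\cu^n(\Gamma_\bullet)$ (that is, $(\Gamma,\Gamma_\bullet)$ is a subnilspace of the group nilspace $(G,G_\bullet)$; see \cite{Cand:Notes1}), together with the fact that restrictions of cubes to faces are cubes, one gets that $\sigma|_{\db{n}\setminus\{1^n\}}$ is an $n$-corner on the group nilspace $(\Gamma,\Gamma_\bullet)$. By the completion axiom for that nilspace there is $\tau\in\cu^n(\Gamma_\bullet)\subseteq\cu^n(G_\bullet)$ with $\tau(v)=\sigma(v)$ for all $v\neq 1^n$ and $\tau(1^n)\in\Gamma$. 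Then $\kappa:=\sigma\tau^{-1}\in\cu^n(G_\bullet)$ equals $\id$ at every vertex of $\db{n}$ except possibly $1^n$, and by the second fact below its value $\kappa(1^n)=\sigma(1^n)\tau(1^n)^{-1}$ lies in $G_n$; hence $\sigma(1^n)\in G_n\tau(1^n)\subseteq G_n\Gamma$, which finishes the argument.

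The second fact is: if $\kappa\in\cu^n(G_\bullet)$ is equal to $\id$ at every vertex of $\db{n}$ other than $1^n$, then $\kappa(1^n)\in G_n$. This is standard --- it is equivalent to the statement that the completions of an $n$-corner on a group nilspace form a coset of $G_n$ --- and one can also give a short induction on $n$: since $\kappa$ restricts to the constant cube $\id$ on the face $\db{n-1}\times\{0\}$, the Host--Kra cube characterization forces $\kappa|_{\db{n-1}\times\{1\}}$ to lie in $\cu^{n-1}(G_{\bullet+1})$, and this restricted cube is again $\id$ everywhere except at $1^{n-1}$, so by the inductive hypothesis applied to the shifted filtration $G_{\bullet+1}$ we obtain $\kappa(1^n)=\kappa|_{\db{n-1}\times\{1\}}(1^{n-1})\in(G_{\bullet+1})_{n-1}=G_n$. (The case $n=0$ of the lemma is in any case trivial, since $G$ acts transitively on $G/\Gamma$.)

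I expect the only genuinely delicate points to be these two structural facts about Host--Kra cube groups --- that $\Gamma$-valued cubes on $(G,G_\bullet)$ are precisely the cubes of the subnilspace $(\Gamma,\Gamma\cap G_\bullet)$, which is what allows the completion axiom to be invoked inside $\Gamma$, and the ``supported at a single vertex'' statement above --- everything else being formal manipulation with the normal subgroups $G_n$.
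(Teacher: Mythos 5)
Your proof is correct and follows essentially the same route as the paper's: lift to Host--Kra cubes, observe that the pointwise quotient is a corner on the subgroup nilspace $(\Gamma,\Gamma\cap G_\bullet)$, complete it inside $\Gamma$, and invoke the standard fact that a cube of $\cu^n(G_\bullet)$ equal to the identity off $1^n$ takes its remaining value in $G_n$. The only differences are cosmetic (you conjugate by $\wt{\q}(1^n)$ at the end using normality of $G_n$, where the paper arranges the product so that $g$ acts on the left from the start, and you supply an inductive proof of the single-vertex fact that the paper cites from \cite{Cand:Notes1}).
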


\begin{proof}
By definition of cubes on $\ns$, there exists $q,q'\in\cu^n(G_\bullet)$ such that $\q=q\Gamma$ and $\q'=q'\Gamma$. Then, considering $q$ and $q'$ as functions on $\db{n}\setminus \{1^n\}$, we have that $q^{-1}q'$ is an $n$-corner on the group nilspace $(\Gamma,\Gamma_{\bullet})$ where $\Gamma_i:=\Gamma \cap G_i$ for all $i\ge 0$. Let $t\in \cu^n(\Gamma_\bullet)$ be a completion of $q^{-1}q'$. Then (since $t$ is $\Gamma$-valued) we have $qt\Gamma = q\Gamma =\q$. As $(qt)(v)=q'(v)$ for all $v\neq 1^n$, we know that $q'(qt)^{-1} \in \cu^n(G_\bullet)$ and that $(q'(qt)^{-1})(v)=1$ for all $v\neq 1^n$. Hence, by basic properties of Host--Kra cubes (see \cite[Lemma 2.2.26]{Cand:Notes1}) we have $(q'(qt)^{-1})(1^n)\in G_n$. Setting $g:=(q'(qt)^{-1})(1^n)$, the result follows.
\end{proof}

We shall also use the following definitions.

\begin{defn}[Simplicial set]\label{def:simplicial-set} Let $n\in \mb{N}$ be an integer. We say that a set $S\subset \mb{Z}^n_{\ge 0}$ is a \emph{simplicial set} if it has the following property: for any $v\in S$ and any $w\in \mb{Z}^n_{\ge 0}$, if $w\sbr{i}\le v\sbr{i}$ for all $i\in [n]$, then $w\in S$.
\end{defn}

\begin{defn}[Simplicial corner]\label{def:simplicial-corner} Let $n\in \mb{N}$ and $S\subset \mb{Z}^n_{\ge 0}$ be a simplicial set. We say that a vertex $v\notin S$ is a \emph{corner-vertex} for $S$ if for all $w\in \mb{Z}^n_{\ge 0}\setminus \{v\}$ such that $w\sbr{i}\le v\sbr{i}$ for all $i\in [n]$, we have $w\in S$.
\end{defn}

\noindent The following figure illustrates these two definitions, indicating in  red an example of a simplicial set in $\mb{Z}_{\ge 0}^2$, and in blue its corner vertices.
\begin{center}
\includegraphics[width=100mm]{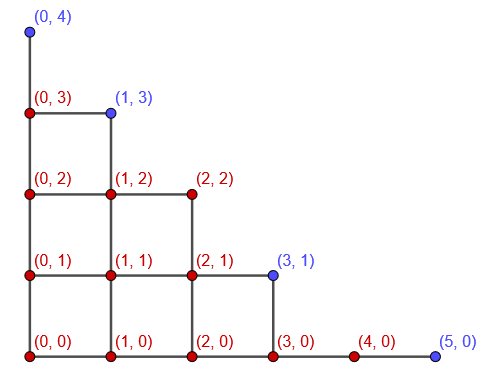}
\end{center}
\noindent With this, we can now prove the following result.
\begin{lemma}[Taylor coefficients]\label{lem:taylor} 
Let $n\ge 0$ be an integer, $S\subset \mb{Z}^n_{\ge 0}$ a simplicial set, and let $w\in \mb{Z}_{\ge 0}^n$ be a corner of $S$. Let $(G,G_{\bullet})$ be a filtered group, let $\Gamma$ be a subgroup of $G$, and let $\ns$ denote the corresponding coset nilspace. Let $f,f'\in \hom(\mc{D}_1(\mb{Z}^n),\ns)$ satisfy $f(v)=f'(v)$ for all $v\in S$. Then there exists $g\in G_{|w|}$ such that $g^{\binom{v}{w}} f(v) = f'(v)$ for all $v\in S\cup \{w\}$.
\end{lemma}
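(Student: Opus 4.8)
\textbf{Proof plan for Lemma \ref{lem:taylor}.}

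The plan is to build the desired element $g$ and verify the identity $g^{\binom{v}{w}}f(v)=f'(v)$ by combining two ingredients already available: Lemma \ref{lem:correction-cubes}, which produces ``correction'' elements in $G_n$ from cubes that agree off the top vertex, and the basic properties \eqref{it:i}--\eqref{it:ii} of the monomial polynomial maps $\underline{n}\mapsto h^{\binom{\underline{n}}{\underline{i}}}$ recalled just before the proof of Lemma \ref{lem:p-hom-group}. First I would use the fact that $w$ is a corner of $S$: every $v\in\db{\,}$-box below $w$ other than $w$ itself lies in $S$, so the maximal cube $\q_{0^n,w}$ (Definition \ref{def:max-cube}) sends every vertex except its top vertex into $S$. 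Composing $f$ and $f'$ with $\q_{0^n,w}$ therefore yields two cubes in $\cu^{|w|}(\ns)$ that agree everywhere except possibly at $1^{|w|}$. Applying Lemma \ref{lem:correction-cubes} gives an element $g\in G_{|w|}$ such that $g\,(f\co\q_{0^n,w})(1^{|w|}) = (f'\co\q_{0^n,w})(1^{|w|})$, i.e.\ $g\,f(w)=f'(w)$.

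Next I would check that this same $g$ works simultaneously at all $v\in S\cup\{w\}$. For $v\in S$ we have $f(v)=f'(v)$ by hypothesis, and we want $g^{\binom{v}{w}}f(v)=f(v)$; this holds provided $\binom{v}{w}=0$ whenever $v\in S$. By property \eqref{it:ii} (or directly from the definition of the multi-parameter binomial coefficient), $\binom{v}{w}=0$ unless $v\sbr{i}\ge w\sbr{i}$ for all $i\in[n]$; and if $v\sbr{i}\ge w\sbr{i}$ for all $i$ and $v\in S$, then by the simplicial-set property applied to $v$ we would get $w\in S$, contradicting that $w$ is a corner of $S$. Hence $\binom{v}{w}=0$ for every $v\in S$, so the asserted identity reduces to $f(v)=f'(v)$, which holds. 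At $v=w$ we have $\binom{w}{w}=1$, so the identity is exactly $g\,f(w)=f'(w)$, already established. This disposes of all vertices in $S\cup\{w\}$.

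The one point that needs a little care, and which I expect to be the only genuine obstacle, is confirming that $g$ actually lies in $G_{|w|}$ rather than merely in $G_n$ — note $|w|$ can be strictly smaller than $n$, and in applications (as in the proof of Lemma \ref{lem:p-hom-group}) it is crucial that the correction element sits in the right filtration level. This should follow from running the correction argument of Lemma \ref{lem:correction-cubes} with $n$ there equal to $|w|$: the cubes $f\co\q_{0^n,w}$ and $f'\co\q_{0^n,w}$ are genuine $|w|$-cubes on $\ns$ (since $f,f'\in\hom(\mc{D}_1(\mb{Z}^n),\ns)$ and $\q_{0^n,w}\in\cu^{|w|}(\mc{D}_1(\mb{Z}^n))$), so the output of that lemma is automatically in $G_{|w|}$. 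Once this is in place, the proof is complete, since then the map $v\mapsto g^{\binom{v}{w}}$ is itself a polynomial map $\mc{D}_1(\mb{Z}^n)\to\mc{D}_1(G_\bullet)$ by property \eqref{it:i} (as $g\in G_{|w|}$), though we do not even need that here — we only need the pointwise identity on the finite set $S\cup\{w\}$, which we have verified vertex by vertex.
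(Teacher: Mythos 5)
Your proposal is correct and follows essentially the same route as the paper: compose $f$ and $f'$ with the maximal cube $\q_{0^n,w}$ of the box $B_{0^n,w}$ (whose non-top vertices land in $S$ because $w$ is a corner-vertex), apply Lemma \ref{lem:correction-cubes} in dimension $|w|$ to obtain $g\in G_{|w|}$, and observe that $\binom{v}{w}$ vanishes for $v\in S$ while $\binom{w}{w}=1$. The paper's proof is terser (it leaves the pointwise verification as "readily seen"), and your vertex-by-vertex check, including the observation that $v\ge w$ coordinatewise with $v\in S$ would force $w\in S$, is exactly the right way to fill it in.
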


\begin{proof}
Let $\q_{0^n,w}\in \cu^{|w|}(\mc{D}_1(\mb{Z}^n))$ be the maximal cube associated with the box $B_{0^n,w}$. Applying Lemma \ref{lem:correction-cubes} to $f\co \q_{0^n,w}$ and $f' \co \q_{0^n,w}$, we obtain the value of $g\in G_{|\underline{w}|}$. It is readily seen that $v\mapsto g^{\binom{v}{w}} f(v)$ is in $\hom(\mc{D}_1(\mb{Z}^n),\ns)$ and satisfies the desired properties.
\end{proof}

\begin{lemma}[Completion of a simplicial set]\label{lem:completion-simplicial}
Let $\ns$ be a nilspace and let $S$ be a simplicial set included in a box $B_{0^n,\ell}\subset \mb{Z}^n$. Let $f:S\to \ns$ be a function such that for any box $B_{0^n,d}\subset S$ we have $f\in\hom_{0^n,d}(\ns)$. Then there exists $g\in\hom_{0^n,\ell}( \ns)$ such that $g|_S=f$.\end{lemma}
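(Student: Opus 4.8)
The statement is the natural ``global'' version of Lemma~\ref{lem:cor-compl-box}: a function on a simplicial set $S$ that is locally (on every sub-box) a box-morphism extends to a box-morphism on the full box $B_{0^n,\ell}$. The plan is to prove this by induction, enlarging $S$ one corner-vertex at a time until it becomes the whole box $B_{0^n,\ell}$. At each step we need to assign the value $f(w)$ at a corner-vertex $w$ of the current simplicial set in such a way that the enlarged function still satisfies the hypothesis that it is a box-morphism on every sub-box contained in the new domain.

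\textbf{Key steps.} First I would set up the induction on $|B_{0^n,\ell}\setminus S|$, the number of missing vertices. The base case is $S=B_{0^n,\ell}$, where there is nothing to prove. For the inductive step, pick any corner-vertex $w$ of $S$ (one exists as long as $S\subsetneq B_{0^n,\ell}$, since $S$ is simplicial and bounded). Now apply Lemma~\ref{lem:cor-compl-box} to extend $f$ to $S\cup\{w\}$: more precisely, let $d=(w\sbr{1},\ldots,w\sbr{n})\in\mb{Z}_{\geq0}^n$, so that $B_{0^n,d}\subset S\cup\{w\}$ with $B_{0^n,d}\setminus\{w\}\subset S$ (using that $w$ is a corner-vertex). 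The hypothesis gives $f\in\hom_{0^n,d-e_j}(\ns)$ for each $j$ with $d\sbr{j}>0$, so Lemma~\ref{lem:cor-compl-box} provides a value $x\in\ns$ with $f\cup\{w\mapsto x\}\in\hom_{0^n,d}(\ns)$. Extend $f$ by setting $f(w):=x$.

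\textbf{The main obstacle.} The delicate point is verifying that the \emph{extended} function on $S':=S\cup\{w\}$ still satisfies the inductive hypothesis, i.e.\ that $f\in\hom_{0^n,d'}(\ns)$ for \emph{every} box $B_{0^n,d'}\subset S'$, not merely for $B_{0^n,d}$. If $w\notin B_{0^n,d'}$ then $B_{0^n,d'}\subset S$ and we are done by the original hypothesis. If $w\in B_{0^n,d'}$, then since $w$ is the coordinatewise-maximal vertex of $B_{0^n,d}$ and $w\in B_{0^n,d'}$, one must have $d'\sbr{j}\ge w\sbr{j}=d\sbr{j}$ for all $j$, hence $B_{0^n,d}\subset B_{0^n,d'}\subset S'=S\cup\{w\}$; but then $B_{0^n,d'}\setminus\{w\}\subset S$ and in fact $B_{0^n,d'}=B_{0^n,d}$ (since any other vertex $v\in B_{0^n,d'}$ with $v\ne w$ lies in $S$, and if $d'\ne d$ there would be a vertex of $B_{0^n,d'}$ strictly above $w$ in some coordinate, contradicting $w\in B_{0^n,d'}$ only if that vertex is outside $S'$ — here one checks that such a vertex cannot exist because $B_{0^n,d'}\subset S'$ forces it into $S$, yet it is $\ne w$ and above $w$, which is fine, so actually $d'$ may be larger). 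This last sub-case is the real work: when $B_{0^n,d}\subsetneq B_{0^n,d'}\subset S'$, I need $f\in\hom_{0^n,d'}(\ns)$, and I would obtain it by a secondary application of Lemma~\ref{lem:app}: the maximal cube $\q_{0^n,d'}$ on $\mc{D}_1(\mb{Z}^n)$ has image in $B_{0^n,d'}$, and on $B_{0^n,d'}\setminus\{w\}$ the function $f$ agrees with its restriction to $S$; one factors $\q_{0^n,d'}$ through $\q_{0^n,d}$-type sub-cubes and a box-morphism on $B_{0^n,d'-w}$ sitting inside $S$, using that $f$ is a box-morphism on each, together with concatenation of cubes (Lemma~3.1.7 of \cite{Cand:Notes1}) to glue the pieces into a single cube. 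Concretely, $B_{0^n,d'}$ is covered by $B_{0^n,d}=B_{0^n,w}$ together with the translated sub-boxes obtained by shifting in the directions where $d'$ exceeds $w$, each of which lies entirely in $S$; writing $\q_{0^n,d'}$ as the concatenation of the corresponding maximal cubes, each factor maps under $f$ to a genuine cube of $\ns$ (by the hypothesis on $S$ and the freshly established $f\in\hom_{0^n,d}(\ns)$), so the concatenation $f\co\q_{0^n,d'}$ is a cube of $\ns$, i.e.\ $f\in\hom_{0^n,d'}(\ns)$. This closes the induction, and after finitely many steps $S$ grows to all of $B_{0^n,\ell}$, yielding the desired $g\in\hom_{0^n,\ell}(\ns)$ with $g|_S=f$.
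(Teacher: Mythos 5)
Your overall strategy — enlarge $S$ one corner-vertex at a time, assigning the new value via Lemma \ref{lem:cor-compl-box} and checking that the box-morphism hypothesis propagates — is exactly the standard simplicial-completion argument that the paper invokes (its proof is a one-line reference to \cite[Lemma 3.1.5]{Cand:Notes1} together with Lemma \ref{lem:cor-compl-box}), and the skeleton of your induction is sound: a coordinatewise-minimal element $w$ of $B_{0^n,\ell}\setminus S$ is a corner-vertex, $S\cup\{w\}$ is again simplicial, and Lemma \ref{lem:cor-compl-box} applies to the box $B_{0^n,w}$ because every proper sub-box $B_{0^n,w-e_j}$ lies in $S$.

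However, the step you single out as ``the real work'' is illusory, and the argument you offer for it does not hold up. If $B_{0^n,d'}\subset S'=S\cup\{w\}$ contains $w$ and $d'\neq w$, then $d'\sbr{j}>w\sbr{j}$ for some $j$, so $w+e_j\in B_{0^n,d'}\setminus\{w\}\subset S$; since $S$ is simplicial (downward closed) and $w\le w+e_j$ coordinatewise, this forces $w\in S$, contradicting that $w$ is a corner-vertex. Hence the only new box to check is $B_{0^n,w}$ itself, which Lemma \ref{lem:cor-compl-box} has already handled, and the induction closes immediately. You half-notice this in your parenthetical but then talk yourself out of it (``so actually $d'$ may be larger'') precisely because you forget to apply downward-closedness of $S$. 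The gluing argument you then sketch for this phantom case is not correct as stated: the translated sub-boxes $B_{e_j,w}$ do contain $w$ (so they do not lie in $S$), and concatenation in the sense of \cite[Lemma 3.1.7]{Cand:Notes1} produces the concatenated cube of two faces, not the maximal cube of the union of two boxes — extending a box-morphism to a strictly larger box genuinely requires the point-by-point corner-completion of Corollary \ref{cor:liftthrufibgen}, not a single concatenation. Once you replace that digression by the one-line vacuity observation above, your proof is complete and agrees with the paper's.
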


\begin{proof}
This is a straightforward generalization of \cite[Lemma 3.1.5]{Cand:Notes1} using Lemma \ref{lem:cor-compl-box}.
\end{proof}

Recall by Definition \ref{def:hom-n-p-set} that $\hom_p^m(\ns):=\{f:[0,p-1]^m\to \ns: f\co \q_{0^m,(p-1)^m}\in \cu^{m(p-1)}(\ns)\}$. Next we prove Lemma \ref{lem:fordimreduc-2}, which we recall here for convenience.
\begin{lemma}\label{lem:fordimreduc-2-app}
Let $\ns$ be a $k$-step nilspace and $n\ge k+1$. Let $f:[0,p-1]^n\to \ns$ satisfy $f\co \phi\in \hom_p^{k+1}(\ns)$ for every $p$-face-map $\phi:[0,p-1]^{k+1}\to [0,p-1]^n$. Then $f\in \hom_p^n(\ns)$.
\end{lemma}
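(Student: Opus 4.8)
\textbf{Proof plan for Lemma \ref{lem:fordimreduc-2-app}.}

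The plan is to prove that $f\co \q_{0^n,(p-1)^n}\in\cu^{n(p-1)}(\ns)$ by an induction on the height, exactly in the spirit of the proof of Lemma \ref{lem:polydet}, but now using as the inductive hypothesis the assumed cube condition on all $p$-face-maps of dimension $k+1$. Concretely, I would first reduce to checking, for every box $B_{0^n,d}\subset [0,p-1]^n$, that $f\co\q_{0^n,d}\in\cu^{|d|}(\ns)$, since $(p-1)^n$ is itself such a box and the set $[0,p-1]^n$ is a simplicial set, so Lemma \ref{lem:completion-simplicial} lets us pass freely between ``$f\in\hom_{0^n,d}(\ns)$ for all sub-boxes $B_{0^n,d}$'' and ``$f\in\hom_p^n(\ns)$''. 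For boxes $B_{0^n,d}$ with $|d|\leq k$ there is nothing to prove. For $|d|\geq k+1$, I would argue by induction on $|d|$: given a target box, pick any box $B_{0^n,d'}$ with $d'\le d$ coordinatewise, $|d'|=k+1$, and $d'$ differing from $d$ by decreasing exactly one coordinate-length by enough to drop $|d|-|d'|$; the maximal cube $\q_{0^n,d}$ can then be written as a concatenation of translates of $\q_{0^n,d'}$ along the remaining directions, so it suffices to know that $f$ restricted to each such translated box of ``size'' $d'$ is a cube. Each such translated box is the image of a $p$-discrete-cube morphism $[0,p-1]^{k+1}\to[0,p-1]^n$ whose underlying affine map embeds a $(k+1)$-dimensional cube; after cutting down, this is exactly a $p$-face-map composed possibly with a shift, and the hypothesis $f\co\phi\in\hom_p^{k+1}(\ns)$ applies.

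The key structural input that makes the concatenation step legitimate is that $\ns$ has step $k$: by uniqueness of completion of $(k+1)$-corners (as used in Lemma \ref{lem:polydet}), once the values of $f$ on a simplicial ``frame'' of height $\le k+1$ around a new vertex are known to be consistent with cubes, the value at the new vertex is forced, and the concatenation of cubes on $\mc{D}_1(\mb{Z}^m)$ (Lemma 3.1.7 of \cite{Cand:Notes1}) produces the full cube $f\co\q_{0^n,d}$. So the actual induction would be: order the vertices of $[0,p-1]^n$ by height; having shown that $f$ is a ``$\hom_{0^n,d}$-map'' for all boxes of total height $\le h$, take a vertex $v$ of height $h+1$; exhibit a $(k+1)$-dimensional box $B$ inside $[0,v]$ (coordinatewise interval) containing $v$ as its top vertex, apply the hypothesis to the corresponding $p$-face-map to see that $f$ restricted to $B$ is a cube, hence $f(v)$ is the unique completion of the corner obtained from $B$; combined with the already-established cube structure on all lower boxes and Lemma \ref{lem:cor-compl-box}, conclude that $f$ is a $\hom_{0^n,d}$-map for all boxes with top vertex $v$. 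Once every vertex is processed we obtain $f\in\hom_p^n(\ns)$ via Lemma \ref{lem:completion-simplicial}.

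The main obstacle I expect is bookkeeping: one must verify carefully that every box $B_{0^n,d}\subset[0,p-1]^n$ with $|d|\ge k+1$ really does contain, with the correct base point and as an initial segment, the image of a genuine $p$-\emph{face}-map of dimension $k+1$ (not merely of a $p$-discrete-cube morphism), possibly after a translation that must be absorbed; the condition $n\ge k+1$ is what guarantees enough coordinates to realize a $(k+1)$-dimensional face, and the fact that a $p$-face-map is allowed to fix coordinates at arbitrary constants in $\{0,\dots,p-1\}$ is what lets us reach boxes not based at the origin. A secondary technical point is checking that the chosen $(k+1)$-dimensional sub-box can always be placed so that the new vertex $v$ is its top corner while all its other vertices have strictly smaller height — this is where one uses that $v$ has at least $k+1$ nonzero coordinate-increments (if $v$ has height $\ge k+1$ but is ``supported'' on fewer than $k+1$ coordinates, one spreads the increments across a $(k+1)$-element multiset of coordinate directions exactly as in the construction of $\q$ in the proof of Lemma \ref{lem:polydet}). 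Modulo this combinatorial care, the argument is a routine assembly of the completion axiom, uniqueness of $(k+1)$-corner completions in a $k$-step nilspace, concatenation of cubes, and Lemmas \ref{lem:cor-of-box}, \ref{lem:cor-compl-box}, and \ref{lem:completion-simplicial}.
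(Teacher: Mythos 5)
Your overall strategy — compare $f$ with the simplicial completion $g$ of its values on a ``low'' set, and show vertex by vertex that $f(v)$ is forced to equal $g(v)$ using the face-map hypothesis together with uniqueness of completion in a $k$-step nilspace — is the same as the paper's. But your choice of induction parameter (the height $|v|$) creates a genuine gap that the paper avoids by inducting on the \emph{support size} $|\supp(v)|$ instead. The problem is that the image of a $p$-face-map $\phi:[0,p-1]^{k+1}\to[0,p-1]^n$ is a slice in which each of the $k+1$ free coordinates ranges over the \emph{full} interval $[0,p-1]$. Consequently, the set of points of that slice which determine $f\co\phi$ (namely $\phi(u)$ for $|\supp(u)|\le k$, once one has established that an element of $\hom_p^{k+1}(\ns)$ is determined by its values there) contains points of height comparable to or larger than the height of your target vertex $v$: already for $k=1$, $p=3$, $n=3$ and $v=(1,1,1)$, pinning down $f(v)$ via the face $\phi(u_1,u_2)=(u_1,u_2,1)$ requires knowing $f$ at $(2,0,1)$ and $(0,2,1)$, both of height $3=|v|$. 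So the height induction does not close. By contrast, those determining points always have strictly smaller \emph{support} than $v$, which is why the paper's induction works: one defines $g$ by simplicial completion from $\{|\supp|\le k\}$ (where $g:=f$ by fiat, so nothing needs to be proved there), takes a disagreement point $w$ of minimal support $s\ge k+1$, and uses the face-map fixing $s-(k+1)$ coordinates of $\supp(w)$ at the values of $w$.

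The second, related failure is your fallback for vertices with height $\ge k+1$ but support $\le k$: ``spreading the increments across a $(k+1)$-element multiset of coordinate directions as in Lemma \ref{lem:polydet}'' does not produce a $p$-face-map. Such a map is not injective (two unit vectors in repeated directions have the same image) and its image need not even lie in $[0,p-1]^n$, so the hypothesis $f\co\phi\in\hom_p^{k+1}(\ns)$ simply cannot be invoked for it. (The spreading trick is legitimate in Lemma \ref{lem:polydet} only because there $q$ is already known to be a morphism, so composition with \emph{any} cube is a cube; here $f$ is not yet known to be anything of the sort outside the face-map slices.) In the support-based induction these vertices are the base case and require no argument at all. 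Finally, note that your plan leans on a determinacy statement you don't isolate: that $g\in\hom_p^{k+1}(\ns)$ is uniquely determined by its restriction to $\{u\in[0,p-1]^{k+1}:|\supp(u)|\le k\}$; this needs its own propagation argument inside $[0,p-1]^{k+1}$ using the maximal cubes $\q_{0^{k+1},w}$ of dimension $|w|\ge k+1$ and uniqueness of $(k+1)$-corner completion, and it is the first step of the paper's proof.
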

\noindent Recall the notation the notation $\cor^{k+1}(\ns)$ for the set of $(k+1)$-corners on $\ns$. For $v\in \mb{Z}^n$, let us recall also the notation $\supp(v)$ for the set of indices $i\in [n]$ such that $v\sbr{i}\neq 0$. Finally, let us call a function $f':\{v\in [0,p-1]^n:|\supp(v)|\leq n-1\}\to \ns$ a \emph{$p$-corner of dimension $n$ on} $\ns$ if for every $p$-face-map $\phi:[0,p-1]^{n-1}\to [0,p-1]^n$ that fixes some coordinate equal to 0, we have $f\co \phi\in \hom_p^{n-1}(\ns)$. 
\begin{proof}
Let $S:=\{v\in [0,p-1]^{k+1}:|\supp(v)|\leq k\}=\bigcup_{i=1}^{k+1} \{v\in [0,p-1]^{k+1}:v\sbr{i}=0\}$. We first claim that every $p$-corner $f'$ of dimension $k+1$ on $\ns$ has a unique completion, that is, there is a unique $f\in \hom_p^{k+1}(\ns)$ with $f(v)=f'(v)$ for every $v\in S$. To see the existence of $f$, note that the set $S$ (on which $f'$ is defined) is simplicial and $f':S\to \ns$ satisfies the assumptions in Lemma \ref{lem:completion-simplicial}, so the existence of $f$ follows from that lemma. To see the uniqueness, consider first the cube $\phi\in \cu^{k+1}(\mc{D}_1(\mb{Z}^{k+1}))$ defined by $\phi(v)=v$ (this just embeds $\db{k+1}$ in $[0,p-1]^{k+1}$). Then $f'\co \phi|_{\db{k+1}\setminus\{1^{k+1}\}}\in \cor^{k+1}(\ns)$ and therefore it has a unique completion. It follows that the value $f(1^{k+1})$ is uniquely determined by $f'$. Now we argue similarly for every remaining $v\in[0,p-1]^{k+1}$, showing inductively that the determination of the values of $f$ by $f'$ propagates to all of $[0,p-1]^{k+1}$. Suppose that we have a simplicial set $S'\subset [0,p-1]^{k+1}$ and a simplicial corner $w$ of $S'$ (see Definitions \ref{def:simplicial-set} and \ref{def:simplicial-corner}). Furthermore, suppose that $S'\supset S$ and assume inductively that for every $v\in S'$, the value $f(v)$ is uniquely determined by $f'$. We are going to prove that $f(w)$ is also uniquely determined by $f'$. Consider the maximal cube $\q_{(0^{k+1},w)}\in \cu^{|w|}(\mc{D}_1(\mb{Z}^{k+1}))$. Then since $f\in \hom_p^{k+1}(\ns)$, we have $f\co \q_{(0^{k+1},w)}\in \cu^{|w|}(\ns)$. As $w\sbr{i}\ge 1$ for all $i\in [k+1]$, we have $|w|\ge k+1$, so by uniqueness of completion $f\co \q_{(0^{k+1},w)}(1^{|w|}) = f(w)$ is uniquely determined by the other values of the cube $f\co \q_{(0^{k+1},w)}$. But since these other values correspond to points of $S'$, they are determined by $f'$, whence $f(w)$ is also uniquely determined by $f'$. This proves our claim.

Now, to prove the lemma, let $f:[0,p-1]^n\to \ns $ be a function such that for all $p$-face-maps $\phi:[0,p-1]^{k+1}\to [0,p-1]^n$ we have $f\co \phi\in \hom_p^{k+1}(\ns)$. We have to prove that $f\in \hom_p^n(\ns)$. Consider the map $g'$ defined by $g'(v):=f(v)$ for all $v\in [0,p-1]^n$ such that $|\supp(v)|\leq k$. By Lemma \ref{lem:completion-simplicial}, we can complete $g'$ to an element $g\in \hom_p^n(\ns)$. We claim that $f=g$. To prove this, we can argue by contradiction using the claim in the previous paragraph. Indeed, suppose that for some $w\in [0,p-1]^n$ we had $f(w)\neq g(w)$, and let $|\supp(w)|$ be minimal with this property. By our initial assumption on $g'$, we have $s:=|\supp(w)|\geq k+1$. Without loss of generality, suppose that $w=(w\sbr{1},\ldots,w\sbr{s},0,\ldots,0)$. Consider the $p$-face-map $\phi:[0,p-1]^{k+1}\to [0,p-1]^n$, $(v\sbr{1},\ldots,v\sbr{k+1})\mapsto (v\sbr{1},\ldots,v\sbr{k+1},w\sbr{k+2},\ldots,w\sbr{s},0,\ldots,0)$. Then, if $v\in \cup_{i=1}^{k+1}\{v\in [0,p-1]^{k+1}:v\sbr{i}=0\}$, we have $|\supp(\phi(v))|\leq k$, so by assumption  $f\co\phi(v)=g\co \phi(v)$. But now both $f\co \phi$ and $g\co \phi$ are elements in $\hom_p^{k+1}(\ns)$, so by the previous paragraph we have $f\co\phi(v) = g\co \phi(v)$ for all $v\in [0,p-1]^{k+1}$, so  $f(w)=g(w)$, a contradiction.
\end{proof}

Let us recall the following useful construction in nilspace theory.
\begin{defn}[Fiber-product of nilspaces]
Let $\ns_1,\ns_2,$ and $\ns_3$ be nilspaces and let $\psi_1:\ns_1\to \ns_3$ and $\psi_2:\ns_2\to \ns_3$ be fibrations. We define the fiber-product (or sub-direct product) $\ns_1 \times_{\ns_3} \ns_2$ to be the nilspace $\{(x_1,x_2)\in \ns_1\times \ns_2: \psi_1(x_1)=\psi_2(x_2)\}$ with cube sets $\cu^n(\ns_1 \times_{\ns_3}\ns_2):=\{\q_1\times \q_2 \in \cu^n(\ns_1)\times \cu^n(\ns_2): \psi_1 \co \q_1=\psi_2 \co \q_2\}$.
\end{defn}
\noindent To see that this defines indeed a nilspace see \cite[Lemma 4.2]{CGSS}. We leave it as an exercise for the reader to check that the projections $p_i:\ns_1 \times_{\ns_3}\ns_2\to \ns_i$ for $i=1,2$ are fibrations, and that if $\ns_1$ and $\ns_2$ are $k$-step, then so is $\ns_1 \times_{\ns_3}\ns_2$.

\begin{proposition}\label{prop:sub-prod-ext} Let $\ns$ and $Q$ be $k$-step nilspaces and let $\varphi:Q \to \ns_{k-1}$ be a fibration. Then $p_1:Q\times_{\ns_{k-1}} \ns\to Q$ is a degree-$k$ extension whose structure group is the $k$-th structure group of $\ns$. \end{proposition}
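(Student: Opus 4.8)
The plan is to verify directly that $p_1\colon Q\times_{\ns_{k-1}}\ns\to Q$ satisfies the conditions in the definition of a degree-$k$ extension (Definition 3.3.13 in \cite{Cand:Notes1}), namely that it is a fibration whose fibers carry a free, transitive action of the group $\ab_k(\ns)$ (acting via $\mc{D}_k$), and that this action is compatible with the cube structure in the appropriate sense. First I would record the structural observation that the fiber product $Q\times_{\ns_{k-1}}\ns$ is itself a $k$-step nilspace (as noted just before the statement), and that $p_1$ and $p_2$ are fibrations; this is the standing setup. Next I would note that the $(k-1)$-factor of $Q\times_{\ns_{k-1}}\ns$ is naturally isomorphic to $Q$: indeed, since $\varphi$ has target $\ns_{k-1}$ (a $(k-1)$-step nilspace) and $\ns\to\ns_{k-1}$ is the projection of a degree-$k$ extension by $\ab_k(\ns)$, one checks that $\pi_{k-1}\colon Q\times_{\ns_{k-1}}\ns\to Q$ factors as an isomorphism onto $Q$ composed with $p_1$, because for each $(x,y)$ the $(k-1)$-data is entirely recorded by $x$ together with $\pi_{k-1}(y)=\varphi(x)$.

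The heart of the argument is to identify the fibers of $p_1$ and the action on them. Fix $x\in Q$; then $p_1^{-1}(x)=\{(x,y): y\in\ns,\ \pi_{k-1}(y)=\varphi(x)\}$, which is in natural bijection with the fiber $\pi_{k-1}^{-1}(\varphi(x))\subset\ns$. Since $\ns$ is a degree-$k$ extension of $\ns_{k-1}$ by $\ab_k(\ns)$, this fiber is a torsor over $\ab_k(\ns)$, and the action $(x,y)+z:=(x,y+z)$ for $z\in\ab_k(\ns)$ is free and transitive on $p_1^{-1}(x)$. I would then check that this pointwise action assembles into an action of $\mc{D}_k(\ab_k(\ns))$ on $Q\times_{\ns_{k-1}}\ns$ respecting cubes: given $\q=\q_1\times\q_2\in\cu^n(Q\times_{\ns_{k-1}}\ns)$ and $d\in\cu^n(\mc{D}_k(\ab_k(\ns)))$, the map $v\mapsto(\q_1(v),\q_2(v)+d(v))$ is again a cube because $\q_2+d\in\cu^n(\ns)$ (by the extension structure on $\ns$), and $\pi_{k-1}\co(\q_2+d)=\pi_{k-1}\co\q_2=\varphi\co\q_1$ since adding an $\ab_k$-cube does not change the $(k-1)$-shadow. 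The two cube-theoretic conditions of a degree-$k$ extension then reduce cleanly to the corresponding conditions for $\ns$: cube-surjectivity of $p_1$ follows because any $\q_1\in\cu^n(Q)$ can be lifted by first lifting $\varphi\co\q_1\in\cu^n(\ns_{k-1})$ to some $\q_2\in\cu^n(\ns)$ (fibration property of $\pi_{k-1}\colon\ns\to\ns_{k-1}$, or equivalently the extension being degree-$k$) and pairing; and if $p_1\co\q=p_1\co\q'$, i.e.\ $\q_1=\q_1'$, then $\pi_{k-1}\co\q_2=\varphi\co\q_1=\varphi\co\q_1'=\pi_{k-1}\co\q_2'$, so by the degree-$k$ extension property of $\ns$ there is $d\in\cu^n(\mc{D}_k(\ab_k(\ns)))$ with $\q_2+d=\q_2'$, whence $\q+d=\q'$.

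I expect the main obstacle to be purely bookkeeping: making precise the identification of the structure group of $Q\times_{\ns_{k-1}}\ns$ with $\ab_k(\ns)$ (as opposed to merely a group acting freely and transitively on fibers) and verifying that this matches the abstract definition of the $k$-th structure group via $\pi_{k-1}$ of the fiber product. This requires invoking that for a $k$-step nilspace the structure group is canonically the group acting on the fibers of $\pi_{k-1}$ (see \cite[\S 3.2.3 and \S 3.3.3]{Cand:Notes1}), together with the already-established fact that $(Q\times_{\ns_{k-1}}\ns)_{k-1}\cong Q$. Once these identifications are in place, all the verifications above are routine consequences of the corresponding properties of the degree-$k$ extension $\ns\to\ns_{k-1}$ and of the definition of the fiber-product cube sets. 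A remark worth including is that this proposition is exactly the tool needed in the proof of Theorem \ref{thm:general-p-hom-intro} to guarantee that the nilspace $\nss$ constructed there as $\{(a,b)\in Q\times\ns:\psi'(q(a))=\pi(b)\}$ is a degree-$k$ extension of $Q$ by $\ab_k(\ns)$.
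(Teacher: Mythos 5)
Your core argument is correct and is essentially the proof the paper gives: the same action $(x,y)+z:=(x,y+z)$ of $\ab_k(\ns)$ on the fibers of $p_1$, the same verification that it is free and transitive, cube-surjectivity via lifting $\varphi\co\q_1$ through $\pi_{k-1}:\ns\to\ns_{k-1}$, and the same reduction of the final cube condition to $\ns$. Your one genuine (minor) variation is at the end: you invoke directly that $\pi_{k-1}:\ns\to\ns_{k-1}$ is a degree-$k$ extension to produce the correcting cube $d$, whereas the paper builds $d$ by hand on $\db{n}_{\leq k}$ and extends it using uniqueness of completion in the $k$-step nilspace $\ns$; both are legitimate, and yours is slightly slicker given that the degree-$k$ extension property of the canonical factor map is standard.

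One assertion in your write-up is wrong, though fortunately not load-bearing: the $(k-1)$-factor of $Q\times_{\ns_{k-1}}\ns$ is \emph{not} isomorphic to $Q$ in general. Since $Q$ is allowed to be genuinely $k$-step (and in the application to Theorem \ref{thm:general-p-hom-intro} it typically is), the $(k-1)$-factor of the fiber product is a $(k-1)$-step nilspace, namely $Q_{k-1}\times_{\ns_{k-1}}\ns_{k-1}\cong Q_{k-1}$, which cannot equal $Q$ unless $Q$ was already $(k-1)$-step. The confusion propagates into your last paragraph, where you try to match $\ab_k(\ns)$ with ``the $k$-th structure group of the fiber product via $\pi_{k-1}$'': the proposition does not claim that. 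A degree-$k$ extension in the sense of \cite[Definition 3.3.13]{Cand:Notes1} is an extension over an arbitrary base (here the $k$-step nilspace $Q$, with $p_1$ playing the role of the projection, not of $\pi_{k-1}$), and ``its structure group'' means the group by which one extends, i.e.\ the group acting freely and transitively on the fibers of $p_1$ compatibly with $\cu^n(\mc{D}_k(\cdot))$ — which your fiber identification already establishes is $\ab_k(\ns)$. So that final identification step you flagged as the ``main obstacle'' is not needed at all; dropping it (and the false isomorphism claim) leaves a complete and correct proof.
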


\begin{proof} Let us denote by $Z$ the $k$-th structure group of $\ns$. First, let us define the action of $Z$ on $\nss:=Q\times_{\ns_{k-1}} \ns$. Given $(q,x)\in Y$ and $z\in Z$, $(q,x)+z:=(q,x+z)$. To see that this is well defined, note that $\pi_{k-1}(x)=\pi_{k-1}(x+z)$ for all $x\in \ns$ and $z\in Z$. The action is free, because if $(q,x)=(q,x+z)$ then $x=x+z$ and this implies that $z=0$. The action is also transitive over the fibers of $p_1$. To see this, let $(q_1,x_1),(q_2,x_2)\in \nss$ be such that $p_1(q_1,x_1)=p_1(q_2,x_2)$. Then $q_1=q_2$ and $\pi_{k-1}(x_1)=\varphi(q_1)=\varphi(q_2)=\pi_{k-1}(x_2)$. Thus, there exists $z\in Z$ such that $x_1=x_2+z$, which implies that $(q_1,x_1)=(q_2,x_2)+z$. We also need to prove that $p_1:\cu^n(\nss)\to \cu^n(Q)$ is a surjection, but this follows from the fact that $p_1$ is a fibration. Finally, we have to check that given two cubes $(\q_1,d_1),(\q_2,d_2)\in \cu^n(\nss)$ such that $p_1 \co (\q_1,d_1) = p_1 \co (\q_2,d_2)$, there exists $f\in \cu^n(\mc{D}_k(Z))$ such that $(\q_1,d_1)=(\q_2,d_2)+f$. Proceeding as before, for all $v\in \db{n}_{\le k}:= \{v\in \db{n}:|v|\le k\}$ we have that there exists $f(v)\in Z$ such that $d_1(v)=d_2(v)+f(v)$. Now consider the (unique) extension of $f$ to an element of $\cu^n(\mc{D}_k(Z))$. Thus, $d_1(v) = (d_2+f)(v)$ for all $v\in \db{n}_{\le k}$. as $\ns$ is $k$-step, this implies that for all $v\in \db{n}$ we have $d_1(v) = (d_2+f)(v)$. Therefore $(\q_1,d_1) = (\q_2,d_2)+f$.
\end{proof}

\begin{proposition}\label{prop:ext-fib}
Let $q:\ns\to \nss$ be a degree-$k$ extension by an abelian group $Z$. Then $q$ is a fibration.
\end{proposition}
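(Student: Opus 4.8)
The plan is to verify directly that the extension map $q\colon\ns\to\nss$ satisfies the two defining conditions of a fibration: first, that $q$ is cube-surjective (every cube on $\nss$ lifts to a cube on $\ns$), and second, the corner-lifting condition (given an $n$-corner on $\ns$ whose image under $q$ extends to a full $n$-cube on $\nss$, that corner can be completed to an $n$-cube on $\ns$ mapping onto the given cube). Both will follow quickly from the definition of a degree-$k$ extension by an abelian group $Z$, since that definition builds in exactly the cube-surjectivity of $q$, and the freeness and transitivity of the $Z$-action on fibers gives us the room to adjust cubes and corners.

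For cube-surjectivity: this is immediate, as it is part of the definition of a degree-$k$ extension (see \cite[Definition 3.3.13]{Cand:Notes1}) that $q$ is cube-surjective, i.e.\ $q\colon \cu^n(\ns)\to\cu^n(\nss)$ is onto for every $n\geq 0$. So the only substantive point is the corner-completion property. First I would take an $n$-corner $\q'\colon\db{n}\setminus\{1^n\}\to\ns$ and a cube $\tilde{q}\in\cu^n(\nss)$ with $q\co\q' = \tilde{q}|_{\db{n}\setminus\{1^n\}}$. Using cube-surjectivity of $q$, lift $\tilde{q}$ to some $\q\in\cu^n(\ns)$. Now $q\co\q$ and $q\co\q'$ agree on $\db{n}\setminus\{1^n\}$; since $\q(v)$ and $\q'(v)$ lie in the same $q$-fiber for each $v\neq 1^n$, and $Z$ acts freely and transitively on fibers, there is a unique function $z\colon\db{n}\setminus\{1^n\}\to Z$ with $\q'(v)=\q(v)+z(v)$. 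The map $z$ is an $n$-corner on $\mc{D}_k(Z)$: this uses the second part of the degree-$k$ extension axiom (the one saying that two cubes on $\ns$ agreeing after $q$ differ by a cube in $\cu^n(\mc{D}_k(Z))$) applied on the appropriate $(n-1)$-faces, or more directly the fact that the extension structure forces $z$ restricted to each lower face to be a cube on $\mc{D}_k(Z)$. Complete $z$ to a cube $\bar{z}\in\cu^n(\mc{D}_k(Z))$ (possible by the nilspace axioms for $\mc{D}_k(Z)$, in fact the completion is unique). Then $\q + \bar{z}\in\cu^n(\ns)$ is the desired completion of $\q'$: it agrees with $\q'$ on $\db{n}\setminus\{1^n\}$ and satisfies $q\co(\q+\bar{z}) = q\co\q = \tilde{q}$, since adding an element of $Z$ does not change the image under $q$.

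The main obstacle, such as it is, is checking carefully that the function $z\colon\db{n}\setminus\{1^n\}\to Z$ is genuinely an $n$-corner on $\mc{D}_k(Z)$, i.e.\ that its restriction to each of the $n$ lower $(n-1)$-faces lies in $\cu^{n-1}(\mc{D}_k(Z))$. This is where one invokes the defining property of a degree-$k$ extension: on each such face $F$, both $\q|_F$ and $\q'|_F$ are cubes on $\ns$ with $q\co(\q|_F)=q\co(\q'|_F)$, so the extension axiom yields $\q'|_F - \q|_F = z|_F \in \cu^{n-1}(\mc{D}_k(Z))$. Once this is in hand, everything else is a routine combination of the completion axiom for $\mc{D}_k(Z)$ and the observation that $Z$-translations commute with the structure maps. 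I would also remark that the hypothesis ``degree-$k$'' (rather than just ``abelian bundle'') is what guarantees $z$ sits in the cube set of $\mc{D}_k(Z)$ specifically, which is the correct target to make the argument close; but for the fibration conclusion per se this fine point is not essential, only cube-surjectivity and freeness/transitivity of the fiber action are.
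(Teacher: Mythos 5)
Your proof is correct and follows essentially the same route as the paper's: lift the given cube of $\nss$ to a cube $\q^*$ on $\ns$ using part (i) of the definition of a degree-$k$ extension, observe that the difference $\q'-\q^*$ is a corner on $\mc{D}_k(Z)$ by part (ii), complete it within $\cu^n(\mc{D}_k(Z))$, and add the completion back. The only caveat is your closing aside that the degree-$k$ structure is ``not essential'' for the fibration conclusion: it is in fact essential, both to place the difference in $\cor^n(\mc{D}_k(Z))$ and to guarantee that $\q^*+\bar z$ is again a cube on $\ns$, but this remark does not affect the validity of the argument you actually give.
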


\begin{proof} Let $\q\in \cu^n(\nss)$ be a cube and $\q'\in \cor^n(\ns)$ be a corner such that $q\co \q' = \q$ for all $v\not=1^n$. By \cite[Definition 3.3.13, (i)]{Cand:Notes1}, let $\q^*\in \cu^n(\ns)$ be such that $q\co \q^* = \q$. Thus, $q\co \q'=q\co \q^*$ for all $v\not= 1^n$. As $\ns$ is a bundle over $\nss$, this means that $\q'-\q^*$ takes values in $Z$ and by \cite[Definition 3.3.13, (ii)]{Cand:Notes1} we know that $\q'-\q^*\in\cor^n(\mc{D}_k(Z))$. Let $d\in \cu^n(\mc{D}_k(Z))$ be a completion of that corner. Then $\q^*+d$ is a cube such that $\q^*+d=\q^*$ for all $v\not=1^n$ and $q\co (\q^*+d) = \q$. \end{proof}

\begin{proposition}\label{prop:proj-of-ext} Let $\ns$ and $\ns'$ be nilspaces such that $p:\ns\to \ns'$ is a degree-$t$ extension by an abelian group $Z$. Let $p_t:\ns_t\to \ns'_t$ denote the induced morphism\footnote{See \cite[Definition 3.3.1.(i) and Proposition 3.3.2.]{Cand:Notes1}} between the $t$-th factors. Then $p_t$ defines a degree-$t$ extension with structure group $Z$ and $\ns \cong \ns' \times_{\ns'_t} \ns_t$. \end{proposition}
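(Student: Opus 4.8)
\textbf{Proof proposal for Proposition \ref{prop:proj-of-ext}.}

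The plan is to build the isomorphism $\ns \cong \ns' \times_{\ns'_t} \ns_t$ directly via the natural map and then read off the degree-$t$ extension structure on $p_t$. First I would observe that $p:\ns\to\ns'$ is a fibration by Proposition \ref{prop:ext-fib}, and that the factor maps $\pi_t:\ns\to\ns_t$, $\pi_t:\ns'\to\ns'_t$ are fibrations, so that $p_t:\ns_t\to\ns'_t$ is a well-defined morphism (using \cite[Definition 3.3.1.(i), Proposition 3.3.2]{Cand:Notes1}) and also a fibration (it is a factor of a fibration; alternatively this will follow from the extension structure we establish). Hence the fiber product $\ns'\times_{\ns'_t}\ns_t$ is a well-defined nilspace. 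Define $\Phi:\ns\to \ns'\times_{\ns'_t}\ns_t$ by $x\mapsto (p(x),\pi_t(x))$; this lands in the fiber product because $\pi_t\co p = p_t\co\pi_t$ by definition of $p_t$, and it is clearly a morphism.

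The core of the proof is to show $\Phi$ is a nilspace isomorphism. For \emph{injectivity}: if $\Phi(x)=\Phi(x')$ then $p(x)=p(x')$, so since $p$ is a degree-$t$ extension by $Z$ and $x,x'$ lie in the same $p$-fiber, there is $z\in Z$ with $x'=x+z$; but $Z$ acts trivially below level $t$ (it is the structure group attached to a degree-$t$ extension, so its action is by translations in $\tran_t$, hence $\pi_t(x+z)=\pi_t(x)$ would force... ) — actually the cleanest argument: $\pi_t(x)=\pi_t(x')$ and $x'=x+z$ with $z\in Z$ acting freely on $\ns$; since $z$ acts as a translation in $\tran_t(\ns)$ compatible with the extension, $\pi_t(x+z)=\pi_t(x)+\bar z$ for the induced action on $\ns_t$, and the induced action of $Z$ on the $t$-th factor is again free (it is the $t$-th structure group of the degree-$t$ extension $p_t$, once we know $p_t$ is such an extension — so I would prove surjectivity and the extension property first, or argue freeness directly from \cite[Definition 3.3.13]{Cand:Notes1}). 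So $\bar z = 0$, forcing $z=0$ by freeness on $\ns$, hence $x=x'$. For \emph{surjectivity}: given $(y,w)\in\ns'\times_{\ns'_t}\ns_t$, pick any $x_0\in\pi_t^{-1}(w)\subset\ns$; then $\pi_t(p(x_0))=p_t(w)=\pi_t(y)$, so $p(x_0)$ and $y$ lie in the same $\pi_t$-fiber of $\ns'$, and since $\ns'$ is a $k$-step nilspace decomposed as an abelian bundle, there is a (suitable) structure-group element moving $p(x_0)$ to $y$; lifting that element through the fibration-like surjectivity of $p$ (more precisely, using that $p$ is a degree-$t$ extension and $Z$ surjects appropriately — this is where one uses that $p$ is degree-$t$ and the relevant structure groups of $\ns$ surject onto those of $\ns'$ in degrees $>t$), one adjusts $x_0$ to some $x$ with $p(x)=y$ and $\pi_t(x)=w$ still. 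For \emph{bi-morphism}: $\Phi$ is cube-surjective because $p$ and $\pi_t$ are fibrations and one can lift a cube of $\ns'\times_{\ns'_t}\ns_t$ by first lifting its $\ns_t$-component through $\pi_t$ and then correcting by a cube of $\mc{D}_k$ of the top structure group so that the $\ns'$-component matches, exactly as in the proof of Proposition \ref{prop:sub-prod-ext}; an injective cube-surjective morphism between nilspaces of the same step has a morphism inverse, so $\Phi$ is an isomorphism.

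Finally, to conclude that $p_t$ is a degree-$t$ extension by $Z$: transporting the extension structure on $p:\ns\to\ns'$ across $\Phi$, the projection $\ns'\times_{\ns'_t}\ns_t\to\ns'$ is a degree-$t$ extension by $Z$ (this is precisely the content of Proposition \ref{prop:sub-prod-ext} applied to the fibration $p_t:\ns_t\to\ns'_t$, which realizes $\ns'\times_{\ns'_t}\ns_t = \ns'\times_{\ns'_t}\ns_t$ with structure group the $t$-th structure group of $\ns$, namely $Z$) — and by the defining property of the fiber product the second-coordinate projection $\ns'\times_{\ns'_t}\ns_t\to\ns_t$ exhibits $p_t$ as the quotient of this extension, so $Z$ acts on the fibers of $p_t$ making it a degree-$t$ extension. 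I expect the main obstacle to be the surjectivity argument for $\Phi$ and, relatedly, pinning down precisely why the structure group $Z$ of $p$ is exactly the $t$-th structure group of $\ns$ (and not something below it) and why this descends correctly to $\ns_t$; this requires careful bookkeeping with \cite[Definition 3.3.13]{Cand:Notes1} and the abelian-bundle decomposition, though it is structurally routine once the right lifting statement is isolated. A convenient shortcut I would try first is to invoke Proposition \ref{prop:sub-prod-ext} directly with $\nss$ there taken to be $\ns'$ and $\varphi=p_t$, once we know $p_t$ is a fibration, so that the extension claim and the identification of the structure group come for free, reducing the whole proof to establishing the isomorphism $\ns\cong\ns'\times_{\ns'_t}\ns_t$.
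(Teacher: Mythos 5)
Your choice of map is the paper's: $\Phi(x)=(p(x),\pi_t(x))$, and the overall target (isomorphism plus extension structure on $p_t$) is right. But the logical order you propose does not close. Both halves of the bijectivity argument secretly presuppose the very extension structure on $p_t$ that you want to deduce afterwards. For injectivity you need that the induced $Z$-action on $\ns_t$ is well defined and that $\pi_t(x)=\pi_t(x+z)$ forces $z=0$; the clean argument is that $\pi_t(x)=\pi_t(x+z)$ produces a $(t+1)$-cube on $\ns$ constant equal to $x$ off the top vertex with value $x+z$ there, whose difference with the constant cube is a map $\db{t+1}\to Z$ supported at one vertex, which by condition (ii) of the definition of a degree-$t$ extension must lie in $\cu^{t+1}(\mc{D}_t(Z))$ and hence vanish. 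This is exactly the ``freeness'' you defer, and it cannot be obtained from freeness of $Z$ on $\ns$ alone. For surjectivity, your plan (fix $w\in\ns_t$ first, then correct the $\ns'$-coordinate inside a $\pi_t$-fiber of $\ns'$) is the hard direction: for $t<k-1$ that fiber is an iterated bundle over several structure groups, not a single orbit, and the ``lifting through $p$'' step is left vague. The paper goes the other way: lift $a\in\ns'$ to $x\in\ns$ via surjectivity of $p$, note $\pi_t(x)$ and $b$ lie in the same $p_t$-fiber, and use transitivity of $Z$ on $p_t$-fibers (already established) to set $x\mapsto x+z$. So the first block of work --- defining the $Z$-action on $\ns_t$ by $\pi_t(x)+z:=\pi_t(x+z)$, checking it is well defined, free, and transitive on $p_t$-fibers (this last using that $p$ is a fibration, Proposition \ref{prop:ext-fib}), and verifying the cube condition of \cite[Definition 3.3.13]{Cand:Notes1} --- must come first; it is not optional bookkeeping.

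The shortcut you propose at the end also fails: Proposition \ref{prop:sub-prod-ext} concerns a fibration $\varphi:Q\to\ns_{k-1}$ into the $(k-1)$-st factor and identifies the structure group of the resulting extension as $\ab_k(\ns)$. Here the relevant base is $\ns'_t$ with $t$ arbitrary, and the group $Z$ is in general neither $\ab_t(\ns)$ nor $\ab_t(\ns_t)$ (a degree-$t$ extension only enlarges the $t$-th structure group by $Z$; it does not equal it). Moreover, to view $\ns'\times_{\ns'_t}\ns_t$ as the fiber product of a fibration against a known degree-$t$ extension, the extension leg would have to be $p_t$ itself --- precisely what is to be proved --- so the reduction is circular. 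With the order reversed as above, your argument becomes the paper's proof.
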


\begin{proof} First, let us see that $p_t$ defines an abelian bundle with structure group $Z$. We define the action of $Z$ on $\ns_t$ by $\pi_t(x)+z:=\pi_t(x+z)$. Let us check that this is well-defined. If $\pi_t(x)=\pi_t(y)$, then by definition of $\pi_t$ (see \cite{Cand:Notes1}) there exists a cube $\q\in \cu^{t+1}(\ns)$ such that $\q(1^{t+1}) = y$ and $\q(v)=x$ for all $v\not=1^{t+1}$. Then, the cube $\q+z$ (adding $z$ to all values $\q(v)$) is an element of $\cu^{t+1}(\ns)$. Thus $\pi_t(x+z)=\pi_t(y+z)$, so the action is well-defined. To see that the action is free, note that if $\pi_t(x)=\pi_t(x+z)$ then there exists a $(t+1)$-dimensional cube in $\ns$ with value $x$ at all vertices except $1^{t+1}$, where the value will be $x+z$. Therefore, as the constant cube with value $x$ is in $\cu^{t+1}(\ns)$, the map $\db{t+1}\to Z$ with value 0 at all vertices except $1^{t+1}$ and value $z$ at $1^{t+1}$, would be a cube on $\mc{D}_t(Z)$. But this implies that $z=0$. To see that this action of $Z$ is transitive on the fibers of $p_t$, suppose that $p_t(\pi_t(x))=p_t(\pi_t(y))$. Then $\pi_t(p(x))=\pi_t(p(y))$ which means that if we let $\q'$ be the function such that $\q'(1^{t+1})=p(y)$ and $\q'(v)=p(x)$ for all $v\not=1^{t+1}$, then $\q'\in\cu^{t+1}(\ns')$. Now let $\q''\in\cor^{t+1}(\ns)$ be the corner such that $\q''(v)=x$ for all $v\in\db{t+1}\setminus \{1^{t+1}\}$. By Proposition \ref{prop:ext-fib}, $p$ is a fibration, and as $p\co \q'' = \q'$ for all $v\not=1^{t+1}$, there exists a completion of $\q''$ such that (abusing a little the notation) $p(\q''(1^{t+1}))=p(y)$. Thus, as $p$ is a degree-$t$ extension by $Z$, this means that $\q''(1^{t+1})=y+z$. Thus, $\pi_t(x)=\pi_t(y+z)=\pi_t(y)+z$.

Now, let us check that $p_t$ satisfies the conditions of \cite[Definition 3.3.13]{Cand:Notes1}. The only non-trivial part is to prove that for any $\pi_t \co \q_1\in \cu^n(\ns_t)$,
\[
\{\pi_t \co \q_2\in\cu^n(\ns_t): p_t \co \pi_t \co \q_1 = p_t \co \pi_t \co \q_2 \} = \{(\pi_t \co \q_1)+d : d\in \cu^n(\mc{D}_t(Z))\}.
\]
Let $\pi_t \co \q_2$ be a cube in the set on the left side above (where, as usual, we assume that $\q_1,\q_2\in \cu^n(\ns)$). Then $\pi_t \co p\co \q_1 = \pi_t \co p\co \q_2$. Now fix any $v\in \db{n}_{\le t}$. Since $\pi_t \co p\co \q_1(v) = \pi_t \co p\co \q_2(v)$, there exists a cube $\q\in \cu^{t+1}(\ns')$ such that $\q(w)=p(\q_1(v))$ for all $w\not=1^{t+1}$ and $\q(1^{t+1}) = p(\q_2(v))$. By an argument similar as before (using that $p$ is a fibration), we conclude that there exists $z(v)\in Z$ such that $\pi_t(\q_1(v)) = \pi_t(\q_2(v))+z(v)$. Now let $d\in \cu^n(\mc{D}_t(Z))$ be the (unique) cube such that $d(v)=z(v)$ for all $v\in \db{n}_{\le t}$. Then, we have that both $\pi_t \co \q_1$ and $\pi_t \co \q_2+d$ are cubes in $\cu^n(\ns_t)$ and that they coincide in the set $\db{n}_{\le t}$. As $\ns_t$ is $t$-step, this implies that $\pi_t \co \q_1= \pi_t \co \q_2+d$. This proves that $\pi_t \co \q_2$ is in the set on the right above. We leave the other inclusion for the reader.

To complete the proof, let us see that $\ns$ is (nilspace) isomorphic to $\ns' \times_{\ns'_t} \ns_t$. Let $\varphi:\ns \to \ns' \times_{\ns'_t} \ns_t$ be defined by $x \mapsto (p(x),\pi_t(x))$. We want to show that this is a nilspace isomorphism. To prove that it is injective, suppose that $(p(x),\pi_t(x))=(p(y),\pi_t(y))$. Then, as $p(x)=p(y)$, we have $y = x+z$ for some $z\in Z$. Likewise, as $\pi_t(x)=\pi_t(y)$, there exists $\q\in \cu^{t+1}(\ns)$ such that $\q(1^{t+1})=y=x+z$ and $\q(v)=x$ for all $v\not=1^{t+1}$. This implies that $\q'$, defined as $\q'(1^{t+1})=z$ and $\q'(v)=0$ for all $v\not=1^{t+1}$ is an element of $\cu^{t+1}(\mc{D}_t(Z))$. Thus, $z=0$. To prove the surjectivity, let $(a,b)\in \ns' \times_{\ns'_t} \ns_t$. Let $x\in \ns$ be such that $p(x)=a$. Thus, $\pi_t(a)=p_t(b)=p_t(\pi_t(x))$. As $p_t$ is a degree-$t$ extension by $Z$, there exists $z\in Z$ such that $b=\pi_t(x)+z$. Now it is straightforward to check that $\varphi(x+z) = (a,b)$. Finally, we need to check that both $\varphi$ and $\varphi^{-1}$ are morphisms. As $\varphi$ is clearly a morphism, let $\q_1\times \q_2 \in \cu^n(\ns' \times_{\ns'_t} \ns_t)$. Let $\q\in \cu^n(\ns)$ be a cube such that $p\co \q = \q_1$. Then, $p_t\co\pi_t\co\q = \pi_t \co p\co \q = \pi_t \co \q_1 = p_t \co \q_2$. As $p_t$ is a degree-$t$ extension, this means that there exists $d\in \cu^n(\mc{D}_t(Z))$ such that $\pi_t\co \q+d = \q_2$. Thus, $\varphi^{-1} \co (\q_1\times \q_2) = \q+d\in \cu^n(\ns)$.
\end{proof}

We shall also use the following construction of an auxiliary nilspace. 

\begin{proposition}\label{prop:H-nil} Let $\nss$ be a $k$-step nilspace and let $H<\ab_k(\nss)$ be any subgroup. Let us define the following  relation on $\nss$: for $y_1,y_2\in\nss$, we have $y_1\sim y_2$ if and only if $y_1=y_2+h$ for some $h\in H$. Then the following holds:

\begin{enumerate}[leftmargin=0.8cm]
    \item The relation $\sim$ is an equivalence relation.
    \item The set $\tilde{\nss}:=\nss/\sim$ together with the sets $\cu^n(\tilde{\nss}):=\{\pi_{\sim}\co \q:\q\in\cu^n(\nss)\}$ is a nilspace.
    \item $\tilde{\nss}$ is $k$-step, $\ab_k(\tilde{\nss})=\ab_k(\nss)/H$ and $\tilde{\nss}_{k-1}\cong \nss_{k-1}$.
\end{enumerate}
\end{proposition}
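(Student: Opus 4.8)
\textbf{Proof plan for Proposition \ref{prop:H-nil}.}

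The plan is to verify the three claims in order, treating the construction as a quotient of the $k$-step nilspace $\nss$ by a subgroup of its top structure group. For (i), transitivity, reflexivity and symmetry of $\sim$ follow immediately from the group axioms for $H$; essentially no work is needed here beyond noting that if $y_1 = y_2 + h$ and $y_2 = y_3 + h'$ then $y_1 = y_3 + (h+h')$ with $h+h'\in H$, using that the action of $\ab_k(\nss)$ on $\nss$ is by a group action (see \cite[\S 3.2.3]{Cand:Notes1}), so the orbit relation is automatically an equivalence relation.

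For (ii), I would check the three nilspace axioms for $(\tilde\nss, (\cu^n(\tilde\nss))_{n\ge 0})$ directly from the definition. The composition (cube-preservation under discrete-cube morphisms) and ergodicity axioms descend trivially from $\nss$ because $\cu^n(\tilde\nss)$ is defined as the pushforward $\pi_\sim\co\cu^n(\nss)$ and $\pi_\sim$ commutes with precomposition by discrete-cube morphisms. The substantive point is the completion axiom: given an $n$-corner $\tilde{\q}':\db{n}\setminus\{1^n\}\to\tilde\nss$ lifting to cubes in $\cu^n(\tilde\nss)$ on each lower face, I must produce a completion. Here I would first lift the corner to a corner $\q'$ on $\nss$ — this requires choosing, face by face, cubes in $\cu^n(\nss)$ projecting to the given faces of $\tilde\q'$ and correcting the overlaps; the overlaps differ by elements of $H\subset\ab_k(\nss)$, and since $H$-valued corrections on codimension-$0$ data in the top structure group can be arranged (using that two cubes agreeing outside $1^n$ differ by an element of $\ab_k$, and more generally Lemma \ref{lem:correction-cubes}-type reasoning), one assembles a genuine corner $\q'\in\cor^n(\nss)$ with $\pi_\sim\co\q' = \tilde\q'$. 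Then apply the completion axiom in $\nss$ and project down. I expect this lifting-of-corners step to be the main obstacle, since one must be careful that the face-by-face choices can be made compatibly; the cleanest way is probably to invoke that $\pi_\sim$ is a fibration (which can be seen as $\nss\to\tilde\nss$ being a degree-$k$ extension by $H$, via Proposition \ref{prop:ext-fib} once one checks the bundle structure), so that corners lift along it by definition of fibration.

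For (iii), once $\tilde\nss$ is known to be a nilspace with $\pi_\sim:\nss\to\tilde\nss$ a degree-$k$ extension with structure group $\ab_k(\nss)/H$, the claims follow. That $\tilde\nss$ is $k$-step: its $(k-1)$-fold factor is obtained by a further quotient, and I would show $\tilde\nss_{k-1}\cong\nss_{k-1}$ by checking that the composite $\nss\to\tilde\nss\to\tilde\nss_{k-1}$ factors through $\pi_{k-1}:\nss\to\nss_{k-1}$ and induces an isomorphism — concretely, $\pi_{k-1}$ is constant on $\sim$-classes (since $H\subset\ab_k(\nss)$ and $\pi_{k-1}\co\q$ only sees data modulo $\ab_k$), giving a well-defined morphism $\tilde\nss\to\nss_{k-1}$, and one checks it is a fibration with trivial structure group hence an isomorphism after passing to $(k-1)$-factors, exactly as in the proof of Proposition \ref{prop:proj-of-ext}. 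Finally $\ab_k(\tilde\nss) = \ab_k(\nss)/H$ is read off from the degree-$k$ extension structure: the fibers of $\tilde\nss\to\tilde\nss_{k-1}\cong\nss_{k-1}$ are the $H$-orbit-quotients of the $\ab_k(\nss)$-fibers of $\nss$, i.e.\ torsors over $\ab_k(\nss)/H$. These last verifications are routine given (ii), so the real content is entirely in establishing the completion axiom for $\tilde\nss$.
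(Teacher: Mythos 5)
You correctly identify the completion axiom as the crux, and parts (i) and (iii) of your outline match the paper's proof in spirit. But your proposed resolution of the completion axiom has a genuine gap, in two respects. First, the route via Proposition \ref{prop:ext-fib} is circular: that proposition concerns degree-$k$ extensions, a notion defined between \emph{nilspaces}, so invoking it presupposes that $\tilde{\nss}$ is already known to be a nilspace --- which is precisely what part (ii) asserts. Second, even granting that $\pi_{\sim}$ has the corner-lifting property of a fibration, that property completes a corner on $\nss$ \emph{relative to a given cube on the target}; it does not by itself convert an $n$-corner on $\tilde{\nss}$ into an $n$-corner on $\nss$, which is the step you actually need. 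You flag the face-by-face compatibility of choices as the main obstacle, but you do not supply the argument that overcomes it.

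The paper's mechanism is different and dissolves the compatibility problem entirely: one lifts the corner $\q'\in\cor^n(\tilde{\nss})$ \emph{pointwise and arbitrarily}, choosing any preimage $y(v)\in\pi_{\sim}^{-1}(\q'(v))$ for each $v\in\db{n}_{\le k}$ (or for each $v\ne 1^n$ when $n\le k+1$), with no attempt at consistency. The key observation is that if $F$ is a lower face of dimension $m\le k$ and $d\in\cu^{m}(\nss)$ is \emph{some} genuine lift of $\q'|_F$, then the discrepancy $d-\q|_F$ is an $H$-valued map on $\db{m}$; since $H\le\ab_k(\nss)$ and $m\le k$, \emph{every} $H$-valued map on $\db{m}$ lies in $\cu^{m}(\mc{D}_k(H))$, so $\q|_F=d-(d-\q|_F)$ is automatically a cube of $\nss$. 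Thus the arbitrary pointwise lift is already a corner (for $n\le k+1$), or a morphism on the simplicial set $\db{n}_{\le k}$ which one completes by simplicial completion and then checks, using that $\nss$ is $k$-step, still projects onto the prescribed faces (for $n\ge k+2$). One then completes in $\nss$ and pushes forward. If you replace your face-by-face correction and the appeal to Proposition \ref{prop:ext-fib} with this observation, the rest of your outline goes through essentially as in the paper.
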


\begin{proof} 
To prove $(i)$, the only non-trivial part is the transitivity of $\sim$. If $y_1\sim y_2$ and $y_2\sim y_3$ then $y_1=y_2+h$ and $y_2=y_3+h'$. Thus, $y_1=y_3+(h+h')$.

To prove $(ii)$, note first that the composition and ergodicity axioms follows easily from the definitions. To prove the completion axiom, let $\q'\in \cor^n(\tilde{\nss})$ for any $n\ge 1$. For every $v\in \db{n}_{\le k}$, let $y(v)\in\nss$ be any element such that $\pi_{\sim}(y(v))=\q'(v)$. Let $\q:\db{n}_{\le k}\to \nss$ be defined as  $\q(v)=y(v)$ for all $v\in\db{n}_{\le k}$.

\textbf{Case $n\le k+1$:} In this case, we have $\q$ defined in $\db{n}\setminus\{1^n\}$. Let $F$ be any lower face of dimension $n-1$. As $\q'|_F\in \cu^{n-1}(\tnss)$ then there exists $d\in \cu^{n-1}(\nss)$ such that $\q'|_F = \pi_{\sim}\co d$. Then, $\q|_F:\db{n-1}\to \nss$ is a function such that $\pi_{\sim}\co d(v) = \pi_{\sim}\co \q|_F(v)$ for all $v\in\db{n-1}$. Therefore, $d-\q|_F:\db{n-1}\to H$. As $n-1\le k$ we have that $d-\q|_F\in \cu^{n-1}(\mc{D}_k(H))$. Thus $\q|_F = d-(d-\q|_F)\in \cu^{n-1}(\nss)$. As this holds for every lower face $F$, we have that $\q\in\cor^n(\nss)$ and if we complete it to an element of $\cu^n(\nss)$ (abusing the notation, let us denote by $\q$ this completion), we have that $\pi_{\sim} \co \q$ is a completion of the corner $\q'$.

\textbf{Case $n\ge k+2$:} In this case, we have $\q$ defined in $\db{n}_{\le k}$. By a similar argument as before we can conclude that $\q\in\hom(\db{n}_{\le k},\nss)$ (seeing $\db{n}_{\le k}$ as a simplicial cubespace). Abusing the notation, let us denote again by $\q$ its (unique) completion in $\cu^n(\nss)$ (using simpicial completion \cite[Lemma 3.1.5]{Cand:Notes1}). Let now $F$ be a lower face of $\db{n}$ of dimension $n-1$. Then $\q'|_F = \pi_{\sim} \co d$ for some $d\in \cu^{n-1}(\nss)$. Thus, $\pi_{\sim} \co d(v) = \pi_{\sim} \co \q|_F(v)$ for all $v\in \db{n-1}_{\le k}$. Note that $(\q|_F-d):\db{n-1}_{\le k}\to H$ is a function that can be completed to an element $f\in \cu^{n-1}(\mc{D}_k(H))$. Thus $(d+f)(v)=\q|_F(v)$ for all $v\in \db{n-1}_{\le k}$. But using that $\nss$ is $k$-step, we have that $(d+f)(v)=\q|_F(v)$ for all $v\in \db{n-1}$. Therefore $\pi_{\sim}\co d = \pi_{\sim} \co \q|_F = \q'|_{F}$. To conclude, note that $\pi_{\sim} \co \q$ is an element of $\cu^n(\tnss)$ that completes the corner $\q'$.

Finally, let us prove $(iii)$. First of all, to prove that $\tnss$ is $k$-step, suppose that we have $\pi_{\sim}\co \q_1 =\pi_{\sim}\co \q_2 $ for all $v\not=1^{k+1}$ where $\q_1,\q_2\in \cu^{k+1}(\nss)$. Let $d:\db{k+1}\setminus\{1^{k+1}\}\to H$ be defined as $d:=\q_1-\q_2$. Abusing the notation, let us denote by $d$ its unique completion in $\cu^{k+1}(\mc{D}_k(H))$. Thus, $\q_1=\q_2+d$ for all $v\not=1^{k+1}$, but $\nss$ is $k$-step, and therefore those cubes must be equal. This implies that $\q_1(1^{k+1})=\q_2(1^{k+1})+d(1^{k+1})$ which in turns means that $\pi_{\sim}(\q_1(1^{k+1}))=\pi_{\sim}(\q_2(1^{k+1}))$.

Now let us define $\phi:\tnss_{k-1} \to \nss_{k-1}$ as $\pi_{k-1}(\pi_{\sim}(y))\mapsto \pi_{k-1}(y)$. We want to prove that this is a nilspace isomorphism. First we need to prove that this is well-defined. Let $y_1,y_2\in \nss$ be such that $\pi_{k-1}(\pi_{\sim}(y_1))=\pi_{k-1}(\pi_{\sim}(y_2))$. Then there exists a cube $\q\in\cu^k(\nss)$ such that $\pi_{\sim}(\q(v))=\pi_{\sim}(y_1)$ for all $v\not=1^k$ and $\pi_{\sim}(\q(1^k))=\pi_{\sim}(y_2)$. Using that every function $f:\db{k}\to H$ is an element of $\cu^k(\mc{D}_k(H))$, we have that for some $f\in \cu^k(\mc{D}_k(H))$ the function $\q+f$ such that $(\q+f)(v)=y_1$ for all $v\not=1^k$ and $(\q+f)(1^k)=y_2$ is an element of $\cu^{k}(\nss)$. Thus $\pi_{k-1}(y_1)=\pi_{k-1}(y_2)$.

To prove that $\phi$ is injective, take two elements $y_1,y_2\in \nss$ such that $\pi_{k-1}(y_1)=\pi_{k-1}(y_2)$. Thus there exists a cube $\q\in \cu^k(\nss)$ such that $\q(v)=y_1$ for all $v\not=1^k$ and $\q(1^k)=y_2$. Then compose with $\pi_{\sim}$ and conclude that $\pi_{k-1}(\pi_{\sim}(y_1))=\pi_{k-1}(\pi_{\sim}(y_2))$. The fact that $\phi$ is surjective is trivial. To prove that $\phi$ is a morphism, let $\pi_{k-1}\co\pi_{\sim}\co \q\in \cu^n(\tnss_{k-1})$, where $\q\in\cu^n(\nss)$, be any cube. Then $\phi\co \pi_{k-1}\co\pi_{\sim}\co \q = \pi_{k-1}\co \q\in \cu^n(\nss_{k-1})$. And to prove that $\phi^{-1}$ is a morphism, for any $\q\in \cu^n(\nss)$ we have that $\phi^{-1}\co \pi_{k-1}\co \q = \pi_{k-1}\co \pi_{\sim}\co \q \in \cu^n(\tnss_{k-1})$.

To conclude the proof, let us prove that $\ab_k(\tnss) = \ab_k(\nss)/H$. First, let us define the action of $\ab_k(\nss)/H$ over $\tnss$ as $(z+H,\pi_{\sim}(y))\mapsto \pi_{\sim}(y+z)$. Let us check that this is well-defined. Suppose that $\pi_{\sim}(y_1)=\pi_{\sim}(y_2)$ and $z_1+H=z_2+H$. Then the first equality implies that $y_1=y_2+h$ for some $h\in H$. Similarly, the second equality implies that $z_1=z_2+h'$ for some $h'\in H$. Thus, we have that $\pi_{\sim}(y_1+z_1)=\pi_{\sim}(y_2+h+z_2+h')=\pi_{\sim}(y_2+z_2)$. To prove that the action is transitive, it is enough to prove that the fibers over a single element are covered by the action of $\ab_k(\nss)/H$. Suppose that we have $\pi_{k-1}(\pi_{\sim}(y_1))=\pi_{k-1}(\pi_{\sim}(y_2))$. We have proved earlier that $\phi$ was an isomorphism. Thus, $\pi_{k-1}(y_1)=\pi_{k-1}(y_2)$ and there exists $z\in \ab_k(\nss)$ such that $y_1=y_2+z$. With this we conclude that $\pi_{\sim}(y_1)=\pi_{\sim}(y_2+z)=\pi_{\sim}(y_2)+(z+H)$. To conclude the proof, we need to see that this action is free. Let $z+H$ be such that $\pi_{\sim}(y)=\pi_{\sim}(y)+(z+H)=\pi_{\sim}(y+z)$. this implies that for some $h\in H$ we have that $y=y+z+h$. But this is an equality in $\nss$, and as the action of $\ab_k(\nss)$ is free, we have that $z+h=0$. Therefore $z+H=H$.
\end{proof}
\begin{proposition}\label{prop:fac-fib-prod}
Let $\nss,\nss'$ and $\ns$ be $k$-step nilspaces. Let $\varphi:\nss\to\ns$ and $\psi:\nss'\to\ns$ be fibrations. Then for every $t\le k$ we have that $(\nss\times_{\ns}\nss')_t\simeq \nss_t\times_{\ns_t}\nss'_t$. \end{proposition}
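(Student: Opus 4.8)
\textbf{Proof proposal for Proposition \ref{prop:fac-fib-prod}.}

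The plan is to argue by induction on $k$, using the structural result (Proposition \ref{prop:proj-of-ext}) that exhibits each of $\nss$, $\nss'$, $\ns$, $\nss\times_\ns\nss'$ as a subdirect product of its own $t$-th factor with the ``top part'' of the tower. First I would dispose of the base case: when $t=k$ there is nothing to prove, and when $k=0$ everything is a point. For the inductive step, fix $t\le k-1$ and suppose the statement holds for all $(k-1)$-step nilspaces. Applying $\pi_{k-1}$ to the fibrations $\varphi,\psi$ gives fibrations $\varphi_{k-1}:\nss_{k-1}\to\ns_{k-1}$ and $\psi_{k-1}:\nss'_{k-1}\to\ns_{k-1}$, and the induction hypothesis applied to these (which are $(k-1)$-step) yields $(\nss_{k-1}\times_{\ns_{k-1}}\nss'_{k-1})_t\simeq\nss_t\times_{\ns_t}\nss'_t$. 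So the problem reduces to identifying $(\nss\times_\ns\nss')_{k-1}$ with $\nss_{k-1}\times_{\ns_{k-1}}\nss'_{k-1}$, i.e.\ to the single step from $k$ to $k-1$: once we know $(\nss\times_\ns\nss')_{k-1}\simeq\nss_{k-1}\times_{\ns_{k-1}}\nss'_{k-1}$, we take the $t$-th factor of both sides (the $t$-th factor of $\nss\times_\ns\nss'$ equals the $t$-th factor of its own $(k-1)$-th factor, since $t\le k-1$) and invoke the induction hypothesis.

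For that single step, I would write down the natural map $\Theta:(\nss\times_\ns\nss')_{k-1}\to\nss_{k-1}\times_{\ns_{k-1}}\nss'_{k-1}$ induced by $\pi_{k-1}$ in each coordinate, namely $\pi_{k-1}\big((x,x')\big)\mapsto(\pi_{k-1}(x),\pi_{k-1}(x'))$, and check it is a well-defined nilspace isomorphism. Well-definedness and the morphism property of $\Theta$ are formal from the definitions of the fiber product and of $\pi_{k-1}$. Surjectivity is also straightforward: given $(\pi_{k-1}(x),\pi_{k-1}(x'))$ with $\varphi_{k-1}\pi_{k-1}(x)=\psi_{k-1}\pi_{k-1}(x')$, one has $\pi_{k-1}\varphi(x)=\pi_{k-1}\psi(x')$, so $\varphi(x)$ and $\psi(x')$ differ by an element of $\ab_k(\ns)$; since $\varphi$ is a fibration its structure morphism onto $\ab_k(\ns)$ is onto, so one can adjust $x$ inside its $\ab_k(\nss)$-fiber to arrange $\varphi(x)=\psi(x')$, producing a genuine point of $\nss\times_\ns\nss'$ over the prescribed data. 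Injectivity: if $\pi_{k-1}(x)=\pi_{k-1}(y)$ and $\pi_{k-1}(x')=\pi_{k-1}(y')$ for two points $(x,x'),(y,y')$ of the fiber product, then $x=y+z$, $x'=y'+z'$ for $z\in\ab_k(\nss)$, $z'\in\ab_k(\nss')$, and the fiber-product constraint $\varphi(x)=\psi(x')$, $\varphi(y)=\psi(y')$ forces $\phi_k^{\varphi}(z)=\phi_k^{\psi}(z')$ in $\ab_k(\ns)$; this says $(z,z')$ lies in the $k$-th structure group of $\nss\times_\ns\nss'$ (which, by Proposition \ref{prop:sub-prod-ext}/\ref{prop:proj-of-ext}, is the relevant fiber product of structure groups), so the two points already coincide modulo $\ab_k$, i.e.\ $\pi_{k-1}$ identifies them. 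Finally, to upgrade $\Theta$ to an isomorphism I would check cube-surjectivity of $\Theta$: a cube $\pi_{k-1}\co\q_1\times\pi_{k-1}\co\q_2$ in $\cu^n(\nss_{k-1}\times_{\ns_{k-1}}\nss'_{k-1})$ lifts, using that $\varphi,\psi$ are fibrations and that the relevant structure morphisms are surjective, to a cube in $\cu^n(\nss\times_\ns\nss')$ whose image under $\pi_{k-1}$-then-$\Theta$ is the given one; cube-surjectivity of a bijective morphism then gives that $\Theta^{-1}$ is a morphism, exactly as in the proof of Proposition \ref{prop:proj-of-ext}.

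The main obstacle I anticipate is bookkeeping rather than conceptual: one must be careful that ``the $t$-th factor of the fiber product'' is computed with respect to the correct pair of fibrations, and that the induction hypothesis is applied to genuinely $(k-1)$-step nilspaces with the induced fibrations $\varphi_{k-1},\psi_{k-1}$. A secondary technical point is that the cube sets must be matched on the nose: a cube of $(\nss\times_\ns\nss')_{k-1}$ is by definition $\pi_{k-1}\co(\q_1\times\q_2)$ with $\varphi\co\q_1=\psi\co\q_2$, and a cube of $\nss_{k-1}\times_{\ns_{k-1}}\nss'_{k-1}$ is a pair $(\pi_{k-1}\co\q_1,\pi_{k-1}\co\q_2)$ with $\varphi_{k-1}\co\pi_{k-1}\co\q_1=\psi_{k-1}\co\pi_{k-1}\co\q_2$; bridging the gap between the constraint ``$\varphi\co\q_1=\psi\co\q_2$'' and the weaker ``$\varphi_{k-1}\co\pi_{k-1}\co\q_1=\psi_{k-1}\co\pi_{k-1}\co\q_2$'' is precisely where one uses the surjectivity of the structure morphism of $\varphi$ (resp.\ $\psi$) on $n$-cubes of the top structure group, i.e.\ the fibration hypothesis. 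None of this requires new ideas beyond those already used in Propositions \ref{prop:sub-prod-ext}, \ref{prop:ext-fib}, and \ref{prop:proj-of-ext}; the proof is a routine-but-careful induction, and I would present it as such, leaving the most mechanical verifications (well-definedness, the ``other inclusion'' for cube sets) to the reader in the same style as the surrounding appendix.
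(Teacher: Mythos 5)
Your proposal is correct and follows essentially the same route as the paper: reduce by induction to the single step from $k$ to $k-1$, then show the coordinate-wise map induced by $\pi_{k-1}$ is a bijective, cube-surjective morphism, using surjectivity of the structure morphisms of $\varphi$ and $\psi$ to adjust lifts so the fiber-product constraint holds exactly. The only (immaterial) difference is that the paper first proves the map $\nss\times_\ns\nss'\to\nss_{k-1}\times_{\ns_{k-1}}\nss'_{k-1}$ is a fibration and then checks injectivity of the induced map on $(k-1)$-factors, whereas you verify surjectivity, injectivity and cube-surjectivity of the induced map directly.
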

\begin{proof}
We prove this by induction on $k-i$ for $i\in [k]$. Note that it suffices to prove this result for the case $i=1$, as then the general result will  follow from applying repeatedly this case. Hence, let us prove that $(\nss\times_{\ns}\nss')_{k-1}\simeq \nss_{k-1}\times_{\ns_{k-1}}\nss'_{k-1}$.

Let us define the map $T:\nss\times_{\ns}\nss'\to \nss_{k-1}\times_{\ns_{k-1}}\nss'_{k-1}$ as $(y,y')\mapsto (\pi(y),\pi(y'))$ where $\pi$ denotes throughout this proof the projection to the $k-1$ factor of any nilspace. This map is easily seen to be a well-defined morphism. Let us see that it is indeed a fibration. Take $(d,d')\in \cor^n(\nss\times_{\ns}\nss')$ and $(\q,\q')\in \cu^n(\nss_{k-1}\times_{\ns_{k-1}}\nss'_{k-1})$ such that $T\co (d,d')=(\q,\q')$ for any $v\not=1^n$. As this means that $\pi\co d=\q$ and $\pi\co d'=\q'$ for $v\not=1^n$, abusing the notation, let $d$ and $d'$ be completions of $\q$ and $\q'$ respectively. By definition $\varphi\co d = \psi\co d'$ for all $v\not=1^n$. Also, as $\varphi_{k-1}\co\q = \psi_{k-1}\co \q'$ for every $v\in\db{n}$ this means that for every $v\in \db{n}$ we have $\pi\co\varphi\co d = \varphi_{k-1}\co\q = \psi_{k-1}\co \q' = \pi\co\psi\co d'$. Thus, there exists a cube $h\in \cu^n(\mc{D}_k(\ab_k(\ns)))$ such that $\varphi\co d = \psi\co d'+h$. But on the other hand we know that $\varphi\co d = \psi\co d'$ for all $v\not=1^n$. Thus the cube $h$ must have zero value for every point except maybe for $v=1^n$. If $n\ge k+1$, as $\ns$ is $k$-step this means that $h(1^n)=0$ and we are done, as $(d,d')\in \cu^n(\nss\times_{\ns}\nss')$ is a cube that lifts $(\q,\q')$. If $n\le k$, let $z'\in \ab_k(\nss')$ be any element such that $\psi_k(z')=z$ where $\psi_k$ is the $k$-th structure morphism of the fibration $\psi$ (and hence, it is surjective). Define $h'\in \cu^n(\mc{D}_k(\ab_k(\nss')))$ as $h'(v)=0$ for $v\not=1^n$ and $h'(1^n)=z'$. It is then easy to see that $(d,d'+h')\in \cu^n(\nss\times_{\ns}\nss')$ is a cube that lifts $(\q,\q')$.

Now, in order to prove that $(\nss\times_{\ns}\nss')_{k-1}\simeq \nss_{k-1}\times_{\ns_{k-1}}\nss'_{k-1}$ note that it is enough to see that $T_{k-1}$ is injective. The reason is the following. We already know that $T_{k-1}:(\nss\times_{\ns}\nss')_{k-1}\to \nss_{k-1}\times_{\ns_{k-1}}\nss'_{k-1}$ is a fibration. In particular, it is a surjective map. If in addition it is injective, then it is invertible. Hence, for any cube $\q\in \cu^n(\nss_{k-1}\times_{\ns_{k-1}}\nss'_{k-1})$ let $\q'\in \cu^n((\nss\times_{\ns}\nss')_{k-1})$ be such that $T_{k-1}\co \q'=\q$. As $T_{k-1}$ is invertible we have that $T_{k-1}^{-1}\co\q = \q'\in \cu^n((\nss\times_{\ns}\nss')_{k-1})$ and thus $T_{k-1}^{-1}$ would be a morphism and the proof would be completed.

Thus, let us see the injectivity of $T_{k-1}$. Let $\pi(y_1,y_1')=\pi(y_2,y_2')$ be any pair of elements in $(\nss\times_{\ns}\nss')$ such that $T_{k-1}(\pi(y_1,y_1'))=T_{k-1}(\pi(y_2,y_2'))$. But now $T_{k-1}(\pi(y_1,y_1')) = \pi(T(y_1,y_1')) = \pi(\pi(y_1),\pi(y_1'))=(\pi(y_1),\pi(y_1')$ where the last equality follows from the fact that $\nss_{k-1}\times_{\ns_{k-1}}\nss'_{k-1}$ is already $k-1$-step. By a similar argument with $(y_2,y_2')$ we conclude that $(\pi(y_1),\pi(y_1'))=(\pi(y_2),\pi(y_2'))$. But this by definition implies that there exists a cube $\q\in \cu^k(\nss)$ such that $\q(v)=y_1$ for all $v\not=1^n$ and $\q(1^k)=y_2$ and a cube $\q'\in \cu^k(\nss')$ such that $\q'(v)=y_1'$ for all $v\not=1^n$ and $\q(1^k)=y_2'$. Hence the cube $(\q,\q')$ is in $\cu^k(\nss\times_{\ns}\nss')$ which by definition means that $\pi(y_1,y_1')=\pi(y_2,y_2')$.\end{proof}

\section{Auxiliary results on $p$-homogeneous nilspaces}\label{app:aux-p-hom}
\noindent In this appendix we record some technical results and definitions that are used several times in the paper. 

Let $\ns$ be a group nilspace associated with a filtered group $(G,G_{\bullet})$. To prove that a function $f:\mc{D}_1(\mb{Z}_p)\to \ns$ is a morphism, it suffices (see e.g. \cite[Theorem 2.2.14]{Cand:Notes1}) to take derivatives of $f$ and check that the resulting functions take values in the correct subgroup in the filtration. That is, it suffices to ensure that $\partial_{a_1}\cdots\partial_{a_{\ell}}f \in G_{i_1+\cdots+i_{\ell}}$ where $a_j\in G_{i_j}$ for all $j=1,\ldots,{\ell}$. It is easy to see that it is enough to check this with $a_j=1$ for all $j$, so we can focus on computing $\partial_1^t f$ and checking that it takes values in $G_t$. Furthermore, it will actually suffice to consider the case $G=\mb{Z}$ with some filtration. The reason is that, for more general groups $G$, we will actually want to prove that some functions of the form $f(x)=g^{m(x)}$ for $g\in G_j$ are morphisms, and taking derivatives of such functions is equivalent to taking derivatives of $m(x)$ over $\mb{Z}$ (relative to some filtration on $\mb{Z}$).

To calculate such derivatives, let us think of $f$ as a vector $v\in\mb{Z}^p$, namely $v=(f(0),f(1),\ldots,f(p-1))$. It is then easy to check that the values of $\partial_1^t f$ will be given by the entries of the matrix $A_p^t v$ where
\begin{equation}\label{eq:Ap}
A_p:=\begin{pmatrix}
-1 & 1 & 0 & \cdots & 0 \\
0& -1 & 1  & \cdots & 0 \\
0 & 0 & -1  & \cdots & 0 \\
\vdots &   &   & \ddots & \\
1 & 0 & \cdots & 0 & -1.
\end{pmatrix}.
\end{equation}
The case $p=2$ will always be treated separately, but typically it will be easier. For any $n\in \mb{Z}$, let us denote by $(n)_p$ the residue of $n$ modulo $p$ that lies in $[p]=\{1,2,\ldots,p\}$. The following concept will also be important in the arguments below.
\begin{defn}\label{def:circular-vector} Let $p$ be a prime. If $p$ is odd, we call a vector $v\in \mb{Z}^p$ \emph{circular} if there exists $i\in [p]$ such that $v_i=0$ and for all $j \in [\frac{p-1}{2}]$, $v_{(i+j)_p}=-v_{(i-j)_p}$. If $p=2$, we say that a vector is \emph{circular} if $v_1=-v_2$.
\end{defn}
\noindent With this definition, let us prove the following result.
\begin{proposition}\label{prop:circular-mult-p} Let $p$ be a prime, let $v\in \mb{Z}^p$ be a circular vector, and let $A_p\in M_{p\times p}(\mb{Z})$ be as defined in \eqref{eq:Ap}. Then $A_p^{p-1}v$ is a circular vector such that all its coordinates are multiples of $p$.
\end{proposition}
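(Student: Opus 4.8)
The plan is to regard $A_p$ as the operator $C-I$ on $\mb{Z}^p$, where $C$ is the cyclic shift $(Cv)_k=v_{(k+1)_p}$ (this matches the matrix \eqref{eq:Ap}, with indices read cyclically in $[p]$), and to split the statement into two independent parts: that every coordinate of $A_p^{p-1}v$ is divisible by $p$, and that $A_p^{p-1}v$ is again circular. The case $p=2$ is immediate, since then $A_p^{p-1}=A_2$ and $A_2$ sends a circular vector $(t,-t)$ to $(-2t,2t)$, which is again circular with all entries even; so the content lies in the case $p$ odd, which I assume from now on.

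For the divisibility, since $C$ and $I$ commute we have $A_p^{p-1}=\sum_{k=0}^{p-1}\binom{p-1}{k}(-1)^{p-1-k}C^k$, hence $(A_p^{p-1}v)_i=\sum_{k=0}^{p-1}\binom{p-1}{k}(-1)^{p-1-k}v_{(i+k)_p}$. Using the classical congruence $\binom{p-1}{k}\equiv(-1)^k\pmod p$ one gets $\binom{p-1}{k}(-1)^{p-1-k}\equiv(-1)^{p-1}\equiv 1\pmod p$ for every $k$, so $(A_p^{p-1}v)_i\equiv\sum_{k=0}^{p-1}v_{(i+k)_p}=\sum_{j\in[p]}v_j\pmod p$, the total sum of the coordinates of $v$, independent of $i$. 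The circularity of $v$ makes this sum vanish exactly: pairing $v_{(i_0+j)_p}$ with $v_{(i_0-j)_p}$ for $j=1,\dots,\tfrac{p-1}{2}$ and using $v_{i_0}=0$ gives $\sum_{j\in[p]}v_j=0$. Therefore every coordinate of $A_p^{p-1}v$ is a multiple of $p$.

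For the circularity, I would encode the defining symmetry of $v$ through the reflection operator $R$ given by $(Rv)_k:=v_{(2i_0-k)_p}$, where $i_0$ is the center of $v$ (for $p$ odd, the unique $R$-fixed index); then ``$v$ is circular'' becomes exactly the relation $Rv=-v$. The only facts needed are $R^2=I$ and $RCR=C^{-1}$, both one-line index computations. From them, $R\,A_p^{p-1}\,R=(RA_pR)^{p-1}=(C^{-1}-I)^{p-1}=(-1)^{p-1}C^{-(p-1)}A_p^{p-1}=C\,A_p^{p-1}$, using that $p-1$ is even and $C^p=I$. Consequently, writing $w:=A_p^{p-1}v$, we get $Rw=(R\,A_p^{p-1}\,R)(Rv)=-C\,A_p^{p-1}v=-Cw$. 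Reading $Rw=-Cw$ coordinatewise gives $w_{(2i_0-k)_p}=-w_{(k+1)_p}$ for all $k$, and a short change of variables shows this is precisely the statement that $w$ is circular about the index $a$ with $2a\equiv 2i_0+1\pmod p$ (in particular $w_a=-w_a$, so $w_a=0$), which completes the proof.

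The routine ingredients — the binomial congruence, the identities $R^2=I$ and $RCR=C^{-1}$, and the rearrangement of $Rw=-Cw$ into the definition of ``circular'' — are all elementary; the main obstacle, such as it is, is the conceptual step of recording circularity as the single operator relation $Rv=-v$ and noticing that conjugating $A_p^{p-1}$ by $R$ collapses (because $p-1$ is even and $C^p=I$) to $C\,A_p^{p-1}$, which is exactly what keeps $w$ circular, together with the observation that all coordinates of $A_p^{p-1}v$ are congruent mod $p$ to the coordinate sum of $v$.
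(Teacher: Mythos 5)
Your proof is correct. The divisibility half is essentially the paper's argument recast in operator language: the paper computes the entries of $A_p^{p-1}$ explicitly (by induction it is the circulant with entries $(-1)^{(j-i)^*_p}\binom{p-1}{(j-i)^*_p}$) and then invokes the same congruence $\binom{p-1}{k}\equiv(-1)^k\pmod p$ to conclude that every coordinate of $A_p^{p-1}v$ is congruent to the coordinate sum of $v$, which vanishes by circularity; writing $A_p=C-I$ and expanding $(C-I)^{p-1}$ is the same computation without the matrix bookkeeping. Where you genuinely diverge is the circularity half. The paper verifies only that some coordinate of $A_p^{p-1}v$ vanishes, by pairing terms via $\binom{n}{r}=\binom{n}{n-r}$ around the index shifted by $\frac{p-1}{2}$, and asserts that the remaining antisymmetry relations follow "essentially the same" way. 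Your reformulation of circularity as the single relation $Rv=-v$, together with the conjugation identity $RA_pR=C^{-1}-I$ and the collapse $(C^{-1}-I)^{p-1}=C\,A_p^{p-1}$ (using that $p-1$ is even and $C^p=I$), yields $Rw=-Cw$ in one stroke and hence the full circularity of $w$ about the index $a$ with $2a\equiv 2i_0+1\pmod p$ — including $w_a=0$ as the special case $m=n=a$. This is cleaner and more complete than the paper's sketch, at the modest cost of introducing the reflection operator; it also locates the new center correctly (your $a\equiv i_0+\frac{p+1}{2}$ agrees with small examples, e.g.\ $p=3$, $v=(0,t,-t)$, $A_3^2v=(-3t,3t,0)$), whereas the paper's description of the new center contains a minor index slip.
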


\begin{proof}
The case $p=2$ follows from a simple calculation, so let us assume that $p$ is odd. It can be proved by induction that $A_p^{p-1}$ has the following form:
\[
A_p^{p-1}:=\begin{pmatrix}
\binom{p-1}{0} & -\binom{p-1}{1} & \binom{p-1}{2} & \cdots & \binom{p-1}{p-1} \\
\binom{p-1}{p-1}& \binom{p-1}{0} & -\binom{p-1}{1}  & \cdots & -\binom{p-1}{p-2} \\
-\binom{p-1}{p-2} & \binom{p-1}{p-1} & \binom{p-1}{0}  & \cdots & \binom{p-1}{p-3} \\
\vdots &   &   & \ddots & \\
-\binom{p-1}{1} & \binom{p-1}{2} & \cdots & \cdots & \binom{p-1}{0}
\end{pmatrix}.
\]
Denoting by $(t)^*_p$ the residue of $t$ modulo $p$ that lies in $\{0,\ldots,p-1\}$, we have $(A_p^{p-1})_{i,j} =(-1)^{(j-i)^*_p} \binom{p-1}{(j-i)^*_p}$. To prove that all entries of $A_p^{p-1}v$ are multiples of $p$, just note that, viewing every entry of $A_p^{p-1}$ modulo $p$, we get that if $r:=(j-i)^*_p$ then $(A_p^{p-1})_{i,j} =(-1)^{r} \binom{p-1}{r}= (-1)^{r}\frac{(p-1)(p-2)\cdots (p-r)}{r(r-1)\cdots 1}= (-1)^{r}\frac{(-1)(-2)\cdots (-r)}{r(r-1)\cdots 1}=1 \mod p$. Hence, when we multiply $A_p^{p-1}$ by a circular vector and we view  it modulo $p$, the sum is 0 (essentially because the matrix $(A_p^{p-1})_{i,j}=1 \mod p$ for all $i,j$).

To complete the proof, we need to show that $A_p^{p-1}v$ is circular. Let us prove only that there is some coordinate such that its value is 0 (the proof of the circularity is essentially the same). As $v$ is circular, suppose that $v_i=0$ and let $j\in [p]$ be the row such that the term $(-1)^{\frac{p-1}{2}}\binom{p-1}{\frac{p-1}{2}}$ is in the position $i$, i.e., $(j-i)^*_p = \frac{p-1}{2}$. We want to prove that $(A_p^{p-1}v)_j=0$ (indeed, this will be the \textit{centre} of the circular vector $A_p^{p-1}v$). The idea is simple: we just write
\[
(A_p^{p-1}v)_j = (-1)^{(j-i)^*_p} \binom{p-1}{(j-i)^*_p}v_i+\sum_{m=1}^{\frac{p-1}{2}} (-1)^{(j-i-m)^*_p} \binom{p-1}{(j-i-m)^*_p}v_{(i-m)^*_p}
\]\[
+\sum_{m=1}^{\frac{p-1}{2}} (-1)^{(j-i+m)^*_p} \binom{p-1}{(j-i+m)^*_p}v_{(i+m)^*_p}
\]
and note that the first term cancels because $v_i=0$ and the other terms cancel pairwise, by the identity $\binom{n}{r}=\binom{n}{n-r}$, the fact that $v$ is circular, and the fact that $(j-i)^*_p = \frac{p-1}{2}$.
\end{proof}

\begin{corollary}\label{cor:der-circ} Let $f:\mc{D}_1(\mb{Z}_p)\to \mb{Z}$ be a function. If $f$ is circular \textup{(}viewed as a vector\textup{)}, then $\partial_1^{p-1} f\in p\mb{Z}$.
\end{corollary}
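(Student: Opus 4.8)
The plan is to reduce the statement directly to Proposition \ref{prop:circular-mult-p} via the matrix description of discrete derivatives recorded in the paragraph preceding Definition \ref{def:circular-vector}. First I would identify the function $f$ with the vector $v=(f(0),f(1),\ldots,f(p-1))\in\mb{Z}^p$. Since $f$ is a function on $\mc{D}_1(\mb{Z}_p)$, it extends to a $p$-periodic function on $\mb{Z}$, and hence so does $\partial_1 f$; moreover, reading off the values $\big(\partial_1 f(0),\ldots,\partial_1 f(p-1)\big)$ and using the periodicity $f(p)=f(0)$ to handle the wrap-around, one gets precisely $\partial_1 f\leftrightarrow A_p v$ with $A_p$ as in \eqref{eq:Ap}. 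Iterating, $\partial_1^{t} f$ corresponds to $A_p^{t} v$ for every $t\ge 0$.

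In particular, $\partial_1^{p-1} f$ corresponds to the vector $A_p^{p-1}v$. By hypothesis $v$ is circular, so Proposition \ref{prop:circular-mult-p} applies and tells us that every coordinate of $A_p^{p-1}v$ is a multiple of $p$. Translating back, this says exactly that $\partial_1^{p-1} f(x)\in p\mb{Z}$ for every $x$, i.e.\ $\partial_1^{p-1} f\in p\mb{Z}$, as claimed.

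I do not expect a genuine obstacle here: all the combinatorial content (the explicit form of $A_p^{p-1}$ modulo $p$, and the preservation of circularity) is already contained in Proposition \ref{prop:circular-mult-p}. The only point requiring a line of care is the bookkeeping in the first step, namely that the linear map sending the value-vector of $f$ to the value-vector of $\partial_1 f$ is really $A_p$ (and not, say, a cyclic shift of it), which is where the $p$-periodicity of $f$ on $\mb{Z}_p$ enters; this is immediate from the definition $\partial_1 f(x)=f(x+1)-f(x)$ together with $f(x+p)=f(x)$.
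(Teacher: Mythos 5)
Your proposal is correct and matches the paper's intended argument: the corollary is stated as an immediate consequence of Proposition \ref{prop:circular-mult-p} via the identification of $\partial_1^t f$ with $A_p^t v$ explained just before Definition \ref{def:circular-vector}, which is exactly the reduction you carry out. The extra care you take in checking that the derivative map really is $A_p$ (using $p$-periodicity for the wrap-around in the last row) is the only point the paper leaves implicit, and you handle it correctly.
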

\noindent Next, let us recall from \eqref{eq:gnsdef} the definition of the group nilspaces $H^{(p)}_i$ (for $i\ge 1$), consisting of $\mb{Z}$ equipped with the filtration
\[   
\left(H^{(p)}_i\right)_j = 
     \begin{cases}
       \mb{Z} &\quad\text{if } j=0,1,\ldots,i\\
       p^{\lfloor\frac{j-i-1}{p-1}\rfloor+1}\mb{Z} &\quad\text{if } j\ge i+1. \\ 
     \end{cases}
\]
\noindent By definition we take $H_0^{(p)}:=H_1^{(p)}$. 

\begin{lemma}\label{lem:prod-m}
Let $n\in \mb{N}$, let $t=(t_1,\ldots,t_n)\in \mb{Z}_{\ge 0}^n$, and for $j\in [n]$ let $m_{t_j}^{(p)}\in \hom(\mc{D}_1(\mb{Z}),H_{t_j}^{(p)})$. Let $g':\mc{D}_1(\mb{Z}^n)\to \mb{Z}$ be defined by $g'(x)=m_{t_1}^{(p)}(x_1)\, m_{t_2}^{(p)}(x_2)\cdots m_{t_n}^{(p)}(x_n)$. Then $g'\in \hom\big(\mc{D}_1(\mb{Z}^n),H^{(p)}_{|t|}\big)$.
\end{lemma}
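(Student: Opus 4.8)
\textbf{Proof proposal for Lemma \ref{lem:prod-m}.}

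The plan is to reduce the multivariate statement to the one-variable statements about the $m_i^{(p)}$ that were already established in the proof of Theorem \ref{thm:strong-p-hom-group}, using the characterization of morphisms into group nilspaces via iterated derivatives. Concretely, by \cite[Theorem 2.2.14]{Cand:Notes1} (as recalled at the start of Appendix \ref{app:aux-p-hom}), to show $g'\in\hom(\mc{D}_1(\mb{Z}^n),H^{(p)}_{|t|})$ it suffices to check that for every sequence of directions $a_1,\ldots,a_\ell\in\mb{Z}^n$ with prescribed degrees $i_1,\ldots,i_\ell$ (where $a_r$ has degree $i_r$ meaning $a_r\in(H^{(p)}_{|t|})_{i_r}$, though here the relevant thing is just that we take $\ell$ discrete derivatives) the iterated derivative $\partial_{a_1}\cdots\partial_{a_\ell}g'$ takes values in $(H^{(p)}_{|t|})_{i_1+\cdots+i_\ell}$. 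Since $g'$ is a product of functions each depending on a single coordinate, and discrete differentiation in direction $e_j$ only affects the $j$-th factor (with the Leibniz-type rule for discrete derivatives of products), the key point is to track, for each coordinate $j$, how many derivatives land on the $j$-th factor and what subgroup of $\mb{Z}$ the resulting derivative of $m_{t_j}^{(p)}$ lies in.

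The main calculation I would carry out is the following. Fix $j\in[n]$ and suppose that, among our $\ell$ derivatives, exactly $s$ of them are taken in direction $e_j$ (more precisely, have nonzero $j$-th component; one can reduce to the standard basis directions by bilinearity of the discrete derivative operator applied coordinatewise). Then the contribution of the $j$-th factor is $\partial_1^s m_{t_j}^{(p)}$. From the properties of $m_i^{(p)}$ established in the proof of Theorem \ref{thm:strong-p-hom-group}: we have $\partial_1 m_i^{(p)}=m_{i-1}^{(p)}$ for $i\geq 1$, so for $s\leq t_j$ the derivative $\partial_1^s m_{t_j}^{(p)}=m_{t_j-s}^{(p)}$ is $\mb{Z}$-valued; and for $s>t_j$, writing $s=t_j+u$ with $u\geq 1$, we have $\partial_1^{s}m_{t_j}^{(p)}=\partial_1^{u}(\partial_1^{t_j}m_{t_j}^{(p)})=\partial_1^{u}m_0^{(p)}$, and since $m_0^{(p)}$ is circular (indeed by construction $\partial_1^{i+1}m_i^{(p)}$ is a circular vector with all entries divisible by $p$, via Corollary \ref{cor:der-circ}), iterating Corollary \ref{cor:der-circ} $\lfloor\frac{u-1}{p-1}\rfloor$ more times shows $\partial_1^{s}m_{t_j}^{(p)}$ is $p^{\lfloor\frac{u-1}{p-1}\rfloor+1}\mb{Z}=p^{\lfloor\frac{s-t_j-1}{p-1}\rfloor+1}\mb{Z}$-valued, i.e.\ exactly $(H^{(p)}_{t_j})_s$-valued. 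In either case $\partial_1^s m_{t_j}^{(p)}$ is valued in $(H^{(p)}_{t_j})_s$.

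It then remains to combine these factor-wise bounds. The total number of derivatives is $\ell=s_1+\cdots+s_n$ where $s_j$ is the number of derivatives hitting coordinate $j$. The product $\prod_j \partial_1^{s_j}m_{t_j}^{(p)}$ is thus valued in the product (in $\mb{Z}$) of the subgroups $(H^{(p)}_{t_j})_{s_j}$, and I would check the elementary inclusion of subgroups of $\mb{Z}$ that $\prod_j (H^{(p)}_{t_j})_{s_j}\subseteq (H^{(p)}_{|t|})_{\ell}$, i.e.\ that $\sum_j \bigl(\lfloor\tfrac{s_j-t_j-1}{p-1}\rfloor+1\bigr)^+ \geq \lfloor\tfrac{\ell - |t| - 1}{p-1}\rfloor + 1$ whenever $\ell>|t|$ — this is a straightforward superadditivity property of the function $s\mapsto\lfloor\frac{s-1}{p-1}\rfloor+1$ on the relevant range. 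When $\ell\leq|t|$ there is nothing to prove since $(H^{(p)}_{|t|})_\ell=\mb{Z}$. I expect the main (though still routine) obstacle to be bookkeeping: carefully handling the Leibniz rule for discrete derivatives of a product of several functions and the case distinctions for which coordinates are "saturated" ($s_j\geq t_j$) versus not, and confirming that cross-terms arising from the product rule are covered by the same subgroup inclusion. No genuinely new idea beyond the one-variable facts and the arithmetic of floor functions should be needed.
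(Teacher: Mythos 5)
Your proposal is correct and follows essentially the same route as the paper: reduce to iterated derivatives along the coordinate directions $e_j$, observe that the derivative of $g'$ factorizes as $\prod_j\partial_{e_j}^{a_j}m_{t_j}^{(p)}$ (there are in fact no Leibniz cross-terms to worry about, since each factor is constant in the directions of the other coordinates), use that $\partial_1^{s}m_{t_j}^{(p)}$ lands in $(H^{(p)}_{t_j})_{s}$, and finish with the floor-function inequality $\sum_j\max\bigl(0,\lfloor\tfrac{a_j-t_j-1}{p-1}\rfloor+1\bigr)\ge\max\bigl(0,\lfloor\tfrac{|a|-|t|-1}{p-1}\rfloor+1\bigr)$. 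The only difference is that the paper verifies this last inequality by an explicit minimization ("moving derivatives between coordinates"), whereas you assert it as superadditivity — which is indeed routine, since the left side is an integer bounded below by $\sum_{j:a_j>t_j}(a_j-t_j)/(p-1)\ge(|a|-|t|)/(p-1)$, hence by its ceiling.
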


\begin{proof}
Let $(e_j)_{j\in [n]}$ be the standard basis of $\mb{Z}^n$. We just have to check that if we take derivatives of $g'$, we land in the correct subgroup in the filtration. We can focus on derivatives involving the generators, i.e.\  we just need to check that for every $a=(a_1,\ldots,a_n)\in \mb{Z}_{\ge 0}^n$ we have that $ 
\partial_{e_1}^{a_1}\cdots \partial_{e_n}^{a_n} g'$ takes values in $(H^{(p)}_{|t|})_{|a|}$.

This derivative equals $\partial_{e_1}^{a_1}m_{t_1}^{(p)}\cdots \partial_{e_n}^{a_n}m_{t_n}^{(p)}$. By what we know about the morphisms $m_{t_j}^{(p)}$, this derivative takes values in $p^r\mb{Z}$ where $r=\sum_{j=1}^n \max\left(0,\left\lfloor \frac{a_j-t_j-1}{p-1}\right\rfloor+1\right)$. On the other hand, the exponent $r'$ satisfying $p^{r'}\mb{Z}=(H^{(p)}_{|t|})_{|a|}$ is $r'=\max\left(\left\lfloor\frac{\sum_{j=1}^n (a_j-t_j)-1}{p-1}\right\rfloor+1,0\right)$. Hence, to ensure that the above derivative takes values in the appropriate subgroup, we just have to check that $r'\leq r$.

To prove this, note that if $|a|\le |t|$ then $r'=0$ so there is nothing to prove. If $|a| > |t|$ then we are going to show that
\begin{equation}\label{eq:min-der}
    \min_{a': |a'|=|a|}\left( \sum_{j=1}^n \max\left(0,\left\lfloor \frac{a_j'-t_j-1}{p-1}\right\rfloor+1\right)\right)
\end{equation}
is attained for $a_1'=|a|-\sum_{j=2}^n t_j$ and $a_j'=t_j$ for $j\ge 2$ (there are other $n$-tuples that attain the minimum, this one here is just one of them). If we prove this then we are done, because for this particular $n$-tuple $a'$ the inequality is trivial.

To do this, let us think of the coordinates $a_j$ of an $n$-tuple $a$ as containers of derivatives, so if we say move $\ell$ derivatives from $a_1$ to $a_2$ we mean that we consider the $n$-tuple $a_1-{\ell},a_2+{\ell},a_3\ldots,a_n$ (and this of course will preserve $|a|$). Now let $a=(a_1,\ldots,a_n)$ be an $n$-tuple that attains the minimum in \eqref{eq:min-der}. First note that we can always assume that $a_j\ge t_j$ for all $j\ge 1$; this is because we can move derivatives from variables with an \textit{excess} to variables with \textit{lack} of derivatives and the minimum must not change. So if we move ${\ell}$ derivatives from (say) $a_1$ to $a_2$, the summand $\max\left(0,\left\lfloor \frac{a_1-{\ell}-t_1-1}{p-1}\right\rfloor+1\right)$ must be equal to $\max\left(0,\left\lfloor \frac{a_1-t_1-1}{p-1}\right\rfloor+1\right)$ (otherwise we contradict our assumption that $a$ achieves the minimum) because as long as $a_2+{\ell}\le t_2$, there is no increment in changing $a_2$ by $a_2+{\ell}$.

Next, note that we can move blocks of $p-1$ derivatives from any $a_j$ ($j\ge 2$) to $a_1$ without modifying the minimum until we get that $t_j\le a_j< t_j+p-1$ for all $j\ge 2$. Now, for every $j\ge 2$, we can move ${\ell}_j:=a_j-t_j$ derivatives from $a_j$ to $a_1$. There are two cases for every $j$. If ${\ell}_j=0$ then nothing happens. If ${\ell}_j\ge 1$ then, since $a$ achieves the minimum, we must have $\max\left(0,\left\lfloor \frac{a_1+{\ell}_j-t_1-1}{p-1}\right\rfloor+1\right)=\max\left(0,\left\lfloor \frac{a_1-t_1-1}{p-1}\right\rfloor+1\right)+1$ (this will be with the new configuration where we have $a_1+{\ell}_j$ and $a_j-{\ell}_j$).
\end{proof}

\subsection{On $p$-homogeneous extensions of the elements of $\mc{Q}_{p,k}$}\hfill\\
Recall from Proposition \ref{prop:H-nil} that given a $k$-step nilspace $\ns$, and a subgroup $H$ of the last structure group $\ab_k(\ns)$, we can define the quotient nilspace $\ns/H$ under the relation $x\sim_H y$ if and only if $x=y+h$ for some $h\in H$. Recall also the class $\mc{Q}_{p,k}$ of $p$-homogeneous $k$-step group nilspaces from Definition \ref{def:bblocks-intro}.

\begin{proposition}\label{prop:factorization-last-str-group}
Let $\ns$ be a nilspace in $\mc{Q}_{p,k}$ and let $H$ be a subgroup of $\ab_k(\ns)$. Then $\ns/H \cong \nss\times\nss'$ where $\nss\in \mc{Q}_{p,k-1}$ and $\nss'\in \mc{Q}_{p,k}$.
\end{proposition}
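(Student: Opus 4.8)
Write $\ns=\prod_{\ell=1}^k \abph_{k,\ell}^{\,a_\ell}$, so that the last structure group is $\ab_k(\ns)\cong \prod_{\ell:\,r(k,\ell,p)=1}\mb{Z}_p^{a_\ell}$ together with the extra $\mb{Z}_p$-summands coming from the top level of each factor $\abph_{k,\ell}$ with $r(k,\ell,p)\ge 2$ (concretely, $\ab_k(\abph_{k,\ell})\cong p^{r-1}\mb{Z}_{p^r}\cong\mb{Z}_p$ where $r=r(k,\ell,p)$, and this is the image of multiplication by $p^{r-1}$). The plan is to realize $\ns/H$ as the quotient of an \emph{abelian group} nilspace by a subgroup, and then use Theorem \ref{thm:intro-1} (i.e.\ Theorem \ref{thm:strong-p-hom-group}) to identify it as another abelian group nilspace, which one then decomposes. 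More precisely, let $G=\prod_{\ell=1}^k (\mb{Z}_{p^{r(k,\ell,p)}})^{a_\ell}$ be the underlying abelian group of $\ns$, with its $p$-homogeneous degree-$k$ filtration $G_\bullet$. One checks easily that $H\le \ab_k(\ns)=G_k$ is a subgroup of the abelian group $G$, and that the quotient nilspace $\ns/H$ from Proposition \ref{prop:H-nil} coincides with the group nilspace associated with $(G/H, (G_i H/H)_{i\ge 0})$: this is just the observation that for a group nilspace, the equivalence relation $x\sim_H y\iff x-y\in H\subseteq G_k$ is exactly the coset relation for $H$, and the cube sets $\{\pi_{\sim}\co\q:\q\in\cu^n(G_\bullet)\}$ are precisely the Host--Kra cubes of the quotient filtration (this is the standard fact that Host--Kra cubes push forward under filtered-group quotients; cf.\ \cite[Proposition 2.3.1]{Cand:Notes1} for coset nilspaces). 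Hence $\ns/H$ is an abelian group nilspace.

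Next I would show that the quotient filtration $(G_iH/H)$ is $p$-homogeneous: for $g\in G_i$ we have $g^p\in G_{i+p-1}$ since $G_\bullet$ is $p$-homogeneous (Theorem \ref{thm:strong-p-hom-group} applied to $\ns\in\mc{Q}_{p,k}$), so $(gH)^p\in G_{i+p-1}H/H$. Therefore, by Theorem \ref{thm:strong-p-hom-group} again, $\ns/H$ is a $p$-homogeneous group nilspace on the finite abelian $p$-group $G/H$. Now decompose $G/H$ into cyclic factors: $G/H\cong\prod_m \mb{Z}_{p^{s_m}}$. The filtration restricted to each cyclic factor is a $p$-homogeneous filtration on a finite cyclic $p$-group, so by Proposition \ref{prop:cyclic-p-hom} each such cyclic factor, with its induced filtration, is (a sub-nilspace of, hence isomorphic to) one of the building blocks — and since the whole thing is a product of abelian group nilspaces of step $\le k$, splitting off the factors is straightforward. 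The key point to extract is: a factor has the top structure group nontrivial (i.e.\ is ``genuinely $k$-step'') precisely when its filtration degree is exactly $k$, in which case by Proposition \ref{prop:cyclic-p-hom} it is of the form $\abph_{k,\ell}$ for the appropriate $\ell$ (namely $\ell=\min\Delta$ in the notation there); the factors whose filtration has degree $\le k-1$ collect into a nilspace $\nss$ of step $\le k-1$, and one checks $\nss\in\mc{Q}_{p,k-1}$ (after padding with trivial factors so that all building blocks are of the uniform step $k-1$, as allowed by Definition \ref{def:bblocks-intro} with $a_\ell\ge 0$), while the degree-exactly-$k$ factors collect into $\nss'\in\mc{Q}_{p,k}$. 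The isomorphism $\ns/H\cong\nss\times\nss'$ is then just the chosen cyclic decomposition of $G/H$ respecting the filtration.

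The main obstacle I anticipate is the bookkeeping in the last step: showing that the induced filtration on each cyclic summand of $G/H$ is \emph{exactly} one of the standard filtrations $(H_i^{(p)})$ modulo $p^{s_m}$ appearing in $\abph_{k,\ell}$ or $\abph_{k-1,\ell}$, rather than some a priori messier $p$-homogeneous filtration on $\mb{Z}_{p^{s_m}}$. Proposition \ref{prop:cyclic-p-hom} tells us that any $p$-homogeneous filtration of degree exactly $k$ on a cyclic $p$-group is determined by a $(p-1)$-separated set $\Delta\subset[k]$ with $k\in\Delta$; to land exactly in the family $\{\abph_{k,\ell}\}$ we need $\Delta=\{\ell,\ell+(p-1),\ell+2(p-1),\dots,k\}$, i.e.\ $\Delta$ must be an arithmetic progression with common difference $p-1$ ending at $k$. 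This is \emph{not} automatic for an arbitrary $p$-homogeneous cyclic nilspace, so the real content is to verify that the filtration inherited by each cyclic summand from $(G_iH/H)$ does have this special arithmetic-progression form. This should follow from the fact that $G_\bullet$ on $\ns$ already has this form on each of its own cyclic factors (by construction of $\abph_{k,\ell}$) together with a direct computation of how the filtration degree jumps interact with the quotient map $G\to G/H$ and with re-choosing generators of the $p$-group $G/H$; one can argue that passing to $G/H$ can only \emph{delete} some of the jump-locations (when the corresponding $\mb{Z}_p$ gets killed) but cannot create new ones or shift them, which preserves the arithmetic-progression structure. Alternatively, and perhaps more cleanly, one invokes the splitting results already proved: since $\ns/H$ is a $p$-homogeneous \textsc{cfr} nilspace, its $(k-1)$-factor $(\ns/H)_{k-1}$ is $p$-homogeneous of step $\le k-1$, and one runs induction on $k$ to put $(\ns/H)_{k-1}$ in $\mc{Q}_{p,k-1}$ form, then uses Proposition \ref{prop:ext-split} to split the degree-$k$ extension $\ns/H\to(\ns/H)_{k-1}$, reducing the identification of the top part to the rank-one case handled by Proposition \ref{prop:cyclic-p-hom}. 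Either route closes the argument.
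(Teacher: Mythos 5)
Your reduction of $\ns/H$ to the group nilspace of the quotient filtered group $(G/H,(G_iH/H)_{i\ge 0})$, and the observation that this quotient filtration is again $p$-homogeneous, are both correct. The problem is that everything after that --- the step you yourself flag as ``bookkeeping'' --- is the entire content of the proposition, and neither of your two proposed ways of closing it works. For your main route: a filtration on a finite abelian $p$-group does \emph{not} restrict compatibly to an arbitrary cyclic decomposition; one must \emph{find} a decomposition $G/H=\bigoplus_m C_m$ with $G_iH/H=\bigoplus_m\big((G_iH/H)\cap C_m\big)$ for every $i$ simultaneously, and such adapted decompositions need not exist for an arbitrary subgroup of a finite abelian $p$-group (e.g.\ $B=\langle(p,1)\rangle\le\mb{Z}_{p^3}\oplus\mb{Z}_p$ is indecomposable of order $p^2$, is not a direct summand, and lies in no cyclic summand of order $p^3$ since $(p,1)\notin pA$; so no cyclic decomposition is adapted to it). Your heuristic that the quotient ``can only delete jump-locations'' describes the answer but is not an argument: as the paper's own example $\abph^{(3)}_{4,2}\times\abph^{(3)}_{4,4}$ with $H=\langle(1,1)\rangle$ shows, the cyclic generators of $G/H$ adapted to the quotient filtration are new mixtures of the old ones, not images of the old generators. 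The paper's proof consists precisely of the construction you are skipping: an explicit automorphism $\varphi$ of $\ns$, given by an upper-triangular block matrix adapted to the flag $U_0\subset\cdots\subset U_t$ in $\ab_k(\ns)$ determined by the orders of the cyclic factors, which carries a coordinate subgroup onto $H$; only then is the quotient computed factor-by-factor via $\abph_{k,\ell}/\ab_k(\abph_{k,\ell})\cong\abph_{k-1,\ell}$.

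Your second, ``cleaner'' route is not merely incomplete but wrong. Proposition \ref{prop:ext-split} requires the base $Q$ of the degree-$k$ extension to lie in $\mc{Q}_{p,k}$, whereas $(\ns/H)_{k-1}\cong\ns_{k-1}=\prod_\ell\abph_{k-1,\ell}^{\,a_\ell}$ lies in $\mc{Q}_{p,k-1}$, and $\abph_{k-1,\ell}\notin\mc{Q}_{p,k}$ whenever $(p-1)\mid(k-\ell)$ (its top filtration jump sits at $k-(p-1)$, which no $\abph_{k,\ell'}$ has as its top jump). Worse, the conclusion you would extract --- that $\ns/H$ splits over its $(k-1)$-step factor --- is false in general: take $H=\{0\}$ and $\ns=\abph_{k,\ell}$ with $(p-1)\mid(k-\ell)$ and $k-\ell\geq p-1$, so that $\ns=\mb{Z}_{p^r}$ with $r\ge 2$ is a degree-$k$ extension of $\ns_{k-1}\cong\mb{Z}_{p^{r-1}}$ by $\mb{Z}_p$ that does not split even at the level of groups. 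The statement of the proposition deliberately allows $\nss'\in\mc{Q}_{p,k}$ to be such a non-split cyclic block, which is exactly why the splitting machinery cannot substitute for the change-of-basis argument.
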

\noindent Since our proof below is technical, it may be useful first to describe a motivating example. Supposing that $\ab_k=\mb{Z}_p^m$ and that $H\leq\mb{Z}_p^m$ has a simple structure (for example, that it is generated by a subset of  $(e_i)_{i\in [m]}$, where $e_i$ is the element with $e_i(i)=1$ and $e_i(j)=0$ for $j\neq i$), then we can explicitly describe $\ns/H$, using in particular the basic fact that the quotient of the nilspace $\abph_{k,\ell}$ by the action of its last structure group is the nilspace $\abph_{k-1,\ell}$. To illustrate this in detail, let $p=3$, $m=2$, and suppose that $\ns$ is the product nilspace $\abph_{4,2}^{(3)} \times \abph_{4,4}^{(3)}$ (thus $k=4$ here). If $H=\mb{Z}_3^2$ then it is easily seen that $\ns/H \cong \mc{D}_2(\mb{Z}_3)$. Next, suppose that $H=\langle (1,0)\rangle$. In this case $\ns/H\cong \mc{D}_2(\mb{Z}_3)\times \mc{D}_4(\mb{Z}_3)$. Similarly, if $H=\langle (0,1)\rangle$ then $\ns/H=\abph^{(3)}_{4,2}$. A more interesting case is when $H=\langle (1,1)\rangle$. To discuss this case, we shall use the injective homomorphism of abelian groups $i:\mb{Z}_3\to \mb{Z}_9$, $y\!\!\mod 3\mapsto 3y\!\!\mod 9$. Denoting elements of $\ns$ as couples $(x,y)\in \abph_{4,2}^{(3)}\times \abph_{4,4}^{(3)}$, we define the bijection $\varphi:\ns\to \ns$, $(x,y)\mapsto (x+i(y),y)$, and we note that this is a nilspace isomorphism, because $i$ respects the required filtrations (that is, for every $j$, the map $i$ sends the $j$-th subgroup of $\mb{Z}_3$ in the $\abph_{4,4}^{(3)}$ filtration into the $j$-th subgroup of $\mb{Z}_9$ in the $\abph_{4,2}^{(3)}$ filtration) and therefore $i$ is a filtered group homomorphism, hence a nilspace morphism. Now we observe that, through $\varphi^{-1}$, the action of $H=\langle(1,1)\rangle$ by addition becomes the action of the subgroup $\langle(0,1)\rangle$. More precisely, letting $\phi_4$ be the last structure morphism of $\varphi$ (see \cite[Definition 3.3.1]{Cand:Notes1}), it can be seen that $\phi_4^{-1}(H)=\langle(0,1)\rangle$, because of the easily checked equality $\varphi^{-1}((x,y)+(1,1))=\varphi^{-1}(x,y) + (0,1)$. Hence, instead of computing $\ns/H$, we can compute $\varphi^{-1}(\ns)/\phi_4^{-1}(H)$ more easily, thus concluding that $\ns/H \cong \varphi^{-1}(\ns)/\phi_4^{-1}(H) \cong \abph^{(3)}_{4,2}$.

\begin{proof}[Proof of Proposition \ref{prop:factorization-last-str-group}]
Since $\ns\in \mc{Q}_{p,k}$, there exist non-negative integers $a_0,\ldots,a_t$,  $t=\lfloor k/(p-1)\rfloor$,  such that
\[
\ns= \abph_{k,k-t(p-1)}^{\,a_0} \times \abph_{k,k-(t-1)(p-1)}^{\, a_1}\times \cdots\times  \abph_{k,k}^{\,a_t} \times Q',
\]
where $Q'\in \mc{Q}_{p,k-1}$. The key point of this expression of $\ns$ is that it isolates the terms that \emph{contribute} to the $k$-th structure group of $\ns$. Thus, the elements of the $k$-th structure group of $\ns$ can be written as tuples $(f_0,f_1,\ldots,f_t,0)\in (p^t\mb{Z}_{p^{t+1}})^{a_0}\times (p^{t-1}\mb{Z}_{p^t})^{a_1}\times \cdots \times \mb{Z}_p^{a_t}\times Q'$ and the action of this group on $\ns$ is by coordinate-wise addition. Note that this $k$-th structure group is isomorphic to $\mb{Z}_p^{a_0+\cdots+a_t}$.

The second observation is that we have the following chain of inclusions:
\begin{equation}\label{eq:inclusions}
    \abph_{k,k-t(p-1)} \supset \abph_{k,k-(t-1)(p-1)} \supset  \cdots\supset  \abph_{k,k}.
\end{equation}
With this we mean that for any $r\le {\ell}$ we can define a homomorphism $\mb{Z}_{p^r}\to \mb{Z}_{p^{{\ell}}}$, namely $x\mod p^r\mapsto p^{{\ell}-r}x\mod p^{{\ell}}$. Moreover, this is a filtered homomorphism (with the filtrations defining $\abph^{(p)}_{k,k-j(p-1)}$ for $j=0,\ldots,t$) and thus it is also a nilspace morphism. With this notation, let us define $\varphi:\ns\to \ns$ as the map sending $(x_0,\ldots,x_t,q)$ to
\[
(A_{0,0} x_0+ A_{0,1}(px_1)+\cdots+A_{0,t}(p^{t}x_t),\;   A_{1,1}x_1+A_{1,2}(px_2)+\cdots+A_{1,t}(p^{t-1}x_t),\;   \ldots,\; A_{t,t}x_t,\; q), 
\]
where $x_i\in (\abph_{k,k-(t-i)(p-1)})^{a_i}$, $A_{i,j}$ a matrix in $\mb{Z}^{a_i\times a_j}$ with $\det(A_{i,i})$ coprime with $p$ for all $i=0,\ldots,t$ and $j\in [i,t]$.

Let us see why this construction makes sense. First note that we are using \eqref{eq:inclusions} to be able to sum any element $x_j$ with $x_i$ for $j>i$. This already implies that $\varphi$ is a nilspace morphism. Now  note that $A_{i,i}$ is invertible as a matrix over $\mb{Z}_{p^r}$ \emph{for all $r\ge 1$} (since $\det(A_{i,i})$ is coprime with $p$ and hence with $p^r$ for all $r\ge 1$). This fact is crucial to prove that $\varphi$ is invertible and thus a nilspace isomorphism.

To prove this last sentence, let us compute the inverse of $\varphi$. If $\varphi(x_0,\ldots,x_t,q) = (y_0,\ldots,y_t,q)$ then $A_{t,t}x_t = y_t$. Let  $A_{t,t}^{-1}\in \mb{Z}^{a_t\times a_t}$ be defined as $D_t\adj(A_{t,t})$ where $D_t\in \mb{Z}$ is any integer such that $D_t\det(A_{t,t}) =1\!\mod p$ (note that $\det(A_{t,t})\!\mod p$ is non-zero by hypothesis). It is clear that, as  linear maps on $\mb{Z}_p^{a_t}$, the matrices $A_{t,t}$, $A_{t,t}^{-1}$ are inverses of each other, and thus $x_t=A_{t,t}^{-1}y_t$. Next, let us solve the equation $A_{t-1,t-1}x_{t-1}+A_{t-1,t}px_t = y_{t-1}$. Using the previous result we have that $A_{t-1,t-1}x_{t-1}=y_{t-1}-A_{t-1,t}\, p\, A_{t,t}^{-1}\, y_t$, which equals $y_{t-1}-A_{t-1,t}A_{t,t}^{-1}\, p\, y_t$.

Now we repeat the same trick as before, but this time we define $A_{t-1,t-1}^{-1}\in \mb{Z}^{a_{t-1}\times a_{t-1}}$ as the matrix $D_{t-1}\adj(A_{t-1,t-1})$ where $D_t \det(A_{t-1,t-1})=1\! \mod p^2$ (note that, since $\det(A_{t-1,t-1})$ is coprime with $p$, it is coprime with $p^2$ whence $\det(A_{t-1,t-1})$ is invertible $\!\!\mod p^2$). Thus it is clear that $x_{t-1}=A_{t-1,t-1}^{-1}y_{t-1}-A_{t-1,t-1}^{-1}A_{t-1,t}A_{t,t}^{-1}py_t$. Repeating this process, we end up obtaining an inverse of the function $\varphi$ that has the same structure as $\varphi$ and is therefore also a nilspace morphism.

We now explain how we use such an automorphism $\varphi$. The idea is that instead of computing $\ns/H$ we can compute $\varphi^{-1}(\ns)/\phi_k^{-1}(H)$ (where $\phi_k:\ab_k(\ns)\to \ab_k(\ns)$ is the $k$-th structure morphism of $\varphi$). We have $\varphi^{-1}(\ns)=\ns$, and  our goal is then to choose $\varphi$ so that $\phi_k^{-1}(H)$ is a subspace generated by a subset of the standard basis $\{e_i\}_{i\in [a_0+\cdots+a_t]}$.

We claim that the linear map $\phi_k$ is represented by the following block matrix: 
\begin{equation}
    A=\begin{pmatrix}
    A_{0,0} & A_{0,1} & \cdots & A_{0,t} \\
    0 & A_{1,1} & \cdots & A_{1,t} \\
    \vdots &&& \\
    0 & 0 & \cdots & A_{t,t}
    \end{pmatrix}\in \mb{Z}_p^{(a_0+\cdots+a_t)\times (a_0+\cdots+a_t)},
\end{equation}
where the elements of the matrices $A_{i,j}$ are inserted modulo $p$ in $A$. The proof of this claim is just a routine computation: take $(z_0,\ldots,z_t)\in \mb{Z}_p^{a_0+\cdots+a_t}$ and note that by definition $(x_0,\ldots,x_t,q)+(z_0,\ldots,z_t) = (x_0+p^tz_0,x_1+p^{t-1}z_1,\ldots,x_t,q)$. Then apply $\varphi$ and by commutativity the claim follows. 

Hence, to complete the proof we just have to find the matrix $A$ (i.e.\ $\varphi$) adequately. We shall construct $A$ in such a way that some of its columns are the vectors generating the subspace $H$. Thus $H$ will be $\phi_k(\langle e_{i_1},\ldots,e_{i_w}\rangle)$ where $w=\dim(H)$. The process for constructing $A$ is as follows. First let us define the subspaces $U_i:=\mb{Z}_p^{a_0+\cdots+a_i}\times 0^{a_{i+1}+\cdots+a_t}$ for $i=0,\ldots, t$. Now we define the columns of $A$ iteratively as follows. First let $v^{(0)}_1,\ldots,v^{(0)}_{b_0}\in H$ be a basis of the subspace $H\cap U_0$, and complete this to a basis of the subspace $U_0$ with vectors $w^{(0)}_{b_0+1},\ldots,w^{(0)}_{a_0}\in U_0$. These vectors will constitute the first $a_0$ columns of $A$. (To be more precise, technically the matrix $A$ must have integer values and the values of the vectors $v^{(0)}_i$ and $w^{(0)}_j$ are in $\mb{Z}_p$. By abuse of notation, when we say that the vectors $v^{(0)}_i$ and $w^{(0)}_j$ constitute the first columns of $A$, we mean that we take the representative of any element of $\mb{Z}_p$ in $[0,p-1]$.)

Next, consider $H\cap U_1$ and complete the linearly independent set $\{v^{(0)}_1,\ldots,v^{(0)}_{b_0}\}\subset H\cap U_0\subset H \cap U_1$ to a basis of $H\cap U_1$. Let these vectors be $\{v^{(1)}_1,\ldots,v^{(1)}_{b_1}\}$. We claim that the set $\{v^{(0)}_1,\ldots,v^{(0)}_{b_0},w^{(0)}_{b_0+1},\ldots,w^{(0)}_{a_0},v^{(1)}_1,\ldots,v^{(1)}_{b_1}\}$ is linearly independent. To prove this, let $\lambda_1v^{(0)}_1+\cdots+\lambda_{b_0}v^{(0)}_{b_0}+\mu_1w^{(0)}_{b_0+1}+\cdots+\mu_{a_0}w^{(0)}_{a_0}+\gamma_1v^{(1)}_1+\cdots+\gamma_{b_1}v^{(1)}_{b_1}=0$ for some coefficients $\lambda_i,\mu_j,\gamma_k\in \mb{Z}_p$. This implies that $\lambda_1v^{(0)}_1+\cdots+\lambda_{b_0}v^{(0)}_{b_0}+\mu_1w^{(0)}_{b_0+1}+\cdots+\mu_{a_0}w^{(0)}_{a_0}=-\gamma_1v^{(1)}_1-\cdots-\gamma_{b_1}v^{(1)}_{b_1}$. And now the left hand side is in $U_0$ whereas the right hand side is in $H$. Thus both sides are in $H\cap U_0$. Therefore we know that for some coefficients $\rho_i\in\mb{Z}_p$ we have $\lambda_1v^{(0)}_1+\cdots+\lambda_{b_0}v^{(0)}_{b_0}+\mu_1w^{(0)}_{b_0+1}+\cdots+\mu_{a_0}w^{(0)}_{a_0}=\rho_1v^{(0)}_1+\cdots+\rho_{b_0}v^{(0)}_{b_0}$. Thus we have that $(\lambda_1-\rho_1)v^{(0)}_1+\cdots+(\lambda_{b_0}-\rho_{b_0})v^{(0)}_{b_0}+\mu_1w^{(0)}_{b_0+1}+\cdots+\mu_{a_0}w^{(0)}_{a_0}=0$. As $\{v^{(0)}_1,\ldots,v^{(0)}_{b_0},w^{(0)}_{b_0+1},\ldots,w^{(0)}_{a_0}\}$ is a basis of $U_0$ we know that $\mu_i=0$ for all $i=b_0+1,\ldots,a_0$. We conclude that, since $\{v^{(0)}_1,\ldots,v^{(0)}_{b_0},v^{(1)}_1,\ldots,v^{(1)}_{b_1}\}$ is a basis of $H\cap U_1$, we have $\lambda_j=\gamma_k=0$ for all $j\in[b_0]$ and $k\in [b_1]$. And finally we define the vectors $w^{(1)}_{b_1+1},\ldots,w^{(1)}_{a_1}\in U_1$ as any vectors that complete $\{v^{(0)}_1,\ldots,v^{(0)}_{b_0},w^{(0)}_{b_0+1},\ldots,w^{(0)}_{a_0},v^{(1)}_1,\ldots,v^{(1)}_{b_1}\}$ to a basis of $U_1$. The vectors $v^{(1)}_1,\ldots,v^{(1)}_{b_1}$ followed by the vectors $w^{(1)}_{b_1+1},\ldots,w^{(1)}_{a_1}$ will be the next $a_1$ columns of $A$ (with the previous convention of choosing a representative in $[0,p-1]$).

Continuing this process, we construct the matrix $A$ putting together the vectors $v^{(i)}_j$ and $w^{(i')}_{j'}$ in the order described above. The resulting matrix $A$ has the following structure:
\begin{equation}
  \begin{pmatrix}[c|c|c|c|c|c|c|c|c|c|c|c|c|c]
    &&&&&&&&&&&&&\\
    v_1^{(0)}&\cdots & v_{b_0}^{(0)}  & w_{b_0+1}^{(0)} & \cdots & w_{a_0}^{(0)} & v_1^{(1)}&\cdots & v_{b_1}^{(1)}  & w_{b_1+1}^{(1)} & \cdots  & w_{a_1}^{(1)} & \cdots & w_{a_t}^{(t)}
    \\
    &&&&&&&&&&&&&\\
\end{pmatrix}.
\end{equation}
By construction this matrix has the desired shape and also, as $\det(A)=\prod_{i=0}^t\det(A_{i,i})\! \mod p$ and the vectors $\{v^{(i)}_j\}_{i\in[0,t],j\in [1,b_i]}\cup \{w^{(i')}_{j'}\}_{i'\in[0,t],j'\in [b_{i'}+1,a_{i'}]}$ form a basis of $U_{t}=\mb{Z}_p^{a_0+\cdots+a_t}$, we have that $\det(A)\not=0\!\mod p$, and thus $\det(A_{i,i}) \not=0\!\mod p$ for $i\in [0,t]$.

Now note that, letting $H'$ be the subspace generated by vectors of the form $e_{a_0+\cdots+a_i+j}$ for $i=0,\ldots,t$ and $j=1,\ldots,b_i$, we have $\phi_k(H')=H$. But now this subspace has the form of the subspaces for which we understand the quotient $\ns/H'$. Thus $\ns/H\simeq \ns/H'$ and the latter equals
\[
\abph_{k-1,k-t(p-1)}^{\,b_0}\times \abph_{k,k-t(p-1)}^{\,a_0-b_0} \times \abph_{k-1,k-(t-1)(p-1)}^{\,b_1}\times \abph_{k,k-(t-1)(p-1)}^{\,a_1-b_1} \times \cdots\times  \abph_{k,k}^{\,a_t-b_t} \times Q'. \qedhere
\]\end{proof}

\begin{remark}\label{rem:gendiff}
It may be tempting to generalize the previous result to lower structure groups, but there are obstacles to a straightforward generalization. For example, let $\ns$ be the group nilspace consisting of $G=\mb{Z}_{25}$ with filtration $G_0=G_1=G_2=\mb{Z}_{25}$, $G_3=\cdots=G_7=5\mb{Z}_{25}$ and $G_i=\{0\}$ for $i\ge 8$. It can be checked that $\ns$ is not isomorphic to any nilspace that is a product of nilspaces in $\mc{Q}_{5,k}$, for any $k$ (note that the only possibility, given the structure groups of $\ns$, would be for $\ns$ to be isomorphic to $\mc{D}_2(\mb{Z}_5)\times \mc{D}_7(\mb{Z}_5)$; we leave it as an exercise to prove that this does not hold). However, we have the fibration $\varphi: \abph^{(5)}_{6,2}\times \mc{D}_7(\mb{Z}_5)\to \ns$, $(x,y)\mapsto x+5y$, where with $5y$ we mean that we take $5(y+5\mb{Z})\mod 25$ (the natural monomorphism $\mb{Z}_5\to  \mb{Z}_{25}$). It can be checked that $\abph^{(5)}_{6,2}\times \mc{D}_7(\mb{Z}_5)$ is thus a degree-6 extension of $\ns$, where the addition of $z\in \mc{D}_6(\mb{Z}_5)$ can be defined as $(x,y)+z:=(x+5z,y-z)$. Thus, to generalize Proposition \ref{prop:factorization-last-str-group}, we would have to take into account nilspaces such as $\ns$, that are not products of nilspaces in $\mc{Q}_{5,k}$.
\end{remark}

\section*{Acknowledgements}
\noindent We thank the anonymous referee for useful comments that helped to improve this paper. All authors received funding from Spain's MICINN project PID2020-113350GB-I00. The second-named author received funding from projects KPP 133921 and
Momentum (Lend\"ulet) 30003 of the Hungarian Government. The research was also supported partially by the NKFIH ``\'Elvonal'' KKP 133921 grant and partially by the Hungarian Ministry of Innovation and Technology NRDI Office within the framework of the Artificial Intelligence National Laboratory Program.

\end{document}